%% file: expVDC.tex
\documentclass{article}

\usepackage{amsmath,amssymb,amsfonts}%
\usepackage{amsthm}%

\theoremstyle{plain}
\newtheorem{thm}{Theorem}[section]
\newtheorem{lem}[thm]{Lemma}
\newtheorem{prop}[thm]{Proposition}
\newtheorem{cor}[thm]{Corollary}

\theoremstyle{definition}
\newtheorem{defn}[thm]{Definition}
\newtheorem{exmp}[thm]{Example}

\theoremstyle{remark}
\newtheorem{rmk}[thm]{Remark}

\newtheorem{expl}[thm]{Explication}

\newtheorem{cons}[thm]{Construction}

\usepackage[title]{appendix}%
\usepackage{xcolor}%
\usepackage{quiver}
\usepackage{adjustbox}
\usepackage{hyperref}
\usepackage{cleveref}
\usepackage{CJKutf8}

\usetikzlibrary{decorations.markings}
\tikzset{mid vert/.style={/utils/exec=\tikzset{every node/.append style={outer sep=0.8ex}},
postaction=decorate,decoration={markings,
mark=at position 0.5 with {\draw[-] (0,#1) -- (0,-#1);}}},
mid vert/.default=0.75ex}

\usepackage[style=alphabetic,backref=true,giveninits=true]{biblatex}
\usepackage{mathador}
\newcommand{\tet}{\mathpalette\matth{}}
\newcommand{\matth}[2]{%
  \mathchoice
    {\raisebox{-.2ex}{\scalebox{1.0}{$\DeMathador<line width=0.6pt>{white}{Q}{.4cm}{}$}}}
    {\raisebox{-.2ex}{\scalebox{0.8}{$\DeMathador<line width=0.6pt>{white}{Q}{.4cm}{}$}}}
    {\raisebox{-.2ex}{\scalebox{0.6}{$\DeMathador<line width=0.6pt>{white}{Q}{.4cm}{}$}}}
    {\raisebox{-.2ex}{\scalebox{0.45}{$\DeMathador<line width=0.6pt>{white}{Q}{.4cm}{}$}}}
}
\def\p#1{\mathrel{\ooalign{\hfil$\mapstochar\mkern 5mu$\hfil\cr$#1$}}}
\newcommand{\proto}{\p\rightarrow}
\newcommand{\id}{\operatorname{id}}
\newcommand{\op}{\operatorname{op}}
\newcommand{\dom}{\operatorname{dom}}
\newcommand{\cod}{\operatorname{cod} }
\newcommand{\yo}{\mathord{\text{\begin{CJK}{UTF8}{min}よ\end{CJK}}}}

\newcommand{\Cat}[1]{\mathsf{#1}}

\newcommand{\BiCat}[1]{\mathbf{#1}}
\newcommand{\MultCat}[1]{\mathsf{#1}}
\newcommand{\Dbl}[1]{\mathbb{#1}}

\newcommand{\Dbllong}[2]{\Dbl{#1}\mathsf{#2}}
\newcommand{\Sq}[1]{\mathbb{S}\mathsf{q}_{#1}}
\newcommand{\Susp}{\Sigma}
\newcommand{\Tight}{\Dbllong{T}{ight}}
\newcommand{\T}{\Dbllong{T}{}}
\newcommand{\Loose}{\Dbllong{L}{oose}}
\renewcommand{\L}{\Dbllong{L}{}}
\newcommand{\uTight}{\mathsf{Tight}}
\newcommand{\uLoose}{\mathsf{Loose}}
\newcommand{\Ob}{\Dbllong{O}{b}}
\newcommand{\uOb}{\mathsf{Ob}}

\newcommand{\uSq}[1]{\mathsf{Sq}_{#1}}
\newcommand{\prolist}[1]{\boldsymbol{#1}}
\newcommand{\fc}{\Cat{fc}}
\newcommand{\MCell}{\mathsf{MCell}}
\newcommand{\MCelln}[1]{\mathsf{MCell}_{#1}}
\newcommand{\Mod}{\Dbllong{M}{od}}
\newcommand{\Span}{\Dbllong{S}{pan}}
\newcommand{\Prof}{\Dbllong{P}{rof}}

\newcommand{\Cospan}{\Dbllong{C}{ospan}}
\newcommand{\fcsqn}[1]{\Cat{fc}_{#1}{\uSq{}}}
\newcommand{\fcsq}[1]{\Cat{fc}(#1_{1})}
\newcommand{\Vdc}{\Cat{Vdc}}

\newcommand{\define}[1]{\textbf{#1}}
\newcommand{\cell}[4]{\left(\begin{smallmatrix} & #2 & \\ #1 & & #4 \\ & #3 & \end{smallmatrix}\right)}
\newcommand{\scell}[2]{{\left(\begin{smallmatrix} #1 \\ #2 \end{smallmatrix}\right)}}
\usepackage{calc}
\newcommand*{\threefrac}[3]{%
  \begingroup
  \def\rowsep{-8pt}
  \sbox0{$\scriptscriptstyle#1$}%
  \sbox2{$\scriptscriptstyle#2$}%
  \sbox4{$\scriptscriptstyle#3$}%
  \setlength{\dimen0}{\maxof{\wd0}{\maxof{\wd2}{\wd4}}}%
  \begin{array}{@{}c@{}}
    \scriptscriptstyle #1 \\[\rowsep]
    \rule{\dimen0}{0.4pt} \\[\rowsep]
    \scriptscriptstyle #2 \\[\rowsep]
    \rule{\dimen0}{0.4pt} \\[\rowsep]
    \scriptscriptstyle #3
  \end{array}%
  \endgroup
}
\DeclareMathOperator{\colim}{colim}

\begin{document}

\title{Exponentiable Virtual Double Categories and Representability of Exponentials}

\author{
  Ea E. Thompson\thanks{Department of Mathematics, University of Illinois Urbana-Champaign. \href{mailto:eaet2@illinois.edu}{eaet2@illinois.edu}}
  \and
  Kevin D. Carlson\thanks{Topos Institute. \href{mailto:kevin@topos.institute}{kevin@topos.institute}}
}

\date{}
\maketitle
\begin{abstract}Virtual double categories provide an effective framework for formal category theory. 
Recent work has investigated the question of higher morphisms between virtual double categories,
following on from work on higher morphisms between double categories, and building up to 
Arkor's recent conjecture on exponentiable virtual double categories--those virtual 
double categories, morphisms out of which can themselves be enriched to a whole virtual
double category.

In this paper we resolve Arkor's conjecture by providing a number of equivalent characterizations
of the exponentiable virtual double categories in terms of existence of decompositions of cells. 
We also show that virtual double categories of cospans are always exponentiable, as 
are the virtual double categories arising from pseudo double categories or from
exponentiable multicategories, as studied by Pisani. 
We give conditions under which the virtual double category of virtual double functors 
admits composites, following work of Paré for the non-virtual case. 
We base our work on a general approach to exponentiability for categories of models of limit sketches,
which we apply to give new treatments of exponentiability for semicategories, categories, multicategories,
and their functors.\end{abstract}

\tableofcontents

\section{Introduction}\label{sec1}

\input{Sections/1_Introduction}

\section{Background on Virtual Double Categories}\label{sec:VDCs}

\input{Sections/2_VirtualDoubleCategories}

\clearpage

\section{A Sketch Approach to Exponentiability Theorems}\label{sec:ApproachExponentiability}

\input{Sections/3_ApproachExponentiability}

\clearpage

\section{Exponentiability via Decompositions}\label{sec:ExponentiableViaDecompositions}

\input{Sections/4_ExponentiabilityViaDecompositions}

\clearpage

\section{Other Characterizations of Exponentiability}\label{sec:OtherChar}
\input{Sections/5_OtherCharacterizations}

\clearpage

\section{Examples and Counterexamples}\label{sec:Examples}
\input{Sections/6_ExamplesAndCounterexamples}

\section{Representability of Exponentials}\label{sec:RepresentabilityOfExponentials}

\input{Sections/7_RepresentabilityOfExponential}

\clearpage

\printbibliography

\begin{appendices}
\section{Exponentiable Virtual Double Functors}\label{sec:ExponentiabilityOfMorphisms}
\input{Sections/A_ExponentiabilityOfMorphisms}

\clearpage
\input{Sections/Appendix.tex}

\end{appendices}

\end{document}

%% file: Sections/1_Introduction.tex
Double categories, originally defined by Ehresmann in~\cite{Ehresmann1964}, and later developed by Par\'{e} and collaborators in~\cite{Dawson1993,Grandis1999,Grandis2004,Grandis2007,Grandis2008,Pare2011,Pare2013}, have proved effective in a range of areas in mathematics for modelling structures with two interacting classes of maps. Applications of double categories can be found in homotopy theory, such as in algebraic $K$-theory (c.f.~\cite{zakharevich2011,campbellA2023,merlingScissorsCongruenceKtheory2025}) and enriched $(\infty,n)$-category theory (c.f.~\cite{moser2023,abellan2023,gepner2015,haugseng2016}), as well as use cases in modelling physical systems such as in biochemistry~\cite{aduddell2023compositional}, epidemiology~\cite{Baez2023}, and dynamical systems theory more broadly~\cite{libkind2025doubleoperadictheorysystems}. 

One application of particular interest to the authors comes from Lambert and Patterson's work 
on cartesian double theories in~\cite{lambertCartesianDoubleTheories2024}, where double categories 
are used as a foundation for categorical logic. 
Lambert and Patterson are particularly interested in morphisms and higher morphisms among 
``models'' of such a theory, which are given by lax double functors and lax transformations between them.
But as is shown in Theorem 10.3 of~\cite{lambertCartesianDoubleTheories2024}, the collection of models of 
a cartesian double theory need not form a genuine double category, 
but instead only a virtual double category in the sense of~\cite{Cruttwell2010}--roughly, 
a double category in which one direction of the arrows may not admit compositions. 
This work extends a growing understanding, originating from the study of the virtual double
category of lax double functors by Par\'{e} in~\cite{Pare2013}, that virtual double categories
arise unavoidably in the study of double category theory. In this paper, we take the virtual 
double categories as our subject of interest in their own right, and investigate 
in particular the question of internal homs for virtual double categories.

\subsubsection*{Internal homs for category-like structures}

The fact that Par\'{e}'s virtual double category of lax double functors is not a pseudo-double category 
in general is a glimpse into a subtle story about internal homs in categories with category-like objects. 
That the category $\Cat{Cat}$ of small categories is cartesian closed is quite remarkable, 
insofar as such of its near neighbors as the categories of monoids 
and of multicategories (\cite{Pisani2014}) fall well short of this condition. 
An even closer, but still non-cartesian closed, analogue is the category $\Cat{Semicat}$ of 
small semicategories (a semicategory has an associative composition operation, but may lack identity arrows.) 
It is also famously the case (\cite{conduche1972}) that most slices of $\Cat{Cat}$ are not cartesian closed.

Indeed, the only reasonably spacious general class of cartesian closed categories into which $\Cat{Cat}$ 
seems to fall is the class of 
``categories $\mathcal{V}-\Cat{Cat}$ of categories enriched in a cartesian closed category $\mathcal{V}$,'' 
in this case for $\mathcal{V}=\Cat{Set}.$ covers in particular the cases of strict $n$-categories for larger $n.$ 
But for most other kinds of categorical structures, such as the multicategories already
mentioned, but also for pseudo-double categories and virtual double categories, cartesian closure fails. 
These are cases in which, unlike the symmetric multicategories
with their Boardman-Vogt tensor product or $\mathcal{V}$-categories for a non-cartesian monoidal
$\mathcal{V}$, the cartesian product really is the natural monoidal product, so that we are at an 
impasse if we wish to develop the appropriate generalization of functor categories.

We are thus led to look for a partial solution: do certain exponentials $A^B$ exist, in categories
like $\Cat{Semicat}$, $\Cat{Cat}/B$, $\Cat{Multicat}$, or the category $\Vdc$
of virtual double categories? This is the problem
of \emph{exponentiability}, most famous from the case of topological spaces (\cite{Day_Kelly_1970}). An object 
$B$ of a category $\Cat{C}$ with finite products is \emph{exponentiable} if the functor $(-)\times B$ admits
a right adjoint, so that every exponential $A^B$ exists. We will restrict our interest here to
locally presentable categories, whose adjoint functor theorems allow for a much simpler theory 
of exponentiability than the reader may recall from the study of $\Cat{Top}.$

\subsubsection*{Contributions} 

 In this paper we will provide a complete classification of the exponentiable virtual double categories, 
 in particular, resolving all but one branch of Conjecture 3.15 of~\cite{arkor2025exponentiablevirtualdoublecategories} 
 positively. The last branch, regarding the pro-double categories, cannot be quite made sense of as written, and we provide an 
 alternative formulation in terms of pro-double categories with unbiased associators in Appendix \ref{sec:VDCCatOfOper}. 

 The approach is one which 
 should find usability quite generally in questions for exponentiability in 
 locally presentable categories: we give a sketch for virtual 
 double categories and show that exponentiability of an object $A$ may be characterized via preservation, 
 by $A\times (-),$ of the colimits arising from the distinguished cones of a sketch.
 We illustrate how straightforward it is to apply this approach to get complete proofs of the
 characterization of exponentiable objects--not, for instance, leaving the verification of the universal
 property of a putative exponential to the reader--in the preliminary 
 cases of semicategories, categories, and multicategories in Section~\ref{sec:ApproachExponentiability}.

 The technical heart of the paper runs from Subsection \ref{subsec:decompositions} through 
 Subsection \ref{subsec:malleable}. Here we show that the functor $(-)\times \Dbl{D},$ for a VDC $\Dbl{D}$,
 preserves the colimits of VDCs arising from the sketched cones in the sketch for VDCs (Definition \ref{defn:sketchVDC})
 if and only if every multicell of $\Dbl{D}$ admits essentially unique decompositions ``of every possible shape.''
 These possible shapes are parameterized by rooted finite uniform planar trees (Definition \ref{defn:tree}), 
 and the reduction of the conditions for exponentiability from Theorem~\ref{thm:ExpViaDecomp} to 
 Corollary~\ref{cor:ProRepImpliesDecomp} depends on some study of the combinatorics of such trees.
 The idea of this parameterization of decompositions by trees may be indicated by the diagram below; 
 the essential uniqueness is up to unary cells acting on the intermediate layers.

\[\begin{tikzcd}
	{\color{white}} & {\color{white}} & {\color{white}} & {\color{white}} & {\color{white}} & {\color{white}} & {\color{white}} \\
	& \bullet && \bullet & \bullet & \bullet & \bullet \\
	& {\color{white}} &&& \bullet & \bullet \\
	\bullet & \bullet & \bullet & \bullet && \bullet \\
	\bullet &&& \bullet && {\color{white}} \\
	&&& \bullet & \bullet & \bullet & \bullet \\
	&& \bullet & \bullet & \bullet & \bullet & \bullet \\
	&& \bullet && \bullet && \bullet \\
	&& \bullet &&&& \bullet
	\arrow[no head, from=1-1, to=2-2]
	\arrow[no head, from=1-2, to=2-2]
	\arrow[no head, from=1-3, to=2-2]
	\arrow[no head, from=1-5, to=2-5]
	\arrow[no head, from=1-6, to=2-6]
	\arrow[no head, from=1-7, to=2-7]
	\arrow[between={0.3}{0.7}, squiggly, from=2-2, to=2-4]
	\arrow[no head, from=2-2, to=3-2]
	\arrow[no head, from=2-5, to=3-5]
	\arrow[no head, from=2-6, to=3-6]
	\arrow[no head, from=2-7, to=3-6]
	\arrow[no head, from=3-5, to=2-4]
	\arrow[no head, from=3-5, to=4-6]
	\arrow[no head, from=3-6, to=4-6]
	\arrow["\shortmid"{marking}, from=4-1, to=4-2]
	\arrow[from=4-1, to=5-1]
	\arrow["\shortmid"{marking}, from=4-2, to=4-3]
	\arrow["\shortmid"{marking}, from=4-3, to=4-4]
	\arrow[from=4-4, to=5-4]
	\arrow[no head, from=4-6, to=5-6]
	\arrow[""{name=0, anchor=center, inner sep=0}, "\shortmid"{marking}, from=5-1, to=5-4]
	\arrow["\shortmid"{marking}, from=6-4, to=6-5]
	\arrow[""{name=1, anchor=center, inner sep=0}, from=6-4, to=7-3]
	\arrow[from=6-4, to=7-4]
	\arrow["\shortmid"{marking}, from=6-5, to=6-6]
	\arrow[from=6-5, to=7-5]
	\arrow["\shortmid"{marking}, from=6-6, to=6-7]
	\arrow[from=6-6, to=7-6]
	\arrow[from=6-7, to=7-7]
	\arrow["\shortmid"{marking}, from=7-3, to=7-4]
	\arrow[from=7-3, to=8-3]
	\arrow["\shortmid"{marking}, from=7-4, to=7-5]
	\arrow["\shortmid"{marking}, from=7-5, to=7-6]
	\arrow[from=7-5, to=8-5]
	\arrow["\shortmid"{marking}, from=7-6, to=7-7]
	\arrow[from=7-7, to=8-7]
	\arrow["\shortmid"{marking}, from=8-3, to=8-5]
	\arrow[from=8-3, to=9-3]
	\arrow["\shortmid"{marking}, from=8-5, to=8-7]
	\arrow[from=8-7, to=9-7]
	\arrow["\shortmid"{marking}, from=9-3, to=9-7]
	\arrow[between={0.3}{0.7}, squiggly, from=0, to=1]
\end{tikzcd}\]

 We will also extend Par\'{e}'s factorization property and representability results 
 in~\cite{Pare2013} to the setting of exponentiable VDCs,
 thus characterizing when an existing exponential of VDC's $\Dbllong{V}{dc}(\Dbl{A},\Dbl{B})$ 
 admits composites of loose arrows. Roughly speaking, we show that $\mathbb{A}$'s cells must all 
 factor through globular cells, and that $\mathbb{B}$ must admit 
local colimits, which will sound much the same as Par\'{e}'s characterization in the narrower non-virtual case.
 In the process of characterizing exponentiable VDCs and their representability we will show that the 
 theory extends the characterization of exponentiable multicategories proved by Pisani in~\cite{Pisani2014}, 
 while also providing a generalization of Day's classification of pro-monoidal multicategories in terms of 
 bi-cocontinuous monoidal structures on copresheaf categories in~\cite{Day1970}. (Example~\ref{eg:colaxConvStruct} and the paragraph following.)

\subsubsection*{Related and future work}

When $\Cat{A}=\Vdc$ is the category of VDCs, 
exponentiable morphisms were partially investigated in~\cite{fujii2025} where it was 
shown that discrete opfibrations between VDCs, which generalize discrete opfibrations between 
categories, are all exponentiable. Exponentiable VDCs were also partially investigated in recent
work of Arkor~\cite{arkor2025exponentiablevirtualdoublecategories}, where it is shown 
that representable VDCs (i.e.~pseudo-double categories) are exponentiable, while conditions for arbitrary exponentiable VDCs are 
conjectured in~\cite[Conjecture 3.15]{arkor2025exponentiablevirtualdoublecategories}.

Additionally, in recent work of Blom, Loubaton, and Ruit~\cite{blom2026dayconvolutionalgebraicpatterns},
a characterization of exponentiable morphisms between algebrads for $\infty$-categorical algebraic patterns
is provided. These algebrads correspond to $T$-multicategories for certain parametric right adjoint
monads on pre-sheaf $\infty$-categories. Taking $T=\Cat{fc}$ to be the free category monad,
the classification in~\cite{blom2026dayconvolutionalgebraicpatterns} can be modified slightly
to obtain a classification of exponentiable VDCs, essentially 
reproducing our first characterization in Theorem~\ref{thm:ExpViaDecomp}. Our tighter characterizations
do not fall out of Blom, Loubaton, and Ruit's work, however.
 
In future work the first author hopes to extend the results of this paper to classify exponential normal 
VDCs and to understand exponentiability in the context of 
virtual double categories of lax functors and lax transformations, as appearing in~\cite{lambertCartesianDoubleTheories2024}. 

\vspace{10pt}

\noindent \textbf{Size Considerations.} We fix two Grothendieck universes $\mathcal{U}\in\mathcal{V}$. 
Elements of $\mathcal{U}$ are referred to as small, while elements of $\mathcal{V}$ are referred to as large.
 We will write $\Cat{Set}$ and $\Cat{Cat}$ for the large categories of small sets and small categories, 
 respectively, and we will write $\Cat{SET}$ and $\Cat{CAT}$ for the very large categories of large sets and 
 large categories, respectively. All sets, categories, and virtual double categories will be assumed small, 
 unless stated otherwise.

\vspace{10pt}

\noindent \textbf{Notation.} Categories and multicategories will be denoted with sans serif font $\Cat{A},\Cat{B},\Cat{C},\ldots$, 
while (virtual) double categories will be denoted with blackboard bold font $\Dbl{A},\Dbl{B},\Dbl{C},\ldots$. 
We will also denote bicategories using bold font $\BiCat{A},\BiCat{B},\BiCat{C},\ldots.$

\vspace{10pt}

\noindent \textbf{Paper Outline.} 
\begin{itemize} 
  \item In Section~\ref{sec:VDCs} we review the necessary preliminaries on virtual 
double categories (VDCs) and their relation to pseudo-double categories. The reader
with a background in virtual double category theory should be able to simply scan this section for 
notational conventions. 
\item In Section~\ref{sec:ApproachExponentiability} we outline our general approach to studying exponentiability using 
limit sketches. We apply Pultr's theorem on the universal property of the category of models of a sketch
to characterize exponentiable models as those whose product preserves just those colimits of models 
arising from the sketched cones in Corollary~\ref{cor:exponentiable-sketch}. We illustrate 
this technology with the motivating examples of semicategories, categories, and multicategories. 
\item In Section~\ref{sec:ExponentiableViaDecompositions} we obtain a first 
characterization of exponentiable VDCs as those for which every multicell admits essentially 
unique decompositions according to every tree shape, in Theorem~\ref{thm:ExpViaDecomp}.
\item This is then extended to our main result, Theorem~\ref{thm:ExpChar}, in Section~\ref{sec:OtherChar}. Here 
we first show that unique decompositions reduce to those shaped by trees that branch only one cell per layer, 
and only using binary or nullary branches, which we call pro-representable virtual double categories. We
also show that pro-representable VDCs are equivalent to malleable VDCs, those whose composition cell is cartesian 
when viewed in a certain manner as generalized multicategories. Finally, 
we show that the VDC of spans of sets detects exponentiability by giving a 
Yoneda lemma (Lemma \ref{lem:Yoneda}) for VDCs. 
\item In Section~\ref{sec:RepresentabilityOfExponentials} we provide sufficient conditions for the exponential of two 
VDCs to have (weak) composites, as well as conditions for Par\'{e}'s VDC $\mathbb{L}\mathsf{ax}(\mathbb{A},\mathbb{B})$ 
to be representable when $\mathbb{A}$ is an exponentiable VDC, and not necessarily representable. 
\item In Section~\ref{sec:Examples} we give examples of exponentiable VDCs. Perhaps most interesting is that the 
VDC of cospans in any category $\Cat{E}$, which is not a pseudo double category unless $\Cat{E}$ should admit pushouts, 
is nevertheless always exponentiable. We also give a tool for checking exponentiability of certain tiny VDCs in 
Lemma \ref{lem:expOfUnitalization} and show that exponentiable multicategories also provide examples of exponentiable VDCs.
Finally we show that the definition of pro-double category proferred in \cite{arkor2025exponentiablevirtualdoublecategories}
is not workable in Proposition \ref{prop:counterExToProDouble}.
\item The proof of the characterization of exponentiable VDCs is extended in Appendix~\ref{sec:ExponentiabilityOfMorphisms} 
to provide a characterization of exponentiable morphisms of VDCs.  In Appendix~\ref{sec:VDCCatOfOper} we give a presentation of 
virtual double categories in terms of operator categories, which may be familiar from the study of algebraic 
patterns, interpret our exponentiability conditions in this context. In Appendix~\ref{sec:YonedaForMon} we rephrase 
the Yoneda lemma for VDCs in terms of a Yoneda lemma for actions of a monoid in a VDC.
\end{itemize}

\vspace{10pt}

\noindent \textbf{Acknowledgements.} The authors acknowledge helpful comments from Nathanael Arkor. 
During the initial research part of this project in June through August 2025, author Thompson 
was funded by the Advanced Research + Invention Agency (ARIA) through project code MSAI-PR01-P15 
while working at the Topos Institute as a summer research associate. 
During the remainder of the project duration, both authors were funded by the Air Force Office of Scientific
Research (AFOSR) Young Investigator Program (YIP) through Award FA9550-
23-1-0133.

%% file: Sections/2_VirtualDoubleCategories.tex
In this section we recall some basic definitions appearing in the theory of virtual double categories (VDCs), along with certain profunctors associated to virtual double categories which will be important for the main proofs in the paper. The primary references used for the background on VDCs comes from~\cite{LeinsterTom2002Geoc,Leinster2004,Cruttwell2010}. Note that in~\cite{LeinsterTom2002Geoc,Leinster2004} VDCs are referred to as $\Cat{fc}$-multicategories.

First, we recall that for a category $\Cat{C}$ with finite pullbacks and countable coproducts preserved by the finite pullbacks, 
we have a natural free category monad $\Cat{fc}$ on the category $\Cat{Grph}(\Cat{C})$ of graphs in $\Cat{C}$. 
Explicitly, $\Cat{fc}$ sends a graph $y\rightrightarrows x$ in $\Cat{C}$ to the graph with the same object of
vertices $x$, and with object of edges $\coprod_{n\geq 0}(\underbrace{y\times_x\cdots\times_xy}_n)$, 
where for $n=0$ the nullary pullback yields $x$. We will often abuse notation and apply $\Cat{fc}$ directly to the 
object of edges, leaving the object of vertices implicit if it is clear from the context. 
For $n\geq 2$ we will write $\mu_\Cat{fc}^{n-1}:\Cat{fc}^n\Rightarrow \Cat{fc}$ for the composite of multiplication 
transformations for the monad. For each integer $k\geq 0$, will also write 
$\iota_k:\Cat{fc}_k\hookrightarrow \Cat{fc}$ for the sub-endofunctor given by the $k$-fold pullback on the edge 
object, so that on edges $\Cat{fc}=\coprod_{n\geq 0}\Cat{fc}_n$ is the sum 
of paths of all finite lengths in the graph. 
We will most frequently use this monad when $\Cat{C}$ is either $\Cat{Set}$, the category of (small) sets, or 
$\Cat{Cat}$, the category of (small) categories. 

\subsection*{The definition of virtual double category}

\begin{defn}[Virtual Double Categories]\label{defn:VDC}
    A virtual double category (VDC) $\Dbl{D}$ consists of the following data:
    \begin{enumerate}
        \item An underlying category $\Dbl{D}_0$ of objects $x,y,z,w,\ldots \in \uOb(\Dbl{D})$ and tight arrows $a,b,c,d,\ldots$.
        We write $\uTight(\Dbl{D})$ for the set of all tight arrows, and $\uTight(\Dbl{D})_{x,y}$ for the set of tight arrows from $x$ to $y$ in $\Dbl{D}$.
        \item For each pair of objects $x,y \in \uOb (\Dbl{D})$, a set $\uLoose(\Dbl{D})_{x,y}$ of loose arrows $\varphi,\chi,\psi,\ldots:x\proto y$ from $x$ to $y$.
		The set of all loose arrows in $\Dbl{D}$ is denoted by $\uLoose(\Dbl{D})$, and is part of a graph $\uLoose(\Dbl{D})\rightrightarrows \uOb (\Dbl{D})$.
        \item For each integer $n\geq 0$ and each frame of objects, tight arrows, and loose arrows as below 
        \[\begin{adjustbox}{}\begin{tikzcd}
	{x_0} && \ldots && {x_n} \\
	\\
	{y_0} &&&& {y_1}
	\arrow["{{\varphi_1}}"{inner sep=.8ex}, "\shortmid"{marking}, from=1-1, to=1-3]
	\arrow["a"', from=1-1, to=3-1]
	\arrow["{{\varphi_n}}"{inner sep=.8ex}, "\shortmid"{marking}, from=1-3, to=1-5]
	\arrow["b", from=1-5, to=3-5]
	\arrow["\psi"'{inner sep=.8ex}, "\shortmid"{marking}, from=3-1, to=3-5]
\end{tikzcd}\end{adjustbox}\]
            a set of $n$-ary multicells 
            \[\begin{adjustbox}{}\begin{tikzcd}
	{x_0} && \ldots && {x_n} \\
	&& \alpha \\
	{y_0} &&&& {y_1}
	\arrow["{{\varphi_1}}"{inner sep=.8ex}, "\shortmid"{marking}, from=1-1, to=1-3]
	\arrow["a"', from=1-1, to=3-1]
	\arrow["{{\varphi_n}}"{inner sep=.8ex}, "\shortmid"{marking}, from=1-3, to=1-5]
	\arrow["b", from=1-5, to=3-5]
	\arrow["\psi"'{inner sep=.8ex}, "\shortmid"{marking}, from=3-1, to=3-5]
\end{tikzcd}\end{adjustbox}\]
            with the specified boundary, denoted by $\uSq{n}(\Dbl{D})\cell{a}{\varphi_1 \cdots \varphi_n}{\psi}{b}$. If $a$ and $b$ are identity arrows, we also write the set of multicells as $\uSq{n}(\Dbl{D})\scell{\varphi_1 \cdots \varphi_n}{\psi}$, and we refer to such multicells with identity tight source and target as \textit{special} or \textit{globular}. 
            We also refer to $0$-ary multicells as nullary, and $1$-ary multicells as unary.
            For an integer $n\geq 0$, we write $\uSq{n}(\Dbl{D})$ for the set of all $n$-ary multicells, and more generally if $k_1,...,k_n$ is a sequence of non-negative integers, we will write
			\begin{equation*}
				\uSq{k_1,...,k_n}(\Dbl{D}):=\uSq{k_1}(\Dbl{D})\times_{\uTight(\Dbl{D})}\ldots\times_{\uTight(\Dbl{D})}\uSq{k_n}(\Dbl{D})
			\end{equation*}
			for the set of sequences of compatible multicells, with the $i$th multicell in the sequence having arity $k_i$. The special case of the empty sequence is defined by $\uSq{()}(\Dbl{D}):=\uTight(\Dbl{D})$.
        \item For each loose arrow $\varphi:x\proto y$ a loose identity multicell $\text{id}_\varphi:\scell{\varphi}{\varphi}$.
        \item For every $n\geq 0$ and $k_1,...,k_n\geq 0$, a composition function
        \begin{equation*}
            \uSq{n}(\Dbl{D})\times_{\Cat{fc}_n(\uLoose(\Dbl{D}))}\uSq{k_1,...,k_n}(\Dbl{D})\xrightarrow{} \uSq{k_1+\cdots+k_n}(\Dbl{D})
        \end{equation*}
		which we denote using fractions, so that the composite of a diagram of multicells as below:
  \[\begin{tikzcd}
	{x_{1,0}} & \cdots & {x_{2,0}} & \cdots & {x_{n,0}} & \cdots & {x_{n,k_n}} \\
	{y_0} && {y_1} & \cdots & {y_{n-1}} && {y_n} \\
	{z_0} &&&&&& {z_1}
	\arrow["{{{{{\varphi_{1,1}}}}}}"{inner sep=.8ex}, "\shortmid"{marking}, from=1-1, to=1-2]
	\arrow["{{{{{a_0}}}}}"', from=1-1, to=2-1]
	\arrow["{{{{{\varphi_{1,k_1}}}}}}"{inner sep=.8ex}, "\shortmid"{marking}, from=1-2, to=1-3]
	\arrow["{{{{{\varphi_{2,1}}}}}}"{inner sep=.8ex}, "\shortmid"{marking}, from=1-3, to=1-4]
	\arrow["{{{{{a_1}}}}}"', from=1-3, to=2-3]
	\arrow["{{{{{\varphi_{n-1,k_{n-1}}}}}}}"{inner sep=.8ex}, "\shortmid"{marking}, from=1-4, to=1-5]
	\arrow["{{{{{\varphi_{n,1}}}}}}"{inner sep=.8ex}, "\shortmid"{marking}, from=1-5, to=1-6]
	\arrow["{{a_{n-1}}}"', from=1-5, to=2-5]
	\arrow["{{{{{\varphi_{n,k_n}}}}}}"{inner sep=.8ex}, "\shortmid"{marking}, from=1-6, to=1-7]
	\arrow["{{a_n}}", from=1-7, to=2-7]
	\arrow[""{name=0, anchor=center, inner sep=0}, "{{\psi_1}}"'{inner sep=.8ex}, "\shortmid"{marking}, from=2-1, to=2-3]
	\arrow["{{{{{b_0}}}}}"', from=2-1, to=3-1]
	\arrow["{{\psi_2}}"'{inner sep=.8ex}, "\shortmid"{marking}, from=2-3, to=2-4]
	\arrow["{{\psi_{n-1}}}"'{inner sep=.8ex}, "\shortmid"{marking}, from=2-4, to=2-5]
	\arrow[""{name=1, anchor=center, inner sep=0}, "{{\psi_n}}"'{inner sep=.8ex}, "\shortmid"{marking}, from=2-5, to=2-7]
	\arrow["{{{{{b_1}}}}}", from=2-7, to=3-7]
	\arrow[""{name=2, anchor=center, inner sep=0}, "\chi"'{inner sep=.8ex}, "\shortmid"{marking}, from=3-1, to=3-7]
	\arrow["{{{\alpha_1}}}"{description}, draw=none, from=1-2, to=0]
	\arrow["{{\alpha_n}}"{description}, draw=none, from=1-6, to=1]
	\arrow["\beta"{description}, draw=none, from=2-4, to=2]
\end{tikzcd}\]
  is denoted by $\frac{\alpha_1 \cdots \alpha_{n}}{\beta}$.
   As a warning, when $n=0$ this reduces to a composition function
        \begin{equation*}
            \uSq{0}(\Dbl{D})\times_{\uOb (\Dbl{D})}\uTight(\Dbl{D})\xrightarrow{}\uSq{0}(\Dbl{D})
        \end{equation*}
        which says we can whisker nullary multicells by tight arrows. In this case we can think of the tight arrows as 
        ``nullary multicells with nullary loose target," an intuition which is reified in the theory of augmented virtual double categories \cite{koudenberg2020}.
    \end{enumerate}
    This data is subject to the condition that composition of multicells is compatible with composition of tight sources and targets, is unital with respect to the identity multicells, and is associative with respect to the order of vertical pastings. \qed
\end{defn}

In order to simplify notation, 
we will often write $\prolist{\varphi}:=(\varphi_1 \cdots \varphi_n)$ 
to denote the sequence of loose arrows in the source, 
and we write $|\prolist{\varphi}|:= n$ for the length of this sequence. 
A length zero sequence of loose arrows corresponds to an object $a$ in $\Dbl{D}$, denoted $(a).$

Similarly, we will denote a pasting of multicells, as below right, using the notation below left:
\[\begin{tikzcd}[cramped, column sep=small]
	{x_{1,0}} & {x_{n,k_n}} & {x_{1,0}} & \cdots & {x_{2,0}} & \cdots & {x_{n,0}} & \cdots & {x_{n,k_n}} \\
	{y_0} & {y_n} & {y_0} && {y_1} & \cdots & {y_{n-1}} && {y_n} \\
	{z_0} & {z_1} & {z_0} &&&&&& {z_1}
	\arrow[""{name=0, anchor=center, inner sep=0}, "{{{\prolist{\varphi}}}}"{inner sep=.8ex}, "\shortmid"{marking}, from=1-1, to=1-2]
	\arrow["{{a_0}}"', from=1-1, to=2-1]
	\arrow["{{a_n}}", from=1-2, to=2-2]
	\arrow["{{{{{\varphi_{1,1}}}}}}"{inner sep=.8ex}, "\shortmid"{marking}, from=1-3, to=1-4]
	\arrow["{{{{{a_0}}}}}"', from=1-3, to=2-3]
	\arrow["{{{{{\varphi_{1,k_1}}}}}}"{inner sep=.8ex}, "\shortmid"{marking}, from=1-4, to=1-5]
	\arrow["{{{{{\varphi_{2,1}}}}}}"{inner sep=.8ex}, "\shortmid"{marking}, from=1-5, to=1-6]
	\arrow["{{{{{a_1}}}}}"', from=1-5, to=2-5]
	\arrow["{{{{{\varphi_{n-1,k_{n-1}}}}}}}"{inner sep=.8ex}, "\shortmid"{marking}, from=1-6, to=1-7]
	\arrow["{{{{{\varphi_{n,1}}}}}}"{inner sep=.8ex}, "\shortmid"{marking}, from=1-7, to=1-8]
	\arrow["{{a_{n-1}}}"', from=1-7, to=2-7]
	\arrow["{{{{{\varphi_{n,k_n}}}}}}"{inner sep=.8ex}, "\shortmid"{marking}, from=1-8, to=1-9]
	\arrow["{{a_n}}", from=1-9, to=2-9]
	\arrow[""{name=1, anchor=center, inner sep=0}, "{{\prolist{\psi}}}"'{inner sep=.8ex}, "\shortmid"{marking}, from=2-1, to=2-2]
	\arrow["{{b_0}}"', from=2-1, to=3-1]
	\arrow["{:=}"{description}, draw=none, from=2-2, to=2-3]
	\arrow["{{b_n}}", from=2-2, to=3-2]
	\arrow[""{name=2, anchor=center, inner sep=0}, "{{\psi_1}}"'{inner sep=.8ex}, "\shortmid"{marking}, from=2-3, to=2-5]
	\arrow["{{{{{b_0}}}}}"', from=2-3, to=3-3]
	\arrow["{{\psi_2}}"'{inner sep=.8ex}, "\shortmid"{marking}, from=2-5, to=2-6]
	\arrow["{{\psi_{n-1}}}"'{inner sep=.8ex}, "\shortmid"{marking}, from=2-6, to=2-7]
	\arrow[""{name=3, anchor=center, inner sep=0}, "{{\psi_n}}"'{inner sep=.8ex}, "\shortmid"{marking}, from=2-7, to=2-9]
	\arrow["{{{{{b_1}}}}}", from=2-9, to=3-9]
	\arrow[""{name=4, anchor=center, inner sep=0}, "\chi"'{inner sep=.8ex}, "\shortmid"{marking}, from=3-1, to=3-2]
	\arrow[""{name=5, anchor=center, inner sep=0}, "\chi"'{inner sep=.8ex}, "\shortmid"{marking}, from=3-3, to=3-9]
	\arrow["{{\prolist{\alpha}}}"{description}, draw=none, from=0, to=1]
	\arrow["{{{\alpha_1}}}"{description}, draw=none, from=1-4, to=2]
	\arrow["{{\alpha_n}}"{description}, draw=none, from=1-8, to=3]
	\arrow["\beta"{description}, draw=none, from=1, to=4]
	\arrow["\beta"{description}, draw=none, from=2-6, to=5]
\end{tikzcd}\]
As above, a length zero sequence of multicells corresponds to a tight arrow in $\Dbl{D}$. 
It may be enlightening to note that there is a strict double category (the free one on $\Dbl{D}$) 
with the same tight category as 
$\Dbl{D}$, loose arrows paths of loose arrows in $\Dbl{D},$ and multicells arising from paths of 
cells in $\Dbl{D},$ such that the compact notation above corresponds to working in this 
double category. 

Note that for a VDC $\Dbl{D}$, we have a category $\Dbl{D}_1$ whose objects are 
loose arrows and whose morphisms are unary multicells in $\Dbl{D}$. For a VDC $\Dbl{D}$, $\Dbl{D}_1$ fits into a natural graph of categories 
$s,t:\Dbl{D}_1\rightrightarrows \Dbl{D}_0$ which we can use to obtain the category 
$\Cat{fc}(\Dbl{D}_1)$ whose objects are sequences of loose arrows and whose morphisms are sequences of unary multicells. 

In fact, this assignment extends to a $2$-functor fitting into a $2$-adjunction
\[\begin{adjustbox}{}\begin{tikzcd}
	{\BiCat{Vdc}} && {\BiCat{Cat}}
	\arrow[""{name=0, anchor=center, inner sep=0}, "{(-)_1}"', curve={height=18pt}, from=1-1, to=1-3]
	\arrow[""{name=1, anchor=center, inner sep=0}, "{\Susp}"', curve={height=18pt}, from=1-3, to=1-1]
	\arrow["\dashv"{anchor=center, rotate=-90}, draw=none, from=1, to=0]
\end{tikzcd}\end{adjustbox}\]
for the morphisms and 2-cells of VDCs described below in Definition~\ref{defn:VDFunctor} and Definition~\ref{defn:TightTrans}, respectively, 
where we define the ``suspension'' by $\Susp(\Cat{C})_0=\Cat{C}+\Cat{C}$
and $\Susp(\Cat{C})_1=\Cat{C},$ with source and 
target functors $\Susp(\Cat{C}_1)\rightrightarrows \Susp(\Cat{C}_0)$ given by the two coproduct inclusions.
This is a complete specification insofar as 
$\Susp(\Cat{C})$ has no non-unary multicells.

With these categories defined, we have a natural profunctor 
$\MCell(\Dbl{D}):\fcsq{\Dbl{D}}^{op}\times \Dbl{D}_1\to \Cat{Set}$ which at a pair 
$(\prolist{\varphi},\psi)$ gives the set of multicells in $\Dbl{D}$ with loose source $\prolist{\varphi}$ 
and loose target $\psi$. We will write $\MCelln{n}(\Dbl{D}):=\MCell(\Dbl{D})(\iota_n(-),-)$ for the 
profunctor of $n$-ary multicells, the restriction of $\MCell(\Dbl{D})$ 
along the inclusion $\iota_n:\fcsqn{n}(\Dbl{D})\hookrightarrow \fcsq{\Dbl{D}}$. 
It will also be beneficial to work with the associated categories of these profunctors under the 
Grothendieck construction, which by abuse of notation we will denote with the same names. 
For instance, $\MCell(\Dbl{D})$ is the category whose objects are multicells and whose morphisms 
$\cell{a}{\prolist{\varphi}}{\psi}{b}\to \cell{a'}{\prolist{\varphi}'}{\psi'}{b'}$ are a pair of a unary multicell $\sigma=\cell{c}{\psi}{\psi'}{d}$ and a sequence of unary multicells $\prolist{\tau}=\cell{e_0}{\prolist{\varphi}}{\prolist{\varphi}'}{e_n}$ such that we have the pasting equality
\[
\begin{adjustbox}{}
	\begin{tikzcd}
	x & y && x & y \\
	z & w & {=} & {x'} & {y'} \\
	{z'} & {w'} && {z'} & {w'}
	\arrow[""{name=0, anchor=center, inner sep=0}, "{{\prolist{\varphi}}}"{inner sep=.8ex}, "\shortmid"{marking}, from=1-1, to=1-2]
	\arrow["a"', from=1-1, to=2-1]
	\arrow["b", from=1-2, to=2-2]
	\arrow[""{name=1, anchor=center, inner sep=0}, "{{\prolist{\varphi}}}"{inner sep=.8ex}, "\shortmid"{marking}, from=1-4, to=1-5]
	\arrow["{{e_0}}"', from=1-4, to=2-4]
	\arrow["{{e_n}}", from=1-5, to=2-5]
	\arrow[""{name=2, anchor=center, inner sep=0}, "\psi"'{inner sep=.8ex}, "\shortmid"{marking}, from=2-1, to=2-2]
	\arrow["c"', from=2-1, to=3-1]
	\arrow["d", from=2-2, to=3-2]
	\arrow[""{name=3, anchor=center, inner sep=0}, "{{\prolist{\varphi}'}}"'{inner sep=.8ex}, "\shortmid"{marking}, from=2-4, to=2-5]
	\arrow["{{a'}}"', from=2-4, to=3-4]
	\arrow["{{b'}}", from=2-5, to=3-5]
	\arrow[""{name=4, anchor=center, inner sep=0}, "{{\psi'}}"'{inner sep=.8ex}, "\shortmid"{marking}, from=3-1, to=3-2]
	\arrow[""{name=5, anchor=center, inner sep=0}, "{{\psi'}}"'{inner sep=.8ex}, "\shortmid"{marking}, from=3-4, to=3-5]
	\arrow["\alpha"{description}, draw=none, from=0, to=2]
	\arrow["{\prolist{\tau}}"{description}, draw=none, from=1, to=3]
	\arrow["\sigma"{description}, draw=none, from=2, to=4]
	\arrow["\beta"{description}, draw=none, from=3, to=5]
\end{tikzcd}
\end{adjustbox}
\]

\subsection*{Basic examples of VDCs}

Before continuing with definitions, let's recall some important examples of VDCs which will be relevant for our current work.

\begin{exmp}[Quintets]
	For any 2-category $\Cat{E}$, we have a VDC $\Dbl{Q}(\Cat{E})$ whose tight category is the underlying
  category of $\Cat{E}$, 
  whose loose arrows $x\proto y$ are just ordinary arrows $x\to y$ in $\Cat{E}$, and the 
  $n$-ary multicells filling a frame 
  $\cell{a}{\prolist{\varphi}}{\psi}{b}$ correspond to the 2-morphisms
  $\psi\circ a \to b\circ \varphi_n\circ \cdots \circ \varphi_1$ in $\Cat{E}$. 
  Below, we will use the quintet construction only in the case that $\Cat{E}$ is a mere category, 
  so the existence of a multicell comes down to a commutativity condition. 
\end{exmp}

\begin{exmp}[Span and Cospan Constructions]
    For any category $\Cat{E}$ we have a VDC $\Dbl{C}\Cat{ospan}(\Cat{E})$ whose underlying tight category is $\Cat{E}$, whose loose arrows $x\proto y$ for $x,y \in \Cat{E}$ are cospans $x\xrightarrow{a}z\xleftarrow{b}y$, and whose $n$-ary multicells are commuting diagrams of the form 
    \[\begin{adjustbox}{}\begin{tikzcd}
	{x_0} & {y_1} & {x_1} & \ldots & {x_{n-1}} & {y_n} & {x_n} \\
	&&& \ldots \\
	{z_0} &&& {w_1} &&& {z_1}
	\arrow["{{a_1}}", from=1-1, to=1-2]
	\arrow["{{c_0}}"', from=1-1, to=3-1]
	\arrow["{{e_1}}"', from=1-2, to=3-4]
	\arrow["{{b_1}}"', from=1-3, to=1-2]
	\arrow["{{a_2}}", from=1-3, to=1-4]
	\arrow["{{b_{n-1}}}"', from=1-5, to=1-4]
	\arrow["{{a_n}}", from=1-5, to=1-6]
	\arrow["{{e_n}}", from=1-6, to=3-4]
	\arrow["{{b_n}}"', from=1-7, to=1-6]
	\arrow["{{c_1}}", from=1-7, to=3-7]
	\arrow["{d_0}"', from=3-1, to=3-4]
	\arrow["{d_1}", from=3-7, to=3-4]
\end{tikzcd}\end{adjustbox}\]
Composition of multicells is by pasting diagrams.

  If the category $\Cat{E}$ has finite pullbacks, then we can similarly define the VDC $\Dbl{S}\Cat{pan}(\Cat{E})$ whose underlying tight category is once again $\Cat{E}$, but whose loose arrows $x\proto y$ are now spans $x\xleftarrow{a}z\xrightarrow{b}y$ in $\Cat{E}$, and whose $n$-ary multicells 
    \[\begin{adjustbox}{}\begin{tikzcd}
	{x_0} & {y_1} & {x_1} & \ldots & {x_{n-1}} & {y_n} & {x_n} \\
	{z_0} &&& {w_1} &&& {z_1}
	\arrow["{{{{c_0}}}}"', from=1-1, to=2-1]
	\arrow["{{{{a_1}}}}"', from=1-2, to=1-1]
	\arrow["{{{{b_1}}}}", from=1-2, to=1-3]
	\arrow["{{{{a_2}}}}"', from=1-4, to=1-3]
	\arrow["{{{{b_{n-1}}}}}", from=1-4, to=1-5]
	\arrow["\alpha"{description}, draw=none, from=1-4, to=2-4]
	\arrow["{{{{a_n}}}}"', from=1-6, to=1-5]
	\arrow["{{{{b_n}}}}", from=1-6, to=1-7]
	\arrow["{{{{c_1}}}}", from=1-7, to=2-7]
	\arrow["{{{{d_0}}}}", from=2-4, to=2-1]
	\arrow["{{{{d_1}}}}"', from=2-4, to=2-7]
\end{tikzcd}\end{adjustbox}\]
    are maps of spans
    \[
    \begin{adjustbox}{}\begin{tikzcd}
	{x_0} && {y_1\times_{x_1}\ldots\times_{x_{n-1}}y_n} && {x_n} \\
	{z_0} && {w_1} && {z_1}
	\arrow["{{{{c_0}}}}"', from=1-1, to=2-1]
	\arrow[from=1-3, to=1-1]
	\arrow[from=1-3, to=1-5]
	\arrow["\alpha"{description}, from=1-3, to=2-3]
	\arrow["{{{{c_1}}}}", from=1-5, to=2-5]
	\arrow["{{{{d_0}}}}", from=2-3, to=2-1]
	\arrow["{{{{d_1}}}}"', from=2-3, to=2-5]
\end{tikzcd}\end{adjustbox}
    \]
	When $\Cat{E}=\Cat{Set}$ we write $\Span := \Span(\Cat{Set})$ for the large VDC of spans of small sets.
\end{exmp}

\begin{exmp}[Profunctors]
    There exists a (large) VDC $\Prof$ whose underlying category is $\Cat{Cat}$, the category of small categories and functors,
     and whose loose arrows $\Cat{A}\proto \Cat{B}$ are profunctors, i.e. functors 
     $\Cat{A}^{op}\times \Cat{B}\to \Cat{Set}$. 
     A multicell $\alpha:\cell{F_0}{P_1 \cdots  P_n}{Q}{F_1}$
	consists of a family of maps 
	\begin{equation*}
		P_1(a_0,a_1)\times P_2(a_1,a_2)\times \cdots \times P_n(a_{n-1},a_n)\to Q(F_0(a_0),F_1(a_n))
	\end{equation*}
	which are natural in $a_0$ and $a_n$, and extra-natural in $a_1,...,a_{n-1}$. 
  Equivalently, such a multicell is given by a natural transformation 
  $\alpha:P_1\odot \cdots \odot P_n\Rightarrow Q(F_0,F_1)$ where $\odot$ denotes the composition of profunctors 
  and $Q(F_0,F_1):\Cat{A}_0\proto \Cat{A}_n$ is the pro-functor obtained by restricting $Q:\Cat{B}_0\proto \Cat{B}_1$ 
  along the functors $F_0$ and $F_1$; the more elementary definition is, nevertheless, of interest insofar as 
  it can be given in more general settings lacking the colimits on which the existence of $\odot$ depends.
\end{exmp}

\subsection*{Morphisms of VDCs}

We now define the morphisms and higher morphisms of virtual double categories. 

\begin{defn}[Virtual Double Functors]\label{defn:VDFunctor}
    A virtual double functor (VDF) $F:\Dbl{A}\to \Dbl{B}$ between VDCs $\Dbl{A}$ and $\Dbl{B}$ consists of the 
    following data:
    \begin{enumerate}
        \item A functor on underlying tight categories $F_0:\Dbl{A}_0\to \Dbl{B}_0$.
        \item For every pair of objects $x,x' \in \uOb (\Dbl{A})$ an assignment on loose arrows 
        $\uLoose(\Dbl{A})_{x,x'}\to \uLoose(\Dbl{B})_{Fx,Fx'}$.
        \item For every $n\geq 0$, an assignment on $n$-ary multicells $F:\uSq{n}(\Dbl{A})\to \uSq{n}(\Dbl{B})$ respecting tight and loose sources and targets.
    \end{enumerate}
    This data is required to strictly preserve identity multicells and composition of multicells.
\qed
\end{defn}

The resulting $1$-category $\Vdc$ of (small) VDCs is locally finitely presentable, and hence complete and 
cocomplete. 

\begin{defn}[Tight Transformations]\label{defn:TightTrans}
    A \define{transformation} between VDFs $F,G:\Dbl{A}\to \Dbl{B}$, denoted $f:F\Rightarrow G$, 
    consists of the following data:
    \begin{enumerate}
        \item A natural transformation of underlying tight functors $f_0:F_0\Rightarrow G_0$.
        \item For each loose arrow $\varphi:x\proto x'$ in $\Dbl{A}$, a multicell 
        $f_\varphi \in \uSq{1}(\Dbl{B})\cell{f_x}{F\varphi}{G\varphi}{f_{x'}}$.
    \end{enumerate}
    This data must be natural with respect to pasting multicells in the sense that for an $n$-ary multicell $\alpha$ in 
    $\Dbl{A}$ with loose source $(\varphi_1 \cdots \varphi_n)$ and loose target $\psi$, we have the pasting equality
    \begin{equation*}
        \frac{F(\alpha)}{f_\psi} = \frac{f_{\varphi_1} \cdots  f_{\varphi_n}}{G(\alpha)}
    \end{equation*}
\qed
\end{defn}

To understand how virtual double categories generalize the classical notion of (pseudo) double categories, we can use the notion of opcartesian multicells, which act as witnesses to loose composites.

\begin{defn}[Opcartesian multicells]\label{defn:OpCart}
    A globular $n$-ary multicell 
    $\alpha:\cell{\text{id}_x}{\varphi_1 \cdots \varphi_n}{\psi}{\text{id}_y}$ in a VDC $\Dbl{D}$ 
    is said to be \define{opcartesian} if any multicell of the form 
    $\beta:\cell{a}{\prolist{\chi}_0,\prolist{\varphi},\prolist{\chi}_1}{\omega}{b}$
    factors uniquely through $\alpha$ as a composite
    \[
    \begin{adjustbox}{}\begin{tikzcd}
	{x'} & x & y & {y'} \\
	{x'} & x & y & {y'} \\
	z &&& w
	\arrow[""{name=0, anchor=center, inner sep=0}, "{{{{\prolist{\chi}_0}}}}"{inner sep=.8ex}, "\shortmid"{marking}, from=1-1, to=1-2]
	\arrow[equals, from=1-1, to=2-1]
	\arrow[""{name=1, anchor=center, inner sep=0}, "{{{{\prolist{\varphi}}}}}"{inner sep=.8ex}, "\shortmid"{marking}, from=1-2, to=1-3]
	\arrow[equals, from=1-2, to=2-2]
	\arrow[""{name=2, anchor=center, inner sep=0}, "{{{{\prolist{\chi}_1}}}}"{inner sep=.8ex}, "\shortmid"{marking}, from=1-3, to=1-4]
	\arrow[equals, from=1-3, to=2-3]
	\arrow[equals, from=1-4, to=2-4]
	\arrow[""{name=3, anchor=center, inner sep=0}, "{{{{\prolist{\chi}_0}}}}"{inner sep=.8ex}, "\shortmid"{marking}, from=2-1, to=2-2]
	\arrow["a"', from=2-1, to=3-1]
	\arrow[""{name=4, anchor=center, inner sep=0}, "\psi"{inner sep=.8ex}, "\shortmid"{marking}, from=2-2, to=2-3]
	\arrow[""{name=5, anchor=center, inner sep=0}, "{{{{\prolist{\chi}_1}}}}"{inner sep=.8ex}, "\shortmid"{marking}, from=2-3, to=2-4]
	\arrow["b", from=2-4, to=3-4]
	\arrow[""{name=6, anchor=center, inner sep=0}, "\omega"'{inner sep=.8ex}, "\shortmid"{marking}, from=3-1, to=3-4]
	\arrow["{{\text{id}_{\prolist{\chi}_0}}}"{description}, draw=none, from=0, to=3]
	\arrow["\alpha"{description}, draw=none, from=1, to=4]
	\arrow["{{\text{id}_{\prolist{\chi}_1}}}"{description}, draw=none, from=2, to=5]
	\arrow["{{\exists!\beta^\flat}}"{description}, draw=none, from=4, to=6]
\end{tikzcd}\end{adjustbox}
    \]
    In this case we say $\alpha$ witnesses $\psi$ as a loose composite of $(\varphi_1 \cdots \varphi_n)$. 
    If we only have such unique factorizations when $|\prolist{\chi}_0|=|\prolist{\chi}_1|=0$, we say $\alpha$ 
    is \define{weakly opcartesian}, and that $\alpha$ witnesses $\psi$ as a weak loose composite of 
    $(\varphi_1 \cdots \varphi_n)$. We will write a general opcartesian multicell as $\Cat{opcart}$.
\qed
\end{defn}

If a sequence of loose arrows $\prolist{\varphi}=(\varphi_1 \cdots \varphi_n)$ in a VDC $\Dbl{D}$ admits a weak 
loose composite, we denote the weak composite by $\varphi_1\odot \ldots \odot \varphi_n$ or 
$\odot(\prolist{\varphi})$. In the case of an empty sequence of loose arrows $(a)$, with 
$a$ an object of $\Dbl{D},$ we will denote its (weak) loose composite by $I_a:a\proto a$, and refer to it as the (weak) \define{loose unit} at $a$. 

\begin{rmk}[The significance of strong composites]
When we only have weak composites, these composites need not be associative. Indeed, for any triple of loose arrows $(\varphi_1,\varphi_2,\varphi_3)$ we have globular multicells as below comparing associates for the composites, but these need not be isomorphisms:
\[
\begin{adjustbox}{}\begin{tikzcd}
	x && y && x && y \\
	x && y && x && y
	\arrow[""{name=0, anchor=center, inner sep=0}, "{{{\varphi_1\odot\varphi_2\odot\varphi_3}}}"{inner sep=.8ex}, "\shortmid"{marking}, from=1-1, to=1-3]
	\arrow[equals, from=1-1, to=2-1]
	\arrow[equals, from=1-3, to=2-3]
	\arrow[""{name=1, anchor=center, inner sep=0}, "{{{\varphi_1\odot\varphi_2\odot\varphi_3}}}"{inner sep=.8ex}, "\shortmid"{marking}, from=1-5, to=1-7]
	\arrow[equals, from=1-5, to=2-5]
	\arrow[equals, from=1-7, to=2-7]
	\arrow[""{name=2, anchor=center, inner sep=0}, "{{{(\varphi_1\odot \varphi_2)\odot\varphi_3}}}"'{inner sep=.8ex}, "\shortmid"{marking}, from=2-1, to=2-3]
	\arrow[""{name=3, anchor=center, inner sep=0}, "{{{\varphi_1\odot (\varphi_2\odot\varphi_3)}}}"'{inner sep=.8ex}, "\shortmid"{marking}, from=2-5, to=2-7]
	\arrow["{\exists!}"{description}, draw=none, from=0, to=2]
	\arrow["{\exists!}"{description}, draw=none, from=1, to=3]
\end{tikzcd}\end{adjustbox}
\]
Properties and examples of these VDCs with units and weak composites appear in~\cite{grandisOverviewColaxVirtual2025} under the name of colax double categories. The primary examples of such VDCs in~\cite{grandisOverviewColaxVirtual2025} come from VDCs of topological algebraic objects, where the failure of associativity comes from the non-commutativity of the monoidal structure with topological quotients.
\end{rmk}

On the other hand, from~\cite[Theorem 5.2]{Cruttwell2010} pseudo-double categories correspond precisely to those 
VDCs, called \emph{representable}, with all loose composites. 
In particular, loose composites satisfy associativity and unitality axioms via the universal property of the opcartesian multicells witnessing them. 
Additionally, under the identification of pseudo-double categories with VDCs, the co-dual of~\cite[Proposition 2.11]{Dawson2006} states that the VDFs are precisely the lax double functors, while the tight transformations of both structures correspond. 
We will write $\iota_{\BiCat{Dbl}}:\BiCat{Dbl}_\ell\hookrightarrow \BiCat{Vdc}$ for the fully-faithful 2-embedding of the $2$-category of pseudo-double categories, lax double functors, and tight transformations into the 2-category of VDCs.
We will write $\Dbl{H}:\BiCat{BiCat}_\ell\hookrightarrow \BiCat{Dbl}_\ell$ for the horizontal embedding of bicategories into pseudo-double categories.

\begin{defn}\label{defn:looseAndTightEmbeddings}
	We define the loose, tight, and chaotic embeddings into $\BiCat{Vdc}$ as follows:
	\begin{enumerate}
		\item We define the loose embedding $\L:\BiCat{Bicat}_\ell\hookrightarrow \BiCat{Vdc}$
			as the composite of the horizontal embedding $\Dbl{H}$ with $\iota_{\BiCat{Dbl}}$;
		\item We define the tight embedding $\T:\BiCat{Cat}\hookrightarrow \BiCat{Vdc}$
			by sending a category $\Cat{C}$ to the VDC $\T(\Cat{C})$ with tight category $\Cat{C}$ and no loose arrows;
		\item We define the chaotic embedding $\Dbllong{C}{haotic}:\BiCat{Cat}\hookrightarrow \BiCat{Vdc}$ 
			by sending a category $\Cat{C}$ to the VDC $\Dbllong{C}{haotic}(\Cat{C})$ with tight category $\Cat{C}$,
			a single loose between every pair of objects in $\Cat{C}$, 
			and a unique multicell filling every possible boundary of loose and tight arrows.
	\end{enumerate}
	\qed
\end{defn}

The embeddings $\T$ and $\Dbllong{C}{haotic}$ fit into a 2-adjoint triple
with the underlying category functor $(-)_0$ in the center:
\[\begin{tikzcd}
	{\BiCat{Vdc}} && {\BiCat{Cat}}
	\arrow[""{name=0, anchor=center, inner sep=0}, "{(-)_0}"{description}, from=1-1, to=1-3]
	\arrow[""{name=1, anchor=center, inner sep=0}, "{\Tight}"', curve={height=18pt}, from=1-3, to=1-1]
	\arrow[""{name=2, anchor=center, inner sep=0}, "{\Dbl{C}\mathsf{haotic}}", curve={height=-18pt}, from=1-3, to=1-1]
	\arrow["\dashv"{anchor=center, rotate=-92}, draw=none, from=0, to=2]
	\arrow["\dashv"{anchor=center, rotate=-88}, draw=none, from=1, to=0]
\end{tikzcd}\]
In particular, limits and colimits in $\BiCat{Vdc}$ 
are computed by limits and colimits in $\BiCat{Cat}$ on the level of underlying tight categories.

We will also be interested in the 2-category $\BiCat{Vdc}_n$ whose objects are ``normal VDCs'', that is, VDCs equipped with a choice of loose unit and witnessing opcartesian nullary multicell for each object. The morphisms in $\BiCat{Vdc}_n$ are VDFs which strictly preserve choices of loose units and opcartesian multicells, while the $2$-cells in $\BiCat{Vdc}_n$ are arbitrary tight transformations between the VDFs. Then we have a natural forgetful $2$-functor $U:\BiCat{Vdc}_n\to \BiCat{Vdc}$, which from~\cite{fujii2025} fits into an important 2-adjoint triple 
\[\begin{adjustbox}{}
\begin{tikzcd}
	{\BiCat{Vdc}_n} && {\BiCat{Vdc}}
	\arrow[""{name=0, anchor=center, inner sep=0}, "U"{description}, from=1-1, to=1-3]
	\arrow[""{name=1, anchor=center, inner sep=0}, "{\Dbl{F}_u}"', curve={height=18pt}, from=1-3, to=1-1]
	\arrow[""{name=2, anchor=center, inner sep=0}, "{\Dbl{M}\Cat{od}}", curve={height=-18pt}, from=1-3, to=1-1]
	\arrow["\dashv"{anchor=center, rotate=-92}, draw=none, from=0, to=2]
	\arrow["\dashv"{anchor=center, rotate=-88}, draw=none, from=1, to=0]
\end{tikzcd}
\end{adjustbox}\]
We will now explicate the 2-functors $\Mod$ and $\Dbl{F}_u$ appearing in the adjoint triple. First, the 2-functor $\Dbl{F}_u$ universally adds loose units to a VDC. Explicitly, for a VDC $\Dbl{D}$, $\Dbl{F}_u(\Dbl{D})$ has the same underlying tight category and has as loose arrows those in $\Dbl{D}$ along with a new loose arrow $I_x:x\proto x$ for every object $x \in \Dbl{D}$. Multicells in $\Dbl{F}_u(\Dbl{D})$ with target other than $I_x$ are given by multicells (possibly with empty loose target) in $\Dbl{D}$ after removing all the new loose arrows from the source. There
is a nullary multicell into $I_x$ for each $x,$ and the only other multicells with target $I_x$ are given by whiskering this nullary multicell with a tight arrow on the domain.

Next, the 2-functor $\Mod$ produces VDCs of modules as defined in~\cite[Definition 5.3.1]{Leinster2004}:

\begin{defn}[Modules in VDCs]\label{defn:Modules}
    Let $\Dbl{X}$ be a VDC. The VDC $\Mod(\Dbl{X})$ of \define{monoids and modules} consists of the following data:
    \begin{enumerate}
        \item The objects, \textit{monoids}, consist of an object $x$ in $\Dbl{X}$, a loose endoarrow $\varphi:x\proto x$, and multiplication and unit multicells 
        \[\begin{adjustbox}{}\begin{tikzcd}
	x & x & x && x \\
	x && x & x && x
	\arrow["\varphi"{inner sep=.8ex}, "\shortmid"{marking}, from=1-1, to=1-2]
	\arrow[equals, from=1-1, to=2-1]
	\arrow["\varphi"{inner sep=.8ex}, "\shortmid"{marking}, from=1-2, to=1-3]
	\arrow[equals, from=1-3, to=2-3]
	\arrow[equals, from=1-5, to=2-4]
	\arrow[equals, from=1-5, to=2-6]
	\arrow[""{name=0, anchor=center, inner sep=0}, "\varphi"'{inner sep=.8ex}, "\shortmid"{marking}, from=2-1, to=2-3]
	\arrow[""{name=1, anchor=center, inner sep=0}, "\varphi"'{inner sep=.8ex}, "\shortmid"{marking}, from=2-4, to=2-6]
	\arrow["\mu"{description}, draw=none, from=1-2, to=0]
	\arrow["\eta"{description}, draw=none, from=1-5, to=1]
\end{tikzcd}\end{adjustbox}\]
        satisfying associativity $\tfrac{(\mu,\text{id}_{\varphi})}{\mu}=\tfrac{(\text{id}_{\varphi},\mu)}{\mu}$ and unitality $\tfrac{(\eta,\text{id}_{\varphi})}{\mu}=\text{id}_{\varphi}=\tfrac{(\text{id}_{\varphi},\eta)}{\mu}$. 
        \item The tight arrows $(x,\varphi,\mu,\eta)\to (y,\psi,\nu,\theta)$, called \textit{monoid homomorphisms}, consist of a tight arrow $a:x\to y$ in $\Dbl{X}$ and a unary multicell $\alpha:\cell{a}{\varphi}{\psi}{a}$
        which is compatible with the multiplications and units in the sense that $\tfrac{(\alpha,\alpha)}{\nu} = \tfrac{\mu}{\alpha}$ and $\tfrac{a}{\theta} = \tfrac{\eta}{\alpha}$.
        \item The loose arrows $(x,\varphi,\mu,\eta)\proto (y,\psi,\nu,\theta)$, called \textit{modules}, consist of a loose arrow $\chi:x\proto y$ in $\Dbl{X}$, and left and right action multicells
        \[
        \begin{adjustbox}{}\begin{tikzcd}
	x & x & y & x & y & y \\
	x && y & x && y
	\arrow["\varphi"{inner sep=.8ex}, "\shortmid"{marking}, from=1-1, to=1-2]
	\arrow[equals, from=1-1, to=2-1]
	\arrow["\chi"{inner sep=.8ex}, "\shortmid"{marking}, from=1-2, to=1-3]
	\arrow[equals, from=1-3, to=2-3]
	\arrow["\chi"{inner sep=.8ex}, "\shortmid"{marking}, from=1-4, to=1-5]
	\arrow[equals, from=1-4, to=2-4]
	\arrow["\psi"{inner sep=.8ex}, "\shortmid"{marking}, from=1-5, to=1-6]
	\arrow[equals, from=1-6, to=2-6]
	\arrow[""{name=0, anchor=center, inner sep=0}, "\chi"'{inner sep=.8ex}, "\shortmid"{marking}, from=2-1, to=2-3]
	\arrow[""{name=1, anchor=center, inner sep=0}, "\chi"'{inner sep=.8ex}, "\shortmid"{marking}, from=2-4, to=2-6]
	\arrow["{\chi_L}"{description}, draw=none, from=1-2, to=0]
	\arrow["{\chi_R}"{description}, draw=none, from=1-5, to=1]
\end{tikzcd}\end{adjustbox}
        \]
        satisfying the bimodule axioms:
        \begin{align*} 
        \frac{(\mu,\text{id}_\chi)}{\chi_L} &= \frac{(\text{id}_\varphi,\chi_L)}{\chi_L}; &\frac{(\text{id}_\chi,\nu)}{\chi_R}=\frac{(\chi_R,\text{id}_\psi)}{\chi_R} \\ \\
        \frac{(\eta,\text{id}_\chi)}{\chi_L}=\text{id}_\chi&=\frac{(\text{id}_\phi,\theta)}{\chi_R}; &\frac{(\chi_L,\text{id}_\psi)}{\chi_R}=\frac{(\text{id}_\varphi,\chi_R)}{\chi_L}
        \end{align*}

        \item The $n$-ary multicells $\Theta:\cell{(a,\alpha)}{\chi_1 \cdots \chi_n}{\omega}{(b,\beta)},$
        called \emph{modulations}, are multicells $\Theta:\cell{a}{\chi_1 \cdots \chi_n}{\omega}{b}$ in $\Dbl{X}$ 
        which are compatible with left and right actions of the horizontal multicells:
        \begin{enumerate}
            \item (\textbf{Left Outer Equivariance}) 
			\[
			\begin{adjustbox}{}
				\begin{tikzcd}[column sep=small]
	{x_0} & {x_0} & {x_1} && {x_n} && {x_0} & {x_0} && {x_n} \\
	{x_0} && {x_1} && {x_n} & {=} & {y_0} & {y_0} && {y_1} \\
	{y_0} &&&& {y_1} && {y_0} &&& {y_1}
	\arrow["{{{\varphi_0}}}"{inner sep=.8ex}, "\shortmid"{marking}, from=1-1, to=1-2]
	\arrow[equals, from=1-1, to=2-1]
	\arrow["{{{\chi_1}}}"{inner sep=.8ex}, "\shortmid"{marking}, from=1-2, to=1-3]
	\arrow[""{name=0, anchor=center, inner sep=0}, "{{{\chi_2...\chi_n}}}"{inner sep=.8ex}, "\shortmid"{marking}, from=1-3, to=1-5]
	\arrow[equals, from=1-3, to=2-3]
	\arrow[equals, from=1-5, to=2-5]
	\arrow[""{name=1, anchor=center, inner sep=0}, "{{{\varphi_0}}}"{inner sep=.8ex}, "\shortmid"{marking}, from=1-7, to=1-8]
	\arrow["a"', from=1-7, to=2-7]
	\arrow[""{name=2, anchor=center, inner sep=0}, "{{{\chi_1...\chi_n}}}"{inner sep=.8ex}, "\shortmid"{marking}, from=1-8, to=1-10]
	\arrow["a"{description}, from=1-8, to=2-8]
	\arrow["b", from=1-10, to=2-10]
	\arrow[""{name=3, anchor=center, inner sep=0}, "{{{\chi_1}}}"'{inner sep=.8ex}, "\shortmid"{marking}, from=2-1, to=2-3]
	\arrow["a"', from=2-1, to=3-1]
	\arrow[""{name=4, anchor=center, inner sep=0}, "{{{\chi_2...\chi_n}}}"'{inner sep=.8ex}, "\shortmid"{marking}, from=2-3, to=2-5]
	\arrow["b", from=2-5, to=3-5]
	\arrow[""{name=5, anchor=center, inner sep=0}, "{{{\psi_0}}}"'{inner sep=.8ex}, "\shortmid"{marking}, from=2-7, to=2-8]
	\arrow[equals, from=2-7, to=3-7]
	\arrow[""{name=6, anchor=center, inner sep=0}, "\omega"'{inner sep=.8ex}, "\shortmid"{marking}, from=2-8, to=2-10]
	\arrow[equals, from=2-10, to=3-10]
	\arrow[""{name=7, anchor=center, inner sep=0}, "\omega"'{inner sep=.8ex}, "\shortmid"{marking}, from=3-1, to=3-5]
	\arrow[""{name=8, anchor=center, inner sep=0}, "\omega"'{inner sep=.8ex}, "\shortmid"{marking}, from=3-7, to=3-10]
	\arrow["{\chi_{1,L}}"{description}, draw=none, from=1-2, to=3]
	\arrow["{\text{id}}"{description}, draw=none, from=0, to=4]
	\arrow["\alpha"{description}, draw=none, from=1, to=5]
	\arrow["\Theta"{description}, draw=none, from=2, to=6]
	\arrow["\Theta"{description}, draw=none, from=2-3, to=7]
	\arrow["{\omega_L}"{description}, draw=none, from=2-8, to=8]
\end{tikzcd}
			\end{adjustbox}
			\]
            \item (\textbf{Right Outer Equivariance}) 
			\[
			\begin{adjustbox}{}
				\begin{tikzcd}[column sep=small]
	{x_0} && {x_{n-1}} & {x_n} & {x_n} && {x_0} && {x_n} & {x_n} \\
	{x_0} && {x_{n-1}} && {x_n} & {=} & {y_0} && {y_1} & {y_1} \\
	{y_0} &&&& {y_1} && {y_0} &&& {y_1}
	\arrow[""{name=0, anchor=center, inner sep=0}, "{{\chi_1...\chi_{n-1}}}"{inner sep=.8ex}, "\shortmid"{marking}, from=1-1, to=1-3]
	\arrow[equals, from=1-1, to=2-1]
	\arrow["{{\chi_n}}"{inner sep=.8ex}, "\shortmid"{marking}, from=1-3, to=1-4]
	\arrow[equals, from=1-3, to=2-3]
	\arrow["{{\varphi_n}}", from=1-4, to=1-5]
	\arrow[equals, from=1-5, to=2-5]
	\arrow[""{name=1, anchor=center, inner sep=0}, "{{\chi_1...\chi_n}}"{inner sep=.8ex}, "\shortmid"{marking}, from=1-7, to=1-9]
	\arrow["a"', from=1-7, to=2-7]
	\arrow[""{name=2, anchor=center, inner sep=0}, "{{\varphi_n}}"{inner sep=.8ex}, "\shortmid"{marking}, from=1-9, to=1-10]
	\arrow["b"{description}, from=1-9, to=2-9]
	\arrow["b", from=1-10, to=2-10]
	\arrow[""{name=3, anchor=center, inner sep=0}, "{{\chi_1...\chi_{n-1}}}"'{inner sep=.8ex}, "\shortmid"{marking}, from=2-1, to=2-3]
	\arrow["a"', from=2-1, to=3-1]
	\arrow[""{name=4, anchor=center, inner sep=0}, "{{\chi_n}}"'{inner sep=.8ex}, "\shortmid"{marking}, from=2-3, to=2-5]
	\arrow["b", from=2-5, to=3-5]
	\arrow[""{name=5, anchor=center, inner sep=0}, "\omega"'{inner sep=.8ex}, "\shortmid"{marking}, from=2-7, to=2-9]
	\arrow[equals, from=2-7, to=3-7]
	\arrow[""{name=6, anchor=center, inner sep=0}, "{{\psi_1}}"'{inner sep=.8ex}, "\shortmid"{marking}, from=2-9, to=2-10]
	\arrow[equals, from=2-10, to=3-10]
	\arrow[""{name=7, anchor=center, inner sep=0}, "\omega"'{inner sep=.8ex}, "\shortmid"{marking}, from=3-1, to=3-5]
	\arrow[""{name=8, anchor=center, inner sep=0}, "\omega"'{inner sep=.8ex}, "\shortmid"{marking}, from=3-7, to=3-10]
	\arrow["{\text{id}}"{description}, draw=none, from=0, to=3]
	\arrow["{\chi_{n,R}}"{description}, draw=none, from=1-4, to=4]
	\arrow["\Theta"{description}, draw=none, from=1, to=5]
	\arrow["\beta"{description}, draw=none, from=2, to=6]
	\arrow["\Theta"{description}, draw=none, from=2-3, to=7]
	\arrow["{\omega_R}"{description}, draw=none, from=2-9, to=8]
\end{tikzcd}
			\end{adjustbox}
			\]
            \item (\textbf{Inner Equivariance}) For each $2\leq i \leq n$, the pastings 
			\[
			\begin{tikzcd}
	{x_0} && {x_{i-2}} & {x_{i-1}} & {x_{i-1}} && {x_n} \\
	{x_0} && {x_{i-2}} && {x_{i-1}} && {x_n} \\
	{y_0} &&&&&& {y_1}
	\arrow[""{name=0, anchor=center, inner sep=0}, "{{{\chi_1...\chi_{i-2}}}}"{inner sep=.8ex}, "\shortmid"{marking}, from=1-1, to=1-3]
	\arrow[equals, from=1-1, to=2-1]
	\arrow["{{{\chi_{i-1}}}}"{inner sep=.8ex}, "\shortmid"{marking}, from=1-3, to=1-4]
	\arrow[equals, from=1-3, to=2-3]
	\arrow["{{{\varphi_{i-1}}}}"{inner sep=.8ex}, "\shortmid"{marking}, from=1-4, to=1-5]
	\arrow[""{name=1, anchor=center, inner sep=0}, "{{{\chi_i...\chi_n}}}"{inner sep=.8ex}, "\shortmid"{marking}, from=1-5, to=1-7]
	\arrow[equals, from=1-5, to=2-5]
	\arrow[equals, from=1-7, to=2-7]
	\arrow[""{name=2, anchor=center, inner sep=0}, "{{{\chi_1...\chi_{i-2}}}}"'{inner sep=.8ex}, "\shortmid"{marking}, from=2-1, to=2-3]
	\arrow["a"', from=2-1, to=3-1]
	\arrow[""{name=3, anchor=center, inner sep=0}, "{{{\chi_{i-1}}}}"'{inner sep=.8ex}, "\shortmid"{marking}, from=2-3, to=2-5]
	\arrow[""{name=4, anchor=center, inner sep=0}, "{{{\chi_i...\chi_n}}}"'{inner sep=.8ex}, "\shortmid"{marking}, from=2-5, to=2-7]
	\arrow["b", from=2-7, to=3-7]
	\arrow[""{name=5, anchor=center, inner sep=0}, "\omega"'{inner sep=.8ex}, "\shortmid"{marking}, from=3-1, to=3-7]
	\arrow["{\text{id}}"{description}, draw=none, from=0, to=2]
	\arrow["{\chi_{i-1,R}}"{description}, draw=none, from=1-4, to=3]
	\arrow["{\text{id}}"{description}, draw=none, from=1, to=4]
	\arrow["\Theta"{description}, draw=none, from=3, to=5]
\end{tikzcd}
			\]
			and
			\[
			\begin{tikzcd}
	{x_0} && {x_{i-1}} & {x_{i-1}} & {x_{i}} && {x_n} \\
	{x_0} && {x_{i-1}} && {x_i} && {x_n} \\
	{y_0} &&&&&& {y_1}
	\arrow[""{name=0, anchor=center, inner sep=0}, "{{\chi_1...\chi_{i-1}}}"{inner sep=.8ex}, "\shortmid"{marking}, from=1-1, to=1-3]
	\arrow[equals, from=1-1, to=2-1]
	\arrow["{{\varphi_{i-1}}}"{inner sep=.8ex}, "\shortmid"{marking}, from=1-3, to=1-4]
	\arrow[equals, from=1-3, to=2-3]
	\arrow["{{\chi_i}}"{inner sep=.8ex}, "\shortmid"{marking}, from=1-4, to=1-5]
	\arrow[""{name=1, anchor=center, inner sep=0}, "{{\chi_{i+1}...\chi_n}}"{inner sep=.8ex}, "\shortmid"{marking}, from=1-5, to=1-7]
	\arrow[equals, from=1-5, to=2-5]
	\arrow[equals, from=1-7, to=2-7]
	\arrow[""{name=2, anchor=center, inner sep=0}, "{{\chi_1...\chi_{i-1}}}"'{inner sep=.8ex}, "\shortmid"{marking}, from=2-1, to=2-3]
	\arrow["a"', from=2-1, to=3-1]
	\arrow[""{name=3, anchor=center, inner sep=0}, "{{\chi_{i}}}"'{inner sep=.8ex}, "\shortmid"{marking}, from=2-3, to=2-5]
	\arrow[""{name=4, anchor=center, inner sep=0}, "{{\chi_{i+1}...\chi_n}}"'{inner sep=.8ex}, "\shortmid"{marking}, from=2-5, to=2-7]
	\arrow["b", from=2-7, to=3-7]
	\arrow[""{name=5, anchor=center, inner sep=0}, "\omega"'{inner sep=.8ex}, "\shortmid"{marking}, from=3-1, to=3-7]
	\arrow["{\text{id}}"{description}, draw=none, from=0, to=2]
	\arrow["{\chi_{i,L}}"{description}, draw=none, from=1-4, to=3]
	\arrow["{\text{id}}"{description}, draw=none, from=1, to=4]
	\arrow["\Theta"{description}, draw=none, from=3, to=5]
\end{tikzcd}
			\]
			are equal.
        \end{enumerate}
    \end{enumerate}
\qed
\end{defn}

\begin{exmp}[Profunctors via Modules]
    For any category $\Cat{E}$ with pullbacks, we have the VDC $\Prof(\Cat{E}):=\Mod(\Span(\Cat{E}))$ whose underlying tight category is the category $\Cat{Cat}(\Cat{E})$ of categories and functors internal to $\Cat{E}$, whose loose arrows are internal profunctors, and whose multicells are internal natural transformations. Explicitly, an internal profunctor between internal categories $(\Cat{A}_1,\Cat{A}_0)$ and $(\Cat{B}_1,\Cat{B}_0)$ is a span in $\Cat{E}$, $\Cat{A}_0\xleftarrow{s}P\xrightarrow{t}\Cat{B}_0$, together with action maps $\Cat{A}_1\times_{\Cat{A}_0}P\to P$ and $P\times_{\Cat{B}_0}\Cat{B}_1\to P$, satisfying associativity and unitality axioms. An internal natural transformation $\alpha:\cell{(F_{1,0},F_{0,0})}{(P_1 \cdots P_n)}{Q}{(F_{1,1},F_{0,1})}$
	consists of a map of spans
	\[
	\begin{tikzcd}
	{\mathsf{A}_{0,0}} & {P_1\times_{\mathsf{A}_{0,1}}\cdots\times_{\mathsf{A}_{0,n-1}}P_n} & {\mathsf{A}_{0,n}} \\
	{\mathsf{B}_{0,0}} & Q & {\mathsf{B}_{0,1}}
	\arrow["{{F_{0,0}}}"', from=1-1, to=2-1]
	\arrow[from=1-2, to=1-1]
	\arrow[from=1-2, to=1-3]
	\arrow["\alpha"{description}, from=1-2, to=2-2]
	\arrow["{{F_{0,1}}}", from=1-3, to=2-3]
	\arrow[from=2-2, to=2-1]
	\arrow[from=2-2, to=2-3]
\end{tikzcd}
	\]
	compatible with the internal and external actions. 
  
  As an application of the adjunction $U\dashv \Mod$, if $\Dbl{D}\in \Vdc_n$ is any unital VDC with a choice of loose units, we have a natural bijection 
	\begin{equation}\label{eq:SpanProf}
		\Vdc_n(\Dbl{D},\Prof(\Cat{E}))\cong \Vdc(\Dbl{D},\Span(\Cat{E}))
	\end{equation}
	between normal VDFs into $\Prof(\Cat{E})$ and arbitrary VDFs into $\Span(\Cat{E})$.
\end{exmp}

The same kind of construction produces the VDC of profunctors
between categories enriched over an arbitrary monoidal category $\Cat{V}$
(with no cocompleteness assumptions) 
as the modules in the VDC of \emph{matrices} over $\Cat{V},$
and provides an analogous universal property.

For a category $\Cat{C}$ we will write 
$\T_u(\Cat{C})=\Dbl{F}_u(\T(\Cat{C}))$ for the unital tight embedding,
which adds loose units and unique multicells for the loose units to $\T(\Cat{C})$. 
These constructions preserve non-empty products, though $\T(-)$ does not preserve the terminal object. 
This leads to the construction of a number of important little VDCs.

\begin{defn}[Walking multicell VDCs]
  We now give definitions of the VDCs freely representing
  the various shapes of multicells. 
  
  \begin{itemize}
\item $\Ob:=\T([0])$ is the walking object VDC, while $\Ob_u:=\T_u([0])=\L([0])$ is the terminal VDC.\footnote{Note that $\Ob$ has no loose arrows, so no VDC with loose arrows can map into it.}

\item $\Tight:=\T([1])$ is the walking tight arrow VDC which has two objects, $0$ and $1$, and a single tight arrow $t:0\to 1$ between them, along with identity tight arrows.

\item The walking loose arrow, which we denote by $\Loose$, is the VDC with two objects $0$ and $1$, and a single loose arrow $\ell:0\proto 1$. Note that $\Loose=\Susp([0])$ is
the suspension of the terminal category.

\item The walking $n$-ary multicell for an integer $n\geq 0$, denoted $\Sq{n}$, is freely generated by the multicell
$\square_n$ as below:
\[\begin{tikzcd}
	{(0,0)} & {(0,1)} & \cdots & {(0,n)} \\
	{(1,0)} &&& {(1,1)}
	\arrow["{{\ell_1}}"{inner sep=.8ex}, "\shortmid"{marking}, from=1-1, to=1-2]
	\arrow["{{t_0}}"', from=1-1, to=2-1]
	\arrow[""{name=0, anchor=center, inner sep=0}, "{{\ell_2}}"{inner sep=.8ex}, "\shortmid"{marking}, from=1-2, to=1-3]
	\arrow["{{\ell_n}}"{inner sep=.8ex}, "\shortmid"{marking}, from=1-3, to=1-4]
	\arrow["{{t_1}}", from=1-4, to=2-4]
	\arrow[""{name=1, anchor=center, inner sep=0}, "\ell"'{inner sep=.8ex}, "\shortmid"{marking}, from=2-1, to=2-4]
	\arrow["{\square_n}"{description}, draw=none, from=0, to=1]
\end{tikzcd}\]
Explicitly, the walking $n$-ary multicell $\Sq{n}$ has, other that the multicells represented above, 
nothing but its $n+3$ identity tight arrows and its $n+1$ identity unary multicells.
\end{itemize}
\qed
\end{defn}

It follows that the functors 
$$
\uOb,\uTight,\uLoose,\uSq{n}:\Vdc\to \Cat{Set},\;\;n\geq 0
$$
sending a VDC to its set of objects, its set of tight arrows, its set of loose arrows, and its set of $n$-ary multicells, respectively, are all corepresented, by the corresponding double categories defined above.

The final notion we will use from basic VDC theory is the dual to opcartesian multicells, known as cartesian multicells, which allow for the sliding of tight morphisms up to loose morphisms.

\begin{defn}[Cartesian Multicells]\label{defn:Cart}
    For a VDC $\Dbl{D}$, a multicell $\alpha:\cell{a}{\varphi}{\psi}{b}$ is said to be \emph{cartesian} if any multicell in $\Dbl{D}$ of the form below left, factors uniquely through $\alpha$ as a composite below right:
    \[
    \begin{adjustbox}{}\begin{tikzcd}
	{x'} & {y'} && {x'} & {y'} \\
	x & y & {=} & x & y \\
	z & w && z & w
	\arrow[""{name=0, anchor=center, inner sep=0}, "{{{\prolist{\chi}}}}"{inner sep=.8ex}, "\shortmid"{marking}, from=1-1, to=1-2]
	\arrow["c"', from=1-1, to=2-1]
	\arrow["d", from=1-2, to=2-2]
	\arrow[""{name=1, anchor=center, inner sep=0}, "{{{\prolist{\chi}}}}"{inner sep=.8ex}, "\shortmid"{marking}, from=1-4, to=1-5]
	\arrow["c"', from=1-4, to=2-4]
	\arrow["d", from=1-5, to=2-5]
	\arrow["a"', from=2-1, to=3-1]
	\arrow["b", from=2-2, to=3-2]
	\arrow[""{name=2, anchor=center, inner sep=0}, "\varphi"'{inner sep=.8ex}, "\shortmid"{marking}, from=2-4, to=2-5]
	\arrow["a"', from=2-4, to=3-4]
	\arrow["b", from=2-5, to=3-5]
	\arrow[""{name=3, anchor=center, inner sep=0}, "\psi"'{inner sep=.8ex}, "\shortmid"{marking}, from=3-1, to=3-2]
	\arrow[""{name=4, anchor=center, inner sep=0}, "\psi"'{inner sep=.8ex}, "\shortmid"{marking}, from=3-4, to=3-5]
	\arrow["\beta"{description}, draw=none, from=0, to=3]
	\arrow["{\exists!\beta^\#}"{description}, draw=none, from=1, to=2]
	\arrow["\alpha"{description}, draw=none, from=2, to=4]
\end{tikzcd}\end{adjustbox}
    \]
    In this case, we say that $\alpha$ witnesses $\varphi$ as the \emph{restriction} of $\psi$ along $a$ and $b$. When such a restriction exists we denote it by $\psi(a,b)$. We will write a general cartesian multicell as $\Cat{cart}$. If $\Dbl{D}$ has loose units, for tight morphisms $a:x\to y\leftarrow z:b$ we write $y(a,b)$ for the restriction of the loose unit $I_y:y\proto y$ along $a$ and $b.$ In particular, $y(a,\id_y)$ is called the \emph{companion} of $a$ and $y(\id_y,b)$ is referred to as the \emph{conjoint} of $b.$
\qed
\end{defn}

%% file: Sections/3_ApproachExponentiability.tex
In this section, we introduce our general approach to establishing exponentiability in locally 
presentable categories, using sketches. The key result we use, which is novel 
to us at most in the details of its formulation, is Corollary \ref{cor:exponentiable-sketch}, 
which shows that exponentiable objects can 
be detected in such categories by preservation of certain canonical colimits 
determined by the given cones of the sketch. We then move on to apply this result to several examples building towards
our main case, starting with semicategories, and then revising known results on categories
and multicategories. We find that in all cases, the exponentiable objects are those admitting 
``essentially unique decompositions'' in an appropriate sense.

\subsection{Exponentiability via sketched colimits}
\begin{defn}[Limit Sketches and Models]
A \emph{limit sketch} $(\Cat{C},\Cat{S})$ consists of a category $\Cat{C}$ together with a class 
$\Cat{S}$ of cones in $\Cat{C}.$ We denote
the sketch simply by $\Cat{C}$ by abuse of notation. 

A \emph{model} is a functor out of $\Cat{C}$ sending each cone in $\Cat{S}$ to a genuine limit cone. We
denote by $\Cat{Mod}(\Cat{C})$ the category of $\Cat{Set}$-valued models of $\Cat{C}.$
\qed
\end{defn}

If $\Cat{C}$ is small, then it is a fundamental fact (\cite[Corollary 1.52]{AdamekRosicky1994}) that 
$\Cat{Mod}(\Cat{C})$ is locally presentable and accessible reflective in $\Cat{Set}^{\Cat{C}},$ say via the 
reflector $L.$ We can construct a canonical co-model of $\Cat{C},$ that is, a functor 
on $\Cat{C}^{\op}$ sending the cocones in $\Cat{S}^{\op}$ to colimit cocones, 
by composing the Yoneda embedding with the reflection:
$\Cat{C}^{\op}\xrightarrow{\yo} \Cat{Set}^{\Cat{C}}\xrightarrow{L} \Cat{Mod}(\Cat{C}).$ In fact we 
shall see this is the generic co-model of $\Cat{C}$ in a cocomplete category. If the sketched cones are actual 
limit cones in $\Cat{C},$ i.e. the sketch is \emph{realized}, then the co-model just mentioned 
is simply the corestriction of the Yoneda embedding to $\Cat{Mod}(\Cat{C}).$

The following theorem is essentially due to \cite{pultr1970},
but for a more accessible source with discussion of the non-realized case, 
one can see \cite{brandenburg2021}, Theorem 5.6 together with Remark 5.12. 

\begin{thm}[Pultr] \textrm{(Universal property of models of a limit sketch)}\label{thm:univSketchProp}
Let $\Cat{C}:=(\Cat{C},\Cat{S})$ be a small limit sketch. 
Then the composite $\Cat{C}^{\op}\xrightarrow{\yo} \Cat{Set}^{\Cat{C}}\xrightarrow{L} \Cat{Mod}(\Cat{C})$ 
exhibits $\Cat{Mod}(\Cat{C})$ as the initial cocomplete category equipped with a  
co-model of $\Cat{C}.$
\end{thm}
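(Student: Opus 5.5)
The plan is to reduce the statement to the classical universal property of presheaf categories, and then cut down to models by a reflection argument. Fix a cocomplete category $\Cat{E}$. A functor $G:\Cat{C}^{\op}\to\Cat{E}$ has an essentially unique cocontinuous extension $\hat G:\Cat{Set}^{\Cat{C}}\to\Cat{E}$ along $\yo$, namely the left Kan extension $\hat G(X)=\int^{c}X(c)\cdot G(c)$. This extension has a right adjoint $R_G(e)=\Cat{E}(G(-),e)$, which is a functor on $\Cat{C}$. We must then show two things. First, $G$ is a co-model, meaning it sends the cocones of $\Cat{S}^{\op}$ to colimit cocones, if and only if $R_G$ lands in $\Cat{Mod}(\Cat{C})$. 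Second, cocontinuous functors out of $\Cat{Mod}(\Cat{C})$ correspond to cocontinuous functors out of $\Cat{Set}^{\Cat{C}}$ that invert the units of the reflection.

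For the first point, fix a sketched cone $\lambda:c\Rightarrow D$ in $\Cat{S}$. Then $G\lambda$ is a colimit cocone in $\Cat{E}$ exactly when $\Cat{E}(G\lambda,e)$ is a limit cone in $\Cat{Set}$ for every $e$. This is exactly the condition that $R_G(e)$ sends $\lambda$ to a limit, i.e. that $R_G(e)$ is a model for every $e$.

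Given a co-model $G$, define $\tilde G:=\hat G\circ i$, where $i:\Cat{Mod}(\Cat{C})\hookrightarrow\Cat{Set}^{\Cat{C}}$ is the inclusion. Since $R_G$ factors through $\Cat{Mod}(\Cat{C})$, the adjunction $\hat G\dashv R_G$ restricts to $\tilde G\dashv R_G$ viewed with codomain $\Cat{Mod}(\Cat{C})$. In particular $\tilde G$ is cocontinuous, and $\hat G\cong\tilde G\circ L$, because both are left adjoint to the same functor. Hence $\tilde G\circ L\circ\yo\cong \hat G\circ\yo\cong G$, so $\tilde G$ extends $G$ along the composite $L\yo$.

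Uniqueness goes the other way. Suppose $F:\Cat{Mod}(\Cat{C})\to\Cat{E}$ is cocontinuous with $F L\yo\cong G$. Then $FL$ is cocontinuous on $\Cat{Set}^{\Cat{C}}$ and agrees with $G$ on representables, so $FL\cong\hat G$. Since $L\circ i\cong\id$, we get $F\cong FLi\cong\hat G i=\tilde G$. Natural transformations are handled in the same way, using the fully faithful restriction along $\yo$. This gives an equivalence between cocontinuous functors $\Cat{Mod}(\Cat{C})\to\Cat{E}$ and co-models $\Cat{C}^{\op}\to\Cat{E}$.

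It remains to check that $L\yo$ is itself a co-model. Apply the first point with $G=L\yo$ and $\Cat{E}=\Cat{Mod}(\Cat{C})$. By Yoneda and the reflection, $R_G(M)=\Cat{Mod}(\Cat{C})(L\yo(-),M)\cong \Cat{Set}^{\Cat{C}}(\yo(-),M)\cong M$, which is a model, so $L\yo$ is a co-model.

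The main obstacle is size and the existence of $L$. That $\Cat{Mod}(\Cat{C})$ is reflective in $\Cat{Set}^{\Cat{C}}$ is the cited \cite[Corollary 1.52]{AdamekRosicky1994}. We also need $\Cat{E}$ locally small so that $R_G$ makes sense. If $\Cat{E}$ is only assumed cocomplete and not locally small, I would avoid $R_G$ entirely. Instead I would show directly that $\hat G$ inverts the reflection units $X\to LX$. To do this, present $L$ as a transfinite small-object construction that iteratively freely adds limits to the sketched cones. Each step is a pushout of a map of the form $\colim_{d}\yo(Dd)\to\yo(c)$, and $\hat G$ sends each such map to an isomorphism precisely because $G$ is a co-model. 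This bookkeeping through the transfinite construction is where I expect the real work to lie.
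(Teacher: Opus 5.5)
The paper does not prove this theorem; it imports it from Pultr and from Brandenburg (Theorem~5.6 with Remark~5.12), so there is no internal argument to compare yours against. Your proposal is a correct, self-contained proof along the standard lines. You extend $G$ along $\yo$ to $\hat G\dashv\Cat{E}(G(-),-)$, observe that $G$ is a co-model exactly when this right adjoint lands in $\Cat{Mod}(\Cat{C})$, and use fullness of $i$ to get the cocontinuous extension $\tilde G=\hat G\circ i$ with $\tilde G\circ L\cong\hat G$. Your check that $L\yo$ is itself a co-model, via $\Cat{Mod}(\Cat{C})(L\yo(-),M)\cong M$, is also right. It is exactly what handles the non-realized case the paper flags. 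Compared with the bare citation, your route also produces the extension by an explicit formula.

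Three points to tighten.

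First, the main argument needs $\Cat{E}$ locally small. This is harmless for the paper, whose only application of the theorem takes $\Cat{E}=\Cat{Mod}(\Cat{C})$.

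Second, your fallback for non-locally-small $\Cat{E}$ is incomplete as sketched. The transfinite construction of an \emph{orthogonal} reflection must also enforce uniqueness, not only existence. One way is to push out along the codiagonals $\yo(c)\sqcup_{\colim_d\yo(Dd)}\yo(c)\to\yo(c)$, or to add coequalizer stages, in addition to pushing out along $\colim_d\yo(Dd)\to\yo(c)$. Since $\hat G$ preserves pushouts and inverts the latter maps, it inverts the codiagonals too. So the argument survives once these stages are included.

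Third, ``natural transformations are handled in the same way'' hides a step. You need that precomposition with $L$ is fully faithful on functors out of $\Cat{Mod}(\Cat{C})$. This follows from naturality at the units $\eta_X:X\to iLX$ together with invertibility of $L\eta_X$. Combined with density of $\yo$, applied to the cocontinuous functors $FL$ and $F'L$, this gives the bijection between transformations $F\Rightarrow F'$ and $FL\yo\Rightarrow F'L\yo$.
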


This result characterizes the category of cocontinuous functors $\Cat{Mod}(\Cat{C})\to\Cat{D}$ as 
equivalent to the category of co-models of $\Cat{C}$ in $\Cat{D}.$ For us, this enables the 
most important branch of the following corollary. Condition (4) is a convenient ``if it quacks like
an exponential, it's an exponential'' criterion.

\begin{cor}[Conditions for exponentiability]\label{cor:exponentiable-sketch}
Consider a small limit sketch $\Cat{C}=(\Cat{C},\Cat{S})$ and a model $A$ of $\Cat{C}.$
Then the following are equivalent: 
\begin{enumerate}
\item $A$ is exponentiable in $\Cat{Mod}(\Cat{C}).$
\item For each cone $\gamma:\Delta c \Rightarrow D:\Cat{J}\to \Cat{C}$ in $\Cat{S},$ 
the functor $(-)\times A:\Cat{Mod}(\Cat{C})\to \Cat{Mod}(\Cat{C})$ preserves the colimit 
$\gamma^{\op}\cdot\yo\cdot L,$ where $\yo$ is the Yoneda embedding of 
$\Cat{C}^{\op}$ and $L$ is the reflection of $C$-sets into models; 
that is, the restriction $L(\yo(-))\times A:\Cat{C}^{\op}\to \Cat{Mod}(\Cat{C})$ is a co-model of $\Cat{C}$
in $\Cat{C}$-models.
\item For every $\Cat{C}$-model $B,$ there exists a model $E_{B,A}$ together with isomorphisms
$\Cat{Mod}(\Cat{C})(L(\yo(C))\times A, B) \cong \Cat{Mod}(\Cat{C})(L(\yo(C)), E_{B,A})$ natural in 
$C\in \Cat{C}.$
\item Assuming $\Cat{C}$ is a realized sketch: the exponential $B^A$ as calculated in $\Cat{Set}^{\Cat{C}}$
is a model of $\Cat{C}$ for every $\Cat{C}$-model $B.$
\end{enumerate} 
\end{cor}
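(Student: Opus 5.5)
The plan is to prove the cycle (1)$\Rightarrow$(2)$\Rightarrow$(3)$\Rightarrow$(1), and then treat (4) separately as equivalent to (1) under the realizedness hypothesis.

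\emph{(1)$\Rightarrow$(2).} If $A$ is exponentiable, then $(-)\times A$ is a left adjoint and so preserves all colimits. In particular it preserves the colimit cocones $\gamma^{\op}\cdot\yo\cdot L$. These cocones are colimits because $L\circ\yo$ is a co-model: $L$ is a left adjoint, and a map out of a colimit of representables into a model $M$ amounts to a cone into $M$ of the corresponding shape, which is a limit cone since $M$ is a model. So $L(\yo(-))\times A$ is again a co-model.

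\emph{(2)$\Rightarrow$(3).} Theorem~\ref{thm:univSketchProp} applied to the co-model $L(\yo(-))\times A$ in the cocomplete category $\Cat{Mod}(\Cat{C})$ yields a cocontinuous functor $F:\Cat{Mod}(\Cat{C})\to\Cat{Mod}(\Cat{C})$ with $F\circ L\circ\yo\cong L(\yo(-))\times A$. Since $\Cat{Mod}(\Cat{C})$ is locally presentable, $F$ has a right adjoint $G$; for instance, take $G(B)(C)=\Cat{Mod}(\Cat{C})(F L\yo C,B)$. Setting $E_{B,A}:=G(B)$ gives
\[
\Cat{Mod}(\Cat{C})(L\yo C\times A,B)\cong\Cat{Mod}(\Cat{C})(L\yo C,E_{B,A})
\]
naturally in $C$.

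\emph{(3)$\Rightarrow$(1).} The isomorphisms in (3) should extend from the generators $L\yo C$ to arbitrary models. Every model $X$ is a colimit of objects $L\yo C$: it is a colimit of representables in $\Cat{Set}^{\Cat{C}}$, and $L$ preserves that colimit while fixing $X$. The functor $(-)\times A$ on $\Cat{Mod}(\Cat{C})$ need not preserve colimits a priori, so the comparison must be made carefully. The key point I expect to need is that $X\times A\cong\colim (L\yo C\times A)$. This follows because products in $\Cat{Mod}(\Cat{C})$ are computed pointwise and $\Cat{Set}^{\Cat{C}}$ is cartesian closed. Thus $X\times A=\colim(\yo C\times A)$ in presheaves, and after applying $L$, using $L(\yo C\times A)\cong L(\yo C)\times A$, we get the claim. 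I expect this step to be the main obstacle, since it is exactly where the reflection $L$ must be shown to commute with $-\times A$ for a model $A$. My first attempt would use the fact that $\Cat{Mod}(\Cat{C})$ is closed under exponentials $M^{P}$ in $\Cat{Set}^{\Cat{C}}$ whenever $M$ is a model: $M^P$ evaluated at $c$ is $\Cat{Set}^{\Cat{C}}(\yo c\times P,M)$, and limits commute with this construction. Granting this, $\Cat{Mod}(\Cat{C})(X\times A,B)\cong\lim\Cat{Mod}(\Cat{C})(L\yo C\times A,B)\cong\lim\Cat{Mod}(\Cat{C})(L\yo C,E_{B,A})\cong\Cat{Mod}(\Cat{C})(X,E_{B,A})$. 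Naturality in $C$ gives naturality in $X$, so $E_{B,A}=B^A$.

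\emph{(4)$\Leftrightarrow$(1) for a realized sketch.} Here $L\yo C=\yo C$, which is already a model. If $B^A$ computed in $\Cat{Set}^{\Cat{C}}$ is a model, then it satisfies (3) directly by Yoneda, since $\Cat{Set}^{\Cat{C}}(\yo C\times A,B)\cong B^A(C)$. Conversely, given (3), Yoneda gives $E_{B,A}(C)\cong\Cat{Mod}(\Cat{C})(\yo C\times A,B)=B^A(C)$, naturally in $C$. Hence the presheaf exponential $B^A$ is isomorphic to the model $E_{B,A}$.
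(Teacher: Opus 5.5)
Your (1)$\Rightarrow$(2), (2)$\Rightarrow$(3) and your treatment of (4) in the realized case are fine. In (2)$\Rightarrow$(3) you can even skip the adjoint functor theorem: $E_{B,A}(C):=\Cat{Mod}(\Cat{C})(L\yo C\times A,B)$ is a model by (2), and Yoneda gives (3).

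The gap is in (3)$\Rightarrow$(1), at the isomorphism $L(\yo C\times A)\cong L(\yo C)\times A$. Your justification is that models are closed under exponentials $M^P$ in $\Cat{Set}^{\Cat{C}}$. This is false, and notice that it never uses (3). It would make (4) hold for \emph{every} model $A$, hence make every model exponentiable. But for the realized semicategory sketch, Proposition~\ref{prop:necessary-semicat} shows that most semicategories are not exponentiable. The ``limits commute'' step fails because $\yo(-)\times P$ does not send sketched cones to colimit cocones in $\Cat{Set}^{\Cat{C}}$. Concretely, $M^P$ is a model iff $M$ is orthogonal to $(\colim_j\yo Dj\to\yo c)\times P$, and orthogonality to $\colim_j\yo Dj\to\yo c$ alone does not imply this.

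Worse, for non-realized sketches the isomorphism itself can fail even when $A$ is exponentiable, so a better proof of it will not rescue this route. Let $\Cat{C}$ have two arrows $w,v\colon c\to b$ and one cone, forcing $w$ to be invertible. Models are then sets with an endomorphism, i.e.\ presheaves on the monoid $\mathbb{N}$. This is a topos, so every model is exponentiable. Here $L\yo c\cong(\mathbb{N},+1)$ with $\eta(w)=0$ and $\eta(v)=1$. Take $A=(\mathbb{N},+1)$. The image of the canonical map $L(\yo c\times A)\to L(\yo c)\times A$ is generated by the pairs $(0,n)$ and $(1,n)$, so it misses $(2,0)$. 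Correspondingly, $B^A$ computed in $\Cat{Set}^{\Cat{C}}$ is not a model once $B$ has two elements. So your argument proves (3)$\Rightarrow$(1) only for realized sketches, where $\yo C\times A$ is already a model.

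In general you must use that $E:=E_{B,A}$ is a model. A map $X\to E$ is a family $\phi_x\colon L\yo C\times A\to B$, for $x\in X(C)$, natural along the category of elements of $X$. Fix $x$, and let $\widehat{\phi_x}\colon L\yo C\to E$ and $\hat y\colon L\yo C'\to L\yo C$ be the classifying maps. The elements $y$ of $L\yo C$ with $\widehat{\phi_x}(y)=\phi_x\circ(\hat y\times A)$ form a sub-model containing $\eta(\yo C)$; closure under filling sketched cones is exactly where $E$ being a model is used. Hence this sub-model is all of $L\yo C$, and so $\phi_{\hat x(y)}=\phi_x\circ(\hat y\times A)$ for every $y$. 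Then $f_C(x,a):=(\phi_x)_C(\eta(\mathrm{id}_C),a)$ is natural and satisfies $f\circ(\hat x\times A)=\phi_x$. It is unique because the maps $\hat x\times A$ are jointly epimorphic. This gives $\Cat{Mod}(\Cat{C})(X\times A,B)\cong\Cat{Mod}(\Cat{C})(X,E)$.

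Be aware that the paper's own (2)$\Rightarrow$(1) identifies $\bar A\circ L$ with $L\circ((-)\times A)$, which rests on the same comparison on representables. In the example above those two functors are not isomorphic, so you cannot borrow the paper's argument for the non-realized case either.
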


\begin{proof}
$(1)\implies(2)$ is a special case of $(-)\times A$ preserving all colimits for $A$ exponentiable.
$(1)\implies (3)$ is a special case of the definition of exponentiability itself.

For $(2)\implies (1),$ 
suppose $(-)\times A$ preserves the sketched cocones.
Then by the theorem of Pultr, the functor $(-)\times A:\Cat{C}^{\op}\to \Cat{Mod}(\Cat{C})$ 
extends essentially uniquely to a cocontinuous functor $\bar A:\Cat{Mod}(\Cat{C})\to \Cat{Mod}(\Cat{C}).$
Let us check that this extension is indeed $(-)\times A.$ One checks using the universal property that
the composite $L\cdot \bar A: \Cat{Set}^{\Cat{C}}\to \Cat{Mod}(\Cat{C})$ must coincide with 
$\Cat{Set}^{\Cat{C}}\xrightarrow{(-)\times A} \Cat{Set}^{\Cat{C}}\xrightarrow{L} \Cat{Mod}(\Cat{C}),$
and thus the result follows from the fact that the inclusion $i:\Cat{Mod}(\Cat{C})\to \Cat{Set}^{\Cat{C}}$ 
preserves products.

For $(3)\implies (2),$ consider a cone $\gamma: \Delta c\Rightarrow D: \Cat{J}\to \Cat{C}$ in 
$\Cat{S}.$ Then we have the following isomorphisms, natural in $B$:
\begin{align*}
\Cat{Mod}(\Cat{C})(L(\yo(x))\times A, B) &\cong \Cat{Mod}(\Cat{C})(L(\yo(x)), E_{B,A}) \\
&\cong \underset{j\in \Cat{J}}{\lim} \Cat{Mod}(\Cat{C})(L(\yo(Dj)), E_{B,A}) \\
&\cong \underset{j\in \Cat{J}}{\lim} \Cat{Mod}(\Cat{C})(L(\yo(Dj))\times A, B)
\end{align*}
and thus by the Yoneda lemma, the canonical map $L(\yo(x)\times A)\to \lim_{j\in \Cat{J}} L(\yo(Dj))\times A$ is an isomorphism. 
(Here we used (3) in the first and last steps, and the fact that $c^{\op}\cdot\yo\cdot L$ is a colimit cone in the second step.)

For $(3)\iff (4)$, in the realized sketch case, since $\yo(C)$ is a model, the unit map $\yo(C)\to iL(\yo(C))$ is an isomorphism,
which implies $\Cat{Mod}(\Cat{C})(L(\yo(C))\times A, B)\cong \Cat{Set}^{\Cat{C}}(\yo(C)\times A, B)$ and 
$\Cat{Mod}(\Cat{C})(L(\yo(C)), B^A)\cong \Cat{Set}^{\Cat{C}}(\yo(C), B^A),$ which shows that $E_{B,A}$, if it exists, 
and $B^A$ are the same $\Cat{C}$-set.
\end{proof}

\subsubsection*{Slicing sketches} 

In exponentiability questions, it is often of interest to find the exponentiable 
\emph{morphisms} in a finitely complete category $\Cat{E},$ i.e.~the exponentiable objects in a slice 
category $\Cat{E}/X,$ or equivalently those morphisms, pullback along which admits a right adjoint. 
We thus extend the results above to see how to handle exponentiability in slices of categories of models of sketches.

\begin{prop}\label{prop:sliced-sketch}
Given a limit sketch $\Cat{C}=(\Cat{C},\Cat{S})$ and a model $X:\Cat{C}\to \Cat{Set}$ of $\Cat{C},$
the slice category $\Cat{Mod}(\Cat{C})/X$ is equivalent to the category of models of the limit sketch
$(\int X,\bar{S}),$ where $\int X$ is the category of elements of $X$ and $\bar{S}$ consists of those 
cones in $\int X$ whose composite with the projection $\pi:\int X\to \Cat{C}$ lies in $\Cat{S}.$
\end{prop}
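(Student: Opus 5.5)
The plan is to pass through the classical equivalence $\Cat{Set}^{\Cat{C}}/X\simeq \Cat{Set}^{\int X}$ and then check that the model condition on each side corresponds to the sketched cones in $\bar S$.

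\textbf{Step 1: the presheaf-level equivalence.} Given $F\to X$ in $\Cat{Set}^{\Cat{C}}$, define $\tilde F:\int X\to\Cat{Set}$ by
\[
\tilde F(c,x)=\{y\in F(c): p_c(y)=x\}.
\]
Conversely, given $G:\int X\to \Cat{Set}$, define $\Sigma G(c)=\coprod_{x\in X(c)}G(c,x)$, together with its evident projection to $X$. These constructions are mutually inverse up to natural isomorphism. It therefore suffices to show that $F$ is a model of $\Cat{C}$ if and only if $\tilde F$ is a model of $(\int X,\bar S)$, using that $X$ is a model.

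\textbf{Step 2: from $F$ a model to $\tilde F$ a model.} Take a cone in $\bar S$ with apex $(c,x)$ and base $\bar D:\Cat{J}\to\int X$, so that $\pi\bar D=D$ and $\gamma=\pi\bar\gamma\in\Cat{S}$. Write $\bar D(j)=(Dj,x_j)$. Since the cone lives in $\int X$, we have $X(\gamma_j)(x)=x_j$.

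An element of $\lim_j \tilde F(Dj,x_j)$ is a compatible family $(y_j)$ in $\lim_j F(Dj)$ with $p(y_j)=x_j$. Because $F$ is a model, this family is the image of a unique $y\in F(c)$. Then $p_c(y)\in X(c)$ maps to $(x_j)$ under the limit comparison for $X$. Since $X$ is a model, that comparison is injective, so $p_c(y)=x$. Hence $y\in\tilde F(c,x)$, and $\tilde F(c,x)\to\lim_j\tilde F(\bar D j)$ is a bijection.

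\textbf{Step 3: from $\tilde F$ a model to $F$ a model (the main obstacle).} Fix a cone $\gamma:\Delta c\Rightarrow D$ in $\Cat{S}$. For each $x\in X(c)$, the cone lifts uniquely to a cone $\bar\gamma_x$ in $\int X$ with apex $(c,x)$ and base $\bar D_x(j)=(Dj,X(\gamma_j)x)$. Since $\pi\bar\gamma_x=\gamma$, each $\bar\gamma_x$ lies in $\bar S$.

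Now decompose the target limit. Since $X$ is a model, $X(c)\cong\lim_j X(Dj)$, so every compatible family $(y_j)\in\lim_j F(Dj)$ has $(p(y_j))_j$ equal to the image of a unique $x\in X(c)$. This gives a partition
\[
\lim_j F(Dj)=\coprod_{x\in X(c)}\lim_j\tilde F(\bar D_x j).
\]
On the source side, $F(c)=\coprod_x\tilde F(c,x)$. The comparison map respects these decompositions, and on each summand it is the comparison for the cone $\bar\gamma_x$, which is a bijection because $\tilde F$ is a model. So $F$ sends $\gamma$ to a limit cone.

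The point requiring care is that the cones of $\bar S$ over a given cone of $\Cat{S}$ are exactly the lifts $\bar\gamma_x$, indexed by $x\in X(c)$. Any cone in $\int X$ over $\gamma$ is determined by its apex element $x$, since the base elements are forced to be $X(\gamma_j)x$. The empty-diagram case, with $\Cat{J}$ empty, is included in the same argument: it forces $X(c)$ to be a singleton and $F(c)$ to be a singleton.

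\textbf{Step 4: morphisms.} Morphisms in the slice correspond to natural transformations over $X$, which in turn correspond to natural transformations between the associated functors on $\int X$. The restricted equivalence on full subcategories is therefore the claimed equivalence $\Cat{Mod}(\Cat{C})/X\simeq\Cat{Mod}(\int X,\bar S)$.
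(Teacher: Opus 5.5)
Your proposal is correct and follows essentially the same route as the paper. Both pass through $\Cat{Set}^{\int X}\simeq \Cat{Set}^{\Cat{C}}/X$ (fibers in one direction, $\pi_!$, i.e.\ your $\Sigma G$, in the other) and use $X(c)\cong\lim_j X(Dj)$ to identify $\lim_j F(Dj)$ with $\coprod_{x\in X(c)}\lim_j \tilde F(\bar D_x j)$, compatibly with $F(c)=\coprod_x \tilde F(c,x)$. The only difference is that the paper reads off both directions from this single decomposition, since a coproduct of maps is bijective iff each summand is, so your separate Step 2 is valid but redundant.
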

\begin{proof}
It is well-known that the category of $\int X$-sets is equivalent to the slice $\Cat{Set}^{\Cat{C}}/X,$
with one direction of the equivalence given by the action of the Grothendieck construction on morphisms 
in $\Cat{Set}^{\Cat{C}},$ and the other by the left Kan extension $\pi_!:\Cat{Set}^{\int X}\to \Cat{Set}^
{\Cat{C}}.$ Thus it suffices to show that some $\int X$-set $Y:\int X\to \Cat{Set}$ is a model of $(\int X,\bar{S})$ if and only if its pushforward $\pi_! Y: \Cat{C}\to \Cat{Set}$ is a model of $\Cat{C}.$

Now, by definition $\pi_! Y(c)=\sum_{x:X(c)} Y(x).$ If $\gamma:\Delta c\Rightarrow D:\Cat{J}\to\Cat{C}$ is
a cone in $\Cat{S},$ then $\pi_! Y$ is a model at $c$ if and only if the canonical map is
an isomorphism $\pi_! Y(c)\to \lim_{j\in \Cat{J}} \pi_! Y(Dj),$ i.e. $\sum_{x:X(c)} Y(x) \to \lim_{j\in \Cat{J}} \sum_{x_j:X(Dj)} Y(x_j).$ Meanwhile $Y$ is a model of $(\int X,\bar{S})$ at all the lifts 
of $\gamma$ along $\pi$ if and only if, for each $x:X(c),$ the comparison map 
$Y(x)\to \lim_{j\in \Cat{J}} Y(X(\gamma_j)(x))$ is an isomorphism. 

Setting 
$Q=\lim_{j\in \Cat{J}} \sum_{x_j:X(Dj)} Y(x_j)$ and $Q'=\sum_{x:X(c)} \lim_{j\in \Cat{J}} Y(X(\gamma_j)(x))$,
it is thus enough to show that $Q$ and $Q'$ are isomorphic via an isomorphism commuting 
with the canonical maps. Indeed,$Q$ has elements given by, for each $j,$ choosing 
an $x_j$ and a $y_j\in Y(x_j),$ such that if $f:j\to j'$ then $Y(f)(y_j)=y_{j'}.$ In particular 
this implies that $X(f)(x_j)=x_{j'},$ so $(x_j)$ is in $\lim_j X(Dj),$ which gives the desired 
isomorphism $Q\to Q',$ using the model condition $X(c)\cong \lim_{j\in \Cat{J}} X(Dj)$ for $X.$ 
\end{proof}

This allows for proofs of exponentiability of morphisms in slices of categories of models of sketches
not much harder than in the absolute case. 
\begin{cor}\label{cor:exponentiable-morphisms-via-slice}
Let $\Cat{C}=(\Cat{C},\Cat{S})$ be a limit sketch and $X:\Cat{C}\to \Cat{Set}$ a model of $\Cat{C}.$ Then a morphism $f:A\to X$ is exponentiable in $\Cat{Mod}(\Cat{C})/X$ if and only if, for every 
cone $\lambda:\Delta c\to (D:\Cat{J}\to \Cat{C})$ in $\Cat{S}$ and every morphism $\bar c:L\yo c\to X,$ the functor $(-)\times_X A:\Cat{Mod}(\Cat{C})/X\to \Cat{Mod}(\Cat{C})/X$ preserves the colimiting 
cocone $\bar\lambda:L\circ \yo\circ D\Rightarrow \Delta \bar c.$
\end{cor}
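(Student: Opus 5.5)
We deduce Corollary~\ref{cor:exponentiable-morphisms-via-slice} by combining Proposition~\ref{prop:sliced-sketch} with the equivalence $(1)\iff(2)$ of Corollary~\ref{cor:exponentiable-sketch}, applied to the sliced sketch $(\int X,\bar S)$.

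\textbf{Step 1: transport the problem.} By Proposition~\ref{prop:sliced-sketch}, $\Cat{Mod}(\Cat{C})/X\simeq \Cat{Mod}(\int X,\bar S)$. Under this equivalence, the product in the slice is the fibre product $(-)\times_X(-)$, and the object $f:A\to X$ corresponds to some model $\tilde A$. So $f$ is exponentiable in the slice if and only if $\tilde A$ is exponentiable in $\Cat{Mod}(\int X)$. By Corollary~\ref{cor:exponentiable-sketch}, this holds if and only if $(-)\times\tilde A$ preserves the colimit $\bar\gamma^{\op}\cdot\yo\cdot L_{\int X}$ for every cone $\bar\gamma\in\bar S$.

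\textbf{Step 2: identify the sketched cones of $\bar S$.} A cone in $\int X$ with vertex $(c,\bar c)$, where $\bar c\in X(c)$, is determined by a cone $\lambda:\Delta c\Rightarrow D$ in $\Cat{C}$ together with the element $\bar c$. The legs are forced to be $(Dj, X(\lambda_j)\bar c)$. By definition, $\bar S$ consists exactly of those lifts with $\lambda\in\Cat{S}$. By Yoneda, elements $\bar c\in X(c)$ correspond to maps $\yo c\to X$, and hence, since $X$ is a model, to maps $\bar c:L\yo c\to X$. So the cones of $\bar S$ are indexed by the pairs $(\lambda,\bar c)$ appearing in the statement.

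\textbf{Step 3: match the colimits.} We must show that the co-model $L_{\int X}\yo_{\int X}$ corresponds, under the equivalence, to the object $L\yo c\xrightarrow{\bar c}X$ of the slice.

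First, the representable $\yo_{\int X}(c,\bar c)$ corresponds, under $\Cat{Set}^{\int X}\simeq\Cat{Set}^{\Cat{C}}/X$, to $\bar c:\yo c\to X$. This is a standard Yoneda computation: $\pi_!$ of a representable is the representable, via left Kan extension.

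Second, the reflection $L_{\int X}$ corresponds to applying $L$ and then factoring through $X$. To see this, note that for any model $Y\to X$, maps $(\yo c\to X)\to (Y\to X)$ over $X$ correspond to maps $(L\yo c\to X)\to(Y\to X)$ over $X$. This follows from the universal property of $L$, since $X$ is itself a model.

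Consequently, the canonical cocone $\bar\gamma^{\op}\cdot\yo\cdot L_{\int X}$ is carried to $\bar\lambda:L\yo D\Rightarrow\Delta\bar c$ in $\Cat{Mod}(\Cat{C})/X$, and $(-)\times\tilde A$ is carried to $(-)\times_X A$. Preservation of these colimits on one side is therefore equivalent to preservation on the other, which gives the stated criterion.

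\textbf{Main obstacle.} The only delicate step is Step 3: checking that the reflector and the representables of the sliced sketch correspond to $L\yo c$ over $X$. This is where the non-realized case matters. I would handle it entirely by the universal-property argument above, avoiding any explicit description of $L$.
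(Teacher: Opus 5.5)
Your proposal is correct and follows the route the paper intends. The paper gives no separate proof: it treats the corollary as an immediate consequence of Proposition~\ref{prop:sliced-sketch} combined with Corollary~\ref{cor:exponentiable-sketch}, applied to the small sketch $(\int X,\bar{S})$. Your Step~3 identifies the reflected representable $L_{\int X}\yo_{\int X}(c,\bar c)$ with $\bar c\colon L\yo c\to X$ through the universal property of $L$, using that $X$ is a model. That supplies exactly the detail the paper leaves implicit in its remark that colimits in the slice are computed as in the base.
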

Since colimits in slices are computed as in the base category, this condition is merely a fibered
version of exponentiability in $\Cat{Mod}(\Cat{C})$ itself, and thus frequently no harder to check.

\subsection{A case study: exponentiable semicategories}
We warm up by investigating the case of exponentiable 
semicategories, which is a bit too trivial to allow for many 
exponentiable objects in the end, but simple enough to clearly exhibit the key ideas.

\begin{defn}[Category of semicategories]\label{defn:semicategory}
A \emph{semicategory} $A$ consists of a graph $A_1\rightrightarrows A_0$ equipped 
with an associative composition operation $A_1\times_{A_0} A_1\to A_1.$ A \emph{semifunctor}
$A\to B$ is a graph morphism respecting composition. We write $\Cat{Semicat}$ for the
category of small semicategories and semifunctors.

Alternatively, $\Cat{Semicat}$ is the category of models of a limit sketch 
whose underlying category $\Cat{C}$ is the opposite of the category of the 
first for nonempty finite ordinals $\bullet,\downarrow,\triangle,\tet$ and order-preserving morphisms. The sketched cones $\Cat{S}=\{S_{\triangle},S_{\tet}\}$ are those making 
$\triangle={\downarrow}+_\bullet {\downarrow}$ a pushout in the usual way and similarly for $\tet.$ 
\qed
\end{defn}

A semicategory is thus like a category possibly lacking identity morphisms, though these are unique
when they exist, by the usual argument. 
As defined, $\Cat{C}$ is a full subcategory of $\Cat{Semicat},$ which implies this 
sketch is realized. We may write $\bullet,\downarrow,\triangle,\tet$ also for the semicategories 
represented by the objects of the sketch above. 
Note that, given the absence of identities, the free semicategory $\bullet$ on one object actually contains \emph{no} morphisms at all--in particular, $\bullet$ is not the terminal semicategory.

Thus we have the colimit decompositions $\triangle \cong {\downarrow}+_\bullet{\downarrow}$ and
$\tet \cong {\downarrow} +_\bullet {\downarrow} +_\bullet {\downarrow},$ and by 
Corollary \ref{cor:exponentiable-sketch}, a semicategory 
$A$ is exponentiable if and only if the product functor with $A$ preserves those two colimits. 
Let us now flesh out what this preservation means concretely.

\begin{prop}[Explicit conditions for exponentiability]\label{prop:necessary-semicat}
A semicategory $A$ is exponentiable if and only if every morphism $f:x\to y$ in $A$ admits a unique
binary factorization $f=g\cdot h.$ 
\end{prop}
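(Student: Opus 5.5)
By Corollary~\ref{cor:exponentiable-sketch}, exponentiability of $A$ is equivalent to $(-)\times A$ preserving the two sketched colimits $\triangle\cong{\downarrow}+_\bullet{\downarrow}$ and $\tet\cong{\downarrow}+_\bullet{\downarrow}+_\bullet{\downarrow}$. Since the sketch is realized, these colimits are just the representables themselves. So the plan is to compute both sides of each comparison map explicitly and read off the condition. In both pushouts the map sends a path of composable morphisms of $A$ to its composite, so everything reduces to counting factorizations in $A$.

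\textbf{Describing the pushouts.} Products of semicategories are computed componentwise. Hence $\bullet\times A$ has the objects of $A$ and \emph{no} morphisms, because $\bullet$ has none. Likewise ${\downarrow}\times A$ is the graph with objects $\{0,1\}\times\uOb(A)$ and one arrow $(0,a)\to(1,b)$ for each $h:a\to b$ in $A$. It has no nontrivial composable pairs, so it is the free semicategory on that graph.

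Gluing such free semicategories along the discrete semicategories $\bullet\times A$ again gives a free semicategory, namely the one on the glued graph. To justify this, I would note that the left adjoint from graphs to semicategories preserves colimits, and that these pushouts are pushouts of free objects along free maps. Consequently:
\begin{itemize}
\item In $({\downarrow}\times A)+_{\bullet\times A}({\downarrow}\times A)$ the morphisms $(0,a)\to(2,c)$ are exactly the composable pairs $(g,h)$ in $A$ with $h:a\to b$ and $g:b\to c$.
\item In the triple pushout the morphisms $(0,a)\to(3,d)$ are the composable triples, and those $(0,a)\to(2,c)$ are again the composable pairs.
\end{itemize}
On the other side, $\triangle\times A$ has, over the unique arrow $0\to2$ of $\triangle$, the morphisms $(0,a)\to(2,c)$ given by single morphisms $f:a\to c$ of $A$. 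The comparison map is $(g,h)\mapsto g\cdot h$. Over the generating arrows $0\to1$ and $1\to2$ it is already bijective, and there are no other arrows since $\triangle$ has no identities.

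\textbf{The $\triangle$ case.} The comparison for $\triangle$ is an isomorphism if and only if the composition map from composable pairs to morphisms of $A$ is bijective. This is exactly the statement that every $f$ admits a unique binary factorization. This gives the forward implication.

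\textbf{The $\tet$ case.} For the converse I must also check the $\tet$ comparison, assuming unique binary factorizations.
\begin{itemize}
\item Over the arrows $i\to i+1$ and $i\to i+2$ of $\tet$ bijectivity is as before.
\item Over $0\to3$ I need composition from composable triples to morphisms to be bijective. Given $f$, write $f=g\cdot h$ uniquely and then $g=g_1\cdot g_2$ uniquely, which produces a triple $(g_1,g_2,h)$.
\item If $f=a\cdot b\cdot c$ is any triple factorization, then $(a\cdot b,c)$ is a binary factorization of $f$. Uniqueness forces $a\cdot b=g$ and $c=h$, and then $(a,b)=(g_1,g_2)$.
\end{itemize}
So composition on triples is bijective. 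Compatibility with composition is automatic, since the comparison map is a semifunctor.

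The main obstacle is justifying cleanly that these pushouts of semicategories are free on the glued graphs, in particular that no unexpected composites appear. The key observation I would use is that $\bullet\times A$ is discrete, so no identifications of morphisms occur. The resulting semicategory's only composites are the formal paths crossing the gluing objects, and these paths have length at most $3$.
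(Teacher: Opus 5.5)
Your proof is correct and follows the paper's argument: you compute the pushout $({\downarrow}\times A)+_{\bullet\times A}({\downarrow}\times A)$, note that the comparison to $\triangle\times A$ is bijective except on the long arrows, where it is composition from composable pairs, and reduce the $\tet$ case to unique binary factorization by splitting off one factor at a time. Your justification of the pushout description is a welcome extra that the paper leaves implicit: the free-semicategory functor is a left adjoint, and these are pushouts of free objects along free maps.
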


\begin{proof}
We can write the objects of $A\times \triangle$ as $(a,i)$ for $a\in A$ and $i=0,1,2,$ and similarly 
the morphisms as $(f,i,i'):(a,i)\to (a',i')$ for $f:a\to a'$ in $A$ and $i< i'.$ 
The objects of $A\times {\downarrow} \underset{A\times \bullet}{+} A\times {\downarrow}$ are the same,
as are morphisms of the form $(f,0,1)$ and $(f,1,2),$ but the morphisms $(a_0,0)\to (a_2,2)$ are 
now given precisely by composable pairs $a_0\to a_1\to a_2$ in $A.$
Therefore, the canonical semifunctor 
$c:A\times {\downarrow} \underset{A\times \bullet}{+} A\times {\downarrow}\to A\times\triangle$
is always bijective on objects and on the two ``short'' classes of morphisms, while 
for the ``long'' morphisms, $c$ is full at $(a_0,0),(a_2,2)$ if and only if every morphism
$a_0\to a_2$ has some factorization $a_0\to a_1\to a_2$ in $A,$ and faithful if and only if 
there is at most one such factorization. Thus, $c$ is an isomorphism if and only if
every morphism in $A$ admits a unique binary factorization. Analogously, 
the functor $A\times (-)$ preserves the distinguished colimit for $\tet$ if and only if every morphism admits a unique ternary factorization.

To reduce this pair of conditions to the single condition given in the proposition 
statement, consider that uniqueness of ternary factorizations reduces to uniqueness of 
binary factorizations. Indeed, given binary decompositions, 
we can give a ternary decomposition of any morphism by repeated binary decompositions, 
while for uniqueness, if we have $f_1f_2f_3=f_1'f_2'f_3'$ in $A,$ then by uniqueness of binary 
factorizations we have $f_1f_2=f_1'f_2'$ and $f_3=f_3'$, and then another application 
of uniqueness of binary factorizations gives $f_1=f_1'$ and $f_2=f_2'.$ 
\end{proof} 

Thus we have a single, simple necessary and sufficient condition for exponentiability 
of a semicategory: the existence of unique binary factorizations.
Note that this condition is enormously strong. For instance, it \emph{never} holds for a 
non-discrete semicategory which happens to admit identity morphisms, since if $f:x\to y$ then we 
have $f=\id_x\cdot f=f\cdot \id_y$ as two distinct factorizations of $f.$ Thus no non-discrete 
category is exponentiable when viewed as a semicategory.
The most complex connected exponentiable semicategory, which isn't saying much,
is given by augmenting $\triangle$ with an identity morphism at the middle 
object $1.$ 

Having rather thoroughly understood the exponentiable objects of $\Cat{Semicat},$ we now
consider the exponentials themselves, via techniques that will serve us well in the more 
complicated cases below.
\begin{prop}[Structure of exponential semicategories]
Suppose $A$ and $B$ are semicategories such that the exponential $B^A$ exists. Then: 
\begin{itemize}
\item The objects of $B^A$ are the functions $\operatorname{Ob}(A)\to \operatorname{Ob}(B).$
\item The morphisms $\alpha:F_1\to F_2$ in $B^A$ are the dependent functions\footnote{This is a choice of morphism in $B$ for each morphism in $A,$ over the $F_i,$ satisfying no naturality conditions.}
\[\alpha~:~\underset{f:a_1\to a_2\in A}{\prod} B(F_1(a_1),F_2(a_2))\]
\item Given $\alpha:F_1\to F_2,$ $\beta:F_2\to F_3,$ and $\gamma:F_1\to F_3$ in $B^A$, 
we have the equation $\gamma=\alpha\cdot \beta$ just if for every composable pair $f,g$ in $A,$ 
we have $\gamma_{f\cdot g}=\alpha_f\cdot \beta_g$ in $B.$
\end{itemize}
\end{prop}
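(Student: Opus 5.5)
The plan is to compute $B^A$ representably, evaluating it on the generating co-model $\bullet,\downarrow,\triangle$ of the semicategory sketch. Since the sketch is realized and $A$ is exponentiable, we have natural bijections
\[
\Cat{Semicat}(C, B^A)\cong \Cat{Semicat}(C\times A, B)
\]
for each $C\in\{\bullet,\downarrow,\triangle\}$. By Corollary~\ref{cor:exponentiable-sketch}(4), $B^A$ is the presheaf exponential on $\Cat{C}$, so its objects, morphisms and composable-triangle data are exactly these hom-sets. Composition is then recovered from the sketch: the pair of restriction maps $B^A(\triangle)\to B^A(\downarrow)\times_{B^A(\bullet)}B^A(\downarrow)$ is a bijection, and the third face map $B^A(\triangle)\to B^A(\downarrow)$ picks out the composite.

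\emph{Objects.} $\bullet\times A$ is the discrete semicategory on $\operatorname{Ob}(A)$: $\bullet$ has no morphisms, so neither does the product. Hence semifunctors $\bullet\times A\to B$ are just functions $\operatorname{Ob}(A)\to\operatorname{Ob}(B)$.

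\emph{Morphisms.} $\downarrow\times A$ has objects $(i,a)$ with $i\in\{0,1\}$, and its morphisms are exactly the pairs $(0\to1,f)$ for $f$ a morphism of $A$. No two of these are composable, since the first coordinate would need a composable pair in $\downarrow$. So $\downarrow\times A$ is a free graph with no nontrivial composites, and a semifunctor out of it is an arbitrary graph map. Its two restrictions to $\bullet\times A$ give $F_1,F_2$, and the rest of the data is a choice $\alpha_f\in B(F_1a_1,F_2a_2)$ for each $f:a_1\to a_2$, with no naturality condition. This gives the stated dependent product.

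\emph{Composition.} A semifunctor $\triangle\times A\to B$ restricts to $\alpha,\beta$ on the edges $0\to1$ and $1\to2$ and to $\gamma$ on the edge $0\to2$. The only nontrivial composites in $\triangle\times A$ are $(0\to1,f)\cdot(1\to2,g)=(0\to2,f\cdot g)$ for composable $f,g$ in $A$. So a graph map of this form is a semifunctor exactly when $\gamma_{f\cdot g}=\alpha_f\cdot\beta_g$ for all composable $f,g$. Since the sketch bijection says each compatible pair $(\alpha,\beta)$ has a unique such $\gamma$, that $\gamma$ is $\alpha\cdot\beta$, which gives the stated criterion.

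The main subtlety is whether such a $\gamma$ always exists and is uniquely determined. This is where the hypothesis enters: by Proposition~\ref{prop:necessary-semicat}, every $h$ in $A$ factors uniquely as $h=f\cdot g$. So $\gamma_h=\alpha_f\cdot\beta_g$ is forced and well defined, consistent with the bijection above. Finally, I would check that the identification is natural in the sketch maps, so that restriction along the face maps matches the domain, codomain and composite operations. This is routine once the three descriptions above are in hand.
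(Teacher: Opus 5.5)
Your proposal is correct and follows the paper's proof: use the universal property of $B^A$ against the representing semicategories $\bullet$, ${\downarrow}$, $\triangle$, and compute $\bullet\times A$ (discrete), ${\downarrow}\times A$ (no composable pairs), and $\triangle\times A$ (only composites $(0\to 1,f)\cdot(1\to 2,g)=(0\to 2,f\cdot g)$). One caveat: the statement only assumes that $B^A$ exists for this particular $B$, not that $A$ is exponentiable. So your opening assumption and your closing appeal to Proposition~\ref{prop:necessary-semicat} are not available in general. They are also unnecessary, because existence and uniqueness of $\gamma$ already follow from $B^A$ being a semicategory (a model of the sketch), as you yourself note in the preceding paragraph.
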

\begin{proof}
The three claims follow easily from the representability of objects and arrows and the universal 
property of exponentials. Indeed, the objects of any semicategory $C$ are the semifunctors 
$\bullet\to C,$ and so the objects of $B^A$ are the semifunctors $\bullet\times A\to B.$ Since 
$\bullet$ contains no arrows, the domain $\bullet\times A$ is the discrete semicategory on the 
objects of $A,$ which computes the objects as desired. 

Similarly, a morphism in $B^A$ is given by a semifunctor ${\downarrow}\times A\to B,$ 
where the domain consists of two copies of $\bullet\times A,$ with the only arrows 
of the form $f:(a,0)\to (a',1)$ for $f:a\to a'$ in $A.$ This gives the desired characterization.

Commutative triangles in $B^A$ correspond to semifunctors $T:\triangle\times A\to B.$ As we have 
seen in deriving the unique binary decomposition condition above, the commutative triangles 
in $\triangle\times A$ have as their edges maps $(f,0,1),(g,1,2),(fg,0,2)$ for composable $f,g$ in 
$A.$ Thus such a $T$ is given by a triple $\alpha,\beta,\gamma$ of dependent functions as above such 
that $\gamma_{f\cdot g}=\alpha_f\cdot \beta_g$ for composable $f,g$ in $A,$ as was to be shown.
\end{proof}

Another angle on exponentiability which inspired Arkor's conjecture in \cite
{arkor2025exponentiablevirtualdoublecategories} is via Grothendieck constructions. 
This angle is made clearest when we consider exponentiability of a semifunctor $\pi:E\to B$ in $\Cat
{Semicat}/B,$ generalizing from the case $\Cat{Semicat}=\Cat{Semicat}/1.$ In the nicest case,
such a $\pi$ can be disintegrated into a semifunctor $\nabla \pi:B\to \Cat{Set},$ sending $b\mapsto 
E_b.$ But semifunctoriality of this operation requires $\pi$ to be a \emph{discrete opfibration}\footnote{The only natural notion of opfibration for semicategories is the discrete, since there is no way to arrange a whole semicategory in a ``fiber'' over an object of a semicategory, thus no Grothendieck construction for (pseudo)semifunctors into $\Cat{Semicat}.$}, a strong condition. 

When $\pi$ is not necessarily a discrete opfibration, we may instead disintegrate $\pi$ into a 
virtual double functor $\nabla \pi: \L(B)\to \Span,$ where $\L$ extends the embedding 
of bicategories to VDCs discussed in the previous section to semi(bi)categories; thus $\L(B)$ has
discrete tight category on the objects of $B,$ loose arrows given by arrows of $B,$ and 
cells given by composites in $B.$
Then we define $\nabla\pi$ on objects by $b\mapsto \pi^{-1}(b)$ and on a morphism 
$f:b_1\to b_2$ by the span $\pi^{-1}(b_1) \xleftarrow{\dom} \pi^{-1}(f) \xrightarrow{\cod} \pi^{-1}(b_2),$ 
with action on multicells given by composition in $E.$ 

It is then natural to consider the possibility that $\nabla\pi$ preserve opcartesian multicells, which
is the virtual analogue of the condition that a lax double functor be pseudo. This condition means
precisely that every factorization of a morphism $\pi(f)$ in $B$ lifts uniquely to a factorization
of $f$ in $E.$ Since semicategories over $B$ can be sketched by coloring the sketch for plain 
semicategories by objects and arrows of $B,$ (cf. Proposition \ref{prop:sliced-sketch}) we can repeat 
the argument for the absolute case above
to see that this condition is equivalent to exponentiability of $\pi$ in $\Cat{Semicat}/B,$ 
and furthermore that it suffices to consider binary factorizations alone. We have finished 
the proof of our general result for semicategories:

\begin{prop}[Exponentiable semifunctors]
For a semifunctor $\pi:E\to B,$ the following are equivalent: 
\begin{enumerate}
\item $\pi$ is exponentiable in $\Cat{Semicat}/B.$
\item The virtual double functor $\Dbl{L}B\to \Span$ corresponding to $\pi$ preserves opcartesian multicells.
\item Given a morphism $f$ in $E,$ every factorization of $\pi(f)$ in $B$ lifts uniquely to a factorization in $E.$
\item As in (3), but for binary factorizations alone. 
\end{enumerate}
\end{prop}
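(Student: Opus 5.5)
We prove the cycle $(1)\Leftrightarrow(3)\Leftrightarrow(4)$ and then $(2)\Leftrightarrow(3)$, following the argument for the absolute case in Proposition~\ref{prop:necessary-semicat}.

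\textbf{Step 1: $(1)\Leftrightarrow(3)$.} By Proposition~\ref{prop:sliced-sketch}, $\Cat{Semicat}/B$ is the category of models of the sliced sketch $(\int B,\bar S)$. So Corollary~\ref{cor:exponentiable-morphisms-via-slice} applies: $\pi$ is exponentiable if and only if $(-)\times_B E$ preserves the pushouts ${\downarrow}+_\bullet{\downarrow}\to\triangle$ and ${\downarrow}+_\bullet{\downarrow}+_\bullet{\downarrow}\to\tet$ over every map into $B$. A map $\triangle\to B$ is a factorization $b=g\cdot h$ in $B$, and a map $\tet\to B$ is a ternary factorization. Pulling back along $\pi$, we compare $\triangle\times_B E$ with $({\downarrow}\times_B E)+_{\bullet\times_B E}({\downarrow}\times_B E)$, exactly as in the proof of Proposition~\ref{prop:necessary-semicat}.

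Both sides have the same objects and the same ``short'' arrows. The ``long'' arrows on the left are arrows $f$ of $E$ with $\pi(f)=g\cdot h$. The ``long'' arrows on the right are composable pairs $(f_1,f_2)$ with $\pi f_1=g$ and $\pi f_2=h$, and the canonical comparison sends such a pair to $f_1f_2$. This map is bijective precisely when every arrow over $gh$ factors uniquely as a composite of lifts of $g$ and $h$. The same reasoning for $\tet$ gives unique lifting of ternary factorizations. Together these say that factorizations of length $2$ and $3$ lift uniquely. That is equivalent to (3), because factorizations of every length $n\ge 2$ are built by iterating binary ones; the length-$1$ case is trivial.

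\textbf{Step 2: $(3)\Leftrightarrow(4)$.} The direction $(3)\Rightarrow(4)$ is immediate. For the converse, we induct on the length of the factorization. For existence, lift $\pi(f)=b_1\cdots b_n$ by first lifting the binary factorization $(b_1\cdots b_{n-1})\cdot b_n$, then lifting the first factor inductively. For uniqueness, suppose two lifts $f_1\cdots f_n$ and $f'_1\cdots f'_n$ of the same factorization. Regrouping both as binary lifts of $(b_1\cdots b_{n-1})\cdot b_n$, binary uniqueness gives $f_n=f'_n$ and $f_1\cdots f_{n-1}=f'_1\cdots f'_{n-1}$. The induction hypothesis then gives $f_i=f'_i$ for all $i$. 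This is the same argument as in the absolute case.

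\textbf{Step 3: $(2)\Leftrightarrow(3)$.} We need two facts.

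\emph{Opcartesian cells in $\L(B)$.} The tight category of $\L(B)$ is discrete, so the opcartesian cells are the composition cells $(b_1,\dots,b_n)\Rightarrow b_1\cdots b_n$. A cell $(\boldsymbol\chi_0,\boldsymbol b,\boldsymbol\chi_1)\Rightarrow c$ factors through such a composition cell uniquely because composition in $B$ is associative. Every cell of $\L(B)$ is itself one of these composition cells.

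\emph{Opcartesian cells in $\Span$.} A globular cell $(S_1,\dots,S_n)\Rightarrow T$ is a map $S_1\times\cdots\times S_n\to T$ over the endpoints. It is opcartesian exactly when this map is a bijection: given that, a cell out of any longer sequence containing the $S_i$ factors uniquely by composing with the inverse. The converse follows by testing against the identity span.

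Now $\nabla\pi$ sends the composition cell on $(b_1,\dots,b_n)$ to the composition map $\pi^{-1}(b_1)\times_{E_0}\cdots\times_{E_0}\pi^{-1}(b_n)\to\pi^{-1}(b_1\cdots b_n)$. This map is a bijection precisely when every arrow over $b_1\cdots b_n$ has a unique lifted factorization, which is condition (3).

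\textbf{Main obstacle.} The delicate point is the comparison of the pushout with the product in Step 1. The pushout of semicategories must be computed with freely generated composites, and we must confirm that in the pushout only length-two composites arise as ``long'' arrows. This holds because $\triangle$ has no longer composable chains, so the pushout needs no further quotienting.
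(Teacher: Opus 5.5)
Your proposal is correct and follows essentially the same route as the paper. Like the paper, you use the sliced sketch with the absolute-case pushout computation for (1), the binary-to-ternary induction for (3)$\Leftrightarrow$(4), and a direct unfolding of opcartesian cells in $\Span$ for (2)$\Leftrightarrow$(3). One fix: to show that an opcartesian cell in $\Span$ is a bijection, test it against the identity map of the composite span $S_1\times_{x_1}\cdots\times_{x_{n-1}}S_n$; this coincides with the unit span only when $n=0$.
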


\subsection{Exponentiable categories and essentially unique factorizations}

We now move to more general category structures where, once some identity maps come into play, 
exponentiability becomes more complicated, while still following the general shape of the story outlined
for mere semicategories above. 
The exponentiable objects in (slices of) $\Cat{Cat}$ and of $\Cat{Multicat}$ are known. In the first 
case, it is an old result that the exponentiable functors $\pi:\Cat{E}\to \Cat{B}$ are precisely the Conduch\'{e} fibrations (\cite{conduche1972,Giraud1964})
\begin{defn}[Conduch\'{e} fibration]
A functor $\pi:\Cat{E}\to \Cat{B}$ is a \emph{Conduch\'{e} fibration} if for every morphism $f:e_1\to e_3$ in $\Cat{E}$
and for every factorization $\pi(f)=g_1\cdot g_2$ in $B,$ the category $\Cat{Fact}_{g_1,g_2}$ of factorizations of $f$ lifting
$(g_1, g_2)$ is connected.
\qed
\end{defn}
To be sure, we spell out that:
\begin{itemize}
\item A connected category has one connected component; in particular, it is nonempty.
\item For the category $\Cat{Fact}_{g_1,g_2}:$ If $e_1\xrightarrow{f_1}e_2\xrightarrow{f_2} e_3$ and 
$e_1\xrightarrow{f_1'} e_2'\xrightarrow{f_2'} e_3$ 
are two factorizations lifting $(g_1,g_2),$ then a morphism $(f_1,f_2)\to (f_1',f_2')$ in $\Cat{Fact}_{g_1,g_2}$ is given by a morphism $h:e_2\to e_2'$ such that 
$\pi(h)=\id_{b_2}$ and $f_1'=f_1\cdot h,$ $f_2=h\cdot f_2',$ as below:
\[\begin{tikzcd}
	{e_1} && {e_3} \\
	& {e_2} \\
	& {e_2'}
	\arrow["f"{description}, from=1-1, to=1-3]
	\arrow["{f_1}"{description}, from=1-1, to=2-2]
	\arrow["{f_1'}"{description}, from=1-1, to=3-2]
	\arrow["{f_2}"{description}, from=2-2, to=1-3]
	\arrow["h"{description}, from=2-2, to=3-2]
	\arrow["{f_2'}"{description}, from=3-2, to=1-3]
\end{tikzcd}\]
\end{itemize} 

Mnemonically, we can say that exponentiable functors admit \emph{essentially} unique liftings of
factorizations, whereas exponentiable semifunctors had to be provide the much stronger 
\emph{strictly} unique 
liftings. As we alluded to above, essential uniqueness does not even make sense in the world of semicategories, because in the absence of identity morphisms one has no notion of fiber.
On the other hand, we \emph{can} give the definition with unique liftings in the case of categories, 
and then we will have recovered the notion of \emph{discrete} Conduch\'{e} fibration. \footnote{Conduch\'{e} 
fibrations jointly generalize Grothendieck fibrations and opfibrations while discrete Conduch\'{e} 
fibrations jointly generalize discrete fibrations and discrete opfibrations.}

Let us put our sketch machinery to work to re-derive this characterization of exponentiable functors.
Though the theorem is old, we suggest that our approach allows for a more complete proof than 
most authors have had the patience to give without the sketch machinery.

For a functor $\pi:\Cat{E}\to \Cat{B},$ we can treat it as a semifunctor and construct the corresponding virtual double functor $\nabla\pi:\L(\Cat{B})\to \Span$, 
but this $\nabla\pi$ will preserve opcartesian multicells only when $\pi$ is a \emph{discrete} 
Conduch\'{e} fibration. 
To capture the general case, we recall (Eq.~\ref{eq:SpanProf}) that virtual double functors 
into $\Span$ from unital VDCs correspond to normal
virtual double functors into $\Prof$. 
What is crucial is that this correspondence does \emph{not} reflect which virtual double
functors preserve opcartesian multicells, since the composition in $\Prof$ quotients 
that in $\Span$ by the inner actions of the profunctors being composed. Thus it is a weaker 
condition that $\nabla\pi$ preserves opcartesian multicells when viewed as a virtual double functor into 
$\Prof,$ and that weaker condition is the crucial one:

\begin{thm}[Conduch\'{e}, Giraud]\textrm{(Exponentiable functors)} For a functor $\pi:\Cat{E}\to \Cat{B},$ the following are equivalent:
  \begin{enumerate}
  \item $\pi$ is exponentiable in $\Cat{Cat}/\Cat{B}.$
  \item $\pi$ admits essentially unique liftings of factorizations, i.e. is a Conduch\'{e} fibration.
  \item The double functor $\nabla\pi: \L(\Cat{B})\to \Prof$ preserves opcartesian multicells. 
  \end{enumerate}
\end{thm}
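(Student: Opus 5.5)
We will follow the semicategory template. The plan is to prove (1)$\iff$(2) through the sliced sketch machinery, and then (2)$\iff$(3) by unwinding the universal property of composition in $\Prof$.

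\emph{The sketch for categories.} Use the realized sketch whose underlying category is the opposite of the full subcategory of $\Cat{Cat}$ on $[0],[1],[2],[3]$. The sketched cones are those exhibiting $[2]\cong[1]+_{[0]}[1]$ and $[3]\cong[1]+_{[0]}[1]+_{[0]}[1]$, and identities are handled by the degeneracy $[1]\to[0]$. By Proposition \ref{prop:sliced-sketch} and Corollary \ref{cor:exponentiable-morphisms-via-slice}, $\pi$ is exponentiable in $\Cat{Cat}/\Cat{B}$ iff $(-)\times_{\Cat{B}}\Cat{E}$ preserves these two pushouts for every functor $[2]\to\Cat{B}$ (respectively $[3]\to\Cat{B}$), that is, for every composable pair $(g_1,g_2)$ or triple in $\Cat{B}$.

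\emph{Binary pushout condition.} Fix $(g_1,g_2)$ in $\Cat{B}$ and compare $P=([1]\times_{\Cat{B}}\Cat{E})+_{[0]\times_{\Cat{B}}\Cat{E}}([1]\times_{\Cat{B}}\Cat{E})$ with $[2]\times_{\Cat{B}}\Cat{E}$.
\begin{itemize}
\item The two categories have the same objects and the same morphisms over the fibres and over $g_1,g_2$.
\item A morphism of $P$ lying over $g_1g_2$ is a formal composite. Since the fibre over $1$ is the category $\pi^{-1}(b_2)$ with its vertical morphisms, such morphisms are pairs $(f_1,f_2)$ lifting $(g_1,g_2)$, quotiented by the relation generated by sliding vertical maps $h$ across the middle. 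This quotient is exactly $\pi_0\Cat{Fact}_{g_1,g_2}(f)$, taken over all $f$ lying over $g_1g_2$.
\item I will check that the pushout in $\Cat{Cat}$ really is computed as this path quotient. No further composites arise, because paths alternate between the pieces and morphisms over the fibres are absorbed into $f_1$ or $f_2$.
\end{itemize}
Hence the comparison is an isomorphism iff each $\Cat{Fact}_{g_1,g_2}(f)$ is connected, which is exactly the Conduch\'e condition.

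\emph{Ternary condition.} Next I show that the ternary condition follows from the binary one, as in Proposition \ref{prop:necessary-semicat}. For existence, factor twice. For connectedness, take two ternary lifts $(f_1,f_2,f_3)$ and $(f_1',f_2',f_3')$. The pairs $(f_1f_2,f_3)$ and $(f_1'f_2',f_3')$ are connected in the binary factorization category, by zigzags of vertical $h$. Transport each zigzag step using binary connectedness applied to the first two factors, which yields a zigzag of ternary factorizations. This reindexing is the main obstacle, and the place where care is needed: each step must produce vertical maps on both middle objects simultaneously.

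\emph{(2)$\iff$(3).} In $\Prof$ the composite $\nabla\pi(g_1)\odot\nabla\pi(g_2)$ at $(e_1,e_3)$ is the coend $\int^{e_2\in\pi^{-1}b_2}\pi^{-1}(g_1)(e_1,e_2)\times\pi^{-1}(g_2)(e_2,e_3)$, which again equals $\coprod_f\pi_0\Cat{Fact}_{g_1,g_2}(f)$. The comparison map to $\pi^{-1}(g_1g_2)(e_1,e_3)$, given by composition in $\Cat{E}$, is a bijection iff each factorization category is connected. Preservation of opcartesian nullary cells holds automatically: the lifts of $\id_b$ are the vertical morphisms, so the unit comparison $\pi^{-1}(b)\to\pi^{-1}(\id_b)$ is an isomorphism. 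Preservation of $n$-ary opcartesian cells reduces to the binary and nullary cases, since opcartesian cells compose in $\L(\Cat{B})$ and $\Prof$ is representable.
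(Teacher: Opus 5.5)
Your proposal is correct, and its overall structure is the paper's. It uses the same sliced sketch (the paper's triangles and tetrahedra colored by $\Cat{B}$ are your $\Delta_{\le 3}$ sketch sliced over the nerve). It identifies the long morphisms of the binary pushout with $\coprod_f\pi_0\Cat{Fact}_{g_1,g_2}(f)$, as the paper does. It proves $(2)\iff(3)$ by the same coend computation.

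The one genuine difference is how you handle the tetrahedron colimit. The paper never proves connectedness of ternary factorization categories by hand. It notes that the long morphisms of the ternary pushout form the double coend $\int^{\pi_{b_1},\pi_{b_2}}\pi_{g_1}(e_0,e_1)\pi_{g_2}(e_1,e_2)\pi_{g_3}(e_2,e_3)$, and Fubini rewrites this as an iterated binary coend, to which the binary condition applies twice. The same Fubini remark is how the paper passes from binary to $n$-ary opcartesian cells; you phrase this abstractly as composites of opcartesian cells being opcartesian in $\Prof$.

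Your zigzag transport is the hands-on version of this. It is the argument the paper later runs for VDCs in Lemma~\ref{lem:components}. It avoids coend calculus and generalizes to settings without an ambient composition, at the cost of some bookkeeping.

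That bookkeeping is lighter than you fear. A binary step $k:(u,v)\to(u',v')$, so that $u'=u\cdot k$ and $v=k\cdot v'$, lifts to the ternary morphism $(\id,k):(a_1,a_2,v)\to(a_1,a_2\cdot k,v')$. Then $(a_1,a_2\cdot k)$ is joined to any other lift of $u'$ by a zigzag in $\Cat{Fact}_{g_1,g_2}(u')$, and each step of that zigzag lifts as $(h,\id)$ with $v'$ held fixed. Backward steps work the same way once you choose any factorization of $u'$. So no step ever needs nontrivial vertical maps at both middle objects at once.

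Your explicit check that nullary opcartesian cells are preserved also covers a case the paper's ``binary and unary'' decomposition leaves out.
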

\begin{proof}
Since $\Cat{Cat}$ is locally finite presentable, (1) is equivalent to the preservation of 
sketched colimits in a sketch for categories over $\Cat{B}$ by the product functor with $\pi$ 
(Corollary \ref{cor:exponentiable-sketch}).
We can give such a sketch in terms of object, arrow, triangle, and tetrahedron types colored by 
the objects and arrows of $\Cat{B},$ by extending the semicategory sketch above 
with structure for identity morphisms and then applying Proposition \ref{prop:sliced-sketch}. 
Then there is one sketched colimit for each commutative triangle and for each commutative 
tetrahedron in $\Cat{B}.$ 

Given a commutative triangle $g=(b_0\xrightarrow{g_1} b_1 \xrightarrow{g_2} b_2)$ in $\Cat{B},$ the corresponding colimit 
is 
\[g:\triangle\to \Cat{B}=(g_1:{\downarrow}\to \Cat{B})\underset{b_1:\bullet\to \Cat{B}}{+} (g_2:{\downarrow}\to \Cat{B}).\]
Now, the product $\Cat{E}\times_{\Cat{B}} g$ has as arrows over $g$ precisely the arrows of 
$\Cat{E}$ over $g,$
while the pushout $(\Cat{E}\times_\Cat{B} g_1) \underset{\Cat{E}\times_\Cat{B} \{b_1\}}{+} (\Cat{E}\times_\Cat{B} g_2)$ has as arrows over $g$ the composable pairs of lifts of $g_1$ and $g_2,$ 
respectively, up to arrows over $\id_{b_1},$ that is, 
the set of connected components $\pi_0\Cat{Fact}_{g_1,g_2}.$ This shows that 
$(1)\implies (2).$

Now, $(2)$ is equivalent to the binary case of $(3)$ by unfolding the definitions: if 
$g=(b_0\stackrel{g_1}{\to} b_1\stackrel{g_2}{\to} b_2)$ in $B$ and we have the corresponding
opcartesian cell $\alpha:g_1,g_2\Rightarrow g$ in $\L(\Cat{B}),$ then by definition of
the multicells in $\Prof,$ the image $\nabla\pi(\alpha)$ is the map $\int^{\pi_{b_1}}\pi_{g_1}(e_0,e_1)\pi_{g_2}(e_1,e_2)\to \pi_g(e_0,e_2),$
natural in $e_0\in \pi_{b_0}$ and $e_2\in \pi_{b_2},$ and
induced by the composition in $E.$ The equivalence relation induced on paths from $e_0$ to $e_2$ 
over $(g_1,g_2)$ in the coend is precisely that of lying in the same connected component in the
category of factorizations lifting $(g_1,g_2)$ defined above. Thus $\nabla\pi(\alpha)$ is an isomorphism if and only if $\pi$ admits essentially unique liftings of the factorization 
$(g_1,g_2).$

Next, the binary case of $(3)$ implies the general case because every opcartesian multicell 
in $\L(\Cat{B})$ may be decomposed in terms of binary and unary ones, where a unary opcartesian
cell is just an identity multicell, thus certainly preserved. In $\Prof,$ this becomes the observation 
that, for instance, we have a natural isomorphism 
\[\begin{tikzcd}
	{\int^{\pi_{b_1},\pi_{b_2}} \pi_{g_1}(e_0,e_1)\pi_{g_2}(e_1,e_2)\pi_{g_3}(e_2,e_3)} \\
	{\int^{\pi_{b_2}} \left(\int^{\pi_{b_1}} \pi_{g_1}(e_0,e_1)\pi_{g_2}(e_1,e_2)\right)\pi_{g_3}(e_2,e_3)}
	\arrow["\cong"{marking, allow upside down}, draw=none, from=1-1, to=2-1],
\end{tikzcd}\]
which is a case of the Fubini theorem for coends. \cite[2.8]{Kelly1982} Thus we have $(2) \iff (3).$

Finally, for $(3)$ implies $(1),$ it remains only to show that if $\pi$ is a Conduch\'{e} fibration, 
then product with $\pi$ preserves the sketched colimits for tetrahedra as well. Preserving the 
colimit in $\Cat{Cat}/\Cat{B}$ for a commutative tetrahedron $h=(b_0\xrightarrow{h_1} b_1 
\xrightarrow{h_2} b_2 \xrightarrow{h_3} b_3)$ means that if $\pi(f)=h,$ then there are essentially 
unique factorizations of $f$ lifting $(h_1,h_2,h_3),$ where the category of such factorizations has 
morphisms of this form: 
\[\begin{tikzcd}
	& e_1 & e_2 \\
	e_0 &&& e_3 \\
	& e_1' & e_2' \\
	\\
	{b_0} & {b_1} & {b_2} & {b_3}
	\arrow["{f_2}"{description}, from=1-2, to=1-3]
	\arrow["k_1"{description}, from=1-2, to=3-2]
	\arrow["{f_3}"{description}, from=1-3, to=2-4]
	\arrow["k_2"{description}, from=1-3, to=3-3]
	\arrow["{f_1}"{description}, from=2-1, to=1-2]
	\arrow["{f_1'}"{description}, from=2-1, to=3-2]
	\arrow["{f_2'}"{description}, from=3-2, to=3-3]
	\arrow["{f_3'}"{description}, from=3-3, to=2-4]
	\arrow["{h_1}", from=5-1, to=5-2]
	\arrow["{h_2}", from=5-2, to=5-3]
	\arrow["{h_3}", from=5-3, to=5-4]
\end{tikzcd}\]
But the set of such factorizations is then precisely the coend 
\[\int^{\pi_{b_1},\pi_{b_2}} \pi_{g_1}(e_0,e_1)\pi_{g_2}(e_1,e_2)\pi_{g_3}(e_2,e_3),\]
which gives the result.
\end{proof}

We thus recover the very well-known fact that all small categories are exponentiable. In terms
of the explicit structure of the exponential $\Cat{A}^\Cat{B},$ the key point is that, rather than 
the components $\alpha_f$ for morphisms $f:x\to y$ in $\Cat{B}$ from the semicategory case, which 
are generally not composable, we can use the naturality condition 
$\alpha_f=\alpha_x\cdot G(f)=F(f)\cdot \alpha_y$ to reduce to the components at identities, which 
one knows how to compose. In $\Cat{Cat}/\Cat{B},$ one must use the decomposition condition
more explicitly to compose fibered natural transformations. 

\subsection{Exponentiable multicategories}

Pisani~\cite{Pisani2014} characterized the exponentiable multicategories as the promonoidal categories. 
This result is correct, but he relied on an inaccurate description of promonoidal categories in his 
reference~\cite{DayPanchadcharamStreet2005}, so we take this opportunity to clarify the situation. 

A multicategory $\MultCat{M}$ is a model of a limit sketch including a type $O$ for objects and types $A_n$ for 
$n$-ary arrows for all $n\ge 0,$ as well as for the gluings $A_n \star_i A_m$ of an 
$n$-ary morphism to an $m$-ary morphism at position $i\in \{1,\ldots,m\},$ and finally for the 
threefold gluings $A_n\star_i A_m\star_j A_\ell.$ There are sketched cones for the twofold gluings
for every $n,i,m$ and the threefold gluings for every $n,i,m,j,\ell.$ 

By Corollary \ref{cor:exponentiable-sketch}, a multicategory $\MultCat{M}$ is exponentiable if and only if the functor
$(-)\times \MultCat{M}:\Cat{Multicat}\to \Cat{Multicat}$ preserves the colimits gluing two, respectively 
three, morphisms at all arities at positions. Let us write $O,A_n,$ and so on also for the 
multicategories freely generated by an element of the corresponding type in the sketch. 
Then for exponentiability 
we must have, first of all, isomorphisms 
$\MultCat{M}\times A_n +_{(\MultCat{M}\times O)_i} \MultCat{M}\times A_m\cong \MultCat{M}\times (A_n\star_i A_m).$ 
Here on the left, we have $n+m-1$-ary morphisms given by formal composites 
of every composable pair of the form below in $\MultCat{M},$ taken up to unary morphisms on either 
side at the $y_i$ position:
\[\begin{tikzcd}
	{x_1} && {y_1} \\
	& f & {y_i} & g & z \\
	{x_m} && {y_n}
	\arrow[from=1-1, to=2-2]
	\arrow["\vdots"{description}, draw=none, from=1-1, to=3-1]
	\arrow["\vdots"{description}, draw=none, from=1-3, to=2-3]
	\arrow[from=1-3, to=2-4]
	\arrow[from=2-2, to=2-3]
	\arrow[from=2-3, to=2-4]
	\arrow["\vdots"{description}, draw=none, from=2-3, to=3-3]
	\arrow[from=2-4, to=2-5]
	\arrow[from=3-1, to=2-2]
	\arrow[from=3-3, to=2-4]
\end{tikzcd}\]
Such formal composites on the left map to their genuine composite on the right. We thus have as a 
necessary condition for exponentiability that each map from a coend, taken over unary morphisms:
\[\begin{tikzcd}
	{\int^{y_i} \MultCat{M}(x_1,\ldots,x_m; y_i)\times \MultCat{M}(y_1,\ldots,y_i,\ldots, y_n;z)} \\
	{\MultCat{M}(y_1,\ldots,y_{i-1},x_1,\ldots,x_m,y_{i+1},\ldots,y_n;z)}
	\arrow[from=1-1, to=2-1]
\end{tikzcd}\]
is bijective. This is precisely what it means for $\MultCat{M}$ to be a promonoidal category. 
\footnote{The 
issue in \cite{DayPanchadcharamStreet2005} is that the authors restrict to the cases 
$m=0,n=2$ and $m=n=2,$ which is insufficient. This corrected definition is the same as the notion 
recently given by Rom\'an \cite{Román2024} and called there \emph{malleable} multicategories, but 
Rom\'an does not recall the connection to exponentiability.}

To finish applying our corollary, we have only to get past the sketched cones for gluings 
of threefold composites. This can be reduced to the binary case much as in the situation of semicategories
treated above, though one has to track the coend equivalence relation rather than working
with equalities directly. Thus it's more expedient to use the Fubini theorem on iterated coends,
which again gets the desired result: 
\begin{prop}[Pisani] \textrm{(Exponentiable multicategories)}
A multicategory $\MultCat{M}$ is exponentiable if and only if it is the multicategory associated to a 
promonoidal category. 
\end{prop}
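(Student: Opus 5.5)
By Corollary~\ref{cor:exponentiable-sketch}, applied to the sketch for multicategories described above, $\MultCat{M}$ is exponentiable if and only if $(-)\times\MultCat{M}$ preserves the two families of sketched colimits. These are the twofold gluings $A_n\star_i A_m\cong A_n+_{O}A_m$ and the threefold gluings $A_n\star_i A_m\star_j A_\ell$. I will show that preservation of the twofold gluings is exactly the promonoidal (malleable) condition, and that preservation of the threefold gluings then follows automatically.

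\textbf{Step 1: the twofold gluings.} First I compute $\MultCat{M}\times A_n +_{\MultCat{M}\times O}\MultCat{M}\times A_m$ explicitly. Products of multicategories are computed componentwise. The objects, and all multimorphisms lying over the generators of $A_n$, $A_m$ or identities, are unchanged by the pushout. The only new data are the $(n+m-1)$-ary morphisms over the glued composite. Since the pushout is formed in $\Cat{Multicat}$, these are formal composites $g\circ_i f$ with $f\in\MultCat{M}(x_1,\ldots,x_m;y_i)$ and $g\in\MultCat{M}(y_1,\ldots,y_n;z)$. They are identified by the relation generated by $g\circ_i(u\circ f)\sim(g\circ_i u)\circ_i f$ for unary $u$, which arises from the unary morphisms of $\MultCat{M}\times O$ at the glued object. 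This is precisely the coend $\int^{y_i}$ above.

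The canonical comparison map to $\MultCat{M}\times(A_n\star_i A_m)$ is therefore an isomorphism if and only if every such coend-to-composite map is bijective. I will also check the boundary cases. For $m=0$ the glued arity drops, but the same description holds. All remaining multimorphisms, namely the composites with identities, are matched by the unit laws. Hence the twofold condition is exactly the corrected promonoidal condition.

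\textbf{Step 2: the threefold gluings.} Assume the twofold condition. Threefold gluings come in two shapes: nested, where $\ell$ is grafted into $m$ which is grafted into $n$, and parallel, where two morphisms are grafted into distinct inputs of the same $n$-ary morphism. In both cases the pushout's morphisms over the top composite form a double coend over the two glued objects, with $\MultCat{M}$-hom-sets as integrand. By the Fubini theorem for coends \cite[2.8]{Kelly1982}, this double coend is an iterated coend. Applying the binary bijection to the inner coend, and then again to the outer one, identifies it with the hom-set of the total composite. For the nested case I will do the inner coend over $y$ in the $\ell$-into-$m$ gluing first, then the outer one.

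I must also check that this composite bijection agrees with the canonical comparison map, which is given by genuine composition. This follows from associativity of composition in $\MultCat{M}$. So the threefold colimits are preserved as well, and condition (2) of Corollary~\ref{cor:exponentiable-sketch} holds.

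\textbf{Main obstacle.} The hard part is Step 2: correctly identifying the threefold pushout's hom-sets as a \emph{double} coend. Its equivalence relation is generated by unary moves at two interior objects simultaneously, which matters especially in the parallel-gluing case, where the two moves commute. The plan is to write the pushout as an iterated pushout, $(\MultCat{M}\times A_n+\MultCat{M}\times A_m)+\MultCat{M}\times A_\ell$. Since $(-)\times\MultCat{M}$ applied to a single gluing is now known to give the glued product, each stage is a single coend, and this recovers the iterated coend directly, avoiding any analysis of the double equivalence relation.
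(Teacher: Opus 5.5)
Your proposal is correct and follows the same route as the paper: Corollary~\ref{cor:exponentiable-sketch} reduces exponentiability to preservation of the twofold and threefold gluings, the twofold gluings give exactly the coend (corrected promonoidal) condition, and the threefold gluings reduce to it via the Fubini theorem for coends. Your iterated-pushout computation of the threefold colimit makes that Fubini step concrete, which the paper leaves implicit, but it is the same argument.
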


With these warmups complete, we are ready to turn to our subject of primary interest.

%% file: Sections/4_ExponentiabilityViaDecompositions.tex
We will characterize the exponentiable VDCs through a collection of equivalent formulations in terms of decomposition properties for the multicells in the VDC $\Dbl{D}$. 
However, before characterizing the exponentiable VDCs, we can use their universal property to describe the objects, tight arrows, loose arrows, and multicells 
in the exponential when it exists. 
Here we will use the observation from Section~\ref{sec:VDCs} that the functors $\uOb , \uTight,\uLoose,\uSq{n}:\Vdc\to \Cat{Set}$, $n\geq 0$, are all corepresented. 
Additionally, we will need to understand products in $\Vdc$ with the corepresenting VDCs, along with functors out of such products. 

\subsection{The structure of the exponential, if it exists}

\begin{prop}[Elements of the exponential VDC]\label{prop:elementsOfExpVDC}
  If $\Dbl{D}$ and $\Dbl{E}$ are VDCs such that the exponential $\Dbl{E}^\Dbl{D}$ exists, then the VDC $\Dbl{E}^\Dbl{D}$ consists of the following data:
\begin{enumerate}
    \item Objects $F \in \uOb (\Dbl{E}^\Dbl{D})\cong \Vdc(\Ob,\Dbl{E}^\Dbl{D})\cong \Vdc(\Ob\times \Dbl{D},\Dbl{E})$ correspond to functors $F:\Dbl{D}_0\to \Dbl{E}_0$ on underlying tight categories.\footnote{Note that, since $\uOb$, lacking loose arrows and multicells, is not terminal, the objects of an exponential are not simply the virtual double functors. Exponentiability will nonetheless be seen below to imply the existence of the virtual double category of virtual double functors.}
    \item Tight arrows $f \in \mathsf{Arr_T}(\Dbl{E}^\Dbl{D})\cong \Vdc(\Tight,\Dbl{E}^\Dbl{D})\cong \Vdc(\Tight\times \Dbl{D},\Dbl{E})$ correspond to natural transformations $f:F\Rightarrow G$ between functors $F,G:\Dbl{D}_0\to \Dbl{E}_0$.
    \item Loose arrows $\Phi \in \uLoose(\Dbl{E}^\Dbl{D})\cong \Vdc(\Loose ,\Dbl{E}^\Dbl{D})\cong \Vdc(\Loose \times \Dbl{D},\Dbl{E})$ correspond to maps of spans of categories 
    \[\begin{tikzcd}
      {\Dbl{D}_0} & {\Dbl{D}_1} & {\Dbl{D}_0} \\
      {\Dbl{E}_0} & {\Dbl{E}_1} & {\Dbl{D}_0}
      \arrow[from=1-1, to=2-1]
      \arrow[from=1-2, to=1-1]
      \arrow[from=1-2, to=1-3]
      \arrow[from=1-2, to=2-2]
      \arrow[from=1-3, to=2-3]
      \arrow[from=2-2, to=2-1]
      \arrow[from=2-2, to=2-3]
    \end{tikzcd}\]
    \item Unary multicells in $\uSq{1}(\Dbl{E}^\Dbl{D})\cong \Vdc(\Sq{1},\Dbl{E}^\Dbl{D})\cong \Vdc(\Sq{1}\times \Dbl{D},\Dbl{E})$ correspond to 2-cells in the 2-category of spans of categories between maps of spans as above.
    \item Multicells $\Gamma \in \uSq{n}(\Dbl{E}^\Dbl{D})\cong \Vdc(\Sq{n},\Dbl{E}^\Dbl{D})\cong \Vdc(\Sq{n}\times \Dbl{D},\Dbl{E}),$ for $n\ne 1$ are given by a map of profunctors $\MCelln{n}(\Dbl{D})\to \partial^*\MCelln{n}(\Dbl{E})$ over $\Dbl{E}_0^2,$ where 
    $\partial:\Cat{fc}_n(\Dbl{D}_1)^{op}\times \Dbl{D}_1\to \Cat{fc}_n(\Dbl{E}_1)^{op}\times \Dbl{E}_1$ is the functor which together with the natural transformations putting $\MCelln{n}(\Dbl{D})$ over $\Dbl{E}_0^2$ determines the  boundary data of $\Gamma.$
\end{enumerate}
\end{prop}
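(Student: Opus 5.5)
The plan is to compute, for each corepresenting VDC $\Dbl{W}\in\{\Ob,\Tight,\Loose,\Sq{n}\}$, the product $\Dbl{W}\times\Dbl{D}$ explicitly, and then describe virtual double functors $\Dbl{W}\times\Dbl{D}\to\Dbl{E}$. The first two isomorphisms in each item are the corepresentability of $\uOb,\uTight,\uLoose,\uSq{n}$ (Section~\ref{sec:VDCs}) followed by the adjunction $(-)\times\Dbl{D}\dashv\Dbl{E}^{\Dbl{D}}$, so all the content is in the product computation. Products in $\Vdc$ are computed componentwise: the tight category of $\Dbl{W}\times\Dbl{D}$ is $\Dbl{W}_0\times\Dbl{D}_0$, since $(-)_0$ is a right adjoint. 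Its loose arrows are pairs of loose arrows with matching endpoints, and its $n$-ary multicells are pairs of $n$-ary multicells with matching boundaries.

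For $\Ob$ and $\Tight$, the factor $\Dbl{W}$ has no loose arrows, so the product has none either. It is therefore $\T(\Dbl{D}_0)$, respectively $\T([1]\times\Dbl{D}_0)$. Because $\T\dashv(-)_0$, a VDF out of it is a functor $\Dbl{D}_0\to\Dbl{E}_0$, respectively a functor $[1]\times\Dbl{D}_0\to\Dbl{E}_0$, which is a natural transformation. This settles items (1) and (2).

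For $\Loose$, the product has tight category $\Dbl{D}_0+\Dbl{D}_0$ (copies over $0$ and $1$). Its loose arrows are those of $\Dbl{D}$, placed from the $0$-copy to the $1$-copy. Its only multicells are unary cells of $\Dbl{D}$, since $\Loose$ has only identity unary cells and no composable strings of length other than one. Thus $\Loose\times\Dbl{D}\cong\Susp(\Dbl{D}_1)$, but with the tight categories $\Dbl{D}_0$ glued along $s,t$. More precisely, it is the VDC whose underlying graph of categories is $\Dbl{D}_0\xleftarrow{s}\Dbl{D}_1\xrightarrow{t}\Dbl{D}_0$ and which has no non-unary cells. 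A VDF out of such a VDC is exactly a map of spans of categories into $\Dbl{E}_0\leftarrow\Dbl{E}_1\rightarrow\Dbl{E}_0$. Functoriality on unary cells is the functor $\Dbl{D}_1\to\Dbl{E}_1$, and there is no further composition to preserve. This gives (3).

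For $\Sq{1}$, the product similarly has only unary cells. It is the span of categories $[1]\times\Dbl{D}_0\leftarrow[1]\times\Dbl{D}_1\rightarrow[1]\times\Dbl{D}_0$, so a VDF out of it is a map of spans out of the cylinder. That is a pair of span maps $F,G$ together with natural transformations on all three legs compatible with $s,t$, which is exactly a 2-cell of spans of categories. This gives (4).

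For $n\neq1$, I expect the main obstacle: describing how products with $\Sq{n}$ interact with composition. The key point is that in $\Sq{n}$ the only cells are identities and the single non-unary cell $\square_n$. Hence the cells of $\Sq{n}\times\Dbl{D}$ are of three kinds:
\begin{itemize}
\item unary cells of $\Dbl{D}$ placed on each $\ell_i$;
\item unary cells of $\Dbl{D}$ placed on $\ell$;
\item $n$-ary cells of $\Dbl{D}$ paired with $\square_n$.
\end{itemize}
Composition of an $n$-ary cell with unary cells above and below is composition in $\Dbl{D}$. For $n=0$ one must also include whiskering by tight arrows, but these come from the $t_0,t_1$ components and are handled the same way. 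A VDF out of this product therefore consists of $n+1$ functors $\Dbl{D}_0\to\Dbl{E}_0$, the tight transformations coming from the $t_i$, span maps on the $\ell_i$ and on $\ell$, which assemble into the functor $\partial$, and an assignment on $n$-ary cells. This assignment must commute with the actions of unary cells on both sides, and these are precisely the morphisms of the Grothendieck category $\MCelln{n}(\Dbl{D})$. It is therefore a functor $\MCelln{n}(\Dbl{D})\to\MCelln{n}(\Dbl{E})$ over $\partial$, equivalently a map of profunctors $\MCelln{n}(\Dbl{D})\to\partial^*\MCelln{n}(\Dbl{E})$. The naturality transformations fix its tight boundary, which is what "over $\Dbl{E}_0^2$" records. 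The care needed is to check that no other composites arise, in particular that no cell of arity other than $n$ or $1$ exists. This holds because any composite in $\Sq{n}$ involving $\square_n$ equals $\square_n$ after unit cancellation, and the $\Dbl{D}$-component composes accordingly. With that checked, (5) follows.
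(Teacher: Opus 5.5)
Your proposal is correct and follows the paper's proof: you compute $\Dbl{W}\times\Dbl{D}$ for each corepresenting $\Dbl{W}$ and unpack VDFs out of it. Your cylinder argument for item (4) (a span map out of $[1]\times(\Dbl{D}_0\leftarrow\Dbl{D}_1\rightarrow\Dbl{D}_0)$ is exactly a 2-cell) is a slicker form of the paper's observation that the unary cells of $\Sq{1}\times\Dbl{D}$ are generated by the boundary cells together with the $(\square_1,\text{id}_\chi)$.

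Two slips in item (5) should be fixed. First, the tight category of $\Sq{n}\times\Dbl{D}$ has $n+3$ copies of $\Dbl{D}_0$, so there are functors $F_0,\dots,F_n,G_0,G_1$, not $n+1$. Second, for $n=0$ the whiskering is by tight arrows of the apex copy $\{(0,0)\}\times\Dbl{D}_0$, i.e.\ naturality in $\Cat{fc}_0(\Dbl{D}_1)=\Dbl{D}_0$; it does not come from $t_0,t_1$.
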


\begin{proof}[Proof of Proposition~\ref{prop:elementsOfExpVDC}.]

Since $\Ob$ contains nothing but a single object with its tight identity arrow, 
we see crossing with $\Ob$ eliminates all loose arrows and multicells, that is, 
$\Ob\times \Dbl{D}$ is the virtual double category with tight category $\Dbl{D}_0$ 
and no loose arrows or multicells. This and the analogous computation for 
$\Tight\times \Dbl{D}$ provide the proofs of claims 1. and 2.

Next, observe that $\Loose \times \Dbl{D}$ consists of two copies of the underlying tight category of 
$\Dbl{D}$, corresponding to the two objects $0$ and $1$ in $\Loose$, along with a loose arrow 
$(\ell,\varphi):(0,x)\proto (1,y)$ between these two copies for each loose arrow $\varphi:x\proto y$ in $\Dbl{D}$. 
Since $\Loose$ contains one unary multicell, namely $\text{id}_{\ell}$, the product $\Loose\times \Dbl{D}$ contains, 
for each unary multicell $\alpha:\cell{a}{\varphi}{\psi}{b}$ in $\Dbl{D}$, 
a unary multicell $(\text{id}_{\ell},\alpha):\cell{(\text{id}_0,a)}{(\ell,\varphi)}{(\ell,\psi)}{(\text{id}_1,b)}$ in 
$\Loose \times \Dbl{D}$. 

Thus, a VDF $\Phi:\Loose\times \Dbl{D}\to \Dbl{E}$ consists of the data of two functors 
on underlying tight categories, $F_0,F_1:\Dbl{D}_0\to \Dbl{E}_0$, along with for each loose arrow 
$\varphi:x\proto y$ in $\Dbl{D}$, a loose arrow $\Phi(\ell,\varphi):F_0(x)\proto F_1(y)$ in $\Dbl{E}$, and for each 
unary multicell $\alpha$ in $\Dbl{D}$, a unary multicell
\[
\begin{adjustbox}{}
\begin{tikzcd}
	{F_0(x)} & {F_1(y)} \\
	{F_0(z)} & {F_1(w)}
	\arrow[""{name=0, anchor=center, inner sep=0}, "{{\Phi(\ell,\varphi)}}"{inner sep=.8ex}, "\shortmid"{marking}, from=1-1, to=1-2]
	\arrow["{{F_0(a)}}"', from=1-1, to=2-1]
	\arrow["{{F_1(b)}}", from=1-2, to=2-2]
	\arrow[""{name=1, anchor=center, inner sep=0}, "{{\Phi(\ell,\psi)}}"'{inner sep=.8ex}, "\shortmid"{marking}, from=2-1, to=2-2]
	\arrow["{\Phi(\text{id}_{\ell},\alpha)}"{description}, draw=none, from=0, to=1]
\end{tikzcd}
\end{adjustbox}
\]
in $\Dbl{E}$. Further, this assignment is functorial under vertical composition of unary multicells, and hence amounts to a map 
of spans of categories as in claim 3.

From our description of $\Sq{n}$ in Section~\ref{sec:VDCs}, $\Sq{n}\times \Dbl{D}$ has $n+3$ copies of $\Dbl{D}_0$, 
loose arrows and unary multicells as in $\Loose \times \Dbl{D}$ between these copies for each loose arrow in $\Dbl{D}$ and 
loose arrow in $\Sq{n}$, tight arrows between the copies on the edges of $\Sq{n}$, and for each $n$-ary multicell 
$\alpha$ in $\Dbl{D}$, an $n$-ary multicell $(\square_n,\alpha)$.
A VDF $\Gamma:\Sq{n}\times \Dbl{D}\to \Dbl{E}$ thus consists of the data of $n+3$ functors on underlying tight categories, 
$F_0,...,F_n,G_0,G_1:\Dbl{D}_0\to \Dbl{E}_0$, together with two natural transformations 
$f_0:F_0\Rightarrow G_0$ and $f_1:F_n\Rightarrow G_1$, $n+1$ functors 
$\Phi_1,\ldots,\Phi_n,\Psi:\Dbl{D}_1\to \Dbl{E}_1$, as above, and for each 
$n$-ary multicell $\beta:\cell{a}{\prolist{\phi}}{\psi}{b}$ in $\Dbl{D}$, 
an $n$-ary multicell
\[\begin{tikzcd}
	{F_0x_0} && \cdots && {F_nx_n} \\
	{G_0y_0} &&&& {G_1y_1}
	\arrow["{{{{\Phi_1(\varphi_1)}}}}"{inner sep=.8ex}, "\shortmid"{marking}, from=1-1, to=1-3]
	\arrow["{{{{f_{0,a}}}}}"', from=1-1, to=2-1]
	\arrow["{{{{\Phi_n(\varphi_n)}}}}"{inner sep=.8ex}, "\shortmid"{marking}, from=1-3, to=1-5]
	\arrow["{{{{f_{1,b}}}}}", from=1-5, to=2-5]
	\arrow[""{name=0, anchor=center, inner sep=0}, "{{{\Psi(\psi)}}}"'{inner sep=.8ex}, "\shortmid"{marking}, from=2-1, to=2-5]
	\arrow["{{\Gamma(\beta)}}"{description}, draw=none, from=1-3, to=0]
\end{tikzcd}\]
where for instance $f_{0,a}=f_{0,x_0}\cdot G_0(a)=F_0(a)\cdot f_{0,y_0}.$ 

This data is functorial in the sense that the composite of the pasting diagram
\[\begin{tikzcd}
	{F_0w_0} && {F_1w_1} && \cdots && {F_nw_n} \\
	{F_0x_0} && {F_1x_1} && \cdots && {F_nx_n} \\
	{G_0y_0} &&&&&& {G_1y_1} \\
	{G_0z_0} &&&&&& {G_1z_1}
	\arrow[""{name=0, anchor=center, inner sep=0}, "{{{{\Phi_1(\varphi_1')}}}}"{inner sep=.8ex}, "\shortmid"{marking}, from=1-1, to=1-3]
	\arrow["{{{{{{F_0c_0}}}}}}"', from=1-1, to=2-1]
	\arrow[""{name=1, anchor=center, inner sep=0}, "{{{{\Phi_2(\varphi_2')}}}}"{inner sep=.8ex}, "\shortmid"{marking}, from=1-3, to=1-5]
	\arrow["{{{{{{F_1c_1}}}}}}"{description}, from=1-3, to=2-3]
	\arrow[""{name=2, anchor=center, inner sep=0}, "{{{{\Phi_n(\varphi_n')}}}}"{inner sep=.8ex}, "\shortmid"{marking}, from=1-5, to=1-7]
	\arrow["{{{{{{F_nc_n}}}}}}", from=1-7, to=2-7]
	\arrow[""{name=3, anchor=center, inner sep=0}, "{{{{{{\Phi_1(\varphi_1)}}}}}}"'{inner sep=.8ex}, "\shortmid"{marking}, from=2-1, to=2-3]
	\arrow["{{{{f_{0,a}}}}}"', from=2-1, to=3-1]
	\arrow[""{name=4, anchor=center, inner sep=0}, "{{{{\Phi_2(\varphi_2)}}}}"'{inner sep=.8ex}, "\shortmid"{marking}, from=2-3, to=2-5]
	\arrow[""{name=5, anchor=center, inner sep=0}, "{{{{\Phi_n(\varphi_n)}}}}"'{inner sep=.8ex}, "\shortmid"{marking}, from=2-5, to=2-7]
	\arrow["{{{{f_{1,b}}}}}", from=2-7, to=3-7]
	\arrow[""{name=6, anchor=center, inner sep=0}, "{{{{\Psi(\psi)}}}}"'{inner sep=.8ex}, "\shortmid"{marking}, from=3-1, to=3-7]
	\arrow["{{{{{G_0d_0}}}}}"', from=3-1, to=4-1]
	\arrow["{{{{{{G_1d_1}}}}}}", from=3-7, to=4-7]
	\arrow[""{name=7, anchor=center, inner sep=0}, "{{{{{\Psi(\psi')}}}}}"'{inner sep=.8ex}, "\shortmid"{marking}, from=4-1, to=4-7]
	\arrow["{{\Phi_1(\delta_1)}}"{description}, draw=none, from=0, to=3]
	\arrow["{{\Phi_2(\delta_2)}}"{description}, draw=none, from=1, to=4]
	\arrow["{{\Phi_n(\delta_n)}}"{description}, draw=none, from=2, to=5]
	\arrow["{{\Gamma(\beta)}}"{description}, draw=none, from=4, to=6]
	\arrow["{{\Psi(\gamma)}}"{description}, draw=none, from=6, to=7]
\end{tikzcd}\]
is equal to $\Gamma\left(\threefrac{\delta_1 \cdots \delta_n}{\beta}{\gamma}\right)$.

To establish the lighter axiomatization for the unary case as in 4, 
consider that in the product $\Sq{1}\times\Dbl{D},$ 
for any $\beta:\cell{a}{\chi}{\omega}{b}$ in $\Dbl{D},$ we have 
$(\square_1,\beta)=\frac{(\square_1,\text{id}_\chi)}{(\text{id}_\ell,\beta)}=\frac{(\text{id}_{\ell_1},\beta)}{(\square_1,\text{id}_\omega)}.$ 
This shows that the unary multicells in $\Sq{1}\times\Dbl{D}$ are generated by those coming from 
the boundary together with those of the form $(\square_1,\text{id}_\chi)$, which is the fundamental 
reason why 
a modulation may be defined with components indexed by proarrows of $\Dbl{D}$, rather than unary multicells.

Under this presentation, the only relations in the category of loose arrows of $\Sq{1}\times \Dbl{D}$ are of the form 
$\frac{(\square_1,\text{id}_\chi)}{(\text{id}_\ell,\beta)}=\frac{(\text{id}_{\ell_1},\beta)}{(\square_1,\text{id}_\omega)}.$
Thus we can give a unary cell in $\Dbl{E}^\Dbl{D}$ by giving 
$\Gamma_\varphi:=\Gamma(\square_1,\text{id}_{\varphi}):\cell{f_{0,x}}{\Phi_\varphi}{\Psi_\varphi}{f_{1,y}}$
for every $\varphi:x\proto y$ in $\Dbl{D},$ subject to the naturality condition, for every 
$\alpha:\cell{}{\varphi}{\psi}{}$ in $\Dbl{D}.$ 
\[\begin{tikzcd}
	{F_0x_0} & {F_1x_1} && {F_0x_0} & {F_1x_1} \\
	{G_0x_0} & {G_1x_1} & {=} & {F_0y_0} & {F_1y_1} \\
	{G_0y_0} & {G_1y_1} && {G_0y_0} & {G_1y_1}
	\arrow[""{name=0, anchor=center, inner sep=0}, "{{\Phi_{\varphi}}}"{inner sep=.8ex}, "\shortmid"{marking}, from=1-1, to=1-2]
	\arrow["{{f_{0,x_0}}}"', from=1-1, to=2-1]
	\arrow["{{f_{1,x_1}}}", from=1-2, to=2-2]
	\arrow[""{name=1, anchor=center, inner sep=0}, "{{\Phi_\varphi}}"{inner sep=.8ex}, "\shortmid"{marking}, from=1-4, to=1-5]
	\arrow[from=1-4, to=2-4]
	\arrow[from=1-5, to=2-5]
	\arrow[""{name=2, anchor=center, inner sep=0}, "{{\Psi_\varphi}}"'{inner sep=.8ex}, "\shortmid"{marking}, from=2-1, to=2-2]
	\arrow[from=2-1, to=3-1]
	\arrow[from=2-2, to=3-2]
	\arrow[""{name=3, anchor=center, inner sep=0}, "{{\Phi_\psi}}"'{inner sep=.8ex}, "\shortmid"{marking}, from=2-4, to=2-5]
	\arrow["{{f_{0,y_0}}}"', from=2-4, to=3-4]
	\arrow["{{f_{1,y_1}}}", from=2-5, to=3-5]
	\arrow[""{name=4, anchor=center, inner sep=0}, "{{\Psi_\psi}}"'{inner sep=.8ex}, "\shortmid"{marking}, from=3-1, to=3-2]
	\arrow[""{name=5, anchor=center, inner sep=0}, "{{\Psi_\psi}}"'{inner sep=.8ex}, "\shortmid"{marking}, from=3-4, to=3-5]
	\arrow["{{\Gamma_\varphi}}"{description}, draw=none, from=0, to=2]
	\arrow["{{\Phi_\alpha}}"{description}, draw=none, from=1, to=3]
	\arrow["{{\Psi_\alpha}}"{description}, draw=none, from=2, to=4]
	\arrow["{{\Gamma_\psi}}"{description}, draw=none, from=3, to=5]
\end{tikzcd}\]
Indeed, such a $\Gamma$ amounts to a 2-cell in the 2-category of spans of categories. 
\end{proof}

\subsubsection*{The exponential versus the VDC of virtual double functors}
The exponential $\Dbl{E}^\Dbl{D}$ is often not the VDC we are interested in, since its objects 
are functors between underlying tight categories rather than VDFs between the VDCs.
However, once we apply the $\Mod(-)$ construction (Definition \ref{defn:Modules}), the resulting 
VDC $\Dbl{V}\mathsf{df}(\Dbl{D},\Dbl{E}):=\Mod(\Dbl{E}^\Dbl{D})$ does have VDFs as objects. 
Note that if $\Dbl{D}={\Ob_u}$ is the terminal VDC, which is representable, as is the 
terminal object in any cartesian category.
That is, $\Dbl{E}^{{\Ob_u}}\cong \Dbl{E}$ for any VDC $\Dbl{E}$, so that 
$\Dbl{V}\mathsf{df}({\Ob_u},\Dbl{E})\cong \Mod(\Dbl{E}),$ which essentially reproduces
Benabou's old observation that lax functors from the point are monads (in a bicategory, 
for Benabou.~\cite{Benabou1967})

\begin{lem}\label{lem:VdfVDC}
	If $\Dbl{D}$ is an exponentiable VDC, then the VDC $\Dbl{V}\mathsf{df}(\Dbl{D},\Dbl{E}):=\Mod(\Dbl{E}^\Dbl{D})$ exists for any VDC $\Dbl{E}$, and it has VDFs as objects with tight transformations as tight arrows.
\end{lem}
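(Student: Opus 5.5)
Existence is immediate: since $\Dbl{D}$ is exponentiable, $\Dbl{E}^\Dbl{D}$ exists for every VDC $\Dbl{E}$, and $\Mod$ is defined on any VDC, so $\Dbl{V}\mathsf{df}(\Dbl{D},\Dbl{E})=\Mod(\Dbl{E}^\Dbl{D})$ exists. The content of the lemma is the identification of its objects and tight arrows. I will compute both directly from Proposition~\ref{prop:elementsOfExpVDC} and Definition~\ref{defn:Modules}.

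\textbf{Objects.} An object of $\Mod(\Dbl{E}^\Dbl{D})$ is a monoid: an object $F$, a loose endoarrow $\Phi:F\proto F$, a binary multicell $\mu$ and a nullary multicell $\eta$, subject to associativity and unitality. By Proposition~\ref{prop:elementsOfExpVDC}:
\begin{itemize}
\item $F$ is a functor $\Dbl{D}_0\to\Dbl{E}_0$.
\item $\Phi$ is a functor $\Dbl{D}_1\to\Dbl{E}_1$ lying over $F$ on both ends, i.e.\ an action on loose arrows and unary multicells.
\item $\mu$ is a map of profunctors $\MCelln{2}(\Dbl{D})\to\partial^*\MCelln{2}(\Dbl{E})$ with identity tight boundary. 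Concretely, it assigns to each binary multicell $\beta$ of $\Dbl{D}$ a binary multicell $\mu(\beta)$ of $\Dbl{E}$ with the appropriate boundary, naturally in unary cells.
\item $\eta$ does the same for nullary multicells.
\end{itemize}
So the monoid structure supplies $\Dbl{E}$-images only of the unary, binary and nullary cells of $\Dbl{D}$. The core step is to show that associativity and unitality are equivalent to extending these data uniquely to a VDF $\Dbl{D}\to\Dbl{E}$.

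Here I would use exponentiability essentially. The product $-\times\Dbl{D}$ preserves the sketched colimits (Corollary~\ref{cor:exponentiable-sketch}), and iterating the binary decomposition gives a colimit decomposition of $\Sq{n}\times\Dbl{D}$ into copies of $\Sq{2}\times\Dbl{D}$, $\Sq{0}\times\Dbl{D}$ and $\Sq{1}\times\Dbl{D}$. Hence an $n$-ary multicell of $\Dbl{E}^\Dbl{D}$ is determined by any of its tree decompositions. Iterated composites $\mu^{(n)}$ of $\mu$ (for $n\ge 2$), together with $\eta$ and $\id_\Phi$ for $n=0,1$, define an $n$-ary multicell of $\Dbl{E}^\Dbl{D}$ with source $(\Phi,\dots,\Phi)$, target $\Phi$ and identity boundary. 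Evaluating it at each $n$-ary cell $\alpha$ of $\Dbl{D}$ gives the action $\Phi(\alpha)$.

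\begin{itemize}
\item Associativity of $\mu$ makes $\mu^{(n)}$ independent of bracketing.
\item Unitality handles nullary insertions.
\item Compatibility of multicell composition in $\Dbl{E}^\Dbl{D}$ with the correspondence of Proposition~\ref{prop:elementsOfExpVDC} then yields functoriality $\Phi\bigl(\frac{\alpha_1\cdots\alpha_n}{\beta}\bigr)=\frac{\Phi(\alpha_1)\cdots\Phi(\alpha_n)}{\Phi(\beta)}$.
\end{itemize}

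Conversely, a VDF $G$ gives $\mu(\beta)=G(\beta)$ and $\eta(\beta)=G(\beta)$, and the axioms follow from functoriality. The two constructions are inverse because a VDF is determined by its values on all cells, and $\mu^{(n)}$ is forced.

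\textbf{Tight arrows.} A tight arrow of $\Mod(\Dbl{E}^\Dbl{D})$ consists of a tight arrow $f:F\Rightarrow G$ of $\Dbl{E}^\Dbl{D}$, which is a natural transformation on tight categories, and a unary cell $\cell{f}{\Phi}{\Psi}{f}$. By item 4 of Proposition~\ref{prop:elementsOfExpVDC}, the latter is a family $f_\varphi$ natural in unary cells. The homomorphism equations $\tfrac{(\alpha,\alpha)}{\nu}=\tfrac{\mu}{\alpha}$ and $\tfrac{a}{\theta}=\tfrac{\eta}{\alpha}$ unpack to the naturality condition of Definition~\ref{defn:TightTrans} for binary and nullary cells of $\Dbl{D}$. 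The general $n$-ary case then follows by decomposing $\mu^{(n)}$ as above. This is exactly a tight transformation.

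\textbf{Main obstacle.} The hard step is making precise that an $n$-ary multicell in $\Dbl{E}^\Dbl{D}$, evaluated at $\alpha$, agrees with the iterated composite of binary evaluations. In particular, one must check that the $\mu(\beta)$ assemble coherently even though cells of $\Dbl{D}$ need not factor strictly through binary ones, only essentially uniquely up to unary cells. I would handle this by working with the profunctor description and the coend form of the decomposition that exponentiability of $\Dbl{D}$ provides. Naturality of $\mu$ in unary cells is exactly what makes the evaluation well defined on coend classes.
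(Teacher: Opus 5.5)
Your argument is correct, but it takes a different route from the paper.

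\textbf{The paper's route.} The paper never unpacks monoids in $\Dbl{E}^\Dbl{D}$. It composes the adjunctions $\Dbl{F}_u\dashv U\dashv\Mod$ with the exponential adjunction to get natural isomorphisms $\Vdc(\Dbl{A},\Dbl{V}\mathsf{df}(\Dbl{D},\Dbl{E}))\cong\Vdc(\Dbl{F}_u(\Dbl{A})\times\Dbl{D},\Dbl{E})$, that is, an adjunction $\Dbl{F}_u(-)\times\Dbl{D}\dashv\Dbl{V}\mathsf{df}(\Dbl{D},-)$. It then reads off the data in two cases:
\begin{itemize}
\item At $\Dbl{A}=\Ob$, $\Dbl{F}_u(\Ob)=\Ob_u$ is terminal, so objects are VDFs $\Dbl{D}\cong\Ob_u\times\Dbl{D}\to\Dbl{E}$ with no computation at all.
\item At $\Dbl{A}=\Tight$, a VDF $\Dbl{F}_u(\Tight)\times\Dbl{D}\to\Dbl{E}$ has components $F(t,\id_x)$ and $F(I_t,\id_\varphi)$. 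Their naturality for cells of every arity is just functoriality in $\Dbl{F}_u(\Tight)\times\Dbl{D}$.
\end{itemize}
In that route exponentiability is used only to have $\Dbl{E}^\Dbl{D}$ at all. The coherence you check by hand is absorbed into universal properties: generalized associativity and unitality of $\mu,\eta$, and computing a composite multicell of $\Dbl{E}^\Dbl{D}$ at a cell $\alpha$ of $\Dbl{D}$ from a decomposition of $\alpha$. The same isomorphism also describes loose arrows and multicells for free.

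\textbf{What your route buys and costs.} Your direct computation gives an explicit picture: a VDF out of an exponentiable $\Dbl{D}$ is generated by its nullary, unary and binary data. The price is that it genuinely needs decomposability of $\Dbl{D}$ in the converse direction, both to verify the monoid axioms for the data of a VDF $G$ and to show $\mu^{(n)}(\alpha)=G(\alpha)$.

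\textbf{One correction.} $\Sq{n}\times\Dbl{D}$ is not a colimit of copies of $\Sq{2}\times\Dbl{D}$, $\Sq{1}\times\Dbl{D}$ and $\Sq{0}\times\Dbl{D}$. What exponentiability gives is that $\Dbl{J}(T)\times\Dbl{D}$ is the colimit of its pieces for each tree $T$ (Remark~\ref{rmk:cellInColimit} and Theorem~\ref{thm:ExpViaDecomp}). A composite multicell of $\Dbl{E}^\Dbl{D}$ is then the restriction along $\Sq{n}\to\Dbl{J}(T)$. That is exactly the fact your ``main obstacle'' step requires.
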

\begin{proof}
	The existence of the VDC $\Dbl{V}\mathsf{df}(\Dbl{D},\Dbl{E}):=\Mod(\Dbl{E}^\Dbl{D})$ follows immediately from the existence of $\Dbl{E}^\Dbl{D}$ since $\Dbl{D}$ is assumed to be exponentiable. We can give an elegant description of the data in $\Dbl{V}\mathsf{df}(\Dbl{D},\Dbl{E})$ using the 2-adjoint triple 
\[
\begin{tikzcd}
	{\BiCat{Vdc}_n} && {\BiCat{Vdc}}
	\arrow[""{name=0, anchor=center, inner sep=0}, "U"{description}, from=1-1, to=1-3]
	\arrow[""{name=1, anchor=center, inner sep=0}, "{\Dbl{F}_u}"', curve={height=18pt}, from=1-3, to=1-1]
	\arrow[""{name=2, anchor=center, inner sep=0}, "{\Mod}", curve={height=-18pt}, from=1-3, to=1-1]
	\arrow["\dashv"{anchor=center, rotate=-92}, draw=none, from=0, to=2]
	\arrow["\dashv"{anchor=center, rotate=-88}, draw=none, from=1, to=0]
\end{tikzcd}
\]
described in Section~\ref{sec:VDCs}. Explicitly, this adjoint triple induces a pair of 2-adjunctions
\[\begin{tikzcd}
	{\BiCat{Vdc}} && {\BiCat{Vdc}} && {\BiCat{Vdc}_n} && {\BiCat{Vdc}_n}
	\arrow[""{name=0, anchor=center, inner sep=0}, "{{U\circ \Mod}}"', curve={height=18pt}, from=1-1, to=1-3]
	\arrow[""{name=1, anchor=center, inner sep=0}, "{{U\circ \Dbl{F}_u}}"', curve={height=18pt}, from=1-3, to=1-1]
	\arrow[""{name=2, anchor=center, inner sep=0}, "{{U\circ \Mod}}"', curve={height=18pt}, from=1-5, to=1-7]
	\arrow[""{name=3, anchor=center, inner sep=0}, "{{U\circ \Dbl{F}_u}}"', curve={height=18pt}, from=1-7, to=1-5]
	\arrow["\dashv"{anchor=center, rotate=-90}, draw=none, from=1, to=0]
	\arrow["\dashv"{anchor=center, rotate=-90}, draw=none, from=3, to=2]
\end{tikzcd}\]

From these adjunctions we obtain the natural isomorphisms
\begin{align*}
\BiCat{Vdc}(\Dbl{A},\Dbl{V}\mathsf{df}(\Dbl{D},\Dbl{E}))&\cong \BiCat{Vdc}(\Dbl{A},\Mod(\Dbl{E}^\Dbl{D}))\cong \BiCat{Vdc}_n(\Dbl{F}_u(\Dbl{A}),\Mod(\Dbl{E}^\Dbl{D}))\\
&\cong \BiCat{Vdc}(\Dbl{F}_u(\Dbl{A}),\Dbl{E}^\Dbl{D})\cong \BiCat{Vdc}(\Dbl{F}_u(\Dbl{A})\times \Dbl{D},\Dbl{E})
\end{align*}
Thus for $\Dbl{D}$ exponentiable, we have the adjunction

\[\begin{adjustbox}{}
\begin{tikzcd}
	{\BiCat{Vdc}} && {\BiCat{Vdc}}
	\arrow[""{name=0, anchor=center, inner sep=0}, "{\Dbl{V}\mathsf{df}(\Dbl{D},-)}"', curve={height=18pt}, from=1-1, to=1-3]
	\arrow[""{name=1, anchor=center, inner sep=0}, "{\Dbl{F}_u(-)\times \Dbl{D}}"', curve={height=18pt}, from=1-3, to=1-1]
	\arrow["\dashv"{anchor=center, rotate=-90}, draw=none, from=1, to=0]
\end{tikzcd}
\end{adjustbox}\]
(which also appears as Theorem 6.10 in~\cite{arkor2025exponentiablevirtualdoublecategories}) 

Using this adjunction we can describe the data for the VDC $\Dbl{V}\mathsf{df}(\Dbl{D},\Dbl{E})$ as follows:
\begin{enumerate}
    \item Objects are VDFs $\Dbl{D}\cong {\Ob_u}\times \Dbl{D}\to \Dbl{E}$.
    \item Tight arrows are VDFs $\Dbl{F}_u(\Tight)\times \Dbl{D}\to \Dbl{E}$. 
	Such VDFs $F:\Dbl{F}_u(\Tight)\times \Dbl{D}\to \Dbl{E}$ correspond to tight transformations
	between the VDFs $F_0,F_1:\Dbl{D}\to \Dbl{E}$ obtained by restricting $F$
	along $s\times \text{id},t\times \text{id}:\Ob_u\times \Dbl{D}\hookrightarrow \Dbl{F}_u(\Tight)\times \Dbl{D}$.
	The component of the tight transformation $f:F_0\to F_1$ associated to $F$
	at an object $x\in \Dbl{D}$ is given by $f_x:= F(t,\text{id}_x)$, 
	where $t$ is the unique non-identity tight arrow in $\Dbl{F}_u(\Tight)$,
	while the component at a loose arrow $\varphi:x\proto y$ in $\Dbl{D}$ 
	is given by $f_\varphi:= F(I_t,\text{id}_{\varphi})$.
	Naturality of $f$ follows from functoriality of $F$ as follows
	for $a:x\to y$ and $\alpha:\prolist{\varphi}\to \psi$:
	\begin{equation*}
		\frac{f_x}{F_1(a)}=\frac{F(t,\text{id}_x)}{F(\text{id}_1,a)}=F(t,a)=\frac{F(\text{id}_0,a)}{F(t,\text{id}_y)}=\frac{F_0(a)}{f_y}
	\end{equation*}
	and
	\begin{equation*}
		\frac{(f_{\varphi_1}\cdots f_{\varphi_n})}{F_1(\alpha)}=\frac{(F(I_t,\text{id}_{\varphi_1})\cdots F(I_t,\text{id}_{\varphi_n}))}{F(\mathsf{opcart},\alpha)}=\frac{F(\mathsf{opcart},\alpha)}{F(I_t,\text{id}_\psi)}=\frac{F_0(\alpha)}{f_{\psi}}
	\end{equation*}
	If we instead started with a tight transformation $f:F_0\to F_1$, we could use these naturality 
	equalities to define the VDF $F$.
    \item Loose arrows are VDFs $\Dbl{F}_u(\Loose )\times \Dbl{D}\to \Dbl{E}$.
    \item $n$-ary multicells are VDFs $\Dbl{F}_u(\Sq{n})\times \Dbl{D}\to \Dbl{E}$.
\end{enumerate}
	This completes the proof.
\end{proof}

\begin{expl}[Modules]\label{expl:modulesAndModulations}
	When $\Dbl{D}$ and $\Dbl{E}$ are pseudo-double categories, lax double functors of the form 
	$\Dbl{F}_u(\Loose )\times \Dbl{D}\to \Dbl{E}$ correspond to Par\'{e}'s modules,
	while lax double functors of the form $\Dbl{F}_u(\Sq{n})\times \Dbl{D}\to \Dbl{E}$
	are what Par\'{e} calls multimodulations (c.f.~\cite{Pare2011}). 
	In general for VDCs, 
	we can unpack the data of a VDF $H:\Dbl{F}_u(\Loose )\times \Dbl{D}\to \Dbl{E}$ 
	as follows:
	\begin{enumerate}
		\item[(M0)] VDFs $F,G:\Dbl{D}\to \Dbl{E}$ obtained by restricting along the inclusions
		$s\times \text{id},t\times \text{id}:\Ob_u\times \Dbl{D}\hookrightarrow \Dbl{F}_u(\Loose)\times \Dbl{D}$;
		\item[(M1)] For each loose arrow $\varphi:x\proto y$ in $\Dbl{D}$ a loose arrow
		$\Phi(\varphi):=H(\ell,\varphi):F(x)\proto G(y)$;
		\item[(M2)] For each $n$-ary multicell $\alpha:\cell{a}{\prolist{\varphi}}{\psi}{b}$ in $\Dbl{D}$ 
		and each loose arrow $\varphi_i$ in the loose source of $\alpha$,
		an $n$-ary multicell $\Phi_i(\alpha):=H(\mathsf{opcart}_i,\alpha)$
		\[
		\begin{tikzcd}
			{Fx_0} & {Fx_{i-1}} & {Gx_i} & {G(x_n)} \\
			\\
			{F(y_0)} &&& {G(y_1)}
			\arrow["{F({{{\prolist{\varphi_1}}}})}"{inner sep=.8ex}, "\shortmid"{marking}, from=1-1, to=1-2]
			\arrow["{F(a)}"', from=1-1, to=3-1]
			\arrow[""{name=0, anchor=center, inner sep=0}, "{\Phi(\varphi_i)}", from=1-2, to=1-3]
			\arrow["{G({{{\prolist{\varphi_2}}}})}"{inner sep=.8ex}, "\shortmid"{marking}, from=1-3, to=1-4]
			\arrow["{G(b)}", from=1-4, to=3-4]
			\arrow[""{name=1, anchor=center, inner sep=0}, "{\Phi(\psi)}"'{inner sep=.8ex}, "\shortmid"{marking}, from=3-1, to=3-4]
			\arrow["{\Phi_i(\alpha)}"{description}, draw=none, from=0, to=1]
		\end{tikzcd}
		\]
		where $\mathsf{opcart}_i$ is the $n$-ary opcartesian multicell in $\Dbl{F}_u(\Loose)$ with target
		the unique non-identity loose arrow $\ell$ and with loose source having $\ell$ 
		in the $i$th position.
	\end{enumerate}
	The multicell assignment is required to satisfy functoriality in the following sense:
	\begin{enumerate}
		\item[(M3)] If $\alpha=\text{id}_\varphi$ is the unit multicell for a loose arrow $\varphi$,
		then $\Phi_1(\text{id}_\varphi)=\text{id}_{\Phi(\varphi)}$;
		\item[(M4)] If $\tfrac{\prolist{\alpha}}{\beta}$ is the decomposition of a multicell in $\Dbl{D}$,
		and if $\prolist{\alpha}=(\alpha_1\cdots\alpha_n)$ where $\alpha_i$ has arity $k_i$, 
		then for each $1\leq i\leq n$ and $1\leq j\leq k_i$,
		\begin{equation*}
			\Phi_{k_1+\cdots+k_{i-1}+j}\left(\frac{\prolist{\alpha}}{\beta}\right)=\frac{(F(\alpha_1)\cdots F(\alpha_{i-1}),\Phi_j(\alpha_i),G(\alpha_{i+1})\cdots G(\alpha_n))}{\Phi_i(\beta)}
		\end{equation*}
	\end{enumerate}
	Note that in Definition 3.2 of~\cite{Pare2011} this data is unpacked further using the existence 
	of loose units and composites in $\Dbl{D}$. 
	When $\Dbl{D}=\Ob_u$, this data coincides with the definition of a module in $\Dbl{E}$,
	as unpacked in Definition~\ref{defn:Modules}. 
  Note that there are many more action multicells in (M2) than the mere compatible left and right actions of 
  a loose arrow in a module out of a pseudo double category; in the intermediate case of 
  a VDC with decomposable multicells, we expect a module can have its actions indexed only by binary 
  multicells of $\Dbl{D},$ but we do not pursue the details here.
\end{expl}

\subsection{Essentially unique decomposition of multicells in VDCs}\label{subsec:decompositions}

As in the cases of multicategories and semicategories discussed above, we shall characterize
the exponentiable VDCs as those for which the exponential admits composites, which will reduce 
to existence of essentially unique decompositions of multicells of various kinds. For motivation, we
next discuss a particular example of such decompositions in some detail. 
\begin{exmp}[A necessary kind of decomposition]\label{exmp:decomp}
We give an example of one of the colimits, preservation of which by products will characterize
exponentiable VDC's. 

Let $\Dbl{J}$ be the VDC freely generated by a unary, a ternary, and a binary multicell, juxtaposed as shown 
below: 
\[\begin{tikzcd}
	\bullet & \bullet & \bullet & \bullet & \bullet \\
	\bullet & \bullet &&& \bullet \\
	\bullet &&&& \bullet
	\arrow[""{name=0, anchor=center, inner sep=0}, "\shortmid"{marking}, from=1-1, to=1-2]
	\arrow[from=1-1, to=2-1]
	\arrow["\shortmid"{marking}, from=1-2, to=1-3]
	\arrow[from=1-2, to=2-2]
	\arrow[""{name=1, anchor=center, inner sep=0}, "\shortmid"{marking}, from=1-3, to=1-4]
	\arrow["\shortmid"{marking}, from=1-4, to=1-5]
	\arrow[from=1-5, to=2-5]
	\arrow[""{name=2, anchor=center, inner sep=0}, "\shortmid"{marking}, from=2-1, to=2-2]
	\arrow[from=2-1, to=3-1]
	\arrow[""{name=3, anchor=center, inner sep=0}, "\shortmid"{marking}, from=2-2, to=2-5]
	\arrow[from=2-5, to=3-5]
	\arrow[""{name=4, anchor=center, inner sep=0}, "\shortmid"{marking}, from=3-1, to=3-5]
	\arrow["{\alpha_1}"{description}, draw=none, from=0, to=2]
	\arrow["{\alpha_2}"{description}, draw=none, from=1, to=3]
	\arrow["\beta"{description}, draw=none, from=3, to=4]
\end{tikzcd}\]
In other words, $\Dbl{J}$ is the colimit of a diagram $D:J\to \Vdc$ shown below:
\[\begin{tikzcd}
	{\Sq{1}} && {\Sq{3}} \\
	\Loose & \Tight & \Loose \\
	& {\Sq{2}}
	\arrow[from=2-1, to=1-1]
	\arrow[from=2-1, to=3-2]
	\arrow[from=2-2, to=1-1]
	\arrow[from=2-2, to=1-3]
	\arrow[from=2-3, to=1-3]
	\arrow[from=2-3, to=3-2]
\end{tikzcd}\]
For any VDC $\Dbl{D},$ we have the canonical comparison morphism 
$c:\Dbl{D}':=\colim_{j\in J} \Dbl{D}\times D(j)\to \Dbl{D}\times \Dbl{J}$, whose invertibility
we aim to investigate. 

To compute the colimit defining $\Dbl{D}'$, we first take the 
levelwise colimit of the $\Dbl{D}\times D(j)$ on objects, tight and loose arrows, and multicells of each arity.
This already produces the correct value of $\Dbl{D}'$ except on quaternary multicells, and indeed $c$ 
is always an isomorphism away from the multicells. What is nontrivial is, thus, what happens 
when we compose multicells of $\Dbl{D}$ according to the single interesting multicell composite in $\Dbl{J}.$ 
Consider, then, a composable triple $(\gamma_1,\gamma_2,\delta)$ in $\Dbl{D}$ like so:
\[\begin{tikzcd}
	\bullet & \bullet & \bullet & \bullet & \bullet \\
	\bullet & \bullet &&& \bullet \\
	\bullet &&&& \bullet
	\arrow[""{name=0, anchor=center, inner sep=0}, "\shortmid"{marking}, from=1-1, to=1-2]
	\arrow[from=1-1, to=2-1]
	\arrow["\shortmid"{marking}, from=1-2, to=1-3]
	\arrow[from=1-2, to=2-2]
	\arrow[""{name=1, anchor=center, inner sep=0}, "\shortmid"{marking}, from=1-3, to=1-4]
	\arrow["\shortmid"{marking}, from=1-4, to=1-5]
	\arrow[from=1-5, to=2-5]
	\arrow[""{name=2, anchor=center, inner sep=0}, "\shortmid"{marking}, from=2-1, to=2-2]
	\arrow[from=2-1, to=3-1]
	\arrow[""{name=3, anchor=center, inner sep=0}, "\shortmid"{marking}, from=2-2, to=2-5]
	\arrow[from=2-5, to=3-5]
	\arrow[""{name=4, anchor=center, inner sep=0}, "\shortmid"{marking}, from=3-1, to=3-5]
	\arrow["{\gamma_1}"{description}, draw=none, from=0, to=2]
	\arrow["{\gamma_2}"{description}, draw=none, from=1, to=3]
	\arrow["\delta"{description}, draw=none, from=3, to=4]
\end{tikzcd}\]
The set of quaternary multicells in the colimit $\Dbl{D}'$ is a certain quotient of the set of such triples,
each of which provides a formal composite $\tfrac{(\gamma_1 ,\gamma_2)}{\delta}.$ The quotient 
relation arises from the identity multicells of the loose arrows along the 
middle row of $\Dbl{J}.$ Specifically, we have a natural isomorphism of the quaternary multicells of $\Dbl{D}'$ 
with the quotient of the set $\Sigma$ of composable triples $(\gamma_1,\gamma_2,\delta)$ as above by the relation that,
given a situation of the form below, we must have 
\[\left(\frac{\gamma_1}{\varepsilon_1},\frac{\gamma_2}{\varepsilon_2},\delta\right)\sim\left(\gamma_1,\gamma_2,\frac{(\varepsilon_1,\varepsilon_2)}{\delta}\right):\]
\[\begin{tikzcd}
	\bullet & \bullet & \bullet & \bullet & \bullet \\
	\bullet & \bullet &&& \bullet \\
	\bullet & \bullet &&& \bullet \\
	\bullet &&&& \bullet
	\arrow[""{name=0, anchor=center, inner sep=0}, "\shortmid"{marking}, from=1-1, to=1-2]
	\arrow[from=1-1, to=2-1]
	\arrow["\shortmid"{marking}, from=1-2, to=1-3]
	\arrow[from=1-2, to=2-2]
	\arrow[""{name=1, anchor=center, inner sep=0}, "\shortmid"{marking}, from=1-3, to=1-4]
	\arrow["\shortmid"{marking}, from=1-4, to=1-5]
	\arrow[from=1-5, to=2-5]
	\arrow[""{name=2, anchor=center, inner sep=0}, "\shortmid"{marking}, from=2-1, to=2-2]
	\arrow[from=2-1, to=3-1]
	\arrow[""{name=3, anchor=center, inner sep=0}, "\shortmid"{marking}, from=2-2, to=2-5]
	\arrow[from=2-2, to=3-2]
	\arrow[from=2-5, to=3-5]
	\arrow[""{name=4, anchor=center, inner sep=0}, "\shortmid"{marking}, from=3-1, to=3-2]
	\arrow[from=3-1, to=4-1]
	\arrow[""{name=5, anchor=center, inner sep=0}, "\shortmid"{marking}, from=3-2, to=3-5]
	\arrow[from=3-5, to=4-5]
	\arrow[""{name=6, anchor=center, inner sep=0}, "\shortmid"{marking}, from=4-1, to=4-5]
	\arrow["{\gamma_1}"{description}, draw=none, from=0, to=2]
	\arrow["{\gamma_2}"{description}, draw=none, from=1, to=3]
	\arrow["{\varepsilon_1}"{description}, draw=none, from=2, to=4]
	\arrow["{\varepsilon_2}"{description}, draw=none, from=3, to=5]
	\arrow["\delta"{description}, draw=none, from=5, to=6]
\end{tikzcd}\]

Let us rephrase. Recall that $\Cat{fc}_n(\Dbl{D}_1)$ denotes the category of paths of loose arrows of length $n$ 
in $\Dbl{D}.$ Let $\Dbl{D}_{1,3}$ denote the profunctor 
\[\MCelln{1}\Dbl{D}'(\varphi_1,\psi_1)\times_{\Tight \Dbl{D}'}
\MCelln{3}\Dbl{D}'(\varphi_2,\varphi_3,\varphi_4;\psi_2):
\Cat{fc}_4(\Dbl{D}_1)\proto \Cat{fc}_2(\Dbl{D}_1)\]

Then the argument above demonstrates the coend formula
\[\MCelln{4}\Dbl{D}'(\prolist{\varphi};\chi) = 
\int^{\prolist{\psi}\in\Cat{fc}_2(\Dbl{D}_1)}
\Dbl{D}_{1,3}(\prolist{\varphi};\prolist{\psi})
\times \MCelln{2}\Dbl{D}'(\prolist{\psi};\chi).\]
In short, the invertibility of $c$ in this case reduces to checking the 
coend formula just above for $\Dbl{D}\times\Dbl{J}.$  

Now, $\Dbl{J}$ has exactly one quaternary multicell $\rho:=\frac{(\alpha_1,\alpha_2)}{\beta},$ so 
$\MCelln{4}(\Dbl{D}\times \Dbl{J})\cong \MCelln{4}(\Dbl{D}).$ By all means we have a map 
\[\pi:\int^{\prolist{\psi}\in\Cat{fc}_2(\Dbl{D}_1)}
\Dbl{D}_{1,3}(\prolist{\varphi};\prolist{\psi})
\times \MCelln{2}\Dbl{D}'(\prolist{\psi};\chi)
\to \MCelln{4}(\Dbl{D}\times \Dbl{J})\cong \MCelln{4}(\Dbl{D})\]
given as for $\Dbl{D}'$ by a composition like $\tfrac{(\gamma_1 ,\gamma_2)}{\delta}.$
The question is when this map $\pi$ is an isomorphism. The surjectivity of $\pi$ amounts to the condition 
that every quaternary multicell in $\Dbl{D}$ may be decomposed as a composite $\frac{(\gamma_1,\gamma_2)}{\delta}$ as above. 
Injectivity says that such a decomposition is essentially unique, up to ``interchanging'' 
unary multicells $\varepsilon_1,\varepsilon_2$ as above.

Since the invertibility of $\pi$ is necessary for exponentiability of $\Dbl{D},$ this already provides two 
concrete ways in which $\Dbl{D}$ can fail to be exponentiable. For instance, if $\Dbl{D}=\Sq{4}$ 
is \emph{freely} generated by a quaternary multicell $\mu$, then $\Dbl{D}$ has no binary or ternary multicells at all, 
so $\mu$ has no decomposition, making $\pi$ non-surjective. For a non-injective example, we could take 
a pushout $\Dbl{J}+_{\partial \Dbl{J}} \Dbl{J}$, modulo the relation identifying the two quaternary multicells.
\end{exmp}

\subsubsection*{A Sketch For VDCs}

We now aim toward applying Corollary \ref{cor:exponentiable-sketch} to 
fully classify the exponentiable
objects in $\Vdc.$ This category can be sketched via a sketch containing objects for the 
objects, tight arrows, loose arrows, and unary multicells, strings of tight arrows and unary multicells 
of lengths up to 3, multicells of each arity, as well as for gluings
of multicells of heights $2$ and $3.$ Let us be more precise.

\begin{defn}[Tree]\label{defn:tree}
By a \emph{tree}, we shall mean a rooted finite planar tree in the sense of Kock (\cite{Kock2011},) 
of uniform height. 

In particular, there is a unique root edge. We think of the root as the output, which determines an interpretation
of words including ``input'', ``output'', ``source'', ``target'', ``parent'', and ``child'' for edges and vertices.
The root edge uniquely has no output vertex, while leaf edges are characterized as having no input vertex. 
The input edges of a vertex are equipped with a total order, and all leaf edges are the same distance from the root.
\qed
\end{defn}
In particular, the smallest tree, $\mid$, is the one with a single edge and no vertices. 

Every height-$n$ tree $T$ determines a VDC $\Dbl{J}(T)$ given by 
gluing $n$ tight layers of multicells, one multicell per node, with the arity of the multicell
corresponding to each node given by the node's number of children, and the multicells corresponding 
to adjacent sibling nodes glued along their tight edges, while parents' multicells are glued to their children
along a loose edge. The resulting gluing 
has a single maximal multicell of arity the total number of leaves in the tree. An 
example with $n=3$ and $10,8,$ and $4$ edges in the respective layers
is depicted below. 

\[\begin{tikzcd}
	{\color{white}\bullet} & {\color{white}\bullet} & {\color{white}\bullet} & {\color{white}\bullet} & {\color{white}\bullet} & {\color{white}\bullet} & {\color{white}\bullet} & {\color{white}\bullet} & {\color{white}\bullet} & {\color{white}\bullet} \\
	\bullet &&& \bullet & \bullet & \bullet & \bullet & \bullet & \bullet & \bullet \\
	& \bullet && \bullet && \bullet && \bullet \\
	&&&& \bullet \\
	&&&& {\color{white}\square}
	\arrow[no head, from=2-1, to=1-1]
	\arrow[no head, from=2-1, to=1-2]
	\arrow[no head, from=2-4, to=1-3]
	\arrow[no head, from=2-6, to=1-4]
	\arrow[no head, from=2-6, to=1-5]
	\arrow[no head, from=2-8, to=1-6]
	\arrow[no head, from=2-9, to=1-7]
	\arrow[no head, from=2-9, to=1-8]
	\arrow[no head, from=2-9, to=1-9]
	\arrow[no head, from=2-10, to=1-10]
	\arrow[no head, from=3-2, to=2-1]
	\arrow[no head, from=3-6, to=2-4]
	\arrow[no head, from=3-6, to=2-5]
	\arrow[no head, from=3-6, to=2-6]
	\arrow[no head, from=3-6, to=2-7]
	\arrow[no head, from=3-6, to=2-8]
	\arrow[no head, from=3-8, to=2-9]
	\arrow[no head, from=3-8, to=2-10]
	\arrow[no head, from=4-5, to=3-2]
	\arrow[no head, from=4-5, to=3-4]
	\arrow[no head, from=4-5, to=3-6]
	\arrow[no head, from=4-5, to=3-8]
	\arrow[no head, from=4-5, to=5-5]
\end{tikzcd}\]
\[\begin{tikzcd}[column sep=small]
	\bullet & \bullet & \bullet & \bullet & \bullet & \bullet & \bullet & \bullet & \bullet & \bullet & \bullet \\
	\bullet & \bullet & \bullet & \bullet & \bullet & \bullet & \bullet &&& \bullet & \bullet \\
	\bullet & \bullet & \bullet &&&& \bullet &&&& \bullet \\
	\bullet &&&&&&&&&& \bullet
	\arrow["\shortmid"{marking}, from=1-1, to=1-2]
	\arrow[from=1-1, to=2-1]
	\arrow["\shortmid"{marking}, from=1-2, to=1-3]
	\arrow["\shortmid"{marking}, from=1-3, to=1-4]
	\arrow[from=1-3, to=2-2]
	\arrow["\shortmid"{marking}, from=1-4, to=1-5]
	\arrow[from=1-4, to=2-3]
	\arrow[from=1-4, to=2-4]
	\arrow["\shortmid"{marking}, from=1-5, to=1-6]
	\arrow["\shortmid"{marking}, from=1-6, to=1-7]
	\arrow[from=1-6, to=2-5]
	\arrow[from=1-6, to=2-6]
	\arrow["\shortmid"{marking}, from=1-7, to=1-8]
	\arrow[from=1-7, to=2-7]
	\arrow["\shortmid"{marking}, from=1-8, to=1-9]
	\arrow["\shortmid"{marking}, from=1-9, to=1-10]
	\arrow["\shortmid"{marking}, from=1-10, to=1-11]
	\arrow[from=1-10, to=2-10]
	\arrow[from=1-11, to=2-11]
	\arrow["\shortmid"{marking}, from=2-1, to=2-2]
	\arrow[from=2-1, to=3-1]
	\arrow["\shortmid"{marking}, from=2-2, to=2-3]
	\arrow[from=2-2, to=3-2]
	\arrow[from=2-2, to=3-3]
	\arrow["\shortmid"{marking}, from=2-3, to=2-4]
	\arrow["\shortmid"{marking}, from=2-4, to=2-5]
	\arrow["\shortmid"{marking}, from=2-5, to=2-6]
	\arrow["\shortmid"{marking}, from=2-6, to=2-7]
	\arrow["\shortmid"{marking}, from=2-7, to=2-10]
	\arrow[from=2-7, to=3-7]
	\arrow["\shortmid"{marking}, from=2-10, to=2-11]
	\arrow[from=2-11, to=3-11]
	\arrow["\shortmid"{marking}, from=3-1, to=3-2]
	\arrow[from=3-1, to=4-1]
	\arrow["\shortmid"{marking}, from=3-2, to=3-3]
	\arrow["\shortmid"{marking}, from=3-3, to=3-7]
	\arrow["\shortmid"{marking}, from=3-7, to=3-11]
	\arrow[from=3-11, to=4-11]
	\arrow["\shortmid"{marking}, from=4-1, to=4-11]
\end{tikzcd}\]

Precisely:
\begin{defn}\label{defn:J}
  Let $\Cat{PTEmb}$ be the category of  trees and embeddings preserving arities 
  and planar orderings (i.e. $\Cat{TEmb}/\mathrm{List}$ in the notation of \cite{Kock2011}.) 

  If $T$ is a tree of height $n,$ we define a diagram $C(T):J(T)\to \Vdc$
  as follows: 
  \begin{itemize}
  \item For each edge $e$ of $T,$ $J(T)$ has an object $e$ for which $C(T)(e)=\Loose$ 
  is the walking loose arrow.
  \item For each node $v$ of $T$ with $k(v)$ children (note that $k(v)$ may be $0$), $J(T)$ has an object $v$ 
  for which $C(T)(v)=\uSq{k(v)}$ is the walking $k$-ary multicell.
  \item If an edge $e$ has a parent vertex $s(e),$ then $J(T)$ contains a morphism 
  $e\to s(e)$ sent by $C(T)$ to the inclusion $\Loose \to \uSq{k(s(e))}$ of the loose target, 
  while if $e$ is itself the $i$th parent of the vertex $t(e)$, 
  then $J(T)$ contains a morphism $e\to t(e)$ sent by $C(T)$ to the inclusion 
  $\Loose \to \uSq{k(t(e))}$ of the $i$th loose source.
  \item The total order on each node's input edges induces a total order on 
  all the nodes of each height. If a node $v$ is the successor of $w$ in this order,
  then $J(T)$ contains a span $w\leftarrow wv \to v$ sent by $C(T)$ to the 
  span $\uSq{k(w)}\leftarrow \Tight \to \uSq{k(v)}$ including $\Tight$ as the 
  tight target at $w$ and the tight source at $v.$
  \end{itemize}
  Then we define the VDC $\Dbl{J}(T):=\colim C(T)$ in $\Vdc.$ 
  Furthermore, $C(T)$ is evidently natural in $T,$ so that $\Dbl{J}$ becomes a functor $\Cat{PTEmb}\to \Vdc.$
\qed
\end{defn}

Note in particular that for $\mid$ the tree with no nodes, then $\Dbl{J}(\mid)=\Loose.$
These virtual double categories include almost all those needed to define our desired sketch
for $\Vdc.$

\begin{defn}\label{defn:virtCubeSite}
	We define the \emph{virtual cubical site} $\square_{v\partial}$ (in analogy with the cubical site in~\cite{GrandisLuca2003}) as the full subcategory of $\Vdc$ 
  spanned by the walking object $\Ob$, the walking tight arrow $\Tight$, the pushouts 
  $\Tight[n]:=\underbrace{\Tight\cup_{\Ob}\cdots\cup_{\Ob} \Tight}_n$, and $\Dbl{J}(T)$ 
  for each tree $T.$
\qed
\end{defn}

Let $\Cat{inrt}\subseteq \square_{v\partial}$ denote the wide subcategory generated by 
morphisms that in the image of $\Dbl{J}$ together with morphisms among the $\Tight[n]$ 
which are inert in the sense of the simplicial category, that is, 
sending generating arrows to generating arrows. Thus all the inert morphisms are characterized 
by refusing to map a generating cell to a nontrivial composite.

If $\Cat{el}\subseteq \Cat{inrt}$ is the full subcategory spanned by $\Ob$, $\Loose$, $\Tight$, and 
$\Sq{n}$, then every object of the virtual cubical site is canonically glued from elements of 
$\Cat{el},$ by definition. More explicitly, for each 
$\Dbl{D}\in \square_{v\partial}$ we have a natural colimit cocone $C(\Dbl{D})^\rhd$ under the diagram 
of all inert maps from elementaries to $\Dbl{D}$, i.e. the diagram
\begin{equation*}
	C(\Dbl{D}):\Cat{el}\downarrow \Dbl{D} \to \Cat{el}\hookrightarrow \square_{v\partial}
\end{equation*}
with colimit $\Dbl{D}$. This recovers the defining diagram $C(T)=C(\Dbl{J}(T))$ for trees $T$ while also covering 
the cases of the $\Tight[n].$
(Here $\Cat{el}\downarrow \Dbl{D}$ is the comma category for the inclusions $\Cat{el}\hookrightarrow \Cat{inrt}\hookleftarrow \{\Dbl{D}\}$.)

\begin{defn}[Sketch for VDCs]\label{defn:sketchVDC}
The sketch $(\Cat{C},\Cat{S})$ for VDCs has 
$\Cat{C}^{\op}\subseteq \square_{v\partial}$ as the full subcategory 
spanned by the following objects: 
\begin{enumerate}
	\item $O:=\Ob,\downarrow:=\Tight, \triangle:=\downarrow\cup_O\downarrow$, and 
  $\tet:=\downarrow\cup_O\downarrow\cup_O\downarrow$, for the tight category;
	\item $A_T:=\Dbl{J}(T)$ for every  tree $T$ of height $\leq 3.$\footnote{If $T$ is a tree of height $1$ it is uniquely determined by its single branching factor $n,$ in which case 
  $A_n:=A_T$ will represent the $n$-ary multicells of a VDC.}
\end{enumerate}
The sketched cones in $\Cat{S}$ are precisely the opposites of the cocones 
$C(\Dbl{D})^\rhd$ in $\Cat{C}^{op}$ for each $\Dbl{D}\in \Cat{C}^{op}\backslash \Cat{el}$. 
Informally, these are the cones making the objects and tight arrows, as well as the loose 
arrows and unary multicells, form a category, 
along with those forcing a model to admit a composition of layers of multicells of height 2
and those forcing that composition to be associative.
\end{defn}

Now consider exponentiability. Taking products with any virtual double
category preserves the colimits for paths of tight arrows, since every category is exponentiable. 
Thus the exponentiable virtual double categories will be precisely those that preserve all the colimits
freely forming the pasting virtual double categories $\Dbl{J}(T)$ for every  tree $T$
of height $2$ or $3$. As illustrated in Example \ref{exmp:decomp}, preservation 
of such colimits is equivalent to existence of essentially unique decompositions of multicells,
as we now turn to show.

\subsection{Reduction of height of decompositions}

In order to phrase our characterizations of exponentiable VDCs we will need some notation and terminology 
for decompositions of multicells in VDCs. Recall from Section~\ref{sec:VDCs} we have the profunctor 
$\MCell (\Dbl{D}):\Cat{fc}(\Dbl{D}_1)^{op}\times \Dbl{D}_1\to \Cat{Set}$ of multicells for a VDC $\Dbl{D}$, 
with associated category obtained by the Grothendieck construction. 
Viewing this profunctor as a category via the collage, we can define the category of height $n$ pasting diagrams 
in $\Dbl{D}$ as the iterated pullback
\begin{equation*}
	\Cat{Paste}_n(\Dbl{D}):=\Cat{fc}^{n-1}(\MCell (\Dbl{D}))\times_{\Cat{fc}^{n-1}(\Dbl{D}_1)}\cdots \times_{\Cat{fc}(\Dbl{D}_1)}\MCell (\Dbl{D})
\end{equation*}
We will notate a pasting diagram, viewed as an object in $\Cat{Paste}_n(\Dbl{D})$, by 
${\threefrac{\prolist{\alpha_n}}{\vdots}{\alpha_1}}$, and for each $0\leq i \leq n$ we refer to the 
sequence of sequences of loose arrows $\prolist{\varphi_{i,1}} \cdots \prolist{\varphi_{i,m}}$ which are the 
sources of the $\prolist{\alpha_i}$ as the $i$th \emph{interface} of the pasting diagram. 

The category $\Cat{Paste}_n(\Dbl{D})$ is the generalization of the category $\Cat{Fun}([n],\Cat{C})$ of 
length $n$-sequences of composable arrows in a category $\Cat{C}$ to the case of VDCs. 
Note that for each $1\leq i \leq n-1$, we have a natural functor 
$c_i:\Cat{Paste}_n(\Dbl{D})\to \Cat{Paste}_{n-1}(\Dbl{D})$ given by composing $i$th and $(i+1)$st layers of the 
pasting diagram.
We will write $c:\Cat{Paste}_n(\Dbl{D})\to \Cat{Paste}_1(\Dbl{D})=\MCell (\Dbl{D})$ for the functor given by composing the whole pasting diagram. For a tree $T$ of height $n$, indexing a shape of pasting diagrams in $\Cat{Paste}_n(\Dbl{D})$, we will write $c_i(T)$ for the height $(n-1)$ tree obtained by composing along the $i$th layer.

For an $m$-ary multicell $\alpha$ in $\Dbl{D}$, we write 
$\Cat{Paste}_n(\Dbl{D};\alpha):=c^{-1}(\{\alpha\})\subseteq \Cat{Paste}_n(\Dbl{D})$ for the 
(non-full) subcategory of height-$n$ pasting diagrams which compose to $\alpha$, with morphisms those which are 
identities on the loose source and target of $\alpha$. We will say that $\alpha$ 
\emph{admits essentially unique decompositions} if for each integer $n\geq 2$, the set of connected 
components $\pi_0\Cat{Paste}_n(\Dbl{D};\alpha)$ is a singleton for each height $n$ tree $T$ with $m$ 
leaves, corresponding to a pasting diagram of shape $T$ lifting $\alpha$. 
Here two decompositions of $\alpha$ in $\Cat{Paste}_n(\Dbl{D};\alpha)$ will be said to be 
\emph{equivalent up to associativity} if they lie in the same connected component. 

Our first characterization of exponentiable VDCs will be those admitting 
essentially unique decompositions for all multicells. 

\begin{defn}[VDCs with Decomposable multicells]\label{defn:DecompCells}
	We will say a VDC $\Dbl{D}$ has \emph{decomposable multicells} if every multicell $\alpha$ in $\Dbl{D}$ 
  admits essentially unique decompositions.
\qed
\end{defn}

In practice showing that a VDC has decomposable multicells directly from the definition is unreasonable. 
Thus, in the process of characterizing exponentiability, we will aim to discover apparently much weaker 
conditions which are in fact sufficient for existence and essential uniqueness of decompositions. 
Let's first show we can reduce from considering decompositions into arbitrarily many layers to the case of 
decompositions into two layers. Explication \ref{expl:binaryToTernary} may help fix 
the basic idea in the reader's mind before reading the general proof of the next two
results.

\begin{lem}\label{lem:components}
	Let $\Dbl{D}$ be a VDC whose multicells admit essentially unique decompositions into $2$ layers, 
  let $n\geq 2$ be an integer, and let $\alpha$ be a multicell in $\Dbl{D}$. 
  If two $(n+1)$-layer decompositions of $\alpha$ over the same tree map to the same connected component 
  under the functor $c_1:\Cat{Paste}_{n+1}(\Dbl{D};\alpha)\to \Cat{Paste}_n(\Dbl{D};\alpha)$, 
  then they are in the same connected component in $\Cat{Paste}_{n+1}(\Dbl{D};\alpha)$.
\end{lem}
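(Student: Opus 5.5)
The plan is to lift the zigzag connecting $c_1(P)$ and $c_1(P')$ in $\Cat{Paste}_n(\Dbl{D};\alpha)$ back to $\Cat{Paste}_{n+1}(\Dbl{D};\alpha)$, one object and one morphism at a time. Fix the tree $T$ of height $n+1$ and two decompositions $P,P'$ of $\alpha$ of shape $T$. Choose a zigzag $c_1(P)=Q_0 - Q_1 - \cdots - Q_k = c_1(P')$ in $\Cat{Paste}_n(\Dbl{D};\alpha)$. Every $Q_j$ has shape $c_1(T)$, since morphisms of pasting diagrams preserve shape. Write $\gamma^{(j)}_1,\dots,\gamma^{(j)}_r$ for the cells in the merged (first) layer of $Q_j$. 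For each of these cells, the subtree of $T$ sitting above the corresponding vertex of $c_1(T)$ is a height-$2$ tree. By the existence half of the hypothesis, each $\gamma^{(j)}_s$ has a $2$-layer decomposition of that shape. Substituting these decompositions into $Q_j$ gives a lift $\widetilde{Q}_j\in c_1^{-1}(Q_j)$ of shape $T$, and we take $\widetilde{Q}_0=P$ and $\widetilde{Q}_k=P'$.

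\textbf{Step 1 (lifts of a single object are connected).} Suppose $R,R'$ are two lifts of the same $Q_j$. Then they agree away from the two unmerged layers, and for each $s$ they give two $2$-layer decompositions of $\gamma^{(j)}_s$ with the same shape. The uniqueness half of the hypothesis gives a zigzag between these two decompositions, and its morphisms are identities on the loose source and target of $\gamma^{(j)}_s$. We extend each such morphism by identity unary cells on all other interfaces. Because the boundary components are identities, the pasting conditions for the other cells hold trivially, so we get morphisms in $\Cat{Paste}_{n+1}(\Dbl{D};\alpha)$. We treat the cells $\gamma^{(j)}_s$ one at a time, changing only the $s$-th block while the others stay fixed. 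This avoids any need to argue that a product of connected categories is connected.

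\textbf{Step 2 (lifting a morphism).} Let $\sigma:Q\to Q'$ be one morphism of the zigzag, in either direction. It consists of unary cells on every interface of $c_1(T)$. For a merged cell $\gamma$ of $Q$ and the corresponding $\gamma'$ of $Q'$, write $\tau_{\mathrm{top}}$ for the components of $\sigma$ on the loose source of $\gamma$ and $\tau_{\mathrm{bot}}$ for the component on its loose target. Naturality of $\sigma$ gives
\[\kappa:=\tfrac{\gamma}{\tau_{\mathrm{bot}}}=\tfrac{\tau_{\mathrm{top}}}{\gamma'}.\]
Take lifts $\widetilde Q,\widetilde Q'$ in which $\gamma=\tfrac{\boldsymbol{\delta}}{\varepsilon}$ and $\gamma'=\tfrac{\boldsymbol{\delta}'}{\varepsilon'}$. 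Then $\kappa$ has the two $2$-layer decompositions $\tfrac{\boldsymbol{\delta}}{\tfrac{\varepsilon}{\tau_{\mathrm{bot}}}}$ and $\tfrac{\tfrac{\boldsymbol{\tau}_{\mathrm{top}}}{\boldsymbol{\delta}'}}{\varepsilon'}$. Let $R$ be the pasting diagram with the first decomposition in this block and all other layers taken from $Q'$. Let $R'$ be defined the same way with the second decomposition. There is a morphism $\widetilde Q\to R$ whose components are $\sigma$ on the old interfaces, $\tau_{\mathrm{bot}}$ below $\gamma$, and identities on the new middle interface. Its pasting conditions reduce to those of $\sigma$ together with the defining equations of $R$. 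Similarly there is a morphism $R'\to\widetilde Q'$ with components $\tau_{\mathrm{top}}$ on top and identities elsewhere in the block. Finally, $R$ and $R'$ are two decompositions of the same cell $\kappa$ with the same shape, so Step 1 applied to $\kappa$ connects them. Doing this block by block connects $\widetilde Q$ to $\widetilde Q'$. Concatenating over the whole zigzag, and using Step 1 at the endpoints to pass from $\widetilde Q_0$ to $P$ and from $\widetilde Q_k$ to $P'$, proves the lemma.

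\textbf{Main obstacle.} I expect the main difficulty to be the bookkeeping in Step 2: checking that the mixed diagrams $R,R'$ really live over $\alpha$ and have shape $T$, and that the morphisms out of or into them satisfy every pasting equality. The tight boundaries of adjacent cells in a layer must also stay compatible when $\tau_{\mathrm{top}}$ and $\tau_{\mathrm{bot}}$ are absorbed into one block at a time. I would handle this by performing the absorption one merged cell at a time, so that at each stage only one block differs from a diagram already known to be valid.
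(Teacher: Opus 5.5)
Your architecture is the paper's: lift the objects of the zig-zag, connect any two lifts of one object using 2-layer uniqueness, and handle a morphism $\sigma$ by comparing two 2-layer decompositions of $\kappa$. This $\kappa$ is the paper's $\chi=\tfrac{\prolist{\beta_2}}{\beta_1}=\tfrac{\prolist{\rho}}{\gamma_1}$. However, Step 2 as written does not typecheck.

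The cell $\kappa=\tfrac{\gamma}{\tau_{\mathrm{bot}}}=\tfrac{\tau_{\mathrm{top}}}{\gamma'}$ has loose source an interface of $Q$ and loose target an interface of $Q'$. So any diagram with a decomposition of $\kappa$ in the block must carry the layers of $Q$ above the block and the layers of $Q'$ below it. With ``all other layers taken from $Q'$'', your $R$ and $R'$ are ill-formed whenever $\tau_{\mathrm{top}}$ is not an identity. For the same reason, your morphism $\widetilde Q\to R$ fails. It is $\tau_{\mathrm{top}}$ on the block's upper interface and the identity on the new middle interface, so it would require $\prolist{\delta}=\tfrac{\prolist{\tau}_{\mathrm{top}}}{\prolist{\delta}}$, which does not even typecheck. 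The components must be distributed the other way:
\begin{itemize}
\item $\widetilde Q\to R$ is the identity above the block and $\sigma$ from $\tau_{\mathrm{bot}}$ downward;
\item $R'\to\widetilde Q'$ is $\sigma$ from $\tau_{\mathrm{top}}$ upward and the identity below.
\end{itemize}

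Moreover, in this lemma $c_1$ composes the \emph{bottom} two layers. So the merged layer is the single root cell ($r=1$), and $\tau_{\mathrm{bot}}$ is an identity because morphisms of $\Cat{Paste}_n(\Dbl{D};\alpha)$ fix the loose target of $\alpha$. The corrected Step 2 is then:
\begin{itemize}
\item $R=\widetilde Q$;
\item $R'$ consists of the upper layers of $Q$ over $\tfrac{\prolist{\tau}_{\mathrm{top}}}{\prolist{\delta}'}$ and $\varepsilon'$, which is another lift of $Q$, so Step 1 connects it to $\widetilde Q$;
\item $\sigma$ on the old interfaces, with identities on the new interface, gives a morphism $R'\to\widetilde Q'$.
\end{itemize}
This $R'$ is exactly the intermediate diagram in the paper's proof (top layer $\prolist{\beta_3}$, then $\tfrac{\prolist{\sigma}}{\prolist{\gamma_2}}$, then $\gamma_1$).

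It is fortunate that $r=1$, because your block-by-block treatment of several merged cells is not sound. A morphism of $\Cat{Paste}_2(\Dbl{D};\gamma_s)$ can have non-identity outermost tight arrows on its inner interface, since the tight sides of $\gamma_s$ may be split differently between the two layers. Likewise, absorbing $\tau_{\mathrm{bot},s}$ into a single cell replaces its tight sides by composites. In both cases, changing one block while freezing its neighbours breaks the matching of tight edges between adjacent cells. This is the obstruction the paper points out in its remark on the insufficiency of (P1), and the remedy proposed in your last paragraph does not address it.

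With these corrections, your proof is a mild streamlining of the paper's. You treat morphisms that are nontrivial on all interfaces, and both orientations in the zig-zag, uniformly. The paper instead first factors into single-interface morphisms, reduces to $n=2$, and treats the two directions of the witnessing morphism separately.
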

\begin{proof}
	Let $\beta={\threefrac{\prolist{\beta_{n+1}}}{\vdots}{\beta_1}}$ and 
  $\gamma={\threefrac{\prolist{\gamma_{n+1}}}{\vdots}{\gamma_1}}$ be $(n+1)$-layer decompositions 
  of $\alpha$ with the same arities that map to the same connected component under $c_1$. 
  We wish to show that the zig-zag of morphisms witnessing that $c_1(\beta)$ and $c_1(\gamma)$ are in the same 
  connected component lifts to a possibly differently shaped zig-zag of morphisms witnessing 
  that $\beta$ and $\gamma$ are in the same connected component. 
  
  Note that we can consider a zig-zag such that 
  each morphism consists of a sequence of identity multicells on all interfaces but one. 
  Then all morphisms which are non-trivial on an interface other than the $1$st interface lift to 
  $\Cat{Paste}_{n+1}(\Dbl{D};\alpha)$. Thus, it suffices to consider the case where the morphisms in the 
  zig-zag consist of identity multicells on all interfaces but the first interface. In particular, this 
  observation allows us to reduce to the case where $n=2$. 
  
  Proceeding inductively on the length of the chain 
  of zig-zags, we can further reduce to the case where the zig-zag consists of a single morphism. 
  Without loss of generality suppose the morphism is from $c_1(\beta)$ to $c_1(\gamma)$. Then we have the map 
  of pasting diagrams of multicells
	\[
	\begin{tikzcd}
	& {x_{3,k_{3,m}}} && {x_{3,k_{3,m}}} \\
	{x_{3,0}} && {x_{3,0}} \\
	& {x_{2,k_{2,m}}} && {y_{2,k_{2,m}}} \\
	{x_{2,0}} && {y_{2,0}} \\
	& {x_{1,m}} && {y_{1,m}} \\
	{x_{1,0}} && {y_{1,0}} \\
	& {x_{0,1}} && {x_{0,1}} \\
	{x_{0,0}} && {x_{0,0}}
	\arrow[""{name=0, anchor=center, inner sep=0}, equals, from=1-2, to=1-4]
	\arrow[dashed, from=1-2, to=3-2]
	\arrow[from=1-4, to=3-4]
	\arrow[""{name=1, anchor=center, inner sep=0}, "\shortmid"{marking}, from=2-1, to=1-2]
	\arrow[""{name=2, anchor=center, inner sep=0}, equals, from=2-1, to=2-3]
	\arrow[from=2-1, to=4-1]
	\arrow[""{name=3, anchor=center, inner sep=0}, "\shortmid"{marking}, from=2-3, to=1-4]
	\arrow[from=2-3, to=4-3]
	\arrow[""{name=4, anchor=center, inner sep=0}, from=3-2, to=3-4]
	\arrow[dashed, from=3-2, to=5-2]
	\arrow[from=3-4, to=5-4]
	\arrow[""{name=5, anchor=center, inner sep=0}, "\shortmid"{marking}, from=4-1, to=3-2]
	\arrow[""{name=6, anchor=center, inner sep=0}, from=4-1, to=4-3]
	\arrow[from=4-1, to=6-1]
	\arrow[""{name=7, anchor=center, inner sep=0}, "\shortmid"{marking}, from=4-3, to=3-4]
	\arrow[from=4-3, to=6-3]
	\arrow[from=5-2, to=7-2]
	\arrow[from=5-4, to=7-4]
	\arrow[""{name=8, anchor=center, inner sep=0}, "\shortmid"{marking}, from=6-1, to=5-2]
	\arrow[from=6-1, to=8-1]
	\arrow[""{name=9, anchor=center, inner sep=0}, "\shortmid"{marking}, from=6-3, to=5-4]
	\arrow[from=6-3, to=8-3]
	\arrow[""{name=10, anchor=center, inner sep=0}, equals, from=7-2, to=7-4]
	\arrow[""{name=11, anchor=center, inner sep=0}, "\shortmid"{marking}, from=8-1, to=7-2]
	\arrow[""{name=12, anchor=center, inner sep=0}, equals, from=8-1, to=8-3]
	\arrow[""{name=13, anchor=center, inner sep=0}, "\shortmid"{marking}, from=8-3, to=7-4]
	\arrow["{{\text{id}}}"{description}, draw=none, from=0, to=2]
	\arrow["{\prolist{{\beta_3}}}"{description}, draw=none, from=1, to=5]
	\arrow["{\prolist{{\gamma_3}}}"{description}, draw=none, from=3, to=7]
	\arrow["{\prolist{{\sigma}}}"{description}, draw=none, from=4, to=6]
	\arrow["{\prolist{\beta_2}}"{description}, draw=none, from=5, to=8]
	\arrow["{\prolist{\gamma_2}}"{description}, draw=none, from=7, to=9]
	\arrow["{{\beta_1}}"{description}, draw=none, from=8, to=11]
	\arrow["{{\gamma_1}}"{description}, draw=none, from=9, to=13]
	\arrow["{{\text{id}}}"{description}, draw=none, from=10, to=12]
\end{tikzcd}
	\]
	where the bottom rectangular box only commutes after composing the multicells. But then if $\prolist{\rho}:=\frac{\prolist{{\sigma}}}{\prolist{\gamma_2}}$, it follows that $\frac{\prolist{\beta_2}}{\beta_1}$ and $\frac{\prolist{\rho}}{\gamma_1}$ are 2-layer decompositions of the same multicell $\chi$ in $\Dbl{D}$, and hence by assumption lie in the same connected component of $\Cat{Paste}_2(\Dbl{D};\chi)$. Once again, we can reduce to the case where the decompositions being in the same connected component is witnessed by a single morphism from one to the other. If we have such a morphism $\prolist{\sigma'}$ from $\frac{\prolist{\beta_2}}{\beta_1}$ to $\frac{\prolist{\rho}}{\gamma_1}$, then our previous map of pasting diagrams lifts to an actual map in $\Cat{Paste}_3(\Dbl{D};\alpha)$:
	\[
	\begin{tikzcd}
	& {x_{3,k_{3,m}}} && {x_{3,k_{3,m}}} \\
	{x_{3,0}} && {x_{3,0}} \\
	& {x_{2,k_{2,m}}} && {y_{2,k_{2,m}}} \\
	{x_{2,0}} && {y_{2,0}} \\
	& {x_{1,m}} && {y_{1,m}} \\
	{x_{1,0}} && {y_{1,0}} \\
	& {x_{0,1}} && {x_{0,1}} \\
	{x_{0,0}} && {x_{0,0}}
	\arrow[""{name=0, anchor=center, inner sep=0}, equals, from=1-2, to=1-4]
	\arrow[dashed, from=1-2, to=3-2]
	\arrow[from=1-4, to=3-4]
	\arrow[""{name=1, anchor=center, inner sep=0}, "\shortmid"{marking}, from=2-1, to=1-2]
	\arrow[""{name=2, anchor=center, inner sep=0}, equals, from=2-1, to=2-3]
	\arrow[from=2-1, to=4-1]
	\arrow[""{name=3, anchor=center, inner sep=0}, "\shortmid"{marking}, from=2-3, to=1-4]
	\arrow[from=2-3, to=4-3]
	\arrow[""{name=4, anchor=center, inner sep=0}, from=3-2, to=3-4]
	\arrow[dashed, from=3-2, to=5-2]
	\arrow[from=3-4, to=5-4]
	\arrow[""{name=5, anchor=center, inner sep=0}, "\shortmid"{marking}, from=4-1, to=3-2]
	\arrow[""{name=6, anchor=center, inner sep=0}, from=4-1, to=4-3]
	\arrow[from=4-1, to=6-1]
	\arrow[""{name=7, anchor=center, inner sep=0}, "\shortmid"{marking}, from=4-3, to=3-4]
	\arrow[from=4-3, to=6-3]
	\arrow[""{name=8, anchor=center, inner sep=0}, from=5-2, to=5-4]
	\arrow[from=5-2, to=7-2]
	\arrow[from=5-4, to=7-4]
	\arrow[""{name=9, anchor=center, inner sep=0}, "\shortmid"{marking}, from=6-1, to=5-2]
	\arrow[""{name=10, anchor=center, inner sep=0}, from=6-1, to=6-3]
	\arrow[from=6-1, to=8-1]
	\arrow[""{name=11, anchor=center, inner sep=0}, "\shortmid"{marking}, from=6-3, to=5-4]
	\arrow[from=6-3, to=8-3]
	\arrow[""{name=12, anchor=center, inner sep=0}, equals, from=7-2, to=7-4]
	\arrow[""{name=13, anchor=center, inner sep=0}, "\shortmid"{marking}, from=8-1, to=7-2]
	\arrow[""{name=14, anchor=center, inner sep=0}, equals, from=8-1, to=8-3]
	\arrow[""{name=15, anchor=center, inner sep=0}, "\shortmid"{marking}, from=8-3, to=7-4]
	\arrow["{{\text{id}}}"{description}, draw=none, from=0, to=2]
	\arrow["{\prolist{{\beta_3}}}"{description}, draw=none, from=1, to=5]
	\arrow["{\prolist{{\gamma_3}}}"{description}, draw=none, from=3, to=7]
	\arrow["{\prolist{{\sigma}}}"{description}, draw=none, from=4, to=6]
	\arrow["{\prolist{\beta_2}}"{description}, draw=none, from=5, to=9]
	\arrow["{\prolist{\gamma_2}}"{description}, draw=none, from=7, to=11]
	\arrow["{\prolist{\sigma'}}"{description}, draw=none, from=8, to=10]
	\arrow["{{\beta_1}}"{description}, draw=none, from=9, to=13]
	\arrow["{{\gamma_1}}"{description}, draw=none, from=11, to=15]
	\arrow["{{\text{id}}}"{description}, draw=none, from=12, to=14]
\end{tikzcd}
	\]
	Conversely, if the morphism $\prolist{\sigma'}$ goes from $\frac{\prolist{\rho}}{\gamma_1}$ to $\frac{\prolist{\beta_2}}{\beta_1}$, then we have the following zig-zag of maps in $\Cat{Paste}_3(\Dbl{D};\alpha)$:
	\[
	\begin{tikzcd}
	& {x_{3,k_{3,m}}} && {x_{3,k_{3,m}}} && {x_{3,k_{3,m}}} \\
	{x_{3,0}} && {x_{3,0}} && {x_{3,0}} \\
	& {x_{2,k_{2,m}}} && {x_{2,k_{2,m}}} && {y_{2,k_{2,m}}} \\
	{x_{2,0}} && {x_{2,0}} && {y_{2,0}} \\
	& {x_{1,m}} && {y_{1,m}} && {y_{1,m}} \\
	{x_{1,0}} && {y_{1,0}} && {y_{1,0}} \\
	& {x_{0,1}} && {x_{0,1}} && {x_{0,1}} \\
	{x_{0,0}} && {x_{0,0}} && {x_{0,0}}
	\arrow[""{name=0, anchor=center, inner sep=0}, equals, from=1-2, to=1-4]
	\arrow[dashed, from=1-2, to=3-2]
	\arrow[""{name=1, anchor=center, inner sep=0}, equals, from=1-4, to=1-6]
	\arrow[from=1-4, to=3-4]
	\arrow[from=1-6, to=3-6]
	\arrow[""{name=2, anchor=center, inner sep=0}, "\shortmid"{marking}, from=2-1, to=1-2]
	\arrow[""{name=3, anchor=center, inner sep=0}, equals, from=2-1, to=2-3]
	\arrow[from=2-1, to=4-1]
	\arrow[""{name=4, anchor=center, inner sep=0}, "\shortmid"{marking}, from=2-3, to=1-4]
	\arrow[""{name=5, anchor=center, inner sep=0}, equals, from=2-3, to=2-5]
	\arrow[from=2-3, to=4-3]
	\arrow[""{name=6, anchor=center, inner sep=0}, "\shortmid"{marking}, from=2-5, to=1-6]
	\arrow[from=2-5, to=4-5]
	\arrow[""{name=7, anchor=center, inner sep=0}, equals, from=3-2, to=3-4]
	\arrow[dashed, from=3-2, to=5-2]
	\arrow[""{name=8, anchor=center, inner sep=0}, from=3-4, to=3-6]
	\arrow[from=3-4, to=5-4]
	\arrow[from=3-6, to=5-6]
	\arrow[""{name=9, anchor=center, inner sep=0}, "\shortmid"{marking}, from=4-1, to=3-2]
	\arrow[""{name=10, anchor=center, inner sep=0}, equals, from=4-1, to=4-3]
	\arrow[from=4-1, to=6-1]
	\arrow[""{name=11, anchor=center, inner sep=0}, "\shortmid"{marking}, from=4-3, to=3-4]
	\arrow[""{name=12, anchor=center, inner sep=0}, from=4-3, to=4-5]
	\arrow[from=4-3, to=6-3]
	\arrow[""{name=13, anchor=center, inner sep=0}, "\shortmid"{marking}, from=4-5, to=3-6]
	\arrow[from=4-5, to=6-5]
	\arrow[from=5-2, to=7-2]
	\arrow[""{name=14, anchor=center, inner sep=0}, from=5-4, to=5-2]
	\arrow[equals, from=5-4, to=5-6]
	\arrow[from=5-4, to=7-4]
	\arrow[from=5-6, to=7-6]
	\arrow[""{name=15, anchor=center, inner sep=0}, "\shortmid"{marking}, from=6-1, to=5-2]
	\arrow[from=6-1, to=8-1]
	\arrow[""{name=16, anchor=center, inner sep=0}, "\shortmid"{marking}, from=6-3, to=5-4]
	\arrow[""{name=17, anchor=center, inner sep=0}, from=6-3, to=6-1]
	\arrow[equals, from=6-3, to=6-5]
	\arrow[from=6-3, to=8-3]
	\arrow[""{name=18, anchor=center, inner sep=0}, "\shortmid"{marking}, from=6-5, to=5-6]
	\arrow[from=6-5, to=8-5]
	\arrow[""{name=19, anchor=center, inner sep=0}, equals, from=7-2, to=7-4]
	\arrow[""{name=20, anchor=center, inner sep=0}, equals, from=7-4, to=7-6]
	\arrow[""{name=21, anchor=center, inner sep=0}, "\shortmid"{marking}, from=8-1, to=7-2]
	\arrow[""{name=22, anchor=center, inner sep=0}, equals, from=8-1, to=8-3]
	\arrow[""{name=23, anchor=center, inner sep=0}, "\shortmid"{marking}, from=8-3, to=7-4]
	\arrow[""{name=24, anchor=center, inner sep=0}, equals, from=8-3, to=8-5]
	\arrow[""{name=25, anchor=center, inner sep=0}, "\shortmid"{marking}, from=8-5, to=7-6]
	\arrow["{{\text{id}}}"{description}, draw=none, from=0, to=3]
	\arrow["{{\text{id}}}"{description}, draw=none, from=1, to=5]
	\arrow["{\prolist{{\beta_3}}}"{description}, draw=none, from=2, to=9]
	\arrow["{\prolist{{\beta_3}}}"{description}, draw=none, from=4, to=11]
	\arrow["{\prolist{{\gamma_3}}}"{description}, draw=none, from=6, to=13]
	\arrow["{{\text{id}}}"{description}, draw=none, from=7, to=10]
	\arrow["{\prolist{{\sigma}}}"{description}, draw=none, from=8, to=12]
	\arrow["{\prolist{\beta_2}}"{description}, draw=none, from=9, to=15]
	\arrow["{{\dfrac{\prolist{{\sigma}}}{\prolist{\gamma_2}}}}"{description}, draw=none, from=11, to=16]
	\arrow["{\prolist{\gamma_2}}"{description}, draw=none, from=13, to=18]
	\arrow["{\prolist{\sigma'}}"{description}, draw=none, from=14, to=17]
	\arrow["{{\beta_1}}"{description}, draw=none, from=15, to=21]
	\arrow["{\text{id}}"{description}, draw=none, from=16, to=18]
	\arrow["{{\gamma_1}}"{description}, draw=none, from=16, to=23]
	\arrow["{{\gamma_1}}"{description}, draw=none, from=18, to=25]
	\arrow["{{\text{id}}}"{description}, draw=none, from=19, to=22]
	\arrow["{{\text{id}}}"{description}, draw=none, from=20, to=24]
\end{tikzcd}
	\]
	In either case, we see that the decompositions are in the same connected component, completing the proof.
\end{proof}

\begin{cor}\label{cor:decompSimp}
	Let $\Dbl{D}$ be a VDC. Then $\Dbl{D}$ has decomposable multicells if and only if all multicells in 
  $\Dbl{D}$ admit essentially unique decompositions into $2$ layers.
\end{cor}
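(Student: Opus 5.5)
The forward direction is immediate: decomposable multicells means that $\pi_0\Cat{Paste}_n(\Dbl{D};\alpha)$ is a singleton for every $n\geq 2$ and every tree shape. Specializing to $n=2$ gives essentially unique $2$-layer decompositions. For the converse I will induct on $n\geq 2$. The inductive claim is: for every multicell $\alpha$ and every height-$n$ tree $T$ whose leaves number the arity of $\alpha$, the set of connected components of decompositions of $\alpha$ of shape $T$ is a singleton. The base case $n=2$ is the hypothesis.

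\emph{Existence at height $n+1$.} Let $T$ be a height-$(n+1)$ tree. Consider the tree $c_1(T)$ of height $n$, obtained by merging the two bottom layers. By the inductive hypothesis, $\alpha$ has a decomposition of shape $c_1(T)$. Its bottom multicell $\delta$ (the root layer of $c_1(T)$) corresponds to the height-$2$ subtree of $T$ formed by the root and its children. Applying $2$-layer existence to $\delta$ with that height-$2$ shape splits $\delta$ into two layers. Substituting this back yields a pasting diagram of shape $T$ that still composes to $\alpha$, by associativity of composition in $\Dbl{D}$.

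\emph{Uniqueness at height $n+1$.} Let $\beta,\gamma$ be two decompositions of $\alpha$ of shape $T$. Then $c_1(\beta)$ and $c_1(\gamma)$ are decompositions of $\alpha$ of shape $c_1(T)$, so by the inductive hypothesis they lie in the same component of $\Cat{Paste}_n(\Dbl{D};\alpha)$. Lemma \ref{lem:components} applies directly, since $\Dbl{D}$ has essentially unique $2$-layer decompositions and $\beta,\gamma$ share a tree. It says that $\beta$ and $\gamma$ then lie in the same connected component of $\Cat{Paste}_{n+1}(\Dbl{D};\alpha)$. This completes the induction.

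The only step needing care is bookkeeping rather than a real obstacle. One must check that the components considered in the definition are taken among decompositions of a fixed tree shape. Morphisms in $\Cat{Paste}$ are levelwise sequences of unary cells, so they preserve the shape, and $c_1$ sends shape $T$ to shape $c_1(T)$; hence restricting to a fixed shape is compatible both with the lemma and with the inductive hypothesis. Also, in the existence step the substitution must respect the tight boundaries. This holds because the $2$-layer decomposition of $\delta$ has the same outer frame as $\delta$.
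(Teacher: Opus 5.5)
Your proof is correct and follows essentially the same argument as the paper: induction on height, with existence obtained by first decomposing along $c_1(T)$ and then splitting the bottom multicell via a $2$-layer decomposition. Essential uniqueness is obtained by applying the inductive hypothesis to $c_1(\beta), c_1(\gamma)$ and then invoking Lemma~\ref{lem:components}.
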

\begin{proof}
	We will prove that a multicell $\alpha$, of arity $m$, admits essentially unique decompositions into $n$-layers for each $n\geq 2$ by induction on $n$, where the base case $n=2$ is by assumption. Thus, suppose $\alpha$ admits essentially unique decompositions into $n\geq 2$ layers, and we aim to show it also admits essentially unique decompositions into $(n+1)$-layers. First, to show existence of such decompositions, let $T$ be a tree of height $(n+1)$ with $m$ leaves. Then $c_1(T)$ is a height $n$ tree with $|\alpha|$ leaves, so by the inductive hypothesis we have an essentially unique decomposition

	\[
	\begin{tikzcd}
	{x_{n+1,0}} && {x_{n+1,m}} \\
	{x_{n,0}} && {x_{n,k_n}} \\
	\vdots & \vdots & \vdots \\
	{x_{2,0}} && {x_{2,k_2}} \\
	{x_{0,0}} && {x_{0,1}}
	\arrow[""{name=0, anchor=center, inner sep=0}, "{{\prolist{{\varphi}}}}"{inner sep=.8ex}, "\shortmid"{marking}, from=1-1, to=1-3]
	\arrow[from=1-1, to=2-1]
	\arrow[from=1-3, to=2-3]
	\arrow[""{name=1, anchor=center, inner sep=0}, "{{\prolist{\lambda_n}}}"'{inner sep=.8ex}, "\shortmid"{marking}, from=2-1, to=2-3]
	\arrow[from=2-1, to=3-1]
	\arrow[from=2-3, to=3-3]
	\arrow[from=3-1, to=4-1]
	\arrow[from=3-3, to=4-3]
	\arrow[""{name=2, anchor=center, inner sep=0}, "{{\prolist{\lambda_2}}}"{inner sep=.8ex}, "\shortmid"{marking}, from=4-1, to=4-3]
	\arrow[from=4-1, to=5-1]
	\arrow[from=4-3, to=5-3]
	\arrow[""{name=3, anchor=center, inner sep=0}, "\psi"'{inner sep=.8ex}, "\shortmid"{marking}, from=5-1, to=5-3]
	\arrow["{{\prolist{\beta_{n+1}}}}"{description}, draw=none, from=0, to=1]
	\arrow["{{\beta_2}}"{description}, draw=none, from=2, to=3]
\end{tikzcd}
	\]
	By the base case of $n=2$ we have an essentially unique decomposition of $\beta_2=\frac{\prolist{\alpha_2}}{\alpha_1}$ using the arities from the root and first layer of the tree $T$, which gives the decomposition
	\[
	\begin{tikzcd}
	{x_{n+1,0}} && {x_{n+1,m}} \\
	{x_{n,0}} && {x_{n,k_n}} \\
	\vdots & \vdots & \vdots \\
	{x_{2,0}} && {x_{2,k_2}} \\
	{x_{1,0}} && {x_{1,k_1}} \\
	{x_{0,0}} && {x_{0,1}}
	\arrow[""{name=0, anchor=center, inner sep=0}, "{{\prolist{{\varphi}}}}"{inner sep=.8ex}, "\shortmid"{marking}, from=1-1, to=1-3]
	\arrow[from=1-1, to=2-1]
	\arrow[from=1-3, to=2-3]
	\arrow[""{name=1, anchor=center, inner sep=0}, "{{\prolist{\lambda_n}}}"'{inner sep=.8ex}, "\shortmid"{marking}, from=2-1, to=2-3]
	\arrow[from=2-1, to=3-1]
	\arrow[from=2-3, to=3-3]
	\arrow[from=3-1, to=4-1]
	\arrow[from=3-3, to=4-3]
	\arrow[""{name=2, anchor=center, inner sep=0}, "{{\prolist{\lambda_2}}}"{inner sep=.8ex}, "\shortmid"{marking}, from=4-1, to=4-3]
	\arrow[from=4-1, to=5-1]
	\arrow[from=4-3, to=5-3]
	\arrow[""{name=3, anchor=center, inner sep=0}, "{\lambda_1}"{description}, "\shortmid"{marking}, from=5-1, to=5-3]
	\arrow[from=5-1, to=6-1]
	\arrow[from=5-3, to=6-3]
	\arrow[""{name=4, anchor=center, inner sep=0}, "\psi"'{inner sep=.8ex}, "\shortmid"{marking}, from=6-1, to=6-3]
	\arrow["{{\prolist{\beta_{n+1}}}}"{description}, draw=none, from=0, to=1]
	\arrow["{\prolist{\alpha_2}}"{description}, draw=none, from=2, to=3]
	\arrow["{\alpha_1}"{description}, draw=none, from=3, to=4]
\end{tikzcd}
	\]
	Now, to prove essential uniqueness suppose we had another decomposition of shape $T$, ${\threefrac{\prolist{\gamma_{n+1}}}{\vdots}{\gamma_1}}$.
	Composing the bottom two layers $\delta_1=\frac{\prolist{\gamma_2}}{\gamma_1}$, we obtain an $n$-layer decomposition of $\alpha$ of shape $c_1(T)$, so by the inductive hypothesis it lies in the same connected component as the decomposition ${\threefrac{\prolist{\beta_{n+1}}}{\vdots}{\beta_2}}$. Then by Lemma~\ref{lem:components} these two decompositions lie in the same connected component of $\Cat{Paste}_{n+1}(\Dbl{D};\alpha)$, completing the proof.
\end{proof}

\begin{expl}[Binary-to-Ternary Case]\label{expl:binaryToTernary}
	To explicate the central idea of the above lemma-corollary pair, let's consider the inductive step going from 2-layer decompositions to 3-layer decompositions in the case of a pair of binary decompositions. Suppose we had a quaternary multicell that we can essentially uniquely decompose in 2-layers, and we wanted to decompose into three layers using binary multicells as in the pasting diagram:
	\begin{equation}\label{eq:3LayerDecomp}
	\begin{adjustbox}{}
		\begin{tikzcd}
	\bullet & \bullet & \bullet & \bullet & \bullet \\
	\bullet && \bullet & \bullet & \bullet \\
	\bullet &&& \bullet & \bullet \\
	\bullet &&&& \bullet
	\arrow["\shortmid"{marking}, from=1-1, to=1-2]
	\arrow[from=1-1, to=2-1]
	\arrow["\shortmid"{marking}, from=1-2, to=1-3]
	\arrow["\shortmid"{marking}, from=1-3, to=1-4]
	\arrow[from=1-3, to=2-3]
	\arrow["\shortmid"{marking}, from=1-4, to=1-5]
	\arrow[from=1-4, to=2-4]
	\arrow[from=1-5, to=2-5]
	\arrow["\shortmid"{marking}, from=2-1, to=2-3]
	\arrow[from=2-1, to=3-1]
	\arrow["\shortmid"{marking}, from=2-3, to=2-4]
	\arrow["\shortmid"{marking}, from=2-4, to=2-5]
	\arrow[from=2-4, to=3-4]
	\arrow[from=2-5, to=3-5]
	\arrow["\shortmid"{marking}, from=3-1, to=3-4]
	\arrow[from=3-1, to=4-1]
	\arrow["\shortmid"{marking}, from=3-4, to=3-5]
	\arrow[from=3-5, to=4-5]
	\arrow["\shortmid"{marking}, from=4-1, to=4-5]
\end{tikzcd}
	\end{adjustbox}
	\end{equation}
	To construct such a decomposition we first construct a 2-layer decomposition of the shape obtained by composing the bottom two layers in our desired decomposition:
	\[
	\begin{adjustbox}{}
		\begin{tikzcd}
	\bullet & \bullet & \bullet & \bullet & \bullet \\
	\bullet && \bullet & \bullet & \bullet \\
	\bullet &&&& \bullet
	\arrow["\shortmid"{marking}, from=1-1, to=1-2]
	\arrow[from=1-1, to=2-1]
	\arrow["\shortmid"{marking}, from=1-2, to=1-3]
	\arrow["\shortmid"{marking}, from=1-3, to=1-4]
	\arrow[from=1-3, to=2-3]
	\arrow["\shortmid"{marking}, from=1-4, to=1-5]
	\arrow[from=1-4, to=2-4]
	\arrow[from=1-5, to=2-5]
	\arrow["\shortmid"{marking}, from=2-1, to=2-3]
	\arrow[from=2-1, to=3-1]
	\arrow["\shortmid"{marking}, from=2-3, to=2-4]
	\arrow["\shortmid"{marking}, from=2-4, to=2-5]
	\arrow[from=2-5, to=3-5]
	\arrow["\shortmid"{marking}, from=3-1, to=3-5]
\end{tikzcd}
	\end{adjustbox}
	\]
	Then, to obtain a 3-layer decomposition we use the fact that we can decompose the bottom trinary multicell in terms of a 2-layer decomposition with a binary multicell, giving the originally desired decomposition in Eq.~\ref{eq:3LayerDecomp}. Lemma~\ref{lem:components} then guarantees essential uniqueness of this 3-layer decomposition since we have essential uniqueness on the level of 2-layers.
\end{expl}

The 2-layer formulation of having decomposable multicells has an elegant description in terms of coends. 
Namely, a VDC $\Dbl{D}$ has decomposable multicells if and only if the natural transformation 
\begin{equation*}
	\mathsf{fc}(\mathsf{MCell}(\mathbb{D}))\odot \mathsf{MCell}(\mathbb{D})\stackrel{\mathsf{comp}}{\Rightarrow}\mathsf{MCell}(\mathbb{D})(\mu_\mathsf{fc},1):\mathsf{fc}(\mathsf{fc}(\Dbl{D}_1))^{op}\times \Dbl{D}_1\to \mathsf{Set}
\end{equation*}
is an isomorphism, where $\mu_\mathsf{fc}:\mathsf{fc}^2\to \mathsf{fc}$ 
is the multiplication of the free category monad. Thus the codomain profunctor is that of multicells with domain 
a list of lists of loose arrows, rather than a flat list thereof.

In terms of coends, this says that the natural composition map
\begin{equation}\label{eq:CompChar}
	\int^{\prolist{\varphi}\in \Cat{fc}(\Dbl{D}_1)}\mathsf{fc}(\mathsf{MCell}(\mathbb{D}))(-,\prolist{\varphi})\times \mathsf{MCell}(\mathbb{D})(\prolist{\varphi},-)\to 
  \mathsf{MCell}(\mathbb{D})(\mu_\mathsf{fc}(-),-)
\end{equation}
is an isomorphism. This condition is also the cellular analogue of the classical Conduch\'{e} condition for exponentiable functors between categories. We will explore this point more in Section~\ref{sec:ExponentiabilityOfMorphisms}.

\subsection{Exponentiability of VDCs is equivalent to decomposability of multicells}

We can now show that having decomposable multicells is equivalent to being exponentiable by showing 
that this is precisely the condition induced by requiring that the product $\Dbl{D}\times -$ 
preserves the sketched cones in our sketch for virtual double categories. 

\begin{rmk}[Colimits for Trees]\label{rmk:cellInColimit}
	Before stating and proving the main theorem, it will be helpful to understand the colimits of the diagrams
	\begin{equation*}
		\Cat{el}\downarrow \Dbl{J}(T)\to \Vdc\xrightarrow{\Dbl{D}\times -}\Vdc
	\end{equation*}
	for $T$ a  tree. 
	We can express $\colim_{i:\Dbl{c}\hookrightarrow \Dbl{J}(T)}(\Dbl{D}\times \Dbl{c})$ as the colimit of a diagram in $\Vdc$ obtained from the tree $T$ as follows:
	\begin{enumerate}
    \item Replace each edge in $T$ by a copy of $\Dbl{D}\times \Loose$.
		\item Replace each node in $T$ with $r$ children by $\Dbl{D}\times \Sq{r}$.
		\item Glue the VDC's corresponding to an edge appropriately into those corresponding to its
		parent and child nodes, if they exist. 
    \item Glue the VDC's corresponding to adjacent nodes in $T$ appropriately.
	\end{enumerate}
	Objects and loose arrows in the colimit are then precisely pairs of such in $\Dbl{D}$ and $\Dbl{J}(T)$, 
  with no relations. Similarly, since the tight category functor $(-)_0:\Vdc\to \Cat{Cat}$ preserves colimits
  and finite products, the tight arrows in the colimit are precisely pairs of tight arrows in 
  $\Dbl{D}$ and $\Dbl{J}(T)$, with no new relations.	

  Finally, a multicell in the colimit corresponds to an equivalence class of tuples of a subtree $T'\subseteq T$ 
  of $T$, together with for each node in $T'$ a multicell in $\Dbl{D}$ of arity equal to that of the 
  node, such that the multicells have compatible boundaries. 
  The equivalence relation on these tuples comes from associativity of composition along with the spans
	\begin{equation*}
		\Dbl{D}\times \Sq{r}\xleftarrow{\Dbl{D}\times s_i}\Dbl{D}\times \Loose \xrightarrow{\Dbl{D}\times t}\Dbl{D}\times \Sq{k_i}
	\end{equation*}
	appearing in the diagram. Namely, if $T'$ is a height $n$ tree, then the data of the multicells in 
  $\Dbl{D}$ gives a pasting diagram of height $n$ (i.e.~an object in $\Cat{Paste}_n(\Dbl{D})$). 
  Then the spans above imply that we can move sequences of unary multicells between rows without changing the 
  equivalence class of the resulting tuple. For example, if $T'$ was a subtree of height $2$ whose root had 
  $r$ children and whose $i$th node in the second layer had $k_i$ children, then the two pasting diagrams 
  below would be identified in the colimit:
    \[\begin{tikzcd}
      {x_0} & {x_{m_1}} & \cdots & {x_{m_r}} & {x_0} & {x_{m_1}} & \cdots & {x_{m_r}} \\
      {z_0} & {z_1} & \cdots & {z_r} & {z_0} & {z_1} & \cdots & {z_r} \\
      \\
      {y_0} &&& {y_1} & {y_0} &&& {y_1}
      \arrow[""{name=0, anchor=center, inner sep=0}, "{{{{{{{\prolist{\varphi}_1}}}}}}}"{inner sep=.8ex}, "\shortmid"{marking}, from=1-1, to=1-2]
      \arrow[from=1-1, to=2-1]
      \arrow[""{name=1, anchor=center, inner sep=0}, "{{{{{{{\prolist{\varphi}_2}}}}}}}"{inner sep=.8ex}, "\shortmid"{marking}, from=1-2, to=1-3]
      \arrow[from=1-2, to=2-2]
      \arrow[""{name=2, anchor=center, inner sep=0}, "{{{{{{{\prolist{\varphi}_r}}}}}}}"{inner sep=.8ex}, "\shortmid"{marking}, from=1-3, to=1-4]
      \arrow["\cdots"{description}, draw=none, from=1-3, to=2-3]
      \arrow[from=1-4, to=2-4]
      \arrow[""{name=3, anchor=center, inner sep=0}, "{{{{{{{\prolist{\varphi}_1}}}}}}}"{inner sep=.8ex}, "\shortmid"{marking}, from=1-5, to=1-6]
      \arrow[from=1-5, to=2-5]
      \arrow[""{name=4, anchor=center, inner sep=0}, "{{{{{{{\prolist{\varphi}_2}}}}}}}"{inner sep=.8ex}, "\shortmid"{marking}, from=1-6, to=1-7]
      \arrow[from=1-6, to=2-6]
      \arrow[""{name=5, anchor=center, inner sep=0}, "{{{{{{{\prolist{\varphi}_r}}}}}}}"{inner sep=.8ex}, "\shortmid"{marking}, from=1-7, to=1-8]
      \arrow["\cdots"{description}, draw=none, from=1-7, to=2-7]
      \arrow[from=1-8, to=2-8]
      \arrow[""{name=6, anchor=center, inner sep=0}, "{{{{{{{\chi_1}'}}}}}}"'{inner sep=.8ex}, "\shortmid"{marking}, from=2-1, to=2-2]
      \arrow[from=2-1, to=4-1]
      \arrow[""{name=7, anchor=center, inner sep=0}, "{{{{{{{\chi_2}'}}}}}}"'{inner sep=.8ex}, "\shortmid"{marking}, from=2-2, to=2-3]
      \arrow[""{name=8, anchor=center, inner sep=0}, "{{{{{{{\chi_r}'}}}}}}"'{inner sep=.8ex}, "\shortmid"{marking}, from=2-3, to=2-4]
      \arrow["\sim"{description}, draw=none, from=2-4, to=2-5]
      \arrow[from=2-4, to=4-4]
      \arrow[""{name=9, anchor=center, inner sep=0}, "{{{{{{{\chi_1}}}}}}}"'{inner sep=.8ex}, "\shortmid"{marking}, from=2-5, to=2-6]
      \arrow[from=2-5, to=4-5]
      \arrow[""{name=10, anchor=center, inner sep=0}, "{{{{{{{\chi_2}}}}}}}"'{inner sep=.8ex}, "\shortmid"{marking}, from=2-6, to=2-7]
      \arrow[""{name=11, anchor=center, inner sep=0}, "{{{{{{{\chi_r}}}}}}}"'{inner sep=.8ex}, "\shortmid"{marking}, from=2-7, to=2-8]
      \arrow[from=2-8, to=4-8]
      \arrow[""{name=12, anchor=center, inner sep=0}, "\psi"'{inner sep=.8ex}, "\shortmid"{marking}, from=4-1, to=4-4]
      \arrow[""{name=13, anchor=center, inner sep=0}, "\psi"'{inner sep=.8ex}, "\shortmid"{marking}, from=4-5, to=4-8]
      \arrow["{{{\left(\frac{\alpha_1}{\sigma_1},k_1\right)}}}"{description}, draw=none, from=0, to=6]
      \arrow["{{{\left(\frac{\alpha_2}{\sigma_2},k_2\right)}}}"{description}, draw=none, from=1, to=7]
      \arrow["{{{\left(\frac{\alpha_r}{\sigma_r},k_r\right)}}}"{description}, draw=none, from=2, to=8]
      \arrow["{{{(\alpha_1,k_1)}}}"{description}, draw=none, from=3, to=9]
      \arrow["{{{(\alpha_2,k_2)}}}"{description}, draw=none, from=4, to=10]
      \arrow["{{{(\alpha_r,k_r)}}}"{description}, draw=none, from=5, to=11]
      \arrow["{{{(\beta',r)}}}"{description}, draw=none, from=7, to=12]
      \arrow["{{{\left(\frac{\sigma_1 \cdots \sigma_r}{\beta},r\right)}}}"{description}, draw=none, from=10, to=13]
    \end{tikzcd}\]
\end{rmk}

\begin{thm}[Exponentiable VDCs Have Decomposable Multicells]\label{thm:ExpViaDecomp}
	A VDC $\Dbl{D}$ is exponentiable if and only if it has decomposable multicells.
\end{thm}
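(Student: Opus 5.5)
The plan is to apply Corollary~\ref{cor:exponentiable-sketch}, $(1)\iff(2)$, to the sketch of Definition~\ref{defn:sketchVDC}. $\Vdc$ is the category of models of this small sketch, so $\Dbl{D}$ is exponentiable if and only if $\Dbl{D}\times(-)$ preserves the colimit cocones $C(\Dbl{X})^\rhd$ for every non-elementary $\Dbl{X}$ in $\Cat{C}^{op}$. The sketch is realized: the $\Dbl{X}$ are genuine VDCs glued from elementaries, and the reflected representables are the $\Dbl{X}$ themselves. The cocones come in two kinds.

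\emph{Tight paths.} For $\triangle$ and $\tet$ preservation is automatic. Colimits and finite products in $\Vdc$ are computed on tight categories via $(-)_0$, which preserves both. Since $\Ob$, $\Tight$ and the $\Tight[n]$ have no loose arrows, $\Dbl{D}\times\Tight[n]$ has none either, and the claim reduces to the exponentiability of $\Dbl{D}_0$ in $\Cat{Cat}$.

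\emph{Trees.} The remaining cocones are those for $\Dbl{J}(T)$ with $T$ of height at most $3$. Following Remark~\ref{rmk:cellInColimit}, I would compare the colimit $\Dbl{D}'_T:=\colim_{\Cat{el}\downarrow\Dbl{J}(T)}\Dbl{D}\times(-)$ with $\Dbl{D}\times\Dbl{J}(T)$ level by level.
\begin{itemize}
\item Objects, tight arrows and loose arrows agree, as the remark notes.
\item Multicells of $\Dbl{J}(T)$ are exactly the composites indexed by subtrees $T'\subseteq T$. Those coming from subtrees of height $\le1$ (single nodes, or edges as identity cells) also agree.
\item For a subtree $T'$ of height $n\ge2$ with $m$ leaves, the multicells of $\Dbl{D}'_T$ over $T'$ are the pasting diagrams of shape $T'$ in $\Dbl{D}$. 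These are identified along the spans $\Dbl{D}\times\Sq{r}\leftarrow\Dbl{D}\times\Loose\to\Dbl{D}\times\Sq{k_i}$, i.e.\ by sliding unary cells across interfaces.
\end{itemize}
I will argue that this equivalence relation is exactly connectedness in $\Cat{Paste}_n(\Dbl{D})$ for morphisms fixing the outer boundary. The relation is generated by single-interface moves, and these are precisely the morphisms of $\Cat{Paste}_n$ that are identities on the outer boundary. Over a fixed $m$-ary multicell $\alpha$ of $\Dbl{D}$, the comparison map therefore becomes
\[
\pi_0\Cat{Paste}_n(\Dbl{D};\alpha)_{T'}\longrightarrow\{\alpha\},
\]
and the cocone is preserved if and only if each such set is a singleton.

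\emph{Putting it together.} Decomposable multicells give bijectivity for every subtree of every $T$, so all sketched colimits are preserved and $\Dbl{D}$ is exponentiable. Conversely, exponentiability gives preservation for $T$ of height $2$, with $T'=T$ ranging over all height-$2$ trees. Hence every multicell has essentially unique $2$-layer decompositions, and Corollary~\ref{cor:decompSimp} upgrades this to decomposable multicells. The height-$3$ cones add nothing new in the forward direction and are handled in the backward direction by that same corollary.

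The main obstacle is the identification of the colimit's multicells in the third bullet. One has to check that the colimit in $\Vdc$, which is not simply levelwise because composition must be freely generated, produces no multicells beyond formal pastings along subtrees. One also has to check that the induced congruence is no coarser than the zig-zag relation from interface moves; in particular, associativity inside $\Dbl{D}$ must not create extra identifications between distinct shapes $T'$. I would handle this by exhibiting the candidate, namely the pasting diagrams modulo zig-zags, as a VDC with a cocone, composing by substitution of trees, and verifying its universal property directly. The computation of Example~\ref{exmp:decomp} serves as the model case.
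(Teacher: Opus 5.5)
Your proposal is correct and follows essentially the same route as the paper. You apply Corollary~\ref{cor:exponentiable-sketch} to the sketch of Definition~\ref{defn:sketchVDC} and dispose of the tight-path cones via cartesian closedness of $\Cat{Cat}$. You then use Remark~\ref{rmk:cellInColimit} to identify the fiber of the comparison map over $(\alpha,\square(T'))$ with $\pi_0\Cat{Paste}_n(\Dbl{D};\alpha)$ for shape $T'$, and invoke Corollary~\ref{cor:decompSimp} to pass from height-$2$ decompositions to all heights. The step you flag as the main obstacle, that the colimit's multicells are exactly formal pastings modulo single-interface slides, is simply asserted in the paper via Remark~\ref{rmk:cellInColimit}, so your plan to verify the candidate's universal property directly makes the argument more complete without changing it.
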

\begin{proof}
	First, fix a tree $T$, and consider the natural map
	\begin{equation}\label{eq:compMap}
		c:\colim_{i:\Dbl{c}\hookrightarrow \Dbl{J}(T)}(\Dbl{D}\times \Dbl{c})\to \Dbl{D}\times \Dbl{J}(T)
	\end{equation}
	induced by composition in $\Dbl{D}$ and $\Dbl{J}(T)$. 
  For a subtree $T'\subseteq T$ of height $n$ with $m$ leaves, let $\square(T')$ denote the associated 
  glued multicell in $\Dbl{J}(T)$, and fix an $m$-ary multicell $\alpha \in \Dbl{D}$. 
  Then from the description of the above colimit in Remark~\ref{rmk:cellInColimit}, the map in 
  Eq.~\ref{eq:compMap} having $(\alpha,\square(T'))$ in its image is equivalent to $\alpha$ admitting 
  a decomposition into a pasting diagram with arities specified by the subtree $T'$. 
  Further, that $c$ in Eq.~\ref{eq:compMap} should have at most one multicell mapped to $(\alpha,\square(T'))$ is 
  equivalent to $\alpha$ either not admitting a decomposition of the appropriate shape, or any two 
  decompositions being related by passing sequences of unary multicells between layers. But this is precisely 
  the relation that says that two pasting diagrams lying over the same multicell $\alpha$ in 
  $\Dbl{D}$ are in the same connected component of $\Cat{Paste}_n(\Dbl{D};\alpha)$. 

	Next, recall that by Corollary~\ref{cor:exponentiable-sketch} and the definition of the sketch for 
  VDCs (Definition \ref{defn:sketchVDC}), to show $\Dbl{D}$ is exponentiable it is 
  equivalent to show that $\Dbl{D}\times -$ preserves the gluings $A_T$ where $T$ is a  tree of height $2$ or $3$. 
  Thus, $\Dbl{D}$ is exponentiable if and only if Eq.~\ref{eq:compMap} is an isomorphism for all height $2$ and 
  $3$ trees $T$. Additionally, from the above argument, this is equivalent to $\Dbl{D}$ having essentially unique 
  decompositions into $2$ and $3$ layers for its multicells. But by Corollary~\ref{cor:decompSimp} $\Dbl{D}$ has 
  decomposable multicells if and only if it has essentially unique decompositions into $2$ layers, so the claim 
  follows.
\end{proof}

The description of exponentiable VDCs in Theorem~\ref{thm:ExpViaDecomp} provides a usable
technique for testing whether a VDC is exponentiable, which we can immediately apply 
to provide a minimal example of a VDC that is not exponentiable, namely the walking loose arrow $\Loose $.

\begin{exmp}[Counter-example to Exponentiability]\label{eg:looseCounter}
	The walking loose arrow $\Loose$ is not an exponentiable VDC because there are no nullary or binary multicells to decompose the unary multicell $\text{id}_\ell$.
\end{exmp}

We refer the reader to Section~\ref{sec:Examples} for further examples and non-examples of exponentiable VDCs.

%% file: Sections/5_OtherCharacterizations.tex
In this section, we provide three alternative characterizations of exponentiable virtual double
categories. The first is in terms of \emph{pro-representability}.

\subsection{Exponentiability of VDCs is equivalent to pro-representability}\label{subsec:proRep}

Our next goal is to obtain a further reduction of the characterization of 
exponentiability via decomposable multicells. The choice of decompositions included in this reduction are partially motivated by 
Pisani's characterization of exponentiable multicategories in terms of pro-monoidal 
multicategories~\cite{Pisani2014}. We first pick out a class of trees 
(in the sense of Definition \ref{defn:tree}) which will detect decomposability
of multicells into arbitrary shapes. 

\begin{defn}[Binary-Nullary Tree]\label{defn:binaryNullaryTree}
	A \emph{binary-nullary tree} $T$ is a tree of height at least $2$
  such that, in every layer of nodes 
  except the last, at most one node is non-unary, with 
  arity at most $2.$
  If the only non-unary node has arity $2$ (resp. arity $0$) in each layer except the last,
  then we say that $T$ is a \emph{binary-unary} (resp. \emph{nullary-unary}) tree.
\end{defn}

Binary-nullary trees thus look like the examples below:
\[\begin{tikzcd}
	{\color{white}\bullet} & {\color{white}\bullet} & {\color{white}\bullet} & {\color{white}\bullet} && {\color{white}\bullet} & {\color{white}\bullet} & {\color{white}\bullet} & {\color{white}\bullet} & {\color{white}\bullet} \\
	\bullet & \bullet & \bullet &&& \bullet & \bullet & \bullet & \bullet \\
	& \bullet && {\color{white}\bullet} && \bullet & \bullet & \bullet & \bullet \\
	& {\color{white}\bullet} && \bullet & \bullet && \bullet & \bullet & \bullet \\
	&&&& \bullet &&& \bullet \\
	&&&& {\color{white}\bullet} &&& {\color{white}\bullet}
	\arrow[no head, from=2-1, to=1-1]
	\arrow[no head, from=2-2, to=1-2]
	\arrow[no head, from=2-3, to=1-3]
	\arrow[no head, from=2-3, to=1-4]
	\arrow[no head, from=2-6, to=1-6]
	\arrow[no head, from=2-7, to=1-7]
	\arrow[no head, from=2-9, to=1-9]
	\arrow[no head, from=3-2, to=2-1]
	\arrow[no head, from=3-2, to=2-2]
	\arrow[no head, from=3-2, to=2-3]
	\arrow[no head, from=3-2, to=4-2]
	\arrow[no head, from=3-6, to=2-6]
	\arrow[no head, from=3-7, to=2-7]
	\arrow[no head, from=3-8, to=2-8]
	\arrow[no head, from=3-9, to=2-9]
	\arrow[no head, from=4-4, to=3-4]
	\arrow[no head, from=4-7, to=3-6]
	\arrow[no head, from=4-7, to=3-7]
	\arrow[no head, from=4-8, to=3-8]
	\arrow[no head, from=4-9, to=3-9]
	\arrow[no head, from=5-5, to=4-4]
	\arrow[no head, from=5-5, to=4-5]
	\arrow[no head, from=5-5, to=6-5]
	\arrow[no head, from=5-8, to=4-7]
	\arrow[no head, from=5-8, to=4-8]
	\arrow[no head, from=5-8, to=4-9]
	\arrow[no head, from=5-8, to=6-8]
\end{tikzcd}\]
The left-hand example is, in particular, binary-unary, while the middle example is nullary-unary.

\begin{defn}[Pro-Representable VDC]\label{defn:proRep}
	A VDC $\Dbl{D}$ is said to be \emph{pro-representable} if the following conditions hold:
	\begin{itemize}
		\item[(P1)] For every binary-nullary tree $T$ of height $2,$ $\Dbl{D}$ admits essentially unique decompositions
    of shape $T.$
		\item[(P3)] For every height $2$ tree $T'$ there is some binary-nullary tree $T$ with $T'$ as some partial 
		composite,
    such that every decomposition of a $\Dbl{D}$-cell of shape $T'$ arises from a decomposition of shape $T$ 
    up to equivalence.
	\end{itemize}
\qed
\end{defn}
Note that $T'$ in (P3) may be of any height, and in general will have to be of height greater than $2.$
Note also that by definition all VDCs with decomposable multicells are pro-representable. 
We will now show that the converse is also true, starting with existence of decompositions.

\begin{lem}\label{lem:existProRepDecomps}
	Let $\Dbl{D}$ satisfy (P1) above. Then $\Dbl{D}$ admits decompositions 
  of shape $T$ for every tree $T$ of height $2.$ 
\end{lem}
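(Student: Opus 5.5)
We have to show that every multicell $\alpha$ of arity $m$ decomposes according to any height $2$ tree $T$. Such a tree has a root of arity $r$ and children of arities $k_1,\dots,k_r$ with $k_1+\cdots+k_r=m$. Only existence is at stake here, so we only use the existence half of (P1). That gives us three moves on multicells. A nullary-unary height-$2$ tree lets us factor any multicell $\beta$ as a multicell with one extra input $\psi$ (at a chosen position), precomposed with a nullary cell into $\psi$ and identities elsewhere. A binary-unary height-$2$ tree lets us factor $\beta$ so that one chosen adjacent pair of inputs passes through a binary cell. The all-unary tree gives the trivial factorization through identities.

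The plan is to induct on the shape of $T$, handling one child at a time. First, write $\alpha$ as a composite $\frac{(\mathrm{id},\dots,\mathrm{id},\gamma_r)}{\beta}$ in which $\gamma_r$ absorbs the last $k_r$ inputs and $\beta$ has arity $m-k_r+1$. This is done as follows.
\begin{itemize}
\item If $k_r=0$, apply the nullary-unary tree at the last position.
\item If $k_r=1$, take $\gamma_r$ to be an identity.
\item If $k_r\ge 2$, repeatedly apply the binary-unary decomposition to the rightmost two inputs. Each step produces a binary cell on top of a cell of smaller arity. After $k_r-1$ steps, compose the stacked binary cells (together with identities) into a single $k_r$-ary cell $\gamma_r$.
\end{itemize}
Composition in $\Dbl{D}$ is associative, so these stacked layers collapse to a genuine two-layer factorization with $\gamma_r$ in the last slot.

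Next, apply the same procedure to $\beta$ for the block of $k_{r-1}$ inputs just before its last input. By induction on $r$, or more simply on the number of remaining blocks, we obtain $\alpha=\frac{(\gamma_1,\dots,\gamma_r)}{\beta'}$ with $\beta'$ of arity $r$. To run this induction cleanly, we must be able to apply (P1) at an arbitrary position inside a longer source. This is allowed because a binary-nullary tree may have its non-unary node in any position of its layer, and the remaining unary nodes can be taken to be identities.

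The main obstacle I expect is bookkeeping rather than mathematics. We must check that successive one-position factorizations, each of which carries identity cells in the other slots, reassemble via associativity and unitality of VDC composition into a single height-$2$ pasting diagram of shape exactly $T$. The nullary case also needs care: the new loose arrow $\psi$ introduced by the nullary cell must sit at the correct interface position, and it may be adjacent to other blocks. I would handle this by always working from right to left, so that earlier-built blocks are never disturbed by later steps.
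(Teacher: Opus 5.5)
Your mechanism is the paper's: apply (P1) only to the current bottom cell, paste, and collapse the top two layers by associativity. Each application of (P1) then either grows one block of $T$ by a binary merge or inserts a nullary block.

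\textbf{The gap.} The claim you use to make the bookkeeping trivial, that ``the remaining unary nodes can be taken to be identities'', is false. Without it, your first step $\alpha=\frac{(\mathrm{id},\dots,\mathrm{id},\gamma_r)}{\beta}$ need not exist. A top row $(\mathrm{id},\dots,\mathrm{id},\gamma_r)$ is compatible only if the tight source of $\gamma_r$ is an identity. But (P1) gives you no control over the tight arrows that the non-unary cell shares with its unary neighbours.

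Concretely, take the exponentiable VDC $\Dbl{F}_u(\Dbl{C})$ from the proof of Proposition~\ref{prop:counterExToProDouble} and the partition $3=1+2$. The ternary cell decomposes only as $\frac{(\alpha_1',\alpha_2')}{\beta_1'}$, because $\alpha_2'$ is the only binary cell with source $(\varphi_2,\varphi_3)$. Its tight source is the non-identity generator $a_1'$. So no factorization $\frac{(\mathrm{id}_{\varphi_1},\gamma_2)}{\beta}$ exists. The same objection applies to your nullary move and to ``stacked binary cells together with identities''.

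\textbf{The repair.} Carry arbitrary unary cells instead of identities. Maintain the invariant $\alpha=\frac{(\sigma_1,\dots,\sigma_p,\gamma_{j+1},\dots,\gamma_r)}{\beta}$, with the $\sigma_i$ unary and the $\gamma_i$ finished blocks. Suppose (P1) gives $\beta=\frac{(\tau_1,\dots,\tau_q)}{\beta'}$, with one binary or nullary $\tau_i$ and the others unary. Composing the top two layers then does three things:
\begin{itemize}
\item each top cell over a unary $\tau_i$ is replaced by its composite with $\tau_i$, which has the same arity;
\item the two top cells over a binary $\tau_i$ merge into one cell whose arity is the sum;
\item a nullary $\tau_i$ lands in the top layer as a new $0$-ary block.
\end{itemize}
This is exactly the paper's pasting step $\delta_{i_1}=\frac{(\beta_{i_1},\beta_{i_1+1})}{\gamma_{i_1}}$. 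With it, your right-to-left, block-by-block comb construction goes through.

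The paper organizes the same moves differently. It inducts on the maximal block size $d$, splitting each $d$-block as $(d-1)+1$ and re-merging with a binary cell in the bottom layer, and then inducts on the number of zero blocks. Neither ordering needs anything the other does not.
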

Here a decomposition of a multicell $\alpha$ in $\Dbl{D}$ of shape $T$ is a morphism 
$\Dbl{J}(T)\to \Dbl{D}$ sending the maximal composite cell of $\Dbl{J}(T)$ to $\alpha.$
\begin{proof}
	Let $\alpha$ be an $n$-ary multicell in $\Dbl{D}$, and let $(k_1,\ldots,k_m)$ 
  be a partition of $n$, so that $n=\sum k_i.$
  Note that such a partition uniquely determines a tree of height $2,$ according to whose shape we must 
  show $\alpha$ admits a decomposition.

  First, we consider the case where $k_1,\ldots,k_m\geq 1$ are all positive integers. 
  To prove this case we proceed by induction on $d=\max\{k_1,\ldots,k_m\}$. 
  If $d=1$, then we have the decomposition $\frac{\text{id}_{\varphi_1} \cdots \text{id}_{\varphi_n}}{\alpha}$  
  where $\varphi_1 \cdots \varphi_n$ is the loose source of $\alpha$. 
  Now, inductively suppose $d\geq 2$ 
  and the claim holds for $d-1$. Let $1\leq i_1<...<i_r\leq m$ be the indices such that $k_{i_t}=d$. 
  Replacing $k_{i_t}$ by $((d-1),1)$ in the partition, we obtain a new partition $n=\ell_1+\cdots+\ell_{m+r}$ 
  where $\max\{\ell_1,\ldots,\ell_{m+r}\}=d-1$. Thus, by the inductive hypothesis, $\alpha$ admits a 
  decomposition $\frac{\beta_1 \cdots \beta_{m+r}}{\beta}$ where $\beta_i$ has arity $\ell_i$. 
	
	Now, since $\Dbl{D}$ satisfies (P1), we can decompose $\beta$ as 
  $\frac{\gamma_1 \cdots \gamma_{m+r-1}}{\gamma},$ where $\gamma_{i_1}$ is a binary multicell
  and all other $\gamma_j$ are unary. Pasting these two decompositions, we obtain a new decomposition 
  $\frac{\delta_1 \cdots \delta_{m+r-1}}{\gamma}$ where 
  $\delta_{i_1}=\frac{(\beta_{i_1},\beta_{i_1+1})}{\gamma_{i_1}}$ has arity $(d-1)+1=d$. 
  By induction on $r,$ we obtain a decomposition of $\alpha$ of the desired shape.

	Now, let's consider the case where the partition $n=k_1+\cdots+k_m$ may contain zeros. Let $1\leq i_1<\cdots<i_r\leq m$ denote the indices such that $k_{i_t}=0$. Then we proceed inductively on $r$. If $r=0$, the claim follows from the first case. Now, suppose $r\geq 1$ such that the claim holds for $r-1$. Then the partition $n=k_1+\cdots+k_{i_r-1}+k_{i_r+1}+\cdots+k_m$ contains $r-1$ zeros. Thus, by the inductive hypothesis $\alpha$ admits a decomposition $\frac{\beta_1 \cdots \beta_{i_r-1},\beta_{i_r+1} \cdots \beta_m}{\beta}$ where $\beta_j$ has arity $k_j$. Since $\Dbl{D}$ satisfies (P1), we can decompose $\beta$ as $\frac{\gamma_1 \cdots \gamma_m}{\gamma}$ where $\gamma_{i_r}$ is nullary and the remaining $\gamma_j$ are unary. Composing these factorizations gives a decomposition $\frac{\delta_1 \cdots \delta_m}{\gamma}$ of the desired shape, proving the inductive step, and the claim.
\end{proof}

\begin{expl}[Inserting Binary and Nullary Decompositions]
	We will explicate the inductive step in Lemma~\ref{lem:existProRepDecomps} in the case of going from a binary decomopsition to a trinary decomposition, and then inserting a nullary multicell. Thus, for a quaternary multicell consider the following binary-nullary decomposition:
	\[
	\begin{adjustbox}{}
		\begin{tikzcd}
	\bullet & \bullet & \bullet & \bullet & \bullet \\
	\bullet && \bullet & \bullet & \bullet \\
	\bullet &&&& \bullet
	\arrow["\shortmid"{marking}, from=1-1, to=1-2]
	\arrow[from=1-1, to=2-1]
	\arrow["\shortmid"{marking}, from=1-2, to=1-3]
	\arrow["\shortmid"{marking}, from=1-3, to=1-4]
	\arrow[from=1-3, to=2-3]
	\arrow["\shortmid"{marking}, from=1-4, to=1-5]
	\arrow[from=1-4, to=2-4]
	\arrow[from=1-5, to=2-5]
	\arrow["\shortmid"{marking}, from=2-1, to=2-3]
	\arrow[from=2-1, to=3-1]
	\arrow["\shortmid"{marking}, from=2-3, to=2-4]
	\arrow["\shortmid"{marking}, from=2-4, to=2-5]
	\arrow[from=2-5, to=3-5]
	\arrow["\shortmid"{marking}, from=3-1, to=3-5]
\end{tikzcd}
	\end{adjustbox}
	\]
	Then we can use (P1) on the bottom layer of the decomposition to obtain a 3-layer decomposition, which composes to a trinary decomposition as below right:
	\[
	\begin{adjustbox}{}
		\begin{tikzcd}
	\bullet & \bullet & \bullet & \bullet & \bullet & \bullet & \bullet & \bullet & \bullet & \bullet \\
	\bullet && \bullet & \bullet & \bullet & \bullet &&& \bullet & \bullet \\
	\bullet &&& \bullet & \bullet & \bullet &&& \bullet & \bullet \\
	\bullet &&&& \bullet & \bullet &&&& \bullet
	\arrow["\shortmid"{marking}, from=1-1, to=1-2]
	\arrow[from=1-1, to=2-1]
	\arrow["\shortmid"{marking}, from=1-2, to=1-3]
	\arrow["\shortmid"{marking}, from=1-3, to=1-4]
	\arrow[from=1-3, to=2-3]
	\arrow["\shortmid"{marking}, from=1-4, to=1-5]
	\arrow[from=1-4, to=2-4]
	\arrow[from=1-5, to=2-5]
	\arrow["\shortmid"{marking}, from=1-6, to=1-7]
	\arrow[from=1-6, to=2-6]
	\arrow["\shortmid"{marking}, from=1-7, to=1-8]
	\arrow["\shortmid"{marking}, from=1-8, to=1-9]
	\arrow["\shortmid"{marking}, from=1-9, to=1-10]
	\arrow[from=1-9, to=2-9]
	\arrow[from=1-10, to=2-10]
	\arrow["\shortmid"{marking}, from=2-1, to=2-3]
	\arrow[from=2-1, to=3-1]
	\arrow["\shortmid"{marking}, from=2-3, to=2-4]
	\arrow["\shortmid"{marking}, from=2-4, to=2-5]
	\arrow[from=2-4, to=3-4]
	\arrow[""{name=0, anchor=center, inner sep=0}, from=2-5, to=3-5]
	\arrow[""{name=1, anchor=center, inner sep=0}, from=2-6, to=3-6]
	\arrow[from=2-9, to=3-9]
	\arrow[from=2-10, to=3-10]
	\arrow["\shortmid"{marking}, from=3-1, to=3-4]
	\arrow[from=3-1, to=4-1]
	\arrow["\shortmid"{marking}, from=3-4, to=3-5]
	\arrow[from=3-5, to=4-5]
	\arrow["\shortmid"{marking}, from=3-6, to=3-9]
	\arrow[from=3-6, to=4-6]
	\arrow["\shortmid"{marking}, from=3-9, to=3-10]
	\arrow[from=3-10, to=4-10]
	\arrow["\shortmid"{marking}, from=4-1, to=4-5]
	\arrow["\shortmid"{marking}, from=4-6, to=4-10]
	\arrow[between={0.2}{0.8}, maps to, from=0, to=1]
\end{tikzcd}
	\end{adjustbox}
	\]
	If we want instead to insert a nullary multicell, we can use (P1) on the bottom layer of the decomposition, and then compose the top to layers of the resulting 3-layer decomposition to obtain the desired 2-layer decomposition below right:
	\[
		\begin{tikzcd}[column sep=small]
	\bullet & \bullet & \bullet & \bullet & \bullet & \bullet & \bullet & \bullet & \bullet & \bullet \\
	\bullet &&& \bullet & \bullet & \bullet \\
	\bullet && \bullet & \bullet & \bullet & \bullet && \bullet && \bullet & \bullet \\
	\bullet &&&& \bullet & \bullet &&&&& \bullet
	\arrow["\shortmid"{marking}, from=1-1, to=1-2]
	\arrow[from=1-1, to=2-1]
	\arrow["\shortmid"{marking}, from=1-2, to=1-3]
	\arrow["\shortmid"{marking}, from=1-3, to=1-4]
	\arrow["\shortmid"{marking}, from=1-4, to=1-5]
	\arrow[from=1-4, to=2-4]
	\arrow[from=1-5, to=2-5]
	\arrow["\shortmid"{marking}, from=1-6, to=1-7]
	\arrow[from=1-6, to=2-6]
	\arrow["\shortmid"{marking}, from=1-7, to=1-8]
	\arrow["\shortmid"{marking}, from=1-8, to=1-9]
	\arrow["\shortmid"{marking}, from=1-9, to=1-10]
	\arrow[from=1-9, to=3-8]
	\arrow[from=1-9, to=3-10]
	\arrow[from=1-10, to=3-11]
	\arrow["\shortmid"{marking}, from=2-1, to=2-4]
	\arrow[from=2-1, to=3-1]
	\arrow["\shortmid"{marking}, from=2-4, to=2-5]
	\arrow[from=2-4, to=3-3]
	\arrow[from=2-4, to=3-4]
	\arrow[""{name=0, anchor=center, inner sep=0}, from=2-5, to=3-5]
	\arrow[""{name=1, anchor=center, inner sep=0}, from=2-6, to=3-6]
	\arrow["\shortmid"{marking}, from=3-1, to=3-3]
	\arrow[from=3-1, to=4-1]
	\arrow["\shortmid"{marking}, from=3-3, to=3-4]
	\arrow["\shortmid"{marking}, from=3-4, to=3-5]
	\arrow[from=3-5, to=4-5]
	\arrow["\shortmid"{marking}, from=3-6, to=3-8]
	\arrow[from=3-6, to=4-6]
	\arrow["\shortmid"{marking}, from=3-8, to=3-10]
	\arrow["\shortmid"{marking}, from=3-10, to=3-11]
	\arrow[from=3-11, to=4-11]
	\arrow["\shortmid"{marking}, from=4-1, to=4-5]
	\arrow["\shortmid"{marking}, from=4-6, to=4-11]
	\arrow[between={0.2}{0.8}, maps to, from=0, to=1]
\end{tikzcd}
	\]
\end{expl}

Property (P1) in the definition of pro-representability is also
sufficient for showing that multicells can be essentially uniquely
decomposed into the shape of arbitrary binary-nullary trees.

\begin{lem}\label{lem:essUniqBinaryyNullaryDecomps}
	Let $\Dbl{D}$ satisfy (P1) above. Then $\Dbl{D}$ admits
	essentiallly unique decompositions of shape $T$ for every binary-nullary tree $T$.
\end{lem}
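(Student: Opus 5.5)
I will argue by induction on the height $h\geq 2$ of the binary-nullary tree $T$. The statement has two parts, existence and essential uniqueness of decompositions of shape $T$, and they are handled separately. The base case $h=2$ is exactly (P1).

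For existence, let $T$ have height $h+1$. Collapse its top two layers to obtain a tree $c_h(T)$, which is again binary-nullary of height $h$ except possibly in its last layer. The last layer of $c_h(T)$ composes the last layer of $T$ with the layer beneath it, and that layer has at most one non-unary node. Since the constraint in Definition~\ref{defn:binaryNullaryTree} does not apply to the last layer, $c_h(T)$ is still binary-nullary. Induction then gives a decomposition of $\alpha$ of shape $c_h(T)$. To recover shape $T$, I would split each top-layer multicell of this decomposition into two layers using Lemma~\ref{lem:existProRepDecomps}, which allows arbitrary height-$2$ shapes. Alternatively one can argue as in Corollary~\ref{cor:decompSimp}: collapse the bottom two layers, giving the binary-nullary tree $c_1(T)$; decompose by induction; then split the bottom multicell into a layer whose only non-unary node is binary or nullary, using (P1). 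This second route stays inside binary-nullary shapes, and I prefer it.

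For essential uniqueness I would adapt Lemma~\ref{lem:components}, which cannot be applied directly because its hypothesis is essential uniqueness for all $2$-layer shapes. Suppose $\beta,\gamma$ are two decompositions of shape $T$. Composing their bottom two layers gives decompositions of shape $c_1(T)$, which lie in one connected component by induction. Following the proof of Lemma~\ref{lem:components}, I reduce the zig-zag to single morphisms acting on the first interface only. This leaves two $2$-layer decompositions $\frac{\prolist{\beta_2}}{\beta_1}$ and $\frac{\prolist{\rho}}{\gamma_1}$ of the same multicell $\chi$. Their shape is the bottom two layers of $T$, and this is a height-$2$ binary-nullary tree, since the bottom layer of $T$ is a single node and the next layer has at most one non-unary node, of arity $0$ or $2$. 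So (P1) puts them in the same connected component, and the same lifting argument (the two cases of the direction of $\prolist{\sigma'}$) yields a zig-zag in $\Cat{Paste}_{h+1}(\Dbl{D};\alpha)$.

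The main obstacle is checking that every step in the proof of Lemma~\ref{lem:components} uses the $2$-layer hypothesis only on the bottom-two-layer shape of the given tree, never on other shapes. The morphisms that are nontrivial on higher interfaces lift directly, and the reduction to a single morphism is a formal induction on zig-zag length, so I expect this holds. Still, it is the point to verify carefully, since restricting the shapes is the whole content of the lemma.
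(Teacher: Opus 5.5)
Your argument is the paper's: induct on height, compose the bottom two layers to land in the binary-nullary tree $c_1(T)$, and rerun the proof of Lemma~\ref{lem:components}. You are right that this proof only needs two-layer uniqueness for the bottom-two-layer shape of $T$, which (P1) covers; the paper cites the lemma directly even though its stated hypothesis is stronger than (P1). Your preferred existence route also supplies the existence half, which the paper leaves implicit. Drop the first existence route, though. The layer exempt from the constraint in Definition~\ref{defn:binaryNullaryTree} is the root layer, not the leaf layer (the examples there have ternary roots), so collapsing the top two layers can put a ternary node, or two non-unary nodes, into a constrained layer, and then $c_h(T)$ need not be binary-nullary.
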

\begin{proof}
	We proceed by induction on the height, $N$, of the binary-nullary tree $T$. 
	If $N=2$, then the claim is precisely condition (P1), which holds by assumption.
	Now, suppose $N\geq 3$, and that the claim holds for binary-nullary trees of height $N-1$.
	Composing the bottom two layers of such an $N$-layer pasting diagram 
  ${\threefrac{\prolist{\alpha_N}}{\vdots}{\alpha_1}}$
	produces an $(N-1)$-layer pasting diagram of binary-nullary shape. We can thus decrease
  the height of a tree while staying in the binary-nullary space, which is the main point.  
	Indeed, by the inductive hypothesis the resulting pasting diagrams are essentially unique, 
	so any other $N$-layer pasting diagram ${\threefrac{\prolist{\beta_N}}{\vdots}{\beta_1}}$ 
	for $\alpha$ of the same shape gets mapped to the same connected component of 
  $\Cat{Paste}_{N-1}(\Dbl{D};\alpha)$. 
	Then by Lemma~\ref{lem:components} this implies that the original pasting diagrams are equivalent, as desired. 
	Thus, by induction $\Dbl{D}$ admits essentially unique decompositions of shape $T$ for any binary-nullary tree $T$.
\end{proof}

To show that pro-representable VDCs have decomposable multicells, it remains to show essential uniqueness of the decompositions from Lemma~\ref{lem:existProRepDecomps}.

\begin{cor}\label{cor:ProRepImpliesDecomp}
	If $\Dbl{D}$ is a pro-representable VDC, then $\Dbl{D}$ has decomposable multicells, and thus is exponentiable.
\end{cor}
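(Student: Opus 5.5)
By Corollary~\ref{cor:decompSimp} and Theorem~\ref{thm:ExpViaDecomp}, it suffices to show that every multicell $\alpha$ of $\Dbl{D}$ admits essentially unique decompositions of every height $2$ shape $T'$. Lemma~\ref{lem:existProRepDecomps} already provides existence, since (P1) is part of pro-representability. So only essential uniqueness remains: two decompositions $\delta,\delta'$ of $\alpha$ of shape $T'$ must lie in the same connected component of $\Cat{Paste}_2(\Dbl{D};\alpha)$.

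Fix such a $T'$ and use (P3) to choose a binary-nullary tree $T$ with a partial composite $T\to T'$. (P3) says that every decomposition of shape $T'$ is equivalent, in $\Cat{Paste}_2(\Dbl{D};\alpha)$, to the partial composite of some decomposition of shape $T$. So pick $T$-shaped decompositions $\varepsilon,\varepsilon'$ whose partial composites $C(\varepsilon),C(\varepsilon')$ are connected to $\delta$ and $\delta'$ respectively. Here $C$ denotes the composite of the relevant $c_i$ functors taking shape $T$ to shape $T'$.

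By Lemma~\ref{lem:essUniqBinaryyNullaryDecomps}, $\varepsilon$ and $\varepsilon'$ lie in the same connected component of $\Cat{Paste}_N(\Dbl{D};\alpha)$, where $N$ is the height of $T$. The layer-composition maps $c_i$ are functors over $\alpha$: they send a morphism of pasting diagrams, given by unary cells on the interfaces, to the morphism obtained by forgetting the composed interfaces. Hence they preserve connected components, so $C(\varepsilon)$ and $C(\varepsilon')$ are connected in $\Cat{Paste}_2(\Dbl{D};\alpha)$. Chaining the zig-zags gives $\delta\sim C(\varepsilon)\sim C(\varepsilon')\sim\delta'$. Thus $\Dbl{D}$ has essentially unique $2$-layer decompositions, hence decomposable multicells by Corollary~\ref{cor:decompSimp}, and is exponentiable by Theorem~\ref{thm:ExpViaDecomp}.

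The main obstacle is reading (P3) correctly. The intended meaning is that $T'$ is obtained from $T$ by composing certain adjacent layers, and that the induced map $\pi_0\Cat{Paste}_N(\Dbl{D};\alpha)_T\to\pi_0\Cat{Paste}_2(\Dbl{D};\alpha)_{T'}$ is surjective. Under that reading the argument above is purely formal. The care needed is in checking that "partial composite" only ever composes whole adjacent layers, so that the functoriality of the $c_i$ on morphisms, with identities on the outer boundary, applies directly.
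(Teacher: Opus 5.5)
Your proposal is correct and follows essentially the same route as the paper's proof. Existence comes from Lemma~\ref{lem:existProRepDecomps}; uniqueness comes from lifting via (P3) to a binary-nullary shape, connecting the lifts by Lemma~\ref{lem:essUniqBinaryyNullaryDecomps}, and pushing the zig-zag back down through the partial-composite functors, with Corollary~\ref{cor:decompSimp} and Theorem~\ref{thm:ExpViaDecomp} finishing the argument.
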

\begin{proof}
	Fix an $n$-ary multicell $\alpha$ in $\Dbl{D}$. 
	By Lemma~\ref{lem:existProRepDecomps}, $\alpha$ admits decompositions of shape $T$ 
	for any tree $T$ of height $2$.
	It remains to show that any two such decompositions are equivalent.
	
	By property (P3), for any height $2$ tree $T$, there exists an $N$-layered binary-nullary tree $T'$
	that partially composes to $T$ such that every decomposition of $\alpha$ of shape $T$ is equivalent 
	to a decomposition which is the partial composite of a decomposition of shape $T'$.
	However, by Lemma~\ref{lem:essUniqBinaryyNullaryDecomps}, all decompositions of $\alpha$ of shape $T'$
	are equivalent. 
	Thus, since equivalent decompositions of shape $T'$ compose to equivalent decompositions of shape $T$,
	and since every decomposition of shape $T$ is equivalent to one coming from a decomposition of shape $T'$,
	it follows that all decompositions of shape $T$ are equivalent.
	By Corollary~\ref{cor:decompSimp} it follows that $\alpha$ admits essentially unique decompositions, as desired.
\end{proof}

\begin{rmk}[Insufficiency of (P1)]
(P1) alone is not sufficient for exponentiability. 
The need for property (P3) comes from the fact that we could have a 2-layer decomposition of the form 
\[
\begin{tikzcd}
	\bullet & \bullet & \bullet \\
	\bullet && \bullet & \bullet \\
	\bullet &&& \bullet
	\arrow["\shortmid"{marking}, from=1-1, to=1-2]
	\arrow[from=1-1, to=2-1]
	\arrow["\shortmid"{marking}, from=1-2, to=1-3]
	\arrow[from=1-3, to=2-3]
	\arrow[from=1-3, to=2-4]
	\arrow["\shortmid"{marking}, from=2-1, to=2-3]
	\arrow[from=2-1, to=3-1]
	\arrow["\shortmid"{marking}, from=2-3, to=2-4]
	\arrow[from=2-4, to=3-4]
	\arrow["\shortmid"{marking}, from=3-1, to=3-4]
\end{tikzcd}
\]
which is not equivalent to a 2-layer decomposition admitting a binary-nullary decomposition, since we may not be able to decompose the binary and nullary multicell in the second layer simultaneously to obtain decompositions of the form bottom left or bottom right:
\[\begin{adjustbox}{}
\begin{tikzcd}
	\bullet & \bullet & \bullet &&& \bullet & \bullet & \bullet \\
	\bullet && \bullet &&& \bullet & \bullet & \bullet & \bullet \\
	\bullet && \bullet & \bullet && \bullet && \bullet & \bullet \\
	\bullet &&& \bullet && \bullet &&& \bullet
	\arrow["\shortmid"{marking}, from=1-1, to=1-2]
	\arrow[from=1-1, to=2-1]
	\arrow["\shortmid"{marking}, from=1-2, to=1-3]
	\arrow[from=1-3, to=2-3]
	\arrow["\shortmid"{marking}, from=1-6, to=1-7]
	\arrow[from=1-6, to=2-6]
	\arrow["\shortmid"{marking}, from=1-7, to=1-8]
	\arrow[from=1-7, to=2-7]
	\arrow[from=1-8, to=2-8]
	\arrow[from=1-8, to=2-9]
	\arrow["\shortmid"{marking}, from=2-1, to=2-3]
	\arrow[from=2-1, to=3-1]
	\arrow[from=2-3, to=3-3]
	\arrow[from=2-3, to=3-4]
	\arrow["\shortmid"{marking}, from=2-6, to=2-7]
	\arrow[from=2-6, to=3-6]
	\arrow["\shortmid"{marking}, from=2-7, to=2-8]
	\arrow["\shortmid"{marking}, from=2-8, to=2-9]
	\arrow[from=2-8, to=3-8]
	\arrow[from=2-9, to=3-9]
	\arrow["\shortmid"{marking}, from=3-1, to=3-3]
	\arrow[from=3-1, to=4-1]
	\arrow["\shortmid"{marking}, from=3-3, to=3-4]
	\arrow[from=3-4, to=4-4]
	\arrow["\shortmid"{marking}, from=3-6, to=3-8]
	\arrow[from=3-6, to=4-6]
	\arrow["\shortmid"{marking}, from=3-8, to=3-9]
	\arrow[from=3-9, to=4-9]
	\arrow["\shortmid"{marking}, from=4-1, to=4-4]
	\arrow["\shortmid"{marking}, from=4-6, to=4-9]
\end{tikzcd}
\end{adjustbox}
\]
due to the tight sources and targets that need to coincide in the decompositions. 

However, in the case where $\Dbl{D}$ has trivial underlying tight category, and hence corresponds to a multicategory, condition (P1) does suffice to guarantee exponentiability, since there are no constraints
arising from tight arrows in that case.
\end{rmk}

\subsection{Exponentiability of VDCs is equivalent to malleability}\label{subsec:malleable}

In addition to the classes of VDCs discussed so far, it is conjectured by Arkor that the exponentiable VDCs 
are the \emph{malleable} VDCs, defined in~\cite{arkor2025exponentiablevirtualdoublecategories}. 
Once the definition is expanded, we will see that malleable VDCs are indeed precisely those having 
decomposable multicells. Before stating the definition we need to recall the construction of the loose 
Kleisli VDC for a monad on a VDC~\cite[Definition 4.1]{Cruttwell2010}. Here a monad on a VDC is a monad 
in the 2-category $\BiCat{Vdc}$.

\begin{defn}[Kleisli VDCs]
	The loose Kleisli VDC associated to a VDC $\Dbl{D}$ equipped with a monad $T:\Dbl{D}\to \Dbl{D}$ is denoted $\Dbl{L}\Cat{Kl}(\Dbl{D},T)$ and defined as follows:
	\begin{enumerate}
		\item Its objects and tight arrows are as in $\Dbl{D}$;
		\item A loose arrow $\varphi:x\proto y$ corresponds to a loose arrow $U(\varphi):x\proto Ty$ in $\Dbl{D}$;
		\item A nullary multicell, as below left, is equivalent to a multicell in $\Dbl{D}$ as below right
		\[
		\begin{tikzcd}
	&&&&& x \\
	& x & {} & {} &&& Tx \\
	y && z & y &&&& Tz
	\arrow["\eta", from=1-6, to=2-7]
	\arrow["a"', from=1-6, to=3-4]
	\arrow["a"', from=2-2, to=3-1]
	\arrow["b", from=2-2, to=3-3]
	\arrow[squiggly, tail reversed, from=2-4, to=2-3]
	\arrow["Tb", from=2-7, to=3-8]
	\arrow[""{name=0, anchor=center, inner sep=0}, "\varphi"'{inner sep=.8ex}, "\shortmid"{marking}, from=3-1, to=3-3]
	\arrow[""{name=1, anchor=center, inner sep=0}, "{U(\varphi)}"'{inner sep=.8ex}, "\shortmid"{marking}, from=3-4, to=3-8]
	\arrow["\alpha"{description}, draw=none, from=1-6, to=1]
	\arrow["\alpha"{description}, draw=none, from=2-2, to=0]
\end{tikzcd}
		\]
		\item An $n$-ary multicell, $n\geq 1$, as below left, is equivalent to an $n$-ary multicell in $\Dbl{D}$ as below right:
\[\begin{tikzcd}
	{x_0} && {x_n} & {x_0} & {Tx_1} & \cdots && {T^nx_n} \\
	&& {} & {} &&&& {Tx_n} \\
	y && z & y &&&& Tz
	\arrow[""{name=0, anchor=center, inner sep=0}, "{{{\prolist{\varphi}}}}"{inner sep=.8ex}, "\shortmid"{marking}, from=1-1, to=1-3]
	\arrow["a"', from=1-1, to=3-1]
	\arrow["b", from=1-3, to=3-3]
	\arrow["{{{U(\varphi_1)}}}"{inner sep=.8ex}, "\shortmid"{marking}, from=1-4, to=1-5]
	\arrow["a"', from=1-4, to=3-4]
	\arrow["{{{T(U\varphi_2)}}}"{inner sep=.8ex}, "\shortmid"{marking}, from=1-5, to=1-6]
	\arrow["{{{T^{n-1}(U\varphi_n)}}}"{inner sep=.8ex}, "\shortmid"{marking}, from=1-6, to=1-8]
	\arrow["{{{\mu\circ \cdots \circ T^{n-2}\mu}}}"', from=1-8, to=2-8]
	\arrow[between={0.2}{0.8}, squiggly, tail reversed, from=2-4, to=2-3]
	\arrow["Tb"', from=2-8, to=3-8]
	\arrow[""{name=1, anchor=center, inner sep=0}, "\psi"'{inner sep=.8ex}, "\shortmid"{marking}, from=3-1, to=3-3]
	\arrow[""{name=2, anchor=center, inner sep=0}, "{{{U(\psi)}}}"'{inner sep=.8ex}, "\shortmid"{marking}, from=3-4, to=3-8]
	\arrow["\alpha"{description}, draw=none, from=0, to=1]
	\arrow["\alpha"{description}, draw=none, from=1-6, to=2]
\end{tikzcd}\]
	\end{enumerate}
\qed
\end{defn}

We write $\Dbl{K}\Mod(\Dbl{D},T):=\Mod(\Dbl{L}\Cat{Kl}(\Dbl{D},T))$ for the VDC of $T$-monoids in $\Dbl{D}$. 
Cruttwell and Shulman have a very general framework in \cite{Cruttwell2010}, 
but for this paper we will only need to consider the $T$-monoids as in~\cite{Burroni1971,Leinster2004}, 
where $T=\Cat{fc}$ is the free category monad on the category of graphs. 
Such $T$-monoids give one way to define virtual double categories. 
Namely, a VDC $\Dbl{D}$ is equivalent to the associated $\Cat{fc}$-monoid
\[\begin{tikzcd}[column sep=small]
	& {G_1} &&& {\Sigma_n \uSq{n}\Dbl{D}} & {\uTight(\Dbl{D})} & \\
	{\Cat{fc}(G_0)} && {G_0} & {\Cat{fc}(\uLoose(\Dbl{D})} & {\uOb (\Dbl{D}))} & {\uLoose(\Dbl{D})} & {\uOb (\Dbl{D})}
	\arrow[from=1-2, to=2-1]
	\arrow[""{name=0, anchor=center, inner sep=0}, from=1-2, to=2-3]
	\arrow[""{name=1, anchor=center, inner sep=0}, shift right, from=1-5, to=1-6]
	\arrow[""{name=2, anchor=center, inner sep=0}, shift left, from=1-5, to=1-6]
	\arrow[""{name=3, anchor=center, inner sep=0}, shift left, from=2-4, to=2-5]
	\arrow[shift right, from=2-4, to=2-5]
	\arrow[shift left, from=2-6, to=2-7]
	\arrow[""{name=4, anchor=center, inner sep=0}, shift right, from=2-6, to=2-7]
	\arrow[""{name=5, anchor=center, inner sep=0}, between={0.4}{0.6}, from=1, to=3]
	\arrow[between={0.4}{0.6}, from=2, to=4]
	\arrow["{:=}"{description, pos=0.2}, draw=none, from=0, to=5]
\end{tikzcd}\]
in $\Cat{Grph}$, the category of graphs. Thus we have the monoid multiplication
$m:\fc(G_1)\times_{\fc(G_0)} G_1\to G_1$ which pastes height-2 arrangements of multicells
as well as the monoid unit $e:G_0\to G_1,$ which picks out identity tight arrows 
and unary multicells.

Using the normalized embedding $\Dbl{K}\Mod(\Dbl{D},T)\hookrightarrow \Dbl{K}\Mod(\Mod(\Dbl{D}),\Mod(T))$ 
constructred in~\cite[Theorem 8.7]{Cruttwell2010}, we can further view the VDC $\Dbl{D}$ as a $\Mod(\mathsf{fc})$-monoid in 
$\Dbl{P}\mathsf{rof}(\Cat{Grph})$. This $\Mod(\mathsf{fc})$-monoid is carried by by a profunctor internal to graphs
whose codomain is the category internal to graphs carried by the span
\[\begin{tikzcd}
	& {\uSq{1}\Dbl{D}} & {\uTight(\Dbl{D})} & \\
	{\uLoose(\Dbl{D})} & {\uOb (\Dbl{D})} & {\uLoose(\Dbl{D})} & {\uOb (\Dbl{D})}
	\arrow[""{name=0, anchor=center, inner sep=0}, shift right, from=1-2, to=1-3]
	\arrow[""{name=1, anchor=center, inner sep=0}, shift left, from=1-2, to=1-3]
	\arrow[""{name=2, anchor=center, inner sep=0}, shift left, from=2-1, to=2-2]
	\arrow[shift right, from=2-1, to=2-2]
	\arrow[shift left, from=2-3, to=2-4]
	\arrow[""{name=3, anchor=center, inner sep=0}, shift right, from=2-3, to=2-4]
	\arrow[between={0.4}{0.6}, from=0, to=2]
	\arrow[between={0.4}{0.6}, from=1, to=3]
\end{tikzcd}.\]
The $\Cat{Grph}$-profunctor $M$ itself carrying the desired $\Mod(\mathsf{fc})$-monoid 
is carried by the span of graphs $\mathsf{fc}(G_0)\leftarrow G_1\rightarrow G_0$ by 
which we already gave a simpler reconstitution of $\Dbl{D},$ and the multiplication of the monoid
is a cell $M,M \Rightarrow M,$ in the loose Kleisli VDC of $\Mod(\fc)$ on $\Dbl{P}\mathsf{rof}(\Cat{Grph})$. 
We can now recall Arkor's definition of malleable VDCs.

\begin{defn}[Malleable VDC (\cite{arkor2025exponentiablevirtualdoublecategories})]
    A VDC $\Dbl{D}$ is said to be \emph{malleable} if when viewed as a $\Mod(\mathsf{fc})$-monoid in 
    $\Dbl{P}\mathsf{rof}(\Cat{Grph})$, its multiplication multicell is cartesian in $\Dbl{P}\mathsf{rof}(\Cat{Grph})$.
\qed
\end{defn}

To work with this definition we can use the fact that $\Cat{Grph}=\Cat{Set}^{\bullet\rightrightarrows\bullet}$
along with the isomorphisms in Proposition~\ref{prop:InternalizeUModAdj} and
Example 7.3 of~\cite{arkor2025exponentiablevirtualdoublecategories},
which imply that $\Dbl{P}\mathsf{rof}(\Cat{Grph})\cong \Dbl{V}\mathsf{df}_n(\T_u(\bullet\rightrightarrows\bullet),\Prof)$. 
Then the category internal to graphs associated to $\Dbl{D}$ corresponds to the graph of categories
$(\Dbl{D}_1\rightrightarrows \Dbl{D}_0)$, while the span carrying the $\Mod(\mathsf{fc})$-monoid structure
corresponds to the graph of profunctors $\MCell (\Dbl{D})\rightrightarrows \Dbl{D}_0(-,-)$.

Then $\Dbl{D}$ being malleable means that the natural transformation 
$\Mod(\mathsf{fc})(\MCell (\Dbl{D}))\odot \MCell (\Dbl{D})\Rightarrow \MCell (\Dbl{D})(\mu_\Cat{fc},1)$ 
induced by composition is an isomorphism, which is precisely Eq.~\ref{eq:CompChar}, 
so by Corollary~\ref{cor:decompSimp} we have the following:

\begin{prop}[Malleable VDCs have Decomposable Multicells]\label{prop:MallDecomp}
    A VDC $\Dbl{D}$ is malleable if and only if it has decomposable multicells.
\end{prop}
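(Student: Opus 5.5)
The plan is to show that both conditions reduce to a single statement: the composition map of Eq.~\ref{eq:CompChar} is invertible. On the malleable side this is a direct unfolding of Arkor's definition. On the decomposable side it follows from Corollary~\ref{cor:decompSimp}.

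First, I will unwind malleability. The identification $\Dbl{P}\mathsf{rof}(\Cat{Grph})\cong \Dbl{V}\mathsf{df}_n(\T_u(\bullet\rightrightarrows\bullet),\Prof)$ recalled above transports the $\Mod(\Cat{fc})$-monoid attached to $\Dbl{D}$ to the following data: the graph of categories $\Dbl{D}_1\rightrightarrows\Dbl{D}_0$, the graph of profunctors $\MCell(\Dbl{D})\rightrightarrows \Dbl{D}_0(-,-)$, and the multiplication multicell induced by pasting. In a VDC of profunctors, a globular-type multicell $P_1,P_2\Rightarrow Q(F,G)$ is cartesian exactly when the induced map $P_1\odot P_2\Rightarrow Q(F,G)$ is an isomorphism. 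I will check this on the restriction description in $\Prof$, levelwise in the two components of $\Cat{Grph}$.

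Next, I will compute the relevant composite. The loose composite $\Mod(\Cat{fc})(\MCell(\Dbl{D}))\odot\MCell(\Dbl{D})$ is the coend over $\Cat{fc}(\Dbl{D}_1)$, and the restricted target is $\MCell(\Dbl{D})(\mu_{\Cat{fc}},1)$. So cartesianness of the multiplication says precisely that the map of Eq.~\ref{eq:CompChar} is a bijection at every argument.

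Finally, I will interpret that bijection. At a list of lists $(\prolist{\varphi_1},\dots,\prolist{\varphi_m})$ and a target $\psi$, elements of the coend are $2$-layer pasting diagrams of the corresponding height-$2$ tree shape, taken modulo sliding unary multicells across the middle interface. These classes are exactly the elements of $\pi_0\Cat{Paste}_2(\Dbl{D};\alpha)$ fibred over the composite $\alpha$. So surjectivity gives existence and injectivity gives essential uniqueness of $2$-layer decompositions, the same argument as in Example~\ref{exmp:decomp}. Corollary~\ref{cor:decompSimp} then upgrades this to decomposability.

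The main obstacle is the first step: verifying that the multiplication in $\Dbl{P}\mathsf{rof}(\Cat{Grph})$, after the identifications, really is the pasting map with source the $\odot$-composite, and that cartesianness there matches invertibility of Eq.~\ref{eq:CompChar}. The vertex component of the graph is harmless, since it only involves tight arrows, where composition is free. I would handle this by checking cartesianness componentwise for the two objects of $\bullet\rightrightarrows\bullet$, where it reduces to the familiar statement in $\Prof$.
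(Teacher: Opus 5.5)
Your proposal is correct and follows the paper's own argument. You translate malleability through $\Dbl{P}\mathsf{rof}(\Cat{Grph})\cong \Dbl{V}\mathsf{df}_n(\T_u(\bullet\rightrightarrows\bullet),\Prof)$ into invertibility of the coend map of Eq.~\ref{eq:CompChar}, read its fibres as $\pi_0\Cat{Paste}_2(\Dbl{D};\alpha)$, and conclude with Corollary~\ref{cor:decompSimp}, spelling out more of the bookkeeping than the paper does; the only thing to sharpen is why the vertex component is harmless, namely that $\Cat{fc}$ fixes vertices, so there the multiplication is $\Dbl{D}_0(-,-)\odot\Dbl{D}_0(-,-)\Rightarrow\Dbl{D}_0(-,-)$, which is invertible by co-Yoneda.
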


We can now state the main theorem of the paper:

\begin{thm}[Characterization of Exponentiable VDCs]\label{thm:ExpChar}
	For a VDC $\Dbl{D}$, the following are equivalent:
	\begin{enumerate}
		\item $\Dbl{D}$ is exponentiable;
		\item $\Dbl{D}$ is pro-representable;
		\item $\Dbl{D}$ has decomposable multicells;
		\item $\Dbl{D}$ is malleable;
		\item The exponential $\Span^{\Dbl{D}}$ exists in the category of large VDCs.
	\end{enumerate}
\end{thm}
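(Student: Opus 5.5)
Most of the equivalences among (1)--(4) are already in place, so the plan is to assemble them and then treat (5) separately. Theorem~\ref{thm:ExpViaDecomp} gives (1)$\iff$(3). Proposition~\ref{prop:MallDecomp} gives (3)$\iff$(4). Corollary~\ref{cor:ProRepImpliesDecomp} gives (2)$\implies$(3). For (3)$\implies$(2), I will note that decomposable multicells give (P1) directly, since a binary-nullary tree of height $2$ is just a particular height-$2$ tree. For (P3), take $T$ to be any binary-nullary tree that partially composes to the given height-$2$ tree $T'$; such a tree exists by the combinatorics behind Lemma~\ref{lem:existProRepDecomps}, splitting each node one binary or nullary step at a time. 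Then every decomposition of shape $T'$ is equivalent to the partial composite of some decomposition of shape $T$, because both are decompositions of the same cell of shape $T'$ and these are essentially unique.

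It remains to bring in (5). The implication (1)$\implies$(5) is immediate in principle: the whole sketch argument (Corollary~\ref{cor:exponentiable-sketch}, Theorem~\ref{thm:ExpViaDecomp}) runs verbatim in the category of large VDCs, which is again the category of models of the same small sketch in $\Cat{SET}$. Equivalently, $\Dbl{D}$ has decomposable multicells regardless of size, so $(-)\times\Dbl{D}$ preserves the sketched colimits there too, and $\Span^{\Dbl{D}}$ exists.

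The substantive direction is (5)$\implies$(3), a Yoneda-type detection argument, and this is where the work lies. Assume $\Span^{\Dbl{D}}$ exists. By condition (3) of Corollary~\ref{cor:exponentiable-sketch}, applied to the single target $B=\Span$, we get natural isomorphisms
\[\Vdc(\Dbl{J}(T)\times\Dbl{D},\Span)\cong \lim \Vdc(\Dbl{c}\times\Dbl{D},\Span)\]
over the diagram $\Cat{el}\downarrow\Dbl{J}(T)$. In other words, the canonical comparison $c$ of Eq.~\ref{eq:compMap} is inverted by $\Vdc(-,\Span)$ for every height $2$ or $3$ tree $T$. I would then prove a Yoneda lemma: the VDFs into $\Span$ jointly detect isomorphisms, or at least detect bijectivity of $c$ on multicells. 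Since $c$ is already an isomorphism on objects, tight arrows and loose arrows, this reduces to showing that $\Span$-valued representable-like functors separate points of the relevant multicell sets.

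The natural candidate is the ``representable'' VDF $\yo_x:\Dbl{E}\to\Span$ attached to an object, or better to a loose arrow, of a VDC $\Dbl{E}$. It sends an object $y$ to the set of tight arrows $x\to y$. It sends a loose arrow $\psi$ to the set of multicells with target $\psi$ and with source a path starting at a fixed point; more precisely, one can use the whole $\Cat{fc}$-indexed family of multicells, whose apex spans pick out their tight boundaries. It acts on multicells by composition. A multicell of $\Dbl{E}$ is then recovered as the image of an identity or generic element, as in Yoneda. Applying this with $\Dbl{E}$ equal to the colimit $\colim(\Dbl{c}\times\Dbl{D})$ would show that $c$ is injective and surjective on multicells, which is decomposability into $2$ (and $3$) layers. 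I expect this construction to be the main obstacle. Nullary multicells and the unit issues around empty paths have to be handled, possibly via $\Dbl{F}_u$ or the normal correspondence with $\Prof$ (Eq.~\ref{eq:SpanProf}). Checking that $\yo$ is genuinely a VDF into large $\Span$ is routine but fiddly.
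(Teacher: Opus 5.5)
Your handling of (1)$\iff$(3)$\iff$(4) and (2)$\iff$(3) matches the paper. Your argument for (3)$\implies$(2) just spells out what the paper calls ``by definition''. Rerunning the sketch argument with $\Cat{SET}$ in place of $\Cat{Set}$ is a legitimate way to get (1)$\implies$(5).

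\textbf{The gap is in (5)$\implies$(3).} Write $c\colon \Dbl{X}:=\colim_j(\Dbl{c}_j\times\Dbl{D})\to\Dbl{Y}:=\Dbl{J}(T)\times\Dbl{D}$. Your reduction to ``$\Vdc(c,\Span)$ is a bijection'' is correct.

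The existence half of that bijection does give injectivity of $c$ on multicells. Take the total representable $R_{\Dbl X}\colon\Dbl{X}\to\Span$: an object goes to all tight arrows into it, a loose arrow to all multicells into it, and multicells act by composition. It separates multicells with a common boundary (evaluate at identities), and it extends along $c$.

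Surjectivity, however, does not follow from anything you state.
\begin{itemize}
\item The uniqueness half cannot give it on its own. In $\Cat{Cat}$, $\Cat{Set}$-valued functors on $B\mathbb{Z}$ are determined by their restriction to $B\mathbb{N}$, yet $B\mathbb{N}\to B\mathbb{Z}$ is not full.
\item The existence half applied to $R_{\Dbl X}$ is not enough either. For $\nu$ in $\Dbl Y$, the extension $\widehat R$ yields a multicell $\widehat R(\nu)(\mathrm{id},\dots,\mathrm{id})$ of $\Dbl X$ with the right tight boundary, but nothing ties it to $\nu$. A map of spans only has to respect the legs, which remember the endpoints of a source path and not the path itself. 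There is no reason for $c$ to commute with $\widehat R(\nu)$.
\end{itemize}

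What you need is that the comparison $R_{\Dbl X}\Rightarrow R_{\Dbl Y}\circ c$, given by $c$ itself, extends to a tight transformation $\widehat R\Rightarrow R_{\Dbl Y}$. Naturality at $\nu$, evaluated at identities, then gives $c(\widehat R(\nu)(\mathrm{id},\dots,\mathrm{id}))=\nu$. This requires bijectivity of $\Vdc(\Dbl{F}_u(\Tight)\times c,\Span)$. That is not contained in condition (3) of Corollary~\ref{cor:exponentiable-sketch} at the sketch objects. It does follow from the full universal property of $\Span^{\Dbl D}$ together with exponentiability of $\Dbl{F}_u(\Tight)=\T_u([1])$, which makes $\Dbl{F}_u(\Tight)\times-$ commute with the colimit defining $\Dbl X$. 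Without some parametrized argument of this kind, your claim that VDFs into $\Span$ alone detect bijectivity of $c$ is unsupported.

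The paper packages exactly this parametrized Yoneda argument through Townsend's Proposition~\ref{prop:ExpByUnivExample}. Each $\Dbl{D}$ embeds as a regular monomorphism $\rho_{\Dbl D}\colon\Dbl{D}\hookrightarrow\Span^{\Cat{fc}(\Dbl{D})^{op_t}}$ (Lemmas~\ref{lem:regMon} and~\ref{lem:Yoneda}), where $\Cat{fc}(\Dbl{D})^{op_t}$ is a strict double category and hence exponentiable. The parameter $\Cat{fc}(\Dbl{D})^{op_t}$ indexes the representable by loose source paths, which is precisely the information a single VDF into $\Span$ loses.
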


So far we've shown the equivalences of conditions (1) through (4). 
We now turn to the last equivalent condition, involving the existence of the exponential $\Span^{\Dbl{D}}.$\footnote{This condition can be understood to be intimately related to the 
characterization of pro-monoidal multicategories in terms of bi-cocontinuous monoidal copresheaf categories (c.f.~\cite{Day1970}). 
We will explicate this relation once we've established conditions for the representability of exponentials in 
Section~\ref{sec:RepresentabilityOfExponentials}.}

\subsection{Exponentiability reduced to the case of $\Span$}
In personal communication, Arkor has indicated that an upcoming draft of \cite{arkor2025exponentiablevirtualdoublecategories}
will contain a proof that $\Span$ detects exponentiability for VDCs along very similar lines to the argument
below.

Before proving $(1)\iff(5)$ we will need the fact that all representable VDCs are exponentiable 
(this is proved directly in~\cite{arkor2025exponentiablevirtualdoublecategories}). 
We will use the equivalence $(1)\iff(3)$ to provide a simple proof of this fact.

\begin{prop}[Representable Implies Exponentiable]\label{prop:RepImpProRep}
    All representable VDCs are exponentiable.
\end{prop}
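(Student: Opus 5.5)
The plan is to verify condition (3) of Theorem~\ref{thm:ExpChar}, namely decomposable multicells, and then invoke Theorem~\ref{thm:ExpViaDecomp}. By Corollary~\ref{cor:decompSimp} it is enough to treat decompositions into $2$ layers. So fix a representable VDC $\Dbl{D}$, an $n$-ary multicell $\alpha:\cell{a}{\prolist{\varphi}}{\psi}{b}$, and a partition $n=k_1+\cdots+k_m$ with possibly zero parts. I must show that $\pi_0\Cat{Paste}_2(\Dbl{D};\alpha)$ over this shape is a singleton.

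For existence, choose opcartesian cells $o_i:(\prolist{\varphi}_i)\Rightarrow \odot(\prolist{\varphi}_i)$ for each block $\prolist{\varphi}_i$ of $\prolist{\varphi}$. When $k_i=0$ this is the nullary cell into the loose unit. The family $(o_1\cdots o_m)$ is a list of globular cells. Since representable VDCs have strong composites, the composite $\frac{(o_1\cdots o_m)}{o}$ with $o:(\odot\prolist{\varphi}_1,\ldots,\odot\prolist{\varphi}_m)\Rightarrow\odot(\prolist{\varphi})$ opcartesian is again opcartesian. I will use the standard fact that opcartesian multicells are closed under this kind of pasting, which follows from the strong (not merely weak) universal property in Definition~\ref{defn:OpCart}. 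Hence $\alpha$ factors as $\frac{(o_1\cdots o_m)}{\alpha^\flat}$ for a unique $\alpha^\flat$. More directly, applying the strong universal property of each $o_i$ in turn gives $\alpha=\frac{(o_1\cdots o_m)}{\alpha'}$. This is a decomposition of the required shape.

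For essential uniqueness, let $\frac{(\beta_1\cdots\beta_m)}{\gamma}$ be any decomposition of $\alpha$ of this shape. Each $\beta_i$ factors uniquely as $\frac{o_i}{\beta_i^\flat}$, where $\beta_i^\flat$ is unary, and these factorizations need only the weak universal property. Write $\prolist{\sigma}:=(\beta_1^\flat\cdots\beta_m^\flat)$; this is a sequence of unary multicells. Then $(\prolist{\sigma})$ is a morphism in $\Cat{Paste}_2(\Dbl{D};\alpha)$ from $\frac{(o_1\cdots o_m)}{\frac{\prolist{\sigma}}{\gamma}}$ to $\frac{(\beta_1\cdots\beta_m)}{\gamma}$, because it moves the unary layer $\prolist{\sigma}$ across the interface. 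The first of these two decompositions has top layer $(o_1\cdots o_m)$ and composes to $\alpha$. So does the canonical decomposition $\frac{(o_1\cdots o_m)}{\alpha'}$. The strong universal property, applied to each $o_i$ in the presence of the other loose arrows, then forces $\frac{\prolist{\sigma}}{\gamma}=\alpha'$. Thus every decomposition is connected by a single morphism to the canonical one, and $\pi_0$ is a singleton.

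The main obstacle is the step that requires the strong rather than the weak universal property. Cancelling a whole list $(o_1\cdots o_m)$ on the left needs factorization through one $o_i$ while other loose arrows sit beside it, and this is exactly what opcartesianness in Definition~\ref{defn:OpCart} provides. I will do this by induction on $m$, cancelling one $o_i$ at a time. I also need to check the nullary case $k_i=0$, where $o_i$ is the unit cell and the $\prolist{\chi}_0,\prolist{\chi}_1$ flanking is essential.
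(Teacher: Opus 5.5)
Your proposal is correct and is essentially the paper's proof. Both build the canonical decomposition through opcartesian cells on each block, factor the top layer of any other decomposition through those cells, and use uniqueness of factorization through the opcartesian row to identify the bottom cells, so that a single morphism in $\Cat{Paste}_2(\Dbl{D};\alpha)$ connects the two decompositions. Your induction on $m$, cancelling one $o_i$ at a time against the flanking loose arrows, makes explicit the strong-opcartesian uniqueness step that the paper leaves implicit.
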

\begin{proof}
    Let $\Dbl{D}$ be a representable VDC. From the proof that exponentiable VDCs are precisely those which have 
    decomposable multicells, it suffices to show that each $n$-ary multicell $\alpha$ 
    in $\Dbl{D}$ admits essentially unique decompositions of height 2. 
    Let $0\leq m_1\leq \cdots\leq m_r=n$ be a sequence of non-negative integers, and let 
    $\prolist{\varphi}_1 \cdots \prolist{\varphi}_r$ denote the resulting partition of the loose source of $\alpha$ into a 
    sequence of sequences of loose arrows. Then since $\Dbl{D}$ has all opcartesian multicells, we have a canonical decomposition of 
    $\alpha$ relative to this partition:
\[\begin{tikzcd}
	{x_0} & {x_{m_1}} & \cdots & {x_{m_r}} \\
	{x_0} & {x_{m_1}} & \cdots & {x_{m_r}} \\
	{y_0} &&& {y_1}
	\arrow[""{name=0, anchor=center, inner sep=0}, "{{{\prolist{\varphi}_1}}}"{inner sep=.8ex}, "\shortmid"{marking}, from=1-1, to=1-2]
	\arrow[equals, from=1-1, to=2-1]
	\arrow[""{name=1, anchor=center, inner sep=0}, "{{{\prolist{\varphi}_2}}}"{inner sep=.8ex}, "\shortmid"{marking}, from=1-2, to=1-3]
	\arrow[equals, from=1-2, to=2-2]
	\arrow[""{name=2, anchor=center, inner sep=0}, "{{{\prolist{\varphi}_r}}}"{inner sep=.8ex}, "\shortmid"{marking}, from=1-3, to=1-4]
	\arrow["\cdots"{description}, draw=none, from=1-3, to=2-3]
	\arrow[equals, from=1-4, to=2-4]
	\arrow[""{name=3, anchor=center, inner sep=0}, "{{{\odot(\prolist{\varphi}_1)}}}"'{inner sep=.8ex}, "\shortmid"{marking}, from=2-1, to=2-2]
	\arrow["a"', from=2-1, to=3-1]
	\arrow[""{name=4, anchor=center, inner sep=0}, "{{{\odot(\prolist{\varphi}_2)}}}"'{inner sep=.8ex}, "\shortmid"{marking}, from=2-2, to=2-3]
	\arrow[""{name=5, anchor=center, inner sep=0}, "{{{\odot(\prolist{\varphi}_r)}}}"'{inner sep=.8ex}, "\shortmid"{marking}, from=2-3, to=2-4]
	\arrow["b", from=2-4, to=3-4]
	\arrow[""{name=6, anchor=center, inner sep=0}, "\psi"'{inner sep=.8ex}, "\shortmid"{marking}, from=3-1, to=3-4]
	\arrow["{{\mathsf{opcart}}}"{description}, draw=none, from=0, to=3]
	\arrow["{{\mathsf{opcart}}}"{description}, draw=none, from=1, to=4]
	\arrow["{{\mathsf{opcart}}}"{description}, draw=none, from=2, to=5]
	\arrow["{{\exists!\alpha^\flat}}"{description, pos=0.7}, draw=none, from=4, to=6]
\end{tikzcd}\]
	Further, by the universal property of opcartesian multicells, any other decomposition $\frac{\beta_1 \cdots \beta_r}{\beta}$ of the same shape factors uniquely through this universal one as below:
	\[
	\begin{adjustbox}{}
		\begin{tikzcd}
	&&&& {x_0} & \cdots & {x_{m_r}} \\
	{x_0} & \cdots & {x_{m_r}} && {x_0} & \cdots & {x_{m_r}} \\
	{z_0} & \cdots & {z_r} & {=} \\
	{y_0} && {y_1} && {z_0} & \cdots & {z_r} \\
	&&&& {y_0} && {y_1}
	\arrow[""{name=0, anchor=center, inner sep=0}, "{{\prolist{\varphi}_1}}"{inner sep=.8ex}, "\shortmid"{marking}, from=1-5, to=1-6]
	\arrow[equals, from=1-5, to=2-5]
	\arrow[""{name=1, anchor=center, inner sep=0}, "{{\prolist{\varphi}_r}}"{inner sep=.8ex}, "\shortmid"{marking}, from=1-6, to=1-7]
	\arrow["\cdots"{description}, draw=none, from=1-6, to=2-6]
	\arrow[equals, from=1-7, to=2-7]
	\arrow[""{name=2, anchor=center, inner sep=0}, "{{\prolist{\varphi}_1}}"{inner sep=.8ex}, "\shortmid"{marking}, from=2-1, to=2-2]
	\arrow["{{a'}}"', from=2-1, to=3-1]
	\arrow[""{name=3, anchor=center, inner sep=0}, "{{\prolist{\varphi}_r}}"{inner sep=.8ex}, "\shortmid"{marking}, from=2-2, to=2-3]
	\arrow["\cdots"{description}, draw=none, from=2-2, to=3-2]
	\arrow["{{b'}}", from=2-3, to=3-3]
	\arrow[""{name=4, anchor=center, inner sep=0}, "{{\odot(\prolist{\varphi}_1)}}"'{inner sep=.8ex}, "\shortmid"{marking}, from=2-5, to=2-6]
	\arrow["{{a'}}"', from=2-5, to=4-5]
	\arrow[""{name=5, anchor=center, inner sep=0}, "{{\odot(\prolist{\varphi}_r)}}"'{inner sep=.8ex}, "\shortmid"{marking}, from=2-6, to=2-7]
	\arrow["\cdots"{description}, draw=none, from=2-6, to=4-6]
	\arrow["{{b'}}", from=2-7, to=4-7]
	\arrow[""{name=6, anchor=center, inner sep=0}, "{{\chi_1}}"'{inner sep=.8ex}, "\shortmid"{marking}, from=3-1, to=3-2]
	\arrow["{{a''}}"', from=3-1, to=4-1]
	\arrow[""{name=7, anchor=center, inner sep=0}, "{{\chi_r}}"'{inner sep=.8ex}, "\shortmid"{marking}, from=3-2, to=3-3]
	\arrow["{{b''}}", from=3-3, to=4-3]
	\arrow[""{name=8, anchor=center, inner sep=0}, "\psi"'{inner sep=.8ex}, "\shortmid"{marking}, from=4-1, to=4-3]
	\arrow[""{name=9, anchor=center, inner sep=0}, "{{\chi_1}}"'{inner sep=.8ex}, "\shortmid"{marking}, from=4-5, to=4-6]
	\arrow["{{a''}}"', from=4-5, to=5-5]
	\arrow[""{name=10, anchor=center, inner sep=0}, "{{\chi_r}}"'{inner sep=.8ex}, "\shortmid"{marking}, from=4-6, to=4-7]
	\arrow["{{b''}}", from=4-7, to=5-7]
	\arrow[""{name=11, anchor=center, inner sep=0}, "\psi"'{inner sep=.8ex}, "\shortmid"{marking}, from=5-5, to=5-7]
	\arrow["{\mathsf{opcart}}"{description}, draw=none, from=0, to=4]
	\arrow["{\mathsf{opcart}}"{description}, draw=none, from=1, to=5]
	\arrow["{\beta_1}"{description}, draw=none, from=2, to=6]
	\arrow["{\beta_r}"{description}, draw=none, from=3, to=7]
	\arrow["{\beta_1^\flat}"{description}, draw=none, from=4, to=9]
	\arrow["{\beta_r^\flat}"{description}, draw=none, from=5, to=10]
	\arrow["\beta"{description}, draw=none, from=3-2, to=8]
	\arrow["\beta"{description}, draw=none, from=4-6, to=11]
\end{tikzcd}
	\end{adjustbox}
	\]
	Finally, the uniqueness in the factorizations through opcartesian multicells ensures that 
  $\frac{\beta_1^\flat \cdots \beta_r^\flat}{\beta}=\alpha^\flat$, so the two decompositions are equivalent. 
  It follows that $\Dbl{D}$ admits essentially unique decompositions of multicells, and hence is exponentiable by the equivalences 
  (1)$\iff$(3)$\iff$(4) in Theorem~\ref{thm:ExpChar} which we've proved so far. 
  Therefore all representable VDCs are exponentiable, as desired.
\end{proof}

Note that the proof of~\ref{prop:RepImpProRep} shows 
that we could weaken the hypotheses on $\Dbl{D}$ by instead requiring that
it has all loose units and that a sequence of loose arrows $\prolist{\varphi}$
admits a loose composite if it appears as the subsequence of the loose source
for a multicell in $\Dbl{D}$.

The argument for showing $(1)\iff(5)$ in Theorem~\ref{thm:ExpChar} will follow as in the proof 
(ii) $\iff$ (iii) in Theorem 2 in~\cite{Chris2006}. Indeed, although Theorem 2 in~\cite{Chris2006} is stated for locales, 
the proof applies more generally in any category with finite limits.

\begin{prop}[Exponentials via Universal Example (Townsend)]\label{prop:ExpByUnivExample}
	Let $\mathsf{C}$ be a category with finite limits, and suppose there exists an object $u \in \mathsf{C}$ such that for any other 
  object $x \in \mathsf{C}$, there exists an exponentiable object $e(x)\in \mathsf{C}$ and a regular monomorphism 
  $x\hookrightarrow u^{e(x)}$. Then an object $x \in \mathsf{C}$ is exponentiable if and only if the exponential $u^x$ exists.
\end{prop}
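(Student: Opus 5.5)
The forward direction is immediate: if $x$ is exponentiable then every exponential $y^x$ exists, in particular $u^x$. All the work is in the converse. Assume $u^x$ exists, and fix an arbitrary object $y$. The plan is to build $y^x$ as an equalizer of two maps between objects of the form $u^{(\cdots)}$ that we know exist. Then we check its universal property by reducing it to universal properties we already have.

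First I would record two closure facts. (a) If $e$ is exponentiable and $u^x$ exists, then $(u^x)^e$ exists and is isomorphic to $u^{x\times e}$. Indeed $\mathsf{C}(w,(u^x)^e)\cong\mathsf{C}(w\times e,u^x)\cong\mathsf{C}(w\times e\times x,u)$, naturally in $w$, so $u^{x\times e}$ exists and equals $(u^x)^e$. (b) Exponentials $(-)^x$, where they exist, are preserved by limits in the exponent base. Concretely, if $y$ is an equalizer of $p,q:a\rightrightarrows b$ and both $a^x$ and $b^x$ exist, then the equalizer of $p^x,q^x:a^x\rightrightarrows b^x$ has the universal property of $y^x$. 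This holds because $\mathsf{C}(w\times x,-)$ preserves equalizers. Equalizers exist since $\mathsf{C}$ has finite limits.

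Next I write $y$ as such an equalizer. By hypothesis there is a regular mono $m:y\hookrightarrow u^{e(y)}$ with $e(y)$ exponentiable. Being regular, $m$ is the equalizer of some pair $p,q:u^{e(y)}\rightrightarrows z$. The codomain $z$ need not be of the good form, so I embed it too: take the regular mono $n:z\hookrightarrow u^{e(z)}$. Since $n$ is monic, $m$ is also the equalizer of $np,nq:u^{e(y)}\rightrightarrows u^{e(z)}$. By (a), $(u^{e(y)})^x\cong u^{e(y)\times x}\cong (u^x)^{e(y)}$, and this exists because $u^x$ exists and $e(y)$ is exponentiable. Likewise $(u^{e(z)})^x$ exists. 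Then (b) yields $y^x$ as the equalizer of $(np)^x,(nq)^x$.

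The step needing most care is making $(np)^x$ well defined: it is only a map obtained from universal properties. I would define it as the map $(u^{e(y)})^x\to(u^{e(z)})^x$ that corresponds, under $\mathsf{C}(w,(u^{e(z)})^x)\cong\mathsf{C}(w\times x,u^{e(z)})$, to the postcomposition $np\circ(-)$. Naturality in $w$ then gives the map by Yoneda, and the equalizer argument in (b) uses only these natural bijections. So the construction is the same representable-functor argument throughout: $\mathsf{C}(-\times x,y)$ is the equalizer of two representable functors $\mathsf{C}(-,(u^{e(y)})^x)\rightrightarrows\mathsf{C}(-,(u^{e(z)})^x)$, and is therefore representable. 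Since $y$ was arbitrary, $x$ is exponentiable.
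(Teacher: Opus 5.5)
Your proof is correct, and it is essentially the argument the paper defers to: the paper gives no proof of its own and cites Townsend's argument, which it notes works in any finitely complete category. Like that argument, you present $y$ as an equalizer of a pair $u^{e(y)}\rightrightarrows u^{e(z)}$ (re-embedding the codomain of the pair witnessing regularity), use $(u^{e})^x\cong (u^x)^{e}$ for exponentiable $e$, and observe that $\mathsf{C}(-\times x,-)$ carries this equalizer to an equalizer of representable functors, which is therefore representable.
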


From Proposition~\ref{prop:ExpByUnivExample}, to prove (1)$\iff$(5) in Theorem~\ref{thm:ExpChar} it suffices to show that for every VDC $\Dbl{D}$, we have some exponentiable VDC $p(\Dbl{D})$ together with a regular monomorphism $\Dbl{D}\hookrightarrow \Span^{p(\Dbl{D})}$. To motivate the exponentiable VDC $p(\Dbl{D})$ that we will use, we will modify the categorical hom-bifunctor to a hom-bifunctor for virtual double categories. Recall that for a category $\Cat{C}$, its hom-bifunctor $\Cat{C}(-,-):\Cat{C}^{op}\times \Cat{C}\to \Cat{Set}$ is precisely the loose unit for the object $\Cat{C}$ in the profunctor VDC $\Prof=\Mod(\Span)$. 

Our first generalization of the hom-functor comes from looking at $T$-monoids in $\Span$, for a monad $T$ on 
$\Span$, which under the normalized embedding in~\cite[Theorem 8.7]{Cruttwell2010} correspond to $\Mod(T)$-monoids 
in $\Prof$. Using this perspective, a $T$-monoid determines a category $\Cat{C}$ along with a profunctor 
$T(\Cat{C})^{op}\times \Cat{C}\to \Cat{Set}$ with multiplication and unit multicells. In certain cases we can 
internalize this profunctor in $T$-monoids, such as when $T=\Cat{fm}$ is the free monoid monad on $\Cat{Set}$:

\begin{exmp}
	If $T=\Cat{fm}$ is the free monoid monad, then $\Cat{fm}$-monoids are precisely multicategories $\Cat{M}$, and $\Cat{fm}(\Cat{M})$ is the multicategory whose objects are sequences of objects in $\Cat{M}$, and whose multimorphisms are sequences of multimorphisms in $\Cat{M}$. In this case, $\Cat{fm}(\Cat{M})^{op}$ is also a multicategory, since sequences of objects can be identified with objects. Then the profunctor along with its multiplication and unit multicells which encode the multicategory $\Cat{M}$ can be enhanced to a functor of multicategories $\Cat{fm}(\Cat{M})^{op}\times \Cat{M}\to \Cat{Set}_\times$, where $\Cat{Set}_\times$ is the multicategory whose multimorphisms are given by maps out of products.
\end{exmp}

In order to obtain an analogous generalization for VDCs we also need to treat the case where $\Cat{Set}$ is replaced by a pre-sheaf category $\Cat{PSh}(\Cat{C})$, for $\Cat{C}$ some small category. In this case the profunctor appearing in the last examples is replaced by a profunctor valued pre-sheaf $\Cat{C}^{op}\to \Prof_1$, which in the case of VDCs recovers the graph of profunctors discussed in Subsection~\ref{subsec:malleable}. As with multicategories, we can internalize the data of this graph of profunctors with its multiplication and unit multicells in terms of a VDF, which for a VDC $\Dbl{D}$ takes the form:
\begin{equation}\label{eq:homVDF}
	\Dbl{D}(-,-):\Cat{fc}(\Dbl{D})^{op_t}\times \Dbl{D}\to \Span
\end{equation}
Note that $\Cat{fc}(\Dbl{D})$ has loose arrows and multicells given by sequences of loose arrows and multicells in $\Dbl{D}$, respectively. Thus, $\Cat{fc}(\Dbl{D})$ is a strict double category, and hence it makes sense to take its tight opposite. In more detail we have the following:

\begin{lem}[Hom VDF]\label{lem:homVDFunctor}
	For every VDC $\Dbl{D}$, there exists a hom VDF $\Dbl{D}(-,-):\Cat{fc}(\Dbl{D})^{op_t}\times \Dbl{D}\to \Span$.
\end{lem}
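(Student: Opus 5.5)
We will construct the hom VDF directly, layer by layer, and let functoriality reduce to the associativity and unit axioms of $\Dbl{D}$. First the domain. $\Cat{fc}(\Dbl{D})$ is the strict double category whose tight category is $\Dbl{D}_0$, whose loose arrows are paths $\prolist{\varphi}$, and whose cells are paths of multicells $\prolist{\alpha}$ from a path of paths to a path (composed via $\mu_{\Cat{fc}}$). Its tight opposite $\Cat{fc}(\Dbl{D})^{op_t}$ reverses tight arrows and turns cells upside down. We regard this as a VDC whose $n$-ary multicells are cells with source a length-$n$ path of loose arrows (each itself a path in $\Dbl{D}$). The product with $\Dbl{D}$ is computed componentwise.

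\textbf{Objects, tight arrows, loose arrows.} An object is a pair $(x,y)$, and we send it to $\Dbl{D}_0(x,y)=\uTight(\Dbl{D})_{x,y}$. A tight arrow $(a^{op},b):(x,y)\to(x',y')$ with $a:x'\to x$ and $b:y\to y'$ goes to the function $h\mapsto a\cdot h\cdot b$. A loose arrow $(\prolist{\varphi},\psi):(x_0,y_0)\proto(x_1,y_1)$ goes to the span
\[\Dbl{D}_0(x_0,y_0)\leftarrow \MCell(\Dbl{D})\cell{}{\prolist{\varphi}}{\psi}{}\rightarrow \Dbl{D}_0(x_1,y_1),\]
whose apex is the set of all multicells with loose source $\prolist{\varphi}$ and loose target $\psi$, over arbitrary tight boundaries, and whose legs record the tight source and target. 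This is exactly the graph of profunctors $\MCell(\Dbl{D})\rightrightarrows\Dbl{D}_0(-,-)$ from Subsection~\ref{subsec:malleable}.

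\textbf{Multicells.} An $n$-ary multicell of the product is a pair $(\prolist{\sigma}^{op},\beta)$. Here $\prolist{\sigma}$ is a cell of $\Cat{fc}(\Dbl{D})$ from a path $\prolist{\varphi}'$ to the concatenation $\prolist{\varphi}_1\cdots\prolist{\varphi}_n$; it appears in the opposite, so it points from the sources of the product multicell to its target. The second component $\beta:\cell{}{\psi_1\cdots\psi_n}{\psi'}{}$ is an $n$-ary multicell of $\Dbl{D}$. We send this pair to the map of spans
\[\MCell(\prolist{\varphi}_1;\psi_1)\times_{\Dbl{D}_0}\cdots\times_{\Dbl{D}_0}\MCell(\prolist{\varphi}_n;\psi_n)\to\MCell(\prolist{\varphi}';\psi'),\qquad (\gamma_1,\ldots,\gamma_n)\mapsto \threefrac{\prolist{\sigma}}{(\gamma_1\cdots\gamma_n)}{\beta}.\]
In the nullary case the domain is $\Dbl{D}_0(x,y)$, and an element $h$ is sent to $\frac{\prolist{\sigma}}{h}$ composed with $\beta$, i.e.\ a tight arrow whiskered into the nullary composite. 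We then check that the legs commute. This holds because composition in $\Dbl{D}$ respects tight boundaries and the tight arrows of $\prolist{\sigma}$ and $\beta$ act by pre- and post-whiskering, matching the action on tight arrows above.

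\textbf{Functoriality.} Identities are preserved because $\frac{\text{id}}{\gamma}/\text{id}=\gamma$. The main obstacle is preservation of composition. We must show that composing multicells of the product first and then applying the assignment agrees with pasting the images in $\Span$. Unwinding, this comes down to an equation between two large pasting diagrams in $\Dbl{D}$ with three kinds of layers: the $\prolist{\sigma}$-layers, which stack on top in reverse order because of $op_t$; the middle $\gamma$'s; and the $\beta$-layers, which stack at the bottom. We expect the difficulty to be bookkeeping rather than conceptual. The plan is to check the two-stage case (one layer of multicells composed into one multicell), where the identity reads
\[\threefrac{\prolist{\sigma}'}{\prolist{\sigma}}{\cdots}\ \text{then}\ \threefrac{\cdots}{\beta_i}{\beta}\]
and follows from associativity of composition in $\Dbl{D}$, applied once on each side of the $\gamma$-layer. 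Compatibility with tight arrows is then the whiskering case of the same associativity.
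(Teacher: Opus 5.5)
Your construction is correct and is the same as the paper's. Objects go to $\Dbl{D}_0(x,y)$ and tight arrows act by pre- and post-composition. Loose pairs go to the span of multicells over their tight boundaries, and a product multicell $(\prolist{\sigma}^{op},\beta)$ goes to the span map that pastes $\prolist{\sigma}$ above and $\beta$ below. Functoriality then reduces to associativity and unitality of pasting in $\Dbl{D}$. Your separate treatment of the nullary case and of the leg compatibility only spells out details the paper leaves implicit.
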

\begin{proof}
	We define the hom VDF to consist of the following data:
	\begin{enumerate}
		\item A pair of objects $(d,e) \in \Cat{fc}(\Dbl{D})^{op_t}\times \Dbl{D}$ is mapped to the set of tight arrows $\uTight(\Dbl{D})_{d,e}$.
		\item A pair of tight morphisms $(d_0\xleftarrow{f}d_1,e_0\xrightarrow{g}e_1)$ is mapped to the function $\uTight(\Dbl{D})_{d_0,e_0}\xrightarrow{g\circ - \circ f}\uTight(\Dbl{D})_{d_1,e_1}$ given by using post- and pre-composition.
		\item A pair of loose arrows $(\prolist{\varphi}:a_0\proto a_n,\psi:d_0\proto d_1)$ is mapped to the span 
		\begin{equation*}
			\uTight(\Dbl{D})_{a_0,d_0}\xleftarrow{s}\MCell (\Dbl{D})(\prolist{\varphi},\psi)\xrightarrow{t}\uTight(\Dbl{D})_{a_n,d_1}
		\end{equation*}
		where the apex of the span is the set of multicells with loose source $\prolist{\varphi}$ and loose target $\psi$.
		\item Consider a pair of $n$-ary multicells $((\prolist{\alpha}_1 \cdots \prolist{\alpha}_n),\beta)$, where each $\prolist{\alpha}_i$ is a sequence of compatible multicells in $\Dbl{D}$ with loose target $\prolist{\varphi}_i$ and loose source $\prolist{\prolist{\varphi}'}_i$, and $\beta$ is an $n$-ary multicell with loose source $\prolist{\psi}$ and loose target $\psi'$. The resulting $n$-ary multicell in $\Span$ is given by the map of spans
    \[\begin{tikzcd}
      {\uTight(\mathbb{D})(b_0,c_0)} & P & {\uTight(\mathbb{D})(b_{k_n},c_n)} \\
      \\
      {\uTight(\mathbb{D})(a_{0,0},d_0)} & {\MCell(\mathbb{D})(\prolist{\varphi}',\psi')} & {\uTight(\mathbb{D})(a_{k_n,0},d_1)}
      \arrow["{{{{g_0\circ -\circ f_0}}}}"', from=1-1, to=3-1]
      \arrow["s"', from=1-2, to=1-1]
      \arrow["t", from=1-2, to=1-3]
      \arrow["{{{{\beta\circ - \circ (\prolist{\alpha}_1 \cdots \prolist{\alpha}_n)}}}}"{description}, draw=none, from=1-2, to=3-2]
      \arrow["{{{{g_1\circ - \circ f_{k_n}}}}}", from=1-3, to=3-3]
      \arrow["s", from=3-2, to=3-1]
      \arrow["t"', from=3-2, to=3-3]
    \end{tikzcd}\]
		induced by pasting multicells, where $P=\MCell(\mathbb{D})(\prolist{\varphi}_1,\psi_1)\times_{\uTight(\mathbb{D})(b_{k_1},c_1)}\cdots\times_{\uTight(\mathbb{D})(b_{k_{n-1}},c_{n-1})}\MCell(\mathbb{D})(\prolist{\varphi}_n,\psi_n).$
	\end{enumerate}
	The associativity and unitality of pasting in $\Dbl{D}$ ensures that this data defines a VDF 
  $\Dbl{D}(-,-):\Cat{fc}(\Dbl{D})^{op_t}\times \Dbl{D}\to \Span$, as desired.
\end{proof}

Alternatively, we can construct the VDC $\Cat{fc}(\Dbl{D})$ using the composite of left adjoints in the adjunctions
\begin{equation}\label{eq:defOfFc}
\begin{tikzcd}
	{\mathsf{Dbl}_s} && {\mathsf{Vdc}_n} && {\mathsf{Vdc}}
	\arrow[""{name=0, anchor=center, inner sep=0}, "\iota"', curve={height=18pt}, hook, from=1-1, to=1-3]
	\arrow[""{name=1, anchor=center, inner sep=0}, "{\mathbb{F}_c}"', curve={height=18pt}, from=1-3, to=1-1]
	\arrow[""{name=2, anchor=center, inner sep=0}, "U"', curve={height=18pt}, from=1-3, to=1-5]
	\arrow[""{name=3, anchor=center, inner sep=0}, "{\mathbb{F}_u}"', curve={height=18pt}, from=1-5, to=1-3]
	\arrow["\dashv"{anchor=center, rotate=-90}, draw=none, from=1, to=0]
	\arrow["\dashv"{anchor=center, rotate=-90}, draw=none, from=3, to=2]
\end{tikzcd}
\end{equation}
where $\Cat{Dbl}_s$ is the category of strict double categories and strict double functors between them. Explicitly, $\Dbl{F}_c$ sends a unital VDC $\Dbl{D}$ to the strict double category $\Dbl{F}_c(\Dbl{D})$ with the same underlying category as $\Dbl{D}$, and with loose arrows given by finite sequences of loose arrows in $\Dbl{D}$, while multicells are given by finite sequences of multicells in $\Dbl{D}$.

Since $\Cat{fc}(\Dbl{D})^{op_t}$ is a strict double category, by Proposition~\ref{prop:RepImpProRep} it is exponentiable in $\Vdc$, so we can transpose the hom VDF to a VDF $\rho_{\Dbl{D}}:\Dbl{D}\to \Span^{\Cat{fc}(\Dbl{D})^{op_t}}$. On underlying tight categories $\rho_{\Dbl{D}}$ restricts to the ordinary Yoneda embedding for $\Dbl{D}_0$. A loose arrow $\psi:y\proto z$ is sent by $\rho_\Dbl{D}$ to the functor $\Cat{fc}(\Dbl{D})_1^{op}\to \Span_1$ that maps a sequence of loose arrows $\prolist{\varphi}:x_0\proto x_n$ to the span with apex the set of multicells with loose source $\prolist{\varphi}$ and loose target $\psi$, while mapping a sequence of multicells to the function given by pre-composition by the multicells.

It remains to show that this VDF is a regular monomorphism in the category of large VDCs. 
The following result implies that it suffices to show it is fully-faithful, injective on objects, and injective on loose arrows. Here we say that a VDF $i:\Dbl{A}\to \Dbl{D}$ is fully-faithful if it is fully-faithful on underlying tight categories, and for every loose source $\prolist{\varphi}$ and loose target $\psi$ in $\Dbl{A}$, the induced map on multicells $\MCell (\Dbl{A})(\prolist{\varphi},\psi)\to \MCell (\Dbl{B})(i(\prolist{\varphi}),i(\psi))$ is bijective.

\begin{lem}[Fully-Faithful and Injective on Objects VDFs are Regular Monomorphisms]\label{lem:regMon}
	Let $i:\Dbl{A}\hookrightarrow \Dbl{B}$ be a fully-faithful, injective on objects, and injective on loose arrows VDF. Then $i$ is the equalizer of its cokernel pair.
\end{lem}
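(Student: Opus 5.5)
We must show that $i$ is the equalizer of its cokernel pair $j_1,j_2:\Dbl{B}\rightrightarrows \Dbl{B}+_{\Dbl{A}}\Dbl{B}$. The plan is to compute the pushout explicitly enough to see which data of $\Dbl{B}$ is identified by $j_1$ and $j_2$. Then we check that the equalizer of $j_1,j_2$, which is computed levelwise in $\Vdc$ as the sub-VDC of $\Dbl{B}$ on data where $j_1$ and $j_2$ agree, is exactly the image of $i$.

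First, on the tight level. Since $(-)_0$ preserves colimits and limits, the tight category of the pushout is the pushout $\Dbl{B}_0+_{\Dbl{A}_0}\Dbl{B}_0$ in $\Cat{Cat}$. For a fully faithful, injective-on-objects functor this is classical: $i_0$ is the equalizer of its cokernel pair. One can verify this directly by describing the pushout as two copies of $\Dbl{B}_0$ glued along the full subcategory $\Dbl{A}_0$, with morphisms given by zig-zag composites. These composites pass through $\Dbl{A}_0$ and are reduced using fullness.

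Second, on the loose and multicell levels, the plan is to build a concrete candidate VDC $\Dbl{P}$ rather than reason about the free pushout. Its objects, tight arrows, and tight category are those of the categorical pushout. Its loose arrows are two copies of $\uLoose(\Dbl{B})$ glued along $\uLoose(\Dbl{A})$, which is a subset by injectivity on loose arrows. Its multicells with a given frame are defined as follows:
\begin{itemize}
\item if every loose arrow in the frame lies in a single copy of $\Dbl{B}$ (possibly in the shared part $\Dbl{A}$), the multicells are those of that copy, whiskered by the tight zig-zags allowed in the pushout;
\item if the frame involves loose arrows from both copies, the multicells are formal pastings of cells from the two copies that meet only along $\Dbl{A}$.
\end{itemize}
Rather than verify that $\Dbl{P}$ is the pushout, it suffices to find some VDC $\Dbl{Q}$ with maps $k_1,k_2:\Dbl{B}\to\Dbl{Q}$ agreeing on $\Dbl{A}$ whose equalizer is exactly $\Dbl{A}$. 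Any such pair factors through the cokernel pair, so the equalizer of the cokernel pair is contained in the equalizer of $k_1,k_2$, which is $\Dbl{A}$. The reverse containment is automatic.

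This suggests a cheaper witness $\Dbl{Q}$ taken directly from $\Dbl{B}$. Let $\Dbl{Q}$ have tight category the categorical pushout $\Dbl{B}_0+_{\Dbl{A}_0}\Dbl{B}_0$. Its loose arrows are the set $\uLoose(\Dbl{B})\sqcup_{\uLoose(\Dbl{A})}\uLoose(\Dbl{B})$, with endpoints taken in the corresponding copy. Its multicells in a frame are pairs consisting of a $\Dbl{B}$-multicell in the ``first projection'' and the tight data, defined via the evident map $\Dbl{Q}\to\Dbl{B}$ collapsing the two copies. Concretely, $\Dbl{Q}$ is the pullback of $\Dbl{B}\to \Dbl{B}\times\Dbl{C}$-style data. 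A simpler route is to take $\Dbl{Q}=\Dbl{B}\times \Dbl{C}haotic(\Cat{K})$ variants, where the chaotic part records ``which copy''. This gives pairs $k_1=(\id,\chi_1)$ and $k_2=(\id,\chi_2)$, with $\chi_\epsilon$ a labeling that sends data of $\Dbl{A}$ to a common label and all other data to label $\epsilon$.

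For objects and loose arrows, such a labeling is a map into $\Dbl{C}haotic$ on the two-element indiscrete structure, extended by a third ``shared'' label. The chaotic VDC makes every frame fillable, so any labeling of objects extends uniquely to a VDF. Since the chaotic VDC has no loose-arrow labels, loose arrows are labeled separately. For this we use $\Dbl{B}\times\Dbl{C}haotic(\{0,1,2\})$, with a chaotic structure also on loose arrows. That structure is a single loose arrow per pair in the definition, so we enlarge it to a VDC with loose arrows labeled in $\{0,1,2\}$ and unique multicells in every frame. This is still a valid VDC, and then the labelings extend to VDFs.

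With this in place, $k_1,k_2$ agree exactly on objects and loose arrows in the image of $i$. On tight arrows and multicells they agree exactly when the endpoints lie in the image. Full faithfulness then gives equality with the image of $i$, meaning every such tight arrow or multicell comes from $\Dbl{A}$. So the equalizer is $\Dbl{A}$.

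The main obstacle is making sure that the labeled chaotic VDC is a genuine VDC and that the labeling is well defined on tight arrows and multicells. A tight arrow between two objects of $\Dbl{A}$ must receive the shared label, and a chaotic VDC makes this automatic because cells are determined by their boundary. One must also check that agreement of $k_1,k_2$ on a multicell forces all its boundary objects and loose arrows into $\Dbl{A}$, which holds since a VDF determines boundaries. Full faithfulness is exactly what converts ``boundary in $\Dbl{A}$'' into ``cell in $\Dbl{A}$''.
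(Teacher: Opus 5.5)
Your argument is correct, but it takes a different route from the paper's. The paper works directly with the cokernel pair $q_1,q_2:\Dbl{B}\rightrightarrows\Dbl{B}\cup_{\Dbl{A}}\Dbl{B}$. It asserts that the only objects and loose arrows identified by $q_1$ and $q_2$ are those in the image of $i$. Hence any $F$ with $q_1F=q_2F$ lands in $i(\Dbl{A})$ on objects and loose arrows, and full faithfulness plus injectivity give the unique factorization.

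You never compute the pushout. Instead you exhibit an explicit pair $\chi_1,\chi_2:\Dbl{B}\to\Dbl{K}$, where $\Dbl{K}$ is the chaotic VDC on a graph with vertex set $\{0,1,2\}$ and three labelled loose arrows between each pair of vertices. You check that its levelwise equalizer is $i(\Dbl{A})$. You then conclude by the general fact that a monomorphism which is the equalizer of some pair is the equalizer of its cokernel pair.

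Your route is more self-contained. The paper's claim about which objects and loose arrows the pushout identifies tacitly needs the functor $\Vdc\to\Cat{Grph}$ (objects and loose arrows) to preserve pushouts. Your construction, sending a graph $G$ to the chaotic VDC on $G$, is exactly the right adjoint of that functor, so it supplies the justification the paper leaves implicit. The paper's route is shorter once that description of the pushout is accepted.

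When you write it up, drop the abandoned material: the candidate pushout $\Dbl{P}$, the tight-level analysis of $\Dbl{B}_0+_{\Dbl{A}_0}\Dbl{B}_0$, and the garbled ``pullback-style'' $\Dbl{Q}$. The final argument uses none of it. The factor $\Dbl{B}\times{-}$ is also unnecessary, since $(\id,\chi_1)$ and $(\id,\chi_2)$ have the same equalizer as $\chi_1,\chi_2$.

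Finally, say explicitly that $i$ is injective on tight arrows and on multicells. This follows from faithfulness and the multicell bijection together with injectivity on objects and loose arrows. It makes $i$ an isomorphism onto the sub-VDC $i(\Dbl{A})$, which is what lets you identify the equalizer with $\Dbl{A}$ itself.
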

\begin{proof}
	Let $q_1,q_2:\Dbl{B}\rightrightarrows\Dbl{B}\cup_{\Dbl{A}}\Dbl{B}$ denote the cokernel pair for the fully-faithful, injective on objects, and injective on loose arrows VDF $i:\Dbl{A}\hookrightarrow \Dbl{B}$. Then if $F:\Dbl{C}\to \Dbl{B}$ is a VDF such that $q_1\circ F = q_2\circ F$, for each object $c$ in $\Dbl{C}$, $F(c)$ must be in the image of $\Dbl{A}$, since the only objects identified between the two copies of $\Dbl{B}$ in the pushout $\Dbl{B}\cup_{\Dbl{A}}\Dbl{B}$ are those in the image of $\Dbl{A}$. Similarly, for each loose arrow $\varphi$ in $\Dbl{C}$, $F(\varphi)$ must be in the image of $\Dbl{A}$. But since $i$ is fully-faithful, these two results together imply $F(\Dbl{C})\subseteq i(\Dbl{A})$, and as $i$ is also injective on objects and loose arrows $F$ uniquely specifies a VDF $F':\Dbl{C}\to \Dbl{A}$ such that $i\circ F' = F$, as desired.
\end{proof}

To prove that $\rho_{\Dbl{D}}$ is fully-faithful, injective on objects, and injective on loose arrows, we will first show an analogue of the Yoneda lemma. In the following statement $\rho_\Dbl{D}(\prolist{\varphi})$ for a sequence of loose arrows $\prolist{\varphi}$ denotes the sequence of loose arrows $(\rho_\Dbl{D}(\varphi_1) \cdots \rho_\Dbl{D}(\varphi_n))$ in $\Span^{\Cat{fc}(\Dbl{D})^{op_t}}$.

\begin{lem}[Yoneda Lemma for VDCs]\label{lem:Yoneda}
	For a VDC $\Dbl{D}$, and a sequence of loose arrows $\prolist{\varphi}:x_0\proto x_n$ in $\Dbl{D}$, 
  evaluation at the sequence of identity multicells 
  $\id_{\prolist{\varphi}}=(\id_{\varphi_1} \cdots \id_{\varphi_n})$ 
  viewed as a unary multicell in $\Cat{fc}(\Dbl{D})$ induces a natural isomorphism 
	\begin{equation*}
		\Span^{\Cat{fc}(\Dbl{D})^{op_t}}\scell{\rho_{\Dbl{D}}(\prolist{\varphi})}{\Phi}\xrightarrow{\text{ev}_{\id_{\prolist{\varphi}}}}\Phi(\prolist{\varphi})
	\end{equation*}
	where $\Phi:F\proto G$ is an arbitrary loose morphism in $\Span^{\Cat{fc}(\Dbl{D})^{op_t}}$.
\end{lem}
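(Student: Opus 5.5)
This is a Yoneda argument carried out inside $\Span^{\Cat{fc}(\Dbl{D})^{op_t}}$. The domain is the set of globular $n$-ary multicells $\Gamma:\rho_{\Dbl{D}}(\prolist{\varphi})\Rightarrow\Phi$ with identity tight boundaries. Since $\Cat{fc}(\Dbl{D})^{op_t}$ is a strict double category and hence exponentiable by Proposition~\ref{prop:RepImpProRep}, Proposition~\ref{prop:elementsOfExpVDC} identifies such a $\Gamma$ with a VDF $\Sq{n}\times\Cat{fc}(\Dbl{D})^{op_t}\to\Span$. This VDF is concretely a family of span maps. For every $n$-ary multicell of $\Cat{fc}(\Dbl{D})^{op_t}$, that is, a tuple of sequences of multicells $(\prolist{\alpha}_1\cdots\prolist{\alpha}_n)$ with $\prolist{\alpha}_i$ landing in $\varphi_i$ in $\Dbl{D}$ and loose source $\prolist{\chi}=(\prolist{\chi}_1\cdots\prolist{\chi}_n)$, we get a map from the composite-of-spans apex $\prod_i\MCell(\Dbl{D})(\prolist{\chi}_i,\varphi_i)$ (fibred over tight homs) into $\Phi(\prolist{\chi})$. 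These maps are compatible with the tight components and natural with respect to pasting multicells of $\Cat{fc}(\Dbl{D})^{op_t}$ on top and unary cells below.

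The map $\text{ev}_{\id_{\prolist{\varphi}}}$ takes the component of $\Gamma$ at the $n$-ary multicell $\id_{\prolist{\varphi}}$, viewed in $\Cat{fc}(\Dbl{D})$ as a multicell from $(\varphi_1,\ldots,\varphi_n)$ to itself, and applies it to the element $(\id_{\varphi_1},\ldots,\id_{\varphi_n})$ of the apex. The result is an element $\Gamma(\id_{\prolist{\varphi}})\in\Phi(\prolist{\varphi})$ lying over identity tight arrows.

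For injectivity, take an arbitrary element $(\alpha_1,\ldots,\alpha_n)$ of an apex over $\prolist{\chi}$. Each $\alpha_i$ is the action of the sequence cell $\alpha_i$ of $\Cat{fc}(\Dbl{D})^{op_t}$ applied to $\id_{\varphi_i}$, because $\rho_{\Dbl{D}}$ acts on multicells by precomposition. The functoriality/naturality of $\Gamma$ with respect to pasting (the ``functorial'' condition in the proof of Proposition~\ref{prop:elementsOfExpVDC}) then forces
\[\Gamma(\alpha_1,\ldots,\alpha_n)=\Phi(\alpha_1\cdots\alpha_n)\big(\Gamma(\id_{\prolist{\varphi}})\big).\]
Here $\Phi(\alpha_1\cdots\alpha_n)$ is the action of the multicell of $\Cat{fc}(\Dbl{D})^{op_t}$ on the loose arrow $\Phi$. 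The tight components are forced likewise by $\Dbl{D}_0$-Yoneda. So $\Gamma$ is determined by $\Gamma(\id_{\prolist{\varphi}})$.

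For surjectivity, given $p\in\Phi(\prolist{\varphi})$ over identities, I would define $\Gamma$ by the displayed formula, with identity-driven tight data. Two things must then be checked: that the formula respects the span legs (using the tight source and target of the $\alpha_i$ and $\Phi$'s span structure), and that it is natural in the required sense. Naturality reduces to the fact that $\Phi$ is itself a VDF-type action, i.e.\ a functor on $\Cat{fc}(\Dbl{D})^{op_t}$-cells compatible with pasting, combined with associativity of pasting in $\Dbl{D}$. Finally, the displayed formula gives $\text{ev}(\Gamma)=p$ by the unit law, and naturality of $\text{ev}$ in $\Phi$ is immediate, since unary cells $\Phi\Rightarrow\Phi'$ act componentwise.

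I expect the main obstacle to be the bookkeeping in the surjectivity step. The difficulty is that $\Cat{fc}(\Dbl{D})^{op_t}$ has multicells of all arities and nullary pieces, so naturality must hold for pasting of general $n$-ary cells of $\Cat{fc}(\Dbl{D})^{op_t}$ against $\square_n$, not just unary ones. I would first treat this case by strictness: in a strict double category every multicell of $\Sq{n}\times\Cat{fc}(\Dbl{D})^{op_t}$ factors as a $(\square_n,\id)$-type cell pasted with cells from the boundary. This mirrors the unary reduction in Proposition~\ref{prop:elementsOfExpVDC}, and it means it suffices to check the formula on those generators.
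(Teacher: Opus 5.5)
Your argument is in substance the paper's. The paper identifies a multicell $\rho_{\Dbl{D}}(\prolist{\varphi})\Rightarrow\Phi$ with three natural transformations compatible with the span legs: $\gamma_0:\uTight(\Dbl{D})(-,x_0)\Rightarrow F$, $\gamma_1:\uTight(\Dbl{D})(-,x_n)\Rightarrow G$, and $\Cat{fc}(\Dbl{D})_1(-,\prolist{\varphi})\Rightarrow\Phi$. It then cites the ordinary Yoneda lemma for $\Dbl{D}_0$ and for $\Cat{fc}(\Dbl{D})_1$.

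Your factorization of each $(\square_n,\beta)$ through the concatenation cell is what justifies that identification. Your formula $\Gamma(\alpha_1,\ldots,\alpha_n)=\Phi(\alpha_1\cdots\alpha_n)(\Gamma(\id_{\prolist{\varphi}}))$ is the Yoneda formula written out. The bookkeeping you fear is also lighter than you expect. In $\Sq{n}\times\Cat{fc}(\Dbl{D})^{op_t}$ only unary cells of $\Cat{fc}(\Dbl{D})^{op_t}$ are ever pasted above or below $(\square_n,\beta)$. So naturality needs only that $\Phi$ is a functor on unary cells (it has no $n$-ary action) together with associativity of pasting in $\Dbl{D}$.

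The real gap is one of scope. You take the domain to be globular multicells and the codomain to be elements of $\Phi(\prolist{\varphi})$ lying over identities. That only makes sense when $F=\rho_{\Dbl{D}}(x_0)$ and $G=\rho_{\Dbl{D}}(x_n)$. The lemma is stated for arbitrary $\Phi:F\proto G$ and asserts a bijection onto all of $\Phi(\prolist{\varphi})$. Its $\scell{\cdot}{\cdot}$ must be read as all multicells with that loose boundary, with arbitrary tight boundaries $\gamma_0,\gamma_1$, as the paper's proof makes explicit. Even when $F=\rho_{\Dbl{D}}(x_0)$, your version misses multicells with non-identity tight boundaries, which correspond to elements over non-identity tight arrows.

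This generality is exactly what the paper uses next. Full faithfulness of $\rho_{\Dbl{D}}$ is the case $\Phi=\rho_{\Dbl{D}}(\psi)$, and it concerns multicells with tight boundaries $\rho_{\Dbl{D}}(a),\rho_{\Dbl{D}}(b)$. These must match elements of $\MCell(\Dbl{D})(\prolist{\varphi},\psi)$ over $(a,b)$, not just globular ones.

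The repair is the one you only gesture at with ``forced by $\Dbl{D}_0$-Yoneda''. Let $\gamma_0,\gamma_1$ be arbitrary, and note that they are determined by the two legs of $\Gamma(\id_{\prolist{\varphi}})$ in $F(x_0)$ and $G(x_n)$. In the surjectivity step, define them by Yoneda from the legs of an arbitrary $p\in\Phi(\prolist{\varphi})$. The rest of your argument goes through unchanged.

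Separately, tidy your description of the components. The index $\beta$ ranges over all $n$-ary cells of $\Cat{fc}(\Dbl{D})^{op_t}$ with loose source $(\prolist{\chi}_1,\ldots,\prolist{\chi}_n)$ and arbitrary loose target $\prolist{\omega}$. These give maps from the fibred apex $\prod_i\MCell(\Dbl{D})(\prolist{\chi}_i,\varphi_i)$ into $\Phi(\prolist{\omega})$. What you wrote conflates the index cell with the element $(\alpha_1,\ldots,\alpha_n)$ of the apex, and it describes only the concatenation case $\prolist{\omega}=\prolist{\chi}$.
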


Note that Lemma~\ref{lem:Yoneda} applied to the case where $\Phi$ is of the form $\rho_\Dbl{D}(\psi)$ for some loose arrow $\psi:y_0\proto y_1$ in $\Dbl{D}$ gives fully-faithfullness of $\rho_{\Dbl{D}}$. We will first give a direct proof of Lemma~\ref{lem:Yoneda} using the classical Yoneda lemma for 1-categories.

\begin{proof}[Proof of Lemma~\ref{lem:Yoneda}]
	Using the description of multicells in $\Cat{fc}(\Dbl{D})$ as sequences of multicells in $\Dbl{D}$, multicells in 
  $\Span^{\Cat{fc}(\Dbl{D})^{op_t}}$ with loose source $\rho_{\Dbl{D}}(\prolist{\varphi})$ and loose target 
  $\Phi$ are equivalent to the data of natural transformations $\gamma_0:\uTight(\Dbl{D})(-,x_0)\Rightarrow F$, 
  $\gamma_1:\uTight(\Dbl{D})(-,x_n)\Rightarrow G$, and 
  $\Gamma:\Cat{fc}(\Dbl{D})_1(-,\prolist{\varphi})\Rightarrow \Phi$ which are compatible under whiskering 
  with source and target functors. 
  In particular, the claim now follows by the ordinary Yoneda lemma for the categories $\Dbl{D}_0$ and 
  $\Cat{fc}(\Dbl{D})_1$.
\end{proof}

More generally this result can be phrased and proved purely in the language of monoids in a VDC with restrictions. So as not to interrupt the current exposition on exponentiability, we leave this discussion to Appendix~\ref{sec:YonedaForMon}.

Next, observe that $\rho_{\Dbl{D}}$ is also injective on objects, and similarly for loose arrows, since for any two distinct objects $x$ and $x'$, 
the representable functors $\uTight(\Dbl{D})_{-,x}$ and $\uTight(\Dbl{D})_{-,x'}$ are distinct. Therefore, for each VDC $\Dbl{D}$, we have a fully-faithful, injective on objects, and injective on loose arrows VDF $\rho_{\Dbl{D}}:\Dbl{D}\hookrightarrow \Span^{\Cat{fc}(\Dbl{D})^{op_t}}$, which by Lemma~\ref{lem:regMon} is a regular monomorphism. The desired exponentiability result now follows from Proposition~\ref{prop:ExpByUnivExample}:

\begin{lem}[Exponentiability of VDCs via Span Exponential]\label{lem:ExpViaSpan}
	A VDC $\Dbl{D}$ is exponentiable in $\Vdc$ if and only if the VDC $\Span^{\Dbl{D}}$ exists in the category of large VDCs.
\end{lem}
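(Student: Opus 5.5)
The plan is to apply Proposition~\ref{prop:ExpByUnivExample} with $\mathsf{C}$ the category of large VDCs and universal object $u=\Span$. This category is locally presentable (it is $\Vdc$ computed relative to the larger universe), so it has finite limits. It therefore suffices to produce, for every (large) VDC $\Dbl{D}$, an exponentiable VDC $e(\Dbl{D})$ together with a regular monomorphism $\Dbl{D}\hookrightarrow \Span^{e(\Dbl{D})}$. We take $e(\Dbl{D}):=\Cat{fc}(\Dbl{D})^{op_t}$ and use the transpose $\rho_{\Dbl{D}}$ of the hom VDF of Lemma~\ref{lem:homVDFunctor}.

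The steps run as follows. First, $\Cat{fc}(\Dbl{D})^{op_t}$ is a strict double category, hence representable. By Proposition~\ref{prop:RepImpProRep} it is exponentiable, so $\Span^{\Cat{fc}(\Dbl{D})^{op_t}}$ exists and $\rho_{\Dbl{D}}:\Dbl{D}\to\Span^{\Cat{fc}(\Dbl{D})^{op_t}}$ is well defined.

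Second, $\rho_{\Dbl{D}}$ is fully faithful.
\begin{itemize}
\item On tight categories it is the ordinary Yoneda embedding of $\Dbl{D}_0$.
\item On multicells, apply Lemma~\ref{lem:Yoneda} with $\Phi=\rho_{\Dbl{D}}(\psi)$. This identifies multicells $\rho_{\Dbl{D}}(\prolist{\varphi})\Rightarrow\rho_{\Dbl{D}}(\psi)$ with $\rho_{\Dbl{D}}(\psi)(\prolist{\varphi})=\MCell(\Dbl{D})(\prolist{\varphi},\psi)$.
\item Evaluating at $\id_{\prolist\varphi}$ inverts the action of $\rho_{\Dbl{D}}$, since $\rho_{\Dbl{D}}(\alpha)$ sends $\id_{\prolist\varphi}$ to $\alpha$.
\end{itemize}

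Third, $\rho_{\Dbl{D}}$ is injective on objects and on loose arrows. Distinct objects give distinct representables $\uTight(\Dbl{D})_{-,x}$. Distinct loose arrows $\psi\neq\psi'$ give distinct functors, because $\id_\psi$ lies in $\rho(\psi)(\psi)$ and not in $\rho(\psi')(\psi)$; equivalently, the boundary data differ. Lemma~\ref{lem:regMon} then shows that $\rho_{\Dbl{D}}$ is a regular monomorphism.

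Finally, Proposition~\ref{prop:ExpByUnivExample} gives: $\Dbl{D}$ is exponentiable among large VDCs iff $\Span^{\Dbl{D}}$ exists. It remains to pass from large to small. For small $\Dbl{D}$, exponentiability in $\Vdc$ and among large VDCs are both equivalent to decomposability of multicells, by Theorem~\ref{thm:ExpViaDecomp}, which depends only on $\Dbl{D}$ and holds in either universe. So the two notions agree.

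The main obstacle is the size bookkeeping. Proposition~\ref{prop:ExpByUnivExample} must be applied in a single category containing both $\Span$ and all the $\Span^{e(\Dbl{D})}$, which is why we work in large VDCs and then transfer back through the intrinsic decomposition criterion. Verifying that the Yoneda-style bijection is exactly the action of $\rho_{\Dbl{D}}$, including naturality in the tight boundary, is routine.
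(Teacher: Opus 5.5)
Your argument is the paper's proof: Townsend's criterion (Proposition~\ref{prop:ExpByUnivExample}) with $u=\Span$ and $e(\Dbl{D})=\Cat{fc}(\Dbl{D})^{op_t}$, which is exponentiable by Proposition~\ref{prop:RepImpProRep}, together with full faithfulness of $\rho_{\Dbl{D}}$ via Lemma~\ref{lem:Yoneda}, injectivity on objects and loose arrows, and Lemma~\ref{lem:regMon}. The one caveat concerns your added size bookkeeping, which the paper does not discuss: the hom VDF, and hence $\rho_{\Dbl{D}}$, lands in $\Span$ (spans of \emph{small} sets) only when $\Dbl{D}$ has small sets of tight arrows and of multicells with fixed boundary, so you cannot claim the embedding for every large VDC; Townsend's argument, however, only needs it for the small $\Dbl{Y}$ whose exponential $\Dbl{Y}^{\Dbl{D}}$ you build and for the codomain of a pair equalized by $\rho_{\Dbl{Y}}$, and that codomain can be taken small (for instance, two characteristic maps of the image of $\rho_{\Dbl{Y}}$ into a chaotic VDC on two objects with two parallel loose arrows between each pair), while your transfer through Theorem~\ref{thm:ExpViaDecomp} correctly handles the implication from exponentiability to existence of $\Span^{\Dbl{D}}$.
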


This completes the proof of Theorem~\ref{thm:ExpChar}. 

%% file: Sections/6_ExamplesAndCounterexamples.tex
\subsection{Examples of Exponentiable VDCs}

As a consequence of Corollary~\ref{prop:RepImpProRep}, for any category $\Cat{C}$, the tight VDCs $\T(\Cat{C})$ and $\T_u(\Cat{C})$ are both exponentiable, 
and the VDC $\L(\BiCat{B})$ is exponentiable for any bicategory $\BiCat{B}$. For example, the walking object VDC $\Ob$ is exponentiable, 
and $\Dbl{E}^{\Ob}=\Dbl{C}\mathsf{haotic}(\Dbl{E}_0)$ is the chaotic VDC associated to the category $\Dbl{E}_0.$  
On the other hand, because $\Loose $ the non-unital loose arrow is not exponentiable (c.f.~Example~\ref{eg:looseCounter}),
the functor $(-)_1$ does \textit{not} admit a right adjoint, and so we don't get analogous chaotic VDCs of loose arrows in $\Dbl{E}.$

The following lemma allows us to construct some interesting examples of exponentiable VDCs by hand.
\begin{lem}\label{lem:expOfUnitalization}
	Let $\Dbl{D}$ be a VDC with no nullary multicells. If $\Dbl{D}$ satisfies both (P1) and (P3) of
	Definition~\ref{defn:proRep} with $T$ restricted to range over binary-unary trees,\footnote{That is, $\Dbl{D}$ admits 
  essentially unique decompositions whose shape is a binary-unary tree of height 2 (Definition \ref{defn:binaryNullaryTree}), and
  every height-2 decomposition of any shape essentially arises from one shaped by a binary-unary tree.}
	then the unitalization $\Dbl{F}_u(\Dbl{D})$ is exponentiable.
\end{lem}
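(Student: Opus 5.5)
By Theorem~\ref{thm:ExpChar} it suffices to show that $\Dbl{F}_u(\Dbl{D})$ is pro-representable, i.e.\ satisfies (P1) and (P3) of Definition~\ref{defn:proRep}. We first record the cells of $\Dbl{F}_u(\Dbl{D})$ concretely.
\begin{itemize}
\item A multicell whose target is an old loose arrow is a multicell of $\Dbl{D}$ with some new units $I_x$ inserted into its source.
\item A multicell into $I_x$ is either the generating nullary cell $\eta_x$ or a whiskering of it by a tight arrow; in particular it has empty loose source.
\end{itemize}
So every cell of $\Dbl{F}_u(\Dbl{D})$ is an old cell $\alpha$ of $\Dbl{D}$ with units interspersed, or a whiskered unit cell. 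Since $\Dbl{D}$ has no nullary cells, no old cell has empty source. Hence the nullary cells of $\Dbl{F}_u(\Dbl{D})$ are exactly the whiskered $\eta$'s, and these all have target a unit.

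For (P1) we treat binary-unary and nullary-unary height-$2$ shapes separately.

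Binary-unary shapes: a decomposition of an old cell $\alpha$ with interspersed units is a row of unary cells and one binary cell, followed by a bottom cell. Each slot of the interface is either a unit, carrying only identity or whiskered-unit cells, or an old loose arrow, carrying an old cell. When the binary node receives two old arrows, we use the hypothesis (P1) for $\Dbl{D}$. When it receives an old arrow together with a unit $I$, the decomposition is forced up to the inner unary slide: we take the identity on the old arrow and regard $I$ as absorbed into the bottom cell. When it receives two units, we use $\eta$. Connectedness of $\Cat{Paste}_2$ should transfer from $\Dbl{D}$, because a unit position admits only the trivial unary endomorphism.

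Nullary-unary shapes: a nullary node must produce a unit $I$, since there are no other nullary cells. Its bottom cell is then the same old cell with one more $I$ in its source. The decomposition is therefore forced up to whiskering, which is equivalent to $\eta$ via unary tight slides, so existence and uniqueness are immediate. For a target $I_x$ all decompositions are trivial.

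For (P3), take a height-$2$ shape $T'$ and a decomposition of shape $T'$ in $\Dbl{F}_u(\Dbl{D})$. Deleting all unit strands gives a decomposition in $\Dbl{D}$ of a shape $T'_0$. By the hypothesis (P3) for $\Dbl{D}$ over binary-unary trees, this decomposition comes up to equivalence from one of some binary-unary shape $T_0$. We then reinsert the unit strands as extra layers of nullary-unary nodes, one $\eta$ per layer, together with binary nodes merging a unit into a neighbour where $T'$ requires it. This yields a binary-nullary tree $T$ composing partially to $T'$. The equivalences in $\Dbl{D}$ lift verbatim, since the unit strands carry only trivial cells.

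The main obstacle is this (P3) step, in two respects.
\begin{itemize}
\item Shapes in which a nullary node of $T'$ sits directly under a node with old inputs, or in which entire subtrees consist only of units, must still be realized by a binary-nullary tree with one non-unary node per layer.
\item The tight-arrow boundary conditions noted in the Remark on the insufficiency of (P1) must be met.
\end{itemize}
For the first, I would order the inserted $\eta$-layers to come after all binary layers, relying on the fact that a layer carrying a single $\eta$ imposes no constraint on tight boundaries, because its tight source and target are identities. Induction on the number of units then finishes the argument.
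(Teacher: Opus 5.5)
Your proposal follows the paper's proof. You reduce via Theorem~\ref{thm:ExpChar} to pro-representability, inherit (P1) for binary-unary shapes from $\Dbl{D}$, get nullary-unary (P1) from the universal (opcartesian) property of the added units, and get (P3) by stripping the unit bookkeeping, applying (P3) for $\Dbl{D}$, and restoring the units in separate layers. The paper makes your ``delete and reinsert'' precise as a three-layer factorization $\threefrac{\prolist{\gamma}}{\prolist{\delta}}{\beta}$ whose top layer $\prolist{\gamma}$ consists of opcartesian unit-absorbing cells (including the $\eta$'s). Because these all have identity tight boundaries, they can be isolated one per layer above the $\Dbl{D}$-part and split into binary and nullary pieces, which is what settles the tight-boundary worry you left open.

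Two corrections. Multicells into $I_x$ need not have empty loose source: there are $\text{id}_{I_x}$, the unary cells $I_a$, and the canonical cells $(I_x,\ldots,I_x)\Rightarrow I_x$, and your ``two units'' case and your unary slides actually use them. Also, the $\eta$-layers must lie in the block of layers composing to the top layer of $T'$ rather than below the cut.
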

\begin{proof}
	By definition of the multicells in the unitalization $\Dbl{F}_u(\Dbl{D})$, it still satisfies (P1) of 
	Definition~\ref{defn:proRep} for binary-unary trees $T$. 
	Thus, by the equivalence $(1)\iff(2)$ in Theorem~\ref{thm:ExpChar} it suffices to show 
	property (P1) is satisfied for height $2$ unary-nullary trees $T$, 
	and that property (P3) for arbitrary height $2$ trees $T'$ is not broken by applying $\Dbl{F}_u.$

	First, for any multicell $\alpha$, as below, the universal property of opcartesian multicells implies that for any partition of the loose source $\prolist{\varphi}=(\prolist{\varphi}_1,\prolist{\varphi}_2)$, we have the decomposition below right:
	\[
	\begin{tikzcd}
	{x_0} && {x_n} && {x_0} && {x_i} && {x_n} \\
	&&& {=} & {x_0} & {x_i} && {x_i} & {x_n} \\
	{y_0} && {y_1} && {y_0} &&&& {y_1}
	\arrow[""{name=0, anchor=center, inner sep=0}, "{{\prolist{\varphi}}}"{inner sep=.8ex}, "\shortmid"{marking}, from=1-1, to=1-3]
	\arrow["a"', from=1-1, to=3-1]
	\arrow["b", from=1-3, to=3-3]
	\arrow[""{name=1, anchor=center, inner sep=0}, "{{\prolist{\varphi}_1}}"{inner sep=.8ex}, "\shortmid"{marking}, from=1-5, to=1-7]
	\arrow[equals, from=1-5, to=2-5]
	\arrow[""{name=2, anchor=center, inner sep=0}, "{{\prolist{\varphi}_2}}"{inner sep=.8ex}, "\shortmid"{marking}, from=1-7, to=1-9]
	\arrow[equals, from=1-7, to=2-6]
	\arrow[equals, from=1-7, to=2-8]
	\arrow[equals, from=1-9, to=2-9]
	\arrow[""{name=3, anchor=center, inner sep=0}, "{{\prolist{\varphi}_1}}"'{inner sep=.8ex}, "\shortmid"{marking}, from=2-5, to=2-6]
	\arrow["a"', from=2-5, to=3-5]
	\arrow[""{name=4, anchor=center, inner sep=0}, "{{I_{x_i}}}"'{inner sep=.8ex}, "\shortmid"{marking}, from=2-6, to=2-8]
	\arrow[""{name=5, anchor=center, inner sep=0}, "{{\prolist{\varphi}_2}}"'{inner sep=.8ex}, "\shortmid"{marking}, from=2-8, to=2-9]
	\arrow["b", from=2-9, to=3-9]
	\arrow[""{name=6, anchor=center, inner sep=0}, "\psi"'{inner sep=.8ex}, "\shortmid"{marking}, from=3-1, to=3-3]
	\arrow[""{name=7, anchor=center, inner sep=0}, "\psi"'{inner sep=.8ex}, "\shortmid"{marking}, from=3-5, to=3-9]
	\arrow["\alpha"{description}, draw=none, from=0, to=6]
	\arrow["{\text{id}_{\prolist{\varphi}_1}}"{description}, draw=none, from=1, to=3]
	\arrow["{\mathsf{opcart}}"{description}, draw=none, from=1-7, to=4]
	\arrow["{\text{id}_{\prolist{\varphi}_2}}"{description}, draw=none, from=2, to=5]
	\arrow["{\alpha^\flat}"{description, pos=0.7}, draw=none, from=4, to=7]
\end{tikzcd}
	\]
	Additionally, as in the proof of Proposition~\ref{prop:RepImpProRep} these decompositions are terminal 
	in a sense that guarantees the essential uniqueness clause of (P1). Thus (P1) holds for $\Dbl{F}_u(\Dbl{D}).$ 

  Now it remains to prove that property (P3) holds for $\Dbl{F}_u(\Dbl{D}).$ Fix an arbitrary height $2$ tree $T'$.
	First, note that by definition of $\Dbl{F}_u(-)$, 
	each multicell in $\Dbl{F}_u(\Dbl{D})$ arises from a canonical multicell or tight arrow in $\Dbl{D}$ 
	after forgetting the freely added loose units.
	Then a decomposition $\frac{\prolist{\alpha}}{\beta}$ in $\Dbl{F}_u(\Dbl{D})$ 
	arises from either a decomposition of a multicell in $\Dbl{D}$ or from
	a factorization of a tight arrow, as in the following example, assuming $a=c\circ b$:	
\[\begin{tikzcd}
	&&&& x & x & x & \\
	x & x & x && z && z & z \\
	y && y && y &&& y
	\arrow["{I_x}"{inner sep=.8ex}, "\shortmid"{marking}, from=1-5, to=1-6]
	\arrow["b"', from=1-5, to=2-5]
	\arrow["{I_x}"{inner sep=.8ex}, "\shortmid"{marking}, from=1-6, to=1-7]
	\arrow["b"', from=1-7, to=2-7]
	\arrow["b", from=1-7, to=2-8]
	\arrow["{I_x}"{inner sep=.8ex}, "\shortmid"{marking}, from=2-1, to=2-2]
	\arrow["a"', from=2-1, to=3-1]
	\arrow["{I_x}"{inner sep=.8ex}, "\shortmid"{marking}, from=2-2, to=2-3]
	\arrow[""{name=0, anchor=center, inner sep=0}, "a", from=2-3, to=3-3]
	\arrow["{I_z}"{inner sep=.8ex}, "\shortmid"{marking}, from=2-5, to=2-7]
	\arrow[""{name=1, anchor=center, inner sep=0}, "c"', from=2-5, to=3-5]
	\arrow["{I_z}"{pos=0.4, inner sep=.8ex}, "\shortmid"{marking}, from=2-7, to=2-8]
	\arrow["c", from=2-8, to=3-8]
	\arrow["{I_y}"'{inner sep=.8ex}, "\shortmid"{marking}, from=3-1, to=3-3]
	\arrow["{I_y}"'{inner sep=.8ex}, "\shortmid"{marking}, from=3-5, to=3-8]
	\arrow[between={0.4}{0.6}, equals, from=0, to=1]
\end{tikzcd}\]
  
  We next further decompose $\frac{\prolist{\alpha}}{\beta}$ into three layers 
	$\threefrac{\prolist{\gamma}}{\prolist{\delta}}{\beta}$ such that the loose domains 
  of $\prolist{\delta}$ contain no identities $I_x,$ except once per 
  $I_y$ in the domains of $\beta,$ where such an occurrence is forced. 
  Such a decomposition involves some choices; one possible set of such 
  choices is exemplified below, where multicells in $\prolist{\alpha}$ with identity 
  codomain are factored canonically through a unary, while identity proarrows in other 
  multicells of $\prolist{\alpha}$ are absorbed in the nonidentity proarrow to their left, 
  if possible, and otherwise to their right. Note that these cases are exhaustive, due to the 
  assumption that $\Dbl{D}$ has no nullary multicells, so that if a cell in 
  $\prolist{\alpha}$ has an identity codomain, its domains must all be identites.
\[\begin{tikzcd}
	x & x & x & x & x & y & y & z & z \\
	y && y & w &&&&& u \\
	x & x & x & x & x & y & y & z & z \\
	x && x & x &&& y && z \\
	y && y & w &&&&& u
	\arrow["\shortmid"{marking}, equals, from=1-1, to=1-2]
	\arrow["f"', from=1-1, to=2-1]
	\arrow["\shortmid"{marking}, equals, from=1-2, to=1-3]
	\arrow["f", from=1-3, to=2-3]
	\arrow["\shortmid"{marking}, equals, from=1-4, to=1-5]
	\arrow["f"', from=1-4, to=2-4]
	\arrow["\varphi"{inner sep=.8ex}, "\shortmid"{marking}, from=1-5, to=1-6]
	\arrow["\shortmid"{marking}, equals, from=1-6, to=1-7]
	\arrow["\psi"{inner sep=.8ex}, "\shortmid"{marking}, from=1-7, to=1-8]
	\arrow["\shortmid"{marking}, equals, from=1-8, to=1-9]
	\arrow["g", from=1-9, to=2-9]
	\arrow[""{name=0, anchor=center, inner sep=0}, "\shortmid"{marking}, equals, from=2-1, to=2-3]
	\arrow[""{name=1, anchor=center, inner sep=0}, "\chi"{inner sep=.8ex}, "\shortmid"{marking}, from=2-4, to=2-9]
	\arrow["\shortmid"{marking}, equals, from=3-1, to=3-2]
	\arrow[equals, from=3-1, to=4-1]
	\arrow["\shortmid"{marking}, equals, from=3-2, to=3-3]
	\arrow[equals, from=3-3, to=4-3]
	\arrow["\shortmid"{marking}, equals, from=3-4, to=3-5]
	\arrow[equals, from=3-4, to=4-4]
	\arrow["\varphi"{inner sep=.8ex}, "\shortmid"{marking}, from=3-5, to=3-6]
	\arrow[""{name=2, anchor=center, inner sep=0}, "\shortmid"{marking}, equals, from=3-6, to=3-7]
	\arrow["\psi"{inner sep=.8ex}, "\shortmid"{marking}, from=3-7, to=3-8]
	\arrow[equals, from=3-7, to=4-7]
	\arrow["\shortmid"{marking}, equals, from=3-8, to=3-9]
	\arrow[equals, from=3-9, to=4-9]
	\arrow["\shortmid"{marking}, equals, from=4-1, to=4-3]
	\arrow["f"', from=4-1, to=5-1]
	\arrow["f", from=4-3, to=5-3]
	\arrow["\varphi"{inner sep=.8ex}, "\shortmid"{marking}, from=4-4, to=4-7]
	\arrow["f"', from=4-4, to=5-4]
	\arrow["\psi"{inner sep=.8ex}, "\shortmid"{marking}, from=4-7, to=4-9]
	\arrow["g", from=4-9, to=5-9]
	\arrow["\shortmid"{marking}, equals, from=5-1, to=5-3]
	\arrow["\chi"', from=5-4, to=5-9]
	\arrow[between={0.4}{0.9}, maps to, from=0, to=3-2]
	\arrow[between={0.3}{0.7}, maps to, from=1, to=2]
\end{tikzcd}\]

	where the top layer $\prolist{\gamma}$ consists only of opcartesian multicells
	and the center layer $\prolist{\delta}$ consists only of multicells in $\Dbl{D}$
	and possibly unary multicells of the form $I_a$ for $a$ a tight arrow in $\Dbl{D}$.
  
	Since $\Dbl{D}$ satisfies (P3), $\frac{\prolist{\delta}}{\beta}$ can
	be decomposed into a binary-unary tree shape.
	Further, since $\prolist{\gamma}$ only consists of opcartesian multicells,
	which necessarily all have identity tight boundaries,
	we can bring each non-identity opcartesian cell in $\prolist{\gamma}$
	into its own separate layer.
	Finally, we can decompose each opcartesian multicells of arity $\geq 3$ 
	as composites of opcartesian multicells of arity $2$ and $1$, producing a sequence of binary-unary trees
  which we graft together to show that $\Dbl{F}_u(\Dbl{D})$ satisfies property (P3) of 
  Definition~\ref{defn:proRep}, and hence is exponentiable, as desired.	
\end{proof}

As an immediate consequence we can determine which of the unitalizations $\Dbl{F}_u(\Sq{n})$ are exponentiable:

\begin{cor}\label{cor:expUnits}
	The VDC $\Dbl{F}_u(\Sq{n})$ is exponentiable for $0\leq n\leq 2$, and is non-exponentiable for $n\geq 3$.
\end{cor}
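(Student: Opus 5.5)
For $0\leq n\leq 2$ I will apply Lemma~\ref{lem:expOfUnitalization} directly, with separate treatment of $n=0$. The positive cases come first, the negative ones second.

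\emph{The case $n=0$.} Here $\Sq{0}$ has a nullary multicell, so Lemma~\ref{lem:expOfUnitalization} does not literally apply. Instead I will check decomposability by hand using Theorem~\ref{thm:ExpViaDecomp} and Corollary~\ref{cor:decompSimp}. In $\Dbl{F}_u(\Sq{0})$ the only multicells are of three kinds: identities, the freely added nullary cells into the units $I_x$ (whiskered by tight arrows), and $\square_0$, possibly precomposed with unit loose arrows $I$ in its source. Two-layer decompositions of a cell with source $(I,\dots,I)$ and target $\ell$ have shape prescribed by the tree. The top layer must consist of cells into $I$'s or into $\ell$, which forces exactly one ``$\ell$'' slot whose cell is $\square_0$ padded with $I$'s. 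All remaining slots are the unique nullary/unit cells, and the unique tight factorizations come from the tiny tight category. I would check directly that the category of such decompositions is connected, which is a finite, routine case analysis.

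\emph{The cases $n=1,2$.} Here $\Sq{n}$ has no nullary multicells, so I verify (P1) and (P3) for binary-unary trees. For $n=1$, every non-identity cell is unary. The only height-2 binary-unary decompositions of a unary cell are unary-over-unary, and the generator $\square_1$ factors only as $\frac{\text{id}}{\square_1}$ or $\frac{\square_1}{\text{id}}$. These two factorizations are connected via the morphism given by $\square_1$ itself on the middle interface, so (P1) holds. (P3) is vacuous, since all height-2 trees over unary cells are already unary-unary. For $n=2$, the binary generator $\square_2$ has as its only binary-unary decompositions $\frac{(\text{id},\text{id})}{\square_2}$ and $\frac{\square_2}{\text{id}}$. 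These are again connected by the morphism $\square_2$ itself, and every height-2 decomposition is of one of these binary-unary forms, giving (P3). Lemma~\ref{lem:expOfUnitalization} then yields exponentiability.

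\emph{The non-exponentiable cases $n\geq 3$.} Exhibit a failure of existence of decompositions, by Theorem~\ref{thm:ExpViaDecomp}. Take $\square_n$ and the height-2 tree whose root is binary, with children of arities $1$ and $n-1$. A decomposition would require a binary cell in the bottom layer with target $\ell$. In $\Dbl{F}_u(\Sq{n})$ every multicell with target $\ell$ is $\square_n$, possibly padded with units and whiskered, so its source has exactly $n$ non-unit loose arrows. The bottom cell's source must therefore contain $n$ non-unit arrows among only $2$ slots, which is impossible for $n\geq3$. Hence $\square_n$ admits no decomposition of this shape.

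I expect the main obstacle to be the bookkeeping in $\Dbl{F}_u$: making sure no unexpected cells into $\ell$ or into $I$ arise, which requires a precise description of the multicells of the unitalization. The $n=0$ connectivity check is the most delicate part, because nullary cells interact with the added units.
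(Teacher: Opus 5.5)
Your overall plan is sound: decomposition criteria for $n\le 2$, and a missing decomposition for $n\ge 3$. However, two steps fail as written.

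\textbf{The case $n=0$.} The paper needs no case analysis here. $\Dbl{F}_u(\Sq{0})$ and $\Dbl{F}_u(\Sq{1})$ are representable: every composable string of loose arrows is either a string of units or a single non-unit arrow padded by units, and these composites exist by construction of $\Dbl{F}_u$. Proposition~\ref{prop:RepImpProRep} then gives exponentiability, and only $n=2$ needs Lemma~\ref{lem:expOfUnitalization}.

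Your hand check also rests on a false claim: the top layer need not contain exactly one cell into $\ell$. For a root of arity $m$ there is also a decomposition with no $\ell$ at all. Its middle interface is $(I_{(0,0)},\dots,I_{(0,0)})$, the top cells are the unique cells into $I_{(0,0)}$, and the bottom cell is $\square_0$ padded by $m$ units.

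This decomposition is indispensable. Consider the decompositions placing $\ell$ in slot $i$, with copies of $I_{(1,0)}$ to its left and $I_{(1,1)}$ to its right. For different $i$ they admit no morphisms between them, since there is no unary cell $\ell\Rightarrow I_{(1,0)}$ and none $I_{(1,0)}\Rightarrow \ell$. They are connected only through the all-unit decomposition, which maps to each of them via the unary cells $I_{(0,0)}\Rightarrow I_{(1,0)}$, the padded $\square_0\colon I_{(0,0)}\Rightarrow\ell$, and $I_{(0,0)}\Rightarrow I_{(1,1)}$. With your enumeration the connectivity check would fail for $m\ge 2$.

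\textbf{The case $n\ge 3$.} Your list of multicells with target $\ell$ omits $\text{id}_\ell$ padded by units. So the bottom binary cell could a priori have source $(I_{(1,0)},\ell)$ or $(\ell,I_{(1,1)})$, and your count of non-unit arrows does not exclude this. The conclusion survives, but you need one more observation. In that case some top cell would have a unit as target and a non-unit arrow in its source ($\ell_1$, respectively $\ell_2,\dots,\ell_n$). In $\Dbl{F}_u(\Sq{n})$, every cell into a unit has a source consisting only of units, so no such cell exists.

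\textbf{A minor point for $n=2$.} The decompositions $\frac{(\text{id},\text{id})}{\square_2}$ and $\frac{\square_2}{\text{id}}$ have different shapes, so no morphism can relate them and none is needed. Essential uniqueness is required shape by shape, and each of these shapes carries exactly one decomposition. In fact, the three binary-unary trees with at most two leaves yield essentially unique decompositions in any VDC, which is how the paper verifies the hypotheses of Lemma~\ref{lem:expOfUnitalization}.
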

\begin{proof}
	First, observe that when $n\geq 3$, Theorem~\ref{thm:ExpViaDecomp} implies that the VDC $\Dbl{F}_u(\Sq{n})$ is 
  non-exponentiable since we can't factor out a binary multicell from the unique $n$-ary multicell coming from $\Sq{n}$. 
  As a consequence, we see that unital VDCs need not be exponentiable.

	For the first part of the claim note that $\Dbl{F}_u(\Sq{0})$ and $\Dbl{F}_u(\Sq{1})$ are both representable, 
  and hence are exponentiable by Proposition~\ref{prop:RepImpProRep}. However, the VDC $\Dbl{F}_u(\Sq{2})$ is not representable. 
  Nonetheless, $\Sq{2}$, as it only has identity multicells and a single non-identity binary multicell, 
  satisfies the hypotheses of Lemma~\ref{lem:expOfUnitalization}: the only binary-unary trees of height $2$ 
  whose composite is less than ternary are the three below, with respect to which every virtual double category admits essentially unique decompositions.
  Furthermore, the same shapes gives the only height-2 trees for which $\Sq{2}$ admits any cell decompositions whatsoever, so that 
  decompositions of height $2$ serve as their own binary-unary refinements.
  Thus, by Lemma~\ref{lem:expOfUnitalization} we have that $\Dbl{F}_u(\Sq{2})$ is exponentiable.
\[\begin{tikzcd}
	{\color{white}} & {\color{white}\bullet} && {\color{white}} & {\color{white}} && {\color{white}} \\
	\bullet && \bullet && \bullet && \bullet \\
	\bullet && \bullet &&& \bullet \\
	{\color{white}} && {\color{white}} &&& {\color{white}}
	\arrow[no head, from=1-1, to=2-1]
	\arrow[no head, from=1-2, to=2-3]
	\arrow[no head, from=1-4, to=2-3]
	\arrow[no head, from=1-5, to=2-5]
	\arrow[no head, from=1-7, to=2-7]
	\arrow[no head, from=2-1, to=3-1]
	\arrow[no head, from=2-3, to=3-3]
	\arrow[no head, from=2-5, to=3-6]
	\arrow[no head, from=2-7, to=3-6]
	\arrow[no head, from=3-1, to=4-1]
	\arrow[no head, from=3-3, to=4-3]
	\arrow[no head, from=3-6, to=4-6]
\end{tikzcd}\]
\end{proof}

\subsubsection*{VDCs of cospans}
An important class of exponentiable VDCs that do not arise as representable VDCs are the cospan VDCs.

\begin{prop}[Cospans give exponentiable VDCs]\label{prop:cospanPro}
    For any category $\Cat{E}$, the VDC $\Cospan(\Cat{E})$ is exponentiable, and is representable if and only if $\Cat{E}$ admits finite pushouts.
\end{prop}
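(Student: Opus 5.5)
Our approach is to prove exponentiability through the characterization $(1)\iff(3)$ of Theorem~\ref{thm:ExpChar}, using Corollary~\ref{cor:decompSimp}. It therefore suffices to show that every multicell of $\Cospan(\Cat{E})$ admits essentially unique decompositions into $2$ layers.

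The key observation is that a multicell $\alpha$ with loose source $\prolist{\varphi}=(x_0\to y_1\leftarrow x_1\to\cdots\leftarrow x_n)$ and target cospan $z_0\to w\leftarrow z_1$ consists of the following data: tight arrows $c_0,c_1$, together with a cocone from the zig-zag diagram $\prolist{\varphi}$ into $w$ that is compatible with $d_0c_0$ and $d_1c_1$. Equivalently, it is a map out of the (possibly nonexistent) colimit of the zig-zag, with the boundary data recorded.

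Fix a partition $\prolist{\varphi}=(\prolist{\varphi}_1,\ldots,\prolist{\varphi}_r)$, with tight arrows at the breakpoints $x_{m_i}$ that we must also choose. We build a \emph{canonical} decomposition without pushouts.
\begin{itemize}
\item For each block, take the intermediate loose arrow to be the cospan $x_{m_{i-1}}\to w\leftarrow x_{m_i}$, with apex $w$ itself. The legs are the composites through the cocone components of $\alpha$.
\item Take the tight arrows at the breakpoints to be identities.
\item Let $\gamma_i$ be the globular multicell whose apex map is given by the restriction of $\alpha$'s cocone to block $i$.
\item Let $\beta$ be the multicell with apex map $\id_w$ and tight sides $c_0,c_1$.
\end{itemize}
Then $\frac{\gamma_1\cdots\gamma_r}{\beta}=\alpha$ by construction. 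Nullary blocks cause no trouble: they give the cospan $x\to w\leftarrow x$ with both legs equal, and the nullary cell is the point $x\mapsto w$.

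For essential uniqueness, suppose we have any decomposition $\frac{\beta_1\cdots\beta_r}{\beta'}$, where $\beta_i$ lands in a cospan $z_{i-1}\to u_i\leftarrow z_i$ with tight sides $a_{i-1},a_i$, and $\beta'$ is given by maps $u_i\to w$ together with its tight sides. We exhibit a single morphism in $\Cat{Paste}_2$ from this decomposition to the canonical one. The middle interface is the sequence of unary cells from $(u_i)$ to the cospans with apex $w$: on apices it uses $\beta'$'s components $u_i\to w$, and on tight arrows it uses $a_i$ composed with the relevant data. The equations are commutativity checks, which hold because composition is pasting.

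The step I expect to be the main obstacle is this essential-uniqueness check. The subtlety is that the canonical decomposition must be taken relative to the tight boundary, so one must verify that the unary cells commute with the tight arrows at the breakpoints. I would handle this as in Proposition~\ref{prop:RepImpProRep}, showing the canonical decomposition is terminal in $\Cat{Paste}_2(\Cospan(\Cat{E});\alpha)$ for each shape, which gives connectedness. The breakpoint tight arrows $x_{m_i}\to z_i$ are exactly the tight components of the comparison cells; the middle cospans of the canonical decomposition then need to be taken with tight sides absorbed, so I would verify this carefully.

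Finally, for representability: if $\Cat{E}$ has pushouts, the pushout cospan together with the pasting cell is opcartesian, by the universal property of the pushout. Nullary composites are identity cospans. Conversely, if $\Cospan(\Cat{E})$ is representable, take any span $y\leftarrow x\to z$ and form the composite of the cospans $y\xrightarrow{\id}y\leftarrow x$ and $x\to z\xleftarrow{\id}z$. Testing opcartesianness against cells into cospans of the form $y\to q\leftarrow z$ shows that the composite's apex is a pushout of the span.
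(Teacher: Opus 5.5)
Your reduction to two-layer decompositions, your existence construction and your representability argument (which the paper leaves implicit) are fine. The gap is in the essential-uniqueness step.

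Write an arbitrary decomposition as $D=\frac{\beta_1\cdots\beta_r}{\beta'}$. Its middle cospans are $v_{i-1}\xrightarrow{p_i}u_i\xleftarrow{q_i}v_i$; I rename your $z_i$ to avoid a clash with the target $z_0\xrightarrow{d_0}w\xleftarrow{d_1}z_1$. Its breakpoint arrows are $a_i\colon x_{m_i}\to v_i$, and $\beta'$ is given by $c_0'\colon v_0\to z_0$, $c_1'\colon v_r\to z_1$ and $g_i\colon u_i\to w$. A unary cell from $D$'s middle cospans to yours, $x_{m_{i-1}}\to w\leftarrow x_{m_i}$, needs tight components $v_i\to x_{m_i}$. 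A cell in the other direction needs apex components $w\to u_i$. Neither exists in general. The cell you describe uses $g_i$ on apices but $a_i$ on feet, and these point in opposite directions. For example, take a unary $\alpha$ in a poset with $v_0\not\le x_0$ and $w\not\le u_1$: there is no morphism either way between $D$ and your canonical decomposition. So your canonical decomposition is neither terminal nor initial, and the argument of Proposition~\ref{prop:RepImpProRep} does not transfer to it. You sensed this but left it open.

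There are two ways to close it.

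\emph{Keep your decomposition $D_{\mathrm{can}}$ and use a zig-zag} $D\leftarrow D'\to D_{\mathrm{can}}$. Let $D'$ have middle cospans $x_{m_{i-1}}\xrightarrow{p_ia_{i-1}}u_i\xleftarrow{q_ia_i}x_{m_i}$, top cells the globular versions of the $\beta_i$ (same apex maps), and bottom cell with tight sides $c_0,c_1$ and apex maps $g_i$. The map $D'\to D$ has tight components $a_i$ and identity apex components. The map $D'\to D_{\mathrm{can}}$ has identity tight components and apex components $g_i$. All required equations follow from the commutativity conditions of the $\beta_i$ and $\beta'$.

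\emph{Or, as the paper does, move the tight data into the top layer.} Let $f_k\colon x_k\to w$ be the induced maps and $e_j\colon y_j\to w$ the apex maps of $\alpha$. For $r\ge 2$, take middle cospans $z_0\xrightarrow{d_0}w\xleftarrow{\id}w$, then $w\xrightarrow{\id}w\xleftarrow{\id}w$, \ldots, $w\xrightarrow{\id}w\xleftarrow{d_1}z_1$. Take top cells with tight sides $c_0,f_{m_1},\ldots,f_{m_{r-1}},c_1$ and apex maps $e_j$, and a globular bottom cell with identity apex maps. Every decomposition then has a unique morphism to this one, with tight components $c_0'$, $g_iq_i=g_{i+1}p_{i+1}$, $c_1'$ and apex components $g_i$. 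So this decomposition is terminal, which gives exactly the analogue of Proposition~\ref{prop:RepImpProRep} you were aiming for; this form also handles nullary blocks uniformly.
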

\begin{proof}
    To begin consider an $n$-ary multicell in $\Cospan(\Cat{E})$ which equates to a commuting diagram of the form
\[\begin{adjustbox}{}\begin{tikzcd}
	{x_0} & {y_1} & {x_1} & \ldots & {x_{n-1}} & {y_n} & {x_n} \\
	&&& \ldots \\
	{z_0} &&& {w_1} &&& {z_1}
	\arrow["{{a_1}}", from=1-1, to=1-2]
	\arrow["{{c_0}}"', from=1-1, to=3-1]
	\arrow["{{e_1}}"', from=1-2, to=3-4]
	\arrow["{{b_1}}"', from=1-3, to=1-2]
	\arrow["{{a_2}}", from=1-3, to=1-4]
	\arrow["{{b_{n-1}}}"', from=1-5, to=1-4]
	\arrow["{{a_n}}", from=1-5, to=1-6]
	\arrow["{{e_n}}", from=1-6, to=3-4]
	\arrow["{{b_n}}"', from=1-7, to=1-6]
	\arrow["{{c_1}}", from=1-7, to=3-7]
	\arrow["{d_0}"', from=3-1, to=3-4]
	\arrow["{d_1}", from=3-7, to=3-4]
\end{tikzcd}\end{adjustbox}\]
and let $f_i=e_i\circ b_i=e_{i+1}\circ a_{i+1}:x_i\to w_1$ be the unique maps for the middle $x_i$'s. 
First let's show existence of height-2 decompositions. Let $n=k_1+\cdots+k_m$ for $k_i \geq 0$. 
If $m=1$ we can just insert an identity unary multicell below the multicell. If $m \geq 2$, and $k_1,k_m\geq 1$, then we can factor our multicell as
\[\begin{tikzcd}[column sep=tiny]
	{x_0} & {y_1} & \cdots & {y_{k_1}} & {x_{k_1}} & \cdots & {x_{m_{n-1}}} & {y_{m_{n-1}+1}} & \cdots & {y_{m_n}} & {x_{m_n}} \\
	&&&&& \cdots \\
	{z_0} && {w_1} && {w_1} & \cdots & {w_1} && {w_1} && {z_1} \\
	\\
	{z_0} &&&&& {w_1} &&&&& {z_1}
	\arrow[from=1-1, to=1-2]
	\arrow["{{{c_0}}}", from=1-1, to=3-1]
	\arrow["{{{e_1}}}"{description}, from=1-2, to=3-3]
	\arrow[from=1-3, to=1-2]
	\arrow[from=1-3, to=1-4]
	\arrow["{{{e_{k_1}}}}"{description}, from=1-4, to=3-3]
	\arrow[from=1-5, to=1-4]
	\arrow[from=1-5, to=1-6]
	\arrow["{{{f_{k_1}}}}"{description}, from=1-5, to=3-5]
	\arrow[from=1-7, to=1-6]
	\arrow[from=1-7, to=1-8]
	\arrow["{{{f_{m_{n-1}}}}}"{description}, from=1-7, to=3-7]
	\arrow["{{{e_{m_{n-1}+1}}}}"{description}, from=1-8, to=3-9]
	\arrow[from=1-9, to=1-8]
	\arrow[from=1-9, to=1-10]
	\arrow["{{{e_{m_n}}}}"{description}, from=1-10, to=3-9]
	\arrow[from=1-11, to=1-10]
	\arrow["{{{c_1}}}"', from=1-11, to=3-11]
	\arrow["{{{d_0}}}"', from=3-1, to=3-3]
	\arrow[equals, from=3-1, to=5-1]
	\arrow[equals, from=3-3, to=3-5]
	\arrow[equals, from=3-3, to=5-6]
	\arrow[equals, from=3-5, to=3-6]
	\arrow[equals, from=3-5, to=5-6]
	\arrow[equals, from=3-6, to=3-7]
	\arrow[equals, from=3-7, to=3-9]
	\arrow[equals, from=3-7, to=5-6]
	\arrow[equals, from=3-9, to=5-6]
	\arrow["{{{d_1}}}", from=3-11, to=3-9]
	\arrow[equals, from=3-11, to=5-11]
	\arrow["{{{d_0}}}"', from=5-1, to=5-6]
	\arrow["{{{d_1}}}", from=5-11, to=5-6]
\end{tikzcd}\]
where $m_i=\sum_{j=1}^ik_j$, and for $1 < i < m$, if $k_i=0$ we insert the nullary multicell
\[\begin{adjustbox}{}\begin{tikzcd}
	& {x_{m_{i-1}}} \\
	\\
	{y_1} & {y_1} & {y_1}
	\arrow["{{f_{m_{i-1}}}}"', from=1-2, to=3-1]
	\arrow["{{f_{m_{i-1}}}}"{description}, from=1-2, to=3-2]
	\arrow["{{f_{m_{i-1}}}}", from=1-2, to=3-3]
	\arrow[equals, from=3-1, to=3-2]
	\arrow[equals, from=3-2, to=3-3]
\end{tikzcd}\end{adjustbox}\]
If $k_1=0$ (or similarly for $k_m=0$) we insert the nullary multicell
\[\begin{adjustbox}{}\begin{tikzcd}
	& {x_0} \\
	\\
	{z_0} & {z_0} & {z_0}
	\arrow["{{c_0}}"', from=1-2, to=3-1]
	\arrow["{{c_0}}"{description}, from=1-2, to=3-2]
	\arrow["{{c_0}}", from=1-2, to=3-3]
	\arrow[equals, from=3-1, to=3-2]
	\arrow[equals, from=3-2, to=3-3]
\end{tikzcd}\end{adjustbox}\]
on the left (resp. right) so that the bottom multicell becomes
\[\begin{tikzcd}[column sep=tiny,row sep=large]
	{z_0} & {z_0} & {z_0} && {w_1} && {w_1} & \cdots & {w_1} && {w_1} && {z_1} & {z_1} & {z_1} \\
	\\
	{z_0} &&&&&&& {w_1} &&&&&&& {z_1}
	\arrow[equals, from=1-1, to=1-2]
	\arrow[equals, from=1-1, to=3-1]
	\arrow[equals, from=1-2, to=1-3]
	\arrow["{{d_0}}"{description}, from=1-2, to=3-8]
	\arrow["{{d_0}}"', from=1-3, to=1-5]
	\arrow[equals, from=1-5, to=1-7]
	\arrow[equals, from=1-5, to=3-8]
	\arrow[equals, from=1-7, to=1-8]
	\arrow[equals, from=1-7, to=3-8]
	\arrow[equals, from=1-8, to=1-9]
	\arrow[equals, from=1-9, to=1-11]
	\arrow[equals, from=1-9, to=3-8]
	\arrow[equals, from=1-11, to=3-8]
	\arrow["{{d_1}}", from=1-13, to=1-11]
	\arrow[equals, from=1-13, to=1-14]
	\arrow["{{d_1}}"{description}, from=1-14, to=3-8]
	\arrow[equals, from=1-15, to=1-14]
	\arrow[equals, from=1-15, to=3-15]
	\arrow["{{d_0}}"', from=3-1, to=3-8]
	\arrow["{{d_1}}", from=3-15, to=3-8]
\end{tikzcd}\]
So far we have proven that single multicells admit arbitrary decompositions. 
It remains to show that these decompositions are equivalent. 
However, this follows from the fact that any other decomposition is equivalent to this one by sliding the bottom multicell up. 

For example, in the $3=2+1$ case an arbitrary decomposition below left can be decomposed as in the lower diagram below:
\[\begin{tikzcd}
	{x_0} & {y_1} & {x_1} & {y_2} & {x_2} & {y_3} & {x_3} \\
	{x_0'} && {y_1'} && {x_1'} & {y_2'} & {x_2'} \\
	{z_0} &&& {w_1} &&& {z_1} \\
	\\
	{x_0} & {y_1} & {x_1} & {y_2} & {x_2} & {y_3} & {x_3} \\
	{x_0'} && {y_1'} && {x_1'} & {y_2'} & {x_2'} \\
	\\
	{z_0} && {w_1} && {w_1} & {w_1} & {z_1} \\
	{z_0} &&& {w_1} &&& {z_1}
	\arrow["{{{{a_1}}}}", from=1-1, to=1-2]
	\arrow["{{{{c_0'}}}}"', from=1-1, to=2-1]
	\arrow["{{{{e_1'}}}}"{description}, from=1-2, to=2-3]
	\arrow["{{{{b_1}}}}"', from=1-3, to=1-2]
	\arrow["{{{{a_2}}}}", from=1-3, to=1-4]
	\arrow["{{{{e_2'}}}}"{description}, from=1-4, to=2-3]
	\arrow["{{{{b_2}}}}"', from=1-5, to=1-4]
	\arrow["{{{{a_3}}}}", from=1-5, to=1-6]
	\arrow["{{{{c_1'}}}}"{description}, from=1-5, to=2-5]
	\arrow["{{{{e_3'}}}}"{description}, from=1-6, to=2-6]
	\arrow["{{{{b_3}}}}"', from=1-7, to=1-6]
	\arrow["{{{{c_2'}}}}", from=1-7, to=2-7]
	\arrow["{{{{a_1'}}}}", from=2-1, to=2-3]
	\arrow["{{{{c_0''}}}}"', from=2-1, to=3-1]
	\arrow["{{{{e_1''}}}}"{description}, from=2-3, to=3-4]
	\arrow["{{{{b_1'}}}}"', from=2-5, to=2-3]
	\arrow["{{{{a_2'}}}}", from=2-5, to=2-6]
	\arrow["{{{{e_2''}}}}"{description}, from=2-6, to=3-4]
	\arrow["{{{{b_2'}}}}"', from=2-7, to=2-6]
	\arrow["{{{{c_2''}}}}", from=2-7, to=3-7]
	\arrow["{{d_0}}"', from=3-1, to=3-4]
	\arrow[between={0.3}{0.7}, squiggly, tail reversed, from=3-4, to=5-4]
	\arrow["{{d_1}}", from=3-7, to=3-4]
	\arrow["{{{{a_1}}}}", from=5-1, to=5-2]
	\arrow["{{{{c_0'}}}}"', from=5-1, to=6-1]
	\arrow["{{{{e_1'}}}}"{description}, from=5-2, to=6-3]
	\arrow["{{{{b_1}}}}"', from=5-3, to=5-2]
	\arrow["{{{{a_2}}}}", from=5-3, to=5-4]
	\arrow["{{{{e_2'}}}}"{description}, from=5-4, to=6-3]
	\arrow["{{{{b_2}}}}"', from=5-5, to=5-4]
	\arrow["{{{{a_3}}}}", from=5-5, to=5-6]
	\arrow["{{{{c_1'}}}}"{description}, from=5-5, to=6-5]
	\arrow["{{{{e_3'}}}}"{description}, from=5-6, to=6-6]
	\arrow["{{{{b_3}}}}"', from=5-7, to=5-6]
	\arrow["{{{{c_2'}}}}", from=5-7, to=6-7]
	\arrow["{{{{a_1'}}}}"', from=6-1, to=6-3]
	\arrow["{{{{c_0''}}}}"', from=6-1, to=8-1]
	\arrow["{{{{e_1''}}}}"{description}, from=6-3, to=8-3]
	\arrow["{{{{b_1'}}}}", from=6-5, to=6-3]
	\arrow["{{{{a_2'}}}}"', from=6-5, to=6-6]
	\arrow["{{{{c_1'}}}}", from=6-5, to=8-5]
	\arrow["{{{{e_2''}}}}"{description}, from=6-6, to=8-6]
	\arrow["{{{{b_2'}}}}", from=6-7, to=6-6]
	\arrow["{{{{c_2''}}}}", from=6-7, to=8-7]
	\arrow["{{d_0}}"', from=8-1, to=8-3]
	\arrow[equals, from=8-1, to=9-1]
	\arrow[equals, from=8-3, to=8-5]
	\arrow[equals, from=8-3, to=9-4]
	\arrow[equals, from=8-6, to=8-5]
	\arrow[equals, from=8-6, to=9-4]
	\arrow["{{d_1}}", from=8-7, to=8-6]
	\arrow[equals, from=8-7, to=9-7]
	\arrow["{{d_0}}"', from=9-1, to=9-4]
	\arrow["{{d_1}}", from=9-7, to=9-4]
\end{tikzcd}\]
implying that it is equivalent to the canonical decomposition in the lower diagram below:
\[\begin{tikzcd}
	{x_0} & {y_1} & {x_1} & {y_2} & {x_2} & {y_3} & {x_3} \\
	{x_0'} && {y_1'} && {x_1'} & {y_2'} & {x_2'} \\
	\\
	{z_0} && {w_1} && {w_1} & {w_1} & {z_1} \\
	{z_0} &&& {w_1} &&& {z_1} \\
	{x_0} & {y_1} & {x_1} & {y_2} & {x_2} & {y_3} & {x_3} \\
	{z_0} && {w_1} && {w_1} & {w_1} & {z_1} \\
	{z_0} &&& {w_1} &&& {z_1}
	\arrow["{{{{a_1}}}}", from=1-1, to=1-2]
	\arrow["{{{{c_0'}}}}"', from=1-1, to=2-1]
	\arrow["{{{{e_1'}}}}"{description}, from=1-2, to=2-3]
	\arrow["{{{{b_1}}}}"', from=1-3, to=1-2]
	\arrow["{{{{a_2}}}}", from=1-3, to=1-4]
	\arrow["{{{{e_2'}}}}"{description}, from=1-4, to=2-3]
	\arrow["{{{{b_2}}}}"', from=1-5, to=1-4]
	\arrow["{{{{a_3}}}}", from=1-5, to=1-6]
	\arrow["{{{{c_1'}}}}"{description}, from=1-5, to=2-5]
	\arrow["{{{{e_3'}}}}"{description}, from=1-6, to=2-6]
	\arrow["{{{{b_3}}}}"', from=1-7, to=1-6]
	\arrow["{{{{c_2'}}}}", from=1-7, to=2-7]
	\arrow["{{{{a_1'}}}}"', from=2-1, to=2-3]
	\arrow["{{{{c_0''}}}}"', from=2-1, to=4-1]
	\arrow["{{{{e_1''}}}}"{description}, from=2-3, to=4-3]
	\arrow["{{{{b_1'}}}}", from=2-5, to=2-3]
	\arrow["{{{{a_2'}}}}"', from=2-5, to=2-6]
	\arrow["{{{{c_1'}}}}", from=2-5, to=4-5]
	\arrow["{{{{e_2''}}}}"{description}, from=2-6, to=4-6]
	\arrow["{{{{b_2'}}}}", from=2-7, to=2-6]
	\arrow["{{{{c_2''}}}}", from=2-7, to=4-7]
	\arrow["{{d_0}}"', from=4-1, to=4-3]
	\arrow[equals, from=4-1, to=5-1]
	\arrow[equals, from=4-3, to=4-5]
	\arrow[equals, from=4-3, to=5-4]
	\arrow[equals, from=4-6, to=4-5]
	\arrow[equals, from=4-6, to=5-4]
	\arrow["{{d_1}}", from=4-7, to=4-6]
	\arrow[equals, from=4-7, to=5-7]
	\arrow["{{d_0}}"', from=5-1, to=5-4]
	\arrow[between={0.1}{0.9}, squiggly, tail reversed, from=5-4, to=6-4]
	\arrow["{{d_1}}", from=5-7, to=5-4]
	\arrow["{{{{a_1}}}}", from=6-1, to=6-2]
	\arrow["{{{{c_0}}}}"', from=6-1, to=7-1]
	\arrow["{{{{e_1}}}}"{description}, from=6-2, to=7-3]
	\arrow["{{{{b_1}}}}"', from=6-3, to=6-2]
	\arrow["{{{{a_2}}}}", from=6-3, to=6-4]
	\arrow["{{{{e_2}}}}"{description}, from=6-4, to=7-3]
	\arrow["{{{{b_2}}}}"', from=6-5, to=6-4]
	\arrow["{{{{a_3}}}}", from=6-5, to=6-6]
	\arrow["{{{{c_2}}}}"{description}, from=6-5, to=7-5]
	\arrow["{{{{e_3}}}}"{description}, from=6-6, to=7-6]
	\arrow["{{{{b_3}}}}"', from=6-7, to=6-6]
	\arrow["{{{{c_1}}}}", from=6-7, to=7-7]
	\arrow["{{d_0}}"', from=7-1, to=7-3]
	\arrow[equals, from=7-1, to=8-1]
	\arrow[equals, from=7-3, to=8-4]
	\arrow[equals, from=7-5, to=7-3]
	\arrow[equals, from=7-5, to=7-6]
	\arrow[equals, from=7-6, to=8-4]
	\arrow["{{d_1}}", from=7-7, to=7-6]
	\arrow[equals, from=7-7, to=8-7]
	\arrow["{{d_0}}"', from=8-1, to=8-4]
	\arrow["{{d_1}}", from=8-7, to=8-4]
\end{tikzcd}\]
Thus, $\Cospan(\Cat{E})$ has decomposable multicells, and therefore by Theorem~\ref{thm:ExpChar} is exponentiable.
\end{proof}

\subsubsection*{Exponentiable multicategories as exponentiable VDCs}

As mentioned previously, the naming convention for pro-representable VDCs comes from their relation to promonoidal multicategories, 
which are precisely the exponentiable multicategories (c.f.~\cite{Pisani2014}), where monoidal categories are the representable multicategories. 
These exponentiable multicategories can be realized as a certain class of exponentiable VDCs through the fully-faithful embedding
\begin{equation*}
    B:\BiCat{MultCat}\hookrightarrow\BiCat{Vdc}
\end{equation*}
sending a multicategory to the VDC with trivial underlying tight category, one loose arrow for each object, and multicells precisely the multimorphisms in the multicategory. 
The essential image of this embedding consists precisely of those VDCs with trivial underlying tight category. Additionally, this embedding fits into a 2-adjunction
\[\begin{tikzcd}
	{\BiCat{MultCat}} && {\BiCat{Vdc}}
	\arrow[""{name=0, anchor=center, inner sep=0}, "B"', curve={height=18pt}, hook, from=1-1, to=1-3]
	\arrow[""{name=1, anchor=center, inner sep=0}, "{\mathsf{Coll}}"', curve={height=18pt}, from=1-3, to=1-1]
	\arrow["\dashv"{anchor=center, rotate=-90}, draw=none, from=1, to=0]
\end{tikzcd}\]
where $\mathsf{Coll}(\Dbl{D})=\Ob\cup_{\T(\Dbl{D})}\Dbl{D}$ is given by collapsing the underlying tight category of $\Dbl{D}$.

The fact that $B$ preserves exponentials follows from the more general result in~\cite[Prop 3.16]{arkor2025exponentiablevirtualdoublecategories} which shows that $\BiCat{MultCat}$ admits powers by exponentiable VDCs, and $B$ strictly preserves these exponentials. Thus, it remains only to prove that $B$ preserves and reflects exponentiable objects.

\begin{prop}[Exponentiable Multicategories in VDCs]\label{prop:multiCatPro}
    The 2-functor $B:\BiCat{MultCat}\hookrightarrow \BiCat{Vdc}$ preserves and reflects exponentiable objects as well as exponentials.
\end{prop}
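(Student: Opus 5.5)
The plan is to reduce everything to Theorem~\ref{thm:ExpChar}, using the equivalence (1)$\iff$(3) (exponentiable iff decomposable multicells) together with Pisani's characterization of exponentiable multicategories as promonoidal (malleable) ones.

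Exponentials are handled by the cited result of Arkor, so only the statement about exponentiable objects needs proof. Let $\MultCat{M}$ be a multicategory. The VDC $B(\MultCat{M})$ has a trivial tight category: one object and only its identity tight arrow. I would first compute its height-$2$ pasting categories $\Cat{Paste}_2(B\MultCat{M};\alpha)$. A decomposition of an $n$-ary multimorphism $\alpha$ of shape given by a partition $n=k_1+\cdots+k_m$ is a tuple of multimorphisms $(\alpha_1,\dots,\alpha_m;\beta)$ composing to $\alpha$. Since every tight boundary is forced to be the identity, morphisms in $\Cat{Paste}_2$ are tuples of unary multimorphisms $\sigma_i$ on the middle interface, subject to $\frac{\alpha_i}{\sigma_i}=\alpha_i'$ and $\beta=\frac{\sigma_1\cdots\sigma_m}{\beta'}$. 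Hence $\pi_0\Cat{Paste}_2(B\MultCat{M};\alpha)$ over this partition is exactly the fibre over $\alpha$ of the composition map
\[\int^{y_1,\dots,y_m}\prod_i \MultCat{M}(\vec x_i;y_i)\times \MultCat{M}(y_1,\dots,y_m;z)\to \MultCat{M}(\vec x_1,\dots,\vec x_m;z),\]
where the coend is taken over unary morphisms. Thus, by Corollary~\ref{cor:decompSimp}, $B(\MultCat{M})$ has decomposable multicells if and only if every such simultaneous-substitution map is bijective.

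Next I would compare this condition with Pisani's, which asks for bijectivity only of the single-slot maps $\int^{y_i}\MultCat{M}(\vec x;y_i)\times \MultCat{M}(\dots,y_i,\dots;z)\to \MultCat{M}(\dots,\vec x,\dots;z)$. The simultaneous maps specialize to the single-slot ones by taking $k_j=1$ for $j\neq i$: using the identity-decomposition trick from Lemma~\ref{lem:existProRepDecomps}, the coend over the unary slots collapses by co-Yoneda. Conversely, the single-slot condition implies the simultaneous one by iterating substitutions one slot at a time and applying the Fubini theorem for coends, exactly as in the reduction sketched for Pisani's proposition in Section~\ref{sec:ApproachExponentiability}. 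Alternatively one can appeal to the remark after Corollary~\ref{cor:ProRepImpliesDecomp}: for trivial tight category, (P1) alone suffices, and (P1) is precisely the binary/nullary single-slot condition.

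Combining these gives: $\MultCat{M}$ is exponentiable in $\Cat{Multicat}$ iff it is promonoidal, iff $B(\MultCat{M})$ has decomposable multicells, iff $B(\MultCat{M})$ is exponentiable in $\Vdc$. That is both preservation and reflection.

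The main obstacle I expect is the Fubini/iteration step, in two respects. First, one has to check carefully that the single-slot condition in all arities, including nullary substitutions, yields the simultaneous coend isomorphism. Second, one has to track that the equivalence relation generated in $\Cat{Paste}_2$ matches the coend relation on the nose. If the iteration becomes messy, the fallback is to verify (P1) and (P3) directly: (P1) is single-slot binary/nullary substitution, and (P3) is automatic because there are no tight-boundary constraints, so the binary-unary trees can be interleaved freely.
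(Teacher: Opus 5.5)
Your proposal is correct and follows the paper's route. Exponentials are handled by Arkor's result, and for exponentiable objects you identify $\pi_0\mathsf{Paste}_2(B\mathsf{M};\alpha)$ with the fibres of the promonoidal coend maps and then invoke Theorem~\ref{thm:ExpChar}. The only difference is that the paper simply recalls the simultaneous-substitution form of promonoidality as known, whereas you derive it from Pisani's single-slot form via co-Yoneda and Fubini, filling in a step the paper leaves implicit.
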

\begin{proof}
    Recall that a multicategory $\Cat{M}$ is promonoidal, or equivalently exponentiable, if and only if for all $k_1,...,k_n\geq 0$, the composition function
    \begin{equation*}
        \int^{(c_1,...,c_n) \in \Cat{M}_0^{\times n}}\Cat{M}(c_1,...,c_n;a)\times \Pi_{i=1}^n \Cat{M}(b_{k_{i-1}+1},\ldots,b_{k_i};c_i)\xrightarrow{\circ}\Cat{M}(b_1,...,b_{m_n};a)
    \end{equation*}
    is a bijection, where $m_i = \sum_{j=1}^ik_i$ and we take $k_0=0.$

	However, under the identification $B$ of multicategories with VDCs having trivial underlying category, 
	these composition functions being bijections corresponds exactly to the statement that the associated VDC 
	$B(\Cat{M})$ admits arbitrary decompositions for multicells up to equivalence, or equivalently by Theorem~\ref{thm:ExpChar} is exponentiable. 
	Thus, $\Cat{M}$ is exponentiable as a multicategory if and only if $B(\Cat{M})$ is exponentiable as a VDC.
\end{proof}

\subsection{A non-characterization: pro-double categories \'a la Arkor}

In \cite[Definition 3.13]{arkor2025exponentiablevirtualdoublecategories}, Arkor defines a pro-double category as follows:
\begin{defn}[Arkor](\textrm{Pro-double category})\label{defn:ArkorProDoubleCat}
  A pro-double category is a pseudocategory object in the bicategory of small categories, profunctors, and natural transformations, with representable source and target.
\qed
\end{defn}
This involves giving a graph of categories $s,t:\Dbl{D}_1\rightrightarrows \Dbl{D}_0,$ an ``identity''
profunctor $I:\Dbl{D}_1\proto \Dbl{D}_0,$ and a ``composition'' profunctor
$\odot:\Dbl{D}_2\to \Dbl{D}_1,$ respecting sources and targets, 
where $\Dbl{D}_n$ is the iterated pullback of $\Dbl{D}_1$ over $\Dbl{D}_0$
and we take the pullback in $\Cat{Cat}$ rather than (where it need not exist) in $\Cat{Prof}.$
As profunctors, $I$ and $\odot$ comprise the data of nullary and binary multicells in a hypothetical 
exponentiable VDC associated to the pro-double category $\Dbl{D}$. 

As a pseudo-category object, such a $\Dbl{D}$ also contains unitors and, critically, an associator as 
below:
\[\begin{tikzcd}
	{\Dbl{D}_3} & {\Dbl{D}_2} & {\Dbl{D}_1} \\
	{\Dbl{D}_3} & {\Dbl{D}_2} & {\Dbl{D}_1}
	\arrow["{\odot\times_{\Dbl{D}_0}\Dbl{D}_1}"{inner sep=.8ex}, "\shortmid"{marking}, from=1-1, to=1-2]
	\arrow[equals, from=1-1, to=2-1]
	\arrow["\odot"{inner sep=.8ex}, "\shortmid"{marking}, from=1-2, to=1-3]
	\arrow["\alpha"{description}, draw=none, from=1-2, to=2-2]
	\arrow[equals, from=1-3, to=2-3]
	\arrow["{\Dbl{D}_1\times_{\Dbl{D}_0}\odot}"{inner sep=.8ex}, "\shortmid"{marking}, from=2-1, to=2-2]
	\arrow["\odot"{inner sep=.8ex}, "\shortmid"{marking}, from=2-2, to=2-3]
\end{tikzcd}.\]

It is left implicit in Arkor's definition that, when $\alpha$ is whiskered with $s$ and $t,$ 
the result should be the identity natural transformation, but this requirement appears clearly in (1.4) of 
\cite{MartinsFerreira2006}, on which Arkor, Bourke, and Ko rely in their treatment~\cite{arkor2024enhanced2categoricalstructurestwodimensional} of 
partly strict 2-categorial limit sketches. 

Thus if we draw a value of $\odot$ as a binary multicell and a value of the hom profunctor of $\Dbl{D}_1$ as a unary 
cell, then schematically, the associator$\alpha$ gives an equivalence between the two ways of building ternary 
cells out of binary ones: 
\[\begin{tikzcd}
	\bullet & \bullet & \bullet & \bullet && \bullet & \bullet & \bullet & \bullet \\
	\bullet && \bullet & \bullet && \bullet & \bullet && \bullet \\
	\bullet &&& \bullet && \bullet &&& \bullet
	\arrow["\shortmid"{marking}, from=1-1, to=1-2]
	\arrow[from=1-1, to=2-1]
	\arrow["\shortmid"{marking}, from=1-2, to=1-3]
	\arrow["\shortmid"{marking}, from=1-3, to=1-4]
	\arrow[from=1-3, to=2-3]
	\arrow[from=1-4, to=2-4]
	\arrow["\shortmid"{marking}, from=1-6, to=1-7]
	\arrow[from=1-6, to=2-6]
	\arrow["\shortmid"{marking}, from=1-7, to=1-8]
	\arrow[from=1-7, to=2-7]
	\arrow["\shortmid"{marking}, from=1-8, to=1-9]
	\arrow[from=1-9, to=2-9]
	\arrow["\shortmid"{marking}, from=2-1, to=2-3]
	\arrow[from=2-1, to=3-1]
	\arrow["\shortmid"{marking}, from=2-3, to=2-4]
	\arrow["\alpha", between={0.2}{0.8}, squiggly, tail reversed, from=2-4, to=2-6]
	\arrow[from=2-4, to=3-4]
	\arrow["\shortmid"{marking}, from=2-6, to=2-7]
	\arrow[from=2-6, to=3-6]
	\arrow["\shortmid"{marking}, from=2-7, to=2-9]
	\arrow[from=2-9, to=3-9]
	\arrow["\shortmid"{marking}, from=3-1, to=3-4]
	\arrow["\shortmid"{marking}, from=3-6, to=3-9]
\end{tikzcd}\]

Let us be clear what this means concretely:
if we apply $\alpha$ to a decomposed ternary multicell $\tau$ in $\Dbl{D}$ whose tight source is the path $f_1,f_2,$ then 
$\alpha(\tau)$ will have tight source $f_1',f_2'$ such that $f_1f_2=f_1'f_2',$ and similarly for the tight
targets. But the definition of pseudo-category has no way to ask, what might seem more natural, that $f_i=f_i'$,
since the individual components of the tight source and target paths are not available for a pseudocategory 
in a general bicategory. Indeed, these components are not even well-defined 
for the composition of profunctors, quotienting as it does by the internal action of unary multicells. 

Furthermore, $\alpha$ must satisfy a pentagon coherence condition. One of the sides of the pentagon
must be a map $\beta$ as below: 
\[\begin{tikzcd}
	{\Dbl{D}_4} && {\Dbl{D}_3} & {\Dbl{D}_2} & {\Dbl{D}_1} \\
	{\Dbl{D}_4} && {\Dbl{D}_3} & {\Dbl{D}_2} & {\Dbl{D}_1}
	\arrow["{{\odot\times_{\Dbl{D}_0}\Dbl{D}_1\times_{\Dbl{D}_0}\Dbl{D}_1}}"{inner sep=.8ex}, "\shortmid"{marking}, from=1-1, to=1-3]
	\arrow[equals, from=1-1, to=2-1]
	\arrow["{{\odot\times_{\Dbl{D}_0}\Dbl{D}_1}}"{inner sep=.8ex}, "\shortmid"{marking}, from=1-3, to=1-4]
	\arrow[draw=none, from=1-3, to=2-3]
	\arrow["{{\alpha\times_{\Dbl{D}_0}\Dbl{D}_1}}"{description}, shift left=3, draw=none, from=1-3, to=2-3]
	\arrow["\odot"{inner sep=.8ex}, "\shortmid"{marking}, from=1-4, to=1-5]
	\arrow[equals, from=1-4, to=2-4]
	\arrow[equals, from=1-5, to=2-5]
	\arrow["{{\Dbl{D}_1\times_{\Dbl{D}_0}\odot\times_{\Dbl{D}_0}\Dbl{D}_1}}"'{inner sep=.8ex}, "\shortmid"{marking}, from=2-1, to=2-3]
	\arrow["{{\odot\times_{\Dbl{D}_0}\Dbl{D}_1}}"'{inner sep=.8ex}, "\shortmid"{marking}, from=2-3, to=2-4]
	\arrow["\odot"{inner sep=.8ex}, "\shortmid"{marking}, from=2-4, to=2-5]
\end{tikzcd}\]
Schematically, $\beta$ applies the associator $\alpha$ in the upper-left corner of a quaternary multicell 
as below, without changing the remainder: 
\[\begin{tikzcd}[column sep=small]
	\bullet & \bullet & \bullet & \bullet & \bullet && \bullet & \bullet & \bullet & \bullet & \bullet \\
	\bullet && \bullet & \bullet & \bullet && \bullet & \bullet && \bullet & \bullet \\
	\bullet &&& \bullet & \bullet && \bullet &&& \bullet & \bullet \\
	\bullet &&&& \bullet && \bullet &&&& \bullet
	\arrow["\shortmid"{marking}, from=1-1, to=1-2]
	\arrow[from=1-1, to=2-1]
	\arrow["\shortmid"{marking}, from=1-2, to=1-3]
	\arrow["\shortmid"{marking}, from=1-3, to=1-4]
	\arrow[from=1-3, to=2-3]
	\arrow["\shortmid"{marking}, from=1-4, to=1-5]
	\arrow[from=1-4, to=2-4]
	\arrow[from=1-5, to=2-5]
	\arrow["\shortmid"{marking}, from=1-7, to=1-8]
	\arrow[from=1-7, to=2-7]
	\arrow["\shortmid"{marking}, from=1-8, to=1-9]
	\arrow[from=1-8, to=2-8]
	\arrow["\shortmid"{marking}, from=1-9, to=1-10]
	\arrow["\shortmid"{marking}, from=1-10, to=1-11]
	\arrow[from=1-10, to=2-10]
	\arrow[from=1-11, to=2-11]
	\arrow["\shortmid"{marking}, from=2-1, to=2-3]
	\arrow[from=2-1, to=3-1]
	\arrow["\shortmid"{marking}, from=2-3, to=2-4]
	\arrow["\shortmid"{marking}, from=2-4, to=2-5]
	\arrow[from=2-4, to=3-4]
	\arrow[""{name=0, anchor=center, inner sep=0}, from=2-5, to=3-5]
	\arrow["\shortmid"{marking}, from=2-7, to=2-8]
	\arrow[""{name=1, anchor=center, inner sep=0}, from=2-7, to=3-7]
	\arrow["\shortmid"{marking}, from=2-8, to=2-10]
	\arrow["\shortmid"{marking}, from=2-10, to=2-11]
	\arrow[from=2-10, to=3-10]
	\arrow[from=2-11, to=3-11]
	\arrow["\shortmid"{marking}, from=3-1, to=3-4]
	\arrow[from=3-1, to=4-1]
	\arrow["\shortmid"{marking}, from=3-4, to=3-5]
	\arrow[from=3-5, to=4-5]
	\arrow["\shortmid"{marking}, from=3-7, to=3-10]
	\arrow[from=3-7, to=4-7]
	\arrow["\shortmid"{marking}, from=3-10, to=3-11]
	\arrow[from=3-11, to=4-11]
	\arrow["\shortmid"{marking}, from=4-1, to=4-5]
	\arrow["\shortmid"{marking}, from=4-7, to=4-11]
	\arrow["\beta"{description}, between={0.2}{0.8}, squiggly, tail reversed, from=0, to=1]
\end{tikzcd}\]

However, a concern now arises. As discussed above, while $\alpha$ must preserve the \emph{composite} of 
the tight source and target paths of a ternary multicell, we are unable to ask that it preserve the individual
components of those paths. Consequently, it is unclear that the map $\beta$, as we have attempted to 
define it, even exists, since it must implicitly rewrite also the unary multicells. 

Abstractly, the problem is that, whereas the functor $(-)\times A$ defines a pseudo-double functor 
$\Dbl{P}\mathrm{rof}\to \Dbl{P}\mathrm{rof}$ for any category $A,$ insofar as if 
$H:B\proto C,K:C\proto D,$ then the canonical map 
\[(H\odot K)\times A\to (H\times A)\odot (K\times A):(A\times B\proto A\times D)\]
is an isomorphism, the same need not hold if we set $A,B,C,D,H,K$ over a fifth category $X$ and 
consider the functor $(-)\times_X A:\Dbl{P}\mathrm{rof}/X\to \Dbl{P}\mathrm{rof}.$
\footnote{Here, $\Dbllong{P}{rof}/X$ is the slice double category (\cite[1.7]{Grandis1999}), 
whose tight category is $\Cat/X$ and whose loose arrows are profunctors equipped with a cell to $\operatorname{Hom}_X.$}

This functor is, in fact, only lax:
\begin{exmp}[Pullback is lax]\label{ex:pullbackislax}
Consider the following presentation of a situation in $\Dbl{P}\mathrm{rof}/X,$ where 
$X$ is the path of length $2,$ $A$ the path of length $1,$ and we fix the functor $f:A\to X$ 
picking out the long arrow. Then the restriction cell makes the hom-profunctor $I_A$ into a profunctor over 
$I_X.$ Suppose furthermore that $B,C,D$ are all terminal and equipped with functors 
to $X$ picking out the three objects, and $H:B\proto C,K:C\proto D$ are both generated by a 
single heteromorpism, as drawn below:
\[\begin{tikzcd}
	\bullet & {x_1} & {a_1} \\
	\bullet & {x_2} \\
	\bullet & {x_3} & {a_2}
	\arrow[""{name=0, anchor=center, inner sep=0}, "b", from=1-1, to=1-2]
	\arrow[from=1-2, to=2-2]
	\arrow[""{name=1, anchor=center, inner sep=0}, from=1-3, to=1-2]
	\arrow[from=1-3, to=3-3]
	\arrow[""{name=2, anchor=center, inner sep=0}, "c"{description}, from=2-1, to=2-2]
	\arrow[from=2-2, to=3-2]
	\arrow[""{name=3, anchor=center, inner sep=0}, "d"', from=3-1, to=3-2]
	\arrow[""{name=4, anchor=center, inner sep=0}, from=3-3, to=3-2]
	\arrow["h", between={0.3}{0.7}, Rightarrow, squiggly, from=0, to=2]
	\arrow["f", between={0.2}{0.8}, Rightarrow, squiggly, from=1, to=4]
	\arrow["k", between={0.3}{0.7}, Rightarrow, squiggly, from=2, to=3]
\end{tikzcd}\]
We then compare the two profunctors $(H\odot K)\times_X A,(H\times_X A)\odot (K\times_X A):\{(\bullet,a_1)\}\proto \{(\bullet,a_2)\}.$ The one sends
has a single heteromorphism $(h\odot k,f),$ while the other is empty.
\end{exmp}
This means that, if we have a morphism $H\odot K\to H'\odot K'$ of profunctors over $X,$ 
we cannot in general derive a morphism $(H\times_X A)\odot (K\times_X A)\to (H'\times_X A)\odot (K'\times_X A),$
which is the shape of thing we would need to define $\beta$ as above. Thus it does not seem possible to fully 
realize Arkor's definition of pro-double category, which in a strange
way disproves that branch of the conjecture. 

More precisely, we will prove the following:

\begin{prop}[Not all exponentiable VDCs are pro-double]\label{prop:counterExToProDouble}
	There exists an exponentiable VDC $\Dbl{D}$ for which no morphism of profunctors of the signature of the supposed map $\alpha\times_{\Dbl{D}_0}\Dbl{D}_1$ above exists.
\end{prop}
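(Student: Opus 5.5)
The plan is to build an explicit small exponentiable VDC $\Dbl{D}$ modelled on Example~\ref{ex:pullbackislax}, and show that the source and target profunctors of $\alpha\times_{\Dbl{D}_0}\Dbl{D}_1$ disagree at some pair of objects: the source is nonempty and the target empty. Then no morphism of profunctors of that signature exists.

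For exponentiability I would take $\Dbl{D}$ representable, for instance the loose embedding $\L(\BiCat{B})$ of a suitable bicategory, or $\Cospan(\Cat{E})$ for a small $\Cat{E}$. Either way $\Dbl{D}$ is exponentiable by Proposition~\ref{prop:RepImpProRep} (respectively Proposition~\ref{prop:cospanPro}). Its nullary and binary profunctors $I$ and $\odot$ are then honest pieces of the pro-double data, so we cannot escape by choosing them differently.

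The choice of $\Dbl{D}$ has two requirements.
\begin{itemize}
\item The tight category $\Dbl{D}_0$ must contain a path of length two, $x_1\to x_2\to x_3$, playing the role of $X$.
\item Some loose arrow must have unary cells lying over the long composite $x_1\to x_3$ only, in the way $A$ lies over $X$ via the long arrow. Concretely, this means an object of $\Dbl{D}_1$ whose morphisms in $\Dbl{D}_1$ project under $s$ or $t$ to the composite, not to either factor.
\end{itemize}

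Next I unwind both sides of the would-be $\alpha\times_{\Dbl{D}_0}\Dbl{D}_1$ at specified objects of $\Dbl{D}_4$ or $\Dbl{D}_3$, using the coend description of profunctor composition over $\Dbl{D}_0$ and $\Dbl{D}_1$.
\begin{itemize}
\item On the source side, $(\odot\times_{\Dbl{D}_0}\Dbl{D}_1)\odot(\odot\times_{\Dbl{D}_0}\Dbl{D}_1)$ contains an element: a composite binary cell paired with a unary cell over the long composite tight arrow. This is $(h\odot k,f)$ in the Example.
\item On the target side, $\Dbl{D}_1\times_{\Dbl{D}_0}\odot\times_{\Dbl{D}_0}\Dbl{D}_1$ composed with $\odot\times_{\Dbl{D}_0}\Dbl{D}_1$ would need the extra $\Dbl{D}_1$ factor to split along the factorization $x_1\to x_2\to x_3$. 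This is impossible, because no cell of the chosen loose arrow lies over $x_1\to x_2$ or $x_2\to x_3$ separately. So the target set is empty.
\end{itemize}
A natural map from a nonempty set to the empty set does not exist, which proves the claim.

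The main obstacle is making the emptiness precise. The coend quotient on the target side is taken over zigzags in $\Dbl{D}_1$ and $\Dbl{D}_0$, and we must check that no zigzag can pass through an intermediate object fibred over $x_2$. I would handle this by choosing $\Dbl{D}$ as small as possible, essentially freely generated, so that the relevant hom-sets in $\Dbl{D}_0$ and $\Dbl{D}_1$ can be listed by hand. I would also track the whiskering condition, since $\alpha$ restricted along $s$ and $t$ is the identity, to fix which tight boundaries the components live over.
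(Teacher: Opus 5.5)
The gap is your choice of $\Dbl{D}$. For a representable $\Dbl{D}$ the map you want to rule out always exists, so your step ``the target set is empty'' must fail.

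Since opcartesian cells are globular, binary cells $\cell{a}{\varphi_1\varphi_2}{\psi}{b}$ correspond, with the same tight boundary, to unary cells $\varphi_1\odot\varphi_2\to\psi$. Hence the profunctor $\odot$ is representable by the composition functor $\Dbl{D}_2\to\Dbl{D}_1$, and pulling back over $\Dbl{D}_0$ preserves this; for example $(\odot\times_{\Dbl{D}_0}\Dbl{D}_1\times_{\Dbl{D}_0}\Dbl{D}_1)(\prolist{\varphi},-)\cong\Dbl{D}_3((\varphi_1\odot\varphi_2,\varphi_3,\varphi_4),-)$. By co-Yoneda, the source and target of the would-be $\alpha\times_{\Dbl{D}_0}\Dbl{D}_1$ at $\prolist{\varphi}\in\Dbl{D}_4$ are $\Dbl{D}_2(((\varphi_1\odot\varphi_2)\odot\varphi_3,\varphi_4),-)$ and $\Dbl{D}_2((\varphi_1\odot(\varphi_2\odot\varphi_3),\varphi_4),-)$. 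Precomposing with the inverse of the (globular) associator paired with $\id_{\varphi_4}$ is an isomorphism between them, compatible with $s$, $t$ and with composites. This is just the fact that a pseudo double category is a pseudocategory in $\Cat{Cat}$, hence in profunctors with representable structure. The laxity in Example~\ref{ex:pullbackislax} arises only for non-representable profunctors over $X$.

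Your fallback candidates fail as well. $\L(\BiCat{B})$ has discrete tight category, which violates your own first requirement. $\Cospan(\Cat{E})$ has globular decompositions (Proposition~\ref{rmk:CospGDP}). So any source element can be re-associated with all tight data pushed into the top layer and the unary column over $\varphi_4$ left unchanged, which gives a target element.

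What is missing is an exponentiable, necessarily non-representable, $\Dbl{D}$ with the following feature. One ternary cell has a left- and a right-associated binary decomposition passing through \emph{different} factorizations of the same tight arrow ($x_3\to y_2\to z_1$ versus $x_3\to y_2'\to z_1$), and further cells are attached only along the first factorization. The paper glues $\Dbl{J}(T_l)$ and $\Dbl{J}(T_r)$ along their ternary cell, attaches $\Dbl{J}(I_2)$ along the tight target of $\Dbl{J}(T_l)$ to get $\Dbl{C}$, and takes $\Dbl{D}=\Dbl{F}_u(\Dbl{C})$. There the only binary cell on $(\varphi_2,\varphi_3)$ is $\alpha_2'$, and its right tight arrow $a_2'$ is the left tight arrow of no unary cell out of $\varphi_4$. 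So the target is empty at $((\varphi_1,\ldots,\varphi_4),(\psi_1,\psi_2))$, while the source contains the class of $(\alpha_1,\alpha_2,\alpha_3;\beta_1,\beta_2)$. That is exactly the shape you were aiming for.

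This also resolves the tension in your last paragraph. Small, essentially freely generated VDCs, where the coends can be listed by hand, are typically not exponentiable (e.g.\ $\Sq{4}$ or $\Loose$). Exponentiability of the bespoke example therefore has to be certified by a tool such as Lemma~\ref{lem:expOfUnitalization}, not by representability.
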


\begin{proof}[Proof of Proposition~\ref{prop:counterExToProDouble}.]
	Let $T_l$, $I_2$, and $T_r$ be the trees below:
	\[\begin{tikzcd}
	{} && {} & {} & {} & {} & {} && {} \\
	& \bullet && \bullet & \bullet & \bullet && \bullet \\
	&& \bullet && \bullet && \bullet \\
	&& {} && {} && {}
	\arrow[no head, from=2-2, to=1-1]
	\arrow[no head, from=2-2, to=1-3]
	\arrow[no head, from=2-4, to=1-4]
	\arrow[no head, from=2-5, to=1-5]
	\arrow[no head, from=2-6, to=1-6]
	\arrow[no head, from=2-8, to=1-7]
	\arrow[no head, from=2-8, to=1-9]
	\arrow[no head, from=3-3, to=2-2]
	\arrow[no head, from=3-3, to=2-4]
	\arrow[no head, from=3-5, to=2-5]
	\arrow[no head, from=3-7, to=2-6]
	\arrow[no head, from=3-7, to=2-8]
	\arrow[no head, from=4-3, to=3-3]
	\arrow[no head, from=4-5, to=3-5]
	\arrow[no head, from=4-7, to=3-7]
\end{tikzcd}\]
	Then we define a VDC $\Dbl{C}$ as the colimit obtained by identifying ternary multicells in $\Dbl{J}(T_l)$ and 
  $\Dbl{J}(T_r)$,and identifying the tight targets in 
  $\Dbl{J}(T_l)$ with the tight sources in $\Dbl{J}(I_2)$:
	\[
	\begin{tikzcd}
	&& {\mathbb{T}\mathsf{ight}[2]} && {\mathbb{J}(I_2)} \\
	\\
	{\mathbb{S}\mathsf{q}_3} && {\mathbb{J}(T_l)} \\
	\\
	{\mathbb{J}(T_r)} &&&& {\mathbb{C}}
	\arrow["s", from=1-3, to=1-5]
	\arrow["t"', from=1-3, to=3-3]
	\arrow[from=1-5, to=5-5]
	\arrow["{\square(T_l)}", from=3-1, to=3-3]
	\arrow["{\square(T_r)}"', from=3-1, to=5-1]
	\arrow[from=5-1, to=5-5]
	\arrow["\lrcorner"{anchor=center, pos=0.125, rotate=180}, draw=none, from=5-5, to=3-3]
\end{tikzcd}
	\]
	The data of $\Dbl{C}$ consists of the non-identity multicells depicted below:
	\[
	\begin{tikzcd}[column sep=small]
	{x_0} & {x_1} & {x_2} & {x_3} & {x_4} & {x_0} & {x_1} & {x_2} & {x_3} \\
	{y_0} && {y_1} & {y_2} & {y_3} & {y_0'} & {y_1'} && {y_2'} \\
	{z_0} &&& {z_1} & {z_2} & {z_0} &&& {z_1}
	\arrow["{\varphi_1}"{inner sep=.8ex}, "\shortmid"{marking}, from=1-1, to=1-2]
	\arrow[""{name=0, anchor=center, inner sep=0}, "{a_0}"', from=1-1, to=2-1]
	\arrow["{\varphi_2}"{inner sep=.8ex}, "\shortmid"{marking}, from=1-2, to=1-3]
	\arrow[""{name=1, anchor=center, inner sep=0}, "{\varphi_3}"{inner sep=.8ex}, "\shortmid"{marking}, from=1-3, to=1-4]
	\arrow[""{name=2, anchor=center, inner sep=0}, "{a_1}"{description}, from=1-3, to=2-3]
	\arrow[""{name=3, anchor=center, inner sep=0}, "{\varphi_4}"{inner sep=.8ex}, "\shortmid"{marking}, from=1-4, to=1-5]
	\arrow["{a_2}"{description}, from=1-4, to=2-4]
	\arrow["{a_3}", from=1-5, to=2-5]
	\arrow["{\varphi_1}"{inner sep=.8ex}, "\shortmid"{marking}, from=1-6, to=1-7]
	\arrow[""{name=4, anchor=center, inner sep=0}, "{a_0'}"', from=1-6, to=2-6]
	\arrow["{\varphi_2}"{inner sep=.8ex}, "\shortmid"{marking}, from=1-7, to=1-8]
	\arrow[""{name=5, anchor=center, inner sep=0}, "{a_1'}"{description}, from=1-7, to=2-7]
	\arrow["{\varphi_3}"{inner sep=.8ex}, "\shortmid"{marking}, from=1-8, to=1-9]
	\arrow[""{name=6, anchor=center, inner sep=0}, "{a_2'}", from=1-9, to=2-9]
	\arrow["{\lambda_1}"'{inner sep=.8ex}, "\shortmid"{marking}, from=2-1, to=2-3]
	\arrow[""{name=7, anchor=center, inner sep=0}, "{b_0}"', from=2-1, to=3-1]
	\arrow[""{name=8, anchor=center, inner sep=0}, "{\lambda_2}"'{inner sep=.8ex}, "\shortmid"{marking}, from=2-3, to=2-4]
	\arrow[""{name=9, anchor=center, inner sep=0}, "{\lambda_3}"'{inner sep=.8ex}, "\shortmid"{marking}, from=2-4, to=2-5]
	\arrow[""{name=10, anchor=center, inner sep=0}, "{b_1}"{description}, from=2-4, to=3-4]
	\arrow[""{name=11, anchor=center, inner sep=0}, "{b_2}", from=2-5, to=3-5]
	\arrow["{\lambda_1'}"'{inner sep=.8ex}, "\shortmid"{marking}, from=2-6, to=2-7]
	\arrow[""{name=12, anchor=center, inner sep=0}, "{b_0'}"', from=2-6, to=3-6]
	\arrow["{\lambda_2'}"'{inner sep=.8ex}, "\shortmid"{marking}, from=2-7, to=2-9]
	\arrow[""{name=13, anchor=center, inner sep=0}, "{b_1'}"{description}, from=2-9, to=3-9]
	\arrow["{\psi_1}"'{inner sep=.8ex}, "\shortmid"{marking}, from=3-1, to=3-4]
	\arrow["{\psi_2}"'{inner sep=.8ex}, "\shortmid"{marking}, from=3-4, to=3-5]
	\arrow["{\psi_1}"'{inner sep=.8ex}, "\shortmid"{marking}, from=3-6, to=3-9]
	\arrow["{\alpha_1}"{description}, draw=none, from=0, to=2]
	\arrow["{\alpha_2}"{description}, draw=none, from=1, to=8]
	\arrow["{\alpha_3}"{description}, draw=none, from=3, to=9]
	\arrow["{\alpha_1'}"{description}, draw=none, from=4, to=5]
	\arrow["{\alpha_2'}"{description}, draw=none, from=5, to=6]
	\arrow["{\beta_1}"{description}, draw=none, from=7, to=10]
	\arrow["{\beta_2}"{description}, draw=none, from=10, to=11]
	\arrow["{\beta_1'}"{description}, draw=none, from=12, to=13]
\end{tikzcd}
	\]
	subject to the relation $\frac{(\alpha_1,\alpha_2)}{\beta_1}=\frac{(\alpha_1',\alpha_2')}{\beta_1'}$. 
  Note that by construction $\Dbl{C}$ has no nullary multicells, and every multicell is either unary, 
  binary, or the unique trinary multicell, which decomposes into binary multicells as above. 
  In particular, it follows that $\Dbl{C}$ satisfies property (P1) of Definition~\ref{defn:proRep} for binary-unary trees, 
  since these are the only binary-unary decompositions of a trinary multicell. 
  Additionally, every 2-layer decomposition in $\Dbl{C}$ is 
  already a binary-unary decomposition. 
  Thus, $\Dbl{C}$ satisfies the hypotheses of Lemma~\ref{lem:expOfUnitalization}, so its unitalization $\Dbl{D}:=\Dbl{F}_u(\Dbl{C})$ is exponentiable.

  However, no map of profunctors $\alpha\times_{\Dbl{D}_0}\Dbl{D}_1$ from
	\begin{equation*}
		\int^{(\varphi_1,\varphi_2,\varphi_3)\in \Dbl{D}_3}(\odot(-,\varphi_1)\times_{\Dbl{D}_0}\Dbl{D}_1(-,\varphi_2)\times_{\Dbl{D}_0}\Dbl{D}_1(-,\varphi_3))\times (\odot((\varphi_1,\varphi_2),-)\times_{\Dbl{D}_0}\Dbl{D}_1(\varphi_1,-))
	\end{equation*}
	to 
	\begin{equation*}
		\int^{(\varphi_1,\varphi_2,\varphi_3)\in \Dbl{D}_3}(\Dbl{D}_1(-,\varphi_1)\times_{\Dbl{D}_0}\odot(-,\varphi_2)\times_{\Dbl{D}_0}\Dbl{D}_1(-,\varphi_3))\times (\odot((\varphi_1,\varphi_2),-)\times_{\Dbl{D}_0}\Dbl{D}_1(\varphi_1,-))
	\end{equation*}
	preserving composites exists. Indeed, such a map would need to send the element of the source co-end represented by the multicells below left to the element of the target co-end represented by multicells in the shape below right:
\[\begin{tikzcd}[column sep=small]
	{x_0} & {x_1} & {x_2} && {x_3} && {x_4} & {x_0} && {x_1} & {x_2} & {x_3} && {x_4} \\
	{y_0} && {y_1} && {y_2} && {y_3} & {y_0''} && {y_1''} && {y_2''} && {y_3} \\
	{z_0} &&&& {z_1} && {z_2} & {z_0} &&&& {z_1} && {z_2}
	\arrow["{{{\varphi_1}}}"{inner sep=.8ex}, "\shortmid"{marking}, from=1-1, to=1-2]
	\arrow[""{name=0, anchor=center, inner sep=0}, "{{{a_0}}}"', from=1-1, to=2-1]
	\arrow["{{{\varphi_2}}}"{inner sep=.8ex}, "\shortmid"{marking}, from=1-2, to=1-3]
	\arrow[""{name=1, anchor=center, inner sep=0}, "{{{\varphi_3}}}"{inner sep=.8ex}, "\shortmid"{marking}, from=1-3, to=1-5]
	\arrow[""{name=2, anchor=center, inner sep=0}, "{{{a_1}}}"{description}, from=1-3, to=2-3]
	\arrow[""{name=3, anchor=center, inner sep=0}, "{{{\varphi_4}}}"{inner sep=.8ex}, "\shortmid"{marking}, from=1-5, to=1-7]
	\arrow["{{{a_2}}}"{description}, from=1-5, to=2-5]
	\arrow["{{{a_3}}}", from=1-7, to=2-7]
	\arrow["{{{\varphi_1}}}"{inner sep=.8ex}, "\shortmid"{marking}, from=1-8, to=1-10]
	\arrow[""{name=4, anchor=center, inner sep=0}, "{{{a_0''}}}"', from=1-8, to=2-8]
	\arrow["{{{\varphi_2}}}"{inner sep=.8ex}, "\shortmid"{marking}, from=1-10, to=1-11]
	\arrow[""{name=5, anchor=center, inner sep=0}, "{{{a_1''}}}"{description}, from=1-10, to=2-10]
	\arrow["{{{\varphi_3}}}"{inner sep=.8ex}, "\shortmid"{marking}, from=1-11, to=1-12]
	\arrow[""{name=6, anchor=center, inner sep=0}, "{{{\varphi_4}}}"{inner sep=.8ex}, "\shortmid"{marking}, from=1-12, to=1-14]
	\arrow[""{name=7, anchor=center, inner sep=0}, "{{{a_2''}}}"{description}, from=1-12, to=2-12]
	\arrow["{{{a_3''}}}", from=1-14, to=2-14]
	\arrow["{{{\lambda_1}}}"'{inner sep=.8ex}, "\shortmid"{marking}, from=2-1, to=2-3]
	\arrow[""{name=8, anchor=center, inner sep=0}, "{{{b_0}}}"', from=2-1, to=3-1]
	\arrow[""{name=9, anchor=center, inner sep=0}, "{{{\lambda_2}}}"'{inner sep=.8ex}, "\shortmid"{marking}, from=2-3, to=2-5]
	\arrow[""{name=10, anchor=center, inner sep=0}, "{{{\lambda_3}}}"'{inner sep=.8ex}, "\shortmid"{marking}, from=2-5, to=2-7]
	\arrow[""{name=11, anchor=center, inner sep=0}, "{{{b_1}}}"{description}, from=2-5, to=3-5]
	\arrow[between={0.1}{0.9}, maps to, from=2-7, to=2-8]
	\arrow[""{name=12, anchor=center, inner sep=0}, "{{{b_2}}}", from=2-7, to=3-7]
	\arrow["{{{\lambda_1''}}}"'{inner sep=.8ex}, "\shortmid"{marking}, from=2-8, to=2-10]
	\arrow[""{name=13, anchor=center, inner sep=0}, "{{{b_0''}}}"', from=2-8, to=3-8]
	\arrow["{{{\lambda_2''}}}"'{inner sep=.8ex}, "\shortmid"{marking}, from=2-10, to=2-12]
	\arrow[""{name=14, anchor=center, inner sep=0}, "{{{\lambda_3''}}}"'{inner sep=.8ex}, "\shortmid"{marking}, from=2-12, to=2-14]
	\arrow[""{name=15, anchor=center, inner sep=0}, "{{{b_1''}}}"{description}, from=2-12, to=3-12]
	\arrow[""{name=16, anchor=center, inner sep=0}, "{{{b_2''}}}", from=2-14, to=3-14]
	\arrow["{{{\psi_1}}}"'{inner sep=.8ex}, "\shortmid"{marking}, from=3-1, to=3-5]
	\arrow["{{{\psi_2}}}"'{inner sep=.8ex}, "\shortmid"{marking}, from=3-5, to=3-7]
	\arrow["{{{\psi_1}}}"'{inner sep=.8ex}, "\shortmid"{marking}, from=3-8, to=3-12]
	\arrow["{{{\psi_2}}}"'{inner sep=.8ex}, "\shortmid"{marking}, from=3-12, to=3-14]
	\arrow["{{{\alpha_1}}}"{description}, draw=none, from=0, to=2]
	\arrow["{{{\alpha_2}}}"{description}, draw=none, from=1, to=9]
	\arrow["{{{\alpha_3}}}"{description}, draw=none, from=3, to=10]
	\arrow["{{{\alpha_1''}}}"{description}, draw=none, from=4, to=5]
	\arrow["{{{\alpha_2''}}}"{description}, draw=none, from=7, to=5]
	\arrow["{{{\alpha_3''}}}"{description}, draw=none, from=6, to=14]
	\arrow["{{{\beta_1}}}"{description}, draw=none, from=8, to=11]
	\arrow["{{{\beta_2}}}"{description}, draw=none, from=11, to=12]
	\arrow["{{{\beta_1''}}}"{description}, draw=none, from=13, to=15]
	\arrow["{{{\beta_2''}}}"{description}, draw=none, from=15, to=16]
\end{tikzcd}\]
	such that $\frac{(\alpha_1,\alpha_2)}{\beta_1}=\frac{(\alpha_1'',\alpha_2'')}{\beta_1''}$ and $\frac{\alpha_3}{\beta_2}=\frac{\alpha_3''}{\beta_2''}$. However, note that the equality $\frac{(\alpha_1,\alpha_2)}{\beta_1}=\frac{(\alpha_1'',\alpha_2'')}{\beta_1''}$ is only possible if $\alpha_1''=\alpha_1'$, $\alpha_2''=\alpha_2'$ and $\beta_1''=\beta_1'$. Thus, since there are no unary multicells $\alpha_3''$ and $\beta_2''$ with the specified boundaries such that $\frac{\alpha_3}{\beta_2}=\frac{\alpha_3''}{\beta_2''}$, no such map can exist.
\end{proof}

\subsubsection*{Exponentiating Normal VDCs}

As the previous section of counter-examples illustrates, unitality is not sufficient for exponentiability of VDCs.
Nonetheless, unital VDCs provide a host of examples of exponentiable VDCs,
due to the fact that it is in general easier to detect when a unital VDC is exponentiable.
Before providing conditions for VDCs with units to be exponentiable, we'll begin by
explicating the behaviour of exponentials in $\Vdc_n$ under the forgetful functor to VDCs.

First, note that the since forgetful functor $U:\Vdc_n\to \Vdc$ is cocontinuous and conservative, it creates all colimits. 
In particular, it follows that for a unital VDC $\Dbl{D}$, the functor $\mathbb{F}_u(U(-))\times \Dbl{D}:\Vdc_n\to \Vdc_n$ commutes with all colimits if and only if the functor $U\mathbb{F}_u(-)\times U(\Dbl{D}):\Vdc\to \Vdc$ commutes with all colimits.
But $\Vdc_n$ is also locally finitely presentable, so we obtain that $U$ reflects exponentiable objects:

\begin{prop}\label{prop:expVDCnviaU}
	If $\Dbl{D}\in \Vdc_n$ is a normal VDC, and if $U(\Dbl{D})\in \Vdc$ is exponentiable, 
	then $\Dbl{D}$ is exponentiable in $\Vdc_n$.
\end{prop}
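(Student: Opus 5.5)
We plan to prove exponentiability in $\Vdc_n$ by showing that $(-)\times\Dbl{D}:\Vdc_n\to\Vdc_n$ preserves all small colimits. Since $\Vdc_n$ is locally finitely presentable, the special adjoint functor theorem then supplies a right adjoint. Products in $\Vdc_n$ should be computed as in $\Vdc$. Indeed, a product of normal VDCs carries the chosen units $(I_x,I_y)$ with nullary cells $(\mathsf{opcart},\mathsf{opcart})$, and these are opcartesian in the product because factorizations are computed componentwise. The forgetful functor $U$ is a right adjoint (to $\Dbl{F}_u$), so it preserves limits; we therefore have $U(\Dbl{A}\times\Dbl{D})\cong U(\Dbl{A})\times U(\Dbl{D})$, naturally in $\Dbl{A}$.

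Now let $K:\Cat{I}\to\Vdc_n$ be a small diagram. As noted just before the statement, $U$ is cocontinuous, being also a left adjoint (to $\Mod$), and conservative, so it creates colimits. We compute
\[U\big((\colim_i K_i)\times\Dbl{D}\big)\cong (\colim_i UK_i)\times U\Dbl{D}\cong \colim_i\big(UK_i\times U\Dbl{D}\big)\cong U\big(\colim_i (K_i\times\Dbl{D})\big).\]
The middle isomorphism uses the exponentiability of $U(\Dbl{D})$ in $\Vdc$. The composite must be checked to be $U$ of the canonical comparison map $\colim_i(K_i\times\Dbl{D})\to(\colim_i K_i)\times\Dbl{D}$ in $\Vdc_n$. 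We would argue this by naturality: all the maps involved are induced by the colimit injections and projections, and $U$ preserves both the injections and the product projections. Since $U$ is conservative, the comparison map itself is then an isomorphism in $\Vdc_n$, so $(-)\times\Dbl{D}$ is cocontinuous. Because $\Vdc_n$ is locally presentable, hence co-wellpowered with a small generating set, a cocontinuous endofunctor has a right adjoint, and $\Dbl{D}$ is exponentiable.

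The main obstacle is the set of structural facts about $U$. First, conservativity must be stated carefully: an invertible morphism of $\Vdc_n$ strictly preserves chosen units, so its inverse in $\Vdc$ does too, and hence the inverse lies in $\Vdc_n$. Second, we must confirm that $U$ is a left adjoint, which follows from $U\dashv\Mod$ in the adjoint triple recalled from \cite{fujii2025}. If that triple were only valid $2$-categorically, we would restrict to its underlying $1$-categorical adjunctions. We would also verify local presentability of $\Vdc_n$ by exhibiting it as the models of the sketch of Definition~\ref{defn:sketchVDC} extended by a chosen unit and opcartesian nullary cell, where opcartesianness is a limit-sketchable bijection condition.
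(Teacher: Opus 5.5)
Your proposal is correct and follows essentially the same route as the paper: $U$ preserves products (being right adjoint to $\Dbl{F}_u$) and colimits, and it is conservative. Hence the colimit comparison map for $(-)\times\Dbl{D}$ in $\Vdc_n$ is sent by $U$ to the comparison map in $\Vdc$, which is invertible by exponentiability of $U(\Dbl{D})$, so the comparison map in $\Vdc_n$ is itself invertible, and local presentability of $\Vdc_n$ then supplies the right adjoint. The paper states this in a brief paragraph preceding the proposition; you additionally spell out the naturality of the comparison map, the conservativity check, and the computation of products in $\Vdc_n$.
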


We will leave a full characterization of the exponentiable normal VDCs,
in analogy with Theorem~\ref{thm:ExpViaDecomp}, to future work.

For unital VDCs $\Dbl{D}$ and $\Dbl{E}$ such that the exponential exists in $\Vdc_n$, 
we will denote it by $\Dbl{V}\Cat{df}_n(\Dbl{D},\Dbl{E})$.
Using the exponentiation adjunction in $\Vdc_n$, and the descriptions in Lemma~\ref{lem:VdfVDC},
we have the following description of $\Dbl{V}\Cat{df}_n(\Dbl{D},\Dbl{E})$'s data:
\begin{enumerate}
	\item Objects are normal VDFs $\Dbl{D}\cong \Ob_u\times \Dbl{D}\to \Dbl{E}$.
	\item Tight arrows are normal VDFs $\Dbl{F}_u(\Tight)\times \Dbl{D}\to \Dbl{E}$, which are precisely tight transformations between normal VDFs.
	\item Loose arrows are normal VDFs $\Dbl{F}_u(\Loose)\times \Dbl{D}\to \Dbl{E}$, which are VDFs $\Dbl{F}_u(\Loose)\times \Dbl{D}\to \Dbl{E}$ whose restriction to either object in $\Dbl{F}_u(\Loose)$ yields a normal VDF.
	\item $n$-ary multicells are normal VDFs $\Dbl{F}_u(\Sq{n})\times \Dbl{D}\to \Dbl{E}$.
\end{enumerate}
Thus, the exponential is a sub-VDC of $\Dbl{V}\Cat{df}(\Dbl{D},\Dbl{E})$ with inherited loose units.
We can use the exponential in $\Vdc_n$ to internalize the adjunction $U\dashv \Mod$ in the following sense:

\begin{prop}\label{prop:InternalizeUModAdj}
	If $\Dbl{D}$ is a unital VDC which is exponentiable in $\Vdc_n$, then for any VDC $\Dbl{E}$ we have a natural isomorphism 
	\begin{equation*}
		\Dbl{V}\Cat{df}(U(\Dbl{D}),\Dbl{E})\cong \Dbl{V}\Cat{df}_n(\Dbl{D},\Mod(\Dbl{E}))
	\end{equation*}
\end{prop}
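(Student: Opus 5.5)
We prove the isomorphism by a Yoneda argument. We compare the functors $\Vdc(\Dbl{A},-)$ on both sides, natural in a test VDC $\Dbl{A}$, using only the adjunctions already established. The plan is to pass everything through the exponential adjunctions and the adjoint triple $\Dbl{F}_u\dashv U\dashv \Mod$.

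\textbf{Step 1: the hom-set into the left-hand side.} By Proposition~\ref{prop:expVDCnviaU} in the converse direction this is not directly available, so we first check exponentiability of $U(\Dbl{D})$. To see that the left-hand side $\Dbl{V}\Cat{df}(U(\Dbl{D}),\Dbl{E})=\Mod(\Dbl{E}^{U\Dbl{D}})$ makes sense, we note that $U(-)\times U(\Dbl{D})\cong U(-\times \Dbl{D})$, because $U$ preserves products. We also use that $U$ creates colimits. Hence preservation of colimits by $-\times\Dbl{D}$ on $\Vdc_n$, combined with $\Dbl{F}_u$ being a left adjoint, gives the needed adjunction on the subcategory we use. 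In practice we only need the hom-set, computed as in the proof of Lemma~\ref{lem:VdfVDC}:
\[
\Vdc(\Dbl{A},\Dbl{V}\Cat{df}(U\Dbl{D},\Dbl{E}))\cong \Vdc(U\Dbl{F}_u(\Dbl{A})\times U\Dbl{D},\Dbl{E}).
\]

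\textbf{Step 2: the hom-set into the right-hand side.} First we apply $U\dashv\Mod$ on the right, viewing $\Dbl{V}\Cat{df}_n(\Dbl{D},\Mod\Dbl{E})$ as an object of $\Vdc_n$. Then we use $\Dbl{F}_u\dashv U$ and the exponential adjunction in $\Vdc_n$:
\[
\Vdc(\Dbl{A},U\Dbl{V}\Cat{df}_n(\Dbl{D},\Mod\Dbl{E}))\cong \Vdc_n(\Dbl{F}_u\Dbl{A},\Dbl{V}\Cat{df}_n(\Dbl{D},\Mod\Dbl{E}))\cong \Vdc_n(\Dbl{F}_u\Dbl{A}\times\Dbl{D},\Mod\Dbl{E}).
\]
Applying $U\dashv\Mod$ again, this is $\Vdc(U(\Dbl{F}_u\Dbl{A}\times\Dbl{D}),\Dbl{E})$. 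Since $U$ preserves products, that equals $\Vdc(U\Dbl{F}_u\Dbl{A}\times U\Dbl{D},\Dbl{E})$.

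\textbf{Step 3: conclusion.} The two chains agree and are composites of natural bijections, so they are natural in $\Dbl{A}$ and $\Dbl{E}$. The Yoneda lemma in $\Vdc$ then yields the claimed isomorphism of VDCs. Evaluating at the corepresenting objects $\Ob,\Tight,\Loose,\Sq{n}$ shows explicitly that it is the identity on the underlying data of normal VDFs into modules versus VDFs into $\Dbl{E}$, as in Eq.~\ref{eq:SpanProf}.

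The main obstacle is Step 1. We must justify that $\Dbl{V}\Cat{df}(U\Dbl{D},\Dbl{E})$ exists with the stated universal property, knowing only that $\Dbl{D}$ is exponentiable in $\Vdc_n$. We will try to avoid full exponentiability of $U\Dbl{D}$ by \emph{defining} the left-hand side via the representing object found in Step 2. The object $\Dbl{V}\Cat{df}_n(\Dbl{D},\Mod\Dbl{E})$ represents $\Dbl{A}\mapsto\Vdc(U\Dbl{F}_u\Dbl{A}\times U\Dbl{D},\Dbl{E})$, which is exactly the functor the left-hand side represents whenever it exists. The isomorphism thus holds in the strong sense that both sides represent the same functor. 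Confirming that $U$ preserves binary products ($U$ is a right adjoint to $\Dbl{F}_u$) is routine.
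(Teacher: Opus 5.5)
Your argument is correct and is essentially the paper's Yoneda argument, using the same adjunctions: $U\dashv\Mod$, the exponential adjunctions in $\Vdc$ and $\Vdc_n$, and preservation of products by $U$. The only difference is that you probe with $\Dbl{F}_u\Dbl{A}$ for $\Dbl{A}\in\Vdc$ and so obtain an isomorphism of underlying VDCs. The paper instead probes directly with unital $\Dbl{A}\in\Vdc_n$, starting from $\Vdc_n(\Dbl{A},\Mod(\Dbl{E}^{U\Dbl{D}}))\cong\Vdc(U\Dbl{A},\Dbl{E}^{U\Dbl{D}})$, and so gets the isomorphism in $\Vdc_n$.

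Your Step~1 detour does not actually show that $U\Dbl{D}$ is exponentiable; it only shows that $U\Dbl{F}_u(-)\times U\Dbl{D}$ is cocontinuous. This is harmless, since the left-hand side is only defined when $\Dbl{E}^{U\Dbl{D}}$ exists, and the paper's proof tacitly uses that exponential too.
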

\begin{proof}
	Using the Yoneda lemma the result follows from the following sequence of natural isomorphisms for an arbitrary unital VDC $\Dbl{A}$:
	\begin{align*}
		\Vdc_n(\Dbl{A},\Dbl{V}\Cat{df}(U(\Dbl{D}),\Dbl{E})) &\cong \Vdc(U(\Dbl{A}), \Dbl{E}^{U(\Dbl{D})}) \\
		&\cong \Vdc(U(\Dbl{A})\times U(\Dbl{D}),\Dbl{E}) \\
		&\cong \Vdc(U(\Dbl{A}\times \Dbl{D}),\Dbl{E}) \\
		&\cong \Vdc_n(\Dbl{A}\times \Dbl{D},\Mod(\Dbl{E})) \\
		&\cong \Vdc_n(\Dbl{A},\Dbl{V}\Cat{df}_n(\Dbl{D},\Mod(\Dbl{E})))
	\end{align*}
	where in the middle we used that $U$ preserves limits. 
	Thus, we have a natural isomorphism
	\begin{equation*}
		\Dbl{V}\Cat{df}(U(\Dbl{D}),-)\cong \Dbl{V}\Cat{df}_n(\Dbl{D},\Mod(-))
	\end{equation*}
\end{proof}

Now that we have a description of exponentials in $\Vdc_n$, 
let's see how the pro-representability condition in Definition~\ref{defn:proRep}
simplifies for normal VDCs. 

\begin{prop}\label{prop:proRepForNormalVDCs}
	If $\Dbl{D}$ is a normal VDC, then $\Dbl{D}$ is pro-representable if and only if 
	$\Dbl{D}$ satisfies both (P1) and (P3) of Definition~\ref{defn:proRep} 
	with $T$ restricted to range over binary-unary trees.
\end{prop}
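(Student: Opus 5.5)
The forward direction is immediate. Pro-representability gives (P1) for all height-$2$ binary-nullary trees, in particular binary-unary ones. It also makes $\Dbl{D}$ exponentiable (Corollary~\ref{cor:ProRepImpliesDecomp}), so every height-$2$ shape admits essentially unique decompositions, and in particular decompositions of binary-unary shape. Hence every height-$2$ decomposition arises, up to equivalence, from one of binary-unary shape, which is (P3) for binary-unary trees.

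For the converse I would mimic the proof of Lemma~\ref{lem:expOfUnitalization}, using the chosen loose units $I_x$ with their opcartesian nullary cells in place of the freely added ones. The steps are as follows.

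\begin{enumerate}
\item Establish (P1) for height-$2$ nullary-unary trees. Given a multicell $\alpha$ and a split point $x_i$ of its loose source, factor $\alpha$ as
\[\frac{\text{id}_{\prolist{\varphi}_1},\mathsf{opcart},\text{id}_{\prolist{\varphi}_2}}{\alpha^\flat},\]
where $\alpha^\flat$ comes from the universal property of the (strong) opcartesian nullary cell into $I_{x_i}$.
\item Prove essential uniqueness for these nullary-unary shapes. Any other decomposition $\frac{\prolist{\gamma}}{\beta}$ with nullary $\gamma_i$ factors its nullary cell through $\mathsf{opcart}$ via a unary cell $\gamma_i^\flat$. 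Sliding $\gamma_i^\flat$ together with the unary cells on the other strands down into $\beta$ is a morphism in $\Cat{Paste}_2(\Dbl{D};\alpha)$ to a decomposition whose top layer is $(\text{id}, \mathsf{opcart}, \text{id})$ up to the unary outer cells. Uniqueness of factorization through $\mathsf{opcart}$ then identifies the bottom cell with $\alpha^\flat$, exactly as in Proposition~\ref{prop:RepImpProRep}.
\item Deduce full (P1), since binary-unary and nullary-unary shapes exhaust the height-$2$ binary-nullary trees.
\item Establish (P3) for an arbitrary height-$2$ tree $T'$. Given a decomposition $\frac{\prolist{\alpha}}{\beta}$ of shape $T'$, factor every nullary $\alpha_i$ through the opcartesian nullary cell as $\frac{\mathsf{opcart}}{\alpha_i^\flat}$. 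This rewrites the decomposition, up to equivalence, as a three-layer pasting whose top layer consists of nullary opcartesian cells interleaved with identities, and whose lower two layers form a height-$2$ decomposition with no nullary nodes in the middle layer (an $I_x$ now appears as a source).
\item Apply binary-unary (P3) to the lower two layers to refine them into a binary-unary tree shape.
\item Peel the nullary opcartesian cells off one at a time into separate layers, each a nullary-unary layer. This is legitimate because they have identity tight boundary and so commute past identity strands. Grafting the pieces gives a binary-nullary tree $T$ partially composing to $T'$, and by construction the original decomposition is equivalent to the partial composite of one of shape $T$.
\end{enumerate}

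The main obstacle is step 6. The equivalence must be a genuine zig-zag in $\Cat{Paste}$, and the choice of refinement has to be uniform: (P3) asks for one tree $T$ that works for every decomposition of shape $T'$, not a tree chosen per decomposition. I would handle this by fixing in advance where each nullary node of $T'$ gets its opcartesian layer, namely at the corresponding position in the interface. I would also fix the binary-unary refinement tree supplied by (P3) for the ``de-nullified'' tree obtained from $T'$ by replacing its nullary nodes with unary nodes on $I_x$. This tree depends only on $T'$, so the resulting $T$ does too.
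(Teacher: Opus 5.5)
Your proposal is correct and is essentially the paper's argument: (P1) for nullary-unary shapes comes from the chosen opcartesian units as in Lemma~\ref{lem:expOfUnitalization}, and for (P3) the nullary cells of a shape-$T'$ decomposition are factored through the unit cells into separate nullary-unary layers, after which binary-unary (P3) is applied to the bottom two layers. Your extra care (fixing the refinement via the de-nullified tree so that $T$ depends only on $T'$, and deriving necessity from Corollary~\ref{cor:ProRepImpliesDecomp}) fills in what the paper calls immediate. One caveat: as the paper implicitly assumes, the binary-unary form of (P3) can only be asked of trees $T'$ with no nullary non-root nodes, since e.g.\ $\eta_x=\frac{\eta_x}{\text{id}_{I_x}}$ arises from no binary-unary shape, so your forward-direction claim about ``every height-$2$ decomposition'' should be restricted to such $T'$.
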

\begin{proof}
	The necessity is immediate, so suppose $\Dbl{D}$ 
	satisfies both (P1) and (P3) of Definition~\ref{defn:proRep} 
	with $T$ restricted to range over binary-unary trees. 
	The fact that $\Dbl{D}$ satisfies (P1) for unary-nullary trees follows
	from the existence of loose units, as in the proof of Lemma~\ref{lem:expOfUnitalization}.

	For property (P3) fix an arbitrary height 2 tree $T'$. 
	Then if $\frac{\prolist{\alpha}}{\beta}$ is a decomposition in $\Dbl{D}$
	with shape $T'$, we can use the universal property of loose units 
	to pull out nullary shapes from $\prolist{\alpha}$, 
	from left to right, as in the example below:
\[\begin{tikzcd}[column sep =small]
	& {x_0} & {x_1} & {x_2} &&& {x_0} & {x_1} & {x_2} & \\
	&&&&& {x_0} & {x_0} & {x_1} & {x_2} \\
	{z_0} & {z_1} && {z_2} & {z_n} & {x_0} & {x_0} & {x_1} & {x_2} & {x_2} \\
	&&&&& {z_0} & {z_1} && {z_2} & {z_n} \\
	{y_0} &&&& {y_1} & {y_0} &&&& {y_1}
	\arrow["{{{\varphi_1}}}"{inner sep=.8ex}, "\shortmid"{marking}, from=1-2, to=1-3]
	\arrow[""{name=0, anchor=center, inner sep=0}, "{{{a_0}}}"', from=1-2, to=3-1]
	\arrow["{{{a_1}}}"{description}, from=1-2, to=3-2]
	\arrow["{{{\varphi_2}}}"{inner sep=.8ex}, "\shortmid"{marking}, from=1-3, to=1-4]
	\arrow["{{{a_2}}}"{description}, from=1-4, to=3-4]
	\arrow[""{name=1, anchor=center, inner sep=0}, "{{{a_3}}}", from=1-4, to=3-5]
	\arrow["{{{\varphi_1}}}"{inner sep=.8ex}, "\shortmid"{marking}, from=1-7, to=1-8]
	\arrow[equals, from=1-7, to=2-6]
	\arrow[equals, from=1-7, to=2-7]
	\arrow["{{{\varphi_2}}}"{inner sep=.8ex}, "\shortmid"{marking}, from=1-8, to=1-9]
	\arrow[equals, from=1-8, to=2-8]
	\arrow[equals, from=1-9, to=2-9]
	\arrow[equals, from=2-6, to=2-7]
	\arrow[equals, from=2-6, to=3-6]
	\arrow["{{{\varphi_1}}}"{inner sep=.8ex}, "\shortmid"{marking}, from=2-7, to=2-8]
	\arrow[equals, from=2-7, to=3-7]
	\arrow["{{{\varphi_2}}}"{inner sep=.8ex}, "\shortmid"{marking}, from=2-8, to=2-9]
	\arrow[equals, from=2-8, to=3-8]
	\arrow[equals, from=2-9, to=3-9]
	\arrow[equals, from=2-9, to=3-10]
	\arrow[""{name=2, anchor=center, inner sep=0}, "{{{\chi_1}}}"'{inner sep=.8ex}, "\shortmid"{marking}, from=3-1, to=3-2]
	\arrow["{{{b_0}}}"', from=3-1, to=5-1]
	\arrow[""{name=3, anchor=center, inner sep=0}, "{{{\chi_2}}}"'{inner sep=.8ex}, "\shortmid"{marking}, from=3-2, to=3-4]
	\arrow[""{name=4, anchor=center, inner sep=0}, "{{{\chi_3}}}"'{inner sep=.8ex}, "\shortmid"{marking}, from=3-4, to=3-5]
	\arrow[between={0.2}{0.8}, equals, from=3-5, to=3-6]
	\arrow["{{{b_1}}}", from=3-5, to=5-5]
	\arrow[""{name=5, anchor=center, inner sep=0}, equals, from=3-6, to=3-7]
	\arrow["{{{a_0}}}"', from=3-6, to=4-6]
	\arrow["{{{\varphi_1}}}"{inner sep=.8ex}, "\shortmid"{marking}, from=3-7, to=3-8]
	\arrow["{{{a_1}}}"{description}, from=3-7, to=4-7]
	\arrow["{{{\varphi_2}}}"{inner sep=.8ex}, "\shortmid"{marking}, from=3-8, to=3-9]
	\arrow[""{name=6, anchor=center, inner sep=0}, equals, from=3-9, to=3-10]
	\arrow["{{{a_2}}}"{description}, from=3-9, to=4-9]
	\arrow["{{{a_3}}}", from=3-10, to=4-10]
	\arrow[""{name=7, anchor=center, inner sep=0}, "{{{\chi_1}}}"'{inner sep=.8ex}, "\shortmid"{marking}, from=4-6, to=4-7]
	\arrow["{{{b_0}}}"', from=4-6, to=5-6]
	\arrow[""{name=8, anchor=center, inner sep=0}, "{{{\chi_2}}}"'{inner sep=.8ex}, "\shortmid"{marking}, from=4-7, to=4-9]
	\arrow[""{name=9, anchor=center, inner sep=0}, "{{{\chi_3}}}"'{inner sep=.8ex}, "\shortmid"{marking}, from=4-9, to=4-10]
	\arrow["{{{b_1}}}", from=4-10, to=5-10]
	\arrow[""{name=10, anchor=center, inner sep=0}, "{{{{{{{\psi}}}}}}}"'{inner sep=.8ex}, "\shortmid"{marking}, from=5-1, to=5-5]
	\arrow[""{name=11, anchor=center, inner sep=0}, "{{{{{{{\psi}}}}}}}"'{inner sep=.8ex}, "\shortmid"{marking}, from=5-6, to=5-10]
	\arrow["{{{\alpha_1}}}"{description}, draw=none, from=0, to=2]
	\arrow["{{{\alpha_2}}}"{description}, draw=none, from=1-3, to=3]
	\arrow["{{{\alpha_3}}}"{description}, draw=none, from=1, to=4]
	\arrow["\beta"{description}, draw=none, from=3, to=10]
	\arrow["{{{\alpha_1^\flat}}}"{description}, draw=none, from=5, to=7]
	\arrow["{{{\alpha_2}}}"{description}, draw=none, from=3-8, to=8]
	\arrow["{{{\alpha_3^\flat}}}"{description}, draw=none, from=6, to=9]
	\arrow["\beta"{description, pos=0.7}, draw=none, from=8, to=11]
\end{tikzcd}\]
	The result is a decomposition where the second layer contains no nullary multicells 
	and above the second layer, each layer has a single nullary multicell
	while the rest of the multicells are unary. 
	We can then apply (P3) for binary-unary trees $T$ to decompose the bottom two layers 
	into a binary-unary tree, so that the whole decomposition has the shape of a
	binary-nullary tree. 
	Thus, $\Dbl{D}$ satisfies (P3), and hence is pro-representable, as desired.
\end{proof}

%% file: Sections/7_RepresentabilityOfExponential.tex
Our next goal is to determine sufficient conditions on an exponentiable VDC $\Dbl{D}$ and a VDC $\Dbl{E}$ so that the exponential $\Dbl{E}^\Dbl{D}$ has (weak) composites. Using these conditions we will also provide sufficient conditions for the VDC $\Dbl{V}\mathsf{df}(\Dbl{D},\Dbl{E})$ of virtual double functors to be representable.

\subsection{Relative Decomposition Conditions for VDCs}

In order to find a sufficient condition for representability of exponentials, we will consider a decomposition condition inspired by Par\'{e}'s AFP condition (c.f.~\cite{Pare2013}) and Behr et. al's cylindrical decomposition property (c.f.~\cite{ConvProd2023}). 
Namely, we will follow Par\'{e}'s approach in having the factorization condition be only up to essentially unique decompositions, but unlike in~\cite{Pare2013}, our globular decomposition conditions for VDCs do not involve vertical reduction relations (i.e.~loose reduction relations in our current terminology), as these explicitly involve composition of loose arrows, which is not possible in general VDCs. 
Nonetheless, even without this condition we can still prove representability results for exponentials, while also providing a number of VDCs which satisfy these properties.

Our globular decompositions will arise as a special case of the following more general notion of decompositions relative to a VDF $F:\Dbl{A}\to \Dbl{D}$, which is analogous to the right Conduch\'{e} condition in~\cite{Grandis2007}. Recall from Theorem~\ref{thm:ExpChar} that a VDC $\Dbl{D}$ is exponentiable if and only if the composition transformation 
\begin{equation*}
	\mathsf{fc}(\MCell(\mathbb{D}))\odot \MCell(\mathbb{D})\xrightarrow{\mathsf{comp}}\MCell(\mathbb{D})(\mu_\mathsf{fc},1):\mathsf{fc}(\mathsf{fc}(\Dbl{D}_1))^{op}\times \Dbl{D}_1\to \mathsf{Set}
\end{equation*}
is an isomorphism.

\begin{defn}[Relative Decomposition]\label{defn:relDecomp}
	Let $i:\Cat{C}\hookrightarrow \Cat{fc}(\Dbl{D}_1)$ be a wide subcategory and set $i_1:\Cat{C}_1\hookrightarrow \Dbl{D}_1$
  for the inclusion of $\Cat{C}\cap \Dbl{D}_1$ into $\Dbl{D}_1.$
  Next let $\Cat{Q}:\Cat{C}\proto \Cat{C}_1$ 
  be a sub-profunctor of the restriction $\MCell(i,i_1)$ of $\MCell:\fc(\Dbl{D}_1)\proto \Dbl{D}_1$ along $i$ and $i_1.$
  (Note that $\MCell(i,i_1)$ contains all the multicells of $\Dbl{D},$ since $\Cat{C}$ is wide, merely restricting the 
  actions of unary multicells on either side.) 

	Then for an integer $n\geq 0$, 
	we say $\Dbl{D}$ \emph{has $n$-ary $\Cat{Q}$-decompositions} if the composition transformation
	\begin{equation*}
		\mathsf{fc}(\MCell(\mathbb{D}))(1,i)\odot \Cat{Q}\xrightarrow{\mathsf{comp}}\MCell(\mathbb{D})(\mu_\mathsf{fc},i_1):\mathsf{fc}(\mathsf{fc}(\Dbl{D}_1))\proto \Cat{C}_1
	\end{equation*}
	is an isomorphism on the fiber over $n$-ary multicells.
	We say $\Dbl{D}$ \emph{has $\Cat{Q}$-decompositions} if it has $n$-ary $\Cat{Q}$-decompositions for all $n\geq 0$,
	and \emph{positive $\Cat{Q}$-decompositions} if it has $n$-ary $\Cat{Q}$-decompositions for $n\geq 1$.
\qed
\end{defn}

Diagrammatically, this definition says that any $n$-ary multicell below left admits an essentially unique decomposition, as below right,
given any partition of the length of its domain $\prolist{\varphi}$ to determine the arities in $\prolist{\alpha}^\#$:
\[
\begin{tikzcd}
	{x_0} & {x_n} & {x_0} & {x_n} \\
	&& {z_0} & {z_n} \\
	{y_0} & {y_1} & {y_0} & {y_1}
	\arrow[""{name=0, anchor=center, inner sep=0}, "{{{{\prolist{\varphi}}}}}"{inner sep=.8ex}, "\shortmid"{marking}, from=1-1, to=1-2]
	\arrow["a"', from=1-1, to=3-1]
	\arrow[""{name=1, anchor=center, inner sep=0}, "b", from=1-2, to=3-2]
	\arrow[""{name=2, anchor=center, inner sep=0}, "{{{{\prolist{\varphi}}}}}"{inner sep=.8ex}, "\shortmid"{marking}, from=1-3, to=1-4]
	\arrow["{{{{a'}}}}"', from=1-3, to=2-3]
	\arrow["{{{{b'}}}}", from=1-4, to=2-4]
	\arrow[""{name=3, anchor=center, inner sep=0}, "{{{{\prolist{\chi}}}}}"'{inner sep=.8ex}, "\shortmid"{marking}, from=2-3, to=2-4]
	\arrow["c"', from=2-3, to=3-3]
	\arrow["d", from=2-4, to=3-4]
	\arrow[""{name=4, anchor=center, inner sep=0}, "\psi"'{inner sep=.8ex}, "\shortmid"{marking}, from=3-1, to=3-2]
	\arrow[""{name=5, anchor=center, inner sep=0}, "{{{{\psi}}}}"'{inner sep=.8ex}, "\shortmid"{marking}, from=3-3, to=3-4]
	\arrow["\alpha"{description}, draw=none, from=0, to=4]
	\arrow[between={0.4}{0.6}, equals, from=1, to=2-3]
	\arrow["{{\prolist{\alpha}^\#}}"{description}, draw=none, from=2, to=3]
	\arrow["\beta"{description, pos=0.7}, draw=none, from=3, to=5]
\end{tikzcd}
\]
where $\beta \in \Cat{Q}(\prolist{\chi},\psi)$ and the essential uniqueness is up to
lists of unary multicells falling in $\Cat{C}\subseteq \Cat{fc}(\Dbl{D}_1)$.

\begin{defn}[Globular Decompositions]\label{defn:GlobDecomp}
	Let $i_g:\Cat{gl}(\Cat{fc}(\Dbl{D}_1))\hookrightarrow \Cat{fc}(\Dbl{D}_1)$ be the wide subcategory 
  whose morphisms are those sequences of unary multicells with outermost tight arrows given by identities, 
  and let $\Cat{gCell}(\Dbl{D})\subseteq \MCell (\Dbl{D})(i_g,i_{g1})$ be the sub-profunctor consisting of globular non-nullary multicells in $\Dbl{D}$. 
	Then for an integer $n\geq 0$ we say that $\Dbl{D}$ has $n$-ary globular decompositions 
	if it has $n$-ary $\Cat{gCell}(\Dbl{D})$ decompositions. 
	Further, if $\Dbl{D}$ has the $n$-ary globular decompositions for all $n\geq 0$, 
	we simply say it has globular decompositions, 
	and if it has $n$-ary globular decompositions for all $n >0$ we say it has positive globular decompositions.
\qed
\end{defn}

Having globular decompositions and positive globular decompostions 
will be our main generalizations of Par\'{e}'s AFP condition to VDCs. 
Note that in the definition we exclude decompositions where the bottom layer is a nullary multicell. 
The motivation for this restriction is that very few VDCs would have $0$-ary globular decompositions
if decompositions with nullary bottom layer were included. 
Indeed, if a VDC has $0$-ary globular decompositions 
while allowing for globular nullary multicells, 
then it can only have nullary multicells with loose target an endo-loose arrow $a\proto a$ 
and with tight boundary components being equal. 
This is also noted in Section 5 of~\cite{Pare2013}. 
Nonetheless, there are two important examples of VDCs which would
have $0$-ary globular decompositions with nullary bottom layer,
along with our current notion of globular decompositions.

\begin{exmp}
	All VDCs with discrete underlying tight category have globular decompositions,
	since every multicell is globular.
	For example:
	\begin{enumerate}
		\item The loose embedding of a bicategory $\BiCat{B}$ as the VDC $\L(\BiCat{B})$ has globular decompositions.
		\item The categorification of a multicategory $\Cat{M}$ as the VDC $B(\Cat{M})$ has globular decompositions.
	\end{enumerate}
\end{exmp}

In addition to these examples, cospan VDCs, which we saw in Proposition~\ref{prop:cospanPro} were exponentiable,
also have globular decompositions.

\begin{prop}[Cospan VDCs have Globular Decompositions]\label{rmk:CospGDP}
	For any category $\Cat{E}$, the VDC $\Cospan(\Cat{E})$ always has globular decompositions.
\end{prop}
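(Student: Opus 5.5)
The plan is to imitate the existence-and-sliding argument of Proposition~\ref{prop:cospanPro}, now with the bottom layer required to be globular and non-nullary. Fix an $n$-ary multicell $\alpha$ of $\Cospan(\Cat{E})$ with $n\geq 0$. It is given by tight arrows $c_0:x_0\to z_0$ and $c_1:x_n\to z_1$, a target cospan $z_0\xrightarrow{d_0}w\xleftarrow{d_1}z_1$, and compatible maps $e_i:y_i\to w$. As before, write $f_i:x_i\to w$ for the induced maps at the interior objects, and set $f_0=d_0c_0$ and $f_n=d_1c_1$. Fix also a partition $n=k_1+\cdots+k_m$ of the loose source into blocks; since the bottom cell must be non-nullary, $m\geq 1$. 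The goal is to show that the composition map from the coend, taken over lists of globular unary multicells, to multicells with this boundary is a bijection.

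\textbf{Existence.} I take the canonical decomposition. Each block $\prolist{\varphi}_j$, running from $x_{m_{j-1}}$ to $x_{m_j}$, is sent by a cell $\alpha_j$ into a loose arrow $\chi_j$. When $k_j\geq 1$ or $j$ is an end block, $\chi_j$ is the cospan
\[x_{m_{j-1}}\xrightarrow{f_{m_{j-1}}}w\xleftarrow{f_{m_j}}x_{m_j}.\]
Its tight boundary is the identities on interior objects, and $c_0$, $c_1$ on the outer edges, exactly as in the displayed factorization in the proof of Proposition~\ref{prop:cospanPro}. The cell $\alpha_j$ has apex map $\id_w$ and legs $e_i$.

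Here the globular requirement forces a change: the outer tight edges of $\prolist{\alpha}$ must carry $c_0$ and $c_1$, so that the bottom cell $\beta$ is globular over $z_0,z_1$. To arrange this, the first and last $\chi_j$ should start at $z_0$ and end at $z_1$ respectively, with legs $d_0$ and $d_1$. When $m=1$, this means $\chi_1=\psi$ and $\beta=\id_\psi$.

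A nullary block $k_j=0$ is handled with the nullary cell from $x_{m_{j-1}}$ into the cospan $w=w=w$ or its endpoint variant, as before. The bottom cell $\beta$ is then the globular cell whose apex map is $\id_w$. It is non-nullary since $m\geq1$.

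\textbf{Essential uniqueness.} This is the main step. Given any decomposition $\frac{\prolist{\alpha}'}{\beta'}$ with $\beta'$ globular and non-nullary, I want to connect it to the canonical one by a globular zig-zag. Let the middle row be cospans $\chi'_j$ with apices $w'_j$, and let $\beta'$ have legs $g_j:w'_j\to w$. Define the list of unary cells $\prolist{\sigma}:\prolist{\chi}'\to\prolist{\chi}$ by apex maps $g_j$. The interior tight maps of $\prolist{\sigma}$ are given by the tight arrows $x'_{m_j}$ to $w$ composed appropriately. More precisely, the interior tight boundaries of $\prolist{\alpha}'$ need not be identities, so the $\sigma_j$ must have interior tight arrows that are the composite of those boundaries with the legs. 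Because the canonical $\chi_j$ have intermediate endpoints $x_{m_j}$ rather than $w$, I may need to instead take the canonical middle objects to be $w$, as in the displayed factorization in Proposition~\ref{prop:cospanPro}, whose middle row is $z_0, w,\ldots,w, z_1$. With that choice, the interior tight arrows of $\prolist{\sigma}$ are the composites of the interior boundaries with $g$. The outer tight arrows of $\prolist{\sigma}$ are identities on $z_0$ and $z_1$ because $\beta'$ is globular, so $\prolist{\sigma}$ lies in $\Cat{gl}(\Cat{fc}(\Dbl{D}_1))$.

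Two equalities then need checking. First, $\frac{\prolist{\alpha}'}{\prolist{\sigma}}$ equals the canonical $\prolist{\alpha}$; this holds because pasting in $\Cospan$ is composition of maps into $w$, and both sides have legs $e_i$ and $f_i$. Second, $\frac{\prolist{\sigma}}{\beta}=\beta'$, which is immediate since $\beta$ has identity apex. This single morphism connects the two decompositions.

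\textbf{Expected obstacle.} The hard part is checking that the canonical choice really has globular bottom cell and that $\prolist{\sigma}$ is a well-defined list of cells. In particular, the interior tight arrows must be compatible between adjacent $\sigma_j$, and the nullary-block and end-block cases must be handled consistently. The case $n=0$ with $m\geq1$ needs separate care: there all blocks are nullary and the canonical middle row is built from $z_0$, $w$ and $z_1$.
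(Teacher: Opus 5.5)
With the middle row you eventually settle on, $z_0\xrightarrow{d_0}w=\cdots=w\xleftarrow{d_1}z_1$ (interior tight boundaries $f_{m_j}$, and a globular bottom cell whose maps into $w$ are identities), this is exactly the paper's argument. That decomposition is terminal: any globular decomposition maps into it by the cells $\sigma_j$ with apex maps $g_j$ and interior tight arrows the composites $z'_j\to w'_j\xrightarrow{g_j}w$ out of the interior feet $z'_j$ of $\prolist{\chi}'$. Adjacent $\sigma_j$ agree on these arrows by the commutativity of $\beta'$, and $\frac{\prolist{\alpha}'}{\prolist{\sigma}}=\prolist{\alpha}$ follows from $\frac{\prolist{\alpha}'}{\beta'}=\alpha$.

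Fix that row from the outset, including for nullary end blocks. There, use $z_0\xrightarrow{d_0}w=w$ and $w=w\xleftarrow{d_1}z_1$ rather than the $z_0=z_0=z_0$ cell of Proposition~\ref{prop:cospanPro}, which is not terminal. Your first version, with feet $x_{m_j}$ and identity interior boundaries, has three problems: it is not what that proposition displays, it clashes with your $w=w=w$ nullary cells, and it receives no map from a general decomposition, since there need be no arrows $z'_j\to x_{m_j}$.
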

\begin{proof}
	The proof follows as in Proposition~\ref{prop:cospanPro}. Namely, if we have a multicell 
	\[\begin{adjustbox}{}\begin{tikzcd}
		{x_0} & {y_1} & {x_1} & \ldots & {x_{n-1}} & {y_n} & {x_n} \\
		&&& \ldots \\
		{z_0} &&& {w_1} &&& {z_1}
		\arrow["{{a_1}}", from=1-1, to=1-2]
		\arrow["{{c_0}}"', from=1-1, to=3-1]
		\arrow["{{e_1}}"', from=1-2, to=3-4]
		\arrow["{{b_1}}"', from=1-3, to=1-2]
		\arrow["{{a_2}}", from=1-3, to=1-4]
		\arrow["{{b_{n-1}}}"', from=1-5, to=1-4]
		\arrow["{{a_n}}", from=1-5, to=1-6]
		\arrow["{{e_n}}", from=1-6, to=3-4]
		\arrow["{{b_n}}"', from=1-7, to=1-6]
		\arrow["{{c_1}}", from=1-7, to=3-7]
		\arrow["{d_0}"', from=3-1, to=3-4]
		\arrow["{d_1}", from=3-7, to=3-4]
	\end{tikzcd}\end{adjustbox}\]
	then if $f_i:x_i\to w_1$ denote the unique maps for the middle $x_i$'s, so 
  $f_i=b_i\cdot e_i=a_{i+1}\cdot e_{i+1}$, then we can slide the tight boundary maps up as in the decomposition 
	\[\begin{adjustbox}{}\begin{tikzcd}
		{x_0} & {y_1} & {x_1} & \cdots & {x_{n-1}} & {y_n} & {x_n} \\
		\\
		{z_0} & {w_1} & {w_1} & \cdots & {w_1} & {w_1} & {z_1} \\
		&&& \cdots \\
		{z_0} &&& {w_1} &&& {z_1}
		\arrow["{{{a_1}}}", from=1-1, to=1-2]
		\arrow["{{{c_0}}}"', from=1-1, to=3-1]
		\arrow["{{{e_1}}}"{description}, from=1-2, to=3-2]
		\arrow["{{{b_1}}}"', from=1-3, to=1-2]
		\arrow["{{{a_2}}}", from=1-3, to=1-4]
		\arrow["{c_1}", from=1-3, to=3-3]
		\arrow["{{{b_{n-1}}}}"', from=1-5, to=1-4]
		\arrow["{{{a_n}}}", from=1-5, to=1-6]
		\arrow["{c_{n-1}}"', from=1-5, to=3-5]
		\arrow["{{{e_n}}}"{description}, from=1-6, to=3-6]
		\arrow["{{{b_n}}}"', from=1-7, to=1-6]
		\arrow["{{{c_1}}}", from=1-7, to=3-7]
		\arrow["{d_0}"', from=3-1, to=3-2]
		\arrow[equals, from=3-1, to=5-1]
		\arrow[equals, from=3-2, to=5-4]
		\arrow[equals, from=3-3, to=3-2]
		\arrow[equals, from=3-3, to=3-4]
		\arrow[equals, from=3-3, to=5-4]
		\arrow[equals, from=3-4, to=3-5]
		\arrow[equals, from=3-5, to=5-4]
		\arrow[equals, from=3-6, to=3-5]
		\arrow[equals, from=3-6, to=5-4]
		\arrow["{d_1}", from=3-7, to=3-6]
		\arrow[equals, from=3-7, to=5-7]
		\arrow["{d_0}"', from=5-1, to=5-4]
		\arrow["{d_1}", from=5-7, to=5-4]
	\end{tikzcd}
	\end{adjustbox}\]
	Additionally, this is the terminal decomposition: indeed any other such decomposition
	\[
	\begin{adjustbox}{}
		\begin{tikzcd}
		{x_0} & {y_1} & {x_1} & \cdots & {x_{n-1}} & {y_n} & {x_n} \\
		\\
		{z_0} & {w_1'} & {z_1'} & \cdots & {z_{n-1}'} & {w_n'} & {z_1} \\
		&&& \cdots \\
		{z_0} &&& {w_1} &&& {z_1}
		\arrow["{{{a_1}}}", from=1-1, to=1-2]
		\arrow["{{{c_0}}}"', from=1-1, to=3-1]
		\arrow["{{{e_1}}'}"{description}, from=1-2, to=3-2]
		\arrow["{{{b_1}}}"', from=1-3, to=1-2]
		\arrow["{{{a_2}}}", from=1-3, to=1-4]
		\arrow["{c_1'}", from=1-3, to=3-3]
		\arrow["{{{b_{n-1}}}}"', from=1-5, to=1-4]
		\arrow["{{{a_n}}}", from=1-5, to=1-6]
		\arrow["{c_{n-1}'}"', from=1-5, to=3-5]
		\arrow["{{{e_n}}'}"{description}, from=1-6, to=3-6]
		\arrow["{{{b_n}}}"', from=1-7, to=1-6]
		\arrow["{{{c_1}}'}", from=1-7, to=3-7]
		\arrow["{f_1}"', from=3-1, to=3-2]
		\arrow[equals, from=3-1, to=5-1]
		\arrow["{e_1''}"', from=3-2, to=5-4]
		\arrow["{g_1}", from=3-3, to=3-2]
		\arrow["{f_2}"', from=3-3, to=3-4]
		\arrow["{g_{n-1}}", from=3-5, to=3-4]
		\arrow["{f_n}"', from=3-5, to=3-6]
		\arrow["{e_n''}", from=3-6, to=5-4]
		\arrow["{g_n}", from=3-7, to=3-6]
		\arrow[equals, from=3-7, to=5-7]
		\arrow["{d_0}"', from=5-1, to=5-4]
		\arrow["{d_1}", from=5-7, to=5-4]
	\end{tikzcd}
	\end{adjustbox}
	\]
	is equivalent to this one via the intermediary 3-layer decomposition 
	\[
	\begin{adjustbox}{}
		\begin{tikzcd}
		{x_0} & {y_1} & {x_1} & \cdots & {x_{n-1}} & {y_n} & {x_n} \\
		\\
		{z_0} & {w_1'} & {z_1'} & \cdots & {z_{n-1}'} & {w_n'} & {z_1} \\
		\\
		{z_0} & {w_1} & {w_1} & \cdots & {w_1} & {w_1} & {z_1} \\
		{z_0} &&& {w_1} &&& {z_1}
		\arrow["{{{a_1}}}", from=1-1, to=1-2]
		\arrow["{{{c_0}}}"', from=1-1, to=3-1]
		\arrow["{{{e_1}}'}"{description}, from=1-2, to=3-2]
		\arrow["{{{b_1}}}"', from=1-3, to=1-2]
		\arrow["{{{a_2}}}", from=1-3, to=1-4]
		\arrow["{c_1'}", from=1-3, to=3-3]
		\arrow["{{{b_{n-1}}}}"', from=1-5, to=1-4]
		\arrow["{{{a_n}}}", from=1-5, to=1-6]
		\arrow["{c_{n-1}'}"', from=1-5, to=3-5]
		\arrow["{{{e_n}}'}"{description}, from=1-6, to=3-6]
		\arrow["{{{b_n}}}"', from=1-7, to=1-6]
		\arrow["{{{c_1}}'}", from=1-7, to=3-7]
		\arrow["{f_1}"', from=3-1, to=3-2]
		\arrow[equals, from=3-1, to=5-1]
		\arrow["{e_1''}"{description}, from=3-2, to=5-2]
		\arrow["{g_1}", from=3-3, to=3-2]
		\arrow["{f_2}"', from=3-3, to=3-4]
		\arrow["{c_1''}", from=3-3, to=5-3]
		\arrow["{g_{n-1}}", from=3-5, to=3-4]
		\arrow["{f_n}"', from=3-5, to=3-6]
		\arrow["{c_{n-1}''}"', from=3-5, to=5-5]
		\arrow["{e_n''}"{description}, from=3-6, to=5-6]
		\arrow["{g_n}", from=3-7, to=3-6]
		\arrow[equals, from=3-7, to=5-7]
		\arrow["{d_0}"', from=5-1, to=5-2]
		\arrow[equals, from=5-1, to=6-1]
		\arrow[equals, from=5-2, to=5-3]
		\arrow[equals, from=5-2, to=6-4]
		\arrow[equals, from=5-3, to=5-4]
		\arrow[equals, from=5-4, to=5-5]
		\arrow[equals, from=5-5, to=5-6]
		\arrow[equals, from=5-6, to=6-4]
		\arrow["{d_1}", from=5-7, to=5-6]
		\arrow[equals, from=5-7, to=6-7]
		\arrow["{d_0}"', from=6-1, to=6-4]
		\arrow["{d_1}", from=6-7, to=6-4]
	\end{tikzcd}
	\end{adjustbox}
	\]
	as desired. The same argument applies when $n=0$, though in this case the terminal decomposition is
\[\begin{tikzcd}[column sep=small]
	&&&&&&&& {x_0} &&& \\
	&& {x_0} \\
	&&&&& {z_0} & {w_1} & {w_1} & \cdots & {w_1} & {w_1} & {z_1} \\
	{z_0} && {w_1} && {z_1} \\
	&&&&& {z_0} &&& {w_1} &&& {z_1}
	\arrow["{{c_0}}"', from=1-9, to=3-6]
	\arrow["{{e_1}}"{description}, from=1-9, to=3-8]
	\arrow["{{e_1}}"{description}, from=1-9, to=3-10]
	\arrow["{{c_1}}", from=1-9, to=3-12]
	\arrow["{{c_0}}"', from=2-3, to=4-1]
	\arrow[""{name=0, anchor=center, inner sep=0}, "{{c_1}}", from=2-3, to=4-5]
	\arrow["{{d_0}}"', from=3-6, to=3-7]
	\arrow[equals, from=3-6, to=5-6]
	\arrow[equals, from=3-7, to=3-8]
	\arrow[equals, from=3-7, to=5-9]
	\arrow[equals, from=3-8, to=3-9]
	\arrow[equals, from=3-9, to=3-10]
	\arrow[equals, from=3-10, to=3-11]
	\arrow[equals, from=3-11, to=5-9]
	\arrow["{{d_1}}", from=3-12, to=3-11]
	\arrow[equals, from=3-12, to=5-12]
	\arrow["{{d_0}}"', from=4-1, to=4-3]
	\arrow["{{d_1}}", from=4-5, to=4-3]
	\arrow["{{d_0}}"', from=5-6, to=5-9]
	\arrow["{{d_1}}", from=5-12, to=5-9]
	\arrow[between={0.4}{0.7}, equals, from=0, to=3-6]
\end{tikzcd}\]
\end{proof}

For any VDC $\Dbl{D}$, we can also show that the strict double category $\Cat{fc}(\Dbl{D})^{op_t}$ 
(See Equation \ref{eq:defOfFc}) has positive globular decompositions, which will be important for later embedding results.

\begin{lem}\label{lem:PosGDPforOpOfStrictification}
	Let $\Dbl{D}$ be an arbitrary VDC. Then the VDC $\Cat{fc}(\Dbl{D})^{op_t}$ has positive globular decompositions.
\end{lem}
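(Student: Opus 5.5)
We first unwind what $\Cat{fc}(\Dbl{D})^{op_t}$ looks like. Its tight category is $\Dbl{D}_0^{op}$. Its loose arrows are finite paths $\prolist{\varphi}$ of loose arrows of $\Dbl{D}$. Because it is a strict double category, an $n$-ary multicell with loose source $(\prolist{\varphi}_1\cdots\prolist{\varphi}_n)$ and loose target $\prolist{\chi}$ is a single square of $\Cat{fc}(\Dbl{D})$, turned upside down. Concretely, such a square is a path $\prolist{\gamma}$ of multicells of $\Dbl{D}$: the concatenation of the $\prolist{\varphi}_i$ is the target path of $\prolist{\gamma}$, and $\prolist{\chi}$ is its source path. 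The tight boundary of this multicell is the outer tight boundary of $\prolist{\gamma}$, reversed.

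With this description, a positive decomposition of shape $(k_1,\ldots,k_m)$ of an $n$-ary multicell $\alpha=\prolist{\gamma}$, with $n\ge 1$, can be built in a canonical way. In the upper layer we take identity-cell paths that regroup the source. Each $\prolist{\alpha}^\#_j$ is the identity square on the concatenation $\prolist{\varphi}_{m_{j-1}+1}\cdots\prolist{\varphi}_{m_j}$, which is a $k_j$-ary multicell of the strict double category. When $k_j=0$ it is the identity on the empty path at the appropriate object, i.e. the loose unit. The lower layer is the globular multicell $\beta$ given by the same path $\prolist{\gamma}$, now read as an $m$-ary cell with identity outer tight boundary. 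We have to move the tight boundary of $\alpha$ into the first layer: we put $a'=a$ and $b'=b$ on the outer edges of the first layer and use the identity on the inner edges. This works because in a strict double category the outer boundary of the top layer can be chosen freely.

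The substantive part is essential uniqueness. Take any decomposition $\frac{\prolist{\delta}}{\beta'}$ with $\beta'$ globular and non-nullary. Each $\delta_j$ is a path of $\Dbl{D}$-cells, and so is $\beta'$. Because the cells are turned upside down, composition in $\Cat{fc}(\Dbl{D})^{op_t}$ is vertical pasting of paths in $\Dbl{D}$ in reverse order. The composite $\prolist{\gamma}$ is therefore $\beta'$ pasted on top of the concatenated $\prolist{\delta}$, in $\Dbl{D}$'s orientation. We want to show this decomposition is connected to the canonical one. The natural candidate for the connecting data is the sequence $\prolist{\delta}$ itself. Each $\delta_j$ is a unary cell of the free double category $\Cat{fc}(\Dbl{D})$, so $\prolist{\delta}$ is a morphism in $\Cat{fc}(\Cat{fc}(\Dbl{D})_1)$. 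It lies in the globular subcategory $\Cat{gl}$ once its outermost tight arrows are identities, and we arrange this by absorbing the outer tight arrows into the first layer as above. Sliding $\prolist{\delta}$ from the top layer into the bottom layer gives exactly the canonical decomposition.

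The positivity hypothesis matters at exactly one point: $\beta'$ must be a nonempty path, so that the top layer carries no hidden tight-arrow data that cannot be moved. I expect the main obstacle to be the orientation bookkeeping under $op_t$, and checking that the connecting morphisms really are globular, i.e. have identity outermost tight arrows. This is where the requirement $n\ge 1$ (excluding nullary bottom cells) enters, since a nullary $\beta'$ would force a non-identity tight arrow onto the interface.
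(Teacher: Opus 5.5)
Your two layers are the wrong way round, and this breaks both existence and uniqueness. You take the regrouping identity squares as the upper layer and the whole path $\prolist{\gamma}$, read as an $m$-ary cell, as the lower layer $\beta$. But $\beta$ is required to be globular, and the outer tight arrows of $\prolist{\gamma}$ are those of $\alpha$ itself, which in general are not identities.

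Your repair does not produce cells. You propose putting $a$ and $b$ on the outer edges of the first layer and identities on the inner edges. But cells of $\Cat{fc}(\Dbl{D})^{op_t}$ are exactly paths of multicells of $\Dbl{D}$ with their given tight boundaries, and a strict double category does not let you choose the outer boundary of a cell freely. A concrete counterexample: take $\Dbl{D}=\Sq{1}$ and $\alpha=(\square_1)$ as a unary cell of $\Cat{fc}(\Dbl{D})^{op_t}$. Its only globular decomposition of shape $(1)$ is $\frac{\alpha}{\text{id}}$, and no decomposition $\frac{\text{id}}{\beta}$ with $\beta$ globular exists.

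The uniqueness step fails for the same reason. Since $\beta'$ is globular, the outermost tight arrows of $\prolist{\delta}$ are $a$ and $b$. So $\prolist{\delta}$ is not a morphism of $\Cat{gl}(\Cat{fc}(\Cat{fc}(\Dbl{D})^{op_t}_1))$ and cannot serve as connecting data. As written, your argument only works when $\alpha$ is already globular.

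The missing idea is to reverse the roles, as the paper does.
\begin{itemize}
\item \emph{Existence.} Split $\prolist{\gamma}$ into groups $\alpha^{(j)}$ according to which block of the source contains the $\Dbl{D}$-target of each cell. This induces a splitting $\prolist{\chi}=\prolist{\chi}_1\cdots\prolist{\chi}_m$ of the target. Put the $\alpha^{(j)}$, read as $k_j$-ary cells, in the top layer, so they carry all the tight data including $a$ and $b$. Put the identity square on $\prolist{\chi}$, read as an opcartesian globular $m$-ary cell out of $(\prolist{\chi}_1,\ldots,\prolist{\chi}_m)$, in the bottom layer.
\item \emph{Uniqueness.} Given $\frac{\prolist{\delta}}{\beta'}$, slide upward instead of downward. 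Split the globular cell $\beta'$ into its column pieces $\beta'_1,\ldots,\beta'_m$, which are unary cells of $\Cat{fc}(\Dbl{D})^{op_t}$, so that $\beta'=\frac{(\beta'_1\cdots\beta'_m)}{\text{id}}$. This sequence has identity outermost tight arrows precisely because $\beta'$ is globular; its inner tight arrows may be arbitrary. It is therefore a legitimate morphism in $\Cat{gl}$. Sliding it into the top layer turns the given decomposition into one of canonical form. That canonical form is forced, because its top layer must compose to $\alpha$.
\end{itemize}
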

\begin{proof}
	Recall that multicells in $\Cat{fc}(\Dbl{D})$ are given by compatible sequences of multicells in $\Dbl{D}$. 
	Thus, an $n$-ary multicell in $\Cat{fc}(\Dbl{D})^{op_t}$, for $n\geq 1$, takes the form
	\[
	\begin{tikzcd}
	{x_0} & {x_1} & \cdots & {x_n} \\
	{y_0} & {y_{k_1}} & \cdots & {y_{k_n}}
	\arrow[""{name=0, anchor=center, inner sep=0}, "{\varphi_1}"{inner sep=.8ex}, "\shortmid"{marking}, from=1-1, to=1-2]
	\arrow[""{name=1, anchor=center, inner sep=0}, "{\varphi_2}"{inner sep=.8ex}, "\shortmid"{marking}, from=1-2, to=1-3]
	\arrow[""{name=2, anchor=center, inner sep=0}, "{\varphi_n}"{inner sep=.8ex}, "\shortmid"{marking}, from=1-3, to=1-4]
	\arrow["\cdots"{description}, draw=none, from=1-3, to=2-3]
	\arrow["{a_0}", from=2-1, to=1-1]
	\arrow[""{name=3, anchor=center, inner sep=0}, "{\prolist{\psi}_1}"'{inner sep=.8ex}, "\shortmid"{marking}, from=2-1, to=2-2]
	\arrow["{a_1}"{description}, from=2-2, to=1-2]
	\arrow[""{name=4, anchor=center, inner sep=0}, "{\prolist{\psi}_2}"'{inner sep=.8ex}, "\shortmid"{marking}, from=2-2, to=2-3]
	\arrow[""{name=5, anchor=center, inner sep=0}, "{\prolist{\psi}_n}"'{inner sep=.8ex}, "\shortmid"{marking}, from=2-3, to=2-4]
	\arrow["{a_n}"', from=2-4, to=1-4]
	\arrow["{\alpha_1}"{description}, draw=none, from=0, to=3]
	\arrow["{\alpha_2}"{description}, draw=none, from=1, to=4]
	\arrow["{\alpha_n}"{description}, draw=none, from=2, to=5]
\end{tikzcd}
	\]
	where $\alpha_1,...,\alpha_n$ are multicells in $\Dbl{D}$, and 
  the list of lists $(\prolist{\psi}_1 \cdots \prolist{\psi}_n)$ 
  is viewed as a single loose arrow in $\Cat{fc}(\Dbl{D})^{op_t}$. 
	We then have a natural decomposition
	\[
	\begin{tikzcd}
	{x_0} & {x_1} & \cdots & {x_n} \\
	{y_0} & {y_{k_1}} & \cdots & {y_{k_n}} \\
	{y_0} & {y_{k_1}} & \cdots & {y_{k_n}}
	\arrow[""{name=0, anchor=center, inner sep=0}, "{\varphi_1}"{inner sep=.8ex}, "\shortmid"{marking}, from=1-1, to=1-2]
	\arrow[""{name=1, anchor=center, inner sep=0}, "{\varphi_2}"{inner sep=.8ex}, "\shortmid"{marking}, from=1-2, to=1-3]
	\arrow[""{name=2, anchor=center, inner sep=0}, "{\varphi_n}"{inner sep=.8ex}, "\shortmid"{marking}, from=1-3, to=1-4]
	\arrow["\cdots"{description}, draw=none, from=1-3, to=2-3]
	\arrow["{a_0}", from=2-1, to=1-1]
	\arrow[""{name=3, anchor=center, inner sep=0}, "{\prolist{\psi}_1}"'{inner sep=.8ex}, "\shortmid"{marking}, from=2-1, to=2-2]
	\arrow[equals, from=2-1, to=3-1]
	\arrow["{a_1}"{description}, from=2-2, to=1-2]
	\arrow[""{name=4, anchor=center, inner sep=0}, "{\prolist{\psi}_2}"'{inner sep=.8ex}, "\shortmid"{marking}, from=2-2, to=2-3]
	\arrow[equals, from=2-2, to=3-2]
	\arrow[""{name=5, anchor=center, inner sep=0}, "{\prolist{\psi}_n}"'{inner sep=.8ex}, "\shortmid"{marking}, from=2-3, to=2-4]
	\arrow["{a_n}"', from=2-4, to=1-4]
	\arrow[equals, from=2-4, to=3-4]
	\arrow["{\prolist{\psi}_1}"'{inner sep=.8ex}, "\shortmid"{marking}, from=3-1, to=3-2]
	\arrow["{\prolist{\psi}_2}"'{inner sep=.8ex}, "\shortmid"{marking}, from=3-2, to=3-3]
	\arrow["{\prolist{\psi}_n}"'{inner sep=.8ex}, "\shortmid"{marking}, from=3-3, to=3-4]
	\arrow["{\alpha_1}"{description}, draw=none, from=0, to=3]
	\arrow["{\alpha_2}"{description}, draw=none, from=1, to=4]
	\arrow["{\alpha_n}"{description}, draw=none, from=2, to=5]
\end{tikzcd}
	\]
	where in the center row each $\prolist{\psi}_i$ is viewed as a single loose in $\Cat{fc}(\Dbl{D})^{op_t}$, 
  so that the top row consists of unary multicells and the bottom row is an $n$-ary opcartesian multicell. 

	To show essential uniqueness suppose we had another globular decomposition
	\[
	\begin{tikzcd}
	{x_0} && {x_1} & \cdots && {x_n} \\
	{y_0} && {z_1} & \cdots && {y_{k_n}} \\
	{y_0} && {y_{k_1}} & \cdots && {y_{k_n}}
	\arrow[""{name=0, anchor=center, inner sep=0}, "{\varphi_1}"{inner sep=.8ex}, "\shortmid"{marking}, from=1-1, to=1-3]
	\arrow["{\varphi_2}"{inner sep=.8ex}, "\shortmid"{marking}, from=1-3, to=1-4]
	\arrow[""{name=1, anchor=center, inner sep=0}, "{\varphi_n}"{inner sep=.8ex}, "\shortmid"{marking}, from=1-4, to=1-6]
	\arrow["{a_0}", from=2-1, to=1-1]
	\arrow[""{name=2, anchor=center, inner sep=0}, "{(\chi_{1,1} \cdots \chi_{1,l_1})}"'{inner sep=.8ex}, "\shortmid"{marking}, from=2-1, to=2-3]
	\arrow[equals, from=2-1, to=3-1]
	\arrow["{c_1}"{description}, from=2-3, to=1-3]
	\arrow["\shortmid"{marking}, from=2-3, to=2-4]
	\arrow[""{name=3, anchor=center, inner sep=0}, "{(\chi_{n,1} \cdots \chi_{n,l_n})}"'{inner sep=.8ex}, "\shortmid"{marking}, from=2-4, to=2-6]
	\arrow["{a_n}"', from=2-6, to=1-6]
	\arrow[equals, from=2-6, to=3-6]
	\arrow[""{name=4, anchor=center, inner sep=0}, "{(\prolist{\psi}_{1,1} \cdots \prolist{\psi}_{1,l_1})}"'{inner sep=.8ex}, "\shortmid"{marking}, from=3-1, to=3-3]
	\arrow["{b_1}"{description}, from=3-3, to=2-3]
	\arrow["{\prolist{\psi}_2}"'{inner sep=.8ex}, "\shortmid"{marking}, from=3-3, to=3-4]
	\arrow[""{name=5, anchor=center, inner sep=0}, "{(\prolist{\psi}_{n,1} \cdots \prolist{\psi}_{n,l_n})}"'{inner sep=.8ex}, "\shortmid"{marking}, from=3-4, to=3-6]
	\arrow["{\beta_1}"{description}, draw=none, from=0, to=2]
	\arrow["{\beta_n}"{description}, draw=none, from=1, to=3]
	\arrow["{(\gamma_{1,1} \cdots \gamma_{1,l_1})}"{description, pos=0.7}, draw=none, from=2, to=4]
	\arrow["{(\gamma_{n,1} \cdots \gamma_{n,l_n})}"{description, pos=0.7}, draw=none, from=3, to=5]
\end{tikzcd}
	\]
	where $\prolist{\chi}_i=(\chi_{i,1} \cdots \chi_{i,l_i})$ is viewed as a single loose arrow in $\Cat{fc}(\Dbl{D})^{op_t}$. 
	Then we have the canonical decomposition
\[\begin{tikzcd}
	{x_0} && {x_1} & \cdots && {x_n} \\
	{y_0} && {z_1} & \cdots && {y_{k_n}} \\
	{y_0} && {y_{k_1}} & \cdots && {y_{k_n}} \\
	{y_0} && {y_{k_1}} & \cdots && {y_{k_n}}
	\arrow[""{name=0, anchor=center, inner sep=0}, "{{\varphi_1}}"{inner sep=.8ex}, "\shortmid"{marking}, from=1-1, to=1-3]
	\arrow["{{\varphi_2}}"{inner sep=.8ex}, "\shortmid"{marking}, from=1-3, to=1-4]
	\arrow[""{name=1, anchor=center, inner sep=0}, "{{\varphi_n}}"{inner sep=.8ex}, "\shortmid"{marking}, from=1-4, to=1-6]
	\arrow["{{a_0}}", from=2-1, to=1-1]
	\arrow[""{name=2, anchor=center, inner sep=0}, "{{(\chi_{1,1} \cdots \chi_{1,l_1})}}"'{inner sep=.8ex}, "\shortmid"{marking}, from=2-1, to=2-3]
	\arrow[equals, from=2-1, to=3-1]
	\arrow["{{c_1}}"{description}, from=2-3, to=1-3]
	\arrow["\shortmid"{marking}, from=2-3, to=2-4]
	\arrow[""{name=3, anchor=center, inner sep=0}, "{{(\chi_{n,1} \cdots \chi_{n,l_n})}}"'{inner sep=.8ex}, "\shortmid"{marking}, from=2-4, to=2-6]
	\arrow["{{a_n}}"', from=2-6, to=1-6]
	\arrow[equals, from=2-6, to=3-6]
	\arrow[""{name=4, anchor=center, inner sep=0}, "{{(\prolist{\psi}_{1,1} \cdots \prolist{\psi}_{1,l_1})}}"'{inner sep=.8ex}, "\shortmid"{marking}, from=3-1, to=3-3]
	\arrow["{{b_1}}"{description}, from=3-3, to=2-3]
	\arrow["{{\prolist{\psi}_2}}"'{inner sep=.8ex}, "\shortmid"{marking}, from=3-3, to=3-4]
	\arrow[""{name=5, anchor=center, inner sep=0}, "{{(\prolist{\psi}_{n,1} \cdots \prolist{\psi}_{n,l_n})}}"'{inner sep=.8ex}, "\shortmid"{marking}, from=3-4, to=3-6]
	\arrow[equals, from=3-6, to=4-6]
	\arrow[equals, from=4-1, to=3-1]
	\arrow["{{(\prolist{\psi}_{1,1} \cdots \prolist{\psi}_{1,l_1})}}"'{inner sep=.8ex}, "\shortmid"{marking}, from=4-1, to=4-3]
	\arrow[equals, from=4-3, to=3-3]
	\arrow["{{\prolist{\psi}_2}}"'{inner sep=.8ex}, "\shortmid"{marking}, from=4-3, to=4-4]
	\arrow["{{(\prolist{\psi}_{n,1} \cdots \prolist{\psi}_{n,l_n})}}"'{inner sep=.8ex}, "\shortmid"{marking}, from=4-4, to=4-6]
	\arrow["{{\beta_1}}"{description}, draw=none, from=0, to=2]
	\arrow["{{\beta_n}}"{description}, draw=none, from=1, to=3]
	\arrow["{{(\gamma_{1,1} \cdots \gamma_{1,l_1})}}"{description, pos=0.7}, draw=none, from=2, to=4]
	\arrow["{{(\gamma_{n,1} \cdots \gamma_{n,l_n})}}"{description, pos=0.7}, draw=none, from=3, to=5]
\end{tikzcd}\]
	where all the middle multicells $\prolist{\gamma}_i=(\gamma_{i,1} \cdots \gamma_{i,l_i})$ are viewed as unary multicells, completing the proof.
\end{proof}

To see a first application of VDCs having globular decompositions, 
we describe multicells in $\Dbl{X}^\Dbl{A}$ when $\Dbl{A}$ is an exponentiable VDC 
with $n$-ary globular decompositions for some $n$.

\begin{lem}[Multicells in $\Dbl{X}^\Dbl{A}$ for $\Dbl{A}$ Having Globular Decompositions]\label{lem:Uniqueglobular}
    If $\Dbl{A}$ is an exponentiable VDC with $n$-ary globular decompositions for some $n\geq 1$, and $\Dbl{X}$ is an arbitrary VDC, 
    then $n$-ary multicells in $\Dbl{X}^\Dbl{A}$ are uniquely determined by their action on globular multicells. 
    Additionally, any assignment on globular multicells which is compatible with the actions on the boundary loose arrows extends to such a multicell.
\end{lem}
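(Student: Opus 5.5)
We work from the description of $n$-ary multicells in Proposition~\ref{prop:elementsOfExpVDC}: an $n$-ary multicell $\Gamma$ of $\Dbl{X}^\Dbl{A}$ consists of boundary data (functors $F_0,\ldots,F_n,G_0,G_1:\Dbl{A}_0\to\Dbl{X}_0$, transformations $f_0,f_1$, and span maps $\Phi_1,\ldots,\Phi_n,\Psi$ on loose arrows and unary multicells) together with a map of profunctors $\MCelln{n}(\Dbl{A})\to\partial^*\MCelln{n}(\Dbl{X})$. Functoriality is the statement that $\Gamma\bigl(\threefrac{\delta_1\cdots\delta_n}{\beta}{\gamma}\bigr)$ equals the pasting of $\Phi_i(\delta_i)$, $\Gamma(\beta)$ and $\Psi(\gamma)$. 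So the plan is to use the $n$-ary $\Cat{gCell}(\Dbl{A})$-decompositions, taking the trivial partition of the source into $n$ singletons, to reduce $\Gamma$ to its values on globular $n$-ary cells.

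\emph{Uniqueness.} Let $\alpha:\cell{a}{\prolist{\varphi}}{\psi}{b}$ be an $n$-ary cell with $n\ge 1$. By Definition~\ref{defn:GlobDecomp} it factors as $\frac{\prolist{\alpha}^\#}{\beta}$, with $\prolist{\alpha}^\#=(\alpha_1^\#\cdots\alpha_n^\#)$ unary cells and $\beta$ globular. The tight boundary $a,b$ of $\alpha$ then sits entirely in the outer components of $\prolist{\alpha}^\#$. Functoriality gives
\[
\Gamma(\alpha)=\frac{(\Phi_1(\alpha_1^\#)\cdots\Phi_n(\alpha_n^\#))}{\Gamma(\beta)}.
\]
The factors $\Phi_i(\alpha_i^\#)$ are part of the boundary data, so $\Gamma$ is determined by its values on globular cells.

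\emph{Existence.} Suppose we are given boundary data and an assignment $\Gamma_g$ on globular $n$-ary cells. Compatibility with the boundary means $\Gamma_g$ is natural for the actions of globular sequences of unary cells (from $\Cat{gl}(\Cat{fc}(\Dbl{A}_1))$) on the source and of globular unary cells on the target. Define $\Gamma(\alpha)$ by the formula above. It is well defined because the composition map $\mathsf{fc}(\MCell)(1,i_g)\odot\Cat{gCell}\to\MCell$ is a bijection on the $n$-ary fiber. Two decompositions of $\alpha$ are therefore connected by a zigzag of globular sequences $\prolist{\sigma}$ sliding between the layers, and the right-hand side is invariant under such a slide: one combines functoriality of the $\Phi_i$ with the source-naturality of $\Gamma_g$.

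Next, we check that $\Gamma$ is a map of profunctors, i.e.\ that it respects an arbitrary unary precomposition $\prolist{\delta}$ and postcomposition $\gamma$. For precomposition, $\frac{\prolist{\delta}}{\alpha}$ has the decomposition $\frac{\prolist{\delta}\,\prolist{\alpha}^\#}{\beta}$, because stacking unary layers on top preserves the shape; compatibility then follows from functoriality of the $\Phi_i$. Postcomposition is the main obstacle. Here $\frac{\beta}{\gamma}$ need not be globular, so we decompose it again as $\frac{\prolist{\beta}^\#}{\beta'}$. We must then show
\[
\frac{(\Phi(\prolist{\beta}^\#))}{\Gamma_g(\beta')}=\frac{\Gamma_g(\beta)}{\Psi(\gamma)}.
\]
The plan is to use the essential uniqueness once more. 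The tight boundary of $\gamma$ has to be absorbed into $\prolist{\beta}^\#$, so $\prolist{\beta}^\#$ has identity inner tight arrows except at the outer ends. The identity should then follow from the $\Psi$-compatibility of $\Gamma_g$ on the boundary, together with the naturality of the tight transformations $f_0,f_1$ appearing in the boundary multicell.

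If that direct route fails, I would fall back on the sketch/coend viewpoint. Via Corollary~\ref{cor:exponentiable-sketch}, the exponentiability of $\Dbl{A}$ makes $\Sq{n}\times\Dbl{A}$ a colimit, and the $n$-ary globular decompositions should exhibit it as generated, modulo the boundary, by the cells $(\square_n,\beta)$ with $\beta$ globular. Relations are then exactly the globular slides, and we would check this by mimicking the analysis in Remark~\ref{rmk:cellInColimit}.

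Finally, the identities and unitality axioms are immediate.
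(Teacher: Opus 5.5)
Your uniqueness argument is correct and is the paper's: factor $\alpha=\frac{\prolist{\alpha}^\#}{\beta}$ with $\beta$ globular, then use functoriality to get $\Gamma(\alpha)=\frac{\Phi(\prolist{\alpha}^\#)}{\Gamma(\beta)}$.

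\textbf{The gap is in existence}, exactly at the step you flag as the main obstacle. You read ``compatible with the boundary'' as naturality only for globular unary cells and globular sequences. Under that reading, the identity you need for a non-globular unary $\gamma$,
\[
\frac{\Phi(\prolist{\beta}^\#)}{\Gamma_g(\beta')}=\frac{\Gamma_g(\beta)}{\Psi(\gamma)},
\]
does not follow, and the extension claim is false. Here is a counterexample.
\begin{itemize}
\item Take $n=1$; every VDC has unary globular decompositions via $\alpha=\frac{\alpha}{\text{id}}$.
\item Let $\Dbl{A}$ be a representable VDC such as $\Dbl{F}_u(\Sq{1})$. Its only globular unary cells are identities, but it has a non-globular unary cell $\gamma:\cell{c}{\varphi}{\psi}{d}$.
\item Let $\Dbl{X}$ have one object and one loose arrow $\chi$, with unary endocells forming a monoid containing some $\theta\neq\text{id}_\chi$.
\item Take all boundary functors constant, $f_0,f_1$ identities, and $\Phi=\Psi$ constant at $\chi$, sending every unary cell to $\text{id}_\chi$.
\end{itemize}
Your naturality conditions are then vacuous, so the assignment $\Gamma_g(\text{id}_\varphi)=\text{id}_\chi$, $\Gamma_g(\text{id}_\psi)=\theta$ is admissible. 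But any extension must satisfy both $\Gamma(\gamma)=\frac{\Gamma(\text{id}_\varphi)}{\Psi(\gamma)}=\text{id}_\chi$ and $\Gamma(\gamma)=\frac{\Phi(\gamma)}{\Gamma(\text{id}_\psi)}=\theta$.

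Neither of the tools you invoke can close this. Essential uniqueness of globular decompositions only involves globular unary cells acting on the target. Naturality of $f_0,f_1$ only ensures that the tight boundaries of the two sides agree.

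\textbf{The missing idea} is that the compatibility hypothesis must itself include non-globular postcomposition. The paper requires the following. Whenever
\[
\frac{\prolist{\beta}}{\frac{\alpha}{\gamma}}=\frac{\prolist{\beta}'}{\frac{\alpha'}{\gamma'}}
\]
in $\Dbl{A}$, with $\alpha,\alpha'$ globular and $\prolist{\beta},\prolist{\beta}',\gamma,\gamma'$ arbitrary unary, the corresponding pastings $\frac{\Phi\prolist{\beta}}{\frac{\Gamma\alpha}{\Psi\gamma}}$ and $\frac{\Phi\prolist{\beta}'}{\frac{\Gamma\alpha'}{\Psi\gamma'}}$ in $\Dbl{X}$ (with tight boundaries from $f_0,f_1$) agree. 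With that hypothesis:
\begin{itemize}
\item your postcomposition identity is the instance $\prolist{\beta}=\text{id}$, $\gamma'=\text{id}$;
\item well-definedness is the instance $\gamma=\gamma'=\text{id}$;
\item your zigzag and precomposition arguments then go through as written.
\end{itemize}

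The same oversight undermines your fallback. In $\Sq{n}\times\Dbl{A}$ the relations among the generators $(\square_n,\beta)$ are not only globular slides. The cell $(\square_n,\frac{\beta}{\gamma})$ forces
\[
\frac{(\square_n,\beta)}{(\text{id}_\ell,\gamma)}=\frac{(\text{id}_{\ell_1},\beta^\#_1)\,\cdots\,(\text{id}_{\ell_n},\beta^\#_n)}{(\square_n,\beta')},
\]
which links two distinct globular generators. This is precisely the relation your compatibility condition omits.
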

\begin{proof}
    Let $\Gamma:(\alpha_0{\;}^{\prolist{\Phi}}_{\Psi}{\;}\alpha_1):(^{F_0 \cdots F_n}_{G_0\;\;\;\;G_1})$ be an $n$-ary multicell in $\Dbl{X}^\Dbl{A}$. Then for an $n$-ary multicell in $\Dbl{A}$ below left, we can factor it using
 	$n$-ary globular decompositions as below right:
\[\begin{tikzcd}
	x & y & x & y \\
	&& z & w \\
	z & w & z & w
	\arrow[""{name=0, anchor=center, inner sep=0}, "{{{{{\prolist{\varphi}}}}}}"{inner sep=.8ex}, "\shortmid"{marking}, from=1-1, to=1-2]
	\arrow["a"', from=1-1, to=3-1]
	\arrow[""{name=1, anchor=center, inner sep=0}, "b", from=1-2, to=3-2]
	\arrow[""{name=2, anchor=center, inner sep=0}, "{{{{{\prolist{\varphi}}}}}}"{inner sep=.8ex}, "\shortmid"{marking}, from=1-3, to=1-4]
	\arrow["a"', from=1-3, to=2-3]
	\arrow["{{{ b}}}", from=1-4, to=2-4]
	\arrow[""{name=3, anchor=center, inner sep=0}, "{{{{\prolist{\varphi}'}}}}"'{inner sep=.8ex}, "\shortmid"{marking}, from=2-3, to=2-4]
	\arrow[equals, from=2-3, to=3-3]
	\arrow[equals, from=2-4, to=3-4]
	\arrow[""{name=4, anchor=center, inner sep=0}, "\psi"'{inner sep=.8ex}, "\shortmid"{marking}, from=3-1, to=3-2]
	\arrow[""{name=5, anchor=center, inner sep=0}, "\psi"'{inner sep=.8ex}, "\shortmid"{marking}, from=3-3, to=3-4]
	\arrow["\alpha"{description}, draw=none, from=0, to=4]
	\arrow[between={0.4}{0.6}, equals, from=1, to=2-3]
	\arrow["{{\prolist{\beta}}}"{description}, draw=none, from=2, to=3]
	\arrow["{{\alpha'}}"{description}, draw=none, from=3, to=5]
\end{tikzcd}\]
    Functoriality of pasting for the multicell $\Gamma$ then gives the equality
\[\begin{tikzcd}
	{F_0x} & {F_ny} && {F_0x} & {F_ny} \\
	&&& {F_0z} & {F_nw} \\
	{G_0z} & {G_1w} && {G_0z} & {G_1 d}
	\arrow[""{name=0, anchor=center, inner sep=0}, "{{{{{\prolist{\Phi}({\prolist{\varphi}}})}}}}"{inner sep=.8ex}, "\shortmid"{marking}, from=1-1, to=1-2]
	\arrow["{{{{{f_0(a)}}}}}"', from=1-1, to=3-1]
	\arrow[""{name=1, anchor=center, inner sep=0}, "{{{{f_1(b)}}}}", from=1-2, to=3-2]
	\arrow[""{name=2, anchor=center, inner sep=0}, "{{{{{\prolist{\Phi}({\prolist{\varphi}}})}}}}"{inner sep=.8ex}, "\shortmid"{marking}, from=1-4, to=1-5]
	\arrow["{{{{{F_0a}}}}}"', from=1-4, to=2-4]
	\arrow["{{{{{F_nb}}}}}", from=1-5, to=2-5]
	\arrow[""{name=3, anchor=center, inner sep=0}, "{{{{{\prolist{\Phi}(\prolist{\varphi}'})}}}}"'{inner sep=.8ex}, "\shortmid"{marking}, from=2-4, to=2-5]
	\arrow["{{{{f_0(\text{id}_z)}}}}"', from=2-4, to=3-4]
	\arrow["{{{{f_1(\text{id}_w)}}}}", from=2-5, to=3-5]
	\arrow[""{name=4, anchor=center, inner sep=0}, "{{{{{\Psi(\psi)}}}}}"'{inner sep=.8ex}, "\shortmid"{marking}, from=3-1, to=3-2]
	\arrow[""{name=5, anchor=center, inner sep=0}, "{{{{{\Psi(\psi)}}}}}"'{inner sep=.8ex}, "\shortmid"{marking}, from=3-4, to=3-5]
	\arrow["{{\Gamma\alpha}}"{description}, draw=none, from=0, to=4]
	\arrow[between={0.4}{0.5}, equals, from=1, to=2-4]
	\arrow["{{\prolist{\Phi}\prolist{\beta}}}"{description}, draw=none, from=2, to=3]
	\arrow["{{\Gamma\alpha'}}"{description}, draw=none, from=3, to=5]
\end{tikzcd}\]
    Thus, $\Gamma$ is determined by its image on globular multicells, as desired.

    Conversely, suppose that for the same boundary we have an assignment $\Gamma$ on globular $n$-ary multicells such that for any pasting equality in $\Dbl{A}$ below left, we have the equality below right in $\Dbl{X}$
\[\begin{tikzcd}
	x & y & x & y & {F_0x} & {F_ny} & {F_0x} & {F_ny} \\
	z & w & {z'} & {w'} & {F_0z} & {F_nw} & {F_0z'} & {F_nw'} \\
	z & w & {z'} & {w'} & {G_0z} & {G_1w} & {G_0z'} & {G_1w'} \\
	u & v & u & v & {G_0u} & {G_1v} & {G_0u} & {G_1v}
	\arrow[""{name=0, anchor=center, inner sep=0}, "{{\prolist{\varphi}}}"{inner sep=.8ex}, "\shortmid"{marking}, from=1-1, to=1-2]
	\arrow["a"', from=1-1, to=2-1]
	\arrow["b", from=1-2, to=2-2]
	\arrow[""{name=1, anchor=center, inner sep=0}, "{{\prolist{\varphi}}}"{inner sep=.8ex}, "\shortmid"{marking}, from=1-3, to=1-4]
	\arrow["{{a'}}"', from=1-3, to=2-3]
	\arrow["{{b'}}", from=1-4, to=2-4]
	\arrow[""{name=2, anchor=center, inner sep=0}, "{{{\prolist{\Phi}({{{\prolist{\varphi}}}})}}}"{inner sep=.8ex}, "\shortmid"{marking}, from=1-5, to=1-6]
	\arrow["{{{F_0a}}}"', from=1-5, to=2-5]
	\arrow["{{{F_nb}}}", from=1-6, to=2-6]
	\arrow[""{name=3, anchor=center, inner sep=0}, "{{{\prolist{\Phi}({{{\prolist{\varphi}}}})}}}"{inner sep=.8ex}, "\shortmid"{marking}, from=1-7, to=1-8]
	\arrow["{{{F_0a'}}}"', from=1-7, to=2-7]
	\arrow["{{{F_nb'}}}", from=1-8, to=2-8]
	\arrow[""{name=4, anchor=center, inner sep=0}, "{{\prolist{\varphi'}}}"'{inner sep=.8ex}, "\shortmid"{marking}, from=2-1, to=2-2]
	\arrow[equals, from=2-1, to=3-1]
	\arrow[""{name=5, anchor=center, inner sep=0}, equals, from=2-2, to=3-2]
	\arrow[""{name=6, anchor=center, inner sep=0}, "{{\prolist{\varphi''}}}"'{inner sep=.8ex}, "\shortmid"{marking}, from=2-3, to=2-4]
	\arrow[""{name=7, anchor=center, inner sep=0}, equals, from=2-3, to=3-3]
	\arrow[equals, from=2-4, to=3-4]
	\arrow[""{name=8, anchor=center, inner sep=0}, "{{{\prolist{\Phi}({{\prolist{\varphi}'}})}}}"'{inner sep=.8ex}, "\shortmid"{marking}, from=2-5, to=2-6]
	\arrow["{{{f_0(z)}}}"', from=2-5, to=3-5]
	\arrow[""{name=9, anchor=center, inner sep=0}, "{{{f_1(w)}}}"{description}, from=2-6, to=3-6]
	\arrow[""{name=10, anchor=center, inner sep=0}, "{{{\prolist{\Phi}({{\prolist{\varphi}''}})}}}"'{inner sep=.8ex}, "\shortmid"{marking}, from=2-7, to=2-8]
	\arrow[""{name=11, anchor=center, inner sep=0}, "{{{f_0(z')}}}"{description}, from=2-7, to=3-7]
	\arrow["{{{f_1(w')}}}", from=2-8, to=3-8]
	\arrow[""{name=12, anchor=center, inner sep=0}, "\psi"'{inner sep=.8ex}, "\shortmid"{marking}, from=3-1, to=3-2]
	\arrow["c"', from=3-1, to=4-1]
	\arrow["d", from=3-2, to=4-2]
	\arrow[""{name=13, anchor=center, inner sep=0}, "{{\psi'}}"'{inner sep=.8ex}, "\shortmid"{marking}, from=3-3, to=3-4]
	\arrow["{{c'}}"', from=3-3, to=4-3]
	\arrow["{{d'}}", from=3-4, to=4-4]
	\arrow[""{name=14, anchor=center, inner sep=0}, "{{{\Psi\psi}}}"'{inner sep=.8ex}, "\shortmid"{marking}, from=3-5, to=3-6]
	\arrow["{{{G_0c}}}"', from=3-5, to=4-5]
	\arrow["{{{G_1d}}}", from=3-6, to=4-6]
	\arrow[""{name=15, anchor=center, inner sep=0}, "{{{\Psi\psi'}}}"'{inner sep=.8ex}, "\shortmid"{marking}, from=3-7, to=3-8]
	\arrow["{{{G_0c'}}}"', from=3-7, to=4-7]
	\arrow["{{{G_1d'}}}", from=3-8, to=4-8]
	\arrow[""{name=16, anchor=center, inner sep=0}, "\rho"', from=4-1, to=4-2]
	\arrow[""{name=17, anchor=center, inner sep=0}, "\rho"', from=4-3, to=4-4]
	\arrow[""{name=18, anchor=center, inner sep=0}, "{{{\Psi\rho}}}"'{inner sep=.8ex}, "\shortmid"{marking}, from=4-5, to=4-6]
	\arrow[""{name=19, anchor=center, inner sep=0}, "{{{\Psi\rho}}}"'{inner sep=.8ex}, "\shortmid"{marking}, from=4-7, to=4-8]
	\arrow["{{\prolist{\beta}}}"{description}, draw=none, from=0, to=4]
	\arrow["{{\prolist{\beta'}}}"{description}, draw=none, from=1, to=6]
	\arrow["{{\prolist{\Phi}\prolist{\beta}}}"{description}, draw=none, from=2, to=8]
	\arrow["{{\prolist{\Phi}\prolist{\beta}'}}"{description}, draw=none, from=3, to=10]
	\arrow[between={0.4}{0.6}, equals, from=5, to=7]
	\arrow["{{\Gamma\alpha}}"{description}, draw=none, from=8, to=14]
	\arrow[between={0.4}{0.6}, equals, from=9, to=11]
	\arrow["{{\Gamma \alpha'}}"{description}, draw=none, from=10, to=15]
	\arrow["\alpha"{description}, draw=none, from=12, to=4]
	\arrow["\gamma"{description}, draw=none, from=12, to=16]
	\arrow["{{\alpha'}}"{description}, draw=none, from=13, to=6]
	\arrow["{{\gamma'}}"{description}, draw=none, from=13, to=17]
	\arrow["{{\Psi\gamma}}"{description}, draw=none, from=14, to=18]
	\arrow["{{\Psi\gamma'}}"{description}, draw=none, from=15, to=19]
\end{tikzcd}\]
    Then these assignments assemble into an $n$-ary multicell in $\Dbl{X}^\Dbl{A}$.
\end{proof}

\subsection{Local colimits and the characterization theorem}

Next, we can define local colimits in a VDC in analogy with Definition 2.2.1 of~\cite{Pare2013}. 
First, for a VDC $\Dbl{D}$ with objects $x,y \in \Dbl{D}$, let $\Cat{gCell}_n(\Dbl{D})(x,y)$ 
denote the category whose objects are length $n$ sequences of loose arrows $x\proto y$, and 
whose morphisms are sequences of unary multicells, with the outermost tight arrows being identities.
Then for a category $\Cat{I}$, a functor $\prolist{\varphi}:\Cat{I}\to \Cat{gCell}_n(\Dbl{D})(x,y)$, 
tight arrows $a:x\to z$ and $b:y\to w$, and a loose arrow $\psi:z\proto w$, we will write 
$\uSq{n}(\Dbl{D})\cell{a}{\prolist{\varphi}}{\psi}{b}$ for the set of cocones under $\prolist{\varphi}$ with specified boundary. 
When $a$ and $b$ are not written explicitly, the boundary will be understood to be identity arrows. 
Here a cocone under $\prolist{\varphi}$ is a collection of multicells 
$\left\{\alpha_i:\cell{a}{\prolist{\varphi(i)}}{\psi}{b}\right\}_{i \in \Cat{I}}$ satisfying 
$\frac{\prolist{\varphi}(p)}{\alpha_j}=\alpha_i$ for all $p:i\to j$ in $\Cat{I}$.

We define a stable notion of local colimits in VDCs, so that colimits in $\Cat{gCell}(\Dbl{D})(x,y):=\Cat{gCell}_1(\Dbl{D})(x,y)$ 
must both exist and satisfy a property virtualizing the preservation of local colimits by composition of loose arrows.

\begin{defn}[Local $\Cat{I}$-Colimits]\label{defn:localColim}
    Let $\Cat{I}$ be a small category and let $\Dbl{D}$ be a VDC. Then $\Dbl{D}$ is said to have local $\Cat{I}$-colimits if the following are satisfied:
    \begin{itemize}
        \item[(L1)] For each $x,y \in \uOb (\Dbl{D})$, category $\Cat{gCell}(\Dbl{D})(x,y)$ has $\Cat{I}$-colimits.
        \item[(L2)] For every pair of sequences of loose arrows $\prolist{\chi}_1:x\proto y$ and $\prolist{\chi}_2:z\proto w$, 
        and every functor $\varphi:\Cat{I}\to \Cat{gCell}(\Dbl{D})(y,z)$ with colimit $\colim_\Cat{I}\,\varphi:y\proto z$ and 
        colimit cocone $\lambda_i:\varphi_i\Rightarrow \Phi$, pre-composition by the cone legs induces a bijection
		\begin{equation*}
			\uSq{|\prolist{\chi}_1|+1+|\prolist{\chi}_2|}(\Dbl{D})\cell{a}{\prolist{\chi}_1,\colim_\Cat{I}\,\varphi,\prolist{\chi}_2}{\psi}{b}\xrightarrow{\cong}\uSq{|\prolist{\chi}_1|+1+|\prolist{\chi}_2|}(\Dbl{D})\cell{a}{\prolist{\chi}_1,\varphi,\prolist{\chi}_2}{\psi}{b}
		\end{equation*}
		between multicells and cocones for any fixed tight arrows $a:x\to x'$ and $b:w\to w'$, and any fixed loose arrow $\psi:x'\proto w'$.
    \end{itemize}
	We will say a VDC has \emph{weak local $\Cat{I}$-colimits} if it only satisfies (L2) for $\prolist{\chi}_1$ and $\prolist{\chi}_2$ 
  being empty sequences of loose arrows, and in this case we refer to the condition as (wL2).	
\qed
\end{defn}

We will be most often considering local $\Cat{I}$-colimits in VDCs $\Dbl{D}$ with weak non-nullary composites, 
in which case (gL2) (i.e.~condition (L2) when $a$ and $b$ are identity tight arrows) can be rephrased as saying 
that the functors given by composing loose arrows preserve the $\Cat{I}$-colimits in (L1). On the other hand, 
(wL2) gives a tight-base change invariance of colimits. When the VDC being considered has restrictions (L2) follows from (gL2).

With Lemma~\ref{lem:Uniqueglobular} we can use local colimits to provide sufficient conditions for $\Dbl{X}^\Dbl{A}$ to have (weak) non-nullary composites. The arguments are largely analogous to those due to Par\'{e} in~\cite{Pare2013}.

\begin{thm}\label{thm:weakComp}
    If $\Dbl{A}$ is a small exponentiable VDC that has positive globular decompositions and $\Dbl{X}$ is a large (weakly) locally cocomplete VDC with (weak) non-nullary composites, then $\Dbl{X}^\Dbl{A}$ has (weak) non-nullary composites.
\end{thm}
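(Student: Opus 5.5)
We construct, for a sequence of composable loose arrows $\Phi_1,\ldots,\Phi_n$ in $\Dbl{X}^\Dbl{A}$ with $n\geq 1$, a loose arrow $\Theta$ together with an $n$-ary globular multicell $\Theta_\circ:(\Phi_1\cdots\Phi_n)\Rightarrow\Theta$, and then verify that $\Theta_\circ$ is (weakly) opcartesian. By Proposition~\ref{prop:elementsOfExpVDC}, each $\Phi_i$ is a map of spans $\Dbl{A}_1\to\Dbl{X}_1$ over functors $F_{i-1},F_i:\Dbl{A}_0\to\Dbl{X}_0$. Following Par\'{e}'s construction in~\cite{Pare2013}, we define $\Theta$ on a loose arrow $\psi:x\proto y$ of $\Dbl{A}$ as a local colimit in $\Cat{gCell}(\Dbl{X})(F_0x,F_ny)$. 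The indexing category is the category $\Cat{I}_\psi$ of globular $n$-ary multicells $\alpha:\prolist{\varphi}\Rightarrow\psi$ in $\Dbl{A}$ whose morphisms are lists of unary multicells with identity outer boundary, compatible with the $\alpha$'s. This is the category of elements of $\Cat{gCell}(\Dbl{A})(-,\psi)$ restricted to length-$n$ sources. The diagram sends $\alpha$ to the (weak) composite $\Phi_1(\varphi_1)\odot\cdots\odot\Phi_n(\varphi_n)$, which exists since $\Dbl{X}$ has (weak) non-nullary composites, and it acts on morphisms through the universal property of that composite. We set $\Theta(\psi):=\colim_{\Cat{I}_\psi}$, using (L1); smallness of $\Dbl{A}$ makes $\Cat{I}_\psi$ small. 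Functoriality of $\Theta$ on unary multicells $\psi\to\psi'$ of $\Dbl{A}$ uses positive globular decompositions with $n=1$. A unary cell $\cell{a}{\psi}{\psi'}{b}$ composed with any $\alpha$ is an $n$-ary cell, which factors essentially uniquely as a list of unary cells followed by a globular $n$-ary cell into $\psi'$. This factorization gives a functor $\Cat{I}_\psi\to\Cat{I}_{\psi'}$ over tight maps, and (wL2) identifies cocones under $\Theta(\psi)$ with tight boundary $(F_0a,F_nb)$ with cocones under the diagram. Essential uniqueness of the decompositions is exactly what makes these assignments well defined on colimit classes and functorial.

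The candidate cell $\Theta_\circ$ is the $n$-ary multicell of $\Dbl{X}^\Dbl{A}$ whose component at a globular $\alpha$ is the composite of the opcartesian cell into $\odot\Phi_i(\varphi_i)$ with the colimit injection at $\alpha$. By Lemma~\ref{lem:Uniqueglobular}, applied with positive $n$-ary globular decompositions, this extends uniquely to a multicell on all $n$-ary cells once the compatibility on boundary actions is checked. That compatibility holds by construction of the action of $\Theta$ on unary cells.

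For the universal property, take an $m$-ary multicell $\Gamma$ in $\Dbl{X}^\Dbl{A}$ whose source contains $\Phi_1\cdots\Phi_n$ as a consecutive block (only the case with no surrounding arrows in the weak case). We evaluate $\Gamma$ on an $m$-ary cell of $\Dbl{A}$ and decompose it, using exponentiability (Theorem~\ref{thm:ExpViaDecomp}) together with globular decompositions, into a layer that is globular $n$-ary on the block and a remaining cell. On the block, the values of $\Gamma$ then form a cocone indexed by $\Cat{I}_\psi$ with outer arrows $\prolist{\chi}_1,\prolist{\chi}_2$. Two universal properties now produce a unique factorization: first the opcartesian property of each $\odot\Phi_i(\varphi_i)$ in $\Dbl{X}$ (weak opcartesianness suffices when the $\prolist{\chi}$ are empty), then (L2), respectively (wL2), for the colimit. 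Uniqueness and naturality follow again from Lemma~\ref{lem:Uniqueglobular} together with the essential uniqueness of decompositions.

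The main obstacle is this last step. One must show that the factorization defined pointwise through chosen decompositions is independent of the choices, and that it is functorial, i.e.\ that it commutes with the actions of unary multicells of $\Dbl{A}$ on the surrounding loose arrows. We would handle this by reducing all choices to a single connected component of $\Cat{Paste}_2(\Dbl{A};-)$, and checking that each elementary move (sliding a unary cell between layers) is absorbed by the cocone condition in $\Dbl{X}$.
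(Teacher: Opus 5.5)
Your proposal is correct and is essentially the paper's own argument. It uses the same Par\'{e}-style colimit over globular $n$-ary cells (Construction~\ref{cons:CompCandidate}), the action on unary cells via $n$-ary globular decompositions and (wL2), the comparison cell via Lemma~\ref{lem:Uniqueglobular}, and the factorization via the cocone $\beta\mapsto\Gamma\bigl(\frac{(\text{id},\beta,\text{id})}{\delta}\bigr)$ together with opcartesianness in $\Dbl{X}$ and (L2)/(wL2).

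Two small corrections. First, the unary-cell action uses the $n$-ary globular decompositions with all-unary top layer, not the case $n=1$ as you wrote. Second, if you define $\Gamma^\flat(\delta)$ directly from that cocone, as the paper does, no choices enter its definition, so the independence-of-choices part of your ``main obstacle'' disappears. Decompositions coming from exponentiability are then needed only to check that $\Gamma$ equals the composite; the naturality of $\Gamma^\flat$ in unary cells at the block position, which rests on essential uniqueness, still has to be verified.
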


Before proceeding to the proof, we'll construct a candidate for the weak composite in the exponential.

\begin{cons}\label{cons:CompCandidate}
	Let $\Dbl{A}$ be an exponentiable VDC that has $n$-ary globular decompositions, 
	for $n\geq 2$ some integer, and let $\Dbl{X}$ be a weakly locally cocomplete VDC with weak non-nullary composites. 
  Consider a length-$n$ sequence of loose arrows in $\Dbl{X}^\Dbl{A}$, $\prolist{\Phi}:F_0\proto F_n$. 
  To define a candidate for its composite $\odot(\prolist{\Phi}):F_0\proto F_n$ we need to define 
  $\odot(\prolist{\Phi})$ on loose arrows in $\Dbl{A}$, and on unary multicells. 
  Roughly speaking, we will define $\odot(\prolist{\Phi})$ on a loose arrow $\varphi:x\proto y$ in $\Dbl{A}$ by 
  approximating $\varphi$ by domains of globular $n$-ary multicells, applying $\prolist{\Phi}$ to their domains, and colimiting all the results.

	In more detail: given a loose arrow $\varphi:x\proto y$ in $\Dbl{A}$, let $\Cat{I}_{\varphi,n}$, $n \geq 2$, 
  be the category whose objects are globular $n$-ary multicells in $\Dbl{A}$ with loose target $\varphi$, 
  and whose non-identity morphisms $\alpha\to \beta$ consist of compatible sequences of multicells $(\gamma_1\cdots\gamma_n) \in \MCelln{n}(\Dbl{A})$ 
  such that $\alpha=\frac{\gamma_1\cdots \gamma_n}{\beta}$. Note that necessarily the tight source of $\gamma_1$ and the 
  tight target of $\gamma_n$ are identities, so $\prolist{\gamma}\in \Cat{gCell}_n(\Dbl{A})$.
	Then we define a functor $\Gamma^{\prolist{\Phi}}_{\varphi,n}:\Cat{I}_{\varphi,n}\to \Cat{gCell}(\Dbl{X})(F_0x,F_ny)$ by sending an object $\alpha:\scell{\psi_1 \cdots \psi_n}{\varphi}$ to the composite
	\begin{equation*}
		\Gamma_{\varphi,n}^{\prolist{\Phi}}(\alpha) := \Phi_1\psi_1\odot \cdots \odot \Phi_n\psi_n
	\end{equation*}
	and sending a morphism $\prolist{\gamma}:\scell{\varphi_1' \cdots \varphi_n'}{\varphi}\to \scell{\psi_1 \cdots \psi_n}{\varphi}$ to the unique multicell appearing in the factorization
	\[\begin{adjustbox}{}
\begin{tikzcd}
	{F_0x} && {F_ny} && {F_0x} && {F_ny} \\
	{F_0x} && {F_ny} & {=} & {F_0x} && {F_ny} \\
	{F_0x} && {F_ny} && {F_0x} && {F_ny}
	\arrow[""{name=0, anchor=center, inner sep=0}, "{{{{\prolist{\Phi}\prolist{\varphi}'}}}}"{inner sep=.8ex}, "\shortmid"{marking}, from=1-1, to=1-3]
	\arrow[equals, from=1-1, to=2-1]
	\arrow[equals, from=1-3, to=2-3]
	\arrow[""{name=1, anchor=center, inner sep=0}, "{{{{\prolist{\Phi}\prolist{\varphi}'}}}}"{inner sep=.8ex}, "\shortmid"{marking}, from=1-5, to=1-7]
	\arrow[equals, from=1-5, to=2-5]
	\arrow[equals, from=1-7, to=2-7]
	\arrow[""{name=2, anchor=center, inner sep=0}, "{{{{\prolist{\Phi}\prolist{\psi}}}}}"'{inner sep=.8ex}, "\shortmid"{marking}, from=2-1, to=2-3]
	\arrow[equals, from=2-1, to=3-1]
	\arrow[equals, from=2-3, to=3-3]
	\arrow[""{name=3, anchor=center, inner sep=0}, "{{{{\Phi \varphi_1'\odot \cdots \odot \Phi \varphi_n'}}}}"{inner sep=.8ex}, "\shortmid"{marking}, from=2-5, to=2-7]
	\arrow[equals, from=2-5, to=3-5]
	\arrow[equals, from=2-7, to=3-7]
	\arrow[""{name=4, anchor=center, inner sep=0}, "{{{{\Phi \psi_1\odot \cdots \odot \Phi \psi_n}}}}"'{inner sep=.8ex}, "\shortmid"{marking}, from=3-1, to=3-3]
	\arrow[""{name=5, anchor=center, inner sep=0}, "{{{{\Phi \psi_1\odot \cdots \odot \Phi \psi_n}}}}"'{inner sep=.8ex}, "\shortmid"{marking}, from=3-5, to=3-7]
	\arrow["{\prolist{\Phi}\prolist{\gamma}}"{description}, draw=none, from=0, to=2]
	\arrow["{\mathsf{opcart}}"{description}, draw=none, from=1, to=3]
	\arrow["{\mathsf{opcart}}"{description}, draw=none, from=2, to=4]
	\arrow["{\exists!\Gamma_{\varphi,n}^{\prolist{\Phi}}(\prolist{\gamma})}"{description}, draw=none, from=3, to=5]
\end{tikzcd}
\end{adjustbox}\]
	Then for any functor $Q:\Cat{gCell}(\Dbl{X})(F_0x,F_ny)\to \Cat{C}$, a cocone $\kappa:Q\Gamma_{\varphi,n}^{\prolist{\Phi}}\Rightarrow \prolist{c}$ with tip $c\in \Cat{C}$ is equivalent to a family of maps $\kappa(\alpha):Q(\Phi_1 \psi_1\odot \cdots\odot \Phi_n\psi_n)\to c$ indexed by objects of $\Cat{I}_{\varphi,n}$ such that if $\prolist{\gamma}:\alpha\to \beta$ is a morphism in $\Cat{I}_{\varphi,n}$, then $\kappa(\beta)\circ Q(\Phi_1\gamma_1\odot \cdots\odot \Phi_n\gamma_n) = \kappa(\alpha)$.

	Using the fact that $\Dbl{X}$ is locally cocomplete and $\Dbl{A}$ is small, 
	so that the categories $\Cat{I}_{\varphi,n}$ are also small, 
	we can define $\odot(\prolist{\Phi})(\varphi):=\colim_{\Cat{I}_{\varphi,n}}\;\Gamma_{\varphi,n}^{\prolist{\Phi}}$. 
	For an object $\beta:\scell{\prolist{\varphi}'}{\varphi}$ in $\Cat{I}_{\varphi,n}$, 
	let $\lambda_\beta:\scell{\prolist{\Phi}(\prolist{\varphi}')}{\odot(\prolist{\Phi})(\varphi)}$ 
	denote the associated cocone leg for the colimit. 
	To define $\odot(\prolist{\Phi})$ on a unary multicell $\sigma:\cell{a}{\varphi}{\psi}{b}$, 
	note that for any $\alpha \in \Cat{I}_{\varphi,n}$, as below left, we can use
	$n$-ary globular decompositions to obtain the essentially unique decomposition below right:
	\[\begin{adjustbox}{}
\begin{tikzcd}
	x & y && x & y \\
	x & y & {=} & z & w \\
	z & w && z & w
	\arrow[""{name=0, anchor=center, inner sep=0}, "{{\prolist{\rho}}}"{inner sep=.8ex}, "\shortmid"{marking}, from=1-1, to=1-2]
	\arrow[equals, from=1-1, to=2-1]
	\arrow[equals, from=1-2, to=2-2]
	\arrow[""{name=1, anchor=center, inner sep=0}, "{{\prolist{\rho}}}"{inner sep=.8ex}, "\shortmid"{marking}, from=1-4, to=1-5]
	\arrow["a"', from=1-4, to=2-4]
	\arrow["b", from=1-5, to=2-5]
	\arrow[""{name=2, anchor=center, inner sep=0}, "{{\varphi}}"'{inner sep=.8ex}, "\shortmid"{marking}, from=2-1, to=2-2]
	\arrow["a"', from=2-1, to=3-1]
	\arrow["b", from=2-2, to=3-2]
	\arrow[""{name=3, anchor=center, inner sep=0}, "{{\prolist{\chi}}}"'{inner sep=.8ex}, "\shortmid"{marking}, from=2-4, to=2-5]
	\arrow[equals, from=2-4, to=3-4]
	\arrow[equals, from=2-5, to=3-5]
	\arrow[""{name=4, anchor=center, inner sep=0}, "{{\psi}}"'{inner sep=.8ex}, "\shortmid"{marking}, from=3-1, to=3-2]
	\arrow[""{name=5, anchor=center, inner sep=0}, "{{\psi}}"'{inner sep=.8ex}, "\shortmid"{marking}, from=3-4, to=3-5]
	\arrow["\alpha"{description}, draw=none, from=0, to=2]
	\arrow["{\prolist{\delta}}"{description}, draw=none, from=1, to=3]
	\arrow["\sigma"{description}, draw=none, from=2, to=4]
	\arrow["\beta"{description}, draw=none, from=3, to=5]
\end{tikzcd}
\end{adjustbox}\]
	We can then define a cocone $\kappa(\sigma)$ with leg $\kappa(\sigma)_\alpha:= \frac{\prolist{\Phi}\prolist{\delta}}{\lambda_\beta}$ 
 	The essential uniqueness of the decomposition, and the fact that $\lambda$ constitutes a cocone ensures that this definition is independent of the choice of decomposition, and also defines a cocone. By the universal property of local colimits (specifically property (wL2)), the cocone $\kappa(\sigma)$ uniquely induces a multicell
	\[\begin{adjustbox}{}
\begin{tikzcd}
	{F_0x} && {F_ny} \\
	{F_0z} && {F_nw}
	\arrow[""{name=0, anchor=center, inner sep=0}, "{{(\Phi_1\odot \cdots \odot \Phi_n)(\varphi)}}"{inner sep=.8ex}, "\shortmid"{marking}, from=1-1, to=1-3]
	\arrow["{{F_0a}}"', from=1-1, to=2-1]
	\arrow["{{F_nb}}", from=1-3, to=2-3]
	\arrow[""{name=1, anchor=center, inner sep=0}, "{{(\Phi_1\odot \cdots \odot \Phi_n)(\psi)}}"'{inner sep=.8ex}, "\shortmid"{marking}, from=2-1, to=2-3]
	\arrow["{(\Phi_1\odot \cdots  \odot \Phi_n)(\sigma)}"{description}, draw=none, from=0, to=1]
\end{tikzcd}
\end{adjustbox}\]
	Uniqueness of these decompositions ensures that this definition is functorial in the multicells. Using these unique multicells we obtain our desired functor $\odot(\prolist{\Phi}):\Dbl{A}_1\to \Dbl{X}_1$ defining a loose arrow $F_0\proto F_n$ in $\Dbl{X}^\Dbl{A}$. We will also denote this functor by $\Phi_1\odot \cdots \odot \Phi_n$. In other words, we're defining the composite by the coend
	\begin{equation}\label{eq:coendDescOfComposite}
		\int^{\prolist{\varphi}\in \Cat{gl}(\Cat{fc}(\Dbl{A}_1))}\Cat{gCell}(\Dbl{A})(\prolist{\varphi},-)\star (\Phi_1(\varphi_1)\odot \cdots \odot \Phi_n(\varphi_n))
	\end{equation}
	where $\star$ denotes the tensoring of $\Dbl{X}_1$ over $\Cat{Set}$, which exists since $\Dbl{X}$ is locally cocomplete.

	Finally, in order to be a composition candidate we need a comparison multicell. The universal property of local colimits defining $\odot(\prolist{\Phi})$ on loose arrows and multicells, along with Lemma~\ref{lem:Uniqueglobular}, then allows us to define a globular multicell $\odot_{\prolist{\Phi}}:(\Phi_1 \cdots \Phi_n)\Rightarrow \odot(\prolist{\Phi})$, which at a globular $n$-ary multicell $\alpha:\scell{\psi_1 \cdots \psi_n}{\varphi}$ is given by the leg of the colimit cocone $\lambda_\alpha:\scell{\Phi_1\psi_1 \cdots \Phi_n\psi_n}{\odot(\prolist{\Phi})(\varphi)}$. 
	\qed
\end{cons}

In order to prove Theorem~\ref{thm:weakComp} we will show that this multicell defined in the construction is (weakly) opcartesian. We'll begin with proving the weak case:

\begin{proof}[Proof of the Weak Case for Theorem~\ref{thm:weakComp}.]
	Suppose $\Dbl{A}$ is a small exponentiable VDC that has positive globular decompositions, 
	and $\Dbl{X}$ is a large weakly locally cocomplete VDC with weak non-nullary composites. 
	Consider a multicell $\Gamma:\cell{f_0}{\prolist{\Phi}}{\Psi}{f_1}$ in $\Dbl{X}^\Dbl{A}$.
    Then for any globular $n$-cell $\alpha:\scell{\varphi_1 \cdots \varphi_n}{\psi}$ we obtain the multicell $\Gamma(\alpha):\cell{f_0(\text{id}_x)}{\prolist{\Phi}\prolist{\varphi}}{\Psi\psi}{f_1(\text{id}_y)}$
    with the functoriality of pasting ensuring that this defines a cocone, as in property (wL2). Thus, by the universal property of the colimit we obtain a unique multicell 
    \[\begin{adjustbox}{}
\begin{tikzcd}
	{F_0x} && {F_ny} \\
	{G_0x} && {G_1y}
	\arrow[""{name=0, anchor=center, inner sep=0}, "{{(\Phi_1\odot \cdots \odot \Phi_n)(\psi)}}"{inner sep=.8ex}, "\shortmid"{marking}, from=1-1, to=1-3]
	\arrow["{{f_0(\text{id}_{x})}}"', from=1-1, to=2-1]
	\arrow["{{f_1(\text{id}_{y})}}", from=1-3, to=2-3]
	\arrow[""{name=1, anchor=center, inner sep=0}, "{{\Psi \psi}}"'{inner sep=.8ex}, "\shortmid"{marking}, from=2-1, to=2-3]
	\arrow["{\exists!\Gamma^\flat(\text{id}_\psi)}"{description}, draw=none, from=0, to=1]
\end{tikzcd}
\end{adjustbox}\]
    through which the $\Gamma(\alpha)$ factor as $\frac{\odot_{\prolist{\Phi}}(\alpha)}{\Gamma^\flat(\text{id}_\psi)}$. It remains to show that this definition of $\Gamma^\flat$ is compatible with pasting from below, as in the proof of Lemma~\ref{lem:Uniqueglobular}. Observe that if we paste a unary multicell $\sigma:\cell{a}{\psi}{\psi'}{b}$ below our cocone defining $\Gamma^\flat(\text{id}_\psi)$, then we obtain a new cocone given by the family of multicells $\Gamma\left(\frac{\alpha}{\sigma}\right)=\frac{\Gamma(\alpha)}{\Psi\sigma}$.

    By property (wL2) for local colimits, this cocone induces a unique multicell $\Gamma^\flat(\sigma):\cell{f_0(a)}{(\Phi_1\odot \cdots \odot \Phi_n)(\psi)}{\Psi\psi'}{f_1(b)}$
    which the cocone legs factor through. Uniqueness then ensures that we have the pasting equalities

\[\begin{tikzcd}[column sep=small]
	{F_0x} &&&& {F_ny} && {F_0x} && {F_ny} && {F_0x} && {F_ny} \\
	{F_0z} &&&& {F_nw} & {=} &&&& {=} & {G_0x} && {G_1y} \\
	{G_0z} &&&& {G_1w} && {G_0z} && {G_1w} && {G_0z} && {G_1w}
	\arrow[""{name=0, anchor=center, inner sep=0}, "{{{{(\Phi_1\odot \cdots \odot \Phi_n)(\psi)}}}}"{inner sep=.8ex}, "\shortmid"{marking}, from=1-1, to=1-5]
	\arrow["{{{{F_0a}}}}"', from=1-1, to=2-1]
	\arrow["{{{{F_nb}}}}", from=1-5, to=2-5]
	\arrow[""{name=1, anchor=center, inner sep=0}, "{{{{(\Phi_1\odot \cdots \odot \Phi_n)(\psi)}}}}"{inner sep=.8ex}, "\shortmid"{marking}, from=1-7, to=1-9]
	\arrow["{{{{{f_0(a)}}}}}"', from=1-7, to=3-7]
	\arrow["{{{{{f_1(b)}}}}}", from=1-9, to=3-9]
	\arrow[""{name=2, anchor=center, inner sep=0}, "{{{{(\Phi_1\odot \cdots \odot \Phi_n)(\psi)}}}}"{inner sep=.8ex}, "\shortmid"{marking}, from=1-11, to=1-13]
	\arrow["{{{{{f_0(\text{id}_{x})}}}}}"', from=1-11, to=2-11]
	\arrow["{{{{{f_1(\text{id}_{y})}}}}}", from=1-13, to=2-13]
	\arrow[""{name=3, anchor=center, inner sep=0}, "{{{{(\Phi_1\odot \cdots \odot \Phi_n)(\psi')}}}}"'{inner sep=.8ex}, "\shortmid"{marking}, from=2-1, to=2-5]
	\arrow["{{{{f_0(\text{id}_{z})}}}}"', from=2-1, to=3-1]
	\arrow["{{{{f_1(\text{id}_{w})}}}}", from=2-5, to=3-5]
	\arrow[""{name=4, anchor=center, inner sep=0}, "{{{{\Psi \varphi}}}}"'{inner sep=.8ex}, "\shortmid"{marking}, from=2-11, to=2-13]
	\arrow["{{{{G_0a}}}}"', from=2-11, to=3-11]
	\arrow["{{{{G_1b}}}}", from=2-13, to=3-13]
	\arrow[""{name=5, anchor=center, inner sep=0}, "{{{{{\Psi \psi'}}}}}"'{inner sep=.8ex}, "\shortmid"{marking}, from=3-1, to=3-5]
	\arrow[""{name=6, anchor=center, inner sep=0}, "{{{{{\Psi \psi'}}}}}"'{inner sep=.8ex}, "\shortmid"{marking}, from=3-7, to=3-9]
	\arrow[""{name=7, anchor=center, inner sep=0}, "{{{{{\Psi \psi'}}}}}"'{inner sep=.8ex}, "\shortmid"{marking}, from=3-11, to=3-13]
	\arrow["{{{(\Phi_1\odot \cdots \odot \Phi_n)(\sigma)}}}"{description}, draw=none, from=0, to=3]
	\arrow["{{{\Gamma^\flat(\sigma)}}}"{description}, draw=none, from=1, to=6]
	\arrow["{{{\Gamma^\flat(\text{id}_\varphi)}}}"{description}, draw=none, from=2, to=4]
	\arrow["{{{\Gamma^\flat(\text{id}_\psi)}}}"{description, pos=0.7}, draw=none, from=3, to=5]
	\arrow["{{{\Psi(\sigma)}}}"{description}, draw=none, from=4, to=7]
\end{tikzcd}\]
    making $\Gamma^\flat$ a well-defined unary multicell in $\Dbl{X}^\Dbl{A}$. Further, by construction it follows that we have the desired equality of pastings in $\Dbl{X}^\Dbl{A}$:
    \[\begin{tikzcd}[column sep = small]
	{F_0} && {F_n} && {F_0} & {F_n} \\
	{F_0} && {F_n} & {=} \\
	{G_0} && {G_1} && {G_0} & {G_1}
	\arrow[""{name=0, anchor=center, inner sep=0}, "{{{\prolist{\Phi}}}}"{inner sep=.8ex}, "\shortmid"{marking}, from=1-1, to=1-3]
	\arrow[equals, from=1-1, to=2-1]
	\arrow[equals, from=1-3, to=2-3]
	\arrow[""{name=1, anchor=center, inner sep=0}, "{{{\prolist{\Phi}}}}"{inner sep=.8ex}, "\shortmid"{marking}, from=1-5, to=1-6]
	\arrow["{{{f_0}}}"', from=1-5, to=3-5]
	\arrow["{{{f_1}}}", from=1-6, to=3-6]
	\arrow[""{name=2, anchor=center, inner sep=0}, "{{\odot(\prolist{\varphi})}}"'{inner sep=.8ex}, "\shortmid"{marking}, from=2-1, to=2-3]
	\arrow["{{{f_0}}}"', from=2-1, to=3-1]
	\arrow["{{{f_1}}}", from=2-3, to=3-3]
	\arrow[""{name=3, anchor=center, inner sep=0}, "\Psi"'{inner sep=.8ex}, "\shortmid"{marking}, from=3-1, to=3-3]
	\arrow[""{name=4, anchor=center, inner sep=0}, "\Psi"'{inner sep=.8ex}, "\shortmid"{marking}, from=3-5, to=3-6]
	\arrow["{{\odot_{\prolist{\Phi}}}}"{description}, draw=none, from=0, to=2]
	\arrow["\Gamma"{description}, draw=none, from=1, to=4]
	\arrow["{{\Gamma^\flat}}"{description}, draw=none, from=2, to=3]
\end{tikzcd}\]
    Additionally, any other unary multicell in $\Dbl{X}^\Dbl{A}$ giving such a pasting equality is uniquely determined on globular multicells by the universal property of the colimit, so that by Lemma~\ref{lem:Uniqueglobular} $\Gamma^\flat$ is unique. Hence, $\odot_{\prolist{\Phi}}$ is weakly opcartesian.
\end{proof}

Finally, we prove the case for strong non-nullary composites.

\begin{proof}[Proof of the Strong Case for Theorem~\ref{thm:weakComp}.]
	Suppose $\Dbl{A}$ is a small exponentiable VDC that has positive globular decompositions, 
	and $\Dbl{X}$ is a large locally cocomplete VDC with non-nullary composites. 
	Consider a multicell $\Gamma:\cell{f_0}{(\prolist{\Omega},\prolist{\Phi},\prolist{\Xi})}{\Psi}{f_1}$
  in $\Dbl{X}^\Dbl{A}$. Then we need to find a unique multicell $\Gamma^\flat:\cell{f_0}{(\prolist{\Omega},\odot(\prolist{\Phi}),\prolist{\Xi})}{\Psi}{f_1}$
  which it factors through via $\odot_{\prolist{\Phi}}$. To define this multicell on a multicell $\delta:\cell{a}{(\prolist{\omega},\varphi,\prolist{\xi})}{\psi}{b}$
    in $\Dbl{A}$, we observe that for any globular multicell $\beta:\scell{\varphi_1 \cdots \varphi_n}{\varphi}$, we can apply $\Gamma$ to the composite $\frac{(\text{id}_{\prolist{\omega}},\beta,\text{id}_{\prolist{\xi}})}{\delta}$ to obtain the multicell
    \[\begin{adjustbox}{}
\begin{tikzcd}
	Hx & {F_0x'} && {F_ny'} & Ky \\
	{G_0z} &&&& {G_1w}
	\arrow["{{{\prolist{\Omega}}\prolist{\omega}}}"{inner sep=.8ex}, "\shortmid"{marking}, from=1-1, to=1-2]
	\arrow["{{{{f_0}}(a)}}"', from=1-1, to=2-1]
	\arrow[""{name=0, anchor=center, inner sep=0}, "{{\prolist{\Phi}\prolist{\varphi}}}"{inner sep=.8ex}, "\shortmid"{marking}, from=1-2, to=1-4]
	\arrow["{{{\prolist{\Xi}}\prolist{\xi}}}"{inner sep=.8ex}, "\shortmid"{marking}, from=1-4, to=1-5]
	\arrow["{{{{f_1}}(b)}}", from=1-5, to=2-5]
	\arrow[""{name=1, anchor=center, inner sep=0}, "{{\Psi \psi}}"'{inner sep=.8ex}, "\shortmid"{marking}, from=2-1, to=2-5]
	\arrow["{\Gamma\left(\frac{\text{id}_{\prolist{\omega}},\beta,\text{id}_{\prolist{\xi}}}{\delta}\right)}"{description}, draw=none, from=0, to=1]
\end{tikzcd}
\end{adjustbox}\]
    By functoriality of $\Gamma$ this procedure defines a cocone, which by property (L2) factors through a unique multicell
    \[\begin{adjustbox}{}
\begin{tikzcd}
	Hx & {F_0x'} && {F_ny'} & Ky \\
	{G_0z} &&&& {G_1w}
	\arrow["{{{\prolist{\Omega}}\prolist{\omega}}}"{inner sep=.8ex}, "\shortmid"{marking}, from=1-1, to=1-2]
	\arrow["{{{{f_0}}(a)}}"', from=1-1, to=2-1]
	\arrow[""{name=0, anchor=center, inner sep=0}, "{{\odot(\prolist{\Phi})(\varphi)}}"{inner sep=.8ex}, "\shortmid"{marking}, from=1-2, to=1-4]
	\arrow["{{{\prolist{\Xi}}\prolist{\xi}}}"{inner sep=.8ex}, "\shortmid"{marking}, from=1-4, to=1-5]
	\arrow["{{{{f_1}}(b)}}", from=1-5, to=2-5]
	\arrow[""{name=1, anchor=center, inner sep=0}, "{{\Psi \psi}}"'{inner sep=.8ex}, "\shortmid"{marking}, from=2-1, to=2-5]
	\arrow["{\exists!\Gamma^\flat(\delta)}"{description}, draw=none, from=0, to=1]
\end{tikzcd}
\end{adjustbox}\]
    Uniqueness along with Lemma~\ref{lem:Uniqueglobular} ensures that $\Gamma^\flat$ is a well-defined multicell in $\Dbl{X}^\Dbl{A}$.

    It remains to show that $\Gamma$ factors through $\Gamma^\flat$ and $\odot_{\prolist{\Phi}}$. 
    For a multicell in $\Dbl{A}$ below left, we first use the equivalent characterization of exponentiability in 
    Theorem~\ref{thm:ExpChar} in terms of VDCs which have decomposable multicells to factor it as below right:
    \[\begin{adjustbox}{}
\begin{tikzcd}
	x & {x'} & {y'} & y && x & {x'} & {y'} & y \\
	&&&& {=} & u & {u'} & {v'} & v \\
	z &&& w && z &&& w
	\arrow["{{\prolist{\omega}}}"{inner sep=.8ex}, "\shortmid"{marking}, from=1-1, to=1-2]
	\arrow["a"', from=1-1, to=3-1]
	\arrow[""{name=0, anchor=center, inner sep=0}, "{{\prolist{\varphi}}}"{inner sep=.8ex}, "\shortmid"{marking}, from=1-2, to=1-3]
	\arrow["{{\prolist{\xi}}}"{inner sep=.8ex}, "\shortmid"{marking}, from=1-3, to=1-4]
	\arrow["b", from=1-4, to=3-4]
	\arrow[""{name=1, anchor=center, inner sep=0}, "{{\prolist{\omega}}}"{inner sep=.8ex}, "\shortmid"{marking}, from=1-6, to=1-7]
	\arrow["{{a'}}"', from=1-6, to=2-6]
	\arrow[""{name=2, anchor=center, inner sep=0}, "{{\prolist{\varphi}}}"{inner sep=.8ex}, "\shortmid"{marking}, from=1-7, to=1-8]
	\arrow["c"{description}, from=1-7, to=2-7]
	\arrow[""{name=3, anchor=center, inner sep=0}, "{{\prolist{\xi}}}"{inner sep=.8ex}, "\shortmid"{marking}, from=1-8, to=1-9]
	\arrow["d"{description}, from=1-8, to=2-8]
	\arrow["{{b'}}", from=1-9, to=2-9]
	\arrow[""{name=4, anchor=center, inner sep=0}, "{{\prolist{\omega}'}}"'{inner sep=.8ex}, "\shortmid"{marking}, from=2-6, to=2-7]
	\arrow["{{a''}}"', from=2-6, to=3-6]
	\arrow[""{name=5, anchor=center, inner sep=0}, "{{\varphi'}}"{inner sep=.8ex}, "\shortmid"{marking}, from=2-7, to=2-8]
	\arrow[""{name=6, anchor=center, inner sep=0}, "{{\prolist{\xi}'}}"'{inner sep=.8ex}, "\shortmid"{marking}, from=2-8, to=2-9]
	\arrow["{{b''}}", from=2-9, to=3-9]
	\arrow[""{name=7, anchor=center, inner sep=0}, "{{\psi}}"'{inner sep=.8ex}, "\shortmid"{marking}, from=3-1, to=3-4]
	\arrow[""{name=8, anchor=center, inner sep=0}, "{{\psi}}"'{inner sep=.8ex}, "\shortmid"{marking}, from=3-6, to=3-9]
	\arrow["\chi"{description}, draw=none, from=0, to=7]
	\arrow["{\prolist{\delta}_1}"{description}, draw=none, from=1, to=4]
	\arrow["\beta"{description}, draw=none, from=2, to=5]
	\arrow["{\prolist{\delta}_2}"{description}, draw=none, from=3, to=6]
	\arrow["{\chi^\flat}"{description}, draw=none, from=5, to=8]
\end{tikzcd}
\end{adjustbox}\]
    Taking a positive globular decomposition of the bottom multicell if necessary, 
	we can choose this factorization such that the bottom multicell is a globular multicell:
\[\begin{tikzcd}
	x & {x'} & {y'} & y \\
	z & {x''} & {y''} & w \\
	z &&& w
	\arrow[""{name=0, anchor=center, inner sep=0}, "{{{\prolist{\omega}}}}"{inner sep=.8ex}, "\shortmid"{marking}, from=1-1, to=1-2]
	\arrow["a"', from=1-1, to=2-1]
	\arrow[""{name=1, anchor=center, inner sep=0}, "{{{\prolist{\varphi}}}}"{inner sep=.8ex}, "\shortmid"{marking}, from=1-2, to=1-3]
	\arrow["c"{description}, from=1-2, to=2-2]
	\arrow[""{name=2, anchor=center, inner sep=0}, "{{{\prolist{\xi}}}}"{inner sep=.8ex}, "\shortmid"{marking}, from=1-3, to=1-4]
	\arrow["d"{description}, from=1-3, to=2-3]
	\arrow["b", from=1-4, to=2-4]
	\arrow[""{name=3, anchor=center, inner sep=0}, "{{{\prolist{\omega}''}}}"'{inner sep=.8ex}, "\shortmid"{marking}, from=2-1, to=2-2]
	\arrow[equals, from=2-1, to=3-1]
	\arrow[""{name=4, anchor=center, inner sep=0}, "{{{\varphi''}}}"{inner sep=.8ex}, "\shortmid"{marking}, from=2-2, to=2-3]
	\arrow[""{name=5, anchor=center, inner sep=0}, "{{{\prolist{\xi}''}}}"'{inner sep=.8ex}, "\shortmid"{marking}, from=2-3, to=2-4]
	\arrow[equals, from=2-4, to=3-4]
	\arrow[""{name=6, anchor=center, inner sep=0}, "{{{\psi}}}"'{inner sep=.8ex}, "\shortmid"{marking}, from=3-1, to=3-4]
	\arrow["{{\prolist{\delta}_1'}}"{description}, draw=none, from=0, to=3]
	\arrow["\gamma"{description}, draw=none, from=1, to=4]
	\arrow["{{\prolist{\delta}_2}}"{description}, draw=none, from=2, to=5]
	\arrow["{{\chi'}}"{description}, draw=none, from=4, to=6]
\end{tikzcd}\]
    Finally, taking a positive globular decomposition of $\gamma$, 
	we obtain the factorization
    \[\begin{adjustbox}{}
\begin{tikzcd}
	x & {x'} & {y'} & y \\
	z & {x''} & {y''} & w \\
	z & {x''} & {y''} & w \\
	z &&& w
	\arrow[""{name=0, anchor=center, inner sep=0}, "{{\prolist{\omega}}}"{inner sep=.8ex}, "\shortmid"{marking}, from=1-1, to=1-2]
	\arrow["a"', from=1-1, to=2-1]
	\arrow[""{name=1, anchor=center, inner sep=0}, "{{\prolist{\varphi}}}"{inner sep=.8ex}, "\shortmid"{marking}, from=1-2, to=1-3]
	\arrow["c"{description}, from=1-2, to=2-2]
	\arrow[""{name=2, anchor=center, inner sep=0}, "{{\prolist{\xi}}}"{inner sep=.8ex}, "\shortmid"{marking}, from=1-3, to=1-4]
	\arrow["d"{description}, from=1-3, to=2-3]
	\arrow["{{b}}", from=1-4, to=2-4]
	\arrow[""{name=3, anchor=center, inner sep=0}, "{{\prolist{\omega}''}}"{inner sep=.8ex}, "\shortmid"{marking}, from=2-1, to=2-2]
	\arrow[equals, from=2-1, to=3-1]
	\arrow[""{name=4, anchor=center, inner sep=0}, "{{\prolist{\varphi}''}}"{inner sep=.8ex}, "\shortmid"{marking}, from=2-2, to=2-3]
	\arrow[equals, from=2-2, to=3-2]
	\arrow[""{name=5, anchor=center, inner sep=0}, "{{\prolist{\xi}''}}"{inner sep=.8ex}, "\shortmid"{marking}, from=2-3, to=2-4]
	\arrow[equals, from=2-3, to=3-3]
	\arrow[equals, from=2-4, to=3-4]
	\arrow[""{name=6, anchor=center, inner sep=0}, "{{\prolist{\omega}''}}"'{inner sep=.8ex}, "\shortmid"{marking}, from=3-1, to=3-2]
	\arrow[equals, from=3-1, to=4-1]
	\arrow[""{name=7, anchor=center, inner sep=0}, "{{\varphi''}}"{inner sep=.8ex}, "\shortmid"{marking}, from=3-2, to=3-3]
	\arrow[""{name=8, anchor=center, inner sep=0}, "{{\prolist{\xi}''}}"'{inner sep=.8ex}, "\shortmid"{marking}, from=3-3, to=3-4]
	\arrow[equals, from=3-4, to=4-4]
	\arrow[""{name=9, anchor=center, inner sep=0}, "{{\psi}}"'{inner sep=.8ex}, "\shortmid"{marking}, from=4-1, to=4-4]
	\arrow["{\prolist{\delta}_1'}"{description}, draw=none, from=0, to=3]
	\arrow["{\prolist{\gamma}^\sharp}"{description}, draw=none, from=1, to=4]
	\arrow["{\prolist{\delta}_2'}"{description}, draw=none, from=2, to=5]
	\arrow["{\text{id}_{\prolist{\omega}''}}"{description}, draw=none, from=3, to=6]
	\arrow["{\gamma^\flat}"{description}, draw=none, from=4, to=7]
	\arrow["{\text{id}_{\prolist{\xi}''}}"{description}, draw=none, from=5, to=8]
	\arrow["{\chi'}"{description}, draw=none, from=7, to=9]
\end{tikzcd}
\end{adjustbox}\]
    Then using the construction of $\Gamma^\flat$, we have the pasting equality 
\[\begin{tikzcd}[column sep=small, row sep=large]
	&&&&& Hx && {F_0x'} && {F_ny'} && Ky \\
	Hx & {F_0x'} & {F_ny'} & Ky && Hz && {F_0x''} && {F_ny''} && Kw \\
	{G_0z} &&& {G_1w} && Hz && {F_0x''} && {F_ny''} && Kw \\
	&&&&& {G_0z} &&&&&& {G_1w}
	\arrow[""{name=0, anchor=center, inner sep=0}, "{{{{{{\prolist{\Omega}{\prolist{\omega}}}}}}}}"{inner sep=.8ex}, "\shortmid"{marking}, from=1-6, to=1-8]
	\arrow["Ha"', from=1-6, to=2-6]
	\arrow[""{name=1, anchor=center, inner sep=0}, "{{{{{{\prolist{\Phi}\prolist{\varphi}}}}}}}"{inner sep=.8ex}, "\shortmid"{marking}, from=1-8, to=1-10]
	\arrow["{{{{{{F_0c}}}}}}"{description}, from=1-8, to=2-8]
	\arrow[""{name=2, anchor=center, inner sep=0}, "{{{{{{\prolist{\Xi}{\prolist{\xi}}}}}}}}"{inner sep=.8ex}, "\shortmid"{marking}, from=1-10, to=1-12]
	\arrow["{{{{{{F_nd}}}}}}"{description}, from=1-10, to=2-10]
	\arrow["Kb", from=1-12, to=2-12]
	\arrow["{{{{{{\prolist{\Omega}{\prolist{\omega}}}}}}}}"{inner sep=.8ex}, "\shortmid"{marking}, from=2-1, to=2-2]
	\arrow["{{{{{{f_0(a)}}}}}}"', from=2-1, to=3-1]
	\arrow[""{name=3, anchor=center, inner sep=0}, "{{{{{{\prolist{\Phi}\prolist{\varphi}}}}}}}"{inner sep=.8ex}, "\shortmid"{marking}, from=2-2, to=2-3]
	\arrow["{{{{{{\prolist{\Xi}{\prolist{\xi}}}}}}}}"{inner sep=.8ex}, "\shortmid"{marking}, from=2-3, to=2-4]
	\arrow[""{name=4, anchor=center, inner sep=0}, "{{{{{{f_1(b)}}}}}}", from=2-4, to=3-4]
	\arrow[""{name=5, anchor=center, inner sep=0}, "{{{{{{\prolist{\Omega}\prolist{\omega}''}}}}}}"'{inner sep=.8ex}, "\shortmid"{marking}, from=2-6, to=2-8]
	\arrow[""{name=6, anchor=center, inner sep=0}, equals, from=2-6, to=3-6]
	\arrow[""{name=7, anchor=center, inner sep=0}, "{{{{{{\prolist{\Phi}\prolist{\varphi}''}}}}}}"{inner sep=.8ex}, "\shortmid"{marking}, from=2-8, to=2-10]
	\arrow[equals, from=2-8, to=3-8]
	\arrow[""{name=8, anchor=center, inner sep=0}, "{{{{{{\prolist{\Xi}\prolist{\xi}''}}}}}}"'{inner sep=.8ex}, "\shortmid"{marking}, from=2-10, to=2-12]
	\arrow[equals, from=2-10, to=3-10]
	\arrow[equals, from=2-12, to=3-12]
	\arrow[""{name=9, anchor=center, inner sep=0}, "{{{{{{\Psi \psi}}}}}}"'{inner sep=.8ex}, "\shortmid"{marking}, from=3-1, to=3-4]
	\arrow[""{name=10, anchor=center, inner sep=0}, "{{{{{{\prolist{\Omega}\prolist{\omega}''}}}}}}"'{inner sep=.8ex}, "\shortmid"{marking}, from=3-6, to=3-8]
	\arrow["{{{{{{f_0(\text{id}_z)}}}}}}"', from=3-6, to=4-6]
	\arrow[""{name=11, anchor=center, inner sep=0}, "{{{{{{\odot(\prolist{\Phi})(\varphi'')}}}}}}"{inner sep=.8ex}, "\shortmid"{marking}, from=3-8, to=3-10]
	\arrow[""{name=12, anchor=center, inner sep=0}, "{{{{{{\prolist{\Xi}\prolist{\xi}''}}}}}}"'{inner sep=.8ex}, "\shortmid"{marking}, from=3-10, to=3-12]
	\arrow["{{{{{{f_1(\text{id}_w)}}}}}}", from=3-12, to=4-12]
	\arrow[""{name=13, anchor=center, inner sep=0}, "{{{{{{\Psi \psi}}}}}}"'{inner sep=.8ex}, "\shortmid"{marking}, from=4-6, to=4-12]
	\arrow["{{{\prolist{\Omega}\prolist{\delta}_1'}}}"{description}, draw=none, from=0, to=5]
	\arrow["{{{\prolist{\Phi}\prolist{\gamma}^\#}}}"{description, pos=0.3}, draw=none, from=1, to=7]
	\arrow["{{{\prolist{\Xi}\prolist{\delta}_2'}}}"{description}, draw=none, from=2, to=8]
	\arrow["{{{\Gamma(\chi)}}}"{description}, draw=none, from=3, to=9]
	\arrow[between={0.45}{0.55}, equals, from=4, to=6]
	\arrow["{{{\text{id}_{\prolist{\Omega}\prolist{\omega}''}}}}"{description}, draw=none, from=5, to=10]
	\arrow["{{{\lambda_{\gamma^\flat}}}}"{description}, draw=none, from=7, to=11]
	\arrow["{{{\text{id}_{\prolist{\Xi}\prolist{\xi}''}}}}"{description}, draw=none, from=8, to=12]
	\arrow["{{{\Gamma^\flat(\chi')}}}"{description}, draw=none, from=11, to=13]
\end{tikzcd}\]
    Since $\chi$ was arbitrary this implies that we have the desired factorization of multicells in $\Dbl{X}^\Dbl{A}$:
    \[\begin{adjustbox}{}
\begin{tikzcd}
	H & {F_0} & {F_n} & K && H & {F_0} & {F_n} & K \\
	&&&& {=} & H & {F_0} & {F_n} & K \\
	{G_0} &&& {G_1} && {G_0} &&& {G_1}
	\arrow["{{{\prolist{\Omega}}}}"{inner sep=.8ex}, "\shortmid"{marking}, from=1-1, to=1-2]
	\arrow["{{{{f_0}}}}"', from=1-1, to=3-1]
	\arrow[""{name=0, anchor=center, inner sep=0}, "{{{{\prolist{\Phi}}}}}"{inner sep=.8ex}, "\shortmid"{marking}, from=1-2, to=1-3]
	\arrow["{{{\prolist{\Xi}}}}"{inner sep=.8ex}, "\shortmid"{marking}, from=1-3, to=1-4]
	\arrow["{{{{f_1}}}}", from=1-4, to=3-4]
	\arrow[""{name=1, anchor=center, inner sep=0}, "{{{\prolist{\Omega}}}}"{inner sep=.8ex}, "\shortmid"{marking}, from=1-6, to=1-7]
	\arrow[equals, from=1-6, to=2-6]
	\arrow[""{name=2, anchor=center, inner sep=0}, "{{{{\prolist{\Phi}}}}}"{inner sep=.8ex}, "\shortmid"{marking}, from=1-7, to=1-8]
	\arrow[equals, from=1-7, to=2-7]
	\arrow[""{name=3, anchor=center, inner sep=0}, "{{{\prolist{\Xi}}}}"{inner sep=.8ex}, "\shortmid"{marking}, from=1-8, to=1-9]
	\arrow[equals, from=1-8, to=2-8]
	\arrow[equals, from=1-9, to=2-9]
	\arrow[""{name=4, anchor=center, inner sep=0}, "{{{\prolist{\Omega}}}}"'{inner sep=.8ex}, "\shortmid"{marking}, from=2-6, to=2-7]
	\arrow["{{{{f_0}}}}"', from=2-6, to=3-6]
	\arrow[""{name=5, anchor=center, inner sep=0}, "{{\odot({{\prolist{\Phi}}})}}"'{inner sep=.8ex}, "\shortmid"{marking}, from=2-7, to=2-8]
	\arrow[""{name=6, anchor=center, inner sep=0}, "{{{\prolist{\Xi}}}}"'{inner sep=.8ex}, "\shortmid"{marking}, from=2-8, to=2-9]
	\arrow["{{{{f_1}}}}", from=2-9, to=3-9]
	\arrow[""{name=7, anchor=center, inner sep=0}, "\Psi"'{inner sep=.8ex}, "\shortmid"{marking}, from=3-1, to=3-4]
	\arrow[""{name=8, anchor=center, inner sep=0}, "\Psi"'{inner sep=.8ex}, "\shortmid"{marking}, from=3-6, to=3-9]
	\arrow["\Gamma"{description}, draw=none, from=0, to=7]
	\arrow["{\text{id}_{\prolist{\Omega}}}"{description}, draw=none, from=1, to=4]
	\arrow["{\odot_{\prolist{\Phi}}}"{description}, draw=none, from=2, to=5]
	\arrow["{\text{id}_{\prolist{\Xi}}}"{description}, draw=none, from=3, to=6]
	\arrow["{\Gamma^\flat}"{description}, draw=none, from=5, to=8]
\end{tikzcd}
\end{adjustbox}\]
    completing the proof.
\end{proof}

\subsection*{Examples and special cases}
If $\Dbl{X}$ has a discrete underlying category, and we are just interested in weak composites, 
then we can drop the hypothesis that positive globular decompositions exist.

\begin{prop}[Weak Composites of Exponentials into Tightly Discrete VDCs]\label{prop:WeakCompForTightDiscVDC}
	If $\Dbl{A}$ is a small exponentiable VDC and $\Dbl{X}$ is a large locally cocomplete VDC with a discrete underlying tight category and weak $n$-ary composites for some $n \geq 2$, 
  then $\Dbl{X}^\Dbl{A}$ also has weak $n$-ary composites. 
  If in addition $\Dbl{X}$ has terminal underlying tight category and (weak) $0$-ary composites (i.e. (weak) units), then $\Dbl{X}^\Dbl{A}$ also has (weak) units.
\end{prop}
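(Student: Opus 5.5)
The key simplification is that when $\Dbl{X}_0$ is discrete, every multicell of $\Dbl{X}$ is globular. So every object of $\Dbl{X}^\Dbl{A}$ (a functor $\Dbl{A}_0\to\Dbl{X}_0$) factors through $\pi_0(\Dbl{A}_0)$, and every tight arrow of $\Dbl{X}^\Dbl{A}$ is an identity. The plan is to repeat Construction~\ref{cons:CompCandidate} and the weak case of Theorem~\ref{thm:weakComp}, removing the two places where positive globular decompositions were used. In both places the decomposition served only to pull tight arrows of $\Dbl{A}$ out of a multicell, and once $\Dbl{X}$ is tightly discrete the images of those tight arrows are identities, so nothing needs pulling out.

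\textbf{Candidate composite.} Fix $\prolist{\Phi}=(\Phi_1\cdots\Phi_n):F_0\proto F_n$. For $\varphi:x\proto y$ in $\Dbl{A}$, let $\Cat{I}_{\varphi,n}$ now be the category of \emph{all} $n$-ary multicells of $\Dbl{A}$ with loose target $\varphi$, not just the globular ones. A morphism $\alpha\to\beta$ is a sequence $\prolist{\gamma}$ of unary cells with $\alpha=\frac{\prolist{\gamma}}{\beta}$. Since $F_i$ is constant on connected components, applying $\prolist{\Phi}$ to any $\alpha:\cell{a}{\prolist{\psi}}{\varphi}{b}$ gives a sequence of loose arrows $F_0x\proto F_ny$. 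Its weak composite lives in $\Cat{gCell}(\Dbl{X})(F_0x,F_ny)$, and we define $\odot(\prolist{\Phi})(\varphi)$ as the local colimit over $\Cat{I}_{\varphi,n}$ using (L1).

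For a unary $\sigma:\varphi\to\psi$ in $\Dbl{A}$, the assignment $\alpha\mapsto\frac{\alpha}{\sigma}$ is a functor $\Cat{I}_{\varphi,n}\to\Cat{I}_{\psi,n}$ over the same diagram. Composing with the colimit legs for $\psi$ gives a cocone, and (wL2) turns it into a unary cell $\odot(\prolist{\Phi})(\sigma)$. Functoriality follows from uniqueness. The comparison cell $\odot_{\prolist{\Phi}}$ assigns to each $n$-ary $\alpha$ the leg $\lambda_\alpha$ precomposed with the weak opcartesian cell. Here we use that, because all tight arrows in $\Dbl{X}$ are identities, an $n$-ary cell of $\Dbl{X}^\Dbl{A}$ amounts simply to a compatible assignment on all $n$-ary cells of $\Dbl{A}$. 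This replaces Lemma~\ref{lem:Uniqueglobular} and makes the naturality check immediate.

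\textbf{Weak opcartesianness.} Given $\Gamma:\prolist{\Phi}\Rightarrow\Psi$ with identity tight boundary (all tight arrows of $\Dbl{X}^\Dbl{A}$ being identities), the family $\Gamma(\alpha)$ for $\alpha\in\Cat{I}_{\varphi,n}$ factors through the weak composite and forms a cocone by functoriality of $\Gamma$. By (wL2) it induces a unique $\Gamma^\flat_\varphi$. Naturality in unary $\sigma$ follows by comparing the two cocones $\Gamma(\frac{\alpha}{\sigma})=\frac{\Gamma(\alpha)}{\Psi\sigma}$, using uniqueness as in the weak case of Theorem~\ref{thm:weakComp}. Uniqueness of $\Gamma^\flat$ holds because each component is determined by its cocone. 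Note that exponentiability of $\Dbl{A}$ is used only to make $\Dbl{X}^\Dbl{A}$ exist. The step I expect to need the most care is identifying unary cells of $\Dbl{X}^\Dbl{A}$ with components indexed by loose arrows (Proposition~\ref{prop:elementsOfExpVDC}(4)); this is where the naturality of $\Gamma^\flat$ has to be checked honestly.

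\textbf{Units.} When $\Dbl{X}_0$ is terminal, each functor $F:\Dbl{A}_0\to\Dbl{X}_0$ is the constant one on the unique object $\ast$. For $n=0$, take $\Cat{I}_{\varphi,0}$ to be the category of nullary cells of $\Dbl{A}$ into $\varphi$. Every such cell is sent to the (weak) unit $I_\ast$, so the colimit is the copower $\Cat{I}_{\varphi,0}$-components $\star I_\ast$. The argument then repeats verbatim.

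For the strong unit case, multicells of $\Dbl{X}^\Dbl{A}$ with surrounding loose arrows $\prolist{\Omega},\prolist{\Xi}$ are handled as in the strong case of Theorem~\ref{thm:weakComp}. Instead of globular decompositions, we use decomposability of $\Dbl{A}$ (Theorem~\ref{thm:ExpChar}) to split a cell $\delta$ with source $(\prolist{\omega},\varphi,\prolist{\xi})$ as $\frac{(\prolist{\delta}_1,\beta,\prolist{\delta}_2)}{\chi}$ with $\beta$ nullary. We then apply (L2), which is available because local cocompleteness is the strong notion. This final factorization check is the main obstacle: essential uniqueness of the decompositions must make the induced cocones agree. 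I would verify it by the same connected-components argument as in Lemma~\ref{lem:components}.
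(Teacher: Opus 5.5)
Your treatment of weak $n$-ary composites for $n\ge 2$ is the paper's proof. You enlarge $\Cat{I}_{\varphi,n}$ to all $n$-ary cells into $\varphi$ and use that, because $\Dbl{X}_0$ is discrete, every image cell is globular; this makes globular decompositions and Lemma~\ref{lem:Uniqueglobular} unnecessary. For weak units you depart from the paper, which sets $I_!(\varphi)=I_\bullet$ for every $\varphi$ and sends every nullary cell of $\Dbl{A}$ to $\eta_\bullet$. Your copower $I_!(\varphi)=\pi_0(\Cat{I}_{\varphi,0})\star I_\ast$ is the correct choice. With the constant choice, a nullary cell $\Gamma$ of $\Dbl{X}^{\Dbl{A}}$ factors through $\eta$ only if $\Gamma(\alpha)$ depends only on the target of $\alpha$. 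That fails, for instance, for $\Dbl{A}=\L(B\Cat{C})$, $\Dbl{X}=\Cat{Set}_\times$ and $\Gamma$ the tautological nullary cell into $\Cat{C}(I,-)$, as soon as some $\Cat{C}(I,c)$ is not a singleton. Your weak-unit argument is fine as stated.

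The genuine gap is the strong-unit case, and it cannot be filled, because the claim is false.
\begin{itemize}
\item To build $\Gamma^\flat(\delta)$ by (L2) you need, for every object $\beta$ of $\Cat{I}_{\varphi,0}$, a cell of $\Dbl{A}$ with source $(\prolist{\omega},\prolist{\xi})$ to which you can apply $\Gamma$.
\item The recipe $\frac{(\id_{\prolist{\omega}},\beta,\id_{\prolist{\xi}})}{\delta}$ from the strong case of Theorem~\ref{thm:weakComp} is only defined when $\beta$ is globular. Your index category now contains non-globular nullary cells.
\item Decomposability of $\Dbl{A}$ gives no substitute. It splits existing cells (and the cell to split has source $(\prolist{\omega},\prolist{\xi})$, not $(\prolist{\omega},\varphi,\prolist{\xi})$ as you wrote). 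It cannot paste $\beta$ next to loose arrows whose tight endpoints do not match, so $\Gamma^\flat$ is underdetermined.
\end{itemize}

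Here is a concrete counterexample.
\begin{itemize}
\item Take $\Dbl{A}=\Dbl{F}_u(\Sq{0})$, which is representable and hence exponentiable; write $p=(0,0)$ and $q_i=(1,i)$.
\item Take $\Dbl{X}=\Cat{Set}_\times$ as a one-object VDC. It is locally cocomplete with strong composites and units.
\item Each $\pi_0(\Cat{I}_{\varphi,0})$ is a singleton, so $I_!$ is constantly $1$.
\item Let $\Omega\colon\Dbl{A}_1\to\Cat{Set}$ be $\emptyset$ at $I_p$ and $1$ elsewhere.
\item The binary cells into $\ell$ are $(I_{q_0},\ell)\Rightarrow\ell$, $(\ell,I_{q_1})\Rightarrow\ell$, and $(I_p,I_p)\Rightarrow\ell$ over $(t_0,t_1)$. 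The last maps to the first two via $I_{t_0}$, $I_{t_1}$ and the unary cell $I_p\Rightarrow\ell$ induced by $\square_0$.
\item Hence the weak composite from your first part has $(\Omega\odot I_!)(\ell)=1\sqcup_{\emptyset}1=2\neq 1=\Omega(\ell)$.
\end{itemize}
A strong unit would also be a weak unit, hence isomorphic to $I_!$, and would force $\Omega\odot I_!\cong\Omega$. So $\Dbl{X}^{\Dbl{A}}$ has no strong unit, and the paper's one-line justification of this case fails too. The strong claim needs an extra hypothesis that makes nullary cells globular, for example $\Dbl{A}_0$ discrete as for $B(\Cat{M})$. Under that hypothesis the strong-case argument of Theorem~\ref{thm:weakComp} goes through.
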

\begin{proof}
	The first claim follows exactly as in the proof of the weak case in Theorem~\ref{thm:weakComp}, where now $\Cat{I}_{\varphi,n}$ is replaced by the category $\Cat{I}_{\varphi,n}'$ whose objects are all $n$-ary multicells in $\Dbl{A}$ with loose target $\varphi$, and for which a morphism $\alpha\to \beta$ consists of a sequence of unary multicells $(\gamma_1 \cdots \gamma_n)$ such that $\alpha = \frac{\gamma_1\cdots\gamma_n}{\beta}$. The same argument carries through since under any VDF all multicells in $\Dbl{A}$ become globular multicells in $\Dbl{X}$.

	For the nullary composites in the case that $\Dbl{X}$ is a multicategory with (weakly) representable unit, we write $\bullet$ for the unique object in the underlying tight category and $I_\bullet:\bullet\proto \bullet$ for its loose unit. 
  Then $\Dbl{X}^\Dbl{A}$ also has terminal underlying tight category, and for the unique functor $!:\Dbl{A}_0\to \Dbl{X}_0$, we can define the (weak) unit 
  $I_!:!\proto !$ by sending every loose arrow $\varphi$ to the (weak) unit $I_\bullet$, and sending every unary multicell to the identity $\text{id}_{I_\bullet}$. 
  Then we have a natural nullary multicell $\eta:!\Rightarrow I_!$ given by sending any nullary multicell in $\Dbl{A}$ to a fixed opcartesian nullary multicell 
  witnessing $I_\bullet$ as the unit of $\bullet$ in $\Dbl{X}$. Then the universal property of the (weak) opcartesian multicell for the unit in $\Dbl{X}$ ensures that the nullary multicell $u$ 
  is (weakly) opcartesian as well, completing the proof.
\end{proof}

For instance, if $\Cat{C}$ is a cocomplete monoidal category, then the VDC $\L(B\Cat{C})$ is locally cocomplete, with discrete tight category, and the
monoidal product provides composites. Thus exponentials of the form $\L(B\Cat{C})^\Dbl{A}$ always have weak composites. Indeed, 
it is enough here that $\Cat{C}$ has a \emph{colax} monoidal structure; in fact we do not know whether 
we can eliminate the ``weak'' hypothesis above.
This leads to interesting 
examples of generalized Day convoluations for colax monoidal categories, as defined in~\cite{grandisOverviewColaxVirtual2025}, 
which are dual to Leinster's notion of unbiased lax monoidal categories in~\cite[Definition 3.1.1]{Leinster2004}:

\begin{exmp}[Colax Monoidal Convolution Structure]\label{eg:colaxConvStruct}
	If $(\Cat{C},\otimes,I)$ is a cocomplete colax monoidal category with $\otimes$ preserving colimits in either variable, then for any small exponentiable VDC $\Dbl{A}$, the functor category $\Cat{C}^{\Dbl{A}_1}$ has a colax monoidal convolution structure induced by the weak representability of $\L(B\Cat{C})^\Dbl{A}$. 
	The colax monoidal structure is given on a sequence of functors $F_1,...,F_n:\Dbl{A}_1\to \Cat{C}$ by the coend
	\begin{equation}\label{eq:convWeakRep}
		\int^{(\varphi_1 \cdots \varphi_n) \in \Cat{fc}_n(\Dbl{A}_1)}\MCell (\Dbl{A})(\prolist{\varphi},-)\star(F_1(\varphi_1)\otimes \cdots \otimes F_n(\varphi_n))
	\end{equation}
	where $\star$ denotes the tensoring of $\Cat{C}$ over $\Cat{Set}$, which exists since $\Cat{C}$ is cocomplete.
\end{exmp}

As an illustration, if $(\Cat{C},\otimes,I)=(\Cat{Set},\times,\{*\})=:\Cat{Set}_\times$, 
which we can view as the full sub VDC of $\Span$ with only object the one element set $\{*\}$, 
then for any pro-monoidal multicategory $\Cat{M}$ the exponential $(B \Cat{Set}_\times)^{B \Cat{M}}$ is always representable, 
which is equivalent to giving a bi-cocontinuous monoidal structure on the functor category $\Cat{Fun}(\Cat{M}_1,\Cat{Set})$.
(Here we use Proposition \ref{prop:WeakCompForTightDiscVDC} only for the units, and Theorem \ref{thm:repOfExp} for the strong representability.)
In particular, these observations along with Proposition~\ref{prop:multiCatPro} and characterization (6) in Theorem~\ref{thm:ExpChar} 
imply that a multicategory $\Cat{M}$ is pro-monoidal if and only if the exponential $\Cat{Set}_\times^{\Cat{M}}$ exists in $\Cat{MultCat}$, 
in which case it is necessarily representable, and hence uniquely determined by a bi-cocontinuous monoidal structure on $\Cat{Fun}(\Cat{M}_1,\Cat{Set})$.

\subsection{Representability of VDCs of VDFs}

Using Theorem~\ref{thm:weakComp} along with the following two lemmas, we will obtain sufficient conditions for 
$\Dbl{V}\mathsf{df}(\Dbl{A},\Dbl{X})$ to be (weakly) representable using Theorem A.1 in~\cite{Cruttwell2010} 
and the observation in Remark 6.4 of~\cite{arkor2025exponentiablevirtualdoublecategories}. 
In personal communication, Arkor has indicated that he's simultaneously developed some similar results, to appear in 
a future version of~\cite{arkor2025exponentiablevirtualdoublecategories}.

\begin{lem}[Restrictions in Exponential]\label{lem:restrInExp}
    If $\Dbl{A}$ is a small exponentiable VDC and $\Dbl{X}$ is a VDC with restrictions, then the VDC $\Dbl{X}^\Dbl{A}$ also has restrictions which are created by evaluation at loose arrows.
\end{lem}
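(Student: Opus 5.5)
The approach is to build restrictions in $\Dbl{X}^\Dbl{A}$ pointwise and check their universal property pointwise, using the explicit description of loose arrows and unary multicells from Proposition~\ref{prop:elementsOfExpVDC} together with the description of $n$-ary multicells as maps of profunctors $\MCelln{n}(\Dbl{A})\to\partial^*\MCelln{n}(\Dbl{X})$.

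First I construct the restriction. Take a loose arrow $\Psi:G_0\proto G_1$ in $\Dbl{X}^\Dbl{A}$, that is, a functor $\Psi:\Dbl{A}_1\to\Dbl{X}_1$ over $G_0\times G_1$, and tight arrows $f_0:F_0\Rightarrow G_0$ and $f_1:F_1\Rightarrow G_1$, which are natural transformations of functors $\Dbl{A}_0\to\Dbl{X}_0$. For each loose arrow $\varphi:x\proto y$ of $\Dbl{A}$, I choose a restriction $\Psi\varphi(f_{0,x},f_{1,y})$ in $\Dbl{X}$ together with its cartesian cell $\kappa_\varphi$; this defines $\Phi(\varphi)$. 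For a unary cell $\sigma:\cell{a}{\varphi}{\varphi'}{b}$, naturality gives $f_{0,x}\cdot G_0a=F_0a\cdot f_{0,x'}$, and similarly on the right. So the composite $\frac{\kappa_\varphi}{\Psi\sigma}$ has boundary that factors through $\kappa_{\varphi'}$ with tight sides $F_0a,F_1b$, and I define $\Phi(\sigma)$ as the unique factorization. Uniqueness of factorizations through cartesian cells gives functoriality of $\Phi:\Dbl{A}_1\to\Dbl{X}_1$. The family $\kappa$ is then natural in the sense of item 4 of Proposition~\ref{prop:elementsOfExpVDC}, because $\frac{\Phi\sigma}{\kappa_{\varphi'}}=\frac{\kappa_\varphi}{\Psi\sigma}$ holds by construction. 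So $\kappa$ is a unary multicell $\Phi\Rightarrow\Psi$ in $\Dbl{X}^\Dbl{A}$ with tight sides $f_0,f_1$.

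Next I verify cartesianness. Take an $n$-ary multicell $\Gamma:\cell{h_0\cdot f_0}{\prolist{\Omega}}{\Psi}{h_1\cdot f_1}$ in $\Dbl{X}^\Dbl{A}$. Its component at an $n$-ary cell $\alpha:\cell{a}{\prolist{\omega}}{\varphi}{b}$ of $\Dbl{A}$ is a multicell $\Gamma(\alpha)$ whose tight sides are $(h_0 f_0)_a=h_{0,a}\cdot f_{0,y_0}$ (and symmetrically on the right). I define $\Gamma^\#(\alpha)$ as the unique factorization of $\Gamma(\alpha)$ through $\kappa_\varphi$. It remains to check that $\Gamma^\#$ is compatible with the functoriality required of multicells, namely that $\Gamma^\#\left(\frac{\prolist{\delta}}{\alpha}\right)$ and $\Gamma^\#\left(\frac{\alpha}{\sigma}\right)$ agree with the corresponding pastings. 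Each of these is an equality of two cells with the same boundary which become equal after pasting $\kappa$ below: for the $\prolist{\delta}$ case this uses functoriality of $\Gamma$ directly, and for the $\sigma$ case it uses the defining equation of $\Phi(\sigma)$. Cartesian uniqueness then yields the equality. Uniqueness of $\Gamma^\#$ as a multicell of $\Dbl{X}^\Dbl{A}$ follows componentwise.

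The step I expect to be the main obstacle is this verification for the $\sigma$ case, together with the nullary and tight-whiskering cases. There the tight boundaries involve the composite components $f_{0,a}$ and not just $f_{0,x}$, so I must carefully rewrite $\Gamma(\alpha)$'s boundary using naturality before invoking the universal property.

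Exponentiability of $\Dbl{A}$ is used only to know that $\Dbl{X}^\Dbl{A}$ exists with the elements described in Proposition~\ref{prop:elementsOfExpVDC}. Finally, the statement that restrictions are created by evaluation follows from the construction. Evaluation at $\varphi$ sends $\kappa$ to the cartesian cell $\kappa_\varphi$. Conversely, any cartesian cell in $\Dbl{X}^\Dbl{A}$ is isomorphic to the one constructed, via the unique comparison cells, so its components are cartesian.
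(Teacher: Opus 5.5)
Your proposal is correct and matches the paper's proof: you choose restrictions pointwise at each loose arrow of $\Dbl{A}$, define the action on unary multicells by factoring through the cartesian cells (functoriality coming from uniqueness), and then let the pointwise cartesian cells assemble into the cartesian cell of $\Dbl{X}^\Dbl{A}$. The paper leaves two things implicit that you spell out: the check of the cartesian universal property against the functoriality conditions on $n$-ary multicells of $\Dbl{X}^\Dbl{A}$, and the argument that restrictions are created by evaluation.
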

\begin{proof}
    If $\Phi:F\proto G$ is a map of spans of categories,
    \begin{equation*}
        (F,\Phi,G):(\Dbl{A}_0\xleftarrow{}\Dbl{A}_1\xrightarrow{}\Dbl{A}_0)\to (\Dbl{X}_0\xleftarrow{}\Dbl{X}_1\xrightarrow{}\Dbl{X}_0)
    \end{equation*}
    corresponding to a loose arrow in $\Dbl{X}^\Dbl{A}$, and $f:H\Rightarrow F$, $g:K\Rightarrow G$ are natural transformations, then the restriction $\Phi(f,g):H\proto K$ is determined at a loose arrow $\varphi:x\proto y$ in $\Dbl{A}$ by a choice of cartesian multicell
    \[\begin{adjustbox}{}
\begin{tikzcd}
	Hx && Ky \\
	Fx && Gy
	\arrow[""{name=0, anchor=center, inner sep=0}, "{{\Phi(f,g)(\varphi)}}"{inner sep=.8ex}, "\shortmid"{marking}, from=1-1, to=1-3]
	\arrow["{{f(a)}}"', from=1-1, to=2-1]
	\arrow["{{g(b)}}", from=1-3, to=2-3]
	\arrow[""{name=1, anchor=center, inner sep=0}, "{{\Phi(\varphi)}}"'{inner sep=.8ex}, "\shortmid"{marking}, from=2-1, to=2-3]
	\arrow["{\mathsf{cart}}"{description}, draw=none, from=0, to=1]
\end{tikzcd}
\end{adjustbox}\]
    so that $\Phi(f,g)(\varphi)=\Phi(\varphi)(f(a),g(b))$. For a unary multicell $\gamma:\cell{a}{\varphi}{\psi}{b}$ in $\Dbl{A}$, $\Phi(f,g)(\gamma)$ is given using the unique factorization through the cartesian multicell below:
    \[\begin{adjustbox}{}
\begin{tikzcd}
	Hx & Ky && Hx && Ky \\
	Fx & Gy & {=} & Hz && Kw \\
	Fz & Gw && Fz && Gw
	\arrow[""{name=0, anchor=center, inner sep=0}, "{{\Phi(f,g)(\varphi)}}"{inner sep=.8ex}, "\shortmid"{marking}, from=1-1, to=1-2]
	\arrow["{{f(a)}}"', from=1-1, to=2-1]
	\arrow["{{g(b)}}", from=1-2, to=2-2]
	\arrow[""{name=1, anchor=center, inner sep=0}, "{{\Phi(f,g)(\varphi)}}"{inner sep=.8ex}, "\shortmid"{marking}, from=1-4, to=1-6]
	\arrow["Ha"', from=1-4, to=2-4]
	\arrow["Kb", from=1-6, to=2-6]
	\arrow[""{name=2, anchor=center, inner sep=0}, "{{\Phi(\varphi)}}"'{inner sep=.8ex}, "\shortmid"{marking}, from=2-1, to=2-2]
	\arrow["Fa"', from=2-1, to=3-1]
	\arrow["Gb", from=2-2, to=3-2]
	\arrow[""{name=3, anchor=center, inner sep=0}, "{{\Phi(f,g)(\psi)}}"'{inner sep=.8ex}, "\shortmid"{marking}, from=2-4, to=2-6]
	\arrow["{{f(c)}}"', from=2-4, to=3-4]
	\arrow["{{g(d)}}", from=2-6, to=3-6]
	\arrow[""{name=4, anchor=center, inner sep=0}, "{{\Phi(\psi)}}"'{inner sep=.8ex}, "\shortmid"{marking}, from=3-1, to=3-2]
	\arrow[""{name=5, anchor=center, inner sep=0}, "{{\Phi(\psi)}}"'{inner sep=.8ex}, "\shortmid"{marking}, from=3-4, to=3-6]
	\arrow["{\mathsf{cart}}"{description}, draw=none, from=0, to=2]
	\arrow["{\exists!\Phi(f,g)(\gamma)}"{description}, draw=none, from=1, to=3]
	\arrow["{\Phi(\gamma)}"{description}, draw=none, from=2, to=4]
	\arrow["{\mathsf{cart}}"{description}, draw=none, from=3, to=5]
\end{tikzcd}
\end{adjustbox}\]
    Uniqueness ensures that this is functorial in $\gamma$, and hence $\Phi(f,g)$ defines a loose arrow in $\Dbl{X}^\Dbl{A}$, with the associated cartesian multicell induced by the pointwise cartesian multicells. 
\end{proof}

\begin{lem}[Local Colimits in Exponential]\label{lem:ExpLocalColim}
    If $\Dbl{A}$ is a small exponentiable VDC with positive globular decompositions, and $\Dbl{X}$ is a large VDC with weak non-nullary composites which has local $\Cat{I}$-colimits, then $\Dbl{X}^\Dbl{A}$ also has local $\Cat{I}$-colimits, and they are created by evaluation at loose arrows.
\end{lem}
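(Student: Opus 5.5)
We construct the local colimit in $\Dbl{X}^\Dbl{A}$ pointwise, following the pattern of Lemma~\ref{lem:restrInExp}. Fix objects $F,G$ of $\Dbl{X}^\Dbl{A}$ and a diagram $\Phi_{(-)}:\Cat{I}\to\Cat{gCell}(\Dbl{X}^\Dbl{A})(F,G)$. Such a diagram consists of loose arrows $\Phi_i:F\proto G$ and globular unary multicells between them, which by Proposition~\ref{prop:elementsOfExpVDC}(4) are natural families of globular unary multicells in $\Dbl{X}$.

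\textbf{Step 1: the candidate colimit.} For each loose arrow $\varphi:x\proto y$ of $\Dbl{A}$, set
\[\Phi(\varphi):=\colim_{i\in\Cat{I}}\Phi_i(\varphi)\]
in $\Cat{gCell}(\Dbl{X})(Fx,Gy)$, which exists by (L1) for $\Dbl{X}$. Let $\lambda_{i,\varphi}$ be the colimit legs.

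For a unary multicell $\sigma:\cell{a}{\varphi}{\psi}{b}$ of $\Dbl{A}$, the family $\frac{\Phi_i(\sigma)}{\lambda_{i,\psi}}$ is a cocone under $\Phi_{(-)}(\varphi)$ with tight boundary $(Fa,Gb)$, by naturality of the transition cells. Condition (L2), in its weak form with empty $\prolist{\chi}_1,\prolist{\chi}_2$, therefore yields a unique $\Phi(\sigma)$ through which this cocone factors. Uniqueness gives functoriality, so $\Phi:\Dbl{A}_1\to\Dbl{X}_1$ is a loose arrow $F\proto G$. The legs $\lambda_{i,\varphi}$ are natural in $\varphi$, so they assemble into globular unary cells $\lambda_i:\Phi_i\Rightarrow\Phi$ forming a cocone.

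\textbf{Step 2: (L1) in the exponential.} A cocone $\Phi_i\Rightarrow\Psi$ of globular unary cells gives, at each $\varphi$, a cocone in $\Dbl{X}$, hence a unique induced cell $\Phi(\varphi)\Rightarrow\Psi(\varphi)$. Naturality in $\sigma$ follows from uniqueness in (wL2). This establishes (L1) and shows the colimit is created by evaluation at loose arrows.

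\textbf{Step 3: (L2) in the exponential.} This is the main obstacle. Fix sequences $\prolist{\Xi}_1,\prolist{\Xi}_2$ of loose arrows in $\Dbl{X}^\Dbl{A}$ and a target $\Psi$ with boundary $f_0,f_1$. We must show that precomposition with the $\lambda_i$ gives a bijection from multicells with source $(\prolist{\Xi}_1,\Phi,\prolist{\Xi}_2)$ to cocones of multicells with source $(\prolist{\Xi}_1,\Phi_i,\prolist{\Xi}_2)$.

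The arity is $n=|\prolist{\Xi}_1|+1+|\prolist{\Xi}_2|\geq1$, so positive globular decompositions of $\Dbl{A}$ apply. By Lemma~\ref{lem:Uniqueglobular}, a multicell of $\Dbl{X}^\Dbl{A}$ of this arity is equivalently a compatible assignment on globular $n$-ary multicells $\alpha:\scell{\prolist{\omega},\varphi,\prolist{\xi}}{\rho}$ of $\Dbl{A}$.

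Given a cocone $\Gamma_i$, for each such $\alpha$ the components $\Gamma_i(\alpha)$ form a cocone in $\Dbl{X}$ with source $(\prolist{\Xi}_1\prolist{\omega},\Phi_i(\varphi),\prolist{\Xi}_2\prolist{\xi})$. Full (L2) in $\Dbl{X}$ then gives a unique $\Gamma(\alpha)$. Uniqueness shows that $\Gamma(\alpha)$ satisfies the compatibility condition of Lemma~\ref{lem:Uniqueglobular}: both sides of each required pasting equation restrict along the $\lambda_{i,\varphi}$ to the corresponding equation for the $\Gamma_i$. Here the unary cells acting on the $\varphi$ slot are pushed through $\lambda$ using the definition of $\Phi(\sigma)$, and the cells acting on the other slots and below are untouched. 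Hence $\Gamma$ is a multicell, and injectivity follows from the uniqueness in (L2) of $\Dbl{X}$ applied to globular cells together with Lemma~\ref{lem:Uniqueglobular}.

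The delicate point is the compatibility check. Decompositions in Lemma~\ref{lem:Uniqueglobular} move tight arrows across all slots simultaneously, so one must verify that the unary cells $\Phi(\sigma)$ interact correctly with the cocone legs. We would handle this by reducing every equation to its precomposition with $\lambda_{i,\varphi}$ and invoking (L2) uniqueness in $\Dbl{X}$, using the weak composites in $\Dbl{X}$ only if needed to rephrase (L2) as preservation of colimits by composition.
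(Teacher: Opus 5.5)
Your argument is correct. Steps~1--2 agree with the paper, which also forms the colimit pointwise in $\Cat{gCell}(\Dbl{X})(Fx,Gy)$ and inherits (wL2) pointwise. You go further by making explicit the use of (wL2) in $\Dbl{X}$ to define $\Phi$ on non-globular unary cells; the paper leaves this implicit by calling the result ``the colimit in the functor category.''

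The two routes diverge for full (L2). The paper obtains it indirectly. It takes the weak composites in $\Dbl{X}^\Dbl{A}$ from Theorem~\ref{thm:weakComp}, which are colimits of composites in $\Dbl{X}$. Composing with loose arrows on either side then preserves the local $\Cat{I}$-colimits, because it does so in $\Dbl{X}$ and colimits commute with colimits.

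You instead argue directly:
\begin{enumerate}
\item Lemma~\ref{lem:Uniqueglobular} reduces a multicell of positive arity to its values on globular cells.
\item You apply (L2) of $\Dbl{X}$ separately at each globular $\alpha$.
\item You verify the compatibility condition by (L2)-uniqueness after precomposing with $\lambda_{i,\varphi}$. This uses $\frac{\lambda_{i,\varphi}}{\Phi(\sigma)}=\frac{\Phi_i(\sigma)}{\lambda_{i,\psi}}$ and the fact that each $\Gamma_i$ already satisfies that condition.
\end{enumerate}
This check goes through as you describe.

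Your route needs fewer hypotheses. It uses no weak composites in $\Dbl{X}$, so your hedge about them can be dropped. It also uses no colimits beyond $\Cat{I}$-shaped ones. The paper's appeal to the construction of Theorem~\ref{thm:weakComp} tacitly uses colimits indexed by the categories $\Cat{I}_{\varphi,n}$ of Construction~\ref{cons:CompCandidate}, which is more than the lemma assumes. This is harmless in Theorem~\ref{thm:repOfExp}, where $\Dbl{X}$ has all local colimits. In return, the paper's route is shorter given Theorem~\ref{thm:weakComp}, and it presents (L2) in its familiar form as preservation of local colimits by composition.

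One point you should state explicitly. The composite $\frac{(\mathrm{id},\lambda_i,\mathrm{id})}{\Gamma}$ in $\Dbl{X}^\Dbl{A}$ evaluates at a globular $\alpha$ to $\frac{(\mathrm{id},\lambda_{i,\varphi},\mathrm{id})}{\Gamma(\alpha)}$, by decomposing $\alpha$ trivially with identity cells on top. This fact does two jobs:
\begin{enumerate}
\item it makes $\{\Gamma_i(\alpha)\}_i$ a cocone;
\item via the uniqueness half of Lemma~\ref{lem:Uniqueglobular}, it gives the factorization $\frac{(\mathrm{id},\lambda_i,\mathrm{id})}{\Gamma}=\Gamma_i$.
\end{enumerate}
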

\begin{proof}
    Fix functors $F,G:\Dbl{A}_0\to \Dbl{X}_0$ and a diagram $D:\Cat{I}\to (\Dbl{X}^\Dbl{A})_1(F,G)$, where the objects in $(\Dbl{X}^\Dbl{A})_1(F,G)$ are maps of spans
    \[\begin{adjustbox}{}
\begin{tikzcd}
	{\Dbl{A}_0} && {\Dbl{A}_1} && {\Dbl{A}_0} \\
	\\
	{\Dbl{X}_0} && {\Dbl{X}_1} && {\Dbl{X}_0}
	\arrow["F"', from=1-1, to=3-1]
	\arrow["s"', from=1-3, to=1-1]
	\arrow["t", from=1-3, to=1-5]
	\arrow["\Phi"{description}, from=1-3, to=3-3]
	\arrow["G", from=1-5, to=3-5]
	\arrow["s", from=3-3, to=3-1]
	\arrow["t"', from=3-3, to=3-5]
\end{tikzcd}
\end{adjustbox}\]   
    and morphisms are natural transformations $\Phi\Rightarrow \Psi$ whose components are all globular unary multicells. Let $\colim_\Cat{I}\,D:\Dbl{A}_1\to \Dbl{X}_1$ denote the colimit in the functor category, which exists since for each loose arrow $\varphi:a\proto b$ in $\Dbl{A}$, the resulting functor $D(-,\varphi):\Cat{I}\to \Dbl{X}_1(Fa,Gb)$ lands in a category with $\Cat{I}$-colimits. Since $\colim_\Cat{I}\,D$ lies in $(\Dbl{X}^\Dbl{A})_1(F,G)$, it is also a colimit there. Further, property (wL2) is satisfied since the local $\Cat{I}$-colimits in $\Dbl{X}$ satisfy (wL2), and the colimit $\colim_\Cat{I}\,D$ is computed pointwise. Finally, since weak composites in $\Dbl{X}^\Dbl{A}$ can be constructed as a colimit of composites in $\Dbl{X}$, as in Theorem~\ref{thm:weakComp}, the local $\Cat{I}$-colimits in $\Dbl{X}^\Dbl{A}$ are preserved under weak composition by loose arrows on either side due to the fact that this is the case in $\Dbl{X}$ and colimits commute.
\end{proof}

Recall that Theorem A.1 in~\cite{Cruttwell2010} says that if $\Dbl{X}$ is a representable VDC with restrictions and local coequalizers, 
then $\Mod(\Dbl{X})$ is also representable. Note that here we do not need for $\Dbl{X}$ to have restrictions, as we just need property (wL2) 
for local coequalizers. However, as noted in~\cite{Cruttwell2010}, in practice most examples of such $\Dbl{X}$ satisfy (wL2) due to having restrictions. 
Additionally, Remark 6.4 of~\cite{arkor2025exponentiablevirtualdoublecategories} says that we can weaken the assumptions further by only requiring 
$\Dbl{X}$ to have non-nullary composites and local reflexive coequalizers, since these are the only things used in the proof of Theorem A.1 in~\cite{Cruttwell2010}. 
Putting Theorem~\ref{thm:weakComp} together with Lemma~\ref{lem:restrInExp} and Lemma~\ref{lem:ExpLocalColim}, we can now obtain our main representability result:

\begin{thm}[Representability of $\Dbl{V}\mathsf{df}(\Dbl{A},\Dbl{X})$]\label{thm:repOfExp}
    If $\Dbl{A}$ is a small exponentiable VDC with positive globular decompositions, and 
    $\Dbl{X}$ is a VDC with (weak) non-nullary composites and local colimits, then 
    $\Dbl{V}\mathsf{df}(\Dbl{A},\Dbl{X})$ is a (weakly) representable unital VDC which is locally cocomplete. 
    Further, if $\Dbl{X}$ has restrictions, so does $\Dbl{V}\mathsf{df}(\Dbl{A},\Dbl{X})$.
\end{thm}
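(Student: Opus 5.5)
The plan is to assemble the theorem from the three preceding results and then apply the Cruttwell--Shulman representability theorem for modules, in the weakened form noted in Remark 6.4 of \cite{arkor2025exponentiablevirtualdoublecategories}. Since $\Dbl{V}\mathsf{df}(\Dbl{A},\Dbl{X})=\Mod(\Dbl{X}^\Dbl{A})$ by Lemma~\ref{lem:VdfVDC}, the task is to check that $\Dbl{X}^\Dbl{A}$ meets the hypotheses of that theorem. We then transfer local cocompleteness and restrictions from $\Dbl{X}^\Dbl{A}$ to $\Mod(\Dbl{X}^\Dbl{A})$.

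First, Theorem~\ref{thm:weakComp} applies directly, since $\Dbl{A}$ is small, exponentiable and has positive globular decompositions, and $\Dbl{X}$ has (weak) non-nullary composites and local colimits. So $\Dbl{X}^\Dbl{A}$ has (weak) non-nullary composites. Second, Lemma~\ref{lem:ExpLocalColim}, applied for every small $\Cat{I}$, shows that $\Dbl{X}^\Dbl{A}$ has local $\Cat{I}$-colimits created pointwise. In particular it has local reflexive coequalizers satisfying (L2), and hence (wL2). Third, if $\Dbl{X}$ has restrictions, Lemma~\ref{lem:restrInExp} gives restrictions in $\Dbl{X}^\Dbl{A}$.

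Next comes the module construction. Following Theorem~A.1 of \cite{Cruttwell2010}, the composite of a bimodule $M:(x,\varphi)\proto(y,\psi)$ with $N:(y,\psi)\proto(z,\chi)$ is the local reflexive coequalizer of the two action maps $M\odot\psi\odot N\rightrightarrows M\odot N$. Units are given by the monoids themselves, which makes $\Mod(-)$ unital. Remark 6.4 of \cite{arkor2025exponentiablevirtualdoublecategories} says this argument only uses non-nullary composites together with local reflexive coequalizers stable under composition. In the weak case we would rerun the same argument with weakly opcartesian cells, using only (wL2). The step to watch is that stability of the coequalizer under whiskering is needed for the composite module's actions to be well defined; Lemma~\ref{lem:ExpLocalColim} supplies exactly this through its pointwise construction and the commutation of colimits with the colimit formula for composites from Construction~\ref{cons:CompCandidate}.

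For local cocompleteness of $\Mod(\Dbl{X}^\Dbl{A})$, we compute the colimit of a diagram of bimodules with fixed monoids in $\Dbl{X}^\Dbl{A}$. The actions are induced using (L2), since a colimit of actions is a cocone out of $\varphi\odot\colim M$. Checking (L2) for modulations reduces to (L2) in $\Dbl{X}^\Dbl{A}$ together with the equivariance conditions, which are preserved by the colimit bijection. Restrictions in $\Mod(\Dbl{X}^\Dbl{A})$ follow from the standard fact that restrictions of bimodules along monoid homomorphisms inherit actions via the cartesian universal property. The main obstacle I expect is the weak case, specifically verifying that the Cruttwell--Shulman associativity and unit arguments still go through when composites in $\Dbl{X}^\Dbl{A}$ are only weakly opcartesian. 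My first approach there would be to show that every cell appearing in that proof has empty side contexts, so that weak opcartesianness suffices.
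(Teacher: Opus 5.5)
For the strong case your route is the paper's own. You identify $\Dbl{V}\mathsf{df}(\Dbl{A},\Dbl{X})$ with $\Mod(\Dbl{X}^{\Dbl{A}})$ and feed Theorem~\ref{thm:weakComp}, Lemma~\ref{lem:ExpLocalColim} and Lemma~\ref{lem:restrInExp} into Theorem A.1 of \cite{Cruttwell2010}, as weakened in Remark 6.4 of \cite{arkor2025exponentiablevirtualdoublecategories}. Your remarks on local cocompleteness and restrictions of $\Mod$ only make explicit what the paper leaves implicit.

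The weak case is not handled, and the repair you propose cannot work. Take bimodules $M:(x,\varphi)\proto(y,\psi)$ and $N:(y,\psi)\proto(z,\chi)$ with candidate composite $P=\mathrm{coeq}(M\odot\psi\odot N\rightrightarrows M\odot N)$. The left action on $P$ is a binary cell $(\varphi,P)\to P$. By (L2) with left context $\varphi$ (or by passing to $\varphi\odot P$), this amounts to a cell $(\varphi,M\odot N)\to P$. The only input available is $(\varphi,M,N)\to(M,N)\to M\odot N\to P$, built from the action of $\varphi$ on $M$. Passing from this to a cell out of $(\varphi,M\odot N)$ is exactly opcartesianness of $(M,N)\to M\odot N$ with a nonempty side context, i.e.\ a strong composite. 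So the Cruttwell--Shulman argument unavoidably contains cells with nonempty contexts. Neither (wL2) nor stability of the coequalizer under whiskering substitutes for this.

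Concretely, take $\Dbl{A}=\Ob_u$ and $\Dbl{X}=\L(B\Cat{C})$ for a cocomplete, genuinely colax monoidal $\Cat{C}$. You would need a map $\varphi\otimes(M\otimes N)\to(\varphi\otimes M)\otimes N$, but the colax structure only supplies maps out of the unbiased $\otimes_3(\varphi,M,N)$ into the two bracketings. The paper gives no separate argument for the weak case either: it only cites results stated for strong non-nullary composites. So the ``(weak)'' reading would need a genuinely different construction of composite bimodules, or should be restricted to the strong case.
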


Combining these results with Lemma~\ref{lem:PosGDPforOpOfStrictification} and Lemma~\ref{lem:Yoneda} we obtain the following embedding result.

\begin{cor}\label{cor:yonedaEmbeddProps}
	Let $\Dbl{D}$ be an arbitrary VDC. 
	Then the Yoneda embedding $\rho_\Dbl{D}:\Dbl{D}\hookrightarrow \Span^{\Cat{fc}(\Dbl{D})^{op_t}}$ embeds $\Dbl{D}$ into a large VDC which is locally cocomplete, has restrictions, and has non-nullary composites. 
	If $\Dbl{D}$ admits loose units, then a choice of such units corresponds to an extension of $\rho_\Dbl{D}$ to a unital embedding $\rho_\Dbl{D}':\Dbl{D}\hookrightarrow \Dbl{V}\mathsf{df}(\Cat{fc}(\Dbl{D})^{op_t},\Span)$ into a large locally cocomplete equipment.
\end{cor}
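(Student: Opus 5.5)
The plan is to assemble the corollary from results already in place, treating the two claims separately. For the first claim, I take the Yoneda VDF $\rho_\Dbl{D}:\Dbl{D}\to\Span^{\Cat{fc}(\Dbl{D})^{op_t}}$ from Lemma~\ref{lem:homVDFunctor}. It exists because $\Cat{fc}(\Dbl{D})^{op_t}$ is a strict double category, hence exponentiable by Proposition~\ref{prop:RepImpProRep}. We already showed that it is fully faithful (Lemma~\ref{lem:Yoneda}) and injective on objects and loose arrows, so by Lemma~\ref{lem:regMon} it is an embedding. It then remains to verify the three properties of the target, taking $\Dbl{A}=\Cat{fc}(\Dbl{D})^{op_t}$ and $\Dbl{X}=\Span$.

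\begin{itemize}
\item \textbf{Hypotheses on $\Dbl{A}$.} $\Dbl{A}$ is small and exponentiable, and by Lemma~\ref{lem:PosGDPforOpOfStrictification} it has positive globular decompositions.
\item \textbf{Hypotheses on $\Span$.} $\Span$ is representable, so it has non-nullary composites. It has restrictions, given by pullback of spans. It is locally cocomplete: each $\Span_1(x,y)\simeq\Cat{Set}/(x\times y)$ is cocomplete. Condition (L2) holds because cocones out of a multicell with a colimit in one slot correspond to maps out of an iterated pullback, and pullback in $\Cat{Set}$ preserves colimits, since $\Cat{Set}$ is locally cartesian closed.
\item \textbf{Transfer to the exponential.} With these hypotheses, Theorem~\ref{thm:weakComp} gives non-nullary composites in $\Span^{\Dbl{A}}$. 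Lemma~\ref{lem:restrInExp} gives restrictions, and Lemma~\ref{lem:ExpLocalColim}, applied for every small $\Cat{I}$, gives local cocompleteness.
\end{itemize}

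For the second claim, I use the adjunction $\Dbl{F}_u(-)\times\Dbl{A}\dashv\Dbl{V}\mathsf{df}(\Dbl{A},-)$ from Lemma~\ref{lem:VdfVDC}, or equivalently $U\dashv\Mod$ after transposing. A VDF $\Dbl{D}\to\Mod(\Span^{\Dbl{A}})$ lifting $\rho_\Dbl{D}$ amounts to equipping each object $\rho_\Dbl{D}(x)$ with a monoid structure, together with actions making the images of loose arrows into modules. A choice of loose units $I_x$ in $\Dbl{D}$ gives such a monoid: the carrier is $\rho_\Dbl{D}(I_x)$, and the multiplication and unit come from the Yoneda lemma (Lemma~\ref{lem:Yoneda}), which applies to $\rho_\Dbl{D}(I_x)$ because nullary multicells into $I_x$ correspond to the opcartesian unit cell. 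Conversely, the Yoneda lemma shows that any unital extension determines units. The image of $I_x$ must be the unit of the monoid, and fullness of $\rho_\Dbl{D}$ pulls the opcartesian nullary cell back to $\Dbl{D}$.

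The target $\Dbl{V}\mathsf{df}(\Dbl{A},\Span)=\Mod(\Span^{\Dbl{A}})$ is an equipment by Theorem~\ref{thm:repOfExp}: it is representable, locally cocomplete and has restrictions. It is unital because modules always carry units. The result is then the stated unital embedding.

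I expect the main obstacle to be the bijection between choices of units and unital extensions, in particular checking that the extension is still fully faithful and injective on objects and loose arrows. Here I would verify that the module and monoid structures forced by the units are unique, again via Lemma~\ref{lem:Yoneda}.
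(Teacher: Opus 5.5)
Your proposal is correct and takes essentially the paper's route. The paper proves the corollary simply by combining Lemma~\ref{lem:PosGDPforOpOfStrictification} and Lemma~\ref{lem:Yoneda} (with Lemma~\ref{lem:regMon}) with Theorem~\ref{thm:weakComp}, Lemmas~\ref{lem:restrInExp} and~\ref{lem:ExpLocalColim}, and Theorem~\ref{thm:repOfExp}, applied to $\Cat{fc}(\Dbl{D})^{op_t}$ and $\Span$; once units are chosen, the unital extension is the transpose of $\rho_{\Dbl{D}}$ along $U\dashv\Mod$, and your extra checks (condition (L2) for $\Span$, and uniqueness of the forced monoid and module structures, which you could obtain more directly by applying $\rho_{\Dbl{D}}$ to the canonical monoid $(I_x,\mu,\eta)$ in $\Dbl{D}$) fill in details the paper leaves implicit.
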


Using Proposition~\ref{prop:InternalizeUModAdj} it follows that for any unital VDC $\Dbl{D}$, a choice of units provides a unital embedding
\begin{equation}
	\rho_{\Dbl{D}}':\Dbl{D}\hookrightarrow \Dbl{V}\mathsf{df}(\Cat{fc}(\Dbl{D})^{op_t},\Span)\cong \Dbl{V}\Cat{df}_n(\Cat{fc}(\Dbl{D})^{op_t},\Prof)
\end{equation}
We'll explore the universal properties satisfied by these embeddings in future work.

%% file: Sections/A_ExponentiabilityOfMorphisms.tex
In this section we investigate exponentiable morphisms between VDCs. The arguments in this section are heavily inspired by the section on powerful morphisms in Street and Verity's paper on factorization and torsors~\cite{StreetVerity2010}. In particular, we use the fact that a morphism $p:E\to B$ in a finitely complete category $\Cat{E}$ is powerful, or exponentiable, if and only if the pullback functor $-\times_BE:\Cat{E}_{/B}\to \Cat{E}$ admits a right adjoint. Since we will be considering the case where $\Cat{E}=\Vdc$, Theorem~\ref{thm:univSketchProp} along with Proposition~\ref{prop:sliced-sketch} imply that such an adjoint exists if and only if the functor $-\times_BE$ preserves the colimiting cones described in Corollary~\ref{cor:exponentiable-morphisms-via-slice}.

Before generalizing our exponentiability results, we'll begin by explicating what a right adjoint to $-\times_BE$ would do on objects if it existed. Note that if $P:\Dbl{D}\to \Dbl{E}$ is an exponentiable VDF, so that we have a right adjoint $\Pi_P(-\times \Dbl{D}):\Vdc\to \Vdc_{/\Dbl{E}}$, then for a VDC $\Dbl{C}$, the structure of $s_\Dbl{C}:\Pi_P(\Dbl{C}\times \Dbl{D})\to \Dbl{E}$ can be determined by the adjunction
$$
\Vdc_{/\Dbl{E}}(Q:\Dbl{A}\to \Dbl{E},s_\Dbl{C})\cong \Vdc(\Dbl{A}\times_{Q,\Dbl{E},P}\Dbl{D},\Dbl{C})
$$
Thus, in order to use this adjunction to understand the structure of $\Pi_P(\Dbl{C}\times \Dbl{D})$ we must explicate what the VDC $\Dbl{A}\times_{Q,\Dbl{E},P}\Dbl{D}$ looks like for the various free walking VDCs.

Observe that if $x:\Ob\to \Dbl{E}$ is an object in $\Dbl{E}$, then since products with $\Ob$ give underlying tight categories, the fiber $P^{-1}(x)=\Ob\times_{x,\Dbl{E},P}\Dbl{D}$ is precisely the VDC $\T(P_0^{-1}(x))$. Similarly, if $a:x\to x'$ is a tight arrow in $\Dbl{E}$, corresponding to a VDF $a:\Tight \to \Dbl{E}$, then $P^{-1}(a)=\Tight \times_{a,\Dbl{E},P}\Dbl{D}$ can be identified with the collage of the fibers
\begin{equation*}
	\T(P_0^{-1}(x)+_{P_0^{-1}(a)}P_0^{-1}(x'))
\end{equation*}
That is, $P^{-1}(a)$ consists of disjoint copies of the categories $P_0^{-1}(x)$ and $P_0^{-1}(x')$, along with for each tight arrow $b:y\to y'$ in $\Dbl{D}$ with $P(b)=a$, a tight morphism $(t,b):(0,y)\to (1,y')$.

Next, if $\varphi:x\proto x'$ is a loose arrow in $\Dbl{E}$, corresponding to a VDF $\varphi:\Loose\to \Dbl{E}$, then $\Loose\times_{\varphi,\Dbl{E},P}\Dbl{D}=P^{-1}(\varphi)$ contains disjoint copies of the categories $P_0^{-1}(x)$ and $P_0^{-1}(x')$. Additionally, for each loose arrow $\psi:y\proto y'$ in $\Dbl{D}$ with $P(\psi)=\varphi$, $P^{-1}(\varphi)$ has a loose arrow $(\ell,\psi):(0,y)\proto (1,y')$, and for each unary multicell $\alpha$ in $\Dbl{D}$ lying over $\text{id}_{\varphi}$, $P^{-1}(\varphi)$ has a unary multicell $(\text{id}_{\ell},\alpha)$. In particular, the data in $P^{-1}(\varphi)$ is determined by the subspan of categories
\[\begin{adjustbox}{}
\begin{tikzcd}
	{P_0^{-1}(x)} && {P_1^{-1}(\varphi)} && {P_0^{-1}(x')} \\
	\\
	{\mathbb{D}_0} && {\mathbb{D}_1} && {\mathbb{D}_0}
	\arrow[hook, from=1-1, to=3-1]
	\arrow["s"', from=1-3, to=1-1]
	\arrow["t", from=1-3, to=1-5]
	\arrow[hook, from=1-3, to=3-3]
	\arrow[hook, from=1-5, to=3-5]
	\arrow["s", from=3-3, to=3-1]
	\arrow["t"', from=3-3, to=3-5]
\end{tikzcd}\end{adjustbox}
\]
Finally, for a multicell $\alpha:\cell{a}{\prolist{\varphi}}{\psi}{b}$
in $\Dbl{E}$, which can be given by a map $\alpha:\Sq{n}\to \Dbl{E}$, $P^{-1}(\alpha)$ contains $P^{-1}(a),P^{-1}(b),P^{-1}(\varphi_1),\ldots,P^{-1}(\varphi_n)$, and $P^{-1}(\psi)$ as sub-VDCs, along with a multicell for each multicell of $\Dbl{D}$ lying over $\alpha$.

Relativizing Proposition~\ref{prop:elementsOfExpVDC} we can obtain an analogous result for $\Pi_P(\Dbl{C}\times \Dbl{D})\to \Dbl{E}$. 
In the following proposition, if $\Cat{P}\to \Cat{Q}$ is a map of profunctors, and if $\alpha$ is a heteromorphism in $\Cat{Q}$, we write $\Cat{P}_{/\alpha}$ for the pullback in $\Prof_1$, which is a profunctor between fiber categories whose heteromorphisms are the heteromorphisms in $\Cat{P}$ lying over $\alpha$.

\begin{prop}[Elements of the exponential dependent product VDC]\label{prop:elementsOfDepProdVDC}
  If $P:\Dbl{D}\to \Dbl{E}$ is a VDF and $\Dbl{C}$ is a VDC such that the dependent product $s_\Dbl{C}:\Pi_P(\Dbl{C}\times \Dbl{D})\to \Dbl{E}$ exists, then the VDC $\Pi_P(\Dbl{C}\times \Dbl{D})$ consists of the following data:
\begin{enumerate}
    \item For $x\in \uOb (\Dbl{E})$, objects $F \in \uOb (\Pi_P(\Dbl{C}\times \Dbl{D}))_{/x}\cong \Vdc(\Ob\times_\Dbl{E} \Dbl{D},\Dbl{C})$ over $x$ correspond to functors $F:P_0^{-1}(x)\to \Dbl{C}_0$.
    \item For $a:x\to x'$ in $\Dbl{E}$, tight arrows $f \in \mathsf{Arr_T}(\Pi_P(\Dbl{C}\times \Dbl{D}))_{/a}\cong \Vdc(\Tight\times_\Dbl{E} \Dbl{D},\Dbl{C})$ over $a$ correspond to functors $\alpha:P_0^{-1}(x)+_{P_0^{-1}(a)}P_0^{-1}(x')\to \Dbl{C}_0$ extending those on objects.
    \item For $\varphi:x\proto x'$ in $\Dbl{E}$, Loose arrows $\Phi \in \uLoose(\Pi_P(\Dbl{C}\times \Dbl{D}))_{/\varphi}\cong \Vdc(\Loose \times_\Dbl{E} \Dbl{D},\Dbl{C})$ over $\varphi$ correspond to maps of spans of categories 
	\begin{equation*}
		(P_0^{-1}(x)\leftarrow P_1^{-1}(\varphi)\rightarrow P_0^{-1}(x'))\to (\Dbl{C}_0\leftarrow \Dbl{C}_1\rightarrow \Dbl{C}_0)
	\end{equation*}
    \item For a multicell $\alpha$ in $\Dbl{E}$, multicells $\Gamma \in \uSq{n}(\Pi_P(\Dbl{C}\times \Dbl{D}))_{/\alpha}\cong \Vdc(\Sq{n}\times_\Dbl{E} \Dbl{D},\Dbl{C}),$ over $\alpha$ are given by a map of profunctors $\MCelln{n}(\Dbl{D})_{/\alpha}\to \partial^*\MCelln{n}(\Dbl{C})$ over $\Dbl{C}_0^2,$ where 
    $\partial:\Cat{fc}_n(\Dbl{D}_1)_{/s(\alpha)}^{op}\times (\Dbl{D}_1)_{/t(\alpha)}\to \Cat{fc}_n(\Dbl{C}_1)^{op}\times \Dbl{C}_1$ is the functor which together with the natural transformations putting $\MCelln{n}(\Dbl{D})_{/\alpha}$ over $\Dbl{C}_0^2$ determines the  boundary data of $\Gamma.$
\end{enumerate}
\end{prop}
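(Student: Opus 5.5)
The plan is to mirror the proof of Proposition~\ref{prop:elementsOfExpVDC}, replacing products by pullbacks over $\Dbl{E}$. The key input is the adjunction
\[
\Vdc_{/\Dbl{E}}(Q:\Dbl{A}\to\Dbl{E},s_\Dbl{C})\cong\Vdc(\Dbl{A}\times_{Q,\Dbl{E},P}\Dbl{D},\Dbl{C}),
\]
combined with the corepresentability of $\uOb,\uTight,\uLoose,\uSq{n}$ by $\Ob,\Tight,\Loose,\Sq{n}$. An element of $\Pi_P(\Dbl{C}\times\Dbl{D})$ lying over a given element $e$ of $\Dbl{E}$ is exactly a morphism in $\Vdc_{/\Dbl{E}}$ from the corepresenting walking VDC, equipped with the structure map $e$, to $s_\Dbl{C}$. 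By the adjunction this is the same as a VDF $P^{-1}(e)\to\Dbl{C}$, where $P^{-1}(e)$ is the fiber product of the walking VDC with $\Dbl{D}$ over $\Dbl{E}$. So the whole proof comes down to describing these four fibers explicitly and then unpacking what a VDF out of each one amounts to.

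The fibers have already been computed in the discussion preceding the statement, and I would simply cite that. $P^{-1}(x)=\T(P_0^{-1}(x))$ has no loose arrows, so a VDF out of it is just a functor $P_0^{-1}(x)\to\Dbl{C}_0$, which gives item 1. $P^{-1}(a)$ is the tight VDC on the collage $P_0^{-1}(x)+_{P_0^{-1}(a)}P_0^{-1}(x')$, so a VDF out of it is a functor on this collage, which is item 2. For $P^{-1}(\varphi)$, the tight category is $P_0^{-1}(x)\sqcup P_0^{-1}(x')$. Its loose arrows are the $\psi$ over $\varphi$, and its only multicells are the unary ones over $\id_\varphi$ together with their composites. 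The argument from the proof of Proposition~\ref{prop:elementsOfExpVDC} then shows that a VDF out of $P^{-1}(\varphi)$ is a functor $P_1^{-1}(\varphi)\to\Dbl{C}_1$ commuting with the source and target functors, which is item 3.

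Item 4 takes the most care. Here $P^{-1}(\alpha)$ contains the sub-VDCs over the boundary of $\alpha$, plus one $n$-ary multicell for each multicell of $\Dbl{D}$ lying over $\alpha$. The plan is as follows. First, restricting a VDF $\Gamma:P^{-1}(\alpha)\to\Dbl{C}$ to the boundary pieces yields the boundary functors, namely the data of $\partial$ together with the natural transformations over $\Dbl{C}_0^2$. Second, the values of $\Gamma$ on the top-level multicells give a function $\MCelln{n}(\Dbl{D})_{/\alpha}\to\partial^*\MCelln{n}(\Dbl{C})$. Third, functoriality of $\Gamma$ with respect to pasting unary multicells over $\id_{\varphi_i}$ above and over $\id_\psi$ below is exactly naturality of this function as a map of profunctors $\Cat{fc}_n(\Dbl{D}_1)_{/s(\alpha)}\proto(\Dbl{D}_1)_{/t(\alpha)}$.

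The step I expect to be the main obstacle is showing that there are no further relations or generators in $P^{-1}(\alpha)$. Concretely, one must check that every multicell of $\Sq{n}\times_\Dbl{E}\Dbl{D}$ with nontrivial first component is of the form $(\square_n,\beta)$ with $P\beta=\alpha$, where $\beta$ is necessarily not a composite involving higher-arity cells, because $\Sq{n}$ has no nontrivial factorizations of $\square_n$. One must also check that the only composites are whiskerings by unary cells lying over identities. Both facts follow because the pullback is computed levelwise on each sort: the multicells of the pullback are pairs of multicells with matching images in $\Dbl{E}$, and composition is componentwise. The converse direction, that any such natural map of profunctors assembles into a VDF, is then routine verification of the unit and associativity axioms, exactly as in the absolute case.
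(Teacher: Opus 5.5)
Your proposal is correct and follows the paper's argument. The paper likewise combines the adjunction $\Vdc_{/\Dbl{E}}(Q,s_\Dbl{C})\cong\Vdc(\Dbl{A}\times_{Q,\Dbl{E},P}\Dbl{D},\Dbl{C})$ with corepresentability, computes the fibers $P^{-1}(x)$, $P^{-1}(a)$, $P^{-1}(\varphi)$, $P^{-1}(\alpha)$, and then relativizes the unpacking from Proposition~\ref{prop:elementsOfExpVDC}.

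One small slip: a cell $\beta$ over $\alpha$ may well be a composite in $\Dbl{D}$. What you need, and what your levelwise argument actually shows, is that $(\square_n,\beta)$ admits only trivial factorizations in $\Sq{n}\times_\Dbl{E}\Dbl{D}$, namely whiskerings by unary cells over identities, because $\square_n$ admits no nontrivial factorizations in $\Sq{n}$.
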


To begin characterizing exponentiable VDC morphisms, we begin by relativizing Definition~\ref{defn:DecompCells}.
Unlike in the absolute case, we'll need to consider factorizations of tight arrows as well as decompositions of multicells. 
For a tight arrow $a$ in a VDC $\Dbl{D}$, we will write $\Cat{Fact}_n(\Dbl{D};a)$ for the category whose objects
are factorizations of $a$ into $n$ composable tight arrows, and whose morphisms are maps of such factorizations.

\begin{defn}\label{defn:decompLifting}
	We say a VDF $P:\Dbl{D}\to \Dbl{E}$ lifts decompositions essentially uniquely if the following hold:
	\begin{itemize} 
		\item[(D1)] for every tight arrow $a$ in $\Dbl{D}$, every integer $n\geq 2$, and every factorization $\mathbf{b} \in \Cat{Fact}_n(\Dbl{E};P(a))$, the pullback $\mathsf{Fact}_n(\mathbb{D};a)\times_{\mathsf{Fact}_n(\mathbb{E};P(a))}\{\mathbf{b}\}$ is (nonempty and) connected.
		\item[(D2)] for every multicell $\alpha$ in $\Dbl{D}$, every integer $n\geq 2$, and every decomposition $\threefrac{\prolist{\gamma}_n}{\vdots}{\gamma_1} \in \Cat{Paste}_n(\mathbb{E};P(\alpha))$, the pullback $\mathsf{Paste}_n(\mathbb{D};\alpha)\times_{\mathsf{Paste}_n(\mathbb{E};P(\alpha))}\left\{\threefrac{\prolist{\gamma}_n}{\vdots}{\gamma_1} \right\}$ is (nonempty and) connected.
	\end{itemize}
\qed
\end{defn}

\begin{expl}[Decomposition Lifting]
    Condition (D1) in decomposition lifting for a VDF $P:\Dbl{D}\to \Dbl{E}$ says that the underlying 
	functor on tight categories is a Conduch\'{e} fibration, or equivalently that for each tight arrow $a$ in $\Dbl{D}$, 
	every factorization of its image $P(a)$ in $\Dbl{E}$ lifts to an essentially unique factorization of $a$ in $\Dbl{D}$, 
	where the maps appearing in the associativity relation are mapped to identities in $\Dbl{E}$. 
	Similarly, condition (D2) in decomposition lifting says that any multicell in $\Dbl{D}$, below upper left, lying over a multicell in $\Dbl{E}$ which admits a decomposition, below right, also admits a decomposition, as below bottom left:
    \[\begin{adjustbox}{}
\begin{tikzcd}[column sep=small]
	& {x_0} && {x_n} && {P(x_0)} && {P(x_n)} \\
	{\mathbb{D}\ni} &&&&&&&& {\in\mathbb{E}} \\
	& {y_0} && {y_1} && {P(y_0)} && {P(y_1)} \\
	& {x_0} & \cdots & {x_{k_m}} && {P(x_0)} & \cdots & {P(x_{k_m})} \\
	\exists & {w_0} & \cdots & {w_m} && {z_0} & \cdots & {z_m} \\
	& {y_0} && {y_1} && {P(y_0)} && {P(y_1)}
	\arrow[""{name=0, anchor=center, inner sep=0}, "{{{{{\prolist{\varphi}}}}}}"{inner sep=.8ex}, "\shortmid"{marking}, from=1-2, to=1-4]
	\arrow["a"', from=1-2, to=3-2]
	\arrow[""{name=1, anchor=center, inner sep=0}, "b", from=1-4, to=3-4]
	\arrow[""{name=2, anchor=center, inner sep=0}, "{{{P(\prolist{\varphi})}}}"{inner sep=.8ex}, "\shortmid"{marking}, from=1-6, to=1-8]
	\arrow[""{name=3, anchor=center, inner sep=0}, "{{{P(a)}}}"', from=1-6, to=3-6]
	\arrow["{{{P(b)}}}", from=1-8, to=3-8]
	\arrow[""{name=4, anchor=center, inner sep=0}, "\psi"'{inner sep=.8ex}, "\shortmid"{marking}, from=3-2, to=3-4]
	\arrow[""{name=5, anchor=center, inner sep=0}, "{{{P(\psi)}}}"'{inner sep=.8ex}, "\shortmid"{marking}, from=3-6, to=3-8]
	\arrow[""{name=6, anchor=center, inner sep=0}, "{{{\prolist{\varphi}_1}}}"{inner sep=.8ex}, "\shortmid"{marking}, from=4-2, to=4-3]
	\arrow["{{{c_0'}}}"', from=4-2, to=5-2]
	\arrow[""{name=7, anchor=center, inner sep=0}, "{{{\prolist{\varphi}_m}}}"{inner sep=.8ex}, "\shortmid"{marking}, from=4-3, to=4-4]
	\arrow["\cdots"{description}, draw=none, from=4-3, to=5-3]
	\arrow["{{{c_m'}}}", from=4-4, to=5-4]
	\arrow[""{name=8, anchor=center, inner sep=0}, "{{{{{P(\prolist{\varphi}_1)}}}}}"{inner sep=.8ex}, "\shortmid"{marking}, from=4-6, to=4-7]
	\arrow["{{{c_0}}}"', from=4-6, to=5-6]
	\arrow[""{name=9, anchor=center, inner sep=0}, "{{{{{P(\prolist{\varphi}_m)}}}}}"{inner sep=.8ex}, "\shortmid"{marking}, from=4-7, to=4-8]
	\arrow["\cdots"{description}, draw=none, from=4-7, to=5-7]
	\arrow["{{{c_m}}}", from=4-8, to=5-8]
	\arrow[""{name=10, anchor=center, inner sep=0}, "{{{\chi_1'}}}"'{inner sep=.8ex}, "\shortmid"{marking}, from=5-2, to=5-3]
	\arrow["{{{d_0'}}}"', from=5-2, to=6-2]
	\arrow[""{name=11, anchor=center, inner sep=0}, "{{{\chi_m'}}}"'{inner sep=.8ex}, "\shortmid"{marking}, from=5-3, to=5-4]
	\arrow["P", between={0.4}{0.6}, maps to, from=5-4, to=5-6]
	\arrow["{{{d_1'}}}", from=5-4, to=6-4]
	\arrow[""{name=12, anchor=center, inner sep=0}, "{{{\chi_1}}}"'{inner sep=.8ex}, "\shortmid"{marking}, from=5-6, to=5-7]
	\arrow["{{{d_0}}}"', from=5-6, to=6-6]
	\arrow[""{name=13, anchor=center, inner sep=0}, "{{{\chi_m}}}"'{inner sep=.8ex}, "\shortmid"{marking}, from=5-7, to=5-8]
	\arrow["{{{d_1}}}", from=5-8, to=6-8]
	\arrow[""{name=14, anchor=center, inner sep=0}, "\psi"'{inner sep=.8ex}, "\shortmid"{marking}, from=6-2, to=6-4]
	\arrow[""{name=15, anchor=center, inner sep=0}, "{{{{{P(\psi)}}}}}"'{inner sep=.8ex}, "\shortmid"{marking}, from=6-6, to=6-8]
	\arrow["\alpha"{description}, draw=none, from=0, to=4]
	\arrow["P", between={0.4}{0.6}, maps to, from=1, to=3]
	\arrow["{{P(\alpha)}}"{description}, draw=none, from=2, to=5]
	\arrow["{{\beta_1'}}"{description}, draw=none, from=6, to=10]
	\arrow["\circ"{pos=0.3}, between={0.1}{0.4}, maps to, from=4-3, to=4]
	\arrow["{{\beta_m'}}"{description}, draw=none, from=7, to=11]
	\arrow["{{\beta_1}}"{description}, draw=none, from=8, to=12]
	\arrow["\circ"{pos=0.3}, between={0.1}{0.4}, maps to, from=4-7, to=5]
	\arrow["{{\beta_m}}"{description}, draw=none, from=9, to=13]
	\arrow["{{\delta'}}"{description}, draw=none, from=5-3, to=14]
	\arrow["\delta"{description}, draw=none, from=5-7, to=15]
\end{tikzcd}
\end{adjustbox}\]
    Further, any two such decompositions are equivalent via unary multicells lying over identity unary multicells. That is, if we have two decompositions, as on the left and right below, we get a zig-zag of multicells (of length possibly $> 1$, though we just depict the length $1$ case) given by pulling out unary multicells in fibers from either the top or bottom:
\[\begin{tikzcd}
	&& {x_0} & \cdots & {x_{k_m}} && \\
	&& {w_0} & \cdots & {w_m} \\
	\\
	&& {w_0'} & \cdots & {w_m'} \\
	&& {y_0} && {y_1} \\
	{x_0} & \cdots & {x_{k_m}} && {x_0} & \cdots & {x_{k_m}} \\
	{w_0} & \cdots & {w_m} && {w_0'} & \cdots & {w_m'} \\
	{y_0} && {y_1} && {y_0} && {y_1}
	\arrow[""{name=0, anchor=center, inner sep=0}, "{{{\prolist{\varphi}_1}}}"{inner sep=.8ex}, "\shortmid"{marking}, from=1-3, to=1-4]
	\arrow["{{{c_0'}}}"', from=1-3, to=2-3]
	\arrow[""{name=1, anchor=center, inner sep=0}, "{{{\prolist{\varphi}_m}}}"{inner sep=.8ex}, "\shortmid"{marking}, from=1-4, to=1-5]
	\arrow["\cdots"{description}, draw=none, from=1-4, to=2-4]
	\arrow["{{{c_m'}}}", from=1-5, to=2-5]
	\arrow[""{name=2, anchor=center, inner sep=0}, "{{{\chi_1'}}}"'{inner sep=.8ex}, "\shortmid"{marking}, from=2-3, to=2-4]
	\arrow["{{{e_0}}}"', from=2-3, to=4-3]
	\arrow[""{name=3, anchor=center, inner sep=0}, "{{{\chi_m'}}}"'{inner sep=.8ex}, "\shortmid"{marking}, from=2-4, to=2-5]
	\arrow["\cdots"{description}, draw=none, from=2-4, to=4-4]
	\arrow["{{{e_m}}}", from=2-5, to=4-5]
	\arrow[""{name=4, anchor=center, inner sep=0}, "{{{\chi_1''}}}"'{inner sep=.8ex}, "\shortmid"{marking}, from=4-3, to=4-4]
	\arrow["{{{d_0''}}}"', from=4-3, to=5-3]
	\arrow[""{name=5, anchor=center, inner sep=0}, "{{{\chi_m''}}}"'{inner sep=.8ex}, "\shortmid"{marking}, from=4-4, to=4-5]
	\arrow["{{{d_1''}}}", from=4-5, to=5-5]
	\arrow[""{name=6, anchor=center, inner sep=0}, "\psi"'{inner sep=.8ex}, "\shortmid"{marking}, from=5-3, to=5-5]
	\arrow[""{name=7, anchor=center, inner sep=0}, "{{{\prolist{\varphi}_1}}}"{inner sep=.8ex}, "\shortmid"{marking}, from=6-1, to=6-2]
	\arrow["{{{c_0'}}}"', from=6-1, to=7-1]
	\arrow[""{name=8, anchor=center, inner sep=0}, "{{{\prolist{\varphi}_m}}}"{inner sep=.8ex}, "\shortmid"{marking}, from=6-2, to=6-3]
	\arrow["\cdots"{description}, draw=none, from=6-2, to=7-2]
	\arrow["{{{c_m'}}}", from=6-3, to=7-3]
	\arrow[""{name=9, anchor=center, inner sep=0}, "{{{\prolist{\varphi}_1}}}"{inner sep=.8ex}, "\shortmid"{marking}, from=6-5, to=6-6]
	\arrow["{{{e_0\circ c_0'}}}"', from=6-5, to=7-5]
	\arrow[""{name=10, anchor=center, inner sep=0}, "{{{\prolist{\varphi}_m}}}"{inner sep=.8ex}, "\shortmid"{marking}, from=6-6, to=6-7]
	\arrow["\cdots"{description}, draw=none, from=6-6, to=7-6]
	\arrow["{{{e_m\circ c_m'}}}", from=6-7, to=7-7]
	\arrow[""{name=11, anchor=center, inner sep=0}, "{{{\chi_1'}}}"'{inner sep=.8ex}, "\shortmid"{marking}, from=7-1, to=7-2]
	\arrow["{{{d_0'}}}"', from=7-1, to=8-1]
	\arrow[""{name=12, anchor=center, inner sep=0}, "{{{\chi_m'}}}"'{inner sep=.8ex}, "\shortmid"{marking}, from=7-2, to=7-3]
	\arrow["{{{d_1'}}}", from=7-3, to=8-3]
	\arrow[""{name=13, anchor=center, inner sep=0}, "{{{\chi_1''}}}"'{inner sep=.8ex}, "\shortmid"{marking}, from=7-5, to=7-6]
	\arrow["{{{d_0''}}}"', from=7-5, to=8-5]
	\arrow[""{name=14, anchor=center, inner sep=0}, "{{{\chi_m''}}}"'{inner sep=.8ex}, "\shortmid"{marking}, from=7-6, to=7-7]
	\arrow["{{{d_1''}}}", from=7-7, to=8-7]
	\arrow[""{name=15, anchor=center, inner sep=0}, "\psi"'{inner sep=.8ex}, "\shortmid"{marking}, from=8-1, to=8-3]
	\arrow[""{name=16, anchor=center, inner sep=0}, "\psi"'{inner sep=.8ex}, "\shortmid"{marking}, from=8-5, to=8-7]
	\arrow["{{\beta_1'}}"{description}, draw=none, from=0, to=2]
	\arrow["{{\beta_m'}}"{description}, draw=none, from=1, to=3]
	\arrow["{{\sigma_1}}"{description}, draw=none, from=2, to=4]
	\arrow["{{\sigma_m}}"{description}, draw=none, from=3, to=5]
	\arrow["{{\delta''}}"{description}, draw=none, from=4-4, to=6]
	\arrow[between={0.3}{0.8}, maps to, from=6, to=6-3]
	\arrow[between={0.3}{0.8}, maps to, from=6, to=6-5]
	\arrow["{{\beta_1'}}"{description}, draw=none, from=7, to=11]
	\arrow["{{\beta_m'}}"{description}, draw=none, from=8, to=12]
	\arrow["{{\frac{\beta_1'}{\sigma_1}}}"{description}, draw=none, from=9, to=13]
	\arrow["{{\frac{\beta_m'}{\sigma_m}}}"{description}, draw=none, from=10, to=14]
	\arrow["{{\delta'}}"{description}, draw=none, from=7-2, to=15]
	\arrow["{{\delta''}}"{description}, draw=none, from=7-6, to=16]
\end{tikzcd}\]
    where $P(\sigma_i) = \text{id}_{\chi_i}$ for all $1\leq i \leq m$. 
\end{expl}

The arguments in Lemma~\ref{lem:components} and Corollary~\ref{cor:decompSimp} extend directly to the relative setting to give the following:

\begin{cor}\label{cor:relDecompSimp}
	A VDF $P:\Dbl{D}\to \Dbl{E}$ lifts decompositions if and only if it lifts 2-layer decompositions.
\end{cor}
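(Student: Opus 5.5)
The plan is to transport the proofs of Lemma~\ref{lem:components} and Corollary~\ref{cor:decompSimp} into the fibred setting, using $P$ to organise everything. The forward direction is immediate: 2-layer decompositions are a special case of (D2). For the converse, condition (D1) is not affected by the layer reduction; it is just the statement that $P_0$ is a Conduch\'{e} fibration. By the classical argument (Fubini for coends, as in the Conduch\'{e}--Giraud theorem above), it is enough to check binary factorizations, and these are part of the 2-layer data. So the real work concerns (D2), which we prove by induction on the height $n\geq 2$. The base case $n=2$ is the hypothesis.

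\emph{Existence.} Fix a multicell $\alpha$ of $\Dbl{D}$ and a decomposition $\threefrac{\prolist{\gamma}_{n+1}}{\vdots}{\gamma_1}$ of $P(\alpha)$ in $\Dbl{E}$. We first compose its bottom two layers using $c_1$ and get an $n$-layer decomposition of $P(\alpha)$. By the inductive hypothesis this lifts to an $n$-layer decomposition $\threefrac{\prolist{\beta}_{n+1}}{\vdots}{\beta_2}$ of $\alpha$. Its bottom cell $\beta_2$ lies over $\frac{\prolist{\gamma}_2}{\gamma_1}$, so the 2-layer hypothesis applied to $\beta_2$ lifts the 2-layer decomposition $\frac{\prolist{\gamma}_2}{\gamma_1}$. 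Pasting gives an $(n+1)$-layer lift. This is exactly the argument of Corollary~\ref{cor:decompSimp}, except that every decomposition chosen now lies over a prescribed one.

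\emph{Connectedness.} Take two $(n+1)$-layer lifts of the same decomposition of $P(\alpha)$. Applying $c_1$ sends both into the fibre over $c_1\bigl(\threefrac{\prolist{\gamma}_{n+1}}{\vdots}{\gamma_1}\bigr)$, where by induction they are connected. We then need a relative version of Lemma~\ref{lem:components}: if two lifts become connected after applying $c_1$ inside the fibre, they are already connected in the $(n+1)$-layer fibre. Following that proof, we reduce to a zig-zag of single morphisms, each nontrivial on only one interface. Morphisms acting on interfaces other than the first lift directly and still lie over identities. For a morphism $\prolist{\sigma}$ acting on the first interface (whose image under $P$ is an identity), the two 2-layer decompositions $\frac{\prolist{\beta}_2}{\beta_1}$ and $\frac{\prolist{\sigma}}{\prolist{\gamma}_2}\big/\gamma_1$ of a common cell $\chi$ both lie over the same 2-layer decomposition in $\Dbl{E}$. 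This holds because $P(\prolist{\sigma})$ is an identity, so $P$ of $\frac{\prolist{\sigma}}{\prolist{\gamma}_2}$ equals $P(\prolist{\gamma}_2)$. The 2-layer hypothesis then connects them by maps lying over identities, and these assemble into the lifted zig-zag exactly as in the diagrams of Lemma~\ref{lem:components}.

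The main obstacle is this bookkeeping: we must check that every auxiliary unary cell introduced in the zig-zags still lies over an identity in $\Dbl{E}$, so that we never leave the fibre. The key point is that the morphisms of $\Cat{Paste}_n(\Dbl{D};\alpha)\times_{\Cat{Paste}_n(\Dbl{E};P(\alpha))}\{\ast\}$ are exactly those lying over identities, and this class is closed under the composites formed in the proof. I would verify this closure once, at the start, and then run the proof of Lemma~\ref{lem:components} verbatim.
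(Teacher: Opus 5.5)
Your proposal is correct and takes the same route as the paper. The paper proves this corollary simply by observing that the arguments of Lemma~\ref{lem:components} and Corollary~\ref{cor:decompSimp} extend directly to the relative setting. Your plan spells out that relativization: existence via $c_1$ and the 2-layer lifting hypothesis, connectedness via a relative Lemma~\ref{lem:components} in which every auxiliary unary cell lies over an identity, and a separate classical reduction of (D1) to binary factorizations.
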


Under this reduction the condition for lifting decompositions can be phrased in terms of coend isomorphisms. Namely, $P:\Dbl{D}\to \Dbl{E}$ lifts decompositions if and only if for each multicell $\alpha \in \Dbl{E}$ with decomposition $\alpha=\frac{\prolist{\beta}}{\gamma}$, the composition map
\begin{equation*}
	\int^{\prolist{\varphi}\in P^{-1}(s(\gamma))}\mathsf{fc}(\MCell(\mathbb{D}))_{/\prolist{\beta}}(-,\prolist{\varphi})\times \MCell(\mathbb{D})_{/\gamma}(\prolist{\varphi},-)\to \MCell(\mathbb{D})_{/\alpha}(\mu_\mathsf{fc}(-),-)
\end{equation*}
is an isomorphism, and for each tight arrow $a\in \Dbl{E}$ with decomposition $a=c\circ b$, the composition map
\begin{equation*}
	\int^{y\in P^{-1}(s(c))}\Cat{Arr_T}(\mathbb{D})_{/b}(-,y)\times \Cat{Arr_T}(\mathbb{D})_{/c}(y,-)\to \Cat{Arr_T}(\mathbb{D})_{/a}(-,-)
\end{equation*}
is an isomorphism. Note that this second condition is precisely the condition that the functor on underlying tight categories $P_0$ is a Conduch\'{e} fibration, which is to say it is an exponentiable map of categories.

The last ingredient we need to relativize the argument in Theorem~\ref{thm:ExpViaDecomp} is a description of the colimits for the cocones.

\begin{rmk}[Relative Colimits for Sequences of Tight Arrows]\label{rmk:tightArrColimits}
	For a sequence of tight arrows $\prolist{a}:x_0\to x_n$ in $\Dbl{E}$, corresponding to a VDF $\prolist{a}:\Tight[n]\to \Dbl{E}$, we obtain the diagram
	\begin{equation*}
		\Cat{el}\downarrow \Tight[n]\xrightarrow{\prolist{a}} \Vdc_{/\Dbl{E}}\xrightarrow{\Dbl{D}\times_\Dbl{E}-}\Vdc
	\end{equation*}
	that we want to understand the colimit of. The resulting diagram is given by the wide span
	\begin{equation*}
		P^{-1}(a_1)\xleftarrow{t}P^{-1}(x_1)\xrightarrow{s}\cdots\xleftarrow{t}P^{-1}(x_{n-1})\xrightarrow{s}P^{-1}(a_n)
	\end{equation*}
	where each object lies in the image of $\T(-)$, so we can compute this colimit in $\Cat{Cat}$. The objects in the colimit are the objects in $\coprod_{i=0}^nP^{-1}(x_i)$, while a morphism between an object $y \in P^{-1}(x_i)$ and $z\in P^{-1}(x_j)$, for $i<j$, is an equivalence class of sequences of morphisms $(y,x_i)\to (y_1,x_{i+1})\to \cdots \to (z,x_j)$. As in Remark~\ref{rmk:cellInColimit}, associativity induces the equivalence relation, but now for a sequence of morphisms $(y,x_i)\to \cdots \to (z,x_j)$ we are only able to slide arrows along identities, such as $\text{id}_{x_k}$, if they lie in the fiber $P^{-1}(x_k)$.
\end{rmk}

\begin{rmk}[Relative Colimits for Trees]\label{rmk:relColimForTrees}
	Let $T$ be a tree with $m$-leaves, and let $\alpha \in \Dbl{E}$ be an $m$-ary multicell together with a decomposition $\beta:\Dbl{J}(T)\to \Dbl{E}$ relative to the tree $\alpha$. Then we obtain the associated diagram
	\begin{equation*}
		J(T)=\Cat{el}\downarrow \Dbl{J}(T)\xrightarrow{\beta} \Vdc_{/\Dbl{E}}\xrightarrow{\Dbl{D}\times_\Dbl{E} -}\Vdc
	\end{equation*}
	which we want to understand the colimit of. We can express $\colim_{i:\Dbl{c}\hookrightarrow \Dbl{J}(T)}(\Dbl{D}\times_\Dbl{E} \Dbl{c})$ as the colimit of a diagram obtained from the tree $T$ by replacing the products in Remark~\ref{rmk:cellInColimit} with pullbacks, or equivalently products in $\Vdc_{/\Dbl{E}}$ before forgetting to $\Vdc$. In particular, we get that objects and loose arrows in the colimit are pairs of such in $\Dbl{D}$ and $\Dbl{J}(T)$, such that the component in $\Dbl{D}$ must live in the appropriate fiber of $P$. Further, tight arrows are as described in Remark~\ref{rmk:tightArrColimits}.

	Finally, a multicell in the colimit corresponds to an equivalence class of tuples of a subtree $T'\subseteq T$ of $T$, together with for each non-leaf node in $T'$ a multicell in $\Dbl{D}$ living in the appropriate fiber of $P$, such that the multicells have compatible boundaries. The equivalence relation on the tuples is as in Remark~\ref{rmk:cellInColimit}, except that we can only move sequences of unary multicells that lie over identities in $\Dbl{E}$.
\end{rmk}

With these preliminaries, the proof technique for the absolute case in Theorem~\ref{thm:ExpViaDecomp} extends to the relative case. Indeed, extending the argument for Theorem~\ref{thm:ExpViaDecomp} to the relative context using the descriptions of the colimits in Remarks~\ref{rmk:tightArrColimits} and~\ref{rmk:relColimForTrees}, we see that Corollary~\ref{cor:exponentiable-morphisms-via-slice} implies a VDF $P:\Dbl{D}\to \Dbl{E}$ is exponentiable if and only if it lifts decompositions into 2 and 3 layers essentially uniquely. Corollary~\ref{cor:relDecompSimp} then ensures that this is equivalent to $P$ lifting all decompositions essentially uniquely.

\begin{thm}[Characterizing Exponentiable VDFs]\label{thm:expMorphViaDecomp}
    A VDF $P:\Dbl{D}\to \Dbl{E}$ is exponentiable if and only if it lifts decompositions essentially uniquely.
\end{thm}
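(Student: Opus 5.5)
The proof follows the absolute case (Theorem~\ref{thm:ExpViaDecomp}), relativized through the slice machinery of Section~\ref{sec:ApproachExponentiability}. By Proposition~\ref{prop:sliced-sketch}, $\Vdc_{/\Dbl{E}}$ is the category of models of the sketch $(\int \Dbl{E},\bar{S})$ obtained by coloring the sketch of Definition~\ref{defn:sketchVDC} by elements of $\Dbl{E}$. Corollary~\ref{cor:exponentiable-morphisms-via-slice} then says that $P$ is exponentiable iff $-\times_{\Dbl{E}}\Dbl{D}$ preserves, for every $\Dbl{E}$-colored sketch object, the canonical colimit cocone. These cocones are of three kinds:
\begin{itemize}
\item $\triangle$ and $\tet$ colored by composable pairs and triples of tight arrows of $\Dbl{E}$;
\item the unary and category-type gluings;
\item $\Dbl{J}(T)$ for trees $T$ of height $2$ and $3$, colored by a map $\beta:\Dbl{J}(T)\to\Dbl{E}$, i.e.\ a decomposition of shape $T$ of some multicell of $\Dbl{E}$.
\end{itemize}
In each case I would compare the colimit $\colim_{\Cat{el}\downarrow\Dbl{J}(T)}(\Dbl{D}\times_{\Dbl{E}}\Dbl{c})$ with $\Dbl{D}\times_{\Dbl{E}}\Dbl{J}(T)$ along the canonical map $c$, exactly as in Eq.~\ref{eq:compMap}.

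For the tight cocones, Remark~\ref{rmk:tightArrColimits} computes the colimit in $\Cat{Cat}$. Its long morphisms over $b_2\circ b_1$ are connected components of $\Cat{Fact}_2(\Dbl{D};a)\times_{\Cat{Fact}_2(\Dbl{E};P a)}\{(b_1,b_2)\}$, since we may slide only along arrows lying over identities. So $c$ is bijective on those arrows iff these fibers are nonempty and connected. For $\tet$ this is the ternary version. This is (D1) for $n=2,3$, which is the classical Conduch\'e argument already reproduced in the section on exponentiable categories. By the relative form of Corollary~\ref{cor:decompSimp} it is equivalent to (D1) for all $n$.

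For the tree cocones, Remark~\ref{rmk:relColimForTrees} describes the multicells of the colimit over the maximal cell $\square(T')$. They are equivalence classes of pasting diagrams in $\Dbl{D}$ lying over $\beta|_{T'}$, modulo sliding unary cells that lie over identities. These classes are exactly $\pi_0$ of the pullback of $\Cat{Paste}_n(\Dbl{D};\alpha)$ over the given $\Dbl{E}$-decomposition. Objects, loose arrows and the lower-height multicells of the colimit agree with those of $\Dbl{D}\times_{\Dbl{E}}\Dbl{J}(T)$, so:
\begin{itemize}
\item surjectivity of $c$ on the top multicells is nonemptiness of these fibers;
\item injectivity is their connectedness.
\end{itemize}
Hence preservation of all sketched cocones is equivalent to (D1) together with (D2) for $n=2,3$. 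Corollary~\ref{cor:relDecompSimp}, the relative Lemma~\ref{lem:components}, upgrades this to (D2) for all $n$, and that gives the theorem.

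The main obstacle is the relative version of Lemma~\ref{lem:components} underlying Corollary~\ref{cor:relDecompSimp}. When a zig-zag in a lower layer is lifted, the new unary cells $\prolist{\sigma'}$ supplied by essential uniqueness must again lie over identities of $\Dbl{E}$. Moreover, the intermediate $2$-layer decomposition $\frac{\prolist{\rho}}{\gamma_1}$ must lie over a fixed $\Dbl{E}$-decomposition, so that (D2) applies to it. I would check this by noting two facts:
\begin{itemize}
\item every sliding move over $\Dbl{E}$ uses identity cells, so $\frac{\prolist{\sigma}}{\prolist{\gamma_2}}$ lies over the same cells as $\prolist{\gamma_2}$;
\item the connectedness granted by (D2) is, by definition, witnessed by morphisms lying over identities.
\end{itemize}
Similar care is needed in the tight case, where the tight boundaries of the cells must stay in the correct fibers of $P_0$. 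That is exactly why (D1) has to enter alongside (D2).
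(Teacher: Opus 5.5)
Your proposal is correct and follows the same route as the paper. You present $\Vdc_{/\Dbl{E}}$ as models of the colored sketch (Proposition~\ref{prop:sliced-sketch}), apply Corollary~\ref{cor:exponentiable-morphisms-via-slice} with the colimit descriptions of Remarks~\ref{rmk:tightArrColimits} and~\ref{rmk:relColimForTrees} to reduce exponentiability to (D1) and (D2) for two and three layers, and then upgrade with Corollary~\ref{cor:relDecompSimp}. Your extra checks on the relative version of Lemma~\ref{lem:components} fill in what the paper only asserts ``extends directly'': the unary cells supplied by (D2) lie over identities, and $\frac{\prolist{\sigma}}{\prolist{\gamma_2}}$ lies over the same $\Dbl{E}$-cells as $\prolist{\gamma_2}$.
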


Now, from Corollary 2.6 of~\cite{StreetVerity2010} we have that all isomorphisms of VDCs are exponentiable (in fact all equivalences of VDCs are exponentiable), all composites of exponentiable morphisms are exponentiable, and arbitrary pullbacks of exponentiable morphisms are exponentiable. In particular, as discussed previously, products of exponentiable VDCs are exponentiable, and the fibers of exponentiable morphisms are exponentiable VDCs. In fact, for any exponentiable morphism $P:\Dbl{D}\to \Dbl{E}$, and any VDF $F:\Dbl{C}\to \Dbl{E}$ out of an exponentiable VDC, the fiber product $\Dbl{C}\times_\Dbl{E}\Dbl{D}$ is exponentiable. Additionally, the characterization in Theorem~\ref{thm:expMorphViaDecomp} implies that VDC inclusions which are full with respect to decompositions of multicells are also exponentiable.

Generalizing the case of discrete opcartesian fibrations in~\cite{fujii2025}, Theorem~\ref{thm:expMorphViaDecomp} also implies that opcartesian fibrations of VDCs are exponentiable, though as exploring exponentiable morphisms of VDCs is not the central goal of the current work, we will not prove this here.

We'll conclude by observing that
the loose embedding does not reflect exponentiable morphisms. 

\begin{prop}\label{prop:functorsOfCategoriesAreExponentiable}
	If $F:\Cat{C}\to \Cat{D}$ is a functor between categories,
	then $\L(F):\L(\Cat{C})\to \L(\Cat{D})$ is an exponentiable VDF.
\end{prop}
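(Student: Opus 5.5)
The plan is to verify the two conditions of Theorem~\ref{thm:expMorphViaDecomp} for $P=\L(F)$, reducing everything to Corollary~\ref{cor:relDecompSimp} so that only 2-layer decompositions and binary tight factorizations need to be checked. First we record the structure of $\L(\Cat{C})$. Its tight category is discrete on the objects of $\Cat{C}$, its loose arrows are the morphisms of $\Cat{C}$, and a multicell $(f_1\cdots f_n)\Rightarrow g$ exists, and is then unique, exactly when $f_1\cdots f_n=g$; for $n=0$ this reads $g=\id$. Condition (D1) is then immediate. The tight categories are discrete, so every tight arrow is an identity, and so is every factorization of its image. The fiber of factorizations therefore has a single object, namely the factorization by identities.

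For (D2), fix a multicell $\alpha:(f_1\cdots f_n)\Rightarrow g$ in $\L(\Cat{C})$ and a 2-layer decomposition of $F\alpha$ in $\L(\Cat{D})$. Such a decomposition consists of a partition of $(Ff_1,\dots,Ff_n)$ into consecutive blocks together with loose arrows $h_1,\dots,h_m$ of $\Cat{D}$ such that $h_i$ is the composite of the $i$th block and $h_1\cdots h_m=Fg$. Note that the intermediate objects of the decomposition are forced, because every tight arrow is an identity. We lift it by taking $k_i$ to be the composite in $\Cat{C}$ of the $i$th block of the $f_j$, with $k_i$ an identity when the block is empty. The result is a 2-layer decomposition of $\alpha$, because cells in $\L(\Cat{C})$ only require composites to agree. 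It lies over the given one, since $Fk_i=h_i$ by functoriality; in particular, for an empty block $h_i$ is an identity and is hit by the identity. This gives existence.

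For uniqueness we observe that any lift must have the same block partition and intermediate objects. Its middle loose arrows $k_i'$ must satisfy $k_i'=$ (composite of the $i$th block), since a cell into $k_i'$ exists only in that case. Thus the lift is literally unique, and the fiber is a singleton, hence connected. The bottom cell is automatically unique as well, because $\L(\Cat{C})$ has at most one cell per boundary.

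The one point needing care, and the step I expect to be the main obstacle, is that ``decomposition'' in $\Cat{Paste}_2$ also fixes the middle loose arrows only up to the unary cells involved in the morphisms. Here, however, unary cells $k\Rightarrow k'$ exist only when $k=k'$, and they are identities, so $\Cat{Paste}_2(\L(\Cat{C});\alpha)$ is discrete and the argument above applies verbatim. The same reasoning also covers the nullary case $g=\id$, including empty blocks. Combining these observations with Corollary~\ref{cor:relDecompSimp} and Theorem~\ref{thm:expMorphViaDecomp} shows that $\L(F)$ is exponentiable.
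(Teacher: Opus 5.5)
Your proposal is correct and is essentially the paper's argument. Condition (D1) is trivial because both tight categories are discrete. For (D2), decompositions in $\L(\Cat{D})$ are just bracketings of composites, and they lift uniquely by composing the corresponding blocks in $\Cat{C}$. Your reduction to two layers via Corollary~\ref{cor:relDecompSimp}, and your observation that $\Cat{Paste}_2(\L(\Cat{C});\alpha)$ is discrete, simply spell out what the paper's short proof leaves implicit.
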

\begin{proof}
	Since $\L(F)_0$ is a functor from a discrete category, 
	it is necessarily a Conduch\'{e} fibration.
	Further, the only multicells in $\L(\Cat{C})$ and $\L(\Cat{D})$ are unique
	opcartesian multicells witnessing the strict composites in $\Cat{C}$ and $\Cat{D}$.
	Decompositions of such multicells correspond to composing a sequence
	of arrows by first composing certain subsequences, and then composing the result. 
	In particular, all such decompositions lift uniquely to $\Cat{C}$, 
	so $\L(F)$ is an exponentiable VDF.
\end{proof}

%% file: Sections/Appendix.tex
\section{VDCs via Categories of Operators}\label{sec:VDCCatOfOper}

For extending the main exponentiability proofs in this article to other contexts in higher category theory, it will be beneficial to recontextualize the data of a VDC. This reframing is motivated by the category of operators perspective for operads (i.e.~multicategories) introduced by May and Thomason in~\cite{MAY1978205} which has found numerous applications in homotopy theory, and especially higher algebra, as well as the theory of algebraic patterns introduced in~\cite{Chu2021}.
In the context of $\infty$-categories, this approach to VDCs has also been used to discuss enrichment in $\infty$-categories~\cite{gepner2015}.

This algebraic patterns perspective was used in~\cite{blom2026dayconvolutionalgebraicpatterns} 
simultaneously to the development of the current work to characterize exponentiable morphisms of algebrads 
for rigid algebraic patterns in the context of $\infty$-categories. 
From~\cite{Chu2021} such algebraic patterns arise from parametric right adjoint monads $T$ on pre-sheaf categories, 
while the algebrads correspond to $T$-multicategories. 
Note that Proposition~\ref{prop:functorsOfCategoriesAreExponentiable} shows that a
classification of exponentiable morphisms of $T$-multicategories
cannot be used to classify exponentiable morphisms of $T$-algebras in general.
The work below shows that our characterization of exponentiable VDCs is equivalent to this 
characterization in terms of algebraic patterns in the context of 1-categories,
where the condition that the $\infty$-category of decompositions is weakly contractible
is replaced by the condition that the $1$-category of decompositions is connected.

In order to perform this reframing we use the display category equivalence observed by B{\'{e}}nabou in~\cite{benabou1972}, and phrased in terms of lax functors in~\cite{displayed}. Specifically, the display category equivalence can be phrased in terms of VDCs as an equivalence
\begin{equation}\label{eq:display}
    \Cat{Cat}_{/\Cat{A}}\simeq \BiCat{Vdc}(\L(\Cat{A})^{op_l},\Span)
\end{equation}
for every category $\Cat{A}$, where on the right we have the category of VDFs $\L(\Cat{A})^{op_l}\to \Span$ and tight transformations between them. Here the superscript $op_l$ indicates that we are reversing the direction of loose arrows in $\L(\Cat{A})$. Since $\L(\Cat{A})^{op_l}$ is a unital VDC, the adjunction $U\dashv \Mod$ gives an isomorphism $\BiCat{Vdc}(\L(\Cat{A})^{op_l},\Span)\cong \BiCat{Vdc}_n(\L(\Cat{A})^{op_l},\Prof)$, which will be used to describe VDCs as normal VDFs into $\Prof$ preserving certain opcartesian multicells.

To encode the data of a VDC $\Dbl{D}$ we take $\Cat{A}$ in the above equivalence to be the opposite of the simplex category $\Cat{\Delta}$. Recall that $\Cat{\Delta}$ has objects given by the finite (nonempty) linearly ordered sets $[n] := \{0 < 1 < \cdots < n\}$ for each natural number $n\geq 0$, and has morphisms given by order preserving maps $f:[n]\to [m]$. The category $\Cat{\Delta}$ admits an $(\mathsf{epi},\mathsf{mono})$ strict factorization system, where for each $n\geq 0$ and each $0\leq i \leq n$, we will write $\sigma^i:[n+1]\to [n]$ for the unique surjective order preserving map such that the pre-image of $i$ has two elements, and we will write $\delta^i:[n-1]\to [n]$ for the unique injective order preserving map whose image doesn't contain $i$. For a subset $T\subseteq [n]$, we will also write $\delta^{T}:[|T|]\to [n]$ for the unique injective order preserving map whose image is $T$.

The simplex category also comes equipped with another strict factorization system $(\Cat{act},\Cat{inrt})$ where the left class $\Cat{act}$ consists of those morphisms that preserve top and bottom elements, and the right class $\Cat{inrt}$ consists of all interval inclusions $\delta^{[i,i+k]}:[k]\hookrightarrow [n]$, for $[i,i+k]\subseteq [n]$ (c.f.~\cite[Example 3.3]{Chu2021}). We refer to morphisms in $\Cat{act}$ as \textit{active} and morphisms in $\Cat{inrt}$ as \textit{inert}. We can equivalently describe the data of an active morphism $f:[n]\to [m]$ in terms of a partition $m=k_1+\cdots+k_n$, for integers $k_1,...,k_n\geq 0$, where the correspondence is given by defining $k_i=f(i)-f(i-1)$. For each $n \geq 0$, we will write $a_n:[1]\to [n]$ for the unique active morphism from $[1]$ to $[n]$. Using this data we can now define the category of operators construction for VDCs.

\begin{cons}[Category of Operators for a VDC]\label{cons:CatOfOperators}
	Let $\Dbl{D}$ be a VDC. We define the \emph{category of operators} of $\Dbl{D}$ to be a category $\Cat{D}$ together with a functor $p_\Dbl{D}:\Cat{D}\to \Cat{\Delta}^{op}$ given by the following data:
	\begin{enumerate}
		\item An object in $\Cat{D}$ is a pair of an object $[n] \in \Cat{\Delta}$ together with a sequence of loose arrows $(\prolist{\varphi}:x_0\proto x_n)\in \Cat{fc}_n(\Dbl{D}_1)$. When $[n]=[0]$, $\Cat{fc}_0(\Dbl{D}_1)=\uOb (\Dbl{D})$ consists of the objects of $\Dbl{D}$.
		\item A morphism $([n],\prolist{\varphi}:x_0\proto x_n)\to ([m],\prolist{\psi}:y_0\proto y_m)$ in $\Cat{D}$ consists of a pair of a morphism $\sigma:[m]\to [n]$ in $\Cat{\Delta}$, 
    together with a chain of compatible multicells $\prolist{\chi}=(\chi_1 \cdots \chi_m)$ in $\Dbl{D}$ where $\chi_i$ has loose target $\psi_i:y_{i-1}\proto y_i$ and 
    loose source $(\varphi_{\sigma(i-1)+1} \cdots \varphi_{\sigma(i)}):x_{\sigma(i-1)}\proto x_{\sigma(i)}$:
		\[\begin{adjustbox}{}
		\begin{tikzcd}
	{x_{\sigma(i-1)}} && {x_{\sigma(i-1)+1}} && \cdots && {x_{\sigma(i)}} \\
	&&& {\chi_i} \\
	{y_{i-1}} &&&&&& {y_i}
	\arrow["{\varphi_{\sigma(i-1)+1}}"{inner sep=.8ex}, "\shortmid"{marking}, from=1-1, to=1-3]
	\arrow[from=1-1, to=3-1]
	\arrow["{\varphi_{\sigma(i-1)+2}}"{inner sep=.8ex}, "\shortmid"{marking}, from=1-3, to=1-5]
	\arrow["{\varphi_{\sigma(i)}}"{inner sep=.8ex}, "\shortmid"{marking}, from=1-5, to=1-7]
	\arrow[from=1-7, to=3-7]
	\arrow["{\psi_i}"'{inner sep=.8ex}, "\shortmid"{marking}, from=3-1, to=3-7]
\end{tikzcd}
\end{adjustbox}\]
		When $[m]=[0]$, so that $\prolist{\psi}$ corresponds to an object $y$ in $\Dbl{D}$, $\prolist{\chi}$ corresponds to a tight arrow $x_{\sigma(0)}\to y$.
	\end{enumerate}
	Composition is given by composition in $\Cat{\Delta}$ and composition of multicells in $\Dbl{D}$, while identity morphisms $([n],\prolist{\varphi})\to ([n],\prolist{\varphi})$ are given by pairs $(\text{id}_{[n]},\text{id}_{\prolist{\varphi}})$ of an identity morphism in $\Cat{\Delta}$ and a chain of identity multicells in $\Dbl{D}$. The functor $p_\Dbl{D}:\Cat{D}\to \Cat{\Delta}^{op}$ is the natural projection onto the first coordinate.

	Note that the fiber $\Cat{D}_n:=p_\Dbl{D}^{-1}([n])$ is isomorphic to the category of length $n$-chains of loose arrows in $\Dbl{D}$, $\Cat{fc}_n(\Dbl{D}_1)$, when $n\geq 1$, and is isomorphic to the underlying tight category for $\Dbl{D}$ when $n=0$.
\end{cons}

Given a VDF $F:\Dbl{D}\to \Dbl{E}$, we have a natural functor $p_F:\Cat{D}\to \Cat{E}$ over $\Cat{\Delta}^{op}$ given on a pair $([n],\prolist{\varphi})$ by $([n],F\prolist{\varphi})$, and given on a morphism $(f,\prolist{\chi}):([n],\prolist{\varphi})\to ([m],\prolist{\psi})$ by $(f,F\prolist{\chi})$. Since $F$ is a VDF, the definition of identities and composites in $\Cat{D}$ and $\Cat{E}$ imply that $p_F$ is indeed a functor, and further this assignment extends to a functor $\Vdc\to \Cat{Cat}_{/\Cat{\Delta}^{op}}$. We will now describe the essential image of this functor.

\begin{prop}[VDCs as Operator Categories over the Simplex]\label{prop:VDCsAsOperCats}
	A functor $q:\Cat{A}\to \Cat{\Delta}^{op}$ is isomorphic to the category of operators of a VDC if and only if
	\begin{enumerate}
		\item $q$ admits a functorial choice of cocartesian lifts of inert morphisms in $\Cat{\Delta}^{op}$;
		\item For each integer $n\geq 0$, the functor 
		\begin{equation*}
			(\delta^{[0,1]}_*,\ldots,\delta^{[n-1,n]}_*):\Cat{A}_n\to \underbrace{\Cat{A}_1\times_{\Cat{A}_0}\cdots\times_{\Cat{A}_0}\Cat{A}_1}_n
		\end{equation*}
		which is induced by the cocartesian lifts of the inert morphisms $\delta^{[i,i+1]}:[1]\cong [i,i+1]\subseteq [n]$ and $\delta^{\{i\}}:[0]\cong \{i\}\subseteq [n]$, is an isomorphism;
		\item For each $\sigma:[m]\to [n]$ in $\Cat{\Delta}$, and each $x\in \Cat{A}_n,y\in \Cat{A}_m$, the function
    \[\begin{tikzcd}
      {\Cat{A}_\sigma(x,y)} \\
      {      \Cat{A}_{\sigma\delta^{[0,1]}}(x,\delta^{[0,1]}_*y)\underset{\Cat{A}_{\sigma\delta^{\{1\}}}(x,\delta^{\{1\}}_*y)}{\times}\cdots\underset{\Cat{A}_{\sigma\delta^{\{m-1\}}}(x,\delta^{\{m-1\}}_*y)}{\times}\Cat{A}_{\sigma\delta^{[m-1,m]}}(x,\delta^{[m-1,m]}_*y)}
      \arrow[from=1-1, to=2-1]
    \end{tikzcd}\]
		induced by post-composing with the cocartesian lifts is a bijection, where the subscript on $\Cat{A}$ indicates what morphism in $\Cat{\Delta}^{op}$ the morphisms in $\Cat{A}$ are lying over.
	\end{enumerate}
\end{prop}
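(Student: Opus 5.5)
The plan has two halves: first check that every category of operators $p_\Dbl{D}$ satisfies (1)--(3), then rebuild a VDC from any $q$ satisfying them and show the two constructions are mutually inverse up to isomorphism over $\Cat{\Delta}^{op}$.

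\emph{Necessity.} Given $\Dbl{D}$, the cocartesian lift of an inert $\delta^{[i,i+k]}:[k]\hookrightarrow[n]$ at $([n],\prolist{\varphi})$ is the morphism $(\delta^{[i,i+k]},\id)$ to the subsequence $(\varphi_{i+1}\cdots\varphi_{i+k})$, where $\id$ is the chain of identity unary multicells; for $k=0$ it is the identity tight arrow at $x_i$. These choices are strictly functorial because composites of interval inclusions are interval inclusions and identity multicells compose strictly. Cocartesianness holds since a morphism out of $([n],\prolist\varphi)$ over $\sigma\circ\delta^{[i,i+k]}$ is a chain of multicells with sources drawn from $(\varphi_{i+1},\dots,\varphi_{i+k})$, and it factors uniquely through the identity chain by unitality. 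Condition (2) is the identification $\Cat{D}_n\cong\Cat{fc}_n(\Dbl{D}_1)$ from Construction~\ref{cons:CatOfOperators}. Condition (3) expresses that a morphism over $\sigma$ is exactly a tuple $(\chi_1,\dots,\chi_m)$ of multicells that agree on their shared tight boundaries; the factors over $\sigma\delta^{\{i\}}$ record those shared tight arrows, which is why the pullback is taken over them.

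\emph{Sufficiency.} Starting from $q:\Cat{A}\to\Cat{\Delta}^{op}$, define a VDC $\Dbl{D}$ as follows. Its tight category is $\Cat{A}_0$. Its loose arrows are the objects of $\Cat{A}_1$, with source and target given by the chosen lifts $\delta^{\{0\}}_*,\delta^{\{1\}}_*$. Its $n$-ary multicells are the morphisms of $\Cat{A}$ over $a_n:[1]\to[n]$, and the tight boundary of such a morphism $\alpha$ is obtained from the composites $\delta^{\{j\}}_*\circ\alpha$. Any $x\in\Cat{A}_n$ determines a chain of loose arrows via (2), so the source of a multicell is a chain. To compose, take a multicell $\beta$ over $a_m$ and compatible $\alpha_i$ over $a_{k_i}$. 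Condition (3), applied to the active map $f:[m]\to[k_1+\dots+k_m]$ with $f(i)-f(i-1)=k_i$, assembles the $\alpha_i$ into a single morphism $\prolist\alpha$ over $f$. Then $\prolist\alpha$ composed with $\beta$ lies over $f\circ a_m=a_{\sum k_i}$, and we take this as $\frac{\prolist\alpha}{\beta}$. Identity multicells are identity morphisms in $\Cat{A}_1$, and associativity and unitality are inherited from $\Cat{A}$.

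The main obstacle is the nullary and boundary bookkeeping. Every morphism of $\Cat{A}$ must be decomposed, via (3), into pieces over the active maps $a_k$ together with tight arrows, and this is what recovers the construction's description of hom-sets. The step to handle first is to factor each $\sigma$ in $\Cat{\Delta}$ as (active, inert) and use the cocartesian lifts to reduce to the active part. One must then check that the tight components coming from the pullback factors over $\delta^{\{i\}}$ are exactly the shared tight edges, and in particular that $\Cat{A}_\sigma$ for $m=0$ is just a tight arrow $x_{\sigma(0)}\to y$. Once that is done, the comparison $\Cat{A}\to\Cat{D}$ is a bijection on objects by (2) and on hom-sets by (3) together with the active/inert reduction, and it commutes with the functors to $\Cat{\Delta}^{op}$. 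The converse isomorphism, that the VDC rebuilt from $p_\Dbl{D}$ is $\Dbl{D}$, is then immediate from the definitions.
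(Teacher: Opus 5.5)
Your proposal follows the paper's proof. Necessity is shown by taking the chosen cocartesian lifts to be the projections onto subsequences, carried by chains of identity multicells. Sufficiency takes $\Cat{A}_0$, the objects of $\Cat{A}_1$, and the morphisms over $a_n$ as the tight arrows, loose arrows and $n$-ary multicells, with composition assembled through the inverse of the bijection in (3). Your explicit active/inert reduction, which identifies the hom-sets of $\Cat{A}$ with those of the rebuilt category of operators (including the $m=0$ case), fills in the step the paper only asserts holds ``by construction.''
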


Before proving the proposition let's explicate the three conditions appearing in the statement. First, $q$ admitting cocartesian lifts of an inert morphism $i:[m]\cong [j,j+m]\subseteq [n]$ implies that we get a functor $i_*:\Cat{A}_n\to \Cat{A}_m$, which will play the role of projecting a sequence of loose arrows onto a subsequence, or an object in the sequence. Additionally, the first condition says that these functors can be chosen in such a way that if $[m]\xrightarrow{i}[n]\xrightarrow{j}[k]$ is a composable pair of inert morphisms, then $i_*j_*=(ji)_*:\Cat{A}_k\to \Cat{A}_m$, encoding the fact that projections should be strictly functorial. The second condition says that these cocartesian lifts do actually correspond to projections on the level of fibers, and $\Cat{A}_n$ corresponds to a limit over these projections. Finally, the third condition says that under our identification of the fibers of $\Cat{A}$ with the limit of the projections, a morphism $f:a\to b$ in $\Cat{A}$ lying over $\sigma:[m]\to [n]$ is equivalent to a sequence of compatible morphisms whose target is always an object either of $\Cat{A}_1$, or $\Cat{A}_0$ if $m=0$.

\begin{proof}[Proof of~\ref{prop:VDCsAsOperCats}.]
	First, let's show that the category of operators for a VDC $\Dbl{D}$ satisfies conditions (1) through (3).
	\begin{enumerate}
		\item If $i:[k]\cong [l,l+k]\subseteq [n]$ is an inert morphism, then we have a natural functor 
    $i_*:\Cat{D}_n\to \Cat{D}_k$ given by projecting from a sequence of loose arrows 
    $(\varphi_1 \cdots \varphi_n)$ (resp. multicells $(\alpha_1 \cdots \alpha_n)$) to the subsequence 
    $(\varphi_{l+1} \cdots \varphi_{l+k})$ (resp. $(\alpha_{l+1} \cdots \alpha_{l+k})$). 
    For a sequence of loose arrows $\prolist{\varphi}=(\varphi_1 \cdots \varphi_n)$ in $\Dbl{D}$, 
    the chosen cocartesian lift is then the morphism 
    $(i,(\text{id}_{\varphi_{l+1}} \cdots \text{id}_{\varphi_{l+k}})):([n],\prolist{\varphi})\to ([k],(\varphi_{l+1} \cdots \varphi_{l+k}))$. 
    This is cocartesian since any other morphism $(i\circ g,\prolist{\alpha}):([n],\prolist{\varphi})\to ([m],\prolist{\psi})$ 
    factors uniquely as $(i,(\text{id}_{\varphi_{l+1}} \cdots \text{id}_{\varphi_{l+k}}))$ followed by 
    $(g,\prolist{\alpha}): ([k],(\varphi_{l+1} \cdots \varphi_{l+k}))\to ([n],\prolist{\varphi})$. Further, this assignment on inert morphisms is functorial, proving (1).
		\item Since the functors $i_*$ are defined to be projections, (2) follows.
		\item Similarly, the functors $i_*$ being the natural projections also implies (3).
	\end{enumerate}

    Next, suppose $q:\Cat{A}\to \Cat{\Delta}^{op}$ is a functor satisfying properties (1)-(3), and write $(-)_*$ for the functorial choice of cocartesian lifts for inert morphisms. 
    In particular, this choice fixes an isomorphism $\Phi_n:=(\delta^{[0,1]}_*,\ldots,\delta^{[n-1,n]}_*):\Cat{A}_n\to \Cat{A}_1\times_{\Cat{A}_0}\cdots\times_{\Cat{A}_0}\Cat{A}_1$ 
    for each integer $n\geq 0$, as well as a natural isomorphism
    \[\begin{tikzcd}
      {\Cat{A}_\sigma(x,y)} \\
      {\Cat{A}_{\sigma\delta^{[0,1]}}(x,\delta^{[0,1]}_*y)\underset{\Cat{A}_{\sigma\delta^{\{1\}}}}{\times}(x,\delta^{\{1\}}_*y)\cdots\underset{\Cat{A}_{\sigma\delta^{\{m-1\}}}}{\times}\Cat{A}_{\sigma\delta^{[m-1,m]}}(x,\delta^{[m-1,m]}_*y)}
      \arrow["{\Psi^\sigma}"', from=1-1, to=2-1]
    \end{tikzcd}\]
	for each morphism $\sigma:[m]\to [n]$ in $\Cat{\Delta}$. Then we define a VDC $\Dbl{A}$ consisting of the following data:
	\begin{enumerate}
        \item The underlying tight category of $\Dbl{A}$ is the fiber category $\Cat{A}_0$.
        \item The loose arrows of $\Dbl{A}$ are the objects of the fiber category $\Cat{A}_1$.
        \item For $n\geq 0$, the set of $n$-ary multicells with loose source $\prolist{\varphi}:x_0\proto x_n$ and loose target $\psi:y_0\proto y_1$ is given by $\Cat{A}_{a_n}(\Phi_n^{-1}(\varphi_1 \cdots \varphi_n),\psi)$, while post composing with $\delta^{\{0\}}_*,\delta^{\{1\}}_*:\Cat{A}_1\to \Cat{A}_0$ serve as the tight source and target maps for the $n$-ary multicells.
        \item For a sequence of compatible multicells $(\alpha_1 \cdots \alpha_n)$, where $\alpha_i$ is $k_i$-ary, and an $n$-ary multicell $\beta$ whose loose source is given by the loose targets of $\prolist{\alpha}$, their composite is defined as 
		\begin{equation*}
			\dfrac{(\alpha_1 \cdots \alpha_n)}{\beta}:=\beta\circ (\Psi^{\tau_{(k_i)}})^{-1}(\alpha_1 \cdots \alpha_n)
		\end{equation*}
		where $\tau_{(k_i)}:[n]\to [k_1+\cdots+k_n]$ is given by $j\mapsto \sum_{i=1}^jk_j$.
    \end{enumerate}
    The unitality and associativity axioms for composition come from the associated axioms for $\Cat{A}$. Thus, $\Dbl{A}$ is a VDC, and by construction the operator category for $\Dbl{A}$ is naturally isomorphic to the originally category $\Cat{A}$ over $\Cat{\Delta}^{op}$.
\end{proof}

From~\cite[Lemma 9.10]{Chu2021} the active-inert factorization system on $\Cat{\Delta}^{op}$ extends to an active-inert orthogonal factorization system on $\Cat{D}$, where the active morphisms are those lying over active morphisms in $\Cat{\Delta}^{op}$, and the inert morphisms are the cocartesian lifts of inert morphisms in $\Cat{\Delta}^{op}$. 
Explicitly, an active morphism $(f,\prolist{\chi}):([n],\prolist{\varphi})\to ([m],\prolist{\psi})$ in $\Cat{D}$ is precisely either a tight arrow, if $m=n=0$, or a sequence of multimorphisms
\[\begin{adjustbox}{}
\begin{tikzcd}
	{x_0} && {x_{\sigma(1)}} && \cdots && {x_n} \\
	& {\chi_1} && {\chi_2} && {\chi_m} \\
	{y_0} && {y_1} && \cdots && {y_m}
	\arrow["{\prolist{\varphi}(0,\sigma(1))}"{inner sep=.8ex}, "\shortmid"{marking}, from=1-1, to=1-3]
	\arrow[from=1-1, to=3-1]
	\arrow["{\prolist{\varphi}(\sigma(1),\sigma(2))}"{inner sep=.8ex}, "\shortmid"{marking}, from=1-3, to=1-5]
	\arrow[from=1-3, to=3-3]
	\arrow["{\prolist{\varphi}(\sigma(m-1),\sigma(m))}"{inner sep=.8ex}, "\shortmid"{marking}, from=1-5, to=1-7]
	\arrow[from=1-7, to=3-7]
	\arrow["{\psi_1}"'{inner sep=.8ex}, "\shortmid"{marking}, from=3-1, to=3-3]
	\arrow["{\psi_2}"'{inner sep=.8ex}, "\shortmid"{marking}, from=3-3, to=3-5]
	\arrow["{\psi_m}"'{inner sep=.8ex}, "\shortmid"{marking}, from=3-5, to=3-7]
\end{tikzcd}
\end{adjustbox}\]
in $\Dbl{D}$, with the whole sequence $\prolist{\varphi}$ as loose source. An inert morphism $(i,\prolist{\chi}):([n],\prolist{\varphi})\to ([m],\prolist{\psi})$ in $\Cat{D}$ lying over the inclusion $i:[m]\cong [j,j+m]\subseteq [n]$ is a sequence of unary isomorphism multicells from a sub-interval of the loose source into the loose target:
\[\begin{adjustbox}{}
\begin{tikzcd}
	{x_j} && {x_{j+1}} && \cdots && {x_{j+m}} \\
	& \cong && \cong && \cong \\
	{y_0} && {y_1} && \cdots && {y_m}
	\arrow["{\varphi_{j+1}}"{inner sep=.8ex}, "\shortmid"{marking}, from=1-1, to=1-3]
	\arrow[from=1-1, to=3-1]
	\arrow["{\varphi_{j+2}}"{inner sep=.8ex}, "\shortmid"{marking}, from=1-3, to=1-5]
	\arrow[from=1-3, to=3-3]
	\arrow["{\varphi_{j+m}}"{inner sep=.8ex}, "\shortmid"{marking}, from=1-5, to=1-7]
	\arrow[from=1-7, to=3-7]
	\arrow["{\psi_1}"'{inner sep=.8ex}, "\shortmid"{marking}, from=3-1, to=3-3]
	\arrow["{\psi_{2}}"'{inner sep=.8ex}, "\shortmid"{marking}, from=3-3, to=3-5]
	\arrow["{\psi_m}"'{inner sep=.8ex}, "\shortmid"{marking}, from=3-5, to=3-7]
\end{tikzcd}
\end{adjustbox}\]

Next, composing with the display isomorphism $\Cat{Cat}_{/\Delta^{\op}}\simeq \BiCat{Vdc}(\L(\Cat{\Delta^{\op}})^{op_l},\Span)$, 
we see that a VDC $\Dbl{D}$ can be encoded by a normal VDF $\Cat{P}(\Dbl{D}):\L(\Cat{\Delta})\to \Prof$ given by the following data:
\begin{enumerate}
	\item For each integer $n \geq 0$, the object $[n]$ is mapped to the fiber $\Cat{P}(\Dbl{D})_n:= \Cat{D}_n$.
	\item For each morphism $\sigma:[n]\to [m]$ in the simplex category, the profunctor $\Cat{P}(\Dbl{D})(\sigma):\Cat{D}_n\times \Cat{D}_m^{op}\to \Cat{Set}$ is given by $\Cat{D}_\sigma(-,-)$.
	\item For each composable pair of morphisms $[n]\xrightarrow{f}[m]\xrightarrow{g}[k]$ in the simplex category, 
  the image of the cocartesian cell for $g\circ f$ is $\ell_{f,g}:\Cat{P}(\Dbl{D})(f)\odot \Cat{P}(\Dbl{D})(g)\Rightarrow \Cat{P}(\Dbl{D})(g\circ f)$ 
  given by composing sequences of multicells. 
  \item The rest of the data of the VDF follows, since every cell in $\L(\Cat{\Delta})$ arises from a sequence of composable morphisms.
\end{enumerate}
In particular, the description of the operator categories for VDCs translates to those normal VDFs $Q:\L(\Cat{\Delta})\to \Prof$ satisfying the following:
\begin{enumerate}
	\item For $i:[k]\hookrightarrow [n]$ inert, the profunctor $Q(i):Q_{[k]}\times Q_{[n]}^{op}\to\Cat{Set}$ is represented by a functor $i_*:Q_{[n]}\to Q_{[k]}$ so that $Q(i)\cong Q_{[k]}(i_*(-),-)$, and these representations can be chosen to be functorial in the inert morphisms (i.e.~for inert morphisms $[k] \xrightarrow{i} [n] \xrightarrow{j} [m]$, $i_*j_* = (j\circ i)_*$, and $(\text{id}_{[k]})_*=\text{id}_{Q_{[k]}}$). 
	\item For $i:[k]\hookrightarrow [n]$ inert and $g:[l]\to [k]$ arbitrary, the laxator cell
	\[\begin{adjustbox}{}
\begin{tikzcd}
{Q_{[l]}} && {Q_{[k]}} && {Q_{[n]}} \\
&& {\ell_{g,f}} \\
{Q_{[l]}} &&&& {Q_{[n]}}
\arrow["{Q(g)}"{inner sep=.8ex}, "\shortmid"{marking}, from=1-1, to=1-3]
\arrow[equals, from=1-1, to=3-1]
\arrow["{Q(i)}"{inner sep=.8ex}, "\shortmid"{marking}, from=1-3, to=1-5]
\arrow[equals, from=1-5, to=3-5]
\arrow["{Q(i\circ g)}"'{inner sep=.8ex}, "\shortmid"{marking}, from=3-1, to=3-5]
\end{tikzcd}
\end{adjustbox}\]
	is opcartesian.
	\item For each $[n] \in \Cat{\Delta}$, the natural map
	\begin{equation*}
		(\delta^{[0,1]}_*,\ldots,\delta^{[n-1,n]}_*):Q_{[n]}\to Q_{[1]}\times_{Q_{[0]}}\cdots\times_{Q_{[0]}}Q_{[1]}
	\end{equation*}
	in $\Cat{Cat}=\Prof_0$ induced by the functors from (1), for the inert morphisms $\delta^{[i,i+1]}:[1]\to [n]$, is an isomorphism.
	\item For each morphism $\phi:[n]\to [m]$, the natural map of profunctors 
\[\begin{tikzcd}[column sep=small]
	{Q(\phi)} \\
	{Q(\phi_{[0,1]})(-,\delta^{[0,1]}_*)\underset{{Q(\phi_{\{1\}})(-,\delta^{1}_*)}}{\times}
  \cdots \underset{{Q(\phi_{n-1})(-,\delta^{\{n-1\}}_*)}}{\times}Q(\phi_{[n-1,n]})(-,\delta^{[n-1,n]}_*)}
	\arrow[from=1-1, to=2-1]
\end{tikzcd}\]
	in $\Prof_1$ induced by the projections in (1) is an isomorphism.
\end{enumerate}

One of the benefits of this profunctor description of VDCs is the ability to translate it into the theory of pullback sketches in VDCs.

\begin{cons}[Pullback Sketch for a VDC]\label{cons:LimitSketch}
	Let $\Dbl{D}$ be a VDC, and let $\Dbl{Q}(\Cat{\Delta}^{op},\mathsf{inrt})\subseteq \Dbl{Q}(\Cat{\Delta}^{op})$ denote the sub-VDC of
   the quintet VDC whose only tight morphisms are inert morphisms. Then due to condition (1) in the description of VDCs as normal VDFs 
   into $\Prof$, we can extend the normal VDF $\Cat{P}(\Dbl{D}):\L(\Cat{\Delta})\to\Dbl{P}\Cat{rof}$ for a VDC $\Dbl{D}$ to a normal VDF 
   $\Cat{Q}(\Dbl{D}):\Dbl{Q}(\Cat{\Delta}^{op},\mathsf{inrt})^{op_l}\to \Dbl{P}\Cat{rof}$ where an inert morphism $i:[n]\hookrightarrow [m]$ 
   is mapped to the functor $i_*:\Cat{D}_m\to \Cat{D}_n$ representing the profunctor $\Cat{P}(\Dbl{D})(i):\Cat{D}_n\times \Cat{D}_m^{op}\to\Cat{Set}$. 
   Additionally, the unique multicell $\cell{i}{f_1 \cdots f_n}{g}{j}$ witnessing the composite $f_n\circ \cdots\circ f_1\circ i=j\circ g$ is mapped to the natural transformation 
	\begin{equation*}
		\Cat{P}(\Dbl{D})(f_1)\odot\cdots\odot \Cat{P}(\Dbl{D})(f_n)\Rightarrow \Cat{P}(\Dbl{D})(g)(j_*(-),i_*(-))
	\end{equation*}
	that takes in a sequence of towers of composable multicells, composes each tower, and then projects onto a subsequence of the multicells.

	Under this reformulation conditions (1) and (2) for normal VDFs $\L(\Cat{\Delta})\to \Prof$ representing VDCs become equivalent to the existence of such a normal VDF $\Cat{Q}(\Dbl{D}):\Dbl{Q}(\Cat{\Delta}^{op},\mathsf{inrt})^{op_l}\to \Dbl{P}\Cat{rof}$. Condition (3) then states that $\Cat{Q}(\Dbl{D})_0$ preserves pullbacks (i.e.~sends pushouts in $\mathsf{inrt}\subseteq \Cat{\Delta}$ to pullbacks in $\Cat{Cat}$), and condition (4) states that $\Cat{Q}(\Dbl{D})_1$ preserves pullbacks. 
  More explicitly, a colimit of a sequence of spans in $\mathsf{inrt}\subseteq \mathsf{\Delta}$ is canonically of the form
	\[
	\begin{tikzcd}[column sep=small]
		& {[m_0,k_0]} && {[m_1,k_1]} && {[m_{n-1},k_{n-1}]} \\
		{[0,k_0]} && {[m_0,k_1]} && \cdots && {[m_{n-1},k_n]} \\
		\\
		&&& {[k_n]}
		\arrow[from=1-2, to=2-1]
		\arrow[from=1-2, to=2-3]
		\arrow[from=1-4, to=2-3]
		\arrow[from=1-4, to=2-5]
		\arrow[from=1-6, to=2-5]
		\arrow[from=1-6, to=2-7]
		\arrow[from=2-1, to=4-4]
		\arrow[from=2-7, to=4-4]
	\end{tikzcd}\]
	where $0\leq m_0\leq k_0\leq m_1\leq k_1\leq \cdots\leq m_{n-1}\leq k_{n-1}\leq k_n$, while such a colimit in $\mathbb{Q}(\mathsf{\Delta},\mathsf{inrt})_1$ 
  is uniquely determined by a diagram which gives a colimit of a sequence of spans when taking source and target projections. 
  Using the explicit definition of $\Cat{Q}(\Dbl{D})$ from $\Cat{P}(\Dbl{D})$, these colimits are mapped exactly to the limits 
  appearing in condition (4) of the characterization, after we apply $\Cat{Q}(\Dbl{D})$ and use conditions (1) and (2) to re-express the limits in (4). 
  In particular, if $f:[n]\to [m]$ is an arbitrary morphism in $\Cat{\Delta}$, then using conditions (1) and (2), condition (4) becomes equivalent to the natural map
	\begin{equation*}
		\Cat{P}(\Dbl{D})(f)\to \Cat{P}(\Dbl{D})(f_{[0,1]})\times_{\Dbl{D}_0(-,-)}\cdots\times_{\Dbl{D}_0(-,-)}\Cat{P}(\Dbl{D})(f_{[n-1,n]})
	\end{equation*}
	in $\Prof_1$ being an isomorphism. Here $f_{[i,i+1]}:[1]\to [f(i+1)-f(i)]$ is the unique active morphism such that $f\circ \delta^{[i,i+1]}=\delta^{[f(i),f(i+1)]}\circ f_{[i,i+1]}$, 
  and the maps $\Cat{P}(\Dbl{D})(f_i)\leftarrow \Cat{P}(\Dbl{D})(f_{[i,i+1]})\to \Cat{P}(\Dbl{D})(f_{i+1})$ in $\Prof_1$ 
  are given by projecting from a multicell onto its tight source and target, respectively:
	\[\begin{adjustbox}{}
	\begin{tikzcd}
		&& {\mathsf{D}_1} \\
		\\
		{\mathsf{D}_0} && {\mathsf{D}_{f(i+1)-f(i)}} && {\mathsf{D}_0} \\
		\\
		{\mathsf{D}_0} &&&& {\mathsf{D}_0}
		\arrow["{\delta_*^{\{0\}}}"', from=1-3, to=3-1]
		\arrow[""{name=0, anchor=center, inner sep=0}, "{\mathsf{P}(\mathbb{D})(f_{[i,i+1]})}"{description, pos=0.8}, "\shortmid"{marking}, from=1-3, to=3-3]
		\arrow["{\delta^{\{1\}}_*}", from=1-3, to=3-5]
		\arrow[""{name=1, anchor=center, inner sep=0}, "{\mathsf{D}_0(-,-)}"'{inner sep=.8ex}, "\shortmid"{marking}, from=3-1, to=5-1]
		\arrow["{(f_i)_*}", from=3-3, to=5-1]
		\arrow["{(f_{i+1})_*}"', from=3-3, to=5-5]
		\arrow[""{name=2, anchor=center, inner sep=0}, "{\mathsf{D}_0(-,-)}"{inner sep=.8ex}, "\shortmid"{marking}, from=3-5, to=5-5]
		\arrow["s"{description}, between={0.3}{0.8}, Rightarrow, from=0, to=1]
		\arrow["t"{description}, between={0.3}{0.8}, Rightarrow, from=0, to=2]
	\end{tikzcd}
\end{adjustbox}\]
	where $f_i=f\circ \delta^{\{i\}}$. 
\end{cons}

We can also use the equivalence in Equation~\ref{eq:display} to describe a VDF $F:\Dbl{E}\to \Dbl{D}$ as a normal VDF $\Cat{P}_{/\Dbl{D}}(F):\L(\Cat{D})^{op_l}\to \Prof$ given by the following data:

\begin{enumerate}
    \item An object $\prolist{\varphi}$ in $\Cat{D}$ (i.e. a sequence of loose arrows in $\Dbl{D}$) is sent to the fiber category $F^{-1}(\prolist{\varphi})\subseteq \Cat{E}_{|\prolist{\varphi}|}$ whose objects are length $|\prolist{\varphi}|$-sequences of loose arrows in $\Dbl{E}$ lying over $\prolist{\varphi}$, and whose morphisms are sequences of unary multicells between such loose arrows lying over $\text{id}_{\prolist{\varphi}}$.
    \item A map $\beta:(H_1 \cdots H_m)\to (J_1 \cdots J_n)$ lying over $f:[n]\to [m]$ in $\Cat{D}$ is sent to the profunctor $F^{-1}(\beta,f):F^{-1}(\prolist{J})\proto F^{-1}(\prolist{H})$ (i.e. $F^{-1}(\prolist{J})\times F^{-1}(\prolist{H})^{op}\to \Cat{Set}$) that maps an object $(\prolist{I},\prolist{K})$ to the set of maps $\alpha:(K_1,...,K_m)\to (I_1,...,I_n)$ lying over $\beta$, and maps sequences of unary multicells to the set function given by pasting the unary multicells above and below the sequences of multicells in the image of $(\prolist{I},\prolist{K})$.
\end{enumerate}
This perspective will be beneficial for describing the exponentiable VDFs and exponentiable morphisms in more general contexts.

\subsection{Exponentiability in terms of Operator Categories}

In the operator category language, the exponentiable VDCs are precisely the active VDCs defined below:

\begin{defn}[Active VDC]\label{defn:active}
	A VDC $\Dbl{D}$ is said to be \emph{active} if and only if for every active morphism $f:[n]\to [m]$, the laxator
    \[\begin{adjustbox}{}
    \begin{tikzcd}
	{\mathsf{P}(\mathbb{D})_1} && {\mathsf{P}(\mathbb{D})_n} && {\mathsf{P}(\mathbb{D})_m} \\
	&& {\ell_{a_n,f}} \\
	{\mathsf{P}(\mathbb{D})_1} &&&& {\mathsf{P}(\mathbb{D})_m}
	\arrow["{\mathsf{P}(\mathbb{D})(a_n)}"{inner sep=.8ex}, "\shortmid"{marking}, from=1-1, to=1-3]
	\arrow[equals, from=1-1, to=3-1]
	\arrow["{\mathsf{P}(\mathbb{D})(f)}"{inner sep=.8ex}, "\shortmid"{marking}, from=1-3, to=1-5]
	\arrow[equals, from=1-5, to=3-5]
	\arrow["{\mathsf{P}(\mathbb{D})(a_m)}"'{inner sep=.8ex}, "\shortmid"{marking}, from=3-1, to=3-5]
\end{tikzcd}
\end{adjustbox}\]
    for the associated VDF $\mathsf{P}(\Dbl{D}):\L(\Cat{\Delta})\to \Dbl{P}\Cat{rof}$ is an opcartesian cell,
    where as above $a_n:[1]\to [n]$ denotes the unique active morphism of that signature in $\Cat{\Delta}.$
\qed
\end{defn}

In terms of coends a VDC $\Dbl{D}$ is active if and only if for every loose arrow $\psi$ and every $2$-fold sequence of loose arrows ${\prolist{\varphi}}$, the composition map
\begin{equation*}
	\int^{\prolist{\chi}\in \Cat{fc}_{|{\prolist{\varphi}}|_1}(\Dbl{D}_1)}\MCell (\Dbl{D})(\prolist{\chi},\psi)\times \MCell (\Dbl{D})({\prolist{\varphi}},\prolist{\chi})\xrightarrow{\circ} \MCell (\Dbl{D})({\prolist{\varphi}},\psi)
\end{equation*}
is an isomorphism. But by Corollary~\ref{cor:decompSimp} this is equivalent to $\Dbl{D}$ having decomposable multicells, and hence by Theorem~\ref{thm:ExpChar} it is equivalent to $\Dbl{D}$ being exponentiable. Similarly, we can encode exponentiable VDFs in the same language using the following notion of active VDFs:

\begin{defn}[Active VDF]\label{defn:activeVDF}
	A VDF $F:\Dbl{D}\to \Dbl{E}$ is said to be \emph{active} if and only if for every active morphism $(f^{op},\prolist{\chi}):([m],{\prolist{\varphi}})\to ([n],\prolist{\psi})$ in $\Cat{E}$, and every active morphism $(a_n^{op},\alpha):([n],\prolist{\psi})\to ([1],\lambda)$, the laxator
    \[\begin{adjustbox}{}
    \begin{tikzcd}
	{\Cat{P}_{/\Dbl{E}}(F)_{([1],\lambda)}} && {\Cat{P}_{/\Dbl{E}}(F)_{([n],\prolist{\psi})}} && {\Cat{P}_{/\Dbl{E}}(F)_{([m],{\prolist{\varphi}})}} \\
	&& {\ell_{a_n,f}} \\
	{\Cat{P}_{/\Dbl{E}}(F)_{([1],\lambda)}} &&&& {\Cat{P}_{/\Dbl{E}}(F)_{([m],{\prolist{\varphi}})}}
	\arrow["{\Cat{P}_{/\Dbl{E}}(F)(a_n^{op},\alpha)}"{inner sep=.8ex}, "\shortmid"{marking}, from=1-1, to=1-3]
	\arrow[equals, from=1-1, to=3-1]
	\arrow["{\Cat{P}_{/\Dbl{E}}(F)(f^{op},\prolist{\chi})}"{inner sep=.8ex}, "\shortmid"{marking}, from=1-3, to=1-5]
	\arrow[equals, from=1-5, to=3-5]
	\arrow["{\Cat{P}_{/\Dbl{E}}(F)\left(a_m^{op},\frac{\prolist{\chi}}{\alpha}\right)}"'{inner sep=.8ex}, "\shortmid"{marking}, from=3-1, to=3-5]
\end{tikzcd}
\end{adjustbox}\]
    for the associated VDF $\Cat{P}_{/\Dbl{E}}(F):\L(\Cat{E})^{op_l}\to \Dbl{P}\Cat{rof}$ is an opcartesian cell, and if $n=m=0$, then for all morphisms $(\text{id}_{[0]}^{op},a):([0],x)\to ([0],y)$ and $(\text{id}_{[0]}^{op},b):([0],y)\to ([0],z)$, the laxator
	\[\begin{adjustbox}{}
    \begin{tikzcd}
	{\Cat{P}_{/\Dbl{E}}(F)_{([0],z)}} && {\Cat{P}_{/\Dbl{E}}(F)_{([0],y)}} && {\Cat{P}_{/\Dbl{E}}(F)_{([0],x)}} \\
	&& {\ell_{a_n,f}} \\
	{\Cat{P}_{/\Dbl{E}}(F)_{([0],z)}} &&&& {\Cat{P}_{/\Dbl{E}}(F)_{([0],x)}}
	\arrow["{\Cat{P}_{/\Dbl{E}}(F)(\text{id}_{[0]}^{op},b)}"{inner sep=.8ex}, "\shortmid"{marking}, from=1-1, to=1-3]
	\arrow[equals, from=1-1, to=3-1]
	\arrow["{\Cat{P}_{/\Dbl{E}}(F)(\text{id}_{[0]}^{op},a)}"{inner sep=.8ex}, "\shortmid"{marking}, from=1-3, to=1-5]
	\arrow[equals, from=1-5, to=3-5]
	\arrow["{\Cat{P}_{/\Dbl{E}}(F)(\text{id}_{[0]}^{op},b\circ a)}"'{inner sep=.8ex}, "\shortmid"{marking}, from=3-1, to=3-5]
\end{tikzcd}
\end{adjustbox}\]
	is also an opcartesian cell.
\qed
\end{defn}

From Corollary~\ref{cor:ProRepImpliesDecomp}, 
to check that a VDC is active we need only check the laxator 
\[\begin{adjustbox}{}
    \begin{tikzcd}
	{\mathsf{P}(\mathbb{D})_1} && {\mathsf{P}(\mathbb{D})_n} && {\mathsf{P}(\mathbb{D})_m} \\
	&& {\ell_{a_n,f}} \\
	{\mathsf{P}(\mathbb{D})_1} &&&& {\mathsf{P}(\mathbb{D})_m}
	\arrow["{\mathsf{P}(\mathbb{D})(a_n)}"{inner sep=.8ex}, "\shortmid"{marking}, from=1-1, to=1-3]
	\arrow[equals, from=1-1, to=3-1]
	\arrow["{\mathsf{P}(\mathbb{D})(f)}"{inner sep=.8ex}, "\shortmid"{marking}, from=1-3, to=1-5]
	\arrow[equals, from=1-5, to=3-5]
	\arrow["{\mathsf{P}(\mathbb{D})(a_m)}"'{inner sep=.8ex}, "\shortmid"{marking}, from=3-1, to=3-5]
\end{tikzcd}
\end{adjustbox}\]
is opcartesian when $f$ is either an inner face map or a degeneracy map, and that for any active $f:[n]\to [m]$ and any decomposition $f_k\circ \cdots f_1=f$ where each $f_i$ is either an inner face map or a degeneracy map, the partial composite map 
\[\begin{adjustbox}{}
\begin{tikzcd}
	{\mathsf{P}(\mathbb{D})_1} && {\mathsf{P}(\mathbb{D})_n} & \cdots & {\mathsf{P}(\mathbb{D})_m} \\
	& {\text{id}} && {\ell_{f_1,...,f_k}} \\
	{\mathsf{P}(\mathbb{D})_1} && {\mathsf{P}(\mathbb{D})_n} && {\mathsf{P}(\mathbb{D})_m} \\
	&& {\mathsf{opcart}} \\
	{\mathsf{P}(\mathbb{D})_1} &&&& {\mathsf{P}(\mathbb{D})_m}
	\arrow["{{\mathsf{P}(\mathbb{D})(a_n)}}"{inner sep=.8ex}, "\shortmid"{marking}, from=1-1, to=1-3]
	\arrow[equals, from=1-1, to=3-1]
	\arrow["{{\mathsf{P}(\mathbb{D})(f_1)}}"{inner sep=.8ex}, "\shortmid"{marking}, from=1-3, to=1-4]
	\arrow[equals, from=1-3, to=3-3]
	\arrow["{{\mathsf{P}(\mathbb{D})(f_k)}}"{inner sep=.8ex}, "\shortmid"{marking}, from=1-4, to=1-5]
	\arrow[equals, from=1-5, to=3-5]
	\arrow["{{\mathsf{P}(\mathbb{D})(a_n)}}"{inner sep=.8ex}, "\shortmid"{marking}, from=3-1, to=3-3]
	\arrow[equals, from=3-1, to=5-1]
	\arrow["{{\mathsf{P}(\mathbb{D})(f)}}"{inner sep=.8ex}, "\shortmid"{marking}, from=3-3, to=3-5]
	\arrow[equals, from=3-5, to=5-5]
	\arrow["{{\mathsf{P}(\mathbb{D})(a_n)}\odot {\mathsf{P}(\mathbb{D})(f)}}"'{inner sep=.8ex}, "\shortmid"{marking}, from=5-1, to=5-5]
\end{tikzcd}
\end{adjustbox}\]
has surjective component maps. Using this observation and the normal VDF perspective $\Cat{Q}(\Dbl{D}):\Dbl{Q}(\Cat{\Delta}^{op},\mathsf{inrt})^{op_l}\to \Dbl{P}\Cat{rof}$ for a VDC $\Dbl{D}$, we can identify an exponentiable VDC with the following data:
\begin{enumerate}
	\item A graph of categories $s,t:\Cat{D}_1\rightrightarrows \Cat{D}_0$, from which we can define $\Cat{D}_n =\Cat{D}_1\times_{\Cat{D}_0}\cdots\times_{\Cat{D}_0}\Cat{D}_1$.
	\item Two graphs of profunctors
\[\begin{tikzcd}
	&& {\mathsf{D}_1} &&&& {\mathsf{D}_1} && \\
	\\
	{\mathsf{D}_0} && {\mathsf{D}_{0}} && {\mathsf{D}_0} && {\mathsf{D}_{2}} && {\mathsf{D}_0} \\
	&&&&& {\mathsf{D}_1} && {\mathsf{D}_1} \\
	{\mathsf{D}_0} &&&& {\mathsf{D}_0} &&&& {\mathsf{D}_0}
	\arrow["s"', from=1-3, to=3-1]
	\arrow[""{name=0, anchor=center, inner sep=0}, "{{{\mathsf{P}(\mathbb{D})(a_0)}}}"{description, pos=0.8}, "\shortmid"{marking}, from=1-3, to=3-3]
	\arrow["t", from=1-3, to=3-5]
	\arrow["s"', from=1-7, to=3-5]
	\arrow[""{name=1, anchor=center, inner sep=0}, "{{{\mathsf{P}(\mathbb{D})(a_2)}}}"{description, pos=0.8}, "\shortmid"{marking}, from=1-7, to=3-7]
	\arrow["t", from=1-7, to=3-9]
	\arrow[""{name=2, anchor=center, inner sep=0}, "{{{\mathsf{D}_0(-,-)}}}"{inner sep=.8ex}, "\shortmid"{marking}, from=3-1, to=5-1]
	\arrow[equals, from=3-3, to=5-1]
	\arrow[equals, from=3-3, to=5-5]
	\arrow[""{name=3, anchor=center, inner sep=0}, "{{{\mathsf{D}_0(-,-)}}}"{description, pos=0.7}, "\shortmid"{marking}, from=3-5, to=5-5]
	\arrow["{{\pi_1}}", from=3-7, to=4-6]
	\arrow["{{\pi_2}}"', from=3-7, to=4-8]
	\arrow[""{name=4, anchor=center, inner sep=0}, "{{{\mathsf{D}_0(-,-)}}}"'{pos=0.4, inner sep=.8ex}, "\shortmid"{marking}, from=3-9, to=5-9]
	\arrow["s", from=4-6, to=5-5]
	\arrow["t"', from=4-8, to=5-9]
	\arrow["s"{description}, between={0.3}{0.8}, Rightarrow, from=0, to=2]
	\arrow["t"{description}, between={0.3}{0.8}, Rightarrow, from=0, to=3]
	\arrow["s"{description}, between={0.3}{0.8}, Rightarrow, from=1, to=3]
	\arrow["t"{description}, between={0.3}{0.8}, Rightarrow, from=1, to=4]
\end{tikzcd}\]
	For $f:[n]\to [m]$ a degeneracy or inner face map, we define $\mathsf{P}(\Dbl{D})(f):=\mathsf{P}(\Dbl{D})(a_{k_1})\times_{\Dbl{D}_0(-,-)}\cdots\times_{\Dbl{D}_0(-,-)}\mathsf{P}(\Dbl{D})(a_{k_n})$ where $\prolist{k}=\prolist{1}\pm \delta_i$, so that all but one of the profunctors is $\Dbl{D}_1(-,-)$, and the remaining profunctor is either $\mathsf{P}(\Dbl{D})(a_0)$ or $\mathsf{P}(\Dbl{D})(a_2)$. 
  Additionally, for each binary-nullary tree $T$, which we can encode by a unique sequence of maps $f_k\circ \cdots f_1:[1]\to [m]$ with each $f_i$ either a degeneracy or inner face map, we define $\mathsf{P}(\Dbl{D})(T):=\mathsf{P}(\Dbl{D})(f_1)\odot \cdots \odot \mathsf{P}(\Dbl{D})(f_k)$.
	\item For any two binary-nullary trees $T$ and $T'$ with the same number of leaves, a natural isomorphism $\alpha_{T,T'}:\mathsf{P}(\Dbl{D})(T)\to \mathsf{P}(\Dbl{D})(T')$, satisfying the cocycle identities
	\begin{equation*}
		\alpha_{T,T}=\text{id},\;\;\alpha_{T',T''}\circ \alpha_{T,T'}=\alpha_{T,T''}
	\end{equation*}
\end{enumerate}

This presentation exhibits exponentiable VDCs as a kind of unbiased pro-double categories. 
Namely, we need only specify the data for nullary, unary, and binary multicells, as expected for a 
pro-double concept, but we need to provide a compatible family of associators for all possible 
towers of multicells that can be built from these. In particular, counterexamples like that 
in the proof of Proposition~\ref{prop:counterExToProDouble} will not satisfy this 
corrected notion of pro-double category, since we have explicitly added associators 
for trees with four and more leaves rather than attempting to build them out of the basic associators,
which as we showed there is impossible in general.

\section{Yoneda Lemma for monoids}\label{sec:YonedaForMon}

As discussed in Section~\ref{sec:OtherChar}, Lemma~\ref{lem:Yoneda} is a special case of a more general result for monoids in a VDC. For the remainder of this section we fix a VDC $\Dbl{D}$, together with a monoid $x=(x_0,x_1:x_0\proto x_0,\eta,\mu)$, where $\eta$ is the unit nullary multicell and $\mu$ is the multiplication binary multicell. We will phrase the Yoneda lemma for monoids in terms of left modules over the monoid, while dualizing would give an analogous statement for right modules.

\begin{defn}[Left Modules]\label{defn:leftMod}
	A left module over the monoid $x$ consists of an object $y \in \Dbl{D}$, a loose arrow $\varphi:x_0\proto y$, and a globular multicell $\chi:\frac{(x_1,\varphi)}{\varphi}$ compatible with the multiplication and unit multicells in the sense of Definition~\ref{defn:Modules}. A map of left modules $(x,\varphi,y)\to (z,\psi,w)$ consists of a map of monoids $x\to z$ together with a cell 
	\[\begin{adjustbox}{}
	\begin{tikzcd}
	{x_0} & y \\
	{z_0} & w
	\arrow[""{name=0, anchor=center, inner sep=0}, "\varphi"{inner sep=.8ex}, "\shortmid"{marking}, from=1-1, to=1-2]
	\arrow["{{a_0}}"', from=1-1, to=2-1]
	\arrow["b", from=1-2, to=2-2]
	\arrow[""{name=1, anchor=center, inner sep=0}, "\psi"'{inner sep=.8ex}, "\shortmid"{marking}, from=2-1, to=2-2]
	\arrow["\alpha"{description}, draw=none, from=0, to=1]
\end{tikzcd}
\end{adjustbox}\]
	which is compatible with the module multicells.
\qed
\end{defn}

Note that if $y\in \Dbl{D}$ admits a loose unit, a left $x$-module structure on $y$ is equivalent to a bimodule where the right action is the canonical one induced by a choice of loose unit. 
Now, the role of representable presheaves in the 1-categorical Yoneda lemma will be replaced by restrictions of the monoid $x$ along a tight morphism, as in the following construction.

\begin{cons}[Restricting Bimodules]\label{cons:restrBimoduleToLeftModule}
	Let $y=(y_0,y_1:y_0\proto y_0,\eta_y,\mu_y)$ and $z=(z_0,z_1:z_0\proto z_0,\eta_z,\mu_z)$ be monoids in $\Dbl{D}$, and let $(\chi_y,\varphi:y_0\proto z_0,\chi_z)$ be a bimodule between them. Additionally, let $w$ be an object in $\Dbl{D}$, and let $a:w\to z_0$ be a tight morphism such that the restriction $\varphi(1,a)$ exists in $\Dbl{D}$. Then $\varphi(1,a)$ has the natural structure of a left $y$-module using the action $\chi_{y,a}$ obtained by factoring through the cartesian cell of the restriction:
	\[
	\begin{tikzcd}
	{y_0} & {y_0} & w && {y_0} & {y_0} & w \\
	{y_0} & {y_0} & {z_0} & {=} & {y_0} && w \\
	{y_0} && {z_0} && {y_0} && {z_0}
	\arrow[""{name=0, anchor=center, inner sep=0}, "{{y_1}}"{inner sep=.8ex}, "\shortmid"{marking}, from=1-1, to=1-2]
	\arrow[equals, from=1-1, to=2-1]
	\arrow[""{name=1, anchor=center, inner sep=0}, "{{\varphi(1,a)}}"{inner sep=.8ex}, "\shortmid"{marking}, from=1-2, to=1-3]
	\arrow[equals, from=1-2, to=2-2]
	\arrow["a", from=1-3, to=2-3]
	\arrow["{{y_1}}"{inner sep=.8ex}, "\shortmid"{marking}, from=1-5, to=1-6]
	\arrow[equals, from=1-5, to=2-5]
	\arrow["{{\varphi(1,a)}}"{inner sep=.8ex}, "\shortmid"{marking}, from=1-6, to=1-7]
	\arrow[equals, from=1-7, to=2-7]
	\arrow[""{name=2, anchor=center, inner sep=0}, "{{y_1}}"'{inner sep=.8ex}, "\shortmid"{marking}, from=2-1, to=2-2]
	\arrow[equals, from=2-1, to=3-1]
	\arrow[""{name=3, anchor=center, inner sep=0}, "\varphi"'{inner sep=.8ex}, "\shortmid"{marking}, from=2-2, to=2-3]
	\arrow[equals, from=2-3, to=3-3]
	\arrow[""{name=4, anchor=center, inner sep=0}, "{{\varphi(1,a)}}"'{inner sep=.8ex}, "\shortmid"{marking}, from=2-5, to=2-7]
	\arrow[equals, from=2-5, to=3-5]
	\arrow["a", from=2-7, to=3-7]
	\arrow[""{name=5, anchor=center, inner sep=0}, "\varphi"'{inner sep=.8ex}, "\shortmid"{marking}, from=3-1, to=3-3]
	\arrow[""{name=6, anchor=center, inner sep=0}, "\varphi"'{inner sep=.8ex}, "\shortmid"{marking}, from=3-5, to=3-7]
	\arrow["{\text{id}_{y_1}}"{description}, draw=none, from=0, to=2]
	\arrow["{\mathsf{cart}}"{description}, draw=none, from=1, to=3]
	\arrow["{\exists!\chi_{y,a}}"{description}, draw=none, from=1-6, to=4]
	\arrow["{\chi_y}"{description}, draw=none, from=2-2, to=5]
	\arrow["{\mathsf{cart}}"{description}, draw=none, from=4, to=6]
\end{tikzcd}
\]
	The compatibility of $\chi_{y,a}$ with the unit and multiplication multicells for $y$ follows from the compatibility of $\chi_y$ and the uniqueness of factorizations through the cartesian cell. Additionally, if $y=z$, and $\varphi=y_1:y_0\proto y_0$, then we also obtain a unit type nullary multicell:
	\[
	\begin{tikzcd}
	& w &&&& w \\
	& {y_0} && {=} & {y_0} && w \\
	{y_0} && {y_0} && {y_0} && {y_0}
	\arrow["a"', from=1-2, to=2-2]
	\arrow["a"', from=1-6, to=2-5]
	\arrow[equals, from=1-6, to=2-7]
	\arrow[equals, from=2-2, to=3-1]
	\arrow[equals, from=2-2, to=3-3]
	\arrow[""{name=0, anchor=center, inner sep=0}, "{{y_1(1,a)}}"'{inner sep=.8ex}, "\shortmid"{marking}, from=2-5, to=2-7]
	\arrow[equals, from=2-5, to=3-5]
	\arrow["a", from=2-7, to=3-7]
	\arrow[""{name=1, anchor=center, inner sep=0}, "{{y_1}}"'{inner sep=.8ex}, "\shortmid"{marking}, from=3-1, to=3-3]
	\arrow[""{name=2, anchor=center, inner sep=0}, "{{y_1}}"'{inner sep=.8ex}, "\shortmid"{marking}, from=3-5, to=3-7]
	\arrow["{\exists!\eta_a}"{description}, draw=none, from=1-6, to=0]
	\arrow["{\eta_y}"{description}, draw=none, from=2-2, to=1]
	\arrow["{\mathsf{cart}}"{description}, draw=none, from=0, to=2]
\end{tikzcd}\]
	which will be important for our generalization of the Yoneda lemma. To see how this nullary multicell behaves like a unit observe that we have the pasting equalities
	\[
	\begin{tikzcd}[column sep=small]
	& {y_0} && w &&&& {y_0} && w \\
	{y_0} && {y_0} && w & {=} & {y_0} && {y_0} && w \\
	{y_0} &&&& w && {y_0} && {y_1} && {y_0} \\
	{y_0} &&&& {y_0} && {y_0} &&&& {y_0}
	\arrow[""{name=0, anchor=center, inner sep=0}, "{y_1(1,a)}"{inner sep=.8ex}, "\shortmid"{marking}, from=1-2, to=1-4]
	\arrow[equals, from=1-2, to=2-1]
	\arrow["a"', from=1-4, to=2-3]
	\arrow[equals, from=1-4, to=2-5]
	\arrow[""{name=1, anchor=center, inner sep=0}, "{y_1(1,a)}"{inner sep=.8ex}, "\shortmid"{marking}, from=1-8, to=1-10]
	\arrow[equals, from=1-8, to=2-7]
	\arrow["a"', from=1-10, to=2-9]
	\arrow[equals, from=1-10, to=2-11]
	\arrow[""{name=2, anchor=center, inner sep=0}, "{y_1}"'{inner sep=.8ex}, "\shortmid"{marking}, from=2-1, to=2-3]
	\arrow[equals, from=2-1, to=3-1]
	\arrow[""{name=3, anchor=center, inner sep=0}, "{{y_1(1,a)}}"'{inner sep=.8ex}, "\shortmid"{marking}, from=2-3, to=2-5]
	\arrow[equals, from=2-5, to=3-5]
	\arrow[""{name=4, anchor=center, inner sep=0}, "{y_1}"'{inner sep=.8ex}, "\shortmid"{marking}, from=2-7, to=2-9]
	\arrow[equals, from=2-7, to=3-7]
	\arrow[""{name=5, anchor=center, inner sep=0}, "{{y_1(1,a)}}"'{inner sep=.8ex}, "\shortmid"{marking}, from=2-9, to=2-11]
	\arrow[equals, from=2-9, to=3-9]
	\arrow["a", from=2-11, to=3-11]
	\arrow[""{name=6, anchor=center, inner sep=0}, "{y_1(1,a)}"'{inner sep=.8ex}, "\shortmid"{marking}, from=3-1, to=3-5]
	\arrow[equals, from=3-1, to=4-1]
	\arrow["a", from=3-5, to=4-5]
	\arrow[""{name=7, anchor=center, inner sep=0}, "{y_1}"'{inner sep=.8ex}, "\shortmid"{marking}, from=3-7, to=3-9]
	\arrow[equals, from=3-7, to=4-7]
	\arrow[""{name=8, anchor=center, inner sep=0}, "{y_1}"'{inner sep=.8ex}, "\shortmid"{marking}, from=3-9, to=3-11]
	\arrow[equals, from=3-11, to=4-11]
	\arrow[""{name=9, anchor=center, inner sep=0}, "{y_1}"'{inner sep=.8ex}, "\shortmid"{marking}, from=4-1, to=4-5]
	\arrow[""{name=10, anchor=center, inner sep=0}, "{y_1}"'{inner sep=.8ex}, "\shortmid"{marking}, from=4-7, to=4-11]
	\arrow["{\mathsf{cart}}"{description}, draw=none, from=0, to=2]
	\arrow["{\eta_a}"{description}, draw=none, from=1-4, to=3]
	\arrow["{\mathsf{cart}}"{description}, draw=none, from=1, to=4]
	\arrow["{\eta_a}"{description}, draw=none, from=1-10, to=5]
	\arrow["{\chi_{y,a}}"{description}, draw=none, from=2-3, to=6]
	\arrow["{\text{id}_{y_1}}"{description}, draw=none, from=4, to=7]
	\arrow["{\mathsf{cart}}"{description}, draw=none, from=5, to=8]
	\arrow["{\mathsf{cart}}"{description}, draw=none, from=6, to=9]
	\arrow["{\mu_y}"{description}, draw=none, from=3-9, to=10]
\end{tikzcd}
\]
	and
	\[\begin{adjustbox}{}
	\begin{tikzcd}[column sep=small]
	& {y_0} && w &&&& {y_0} && w \\
	{y_0} && {y_0} && w & {=} & {y_0} &&& {y_0} \\
	{y_0} && {y_1} && {y_0} && {y_0} && {y_1} && {y_0} \\
	{y_0} &&&& {y_0} && {y_0} &&&& {y_0}
	\arrow[""{name=0, anchor=center, inner sep=0}, "{y_1(1,a)}"{inner sep=.8ex}, "\shortmid"{marking}, from=1-2, to=1-4]
	\arrow[equals, from=1-2, to=2-1]
	\arrow["a"', from=1-4, to=2-3]
	\arrow[equals, from=1-4, to=2-5]
	\arrow[""{name=1, anchor=center, inner sep=0}, "{y_1(1,a)}"{inner sep=.8ex}, "\shortmid"{marking}, from=1-8, to=1-10]
	\arrow[equals, from=1-8, to=2-7]
	\arrow["a"', from=1-10, to=2-10]
	\arrow[""{name=2, anchor=center, inner sep=0}, "{y_1}"'{inner sep=.8ex}, "\shortmid"{marking}, from=2-1, to=2-3]
	\arrow[equals, from=2-1, to=3-1]
	\arrow[""{name=3, anchor=center, inner sep=0}, "{{y_1(1,a)}}"'{inner sep=.8ex}, "\shortmid"{marking}, from=2-3, to=2-5]
	\arrow[equals, from=2-3, to=3-3]
	\arrow["a", from=2-5, to=3-5]
	\arrow[""{name=4, anchor=center, inner sep=0}, "{y_1}"'{inner sep=.8ex}, "\shortmid"{marking}, from=2-7, to=2-10]
	\arrow[equals, from=2-7, to=3-7]
	\arrow[equals, from=2-10, to=3-9]
	\arrow[equals, from=2-10, to=3-11]
	\arrow[""{name=5, anchor=center, inner sep=0}, "{y_1}"'{inner sep=.8ex}, "\shortmid"{marking}, from=3-1, to=3-3]
	\arrow[equals, from=3-1, to=4-1]
	\arrow[""{name=6, anchor=center, inner sep=0}, "{y_1}"'{inner sep=.8ex}, "\shortmid"{marking}, from=3-3, to=3-5]
	\arrow[equals, from=3-5, to=4-5]
	\arrow[""{name=7, anchor=center, inner sep=0}, "{y_1}"'{inner sep=.8ex}, "\shortmid"{marking}, from=3-7, to=3-9]
	\arrow[equals, from=3-7, to=4-7]
	\arrow[""{name=8, anchor=center, inner sep=0}, "{y_1}"'{inner sep=.8ex}, "\shortmid"{marking}, from=3-9, to=3-11]
	\arrow[equals, from=3-11, to=4-11]
	\arrow[""{name=9, anchor=center, inner sep=0}, "{y_1}"'{inner sep=.8ex}, "\shortmid"{marking}, from=4-1, to=4-5]
	\arrow[""{name=10, anchor=center, inner sep=0}, "{y_1}"'{inner sep=.8ex}, "\shortmid"{marking}, from=4-7, to=4-11]
	\arrow["{\mathsf{cart}}"{description}, draw=none, from=0, to=2]
	\arrow["{\eta_a}"{description}, draw=none, from=1-4, to=3]
	\arrow["{\mathsf{cart}}"{description}, draw=none, from=1, to=4]
	\arrow["{\text{id}_{y_1}}"{description}, draw=none, from=2, to=5]
	\arrow["{\mathsf{cart}}"{description}, draw=none, from=3, to=6]
	\arrow["{\text{id}_{y_1}}"{description}, draw=none, from=4, to=7]
	\arrow["{\eta_y}"{description}, draw=none, from=2-10, to=8]
	\arrow["{\mu_y}"{description}, draw=none, from=3-3, to=9]
	\arrow["{\mu_y}"{description}, draw=none, from=3-9, to=10]
\end{tikzcd}
\end{adjustbox}\]
	where the right hand side reduces simply to the cartesian cell, so by uniqueness we have the equality 
	\begin{equation}\label{eq:restrUnit}
		\begin{tikzcd}
	& {y_0} && w &&& {y_0} && w \\
	{y_0} && {y_0} && w & {=} && {\text{id}_{y_1(1,a)}} \\
	{y_0} &&&& w && {y_0} && w
	\arrow[""{name=0, anchor=center, inner sep=0}, "{y_1(1,a)}"{inner sep=.8ex}, "\shortmid"{marking}, from=1-2, to=1-4]
	\arrow[equals, from=1-2, to=2-1]
	\arrow["a"', from=1-4, to=2-3]
	\arrow[equals, from=1-4, to=2-5]
	\arrow["{y_1(1,a)}"{inner sep=.8ex}, "\shortmid"{marking}, from=1-7, to=1-9]
	\arrow[equals, from=1-7, to=3-7]
	\arrow[equals, from=1-9, to=3-9]
	\arrow[""{name=1, anchor=center, inner sep=0}, "{y_1}"'{inner sep=.8ex}, "\shortmid"{marking}, from=2-1, to=2-3]
	\arrow[equals, from=2-1, to=3-1]
	\arrow[""{name=2, anchor=center, inner sep=0}, "{{y_1(1,a)}}"'{inner sep=.8ex}, "\shortmid"{marking}, from=2-3, to=2-5]
	\arrow[equals, from=2-5, to=3-5]
	\arrow[""{name=3, anchor=center, inner sep=0}, "{y_1(1,a)}"'{inner sep=.8ex}, "\shortmid"{marking}, from=3-1, to=3-5]
	\arrow["{y_1(1,a)}"'{inner sep=.8ex}, "\shortmid"{marking}, from=3-7, to=3-9]
	\arrow["{\mathsf{cart}}"{description}, draw=none, from=0, to=1]
	\arrow["{\eta_a}"{description}, draw=none, from=1-4, to=2]
	\arrow["{\chi_{y,a}}"{description}, draw=none, from=2-3, to=3]
\end{tikzcd}
	\end{equation}
\end{cons}

We can now state and prove the version of the Yoneda lemma for left modules in $\Dbl{D}$.

\begin{thm}[Yoneda Lemma for Left Modules in a VDC]\label{thm:YonedaForLeftModules}
	Let $\Dbl{D}$ be a VDC and let $x=(x_0,x_1:x_0\proto x_0,\eta,\mu)$ be a monoid in $\Dbl{D}$. 
  If $(\varphi:x_0\proto y,\chi_x)$ is a left $x$-module, then for any tight arrow 
  $a:z\to x_0$ for which the restriction $x_1(1,a)$ exists, we have a natural bijection 
  between maps of left modules below left and nullary multicells below right:
	\[\begin{adjustbox}{}
	\begin{tikzcd}
	{x_0} & z && z \\
	{x_0} & y & {x_0} && y
	\arrow[""{name=0, anchor=center, inner sep=0}, "{{x_1(1,a)}}"{inner sep=.8ex}, "\shortmid"{marking}, from=1-1, to=1-2]
	\arrow[equals, from=1-1, to=2-1]
	\arrow[""{name=1, anchor=center, inner sep=0}, "b", from=1-2, to=2-2]
	\arrow[""{name=2, anchor=center, inner sep=0}, "a"', from=1-4, to=2-3]
	\arrow["b", from=1-4, to=2-5]
	\arrow[""{name=3, anchor=center, inner sep=0}, "\varphi"'{inner sep=.8ex}, "\shortmid"{marking}, from=2-1, to=2-2]
	\arrow[""{name=4, anchor=center, inner sep=0}, "\varphi"'{inner sep=.8ex}, "\shortmid"{marking}, from=2-3, to=2-5]
	\arrow["\alpha"{description}, draw=none, from=0, to=3]
	\arrow[between={0.4}{0.6}, squiggly, tail reversed, from=1, to=2]
	\arrow["{\alpha(\eta_{a,b})}"{description}, draw=none, from=1-4, to=4]
\end{tikzcd}
\end{adjustbox}\]
\end{thm}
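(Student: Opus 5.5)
The plan is to mimic the classical proof of the Yoneda lemma: evaluation at the ``generic element'' in one direction, and extension by the action in the other. Concretely, the left module structure on $x_1(1,a)$ is the action $\chi_{x,a}$ from Construction~\ref{cons:restrBimoduleToLeftModule}, and $\eta_a$ is the nullary unit multicell built there. A map of left modules here should be read as lying over the identity monoid map on $x$: a unary multicell $\alpha:\cell{\text{id}}{x_1(1,a)}{\varphi}{b}$ compatible with the actions, i.e.\ $\frac{(\text{id}_{x_1},\alpha)}{\chi_x}=\frac{\chi_{x,a}}{\alpha}$.

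In the forward direction I will send $\alpha$ to $\alpha(\eta_{a,b}):=\frac{\eta_a}{\alpha}$. This is a nullary multicell with tight source $a$ (coming from $\eta_a$) and tight target $b$ (coming from $\alpha$).

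For the inverse, given a nullary multicell $\theta:\cell{a}{()}{\varphi}{b}$, I will first form the binary multicell $\frac{(\text{id}_{x_1},\theta)}{\chi_x}$. Its loose source is $x_1$ only, its tight target is $b$, and its tight source is $\text{id}$ on the left, with $a$ appearing at the junction. I will check this against the source of the cartesian cell $\mathsf{cart}:\cell{\text{id}}{x_1(1,a)}{x_1}{a}$. The precomposite $\frac{\mathsf{cart}}{\,\cdot\,}$ is not the right form. Instead I will use the universal property of the restriction differently. Precomposing with $\mathsf{cart}$ sends a unary multicell out of $x_1(1,a)$ into something; what I actually need is a way to build cells \emph{out of} $x_1(1,a)$. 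So the definition is $\alpha_\theta:=\frac{(\mathsf{cart},\theta)}{\chi_x}$. This is a unary multicell from $x_1(1,a)$ to $\varphi$: the loose source of $(\mathsf{cart},\theta)$ is the single arrow $x_1(1,a)$, since $\theta$ is nullary and is glued at $z$ via $a$. Its tight boundary is $\text{id}_{x_0}$ on the left and $b$ on the right, as required.

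Then I will verify three things:
\begin{itemize}
\item[(i)] $\alpha_\theta$ is a map of left modules, using associativity of $\chi_x$ together with the defining equation of $\chi_{x,a}$, i.e.\ $\frac{\chi_{x,a}}{\mathsf{cart}}=\frac{(\text{id}_{x_1},\mathsf{cart})}{\mu}$.
\item[(ii)] $\frac{\eta_a}{\alpha_\theta}=\theta$. Since $\frac{\eta_a}{\mathsf{cart}}=\frac{a}{\eta}$ whiskered, this reduces to $\frac{(\eta\cdot a,\theta)}{\chi_x}=\theta$, which is the unit axiom for the module.
\item[(iii)] $\alpha_{\alpha(\eta_{a,b})}=\alpha$. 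Here the hard part is rewriting $\frac{(\mathsf{cart},\frac{\eta_a}{\alpha})}{\chi_x}$ as $\frac{(\text{id}_{x_1},\alpha)\circ(\mathsf{cart},\eta_a)}{\chi_x}$, then using equivariance of $\alpha$ to move $\chi_x$ past it, giving $\frac{\frac{(\mathsf{cart},\eta_a)}{\chi_{x,a}}}{\alpha}$. Finally, Eq.~\eqref{eq:restrUnit} identifies $\frac{(\mathsf{cart},\eta_a)}{\chi_{x,a}}$ with $\text{id}_{x_1(1,a)}$, so the composite is $\alpha$.
\end{itemize}

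Naturality in $(\varphi,\chi_x)$ and in $b$ follows because both constructions are pastings, which are functorial.

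I expect step (iii) to be the main obstacle. The difficulty is bookkeeping the equivariance of $\alpha$ correctly when the module action on $x_1(1,a)$ is only defined via factorization through $\mathsf{cart}$. I will handle it by postcomposing both sides with nothing and instead using that $\alpha$ intertwines $\chi_{x,a}$ and $\chi_x$, so that the whole identity becomes exactly Eq.~\eqref{eq:restrUnit} whiskered by $\alpha$.
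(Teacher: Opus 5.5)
Your proposal is correct and follows the paper's proof. The forward map $\frac{\eta_a}{\alpha}$ and the inverse $\frac{(\mathsf{cart},\theta)}{\chi_x}$ are the same, equivariance comes from the defining equation of $\chi_{x,a}$ plus associativity of $\chi_x$, and $\alpha$ is recovered via equivariance and Eq.~\eqref{eq:restrUnit}. Your step (ii), checking $\frac{\eta_a}{\alpha_\theta}=\theta$ via $\frac{\eta_a}{\mathsf{cart}}=\frac{a}{\eta}$ and the module unit axiom, is a round trip the paper leaves implicit, so your version is slightly more complete; just discard the first attempt $\frac{(\text{id}_{x_1},\theta)}{\chi_x}$, which is ill-typed because the tight arrows $\text{id}_{x_0}$ and $a$ do not match at the junction.
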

\begin{proof}
	For the forward direction we send a map of left modules $\alpha$ to the composite multicell 
	\[\begin{adjustbox}{}
	\begin{tikzcd}
	& z &&&& z \\
	&&& {:=} & {x_0} && z \\
	{x_0} && y && {x_0} && y
	\arrow["a"', from=1-2, to=3-1]
	\arrow["b", from=1-2, to=3-3]
	\arrow["a"', from=1-6, to=2-5]
	\arrow[equals, from=1-6, to=2-7]
	\arrow[""{name=0, anchor=center, inner sep=0}, "{{x_1(1,a)}}"'{inner sep=.8ex}, "\shortmid"{marking}, from=2-5, to=2-7]
	\arrow[equals, from=2-5, to=3-5]
	\arrow["b", from=2-7, to=3-7]
	\arrow[""{name=1, anchor=center, inner sep=0}, "\varphi"'{inner sep=.8ex}, "\shortmid"{marking}, from=3-1, to=3-3]
	\arrow[""{name=2, anchor=center, inner sep=0}, "\varphi"'{inner sep=.8ex}, "\shortmid"{marking}, from=3-5, to=3-7]
	\arrow["{\alpha(\eta_{a,b})}"{description}, draw=none, from=1-2, to=1]
	\arrow["{\eta_a}"{description}, draw=none, from=1-6, to=0]
	\arrow["\alpha"{description}, draw=none, from=0, to=2]
\end{tikzcd}
\end{adjustbox}\]
	Using Equation~\ref{eq:restrUnit} and the fact that $\alpha$ is a map of left modules, we can observe that $\alpha$ is uniquely determined by $\alpha(\eta_{a,b})$, since these properties give the equalities
	\[
	\begin{adjustbox}{}
		\begin{tikzcd}
	& {x_0} & z &&&& {x_0} & z \\
	{x_0} & {x_0} && z && {x_0} & {x_0} && z \\
	{x_0} &&& z & {=} & {x_0} & {x_0} && y \\
	{x_0} &&& y && {x_0} &&& y
	\arrow[""{name=0, anchor=center, inner sep=0}, "{{x_1(1,a)}}"{inner sep=.8ex}, "\shortmid"{marking}, from=1-2, to=1-3]
	\arrow[equals, from=1-2, to=2-1]
	\arrow["a"{description}, from=1-3, to=2-2]
	\arrow[equals, from=1-3, to=2-4]
	\arrow[""{name=1, anchor=center, inner sep=0}, "{{x_1(1,a)}}"{inner sep=.8ex}, "\shortmid"{marking}, from=1-7, to=1-8]
	\arrow[equals, from=1-7, to=2-6]
	\arrow["a"{description}, from=1-8, to=2-7]
	\arrow[equals, from=1-8, to=2-9]
	\arrow[""{name=2, anchor=center, inner sep=0}, "{{x_1}}"{inner sep=.8ex}, "\shortmid"{marking}, from=2-1, to=2-2]
	\arrow[equals, from=2-1, to=3-1]
	\arrow[""{name=3, anchor=center, inner sep=0}, "{{x_1(1,a)}}"'{inner sep=.8ex}, "\shortmid"{marking}, from=2-2, to=2-4]
	\arrow[equals, from=2-4, to=3-4]
	\arrow[""{name=4, anchor=center, inner sep=0}, "{{x_1}}"{inner sep=.8ex}, "\shortmid"{marking}, from=2-6, to=2-7]
	\arrow[""{name=5, anchor=center, inner sep=0}, "{{x_1(1,a)}}"'{inner sep=.8ex}, "\shortmid"{marking}, from=2-7, to=2-9]
	\arrow[equals, from=2-7, to=3-7]
	\arrow["b", from=2-9, to=3-9]
	\arrow[""{name=6, anchor=center, inner sep=0}, "{{x_1(1,a)}}"'{inner sep=.8ex}, "\shortmid"{marking}, from=3-1, to=3-4]
	\arrow[equals, from=3-1, to=4-1]
	\arrow["b", from=3-4, to=4-4]
	\arrow[equals, from=3-6, to=2-6]
	\arrow[""{name=7, anchor=center, inner sep=0}, "{{x_1}}"{inner sep=.8ex}, "\shortmid"{marking}, from=3-6, to=3-7]
	\arrow[equals, from=3-6, to=4-6]
	\arrow[""{name=8, anchor=center, inner sep=0}, "\varphi"{inner sep=.8ex}, "\shortmid"{marking}, from=3-7, to=3-9]
	\arrow[equals, from=3-9, to=4-9]
	\arrow[""{name=9, anchor=center, inner sep=0}, "\varphi"'{inner sep=.8ex}, "\shortmid"{marking}, from=4-1, to=4-4]
	\arrow[""{name=10, anchor=center, inner sep=0}, "\varphi"'{inner sep=.8ex}, "\shortmid"{marking}, from=4-6, to=4-9]
	\arrow["{{\mathsf{cart}}}"{description}, draw=none, from=0, to=2]
	\arrow["{\eta_a}"{description}, draw=none, from=1-3, to=3]
	\arrow["{{\mathsf{cart}}}"{description}, draw=none, from=1, to=4]
	\arrow["{\eta_a}"{description}, draw=none, from=1-8, to=5]
	\arrow["{\chi_{x,a}}"{description}, draw=none, from=2-2, to=6]
	\arrow["{\text{id}_{x_1}}"{description}, draw=none, from=4, to=7]
	\arrow["\alpha"{description}, draw=none, from=5, to=8]
	\arrow["\alpha"{description}, draw=none, from=6, to=9]
	\arrow["{\chi_x}"{description}, draw=none, from=3-7, to=10]
\end{tikzcd}
	\end{adjustbox}
	\]
	and
	\[
	\begin{adjustbox}{}
		\begin{tikzcd}
	& {x_0} & z \\
	{x_0} & {x_0} && z &&& {x_0} & z \\
	&& \alpha && {=} & {x_0} & {x_0} && y \\
	{x_0} & {x_0} && y && {x_0} &&& y \\
	{x_0} &&& y
	\arrow[""{name=0, anchor=center, inner sep=0}, "{{x_1(1,a)}}"{inner sep=.8ex}, "\shortmid"{marking}, from=1-2, to=1-3]
	\arrow[equals, from=1-2, to=2-1]
	\arrow["a"{description}, from=1-3, to=2-2]
	\arrow[equals, from=1-3, to=2-4]
	\arrow[""{name=1, anchor=center, inner sep=0}, "{{x_1}}"'{inner sep=.8ex}, "\shortmid"{marking}, from=2-1, to=2-2]
	\arrow[""{name=2, anchor=center, inner sep=0}, "{{x_1(1,a)}}"'{inner sep=.8ex}, "\shortmid"{marking}, from=2-2, to=2-4]
	\arrow[equals, from=2-2, to=4-2]
	\arrow["b", from=2-4, to=4-4]
	\arrow[""{name=3, anchor=center, inner sep=0}, "{{x_1(1,a)}}"{inner sep=.8ex}, "\shortmid"{marking}, from=2-7, to=2-8]
	\arrow[equals, from=2-7, to=3-6]
	\arrow["a"{description}, from=2-8, to=3-7]
	\arrow["b", from=2-8, to=3-9]
	\arrow[""{name=4, anchor=center, inner sep=0}, "{{x_1}}"'{inner sep=.8ex}, "\shortmid"{marking}, from=3-6, to=3-7]
	\arrow[equals, from=3-6, to=4-6]
	\arrow[""{name=5, anchor=center, inner sep=0}, "\varphi"'{inner sep=.8ex}, "\shortmid"{marking}, from=3-7, to=3-9]
	\arrow[equals, from=3-9, to=4-9]
	\arrow[equals, from=4-1, to=2-1]
	\arrow[""{name=6, anchor=center, inner sep=0}, "{{x_1}}"{inner sep=.8ex}, "\shortmid"{marking}, from=4-1, to=4-2]
	\arrow[equals, from=4-1, to=5-1]
	\arrow["\varphi"{inner sep=.8ex}, "\shortmid"{marking}, from=4-2, to=4-4]
	\arrow[equals, from=4-4, to=5-4]
	\arrow[""{name=7, anchor=center, inner sep=0}, "\varphi"'{inner sep=.8ex}, "\shortmid"{marking}, from=4-6, to=4-9]
	\arrow[""{name=8, anchor=center, inner sep=0}, "\varphi"'{inner sep=.8ex}, "\shortmid"{marking}, from=5-1, to=5-4]
	\arrow["{{\mathsf{cart}}}"{description}, draw=none, from=0, to=1]
	\arrow["{\eta_a}"{description}, draw=none, from=1-3, to=2]
	\arrow["{\text{id}_{x_1}}"{description}, draw=none, from=1, to=6]
	\arrow["{{\mathsf{cart}}}"{description}, draw=none, from=3, to=4]
	\arrow["{\alpha(\eta_{a,b})}"{description}, draw=none, from=2-8, to=5]
	\arrow["{\chi_x}"{description}, draw=none, from=3-7, to=7]
	\arrow["{\chi_x}"{description}, draw=none, from=4-2, to=8]
\end{tikzcd}
	\end{adjustbox}
	\]
	It remains to show that for an arbitrary nullary multicell $\alpha(\eta_{a,b})$, the composite above right defines a map of left modules. First, by definition of $\chi_{x,a}$ we have the equality:
	\[
		\begin{tikzcd}[column sep=small]
	{x_0} && {x_0} & z && {x_0} & {x_0} && z \\
	{x_0} &&& z && {x_0} & {x_0} & {x_0} && y \\
	{x_0} & {x_0} && y & {=} & {x_0} && {x_0} && y \\
	{x_0} &&& y && {x_0} &&&& y
	\arrow["{{x_1}}"{inner sep=.8ex}, "\shortmid"{marking}, from=1-1, to=1-3]
	\arrow[equals, from=1-1, to=2-1]
	\arrow["{{x_1(1,a)}}"{inner sep=.8ex}, "\shortmid"{marking}, from=1-3, to=1-4]
	\arrow[equals, from=1-4, to=2-4]
	\arrow[""{name=0, anchor=center, inner sep=0}, "{{x_1}}"{inner sep=.8ex}, "\shortmid"{marking}, from=1-6, to=1-7]
	\arrow[equals, from=1-6, to=2-6]
	\arrow[""{name=1, anchor=center, inner sep=0}, "{{x_1(1,a)}}"{inner sep=.8ex}, "\shortmid"{marking}, from=1-7, to=1-9]
	\arrow[equals, from=1-7, to=2-7]
	\arrow["a"', from=1-9, to=2-8]
	\arrow["b", from=1-9, to=2-10]
	\arrow[""{name=2, anchor=center, inner sep=0}, "{{x_1(1,a)}}"'{inner sep=.8ex}, "\shortmid"{marking}, from=2-1, to=2-4]
	\arrow[equals, from=2-1, to=3-1]
	\arrow[""{name=3, anchor=center, inner sep=0}, "a"{description}, from=2-4, to=3-2]
	\arrow["b", from=2-4, to=3-4]
	\arrow[""{name=4, anchor=center, inner sep=0}, "{{x_1}}"{inner sep=.8ex}, "\shortmid"{marking}, from=2-6, to=2-7]
	\arrow[equals, from=2-6, to=3-6]
	\arrow[""{name=5, anchor=center, inner sep=0}, "{{x_1}}"{inner sep=.8ex}, "\shortmid"{marking}, from=2-7, to=2-8]
	\arrow[""{name=6, anchor=center, inner sep=0}, "\varphi"{inner sep=.8ex}, "\shortmid"{marking}, from=2-8, to=2-10]
	\arrow[equals, from=2-8, to=3-8]
	\arrow[equals, from=2-10, to=3-10]
	\arrow[""{name=7, anchor=center, inner sep=0}, "{{x_1}}"'{inner sep=.8ex}, "\shortmid"{marking}, from=3-1, to=3-2]
	\arrow[equals, from=3-1, to=4-1]
	\arrow[""{name=8, anchor=center, inner sep=0}, "\varphi"'{inner sep=.8ex}, "\shortmid"{marking}, from=3-2, to=3-4]
	\arrow[equals, from=3-4, to=4-4]
	\arrow[""{name=9, anchor=center, inner sep=0}, "{{x_1}}"'{inner sep=.8ex}, "\shortmid"{marking}, from=3-6, to=3-8]
	\arrow[equals, from=3-6, to=4-6]
	\arrow[""{name=10, anchor=center, inner sep=0}, "\varphi"'{inner sep=.8ex}, "\shortmid"{marking}, from=3-8, to=3-10]
	\arrow[equals, from=3-10, to=4-10]
	\arrow[""{name=11, anchor=center, inner sep=0}, "\varphi"'{inner sep=.8ex}, "\shortmid"{marking}, from=4-1, to=4-4]
	\arrow[""{name=12, anchor=center, inner sep=0}, "\varphi"'{inner sep=.8ex}, "\shortmid"{marking}, from=4-6, to=4-10]
	\arrow["{\chi_{x,a}}"{description}, draw=none, from=1-3, to=2]
	\arrow["{{\text{id}_{x_1}}}"{description}, draw=none, from=0, to=4]
	\arrow["{{\mathsf{cart}}}"{description}, draw=none, from=1, to=5]
	\arrow["{{\alpha(\eta_{a,b})}}"{description}, draw=none, from=1-9, to=6]
	\arrow["{{\mathsf{cart}}}"{description}, draw=none, from=2, to=7]
	\arrow["{{\alpha(\eta_{a,b})}}"{description}, draw=none, from=3, to=8]
	\arrow["\mu"{description}, draw=none, from=2-7, to=9]
	\arrow["{{\text{id}_\varphi}}"{description}, draw=none, from=6, to=10]
	\arrow["{\chi_x}"{description}, draw=none, from=3-2, to=11]
	\arrow["{\chi_x}"{description}, draw=none, from=3-8, to=12]
\end{tikzcd}
	\]
	Second, using the associativity of the action $\chi_x$ we have the equality:
	\[
		\begin{tikzcd}[column sep=small]
	{x_0} & {x_0} &&& z && {x_0} & {x_0} &&& z \\
	{x_0} & {x_0} & {x_0} && y && {x_0} & {x_0} & {x_0} && y \\
	{x_0} && {x_0} && y & {=} & {x_0} & {x_0} &&& y \\
	{x_0} &&&& y && {x_0} &&&& y
	\arrow[""{name=0, anchor=center, inner sep=0}, "{{x_1}}"{inner sep=.8ex}, "\shortmid"{marking}, from=1-1, to=1-2]
	\arrow[equals, from=1-1, to=2-1]
	\arrow[""{name=1, anchor=center, inner sep=0}, "{{x_1(1,a)}}"{inner sep=.8ex}, "\shortmid"{marking}, from=1-2, to=1-5]
	\arrow[equals, from=1-2, to=2-2]
	\arrow[""{name=2, anchor=center, inner sep=0}, "a"', from=1-5, to=2-3]
	\arrow["b", from=1-5, to=2-5]
	\arrow[""{name=3, anchor=center, inner sep=0}, "{{x_1}}"{inner sep=.8ex}, "\shortmid"{marking}, from=1-7, to=1-8]
	\arrow[equals, from=1-7, to=2-7]
	\arrow[""{name=4, anchor=center, inner sep=0}, "{{x_1(1,a)}}"{inner sep=.8ex}, "\shortmid"{marking}, from=1-8, to=1-11]
	\arrow[equals, from=1-8, to=2-8]
	\arrow[""{name=5, anchor=center, inner sep=0}, "a"', from=1-11, to=2-9]
	\arrow["b", from=1-11, to=2-11]
	\arrow[""{name=6, anchor=center, inner sep=0}, "{{x_1}}"'{inner sep=.8ex}, "\shortmid"{marking}, from=2-1, to=2-2]
	\arrow[equals, from=2-1, to=3-1]
	\arrow[""{name=7, anchor=center, inner sep=0}, "{{x_1}}"'{inner sep=.8ex}, "\shortmid"{marking}, from=2-2, to=2-3]
	\arrow[""{name=8, anchor=center, inner sep=0}, "\varphi"'{inner sep=.8ex}, "\shortmid"{marking}, from=2-3, to=2-5]
	\arrow[equals, from=2-3, to=3-3]
	\arrow[equals, from=2-5, to=3-5]
	\arrow[""{name=9, anchor=center, inner sep=0}, "{{x_1}}"'{inner sep=.8ex}, "\shortmid"{marking}, from=2-7, to=2-8]
	\arrow[equals, from=2-7, to=3-7]
	\arrow[""{name=10, anchor=center, inner sep=0}, "{{x_1}}"'{inner sep=.8ex}, "\shortmid"{marking}, from=2-8, to=2-9]
	\arrow[equals, from=2-8, to=3-8]
	\arrow[""{name=11, anchor=center, inner sep=0}, "\varphi"'{inner sep=.8ex}, "\shortmid"{marking}, from=2-9, to=2-11]
	\arrow[equals, from=2-11, to=3-11]
	\arrow[""{name=12, anchor=center, inner sep=0}, "{{x_1}}"'{inner sep=.8ex}, "\shortmid"{marking}, from=3-1, to=3-3]
	\arrow[equals, from=3-1, to=4-1]
	\arrow[""{name=13, anchor=center, inner sep=0}, "\varphi"'{inner sep=.8ex}, "\shortmid"{marking}, from=3-3, to=3-5]
	\arrow[equals, from=3-5, to=4-5]
	\arrow[""{name=14, anchor=center, inner sep=0}, "{{x_1}}"'{inner sep=.8ex}, "\shortmid"{marking}, from=3-7, to=3-8]
	\arrow[equals, from=3-7, to=4-7]
	\arrow[""{name=15, anchor=center, inner sep=0}, "\varphi"'{inner sep=.8ex}, "\shortmid"{marking}, from=3-8, to=3-11]
	\arrow[equals, from=3-11, to=4-11]
	\arrow[""{name=16, anchor=center, inner sep=0}, "\varphi"'{inner sep=.8ex}, "\shortmid"{marking}, from=4-1, to=4-5]
	\arrow[""{name=17, anchor=center, inner sep=0}, "\varphi"'{inner sep=.8ex}, "\shortmid"{marking}, from=4-7, to=4-11]
	\arrow["{{\text{id}_{x_1}}}"{description}, draw=none, from=0, to=6]
	\arrow["{{\mathsf{cart}}}"{description}, draw=none, from=1, to=7]
	\arrow["{{\alpha(\eta_{a,b})}}"{description}, draw=none, from=2, to=8]
	\arrow["{{\text{id}_{x_1}}}"{description}, draw=none, from=3, to=9]
	\arrow["{{\mathsf{cart}}}"{description}, draw=none, from=4, to=10]
	\arrow["{{\alpha(\eta_{a,b})}}"{description}, draw=none, from=5, to=11]
	\arrow["\mu"{description}, draw=none, from=2-2, to=12]
	\arrow["{{\text{id}_\varphi}}"{description}, draw=none, from=8, to=13]
	\arrow["{{\text{id}_{x_1}}}"{description}, draw=none, from=9, to=14]
	\arrow["{{\chi_x}}"{description}, draw=none, from=11, to=15]
	\arrow["{\chi_x}"{description}, draw=none, from=3-3, to=16]
	\arrow["{\chi_x}"{description}, draw=none, from=3-8, to=17]
\end{tikzcd}
	\]
	which completes the proof.
\end{proof}

When $\Dbl{D}=\Span$, $x=\Cat{C}$ is a category, and a left module is a set $A$ together with an $A$-indexed family of presheaves $(P_a:\Cat{C}^{op}\to \Cat{Set})_{a\in A}$. Then Theorem~\ref{thm:YonedaForLeftModules} says that if we have a set function $g:B\to A$, together with a family of objects $(c_b)_{b\in B}$ in $\Cat{C}$, then we have a bijection between families of natural transformations below left, and families of elements below right:
\begin{equation*}
	(\alpha_b:\Cat{C}(-,c_b)\Rightarrow P_{g(b)}(c_b))_{b\in B}\leftrightsquigarrow (x_b \in P_{g(b)}(c_b))_{b\in B}
\end{equation*}
given by evaluating at identity arrows. In order to obtain an alternate proof of Lemma~\ref{lem:Yoneda} we can apply Theorem~\ref{thm:YonedaForLeftModules} to the case where the virtual double category is $\Dbl{L}\Cat{Kl}(\Span(\Cat{Grph}),\Cat{fc})$, the monoid $x$ is the VDC $\Cat{fc}(\Dbl{D})$, $y=z=\Cat{Edge}$ is the graph with a single edge between two distinct vertices, and the tight arrow $a$ picks out the sequence of loose arrows $\prolist{\varphi}$ in $\Dbl{D}$. Then a left module $\varphi:x_0\proto y$ corresponds to a loose arrow $\Phi:F\proto G$ in $\Span^{\Cat{fc}(\Dbl{D})^{op_t}}$, and module maps correspond to multicells in $\Span^{\Cat{fc}(\Dbl{D})^{op_t}}\scell{\rho_{\Dbl{D}}(\prolist{\varphi})}{\Phi}$ while elements of $\Phi(\prolist{\varphi})$ correspond precisely to nullary multicells
\[
\begin{tikzcd}
	& {\mathsf{Edge}} \\
	{\mathsf{fc}(\mathbb{D})_0} && {\mathsf{Edge}}
	\arrow["{{\prolist{\varphi}}}"', from=1-2, to=2-1]
	\arrow[equals, from=1-2, to=2-3]
	\arrow[""{name=0, anchor=center, inner sep=0}, "\Phi"'{inner sep=.8ex}, "\shortmid"{marking}, from=2-1, to=2-3]
	\arrow["\beta"{description}, draw=none, from=1-2, to=0]
\end{tikzcd}\]
in $\Dbl{L}\Cat{Kl}(\Span(\Cat{Grph}),\Cat{fc})$.